\numberwithin{equation}{section}
\newcommand{\C}{\mathbb{C}}
\newcommand{\R}{\mathbb{R}}
\newcommand{\Z}{\mathbb{Z}}
\newcommand{\F}{\mathcal{F}}
\newcommand{\e}{\epsilon}
\newcommand{\la}{\Delta}
\newtheorem{thm}{Theorem}[section]
\newtheorem{lem}{Lemma}[section]
\newtheorem{prop}{Proposition}[section]
\newtheorem{defn}{Definition}[section]
\begin{document}
 
\title{Global Solutions For Systems of Quadratic Nonlinear Schrödinger Equations in 3D}
\author{Boyang Su}
 
\maketitle
\section{Introduction}
In this work, we prove global well-posedness and scattering for systems of quadratic nonlinear Schrödinger equations in the critical three-dimensional case, for small, localized data. For the terms corresponding to the nonlinearity $u\bar{u}$, we need to do an $\e$ regularization of this part of the nonlinearity.

In order to tackle quadratic space-time resonances, after performing a Littlewood-Paley decomposition, we use integration by parts in the Duhamel term, to take advantage of the oscillations when space-time resonances are absent.

\subsection{Background}
The existence of a global solution for the nonlinear Schrödinger equation
\begin{equation}
     i\partial_t u + \Delta u = P(u),
     \tag{NLS}
\end{equation}
where $u(t,x):\R\times\R^n\rightarrow \C$ and $P$ is a nonlinear homogeneous function, has been extensively studied. Many papers focused on the case of a gauge invariant nonlinearity, namely $P(u)=\lambda|u|^{p-1}u$ for some real constant $p>1$ and $\lambda\neq 0$, where solutions satisfy several conservation laws. In the $L^2$-subcritical cases, i.e. $1<p<1+\frac{4}{n}$, a global solution can be obtained by iterations of local well-posedness results. In the energy-subcritical case, i.e. $1<p<1+\frac{4}{n-2}$, the local result can also be extended globally in time with some smallness condition imposed on the initial data \cite{text}.

The problem of global well-posedness and scattering for a general nonlinearity $P$, homogeneous of degree $p$, turns out to be more complicated, especially as $p$ becomes smaller. An important notion related to this problem is the Strauss exponent \cite{Strauss},
$$\frac{\sqrt{n^2+12n+4}+n+2}{2n},$$
derived from the positive root of $np^2-(n+2)p-2=0$. Notably, the Strauss exponent is smaller than the mass critical exponent, $1+\frac{4}{n}$. When $p$ is above this Strauss exponent, we can get small data global well-posedness and scattering for (NLS) using a fixed point argument with dispersion properties of the solution to linear Schrödinger solution \cite{Strauss}. Some other works include Shatah \cite{Shatah}, Klainerman and Ponce \cite{KlainermanandPonce}, and Kato \cite{Kato}. The Strauss exponent serves as an essential concept within the broader scope of nonlinear dispersive equations. In particular, for the nonlinear wave equation, it is the positive root of $(n-1)p^2-(n+1)p-2=0$. For any $p$ greater than this Strauss exponent, Lindblad and Sogge \cite{Sogge} proved global existence for small initial data with spherical symmetry, where the symmetry requirement can be removed when $n\leq 8$. Later in \cite{GLS}, Georgiev, Lindblad, and Sogge removed the symmetry assumption for all dimensions.

In dimension $n=3$, the Strauss exponent for NLS is $2$, which makes quadratic NLS in 3D an interesting case to study. Adopting the vector field method in Klainerman \cite{Klainermanvector}, one can show the existence of global solutions and scattering for a quadratic nonlinearity involving $u^2$ and $\bar{u}^2$. Relevant works using this method include Hayashi \cite{Hayashi93}, Hayashi and Naumkin \cite{Hayashi}, and Kawahara \cite{Kawahara}. In \cite{germain}, Germain, Masmoudi, and Shatah developed a more transparent proof combining the vector field and normal forms methods to deal with the space-time resonance, which can be generalized to systems of quadratic NLS.

However, for the nonlinearity of form $u\bar{u}$, the space-time resonance is 3-dimensional and we only have almost global existence in Ginibre and Hayashi \cite{Ginibre} and Wang \cite{wang}. Furthermore, Wang managed to prove the existence of global solutions when the quadratic term contains \textquotedblleft$\e$\textquotedblright derivatives in the low-frequency part. But global existence remains unknown for the general case of $|u|^2$. In \cite{Ikeda}, Ikeda and Inui showed that the solution blows up in finite time if the initial data $|u_0(x)|\geq |x|^{-k}$ for $|x|> 1$  and $3/2<k<2$. Hence, a necessary condition for the small data global existence in the $|u|^2$ case is fast enough decay at spacial infinity of the initial data.

\subsection{Main Result}
This paper generalizes Germain, Masmoudi, and Shatah's result \autocite[Theorem~2]{germain} on systems of quadratic nonlinear Schrödinger equations, to incorporate the quadratic resonance cases when $\frac{1}{c_l}=\frac{1}{c_m}+\frac{1}{c_n}$ and when $c_m+c_n=0$. The case when $c_m+c_n=0$ is analogous to the $u\bar{u}$ nonlinearity, where we use Wang's idea in \cite{wang} to add an \textquotedblleft$\e$\textquotedblright derivative to the low-frequency part of this quadratic nonlinear term. 

We thus consider the system of quadratic nonlinear Schrödinger equations (see Section \ref{notation section} for the unexplained notations)
\begin{equation}
\label{pde}
    \begin{cases}
     \partial_t u_l= ic_l\Delta u_l+\sum_{c_m+c_n\neq 0}A_{lmn} u_mu_n+\sum_{c_m+c_n= 0}A_{lmn} Q(u_m,u_n)\\
     u_l|_{t=1}=u_{l0}
    \end{cases},
\end{equation}
where $1\leq l,m,n\leq N$, $u_l(t,x):[1,\infty)\times\R^3\rightarrow \C$, $u_0\in H^{10}$, $c_l$ are nonzero real numbers, and
$$Q(u_m,u_n)=\F^{-1}\int_{\R^3}q(\xi-\eta,\eta)\hat{u}_m(t,\xi-\eta)\hat{u}_n(t,\eta)d\eta,$$
for $q(\xi,\eta)=q(\eta,\xi)$. Without loss of generality, we assume $m\geq n$ for all equations in \eqref{pde} and the constants $A_{lmn}=A_{lnm}$. As in \cite{wang}, we also impose the following condition on the symbol of $Q$,
\begin{equation}
    \label{q}
    \begin{aligned}
    \sup_{k,k_1,k_2\in\Z} &\{\|\F^{-1}[q(\xi-\eta,\eta)\Tilde{\psi}_k(\xi)\Tilde{\psi}_{k_1}(\xi-\eta)\Tilde{\psi}_{k_2}(\eta)]\|_{L^1}+\\
    &+2^k\|\F^{-1}[\nabla_\xi q(\xi-\eta,\eta)\Tilde{\psi}_k(\xi)\Tilde{\psi}_{k_1}(\xi-\eta)\Tilde{\psi}_{k_2}(\eta)]\|_{L^1}+\\
    &+2^{2k}\|\F^{-1}[\nabla^2_\xi q(\xi-\eta,\eta)\Tilde{\psi}_k(\xi)\Tilde{\psi}_{k_1}(\xi-\eta)\Tilde{\psi}_{k_2}(\eta)]\|_{L^1}+\\
    &+2^{\min\{k_1,k_2\}}\|\F^{-1}[\nabla_\eta q(\xi-\eta,\eta)\Tilde{\psi}_k(\xi)\Tilde{\psi}_{k_1}(\xi-\eta)\Tilde{\psi}_{k_2}(\eta)]\|_{L^1}+\\
    &+2^{2\min\{k_1,k_2\}}\|\F^{-1}[\nabla^2_\eta q(\xi-\eta,\eta)\Tilde{\psi}_k(\xi)\Tilde{\psi}_{k_1}(\xi-\eta)\Tilde{\psi}_{k_2}(\eta)]\|_{L^1}+\\
    &+2^{k+\min\{k_1,k_2\}}\|\F^{-1}[\nabla_\eta\nabla_\xi q(\xi-\eta,\eta)\Tilde{\psi}_k(\xi)\Tilde{\psi}_{k_1}(\xi-\eta)\Tilde{\psi}_{k_2}(\eta)]\|_{L^1}\}
    \lesssim 2^{\e k_-}.
    \end{aligned}
\end{equation}
Define
\begin{equation}
\label{Znormdef}
    \|f\|_Z=\sup_{k\in\Z}2^{-\gamma k+2k_++k/2+\alpha k}\|D^{1+\alpha}_\xi\hat{f}_k(\xi)\|_{L^2},
\end{equation}
where $f_k$ is the Littlewood-Paley projection at frequence $2^k$ and $\alpha,\gamma$ are positive real constants satisfying
\begin{equation}
\label{constant conditions1}
    0<\gamma <\e/3 \ll 1
\end{equation}
and
\begin{equation}
\label{constant conditions2}
    1/2-\gamma/8<\alpha<1/2-\gamma/(16-2\gamma).
\end{equation}
The definition of $k_{\pm}$ and Littlewood-Paley projection will be provided in Section \ref{notation section}. Note that the weights in the $Z$ norm guarantee $|x|^{-2}$ decay as $|x|\rightarrow\infty$ of the initial data. Using the Sobolev inequality, we have
\begin{align*}
    \||x|^{1/2}f(x)\|_{L^2}
    &\lesssim \sum_{k\in\Z}\|D^{1/2}_\xi\hat{f}_k(\xi)\|_{L^2}\\
    &\lesssim \sum_{k\in\Z}\|\chi_{2^{k-2}\leq |\xi|\leq 2^k}\|_{L^{\frac{6}{1+2\alpha}}}\|D^{1/2}_\xi\hat{f}_k(\xi)\|_{L^{\frac{6}{5-2\alpha}}}\\
    &\lesssim \sum_{k\in\Z}2^{k/2+\alpha k}\|D^{1+\alpha}_\xi\hat{f}_k(\xi)\|_{L^2}
    \leq \sum_{k\in\Z}2^{\gamma k-2k_+}\|f\|_Z\lesssim \|f\|_Z.
\end{align*}
This rules out the blow-up possibility introduced by Ikeda and Inui \cite{Ikeda}. Next, we state the main theorem, which proves global existence with scattering for \eqref{pde} when the initial data is small and localized around the origin.
\begin{thm}
\label{mainthm}
Fix $\alpha$, $\gamma$ and $\e$. If \eqref{constant conditions1} and \eqref{constant conditions2} are satisfied, and for all $l$
\begin{equation}
\label{initial}
    \|u_{l0}\|_{H^{10}}+\|e^{-ic_l\la}u_{l0}\|_Z\leq \e_0,
\end{equation}
for a sufficiently small constant $\e_0$, then there exists a unique global solution for the IVP \eqref{pde}. Moreover, for all $l$, the following estimate holds,
\begin{equation}
\label{gse}
    \sup_{t\in[1,\infty)}\|u_l(t,x)\|_{H^{10}_x}+\|e^{-ic_lt\la}u_l(t,x)\|_{Z_x}+t^{1+\gamma/2}\|u_l(t,x)\|_{L^\infty_x}\lesssim \e_0,
\end{equation}
and the solution $u_l$ scatters.
\end{thm}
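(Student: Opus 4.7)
My plan is a bootstrap argument on the three quantities appearing in \eqref{gse}. Passing to the profile $f_l(t)=e^{-ic_l t\la}u_l(t)$, the system \eqref{pde} reads on the Fourier side as
\begin{equation*}
\partial_t \hat f_l(t,\xi) = \sum_{m,n} A_{lmn}\int_{\R^3} e^{it\Phi_{lmn}(\xi,\eta)}\, p_{lmn}(\xi,\eta)\,\hat f_m(t,\xi-\eta)\hat f_n(t,\eta)\,d\eta,
\end{equation*}
where $\Phi_{lmn}(\xi,\eta)=-c_l|\xi|^2+c_m|\xi-\eta|^2+c_n|\eta|^2$ and $p_{lmn}$ is either $1$ or the symbol $q$ from \eqref{q}. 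Under the bootstrap assumption that the left-hand side of \eqref{gse} is bounded by $\e_1=C\e_0$ on some interval $[1,T]$, I would recover the improved bound $C\e_0/2$; local well-posedness in $H^{10}$ together with continuity in $T$ then yields global existence.

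\textbf{The three estimates.} I would close the bootstrap in the following order. First, the pointwise decay $\|u_l(t)\|_{L^\infty}\lesssim t^{-1-\gamma/2}\|f_l\|_Z$ follows from stationary phase applied to $e^{ic_lt\la}f_l$ after a Littlewood--Paley decomposition: the fractional derivative $D^{1+\alpha}_\xi$ in the $Z$-norm provides the $t^{-1}$ Schrödinger decay (via Sobolev in $\xi$ as in the calculation already given in the introduction), while the weight $2^{-\gamma k}$ yields the extra $t^{-\gamma/2}$ gain after summing over dyadic scales. Second, the $H^{10}$ estimate: since \eqref{q} makes $Q$ a uniformly bounded bilinear multiplier at the $H^{10}$ level, a standard energy estimate gives
\begin{equation*}
\tfrac{d}{dt}\|u_l(t)\|_{H^{10}}^2 \lesssim \|u(t)\|_{L^\infty}\|u(t)\|_{H^{10}}^2 \lesssim \e_1\, t^{-1-\gamma/2}\|u(t)\|_{H^{10}}^2,
\end{equation*}
and the right-hand side is integrable in $t$, closing the $H^{10}$ bound. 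Third, and by far the most delicate, is the $Z$-norm estimate on the profile.

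\textbf{$Z$-norm via space--time resonance.} Decomposing the Duhamel integral by output frequency $2^k$ and input frequencies $2^{k_1},2^{k_2}$, one applies $D^{1+\alpha}_\xi$ and estimates in $L^2_\xi$. Derivatives landing on the phase produce factors $s\,|\nabla_\xi\Phi_{lmn}|$; these are harmful only on the space-resonance set $\{\nabla_\eta\Phi=0\}$, which for $c_m+c_n\neq 0$ is the isolated point $\eta=\frac{c_m}{c_m+c_n}\xi$. Away from this set I would integrate by parts in $\eta$, gaining $(s|\nabla_\eta\Phi|)^{-1}$ per derivative. In the complementary region, near a potential space--time resonance, I would perform a normal-form integration by parts in $s$, gaining $(s|\Phi|)^{-1}$; this is viable precisely when the space--time resonance has high enough codimension, which generalizes to the new resonant case $\tfrac{1}{c_l}=\tfrac{1}{c_m}+\tfrac{1}{c_n}$. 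The conditions \eqref{constant conditions1}--\eqref{constant conditions2} are calibrated so that the $L^2$ losses from the fractional $\xi$-derivative, the gains from the two integrations by parts, and the dispersive $t^{-1-\gamma/2}$ decay balance in the final dyadic sum over $k,k_1,k_2$.

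\textbf{Main obstacle.} The hardest case is $c_m+c_n=0$, where $\nabla_\eta\Phi=-c_m\xi$ vanishes on the entire $3$-dimensional set $\{\xi=0\}$, on which $\Phi$ also vanishes, so the space--time resonance is $3$-dimensional and neither integration by parts in $\eta$ nor in $s$ is by itself sufficient. Here I would exploit the $\e$-regularization encoded in \eqref{q}: the factor $2^{\e k_-}$ on the right-hand side supplies a crucial gain in the low output-frequency regime $k_-=\min\{k,0\}$, exactly where the resonance is concentrated. Combined with the smallness $\gamma<\e/3$ from \eqref{constant conditions1}, this promotes the otherwise logarithmically divergent low-frequency sum to an absolutely convergent one, following the idea of Wang \cite{wang}. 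Once the bootstrap is closed, scattering follows because $\partial_t f_l\in L^1_t([1,\infty);Z\cap H^{10})$, so $f_l(t)$ converges as $t\to\infty$ to a scattering profile $f_l^\infty$, which gives the scattering statement for $u_l$.
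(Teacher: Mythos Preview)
Your overall bootstrap architecture, the energy estimate via the pointwise decay, the space--time resonance splitting for the $Z$-norm, and the treatment of the $c_m+c_n=0$ case via the $\e$-regularization from \eqref{q} all match the paper's approach.

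There is, however, a genuine gap in your handling of the resonant case $\tfrac{1}{c_l}=\tfrac{1}{c_m}+\tfrac{1}{c_n}$. You write that the normal-form integration by parts in $s$ ``generalizes to'' this case, but in fact it does not: here the space resonance is $\{\eta=\tfrac{c_m}{c_m+c_n}\xi\}$, and one checks that $\Phi$ vanishes identically on this entire $3$-dimensional set, so the gain $(s|\Phi|)^{-1}$ is unavailable precisely where it is needed. The paper instead exploits an algebraic coincidence specific to this resonance: when $\tfrac{1}{c_l}=\tfrac{1}{c_m}+\tfrac{1}{c_n}$ one has $\nabla_\xi\phi=\tfrac{c_l-c_m}{c_m}\nabla_\eta\phi$, so the harmful factor $it\,\partial_{\xi_l}\phi$ produced by the $D_\xi^{1+\alpha}$ derivative in the $Z$-norm can be rewritten as a multiple of $\partial_{\eta_l}e^{it\phi}$ and integrated by parts in $\eta$, reducing to terms with derivatives on $\hat f_{m,k_1}$ or $\hat f_{n,k_2}$ that are already controlled. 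Even in the nonresonant $\chi^3_k$ region the paper does not use a pure normal form but rather the combined operator $(\partial_t+\tfrac{P(\xi,\eta)}{t}\cdot\nabla_\eta)$ with $P$ chosen so that the resulting symbol $Z(\xi,\eta)=\phi+P\cdot\nabla_\eta\phi$ is elliptic of order $2$; this is what fails when $c_lc_m+c_lc_n-c_mc_n=0$. Your sketch would need this mechanism to close.

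A minor point: the paper proves scattering only in $H^{10}$, via $\|\partial_t\hat f_{l,k}\|_{L^2}\lesssim 2^{-10k_+}t^{-1-\gamma/2}$; your claim that $\partial_t f_l\in L^1_t Z$ is stronger than what is established and not needed for the stated conclusion.
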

In \cite{germain}, Germain, Masmoudi, and Shatah proved global existence and scattering of small solutions to the system \eqref{pde}, assuming no quadratic resonance, i.e. $A_{lmn}=0$ when $\frac{1}{c_l}=\frac{1}{c_m}+\frac{1}{c_n}$ and when $c_m+c_n= 0$. They defined the quadratic space-time resonance as $\mathcal{T}_{l,m,n}\cap\mathcal{S}_{m,n}$, where 
$$\mathcal{T}_{l,m,n}=\{(\xi,\eta)\in\R^3\times\R^3:c_m|\xi|^2+c_n|\eta|^2=c_l|\xi+\eta|^2\}$$
is the time resonance and
$$\mathcal{S}_{m,n}=\{(\xi,\eta)\in\R^3\times\R^3:c_m\xi=c_n\eta\}$$
is the space resonance. No quadratic resonance corresponds to $\mathcal{T}_{l,m,n}\cap \mathcal{S}_{m,n}=\{(0,0)\}$. The nonlinear component in Duhamel's formula for the profile $\hat{f}_l(t,\xi)=e^{ic_lt|\xi|^2}\hat{u}_l(t,\xi)$ is given by
$$\int_1^t\int_{\R^3}e^{is(c_l|\xi|^2-c_m|\xi-\eta|^2-c_n|\eta|^2)}\hat{f}_m(s,\xi-\eta)\hat{f}_n(s,\eta)d\eta ds.$$
Germain, Masmoudi, and Shatah used integration by parts in time and frequency to take advantage of the oscillation due to the fact of no quadratic resonance. However, when $\frac{1}{c_l}=\frac{1}{c_m}+\frac{1}{c_n}$ and when $c_m+c_n= 0$, the dimension of space-time resonance goes up to three, making it hard to apply the previous method.

It is worth noting that $c_m=1,c_n=-1$ contains the case of $u\bar{u}$ nonlinearity. Wang \cite{wang} suggested using Littlewood-Paley to decompose the frequency space, enabling one to conduct a more detailed study to fully exploit the interplay between the time variable and the frequency levels. 

When one attempts to use Wang's method for the case of $u^2$, the order of derivatives in Wang's norm $\|\nabla^2_\xi\hat{f}_k(t,\xi)\|_{L^2_\xi}$ is too large and one cannot obtain enough decay in the time variable for the estimations. Interestingly, there is a term in the norm in \cite{germain} with the same weights, namely $\|\frac{x^2}{\sqrt{t}}f(t,x)\|_{L^2_x}$. Nevertheless, the presence of $\sqrt{t}$ in the denominator effectively counteracts the substantial growth in the time variable caused by these weights.

Subsequently, work by L\'{e}ger \cite{Tristan} demonstrated, in particular, small data global existence and scattering for NLS with $u^2$ nonlinearity, employing the norm $\|\nabla_\xi\hat{f}_k(t,\xi)\|_{L^2}$. L\'{e}ger's work served as the inspiration to lower the order of derivatives in Wang's norm. However, in the $u\bar{u}$ case, the oscillation in the Duhamel's formula, taking the form of $e^{i2s\xi\cdot\eta}$, does not provide enough decay when focusing on the lower frequencies. Hence, the $t^{-3/2}$ decay brought by the $e^{it\Delta}: L^1\rightarrow L^\infty$ estimate played a crucial role in this case. The weights required for bounding the $L^1$ norm of $f_k$ translate into at least $3/2$ derivatives in $\hat{f}_k$, leading to the $D^{1+\alpha}$ derivatives in our $Z$ norm. Nevertheless, taking $\alpha\geq 1/2$ will result in excessive powers of $t$ that we cannot control, so we required $\alpha<1/2$.

Next, we briefly outline the proof and the structure of this paper.

Assuming the initial condition \eqref{initial}, we know by the local well-posedness (see Proposition \ref{lwp}) that there exists some $T>1$ so that there is a unique solution $\{u_l\}$ to \eqref{pde} such that
\begin{equation}
\label{b1}
    \sup_{t\in[1,T]}\|u_l(t,x)\|_{H^{10}_x}+\|e^{-ic_lt\la}u_l(t,x)\|_{Z_x}\leq \e_1:=\e_0^{5/6}.
\end{equation}
We use this as the bootstrap assumption and Proposition \ref{main} shows that under \eqref{b1}
\begin{equation*}
    \sup_{t\in[1,T]}\|u_l(t,x)\|_{H^{10}_x}+\|e^{-ic_lt\la}u_l(t,x)\|_{Z_x}\lesssim \e_0,
\end{equation*}
which leads to a global solution for \eqref{pde} and this solution also scatters and decays in time. 

The detailed proof of Theorem \ref{mainthm} is presented in Section \ref{main proof}, while Section \ref{lwpproof} contains the proof of local well-posedness and Section \ref{prop} provides the detailed estimates for the bootstrap argument (Proposition \ref{main}). For the reader's convenience, we included some standard inequalities in Section \ref{background}. Furthermore, Section \ref{prelim} provides proofs for some estimates utilized in Section \ref{prop}.

\subsection{Notations}
\label{notation section}
We will introduce some notations used in this paper. Let
$$\F f = \hat{f}(\xi)= \frac{1}{(2\pi)^3}\int_{\R^3}e^{-ix\cdot\xi}f(x)dx$$
and
$$\F^{-1} f = \int_{\R^3} e^{ix\cdot\xi}f(\xi)d\xi$$
denote Fourier transfrom and inverse Fourier transform. In particular, 
$$\|\F^{-1}m(\xi,\eta)\|_{L^1}=\int_{\R^3}\int_{\R^3}\bigg|\int_{\R^3}\int_{\R^3}e^{ix\cdot\xi+iy\cdot\eta}m(\xi,\eta)d\xi d\eta\bigg|dxdy.$$

For the Littlewood-Paley operator, we choose an increasing smooth function $\Psi:\R^+\rightarrow\R$ such that $0\leq \Psi(x)\leq 1$, $\Psi(x)=0$ on $\{|x|<1/2\}$, and $\Psi(x)=1$ on $\{|x|\geq 1\}$. Then, $\psi:\R^3\rightarrow\R$ defined as $\psi(x)=\Psi(2|x|)-\Psi(|x|)$ is also smooth, nonnegative and supported on $\{1/4\leq|x|\leq 1\}$. Let $\psi_k(x)=\psi(x/2^k)$, we get
\begin{equation*}
    1\equiv\sum_{k\in\Z}\psi_k(x)=\sum_{k\in\Z}\Psi(|x|/2^{k-1})-\Psi(|x|/2^k).
\end{equation*}
Hence, $\psi_k$ forms a partition of unity on $\R^3$. For an integer $a\geq 3$ to be determined later in \eqref{adef}, define another Schwartz function $\Tilde{\psi}:\R^3\rightarrow [0,1]$ supported on $\{2^{-2a-1}\leq |\xi|\leq 2^{a+3}\}$ and $\Tilde{\psi}(\xi)=1$ for $2^{-2a}\leq |\xi|\leq 2^{a+2}$, so that $\psi(\xi)\Tilde{\psi}(\xi)=\psi(\xi)$. Let $\Tilde{\psi}_k(\xi)=\Tilde{\psi}(\xi/2^k)$, then we have $\psi_k(\xi)\Tilde{\psi}_k(\xi)=\psi_k(\xi)$ and we define
$$f_k=\F^{-1}\hat{f}(\xi)\psi_k(\xi).$$
The linear Schrödinger equation operator is defined as follows
$$e^{ic_lt\la}u_0=\F^{-1} e^{-ic_lt|\xi|^2}\hat{u}_0(\xi).$$
The fractional Sobolev spaces are defined as
$$H^s(\R^3)=\{f\in\mathcal{S'}(\R^3):\Lambda^s f(x)= \F^{-1}((1+|\xi|^2)^{s/2}\hat{f}(\xi))(x)\in L^2(\R^3)\}$$
and fractional derivatives
$$D^s f=\F^{-1}|\xi|^s\hat{f}(\xi).$$
For any spacial norm $X$, the mixed norm is defined as
$$\|f(t,x)\|_{L^p_t([a,b])X_x}=
\begin{cases}
    \bigg(\int_a^b\|f(t,x)\|_{X_x}^p dt \bigg)^{\frac{1}{p}}, & 1\leq p<\infty\\
    \sup_{t\in[a,b]}\|f(t,x)\|_{X_x}, & p=\infty
\end{cases},
$$
Lastly, in this paper, we denote
$$k_-=\min\{k,0\},\, k_+=\max\{k,0\},$$
the phase function
$$\phi(\xi,\eta)=c_l|\xi|^2-c_m|\xi-\eta|^2-c_n|\eta|^2,$$
and
$\chi^1_k=\{(k_1,k_2):|k-k_1|\leq 3,k_2\leq k_1-a\}$,
$\chi^2_k=\{(k_1,k_2):|k_1-k_2|< a,k< k_1-a-2\}$ and $\chi^3_k=\{(k_1,k_2):|k_1-k_2|< a,|k-k_1|\leq a+2\}$, with $a$ as above.
\section{Background Estimates}
\label{background}
In this section, we present several fundamental results concerning the Littlewood-Paley decomposition, fractional derivatives, linear Schrödinger equation, and bilinear estimations. These results will play a crucial role in the subsequent proofs, providing essential tools and techniques for our estimations.
\subsection{Tools for Fractional Derivatives}
\label{section for fractional derivatives}
The definition of the $Z$ norm involves the $L^2$ norm of fractional derivatives, hence we will present two interpolation-type results. Using these lemmas, we may obtain a bound for the fractional derivative using the bounds we found for the integer order derivatives. The first one below bounds the $\alpha$ fractional derivative by the function and its first derivative.
\begin{lem}
\label{dalpha}
For any $f\in H^1(\R^3)$, we have
\begin{align*}
    \|D^\alpha f\|_{L^2}\leq \|f\|_{L^2}^{1-\alpha}\|\nabla f\|_{L^2}^\alpha,
\end{align*}
where $\alpha\in[0,1]$.
\end{lem}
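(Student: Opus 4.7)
The plan is to pass to the Fourier side and apply H\"older's inequality on the weight $|\xi|^{2\alpha}$. By the Plancherel identity,
$$\|D^\alpha f\|_{L^2}^2 = \int_{\R^3} |\xi|^{2\alpha}|\hat f(\xi)|^2\, d\xi.$$
I would then split the integrand as $(|\xi|^2|\hat f|^2)^{\alpha}\,(|\hat f|^2)^{1-\alpha}$ and apply H\"older's inequality with conjugate exponents $p=1/\alpha$ and $p'=1/(1-\alpha)$, which produces
$$\int_{\R^3} |\xi|^{2\alpha}|\hat f|^2 \, d\xi \;\le\; \left(\int_{\R^3} |\xi|^2|\hat f|^2\, d\xi\right)^{\!\alpha} \left(\int_{\R^3} |\hat f|^2\, d\xi\right)^{\!1-\alpha}.$$
A second application of Plancherel identifies the two factors as $\|\nabla f\|_{L^2}^{2\alpha}$ and $\|f\|_{L^2}^{2(1-\alpha)}$ respectively. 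Taking square roots gives the claimed bound.

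The boundary cases $\alpha=0$ and $\alpha=1$ are trivial (and consistent with the convention $D^1 f$ and $\nabla f$ having the same $L^2$ norm by Plancherel, since $\||\xi|\hat f\|_{L^2}=\|\nabla f\|_{L^2}$). For $\alpha\in(0,1)$ the H\"older exponents are finite and the argument is immediate; there is essentially no obstacle here, as the lemma is a one-line consequence of Plancherel plus H\"older. The only minor point worth checking is that $f\in H^1$ guarantees both $\hat f\in L^2$ and $|\xi|\hat f\in L^2$, which is exactly what is needed to make the two integrals on the right finite and the interpolation meaningful.
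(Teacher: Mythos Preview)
Your proof is correct and follows essentially the same approach as the paper: pass to the Fourier side via Plancherel and apply H\"older's inequality with exponents $1/\alpha$ and $1/(1-\alpha)$ to the integrand $|\xi|^{2\alpha}|\hat f|^2 = (|\xi|^2|\hat f|^2)^\alpha(|\hat f|^2)^{1-\alpha}$.
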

\begin{proof}
This is a direct result of Hölder's inequality and Plancherel's theorem,
\begin{align*}
    \|D^{\alpha} f\|_{L^2}
    \leq \bigg(\int_{\R^3}|\xi|^{2\alpha} |\hat{f}|^2 d\xi\bigg)^{1/2}
    &\leq \bigg(\int_{\R^3}(|\xi|^{2\alpha}|\hat{f}|^{2\alpha})^{\frac{1}{\alpha}}d\xi\bigg)^{\frac{\alpha}{2}}\bigg(\int_{\R^3}(|\hat{f}|^{2-2\alpha})^{\frac{1}{1-\alpha}}d\xi\bigg)^{\frac{1-\alpha}{2}}\\
    &=\|\xi\hat{f}(\xi)\|_{L^2}^\alpha\|\hat{f}(\xi)\|_{L^2}^{1-\alpha}
    =\|f\|_{L^2}^{1-\alpha}\|\nabla f\|_{L^2}^\alpha
\end{align*}
\end{proof}
However, we will not always have the $1-\alpha$ extra derivative that we may use to bound the $\alpha$ derivative. In those cases, we use a variation of a special case of the Stein's interpolation theorem \autocite[Theorem~4.1]{stein} tailored for the specific problem setting in this paper.

The lemma below presents the two interpolation results, whose domains of the family of operators are different. The first one is a result on space norm, while the second one involves mixed space-time norm. The proof is analogous to Stein's proof and is based on the three lines lemma \autocite[Lemma~2.1]{text}.
\begin{lem}[Interpolation]
\label{si}
Let $S=\{x+iy\in\C: 0\leq x\leq 1\}$ and $\{T_z\}_{z\in S}$ be a family of linear operators such that $(T_zf)g$ is integrable whenever $f\in X$ and $g\in L^2(\R^3)$. Furthermore,
$$z\mapsto \int_{\R^3}(T_zf)g$$
is analytic in the interior of $S$, continuous and uniformly bounded on $S$. Take $0\leq s_1\leq s_2$.
\begin{enumerate}[label=(\alph*)]
    \item When $X=C^\infty_0(\R^3)$, if
    \begin{align}
        \label{sicon.1}
        \|T_{iy} f\|_{L^2_x}\leq M_0\|f\|_{H^{s_1}_x}\qquad
        \text{and}\qquad
        \|T_{1+iy} f\|_{L^2_x}\leq M_1\|f\|_{H^{s_2}_x},
    \end{align}
    for any $y\in\R$, then
    \begin{align*}
        \|T_{\theta} f\|_{L^2_x}\lesssim M_0^{1-\theta}M_1^{\theta}\|f\|_{H^{(1-\theta)s_1+\theta s_2}_x},
    \end{align*}
    for all $0\leq \theta\leq 1$.
    \item When $X=L^\infty[1,T]C^\infty_0(\R^3)$, if
    \begin{align}
        \label{sicon.2}
        \|T_{iy} f\|_{L^2_x}\leq M_0\|f\|_{L^\infty_t([1,T])H^{s_1}_x}\qquad
        \text{and}\qquad
        \|T_{1+iy} f\|_{L^2_x}\leq M_1\|f\|_{L^\infty_t([1,T])H^{s_2}_x},
    \end{align}
    for any $y\in\R$, then 
    \begin{align*}
        \|T_{\theta} f\|_{L^2_x}\lesssim M_0^{1-\theta}M_1^{\theta}\|f\|_{L^\infty_t([1,T])H^{(1-\theta)s_1+\theta s_2}_x},
    \end{align*}
    for all $0\leq \theta\leq 1$.
\end{enumerate}
\end{lem}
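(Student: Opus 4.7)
The plan is to adapt Stein's classical complex interpolation argument, whose backbone is Hadamard's three lines lemma. Both (a) and (b) rest on constructing an analytic family of auxiliary functions that interpolates between the Sobolev norms on the two boundary lines of $S$, and then pairing against an $L^2$ dual element to convert the $L^2$-valued mapping property into a scalar inequality amenable to the three lines lemma.

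For part (a), write $s(z) = (1-z)s_1 + z s_2$, so that $s(\theta)=(1-\theta)s_1+\theta s_2$ is the target interpolated regularity. Given $f \in C_0^\infty(\R^3)$, I introduce the analytic family
$$F_z = \F^{-1}\bigl[(1+|\xi|^2)^{(s(\theta)-s(z))/2}\hat{f}(\xi)\bigr],\qquad z\in S,$$
which satisfies $F_\theta = f$. Since $|(1+|\xi|^2)^{it/2}|=1$ for real $t$, the imaginary part of $s(\theta)-s(z)$ contributes only a unit-modulus Fourier multiplier, and hence
$$\|F_{iy}\|_{H^{s_1}} = \|F_{1+iy}\|_{H^{s_2}} = \|f\|_{H^{s(\theta)}}$$
for every $y\in\R$, while $z\mapsto F_z$ is Schwartz-valued and holomorphic in $z$. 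Fixing $g\in L^2(\R^3)$ with $\|g\|_{L^2}=1$, I set
$$\Phi(z) = \int_{\R^3} (T_z F_z)(x)\,\overline{g(x)}\,dx.$$
The hypothesized analyticity of $z\mapsto\int(T_z h)g$ combined with the holomorphic $z$-dependence of $F_z$ makes $\Phi$ analytic in the interior of $S$, continuous on $\overline{S}$, and uniformly bounded on $S$ (after inserting, if necessary, a Gaussian regularizer $e^{\delta(z^2-\theta^2)}$ and then sending $\delta\to 0^+$). The boundary bounds \eqref{sicon.1} together with Cauchy--Schwarz give
$$|\Phi(iy)| \leq M_0\|f\|_{H^{s(\theta)}},\qquad |\Phi(1+iy)| \leq M_1\|f\|_{H^{s(\theta)}},$$
so the three lines lemma yields $|\Phi(\theta)| \leq M_0^{1-\theta}M_1^{\theta}\|f\|_{H^{s(\theta)}}$. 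Taking the supremum over unit-norm $g\in L^2$ converts this into the claimed $L^2$-bound on $T_\theta f = T_\theta F_\theta$.

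Part (b) follows from the same scheme: the Bessel potential $\Lambda^{s(\theta)-s(z)}$ is applied only in the spatial variable, so the holomorphic family $F_z(t,x)$ inherits $\|F_{iy}\|_{L^\infty_t H^{s_1}_x}=\|F_{1+iy}\|_{L^\infty_t H^{s_2}_x}=\|f\|_{L^\infty_t H^{s(\theta)}_x}$, and the rest of the argument is unchanged. The main technical obstacle I expect is establishing the uniform boundedness of $\Phi$ on the full strip, since the hypothesis only provides qualitative boundedness of $z\mapsto\int(T_z h)g$ with $h$ fixed, whereas $F_z$ itself varies with $z$. However, for $f\in C_0^\infty(\R^3)$ (or $L^\infty_t C_0^\infty$) the family $F_z$ has Schwartz seminorm bounds depending only on $\mathrm{Re}\,z\in[0,1]$, which combined with the qualitative hypothesis supplies the required uniform control; the Gaussian factor provides a clean fallback to enforce decay as $|y|\to\infty$ before passing to the limit.
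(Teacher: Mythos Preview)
Your proposal is correct and essentially identical to the paper's argument: both proofs combine the three lines lemma with an analytic Bessel-potential shift to interpolate the Sobolev exponents. The only cosmetic difference is that the paper absorbs the shift into the operator by setting $T'_z = T_z\Lambda^{-s(z)}$ and then applies $T'_z$ to a fixed $g$, whereas you absorb it into the input via $F_z=\Lambda^{s(\theta)-s(z)}f$; substituting $g=\Lambda^{s(\theta)}f$ into the paper's $\Psi$ recovers your $\Phi$ verbatim, so the two are the same computation. Your discussion of uniform boundedness (and the Gaussian regularizer fallback) is in fact more careful than the paper, which simply asserts it.
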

\begin{proof}
We first show part $(a)$. Define a new family of operators for $z\in S$, $$T'_z=T_z\Lambda^{-s_1+(s_1-s_2)z},$$
where $\Lambda^s f(x)=\F^{-1}(1+|\xi|^2)^{s/2}\hat{f}(\xi).$ 
Then, as a result of \eqref{sicon.1},
\begin{align*}
    \|T'_{iy}g\|_{L^2_x}
    &=\|T_{iy}\Lambda^{-s_1+(s_1-s_2)iy}g\|_{L^2_x}
    \leq M_0\|\Lambda^{-s_1+(s_1-s_2)iy}g\|_{H^{s_1}_x}=M_0\|g\|_{L^2_x}
\end{align*}
and 
\begin{align*}
    \|T'_{1+iy}g\|_{L^2_x}
    &=\|T_{1+iy}\Lambda^{-s_1+(s_1-s_2)(1+iy)}g\|_{L^2_x}
    \leq M_1\|\Lambda^{-s_2+(s_1-s_2)iy}g\|_{H^{s_2}_x}=M_1\|g\|_{L^2_x}.
\end{align*}
For $g, h\in C^\infty_0(\R^3)$ with $\|g\|_{L^2_x}=\|h\|_{L^2_x}=1$, define
$$\Psi(z) = \int_{\R^3} (T'_zg)(x)h(x)dx.$$
Then the condition on $T_z$ implies $\Psi(z)$ is analytic in the interior of $S$, and continuous and uniformly bounded on $S$. From Hölder's inequality, we obtain
$$|\Psi(iy)|\leq \|T'_{iy}g\|_{L^2_x}\|h\|_{L^2_x}\leq M_0\|g\|_{L^2_x}=M_0$$
and
$$|\Psi(1+iy)|\leq \|T'_{1+iy}g\|_{L^2_x}\|h\|_{L^2_x}\leq M_1\|g\|_{L^2_x}=M_1.$$
Hence, three lines theorem \autocite[Lemma~2.1]{text} implies for any $\theta\in[0,1]$
$$|\Psi(\theta)|\leq M^{1-\theta}_0M^\theta_1.$$
Using duality and the density of $C^\infty_0(\R^3)$ in $L^2(\R^3)$, we get $T'_\theta:L^2(\R^3)\rightarrow L^2(\R^3)$ is bounded with norm $M^{1-\theta}_0M^\theta_1$.
Thus,
\begin{align*}
    \|T_\theta f\|_{L^2_x}
    =\|T'_\theta \Lambda^{s_1-(s_1-s_2)\theta} f\|_{L^2_x}
    \lesssim M_0^{1-\theta}M_1^\theta\|\Lambda^{s_1-(s_1-s_2)\theta} f\|_{L^2_x}=M_0^{1-\theta}M_1^\theta\|f\|_{H^{s_1-(s_1-s_2)\theta}_x}.
\end{align*}
For part (b), we use the same $T'_z$ as above.
From (\ref{sicon.2}), we get
\begin{align*}
    \|T'_{iy}g\|_{L^2_x}
    &=\|T_{iy}\Lambda^{-s_1+(s_1-s_2)iy}g\|_{L^2_x}
    \leq M_0\|\Lambda^{-s_1+(s_1-s_2)iy}g\|_{L^\infty_t([1,T])H^{s_1}_x}=M_0\|g\|_{L^\infty_t([1,T])L^2_x}
\end{align*}
and 
\begin{align*}
    \|T'_{1+iy}g\|_{L^2_x}
    &=\|T_{1+iy}\Lambda^{-s_2+(s_1-s_2)iy}g\|_{L^2_x}
    \leq M_1\|\Lambda^{-s_2+(s_1-s_2)iy}g\|_{L^\infty_t([1,T])H^{s_2}_x}=M_1\|g\|_{L^\infty_t([1,T])L^2_x}.
\end{align*}
Then, it suffices to show the interpolation result, $T'_\theta:L^\infty_t([1,T])L^2_x\rightarrow L^2_x$ is bounded such that
\begin{align}
\label{threelines}
    \|T'_{\theta}g\|_{L^2_x}\leq M^{1-\theta}_0M^\theta_1\|g\|_{L^\infty_t([1,T])L^2_x},
\end{align}
for any $0\leq \theta\leq 1$, since \eqref{threelines} gives
\begin{align*}
    \|T_\theta f\|_{L^2_x}
    =&\|T'_\theta \Lambda^{s_1-(s_1-s_2)\theta} f\|_{L^2_x}
    \leq  M_0^{1-\theta}M_1^\theta\|\Lambda^{s_1-(s_1-s_2)\theta} f\|_{L^\infty_t([1,T])L^2_x}=M_0^{1-\theta}M_1^\theta\|f\|_{L^\infty_t([1,T])H^{s_1-(s_1-s_2)\theta}_x}.
\end{align*}
Now, we are left to prove \eqref{threelines}.\\
Take $g\in L_t^\infty([1,T])C^\infty_0(\R^3)$ and $h\in C^\infty_0(\R^3)$ so that $\|g\|_{L^\infty_t([1,T])L^2_x}=\|h\|_{L^2_x}=1$. Define $\Psi(z)$ same as above, we get
$|\Psi(iy)|\leq M_0$
and
$|\Psi(1+iy)|\leq M_1$.
Using three lines theorem, we have for any $\theta\in[0,1]$
\begin{align*}
    |\Psi(\theta)|\leq  M_0^{1-\theta}M_1^{\theta},
\end{align*}
which shows \eqref{threelines} by duality.
\end{proof}
\subsection{Bilinear Estimates}
Since we have the quadratic nonlinear terms in \eqref{pde}, using Duhamel's principle will result in bilinear integral forms. The lemma below provides us with an estimate for such type of integral forms with a multiplier $m$.
\begin{lem}
\label{bilinear}
For $1\leq p,q,r\leq \infty$, $f\in L^p(\R^3)$, and $g\in L^q(\R^3)$. If 
$$\|\F^{-1}m(\xi,\eta)\|_{L^1}=\int_{\R^3}\int_{\R^3}\bigg|\int_{\R^3}\int_{\R^3}e^{ix\cdot\xi+iy\cdot\eta}m(\xi,\eta)d\eta d\xi\bigg|dxdy < \infty,$$
then the following bilinear estimates hold
\begin{enumerate}[label=(\alph*)]
    \item $\|\F^{-1}\int_{\R^3}m(\xi,\eta)\hat{f}(\xi-\eta)\hat{g}(\eta)d\eta\|_{L^r}\leq  \|\F^{-1}[m(\xi,\eta)]\|_{L^1}\|f\|_{L^p}\|g\|_{L^q}$,
    \item $\|\F^{-1}\int_{\R^3}m(\xi,\eta)\hat{f}(\xi-\eta)\hat{g}(\xi)d\xi\|_{L^r}\leq \|\F^{-1}[m(\xi,\eta)]\|_{L^1}\|f\|_{L^p}\|g\|_{L^q}$,
\end{enumerate}
where $1/r=1/p+1/q$.
\end{lem}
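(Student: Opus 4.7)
The plan is to represent both bilinear operators as weighted integrals of pointwise products of translates of $f$ and $g$, and then close the estimates with Minkowski's integral inequality followed by Hölder. Writing $K(x,y) = \F^{-1}m(\xi,\eta)$, Fourier inversion gives (up to a universal $2\pi$-power) $m(\xi,\eta) = c\int_{\R^3}\int_{\R^3} e^{-ix\cdot\xi - iy\cdot\eta}K(x,y)\,dx\,dy$, and the hypothesis of the lemma is precisely that $K\in L^1(\R^3\times\R^3)$.

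For part (a) I would substitute this kernel representation into the symbol, interchange the order of integration (justified by $K\in L^1$ together with density of Schwartz functions in $L^p$ and $L^q$), and factor the oscillatory phase as $e^{-ix\cdot\xi - iy\cdot\eta} = e^{-ix\cdot(\xi-\eta)}\,e^{-i(x+y)\cdot\eta}$. Because $e^{-ix_0\cdot\xi}\hat{h}(\xi) = \widehat{h(\cdot - x_0)}(\xi)$, the inner $\eta$-integral is recognized as $\widehat{f(\cdot - x)\,g(\cdot - x - y)}(\xi)$, so applying $\F^{-1}$ in $\xi$ yields the pointwise identity
\begin{align*}
    \F^{-1}\!\int_{\R^3} m(\xi,\eta)\hat{f}(\xi-\eta)\hat{g}(\eta)\,d\eta\,(z) \;=\; c\int_{\R^3}\int_{\R^3}K(x,y)\,f(z-x)\,g(z-x-y)\,dx\,dy.
\end{align*}
Taking the $L^r_z$ norm, Minkowski pulls the norm inside the $(x,y)$-integral and Hölder with $\tfrac{1}{r}=\tfrac{1}{p}+\tfrac{1}{q}$, combined with translation invariance of $L^p$ and $L^q$, gives $\|f(\cdot-x)\,g(\cdot-x-y)\|_{L^r}\leq\|f\|_{L^p}\|g\|_{L^q}$, and the asserted bound follows after integrating $|K|$.

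Part (b) follows from the same three-step recipe with one adjustment: since the convolution variable is now $\xi$ rather than $\eta$, I would change variables $\eta' = \xi - \eta$ in the inner integral so that the phase separates as a product of a function of $\xi$ alone times a function of $\eta'$ alone. The resulting pointwise identity represents the bilinear form as a weighted integral of a product of translated and, for one factor, reflected copies of $f$ and $g$; reflection and translation invariance of $L^p$ norms let Minkowski and Hölder close the estimate exactly as in (a). The only non-trivial points are tracking the $2\pi$ factors dictated by the chosen Fourier convention and justifying the interchange of integrals, both of which are routine given $K\in L^1$, so the lemma reduces to the familiar observation that bilinear operators with integrable kernel on phase space obey Young-type bounds.
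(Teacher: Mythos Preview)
Your proposal is correct and follows essentially the same route as the paper: both arguments write $m$ via its inverse Fourier transform $K\in L^1(\R^6)$, recognize the bilinear form as a $K$-weighted integral of products of translated (and, in part (b), reflected) copies of $f$ and $g$, and close with Minkowski plus H\"older. The only cosmetic difference is that the paper tests against $\varphi\in L^{r'}$ and argues by duality, whereas you apply Minkowski's integral inequality directly to the $L^r$ norm; the paper also spells out the $L^\infty$ endpoint via truncation, which you should keep in mind since Schwartz functions are not dense in $L^\infty$.
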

\begin{proof}
Take an arbitrary test function $\varphi(x)\in\mathcal{S}(\R^3)$ such that $\|\varphi(x)\|_{L^{r'}}\leq 1$ with $1/r + 1/r' = 1$. Suppose $f,g\in\mathcal{S}(\R^3)$, then we have
\begin{align*}
    &\int_{\R^3}\varphi(x)\F^{-1}\int_{\R^3}m(\xi,\eta)\hat{f}(\xi-\eta)\hat{g}(\eta)d\eta dx\\
    =&\int_{\R^3}\int_{\R^3} e^{ix\cdot\xi}\varphi(x)\int_{\R^3}m(\xi,\eta)\hat{f}(\xi-\eta)\hat{g}(\eta)d\eta d\xi dx\\
    =& \frac{1}{(2\pi)^6}\int_{\R^3}\int_{\R^3}e^{ix\cdot\xi}\varphi(x)\int_{\R^3}\int_{\R^3}\int_{\R^3}m(\xi,\eta)e^{-iz\cdot(\xi-\eta)}f(z)e^{-iy\cdot\eta}g(y)dzdyd\eta d\xi dx\\
    =& \frac{1}{(2\pi)^6}\int_{\R^3}\int_{\R^3}\int_{\R^3}\int_{\R^3}\int_{\R^3}e^{i(x-z)\cdot\xi+i(z-y)\cdot\eta}m(\xi,\eta) d\eta d\xi\varphi(x)f(z)g(y)dy dx dz.
\end{align*}
Perform a change of variable on $x$ and $y$. In particular, we take $\Tilde{x} = x-z$ and $\Tilde{y} = z-y$. Hence,
\begin{align*}
    &\int_{\R^3}\varphi(x)\F^{-1}\int_{\R^3}m(\xi,\eta)\hat{f}(\xi-\eta)\hat{g}(\eta)d\eta dx\\
    =& \frac{1}{(2\pi)^6}\int_{\R^3}\int_{\R^3}\int_{\R^3}\int_{\R^3}\int_{\R^3}e^{i\Tilde{x}\cdot\xi+i\Tilde{y}\cdot\eta}m(\xi,\eta) d\eta d\xi\varphi(\Tilde{x}+z)f(z)g(z-\Tilde{y})d\Tilde{y} d\Tilde{x} dz\\
    =& \frac{1}{(2\pi)^6}\int_{\R^3}\int_{\R^3}\int_{\R^3}M(\Tilde{x},\Tilde{y})\varphi(\Tilde{x}+z)f(z)g(z-\Tilde{y})d\Tilde{y} d\Tilde{x} dz,
\end{align*}
where 
$$M(x,y) = \F^{-1}m(\xi,\eta) = \int_{\R^3}\int_{\R^3}e^{ix\cdot\xi+iy\cdot\eta}m(\xi,\eta)d\eta d\xi.$$
Using Hölder's in equality and Minkowski's integral inequality, for $1/p+1/p'=1$ and $1/p'=1/r'+1/q$, we get
\begin{align*}
    \bigg|\int_{\R^3}\varphi(x)\F^{-1}\int_{\R^3}m(\xi,\eta)\hat{f}(\xi-\eta)\hat{g}(\eta)d\eta dx\bigg|
    \leq & \bigg|\int_{\R^3}\int_{\R^3}\int_{\R^3}M(\Tilde{x},\Tilde{y})\varphi(\Tilde{x}+z)g(z-\Tilde{y})d\Tilde{y} d\Tilde{x}f(z) dz\bigg|\\
    \leq & \bigg\|\int_{\R^3}\int_{\R^3}M(\Tilde{x},\Tilde{y})\varphi(\Tilde{x}+z)g(z-\Tilde{y})d\Tilde{y} d\Tilde{x}\bigg\|_{L^{p'}_z}\|f\|_{L^p}\\
    \leq & \int_{\R^3}\int_{\R^3}|M(\Tilde{x},\Tilde{y})|\|\varphi(\Tilde{x}+z)g(z-\Tilde{y})\|_{L^{p'}_z}d\Tilde{y} d\Tilde{x}\|f\|_{L^p}\\
    \leq & \int_{\R^3}\int_{\R^3}|M(\Tilde{x},\Tilde{y})|\|\varphi\|_{L^{r'}}\|g\|_{L^q}d\Tilde{y} d\Tilde{x}\|f\|_{L^p}\\
    \leq & \|M(\Tilde{x},\Tilde{y})\|_{L^1}\|g\|_{L^q}\|f\|_{L^p},
\end{align*}
which shows $(a)$ by duality. Then, we can use the density of Schwartz space in $L^p$ to extend the result to any $f\in L^p(\R^3)$ and $g\in L^q(\R^3)$ for $1\leq p,q<\infty$. For the case of $L^\infty$, we can assume without loss of generality $g\in L^\infty(\R^3)$ and perform approximation using $g_n(x)= g(x)\chi_{|x|\leq n}(x)$.

The proof for statement $(b)$ is analogous. Starting with the test function $\varphi$, $f$, and $g$ in $\mathcal{S}(\R^3)$, we have
\begin{align*}
    &\int_{\R^3}\varphi(x)\F^{-1}\int_{\R^3}m(\xi,\eta)\hat{f}(\xi-\eta)\hat{g}(\xi)d\xi dx\\
    =&\int_{\R^3}\int_{\R^3} e^{ix\cdot\eta}\varphi(x)\int_{\R^3}m(\xi,\eta)\hat{f}(\xi-\eta)\hat{g}(\xi)d\xi d\eta dx\\
    =& \frac{1}{(2\pi)^6}\int_{\R^3}\int_{\R^3}e^{ix\cdot\eta}\varphi(x)\int_{\R^3}\int_{\R^3}\int_{\R^3}m(\xi,\eta)e^{-iz\cdot(\xi-\eta)}f(z)e^{-iy\cdot\xi}g(y)dzdyd\xi d\eta dx\\
    =& \frac{1}{(2\pi)^6}\int_{\R^3}\int_{\R^3}\int_{\R^3}\int_{\R^3}\int_{\R^3}e^{i(-y-z)\cdot\xi+i(x+z)\cdot\eta}m(\xi,\eta) d\eta d\xi\varphi(x)f(z)g(y)dy dx dz.
\end{align*}
Using $\Tilde{x} = x+z$ and $\Tilde{y} = -y-z$ for change of variables, we have
\begin{align*}
    &\int_{\R^3}\varphi(x)\F^{-1}\int_{\R^3}m(\xi,\eta)\hat{f}(\xi-\eta)\hat{g}(\xi)d\xi dx\\
    =& \frac{1}{(2\pi)^6}\int_{\R^3}\int_{\R^3}\int_{\R^3}\int_{\R^3}\int_{\R^3}e^{i\Tilde{y}\cdot\xi+i\Tilde{x}\cdot\eta}m(\xi,\eta) d\eta d\xi\varphi(\Tilde{x}-z)f(z)g(-\Tilde{y}-z)d\Tilde{y} d\Tilde{x} dz\\
    =& \frac{1}{(2\pi)^6}\int_{\R^3}\int_{\R^3}\int_{\R^3}M(\Tilde{y},\Tilde{x})\varphi(\Tilde{x}-z)f(z)g(-\Tilde{y}-z)d\Tilde{y} d\Tilde{x} dz.
\end{align*}
Lastly, we obtain (b) from Hölder's in equality and Minkowski's integral inequality,
\begin{align*}
    \bigg|\int_{\R^3}\varphi(x)\F^{-1}\int_{\R^3}m(\xi,\eta)\hat{f}(\xi-\eta)\hat{g}(\eta)d\eta dx\bigg|
    \leq & \bigg|\int_{\R^3}\int_{\R^3}\int_{\R^3}M(\Tilde{y},\Tilde{x})\varphi(\Tilde{x}-z)g(-\Tilde{y}-z)d\Tilde{y} d\Tilde{x}f(z) dz\bigg|\\
    \leq & \int_{\R^3}\int_{\R^3}|M(\Tilde{y},\Tilde{x})|\|\varphi(\Tilde{x}-z)g(-\Tilde{y}-z)\|_{L^{p'}_z}d\Tilde{y} d\Tilde{x}\|f\|_{L^p}\\
    \leq & \|M(\Tilde{y},\Tilde{x})\|_{L^1}\|g\|_{L^q}\|f\|_{L^p},
\end{align*}
and extend the result to $L^p$ spaces.
\end{proof}
The next result is an immediate application of the bilinear estimates above, combined with a duality argument. Here, we chose to estimate the term $\|e^{it\la}f_k\|_{L^\infty}$ by Bernstein's inequality (see Lemma \ref{bernstein}),
$$\|e^{it\la}f_k\|_{L^\infty}\lesssim 2^{3k/2}\|e^{it\la}f_k\|_{L^2}=\|f_k\|_{L^2}.$$
However, as we will see later, there are instances where we utilize the properties of the linear Schrödinger equation to obtain different bounds, for example, $\|e^{it\la}f_k\|_{L^\infty}\lesssim t^{-3/2}\|f_k\|_{L^1}$. The key advantage of Lemma \ref{dualitycomp} becomes evident when the magnitude of $2^k$ is relatively small compared to $t^{-1}$.
\begin{lem}
\label{dualitycomp}
For any $k,k_1,k_2\in\Z$,
\begin{align*}
    &\bigg\|\int_{\R^3}m(\xi,\eta)\hat{f}_{k_1}(\xi-\eta)\hat{g}_{k_2}(\eta)\psi_k(\xi)d\eta\bigg\|_{L^2}\\
    \lesssim &2^{3\min\{k,k_2\}/2}\|\F^{-1}m(\xi,\eta)\Tilde{\psi}_k(\xi)\Tilde{\psi}_{k_1}(\eta)\Tilde{\psi}_{k_2}(\xi-\eta)\|_{L^1}\|f_{k_1}\|_{L^2}\|g_{k_2}\|_{L^2}.
\end{align*}
\end{lem}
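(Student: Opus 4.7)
The plan is to reduce the estimate to two complementary applications of the bilinear estimate in Lemma~\ref{bilinear}(a), combined with Bernstein's inequality to supply the factor $2^{3\min\{k,k_2\}/2}$. Denote the expression inside the $L^2$ norm by $\phi(\xi)$. As a preliminary step, I would replace $m$ by $\tilde m(\xi,\eta):=m(\xi,\eta)\Tilde{\psi}_k(\xi)\Tilde{\psi}_{k_1}(\xi-\eta)\Tilde{\psi}_{k_2}(\eta)$; this does not alter $\phi$ because $\hat f_{k_1}(\xi-\eta)$ is Fourier-supported where $\tilde\psi_{k_1}(\xi-\eta)\equiv 1$, similarly for $\hat g_{k_2}(\eta)$, and $\psi_k\tilde\psi_k=\psi_k$.

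Next I would strip the outer cutoff $\psi_k(\xi)$ from the multiplier. Since $\psi_k$ depends only on $\xi$, one has $\F^{-1}[\psi_k(\xi)\tilde m(\xi,\eta)]=c\,(\F^{-1}\psi_k)\ast_x(\F^{-1}\tilde m)$, a convolution in the first variable alone. Young's inequality together with the scale-invariance $\|\F^{-1}\psi_k\|_{L^1_x}=\|\F^{-1}\psi\|_{L^1}\lesssim 1$ then yields $\|\F^{-1}[\psi_k\tilde m]\|_{L^1}\lesssim\|\F^{-1}\tilde m\|_{L^1}$, so the multiplier in the forthcoming bilinear estimates may be taken to be $\psi_k\tilde m$ while charging only $\|\F^{-1}\tilde m\|_{L^1}$.

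The heart of the argument is then two applications of Lemma~\ref{bilinear}(a). First, with exponents $(p,q,r)=(2,\infty,2)$, Plancherel and the bilinear estimate give $\|\phi\|_{L^2}\lesssim\|\F^{-1}\tilde m\|_{L^1}\|f_{k_1}\|_{L^2}\|g_{k_2}\|_{L^\infty}$, and Bernstein's inequality converts $\|g_{k_2}\|_{L^\infty}\lesssim 2^{3k_2/2}\|g_{k_2}\|_{L^2}$. Second, with $(p,q,r)=(2,2,1)$, I obtain an $L^1_x$ bound on $\F^{-1}\phi$; because $\phi$ is frequency-supported in $\{|\xi|\sim 2^k\}$, Bernstein in the opposite direction gives $\|\F^{-1}\phi\|_{L^2}\lesssim 2^{3k/2}\|\F^{-1}\phi\|_{L^1}$, and Plancherel produces $\|\phi\|_{L^2}\lesssim 2^{3k/2}\|\F^{-1}\tilde m\|_{L^1}\|f_{k_1}\|_{L^2}\|g_{k_2}\|_{L^2}$. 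Taking the smaller of the two bounds yields the desired factor $2^{3\min\{k,k_2\}/2}$.

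I do not anticipate a real obstacle: the only conceptual point is recognizing that Bernstein can be deployed either on the input $g_{k_2}$ (which is sharp when $k_2\leq k$) or on the output $\phi$ (which is sharp when $k<k_2$, exploiting that $\phi$ is frequency-localized at $2^k$); together they cover both regimes and combine into the minimum appearing in the exponent. The bookkeeping with the cutoffs in Step~1 and the Young's-inequality reduction in Step~2 are routine but must be written carefully so that only $\|\F^{-1}\tilde m\|_{L^1}$ appears in the final bound.
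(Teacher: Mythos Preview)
Your proof is correct. The first half---the $2^{3k_2/2}$ bound via the $L^2\times L^\infty\to L^2$ bilinear estimate followed by Bernstein on $g_{k_2}$---coincides with the paper's argument. For the $2^{3k/2}$ bound, however, you take a different and cleaner route. The paper argues by duality: it pairs $\phi$ with a test function $h\in L^2$, swaps the order of integration to peel off $\|\hat g_{k_2}\|_{L^2}$ by H\"older in $\eta$, applies part~(b) of Lemma~\ref{bilinear} to the remaining $\xi$-integral involving $\hat f_{k_1}(\xi-\eta)$ and $h(\xi)\psi_k(\xi)$, and finally extracts the factor $2^{3k/2}$ from $\|\psi_k\|_{L^2}$. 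Your alternative---using the $L^2\times L^2\to L^1$ case of Lemma~\ref{bilinear}(a) directly, then Bernstein $L^1\to L^2$ on the output $\F^{-1}\phi$ (which is frequency-localized at $|\xi|\sim 2^k$)---avoids both the duality detour and any use of part~(b). Your careful Step~2, bounding $\|\F^{-1}[\psi_k\tilde m]\|_{L^1}\lesssim\|\F^{-1}\tilde m\|_{L^1}$ via a convolution in the $x$-variable alone and Young's inequality, also makes explicit a reduction that the paper's proof glosses over.
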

\begin{proof}
Using Plancherel's theorem and the bilinear estimate $L^2\times L^\infty\rightarrow L^2$ in Lemma \ref{bilinear}, we have
\begin{align*}
    &\bigg\|\int_{\R^3}m(\xi,\eta)\hat{f}_{k_1}(\xi-\eta)\hat{g}_{k_2}(\eta)\psi_k(\xi)d\eta\bigg\|_{L^2}\\
    = &\bigg\|\F^{-1}\int_{\R^3}m(\xi,\eta)\Tilde{\psi}_{k}(\xi)\Tilde{\psi}_{k_1}(\xi-\eta)\Tilde{\psi}_{k_2}(\eta)\hat{f}_{k_1}(\xi-\eta) \hat{g}_{k_2}(\xi)d\eta\bigg\|_{L^2}\\
    \lesssim &\|\F^{-1}m(\xi,\eta)\Tilde{\psi}_{k}(\xi)\Tilde{\psi}_{k_1}(\xi-\eta)\Tilde{\psi}_{k_2}(\eta)\|_{L^1}\|f_{k_1}\|_{L^2}\|g_{k_2}\|_{L^\infty}\\
    \lesssim &2^{3k_2/2}\|\F^{-1}m(\xi,\eta)\Tilde{\psi}_{k}(\xi)\Tilde{\psi}_{k_1}(\xi-\eta)\Tilde{\psi}_{k_2}(\eta)\|_{L^1}\|f_{k_1}\|_{L^2}\|g_{k_2}\|_{L^2},
\end{align*}
where the last step follows from Bernstein's inequality in Lemma \ref{bernstein}.\\
On the other hand, we use duality to estimate the $L^2$ norm. Take $h\in\mathcal{S}(\R^3)$ with $\|h\|_{L^2}\leq 1$ and we have
\begin{align*}
    &\int_{\R^3}\int_{\R^3}m(\xi,\eta)\hat{f}_{k_1}(\xi-\eta)\hat{g}_{k_2}(\eta)\psi_k(\xi)d\eta h(\xi)d\xi\\
    =&\int_{\R^3}\int_{\R^3}m(\xi,\eta)\hat{f}_{k_1}(\xi-\eta)\psi_k(\xi) h(\xi)d\xi \hat{g}_{k_2}(\eta)d\eta\\
    \leq &\bigg\|\int_{\R^3}m(\xi,\eta)\hat{f}_{k_1}(\xi-\eta)\psi_k(\xi) h(\xi)d\xi\Tilde{\psi}_{k_2}(\eta)\bigg\|_{L^2}\| \hat{g}_{k_2}(\eta)\|_{L^2}\\
    = &\bigg\|\F^{-1}\int_{\R^3}m(\xi,\eta)\Tilde{\psi}_{k_2}(\eta)\hat{f}_{k_1}(\xi-\eta) h(\xi)\psi_k(\xi)d\xi\bigg\|_{L^2}\|g_{k_2}\|_{L^2}\\
    \lesssim &\|\F^{-1}m(\xi,\eta)\Tilde{\psi}_k(\xi)\Tilde{\psi}_{k_1}(\eta)\Tilde{\psi}_{k_2}(\xi-\eta)\|_{L^1}\|f_{k_1}\|_{L^2}\|\F^{-1}h(\xi)\psi_k(\xi)\|_{L^\infty}\|g_{k_2}\|_{L^2}\\
    \lesssim &\|\F^{-1}m(\xi,\eta)\Tilde{\psi}_k(\xi)\Tilde{\psi}_{k_1}(\xi-\eta)\Tilde{\psi}_{k_2}(\eta)\|_{L^1}\|f_{k_1}\|_{L^2}\|h(\xi)\psi_k(\xi)\|_{L^1}\|g_{k_2}\|_{L^2}\\
    \lesssim &\|\F^{-1}m(\xi,\eta)\Tilde{\psi}_k(\xi)\Tilde{\psi}_{k_1}(\xi-\eta)\Tilde{\psi}_{k_2}(\eta)\|_{L^1}\|f_{k_1}\|_{L^2}\|h(\xi)\|_{L^2}\|\psi_k(\xi)\|_{L^2}\|g_{k_2}\|_{L^2}\\
    \lesssim & 2^{3k/2}\|\F^{-1}m(\xi,\eta)\Tilde{\psi}_k(\xi)\Tilde{\psi}_{k_1}(\xi-\eta)\Tilde{\psi}_{k_2}(\eta)\|_{L^1}\|f_{k_1}\|_{L^2}\|g_{k_2}\|_{L^2},
\end{align*}
which finishes the proof.
\end{proof}
\subsection{Multiplier Estimates}
Recall the bilinear estimates in Lemma \ref{bilinear}. The $L^1$ norm of the inverse Fourier transform of the multiplier shows up on the right-hand side. In this section, we present the $L^1$ norm bounds on three different types of multipliers that we will encounter later in the main proof.
\begin{lem}
\label{chi,eta}
Fix $k\in\Z$. Suppose $\tau:\R^3\times\R^3\rightarrow\R$ is homogeneous of degree $n$ and smooth on $\{(\xi,\eta)\in\R^3\times\R^3:\xi\neq 0, \xi-\eta\neq 0\}$. If $|k_1-k|= O(1)$, then
\begin{equation*}
    \|\F^{-1}[\tau(\xi,\eta)m(\xi,\eta)\Tilde{\psi}^2_k(\xi)\Tilde{\psi}^2_{k_1}(\xi-\eta)\Tilde{\psi}_{k_2}(\eta)]\|_{L^1}\lesssim 2^{nk}\|\F^{-1}[m(\xi,\eta)\Tilde{\psi}_k(\xi)\Tilde{\psi}_{k_1}(\xi-\eta)\Tilde{\psi}_{k_2}(\eta)]\|_{L^1}
\end{equation*}
\end{lem}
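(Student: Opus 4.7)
The plan is to split the full symbol into a piece carrying $\tau$ and a piece carrying $m$, and then exploit the fact that the inverse Fourier transform turns products into convolutions so that $L^1$ norms factorize by Young's inequality. Concretely I would write
\[
\tau(\xi,\eta)m(\xi,\eta)\Tilde{\psi}_k^2(\xi)\Tilde{\psi}_{k_1}^2(\xi-\eta)\Tilde{\psi}_{k_2}(\eta) = A(\xi,\eta)\cdot B(\xi,\eta),
\]
with $A(\xi,\eta):=\tau(\xi,\eta)\Tilde{\psi}_k(\xi)\Tilde{\psi}_{k_1}(\xi-\eta)$ and $B(\xi,\eta):=m(\xi,\eta)\Tilde{\psi}_k(\xi)\Tilde{\psi}_{k_1}(\xi-\eta)\Tilde{\psi}_{k_2}(\eta)$. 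Since $\F^{-1}(AB)$ equals (up to a multiplicative constant) the convolution of $\F^{-1}A$ and $\F^{-1}B$ on $\R^3\times\R^3$, Young's inequality yields
\[
\|\F^{-1}(AB)\|_{L^1(\R^6)}\lesssim \|\F^{-1}A\|_{L^1(\R^6)}\,\|\F^{-1}B\|_{L^1(\R^6)}.
\]
The factor containing $B$ is exactly the right-hand side of the claim, so the task reduces to proving $\|\F^{-1}A\|_{L^1(\R^6)}\lesssim 2^{nk}$ uniformly in $k_1$ with $|k_1-k|=O(1)$.

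I would extract the $2^{nk}$ by rescaling. Setting $\xi=2^k\xi'$ and $\eta=2^k\eta'$, the homogeneity of $\tau$ gives $\tau(\xi,\eta)=2^{nk}\tau(\xi',\eta')$; moreover $\Tilde{\psi}_k(\xi)=\Tilde{\psi}(\xi')$ and $\Tilde{\psi}_{k_1}(\xi-\eta)=\Tilde{\psi}(2^{k-k_1}(\xi'-\eta'))$. Hence $A(\xi,\eta)=2^{nk}\widetilde A(\xi',\eta')$ where $\widetilde A(\xi',\eta'):=\tau(\xi',\eta')\Tilde{\psi}(\xi')\Tilde{\psi}(2^{k-k_1}(\xi'-\eta'))$. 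A change of variables in the double Fourier integral then yields
\[
\|\F^{-1}A\|_{L^1(\R^6_{x,y})} = 2^{nk}\|\F^{-1}\widetilde A\|_{L^1(\R^6_{x,y})}.
\]

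Finally, I would argue $\|\F^{-1}\widetilde A\|_{L^1}\lesssim 1$ by showing that $\widetilde A$ is a smooth compactly supported function with derivative bounds that are uniform in $k$. The cutoffs localize $|\xi'|\sim 1$ and $|\xi'-\eta'|\sim 2^{k_1-k}$, which together with the hypothesis $|k_1-k|=O(1)$ confines $(\xi',\eta')$ to a fixed bounded set that is bounded away from the singular locus $\{\xi'=0\}\cup\{\xi'=\eta'\}$ of $\tau$. On this set $\tau$ is smooth, so $\widetilde A\in\mathcal{S}(\R^6)$, whence $\F^{-1}\widetilde A$ is Schwartz and in particular integrable. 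Combining with the previous display gives $\|\F^{-1}A\|_{L^1}\lesssim 2^{nk}$ and closes the proof.

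The main technical point is the last one: verifying that all derivative bounds on $\widetilde A$, and hence the Schwartz seminorms of $\F^{-1}\widetilde A$, can be chosen independently of $k$. This is what makes the hypothesis $|k_1-k|=O(1)$ essential, since it restricts the dilation factor $2^{k-k_1}$ in $\widetilde A$ to finitely many values and thereby forces uniformity of all resulting constants.
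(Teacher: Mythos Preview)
Your proposal is correct and follows essentially the same route as the paper: the same factorization $A\cdot B$ with $A=\tau\Tilde{\psi}_k\Tilde{\psi}_{k_1}$ and $B=m\Tilde{\psi}_k\Tilde{\psi}_{k_1}\Tilde{\psi}_{k_2}$, Young's inequality for the resulting convolution, the rescaling to extract $2^{nk}$ via homogeneity and scale-invariance of $\|\F^{-1}(\cdot)\|_{L^1}$, and the observation that the rescaled factor is smooth and compactly supported away from the singular set of $\tau$. The paper carries out exactly these steps, and your remark that $|k_1-k|=O(1)$ restricts the dilation parameter to finitely many values is precisely how uniformity is obtained.
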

\begin{proof}
Note that $\tau(\xi,\eta)\Tilde{\psi}_k(\xi)\Tilde{\psi}_{k_1}(\xi-\eta)\in\mathcal{S}(\R^3\times\R^3)$. Using Young's inequality, we get
\begin{align*}
    &\|\F^{-1}[\tau(\xi,\eta)m(\xi,\eta)\Tilde{\psi}^2_k(\xi)\Tilde{\psi}^2_{k_1}(\xi-\eta)\Tilde{\psi}_{k_2}(\eta)]\|_{L^1}\\
    = &\|\F^{-1}[m(\xi,\eta)\Tilde{\psi}_k(\xi)\Tilde{\psi}_{k_1}(\xi-\eta)\Tilde{\psi}_{k_2}(\eta)]*\F^{-1}[\tau(\xi,\eta)\Tilde{\psi}_k(\xi)\Tilde{\psi}_{k_1}(\xi-\eta)]\|_{L^1}\\
    \leq &\|\F^{-1}[m(\xi,\eta)\Tilde{\psi}_k(\xi)\Tilde{\psi}_{k_1}(\xi-\eta)\Tilde{\psi}_{k_2}(\eta)]\|_{L^1}\|\F^{-1}[\tau(\xi,\eta)\Tilde{\psi}_k(\xi)\Tilde{\psi}_{k_1}(\xi-\eta)]\|_{L^1}\\
    \lesssim & \|\F^{-1}[m(\xi,\eta)\Tilde{\psi}_k(\xi)\Tilde{\psi}_{k_1}(\xi-\eta)\Tilde{\psi}_{k_2}(\eta)]\|_{L^1}2^{nk}\|\F^{-1}[\tau(\xi/2^k,\eta/2^k)\Tilde{\psi}(\xi/2^k)\Tilde{\psi}(2^{k-k_1}(\xi-\eta)/2^{k})]\|_{L^1}\\
    =& \|\F^{-1}[m(\xi,\eta)\Tilde{\psi}_k(\xi)\Tilde{\psi}_{k_1}(\xi-\eta)\Tilde{\psi}_{k_2}(\eta)]\|_{L^1}2^{nk}\|\F^{-1}[\tau(\xi,\eta)\Tilde{\psi}(\xi)\Tilde{\psi}(2^{k-k_1}(\xi-\eta))]\|_{L^1}.
\end{align*}
The last line follows from the scale-invariant property $\|\F^{-1}D_\delta f(x)\|_{L^1}=\|\F^{-1}f(x)\|_{L^1}$, where $D_\delta f(x)=f(x/\delta)$. Then we are left to show,
\begin{equation*}
    \sup_{|k-k_1|= O(1)}\|\F^{-1}[\tau(\xi,\eta)\Tilde{\psi}(\xi)\Tilde{\psi}(2^{k-k_1}(\xi-\eta))]\|_{L^1}\lesssim 1.
\end{equation*}
Take an arbitrary integer $|p|= O(1)$, define $f(\xi,\eta)=\tau(\xi,\eta)\Tilde{\psi}(\xi)\Tilde{\psi}(2^p(\xi-\eta))$. It suffices to show $f\in\mathcal{S}(\R^3\times\R^3)$.\\
We know $\Tilde{\psi}(\xi)$ is supported on $\{(\xi,\eta): 2^{-2a-1}\leq|\xi|\leq 2^{a+3}\}$ and $\Tilde{\psi}(2^p(\xi-\eta))$ is supported on $\{(\xi,\eta):2^{-2a-1-p}\leq |\xi-\eta|\leq 2^{a+3-p}\}$. Hence,
$$supp\, f\subset \{(\xi,\eta): 2^{-2a-1}\leq|\xi|\leq 2^{a+3},2^{-2a-1-p}\leq |\xi-\eta|\leq 2^{a+3-p}\}\subset\{(\xi,\eta): |\eta|\leq 2^{a+4},|\xi|\leq 2^{a+3}\}.$$
Thus, $f\in C^\infty(\R^3\times\R^3)$, since $\Tilde{\psi}, \tau$ are smooth on the support of $f$. Also, $f$ has compact support, so $f\in\mathcal{S}(\R^3\times\R^3)$.
\end{proof}

\begin{lem}
\label{chi+eta}
Suppose $\tau_1:\R^3\rightarrow\R$ is homogeneous of degree $n_1$, $\tau_2:\R^3\rightarrow\R$ is homogeneous of degree $n_2$, and $\tau_1$ and $\tau_2$ are both smooth on $\R^3\setminus\{0\}$, then
\begin{equation*}
    \|\F^{-1}[\tau_1(\xi)\tau_2(\eta)m(\xi,\eta)\Tilde{\psi}^2_k(\xi)\Tilde{\psi}_{k_1}(\xi-\eta)\Tilde{\psi}^2_{k_2}(\eta)]\|_{L^1}\lesssim 2^{n_1k+n_2k_2}\|\F^{-1}[m(\xi,\eta)\Tilde{\psi}_k(\xi)\Tilde{\psi}_{k_1}(\xi-\eta)\Tilde{\psi}_{k_2}(\eta)]\|_{L^1}.
\end{equation*}
\end{lem}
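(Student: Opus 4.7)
The plan is to mimic the proof of Lemma~\ref{chi,eta}, but to exploit the fact that the ``extra'' cutoffs here, namely $\Tilde{\psi}_k(\xi)$ and $\Tilde{\psi}_{k_2}(\eta)$, involve only $\xi$ and $\eta$ separately (no $\xi-\eta$ dependence), so that the auxiliary factor we peel off is a tensor product in the two variables.

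First I would factor the symbol as
\begin{align*}
    &\tau_1(\xi)\tau_2(\eta)m(\xi,\eta)\Tilde{\psi}^2_k(\xi)\Tilde{\psi}_{k_1}(\xi-\eta)\Tilde{\psi}^2_{k_2}(\eta)\\
    &\qquad=\bigl[m(\xi,\eta)\Tilde{\psi}_k(\xi)\Tilde{\psi}_{k_1}(\xi-\eta)\Tilde{\psi}_{k_2}(\eta)\bigr]\cdot\bigl[\tau_1(\xi)\Tilde{\psi}_k(\xi)\bigr]\cdot\bigl[\tau_2(\eta)\Tilde{\psi}_{k_2}(\eta)\bigr],
\end{align*}
and take the inverse Fourier transform on $\R^3\times\R^3$. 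The second and third factors depend only on $\xi$ and $\eta$ respectively, so their joint inverse Fourier transform is the tensor product $\F^{-1}[\tau_1(\xi)\Tilde{\psi}_k(\xi)](x)\cdot\F^{-1}[\tau_2(\eta)\Tilde{\psi}_{k_2}(\eta)](y)$, whose $L^1(\R^3\times\R^3)$ norm equals the product of the one-variable $L^1$ norms by Fubini. Young's inequality on $\R^3\times\R^3$ then yields
\begin{align*}
    &\|\F^{-1}[\tau_1(\xi)\tau_2(\eta)m(\xi,\eta)\Tilde{\psi}^2_k(\xi)\Tilde{\psi}_{k_1}(\xi-\eta)\Tilde{\psi}^2_{k_2}(\eta)]\|_{L^1}\\
    &\qquad\leq \|\F^{-1}[m\Tilde{\psi}_k\Tilde{\psi}_{k_1}\Tilde{\psi}_{k_2}]\|_{L^1}\cdot\|\F^{-1}[\tau_1(\xi)\Tilde{\psi}_k(\xi)]\|_{L^1}\cdot\|\F^{-1}[\tau_2(\eta)\Tilde{\psi}_{k_2}(\eta)]\|_{L^1}.
\end{align*}

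Next I would control each of the two one-variable factors by the same scale-invariance argument used in Lemma~\ref{chi,eta}. Writing $\tau_1(\xi)\Tilde{\psi}_k(\xi)=2^{n_1k}[\tau_1(\xi/2^k)\Tilde{\psi}(\xi/2^k)]$ by homogeneity, and using $\|\F^{-1}D_\delta f\|_{L^1}=\|\F^{-1}f\|_{L^1}$, gives
\begin{equation*}
    \|\F^{-1}[\tau_1(\xi)\Tilde{\psi}_k(\xi)]\|_{L^1}=2^{n_1k}\|\F^{-1}[\tau_1(\xi)\Tilde{\psi}(\xi)]\|_{L^1},
\end{equation*}
and the analogous identity holds for $\tau_2$ with exponent $n_2k_2$. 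Since $\Tilde{\psi}$ is supported away from the origin and $\tau_i$ is smooth there, both $\tau_i(\xi)\Tilde{\psi}(\xi)$ belong to $\mathcal{S}(\R^3)$, so their inverse Fourier transforms have finite $L^1$ norms.

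Multiplying these bounds together produces the claimed $2^{n_1k+n_2k_2}$ gain. The only subtle point is the factorization step: one needs the ``extra'' cutoffs to not involve $\xi-\eta$, which is precisely why the hypothesis pairs $\Tilde{\psi}_k^2$ with $\Tilde{\psi}_{k_2}^2$ but only a single $\Tilde{\psi}_{k_1}$. With that observed, no new difficulty arises beyond what appeared in the previous lemma.
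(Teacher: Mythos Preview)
Your proof is correct and follows essentially the same approach as the paper: factor out the tensor product $\tau_1(\xi)\Tilde{\psi}_k(\xi)\cdot\tau_2(\eta)\Tilde{\psi}_{k_2}(\eta)$, apply Young's inequality on $\R^3\times\R^3$, use Fubini to split the $L^1$ norm of the tensor product, and finish by the dilation-invariance and homogeneity argument. The paper's only cosmetic difference is that it first applies Young to obtain $\|\F^{-1}[\tau_1(\xi)\tau_2(\eta)\Tilde{\psi}_k(\xi)\Tilde{\psi}_{k_2}(\eta)]\|_{L^1}$ as a single factor and then splits it, whereas you observe the tensor structure at the outset.
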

\begin{proof}
Again, we use Young's inequality,
\begin{align*}
    &\|\F^{-1}[\tau_1(\xi)\tau_2(\eta)m(\xi,\eta)\Tilde{\psi}^2_k(\xi)\Tilde{\psi}_{k_1}(\xi-\eta)\Tilde{\psi}^2_{k_2}(\eta)]\|_{L^1}\\
    = &\|\F^{-1}[m(\xi,\eta)\Tilde{\psi}_k(\xi)\Tilde{\psi}_{k_1}(\xi-\eta)\Tilde{\psi}_{k_2}(\eta)]*\F^{-1}[\tau_1(\xi)\tau_2(\eta)\Tilde{\psi}_k(\xi)\Tilde{\psi}_{k_2}(\eta)]\|_{L^1}\\
    \leq &\|\F^{-1}[m(\xi,\eta)\Tilde{\psi}_k(\xi)\Tilde{\psi}_{k_1}(\xi-\eta)\Tilde{\psi}_{k_2}(\eta)]\|_{L^1}\|\F^{-1}[\tau_1(\xi)\tau_2(\eta)\Tilde{\psi}_k(\xi)\Tilde{\psi}_{k_2}(\eta)]\|_{L^1}.
\end{align*}
Then, we have
\begin{align*}
    &\|\F^{-1}[\tau_1(\xi)\tau_2(\eta)\Tilde{\psi}_k(\xi)\Tilde{\psi}_{k_2}(\eta)]\|_{L^1}\\
    = & \int_{\R^3}\int_{\R^3}\bigg|\int_{\R^3}\int_{\R^3}e^{ix\cdot\xi}e^{iy\cdot\eta}\tau_1(\xi)\tau_2(\eta)\Tilde{\psi}_k(\xi)\Tilde{\psi}_{k_2}(\eta)d\eta d\xi\bigg| dx dy\\
    \leq & \int_{\R^3}\int_{\R^3}\bigg|\int_{\R^3}e^{ix\cdot\xi}\tau_1(\xi)\Tilde{\psi}_k(\xi)d\xi \bigg| \bigg|\int_{\R^3}e^{iy\cdot\eta}\tau_2(\eta)\Tilde{\psi}_{k_2}(\eta)d\eta\bigg| dy dx\\
    = &\int_{\R^3}\bigg|\int_{\R^3}e^{ix\cdot\xi}\tau_1(\xi)\Tilde{\psi}_k(\xi)d\xi \bigg|dx \int_{\R^3} \bigg|\int_{\R^3}e^{iy\cdot\eta}\tau_2(\eta)\Tilde{\psi}_{k_2}(\eta)d\eta\bigg| dy\\
    =& \|\F^{-1}[\tau_1(\xi)\Tilde{\psi}_k(\xi)]\|_{L^1}\|\F^{-1}[\tau_2(\eta)\Tilde{\psi}_{k_2}(\eta)]\|_{L^1}\\
    =& 2^{n_1k+n_2k_2}\|\F^{-1}[\tau_1(\xi/2^k)\Tilde{\psi}(\xi/2^k)]\|_{L^1}\|\F^{-1}[\tau_2(\eta/2^{k_2})\Tilde{\psi}(\eta/2^{k_2})]\|_{L^1}\\
    =& 2^{n_1k+n_2k_2}\|\F^{-1}[\tau_1(\xi)\Tilde{\psi}(\xi)]\|_{L^1}\|\F^{-1}[\tau_2(\eta)\Tilde{\psi}(\eta)]\|_{L^1}\\
    \lesssim & 2^{n_1k+n_2k_2},
\end{align*}
where the last line follows from $\tau_1(\xi)\Tilde{\psi}(\xi),\, \tau_2(\eta)\Tilde{\psi}(\eta)\in\mathcal{S}(
\R^3)$. We know 
$\tau_1(\xi)\Tilde{\psi}(\xi)$ and $\tau_2(\eta)\Tilde{\psi}(\eta)$ are both compactly supported on $\{\xi\in\R^3:2^{-2a-1}\leq |\xi|\leq 2^{a+3}\}$, and smooth on their support.
\end{proof}

\begin{lem}
\label{chi+eta2}
Suppose $\tau_1:\R^3\rightarrow\R$ is homogeneous of degree $n_1$, $\tau_2:\R^3\rightarrow\R$ is homogeneous of degree $n_2$, and $\tau_1$ and $\tau_2$ are both smooth on $\R^3\setminus\{0\}$, then
\begin{equation}
\label{etachi1}
    \|\F^{-1}[\tau_1(2c_m(\xi-\eta)-2c_n\eta)\tau_2(\eta)\Tilde{\psi}_{k_1}(2c_m(\xi-\eta)-2c_n\eta)\Tilde{\psi}_{k_2}(\eta)]\|_{L^1}\lesssim 2^{n_1k_1+n_2k_2},
\end{equation}
 for $c_m\neq 0$.
\end{lem}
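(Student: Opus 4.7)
The multiplier has a tensor-product structure once viewed in the right coordinates: with $u := 2c_m(\xi-\eta)-2c_n\eta = 2c_m\xi - 2(c_m+c_n)\eta$, it factors as $F(u)\,G(\eta)$ where $F(u) = \tau_1(u)\Tilde{\psi}_{k_1}(u)$ and $G(\eta) = \tau_2(\eta)\Tilde{\psi}_{k_2}(\eta)$. The plan is to exploit this by changing variables in the double inverse Fourier integral so that the integrand splits into a product of two independent one-variable inverse Fourier transforms, and then to estimate each piece as in Lemma \ref{chi+eta}. Crucially, the linear map $(\xi,\eta)\mapsto(u,\eta)$ is a bijection on $\R^3\times\R^3$ precisely because $c_m\neq 0$, with constant Jacobian $(2c_m)^3$.

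After substituting, the phase $x\cdot\xi + y\cdot\eta$ becomes $\tfrac{x}{2c_m}\cdot u + \bigl(\tfrac{c_m+c_n}{c_m}x + y\bigr)\cdot\eta$, so
\[
\F^{-1}[F(u)G(\eta)](x,y) \;=\; \frac{1}{(2c_m)^3}\,(\F^{-1}F)\!\left(\tfrac{x}{2c_m}\right)(\F^{-1}G)\!\left(\tfrac{c_m+c_n}{c_m}x + y\right).
\]
Taking the $L^1_{x,y}$ norm, I would integrate in $y$ first (translation invariance absorbs the shift by $\tfrac{c_m+c_n}{c_m}x$ and yields $\|\F^{-1}G\|_{L^1}$), then rescale $x$ so that the prefactor $(2c_m)^{-3}$ cancels exactly against the Jacobian of the rescaling. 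This gives the clean factorization
\[
\|\F^{-1}[F(u)G(\eta)]\|_{L^1_{x,y}} \;=\; \|\F^{-1}F\|_{L^1}\,\|\F^{-1}G\|_{L^1}.
\]

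Finally, each univariate factor is controlled exactly as in the last display of the proof of Lemma \ref{chi+eta}: by homogeneity of $\tau_i$ and the scale-invariance of the $L^1$ norm under $f(\cdot)\mapsto f(\cdot/\delta)$, one gets $\|\F^{-1}F\|_{L^1} = 2^{n_1 k_1}\|\F^{-1}[\tau_1\Tilde{\psi}]\|_{L^1}$ and similarly for $G$, and $\tau_i\Tilde{\psi}\in\mathcal{S}(\R^3)$ because $\Tilde{\psi}$ is compactly supported in an annulus bounded away from $0$ on which $\tau_i$ is smooth. Multiplying yields the claimed bound $2^{n_1 k_1 + n_2 k_2}$. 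I do not anticipate a genuine obstacle here: the content is recognizing the tensor structure hidden behind the linear combination $2c_m(\xi-\eta)-2c_n\eta$ and verifying that this decoupling change of variables really does separate the two factors. Once those are in place, the estimate reduces to the two one-variable estimates already carried out in Lemma \ref{chi+eta}.
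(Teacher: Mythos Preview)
Your proposal is correct and follows essentially the same route as the paper: the change of variables $u=2c_m(\xi-\eta)-2c_n\eta$ (equivalently, the paper's $\zeta$) decouples the double inverse Fourier integral into a product of two single-variable ones, after which homogeneity and the dilation invariance of the $L^1$ norm give the factors $2^{n_1k_1}$ and $2^{n_2k_2}$. The paper carries out the same change of variables and the same endgame; your write-up is slightly more streamlined in tracking the Jacobian cancellation.
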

\begin{proof}
By a change of variable $\zeta=2c_m(\xi-\eta)-2c_n\eta$,
\begin{align*}
    &\|\F^{-1}[\tau_1(2c_m(\xi-\eta)-2c_n\eta)\tau_2(\eta)\Tilde{\psi}_{k_1}(2c_m(\xi-\eta)-2c_n\eta)\Tilde{\psi}_{k_2}(\eta)]\|_{L^1}\\
    =&\int_{\R^3}\int_{\R^3}\bigg|\int_{\R^3}\int_{\R^3}e^{ix\cdot\xi}e^{iy\cdot\eta}\tau_1(2c_m(\xi-\eta)-2c_n\eta)\tau_2(\eta)\Tilde{\psi}_{k_1}(2c_m(\xi-\eta)-2c_n\eta)\Tilde{\psi}_{k_2}(\eta)d\xi d\eta \bigg| dx dy\\
    =&\int_{\R^3}\int_{\R^3}\bigg|\int_{\R^3}\int_{\R^3}e^{ix\cdot(\frac{\zeta}{2c_m}+\frac{c_n}{c_m}\eta+\eta)}e^{iy\cdot\eta}\tau_1(\zeta)\Tilde{\psi}_{k_1}(\zeta)\tau_2(\eta)\Tilde{\psi}_{k_2}(\eta)d\zeta d\eta \bigg|dx dy\\
    =&\int_{\R^3}\int_{\R^3}\bigg|\int_{\R^3}\int_{\R^3}e^{i(\frac{c_n}{c_m}x+x+y)\cdot\eta}e^{ix\cdot\frac{\zeta}{2c_m}}\tau_1(\zeta)\Tilde{\psi}_{k_1}(\zeta)\tau_2(\eta)\Tilde{\psi}_{k_2}(\eta)d\zeta d\eta \bigg| dx dy\\
    =&\int_{\R^3}\int_{\R^3}\bigg|\int_{\R^3}e^{i(\frac{c_n}{c_m}x+x+y)\cdot\eta}\tau_2(\eta)\Tilde{\psi}_{k_2}(\eta)d\eta\bigg|dy\bigg|\int_{\R^3}e^{i\frac{x}{2c_m}\cdot\zeta}\tau_1(\zeta)\Tilde{\psi}_{k_1}(\zeta) d\zeta\bigg| dx.
\end{align*}
Then, let $z_1=\frac{c_n}{c_m}x+x+y$, $z_2=\frac{x}{2c_m}$ and use the homogeneity of $\tau_1,\tau_2$,
\begin{align*}
    &\|F^{-1}\tau_1(\xi-\eta)\tau_2(\eta)\Tilde{\psi}_{k_1}(\xi-\eta)\Tilde{\psi}_{k_2}(\eta)\|_{L^1}\\
    \lesssim &\int_{\R^3}\bigg|\int_{\R^3}e^{iz_1\cdot\eta}\tau_2(\eta)\Tilde{\psi}_{k_2}(\eta)d\eta\bigg| dz_1\int_{\R^3}\bigg|\int_{\R^3}e^{iz_2\cdot\zeta}\tau_1(\zeta)\Tilde{\psi}_{k_1}(\zeta)d\zeta \bigg|dz_2\\
    =&\int_{\R^3}\bigg|\int_{\R^3}e^{iz_1\cdot\eta}\tau_2(\eta)\Tilde{\psi}(\eta/2^{k_2})d\eta\bigg| dz_1\int_{\R^3}\bigg|\int_{\R^3}e^{iz_2\cdot\zeta}\tau_1(\zeta)\Tilde{\psi}(\zeta/2^{k_1})d\zeta \bigg|dz_2\\
    =&2^{n_1k_1+n_2k_2}\int_{\R^3}\bigg|\int_{\R^3}e^{iz_1\cdot\eta}\tau_2(\eta/2^{k_2})\Tilde{\psi}(\eta/2^{k_2})d\eta\bigg| dz_1\int_{\R^3}\bigg|\int_{\R^3}e^{iz_2\cdot\zeta}\tau_1(\zeta/2^{k_1})\Tilde{\psi}(\zeta/2^{k_1})d\zeta \bigg|dz_2\\
    \lesssim &2^{n_1k_1+n_2k_2}\|\F^{-1}D_{2^{k_1}}(\tau_1\Tilde{\psi})\|_{L^1}\|\F^{-1}D_{2^{k_2}}(\tau_2\Tilde{\psi})\|_{L^1},
\end{align*}
where the operator $D_\delta$ denotes dilation, i.e. $D_\delta \psi(x)=\psi(x/\delta)$. Employing the dilation invariant property $\|\F^{-1}D_\delta \psi(x)\|_{L^1}=\|\F^{-1}\psi(x)\|_{L^1}$, we have
\begin{align*}
    &\|\F^{-1}D_{2^{k_1}}(\tau_1\Tilde{\psi})\|_{L^1}\|\F^{-1}D_{2^{k_2}}(\tau_2\Tilde{\psi})\|_{L^1}=\|\F^{-1}(\tau_1\Tilde{\psi})\|_{L^1}\|\F^{-1}(\tau_2\Tilde{\psi})\|_{L^1}\lesssim 1.
\end{align*}
This follows from $\tau_1(\xi)\Tilde{\psi}(\xi),\, \tau_2(\eta)\Tilde{\psi}(\eta)\in\mathcal{S}(
\R^3)$, since
$\tau_1(\xi)\Tilde{\psi}(\xi)$ and $\tau_2(\eta)\Tilde{\psi}(\eta)$ are both compactly supported and smooth on their support. Thus, we showed \eqref{etachi1}.
\end{proof}

\subsection{Linear Schrödinger Equation}
\label{section for lse}
The lemma below is a well-known result for the group of operators $\{e^{it\la}\}_{t=-\infty}^{\infty}$, which gives the solution to the linear initial value problem
\begin{equation*}
    \begin{cases}
     \partial_t u= i\Delta u\\
     u(x,0)=u_{0}
    \end{cases},
\end{equation*}
when applied to the initial data $u_0$. Using the fact that
$$e^{it\la}u_0=\F^{-1} e^{-it|\xi|^2}\hat{u}_0(\xi)=\frac{e^{-|x|^2/4it}}{(4it\pi)^{3/2}}*u_0,$$
we observe that Plancherel's theorem implies that $e^{it\la}:L^2(\R^3)\rightarrow L^2(\R^3)$ is an isometry. Furthermore, the convolution form, in combination with Young's inequality, shows that $e^{it\la}:L^1\rightarrow L^\infty$. By interpolation between these two bounds, we obtain the properties of the operator on the $L^q$ spaces for $q\in[1,2]$.
\begin{lem}
\label{op}
If $t\neq 0$, $1/p+1/q=1$, and $q\in[1,2]$, then $e^{it\la}:L^q(\R^3)\rightarrow L^p(\R^3)$ is continuous and
\begin{equation}
    \|e^{it\la}f\|_{L^p}\lesssim |t|^{-3/2(1/q-1/p)}\|f\|_{L^q}.
\end{equation}
\end{lem}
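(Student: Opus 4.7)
The plan is to follow the two-endpoint-plus-interpolation strategy already sketched in the paragraph preceding the lemma. The key observation is the explicit representation
$$e^{it\Delta}f = \F^{-1}\!\bigl(e^{-it|\xi|^2}\hat f(\xi)\bigr) = \frac{e^{-|x|^2/4it}}{(4it\pi)^{3/2}} * f,$$
which gives access to both a frequency-side description and a physical-side convolution kernel whose pointwise size is exactly $|4\pi t|^{-3/2}$.

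First I would establish the endpoint $q=2$, $p=2$. Since the Fourier multiplier $e^{-it|\xi|^2}$ has modulus $1$, Plancherel's theorem yields $\|e^{it\Delta}f\|_{L^2} = \|\hat f\|_{L^2} = \|f\|_{L^2}$, which agrees with $|t|^{-3/2(1/2-1/2)} = 1$. Next I would establish the endpoint $q=1$, $p=\infty$. Bounding the kernel by $\|e^{-|\cdot|^2/4it}/(4it\pi)^{3/2}\|_{L^\infty} \leq (4\pi|t|)^{-3/2}$ and using Young's convolution inequality $L^1 * L^\infty \to L^\infty$ (or equivalently $L^1 * L^1 \to L^1$ followed by taking the $\sup$) gives
$$\|e^{it\Delta}f\|_{L^\infty} \lesssim |t|^{-3/2}\|f\|_{L^1}.$$

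The intermediate cases follow from Riesz--Thorin interpolation applied to the analytic family (in fact, constant in the spectral parameter) of operators $\{e^{it\Delta}\}$ viewed as bounded maps $L^{q_\theta} \to L^{p_\theta}$, where $(1/q_\theta, 1/p_\theta) = (1-\theta)(1/2,1/2) + \theta(1,0)$ for $\theta \in [0,1]$. The interpolated norm bound is
$$M_0^{1-\theta}M_1^{\theta} = 1^{1-\theta}\cdot \bigl(C|t|^{-3/2}\bigr)^{\theta} = C^\theta |t|^{-3\theta/2},$$
and a short calculation shows $3\theta/2 = \tfrac{3}{2}(1/q_\theta - 1/p_\theta)$, matching the claimed exponent. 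Continuity of $e^{it\Delta}: L^q \to L^p$ then follows from the boundedness together with density of Schwartz functions, on which $e^{it\Delta}$ acts through Fourier multiplication in a manifestly continuous way.

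There is no real obstacle here: the argument is essentially a packaging of Plancherel, the explicit Gaussian kernel, and Riesz--Thorin. The only minor points to watch are the complex-valued nature of the kernel $e^{-|x|^2/4it}$ (its modulus is constant, so the $L^\infty$ bound is clean) and checking that Riesz--Thorin applies to a family that is independent of the complex parameter, which is the trivial case of the theorem. The same three-lines argument underlying Lemma \ref{si} would in principle also deliver the interpolation step if one preferred to avoid invoking Riesz--Thorin as a black box.
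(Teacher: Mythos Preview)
Your proposal is correct and follows exactly the approach the paper itself sketches in the paragraph immediately preceding the lemma (Plancherel for the $L^2$ endpoint, the explicit Gaussian kernel plus Young's inequality for the $L^1\to L^\infty$ endpoint, then Riesz--Thorin interpolation). The paper's own proof is simply a citation to an external reference, so your argument is in fact more detailed than what the paper provides.
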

\begin{proof}
See \autocite[Lemma~4.1]{text}.
\end{proof}
Note that in this paper, we only consider $t\geq 1$ to avoid singularity. This estimate will be used to obtain decay in the time variable in the later proofs. Another important property of the group $\{e^{it\la}\}_{t=-\infty}^{\infty}$ is its global smoothing effect, namely the Strichartz's estimates. Before we state the inequality, we need to introduce the definition of an admissible pair in our case when the dimension $n=3$.
\begin{defn}[admissible pair]
    We say $(q,r)$ is admissible, if 
    $$\frac{2}{q}=3\bigg(\frac{1}{2}-\frac{1}{r}\bigg)$$
    and $2\leq r\leq 6$.
\end{defn}
The Strichartz's estimates encompass a family of results. Here, we will focus on stating the specific inequality that is relevant to this paper.
\begin{lem}[Strichartz's estimate]
    Let $I$ be an interval of $\R$ and $t_0\in\bar{I}$. If $(\gamma,\rho)$ is an admissible pair and $f\in L_t^{\gamma'}(I)L_x^{\rho'}(\R^3)$, then for every admissible pair $(q,r)$, there exists a constant $C$ independent of $I$ such that
    $$\bigg\|\int_{t_0}^te^{i(t-s)\la}f(s)ds\bigg\|_{L^q_t(I)L_x^r(\R^3)}\leq C \|f\|_{L^{\gamma'}_t(I)L^{\rho'}_x(\R^3)}.$$
\end{lem}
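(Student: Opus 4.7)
The plan is to reduce the retarded Strichartz estimate to the dispersive bound from Lemma \ref{op} via a standard $TT^*$ argument, the Hardy--Littlewood--Sobolev inequality, and (away from the endpoint) the Christ--Kiselev lemma. I first focus on the untruncated integral and on the homogeneous estimate $\|e^{it\Delta}u_0\|_{L^q_t(\R) L^r_x}\lesssim \|u_0\|_{L^2}$ for an admissible pair $(q,r)$; the retarded version will follow.

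For the homogeneous estimate, define $T:L^2_x\to L^q_tL^r_x$ by $Tu_0=e^{it\Delta}u_0$. By Plancherel and duality, bounding $T$ is equivalent to bounding $TT^*$, where
\[
TT^*F(t)=\int_{\R}e^{i(t-s)\Delta}F(s)\,ds.
\]
For fixed $t\neq s$ and $F(s)\in L^{r'}_x$, Lemma \ref{op} with exponents $r'\to r$ gives
\[
\|e^{i(t-s)\Delta}F(s)\|_{L^r_x}\lesssim |t-s|^{-3(1/2-1/r)}\|F(s)\|_{L^{r'}_x}=|t-s|^{-2/q}\|F(s)\|_{L^{r'}_x},
\]
using the admissibility identity $2/q=3(1/2-1/r)$. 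Next I will apply the Hardy--Littlewood--Sobolev inequality in the time variable: since $0<2/q<1$ in the non-endpoint range $2<r<6$, the convolution with $|t|^{-2/q}$ maps $L^{q'}_t(\R)\to L^q_t(\R)$, yielding $\|TT^*F\|_{L^q_tL^r_x}\lesssim \|F\|_{L^{q'}_tL^{r'}_x}$, which by $TT^*$ gives the homogeneous estimate. Duality then also furnishes $\|T^*G\|_{L^2_x}\lesssim \|G\|_{L^{q'}_tL^{r'}_x}$.

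Combining the homogeneous estimate for $(q,r)$ with the dual estimate for $(\gamma,\rho)$, the untruncated operator
\[
\mathcal{I}F(t)=\int_{\R}e^{i(t-s)\Delta}F(s)\,ds
\]
satisfies $\|\mathcal{I}F\|_{L^q_t(\R)L^r_x}\lesssim \|F\|_{L^{\gamma'}_t(\R)L^{\rho'}_x}$ by composing $T$ on the outside and $T^*$ on the inside. To pass from this to the retarded operator $\int_{t_0}^t e^{i(t-s)\Delta}f(s)\,ds$, I restrict $f$ to $I$, extend by zero, and invoke the Christ--Kiselev lemma, which applies as long as $q>\gamma'$, i.e.\ away from the double endpoint $(q,r)=(\gamma,\rho)=(2,6)$. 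The constant it produces is independent of the interval $I$, giving the claimed bound with $C$ uniform in $I$.

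The main obstacle is the endpoint case $(q,r)=(\gamma,\rho)=(2,6)$, where Christ--Kiselev fails and the HLS exponent $2/q=1$ sits at the borderline. I would handle it by the Keel--Tao dyadic decomposition in time: split the kernel into pieces on dyadic shells $|t-s|\sim 2^j$ and use a bilinear interpolation between the $L^1_x\to L^\infty_x$ dispersive bound on each shell and the trivial $L^2_x\to L^2_x$ bound, summing the geometric series in $j$ to obtain the endpoint estimate; the remaining off-diagonal and retarded pieces then follow by the same $TT^*$/duality compositions as above. Every other case follows by complex interpolation with the non-endpoint estimates already proved.
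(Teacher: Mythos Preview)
Your proposal is correct and follows the standard $TT^*$/Hardy--Littlewood--Sobolev/Christ--Kiselev route, with Keel--Tao handling the endpoint. The paper itself does not prove this lemma at all --- it simply cites \autocite[Theorem~2.3.3]{cln} --- so your sketch is in fact more detailed than what appears in the paper, and is essentially the argument one would find in the cited reference.
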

\begin{proof}
    See \autocite[Theorem~2.3.3]{cln}.
\end{proof}
Using the Strichartz's estimate above, we may obtain a new type of bilinear estimate. 
\begin{lem}
\label{strichartz}
For any $2^{M-1}\leq t_1\leq t_2\leq 2^M$,
\begin{align*}
    &\bigg\|\int_{t_1}^{t_2}\int_{\R^3}e^{ic_lt|\xi|^2}m(\xi,\eta)\widehat{e^{ic_mt\la} f}(t,\xi-\eta)\widehat{e^{ic_nt\la}g}(t,\eta)d\eta dt\bigg\|_{L^2}\\
    \lesssim &2^{-M/4}\|\F^{-1}m(\xi,\eta)\|_{L^1}\|\hat{f}(t,\xi)\|_{L^\infty_t([2^{M-1},2^M])H^1_\xi}\|\hat{g}(t,\xi)\|_{L^\infty_t([2^{M-1},2^M])L^2_\xi}.
\end{align*}
\end{lem}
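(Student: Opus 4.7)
The plan is to transfer the estimate to the physical side via Plancherel, then combine a dual Strichartz inequality with the bilinear estimate of Lemma \ref{bilinear} and an $L^6$ dispersive bound coming from the Galilean vector field. Let
\[
B_m(u,v) := \F^{-1}\int_{\R^3}m(\xi,\eta)\hat u(\xi-\eta)\hat v(\eta)\,d\eta
\]
denote the bilinear operator with symbol $m$. Since $e^{ic_l t|\xi|^2}$ is the Fourier symbol of $e^{-ic_l t\Delta}$, Plancherel's theorem rewrites the $L^2_\xi$ norm on the left-hand side as
\[
\bigg\|\int_{t_1}^{t_2} e^{-ic_l t\Delta}\bigl[B_m\bigl(e^{ic_m t\Delta} f(t,\cdot),\, e^{ic_n t\Delta} g(t,\cdot)\bigr)\bigr]\, dt\bigg\|_{L^2_x}.
\]

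I would then specialize the Strichartz lemma to the admissible pair $(q,r)=(\infty,2)$ on the output side. Using the unitarity of $e^{ic_l t\Delta}$ on $L^2_x$ (and a linear rescaling of time to absorb $c_l$), this produces the dual homogeneous Strichartz bound
\[
\bigg\|\int_{t_1}^{t_2} e^{-ic_l t\Delta}F(t)\,dt\bigg\|_{L^2_x} \lesssim \|F\|_{L^{\gamma'}_t L^{\rho'}_x([t_1,t_2])}
\]
for any admissible pair $(\gamma,\rho)$. I would take $(\gamma,\rho)=(4,3)$, so $(\gamma',\rho')=(4/3,3/2)$, and apply Lemma \ref{bilinear}(a) with the Hölder relation $2/3 = 1/6 + 1/2$ to obtain
\[
\|B_m(e^{ic_m t\Delta}f(t), e^{ic_n t\Delta}g(t))\|_{L^{3/2}_x} \lesssim \|\F^{-1}m\|_{L^1}\,\|e^{ic_m t\Delta} f(t)\|_{L^6_x}\,\|\hat g(t,\cdot)\|_{L^2_\xi},
\]
where the last factor comes from Plancherel's theorem applied to $\|e^{ic_n t\Delta}g(t)\|_{L^2_x} = \|g(t)\|_{L^2_x}$.

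The key analytic step is to bound $\|e^{ic_m t\Delta} f(t)\|_{L^6_x}$ by $t^{-1}\|\hat f(t)\|_{H^1_\xi}$. For this I would introduce the Galilean vector field $J(t) = x + 2ic_m t\nabla$, which commutes with $i\partial_s + c_m\Delta$. A short Fourier-side computation gives $J(t)\bigl[e^{ic_m t\Delta}f(t,\cdot)\bigr] = e^{ic_m t\Delta}[x f(t,\cdot)]$, so
\[
\|J(t)\,e^{ic_m t\Delta} f(t)\|_{L^2_x} = \|xf(t)\|_{L^2_x} = \|\nabla_\xi \hat f(t)\|_{L^2_\xi} \leq \|\hat f(t)\|_{H^1_\xi}.
\]
The factorization $J(t)u = 2ic_m t\,e^{i|x|^2/(4c_m t)}\nabla\bigl(e^{-i|x|^2/(4c_m t)}u\bigr)$ and the 3D Sobolev embedding $\dot H^1(\R^3)\hookrightarrow L^6(\R^3)$ then give the pointwise-in-$t$ bound $\|e^{ic_m t\Delta} f(t)\|_{L^6_x}\lesssim t^{-1}\|\hat f(t)\|_{H^1_\xi}$.

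Assembling the pieces and taking $L^{4/3}_t$ on $[t_1,t_2]\subset [2^{M-1},2^M]$ produces the time factor
\[
\|t^{-1}\|_{L^{4/3}_t([2^{M-1},2^M])} \lesssim \bigl(\textstyle\int_{2^{M-1}}^{2^M} t^{-4/3}\,dt\bigr)^{3/4}\lesssim (2^{-M/3})^{3/4} = 2^{-M/4},
\]
which matches the desired bound. The main delicate point is the choice $(\gamma,\rho)=(4,3)$: this is exactly the admissible pair for which the $t^{-1}$ decay produced by the Galilean-Sobolev step is integrated against the sharp Lebesgue exponent $L^{4/3}_t$ on a dyadic time interval of length $\sim 2^M$, producing the sharp $2^{-M/4}$ factor. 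Other natural choices such as $(2,6)$ or $(8/3,4)$ yield only $O(1)$ or $2^{-M/8}$ decay, which are too weak, so the proof hinges on matching the Sobolev-interpolation exponent to the admissible pair with the right compatible time integrability.
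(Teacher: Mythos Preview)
Your proposal is correct and follows essentially the same approach as the paper: both rewrite via Plancherel, apply the retarded/dual Strichartz estimate with output pair $(\infty,2)$ and input pair $(4,3)$, use the $L^6\times L^2\to L^{3/2}$ bilinear estimate from Lemma~\ref{bilinear}, and finish with the bound $\|e^{ic_mt\Delta}f(t)\|_{L^6_x}\lesssim t^{-1}\|\hat f(t)\|_{H^1_\xi}$ integrated in $L^{4/3}_t$ over the dyadic interval. The only cosmetic differences are that the paper reduces to the inhomogeneous Strichartz statement by inserting $e^{ic_l(t-s)\Delta}e^{-ic_l(t-s)\Delta}$ and taking an $L^\infty_t$ supremum, whereas you invoke the dual homogeneous bound directly; and the paper cites \autocite[Corollary~2.5.4]{cln} for the $L^6$ decay, whereas you supply the standard Galilean-operator/Sobolev proof of that same estimate.
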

\begin{proof}
Let $F(t,x)=\F^{-1}\int_{\R^3}e^{ic_lt|\xi|^2}m(\xi,\eta)\widehat{e^{ic_mt\la} f}(t,\xi-\eta)\widehat{e^{ic_nt\la}g}(t,\eta)d\eta$. We rewrite the left hand side and use Plancherel's theorem,
\begin{align*}
    &\bigg\|\int_{t_1}^{t_2}\int_{\R^3}e^{ic_lt|\xi|^2}m(\xi,\eta)\widehat{e^{ic_mt\la} f}(t,\xi-\eta)\widehat{e^{ic_nt\la}g}(t,\eta)d\eta dt\bigg\|_{L^2}\\
    =&\bigg\|\int_{t_1}^{t_2}F(t,x) dt\bigg\|_{L^2}\\
    \leq &\bigg\|\int_{t_1}^t e^{ic_l(t-s)\la}e^{-ic_l(t-s)\la}F(s,x) ds\bigg\|_{L_t^\infty([t_1,t_2])L_x^2(\R^3)}\\
    \leq & \bigg\|\int_{t_1}^t e^{ic_l(t-s)\la}e^{ic_ls\la}F(s,x) ds\bigg\|_{L_t^\infty([t_1,t_2])L_x^2(\R^3)}.
\end{align*}
Since $(\infty,2)$ and $(4,3)$ are admissible pairs, we obtain from Strichartz's estimate, 
\begin{align*}
    \bigg\|\int_{t_1}^{t_2}\int_{\R^3}e^{ic_lt|\xi|^2}m(\xi,\eta)\widehat{e^{ic_mt\la} f}(t,\xi-\eta)\widehat{e^{ic_nt\la}g}(t,\eta)d\eta dt\bigg\|_{L^2}
    \lesssim & \|e^{ic_ls\la}F(s,x)\|_{L_s^{4/3}([t_1,t_2])L^{3/2}_x(\R^3)}.
\end{align*}
Then, plugging in the definition of $F$ and using the bilinear estimate $L^6\times L^2\rightarrow L^{3/2}$ give
\begin{align*}
    &\|e^{ic_ls\la}F(s,x)\|_{L_s^{4/3}([t_1,t_2])L^{3/2}_x(\R^3)}\\
    =&\bigg\|e^{ic_lt\la}\F^{-1}\int_{\R^3}e^{ic_lt|\xi|^2}m(\xi,\eta)\widehat{e^{ic_mt\la} f}(t,\xi-\eta)\widehat{e^{ic_nt\la}g}(t,\eta)d\eta\bigg\|_{L_t^{4/3}([t_1,t_2])L^{3/2}_x(\R^3)}\\
    =&\bigg\|\F^{-1}\int_{\R^3}m(\xi,\eta)\widehat{e^{ic_mt\la} f}(t,\xi-\eta)\widehat{e^{ic_nt\la}g}(t,\eta)d\eta\bigg\|_{L_t^{4/3}([t_1,t_2])L^{3/2}_x(\R^3)}\\
    \lesssim &\|\F^{-1}m(\xi,\eta)\|_{L^1}\big\|\|e^{ic_mt\la}f(t,x)\|_{L_x^6}\|e^{ic_nt\la}g(t,x)\|_{L_x^2}\big\|_{L^{4/3}_t([t_1,t_2])}\\
    \lesssim &\|\F^{-1}m(\xi,\eta)\|_{L^1}\|e^{ic_mt\la}f(t,x)\|_{L_t^{4/3}([t_1,t_2])L_x^6}\|g(t,x)\|_{L_t^\infty([t_1,t_2])L_x^2}.
\end{align*}
Lastly, \autocite[Corollary~2.5.4]{cln} gives
\begin{align*}
    \|e^{ic_mt\la}f(t,x)\|_{L_t^{4/3}([t_1,t_2])L_x^6}
    \lesssim \|t^{-1}\|_{L_t^{4/3}([t_1,t_2])}\|\hat{f}\|_{L_t^\infty([t_1,t_2])H^1_\xi}\lesssim 2^{-M/4}\|\hat{f}\|_{L_t^\infty([t_1,t_2])H^1_\xi}.
\end{align*}
\end{proof}
We compare this method with simply taking the $L^\infty$ norm in $t$ and applying the bilinear estimate $L^\infty\times L^2\rightarrow L^2$ in Lemma \ref{bilinear},
\begin{align*}
    &\bigg\|\int_{t_1}^{t_2}\int_{\R^3}e^{ic_lt|\xi|^2}m(\xi,\eta)\widehat{e^{ic_mt\la} f}(t,\xi-\eta)\widehat{e^{ic_nt\la}g}(t,\eta)d\eta dt\bigg\|_{L^2}\\
    \leq & 2^M\bigg\|\int_{\R^3}m(\xi,\eta)\widehat{e^{ic_mt\la} f}(t,\xi-\eta)\widehat{e^{ic_nt\la}g}(t,\eta)d\eta\bigg\|_{L^\infty_t([2^{M-1},2^M])L^2_\xi}\\
    \lesssim &2^M\|\F^{-1}m(\xi,\eta)\|_{L^1}\|e^{ic_mt\la}f(t,x)\|_{L^\infty_t([2^{M-1},2^M])L_x^\infty}\|g(t,x)\|_{L_t^\infty([2^{M-1},2^M])L_x^2}\\
    \lesssim &2^{-M/2}\|\F^{-1}m(\xi,\eta)\|_{L^1}\|f(t,x)\|_{L^\infty_t([2^{M-1},2^M])L_x^1}\|g(t,x)\|_{L_t^\infty([2^{M-1},2^M])L_x^2}.
\end{align*}
This other method provides a greater decay in the time variable, $t\sim 2^M$. However, bounding the $L^1$ norm of $f$ also requires more derivatives than $\|\hat{f}\|_{H^1_\xi}$, which we will see in the next section.

\subsection{Littlewood-Paley Operator}
One may recall the definition of $Z$ norm in \eqref{Znormdef}, the frequency space is decomposed into dyadic annuli by the Littlewood-Paley operator. In this section, we will present some useful estimates as results of this decomposition. Bernstein's inequality below is one basic result due to the localization in frequency space.
\begin{lem}[Bernstein's inequality]
\label{bernstein}
For any $1 \leq p\leq q\leq \infty$,
\begin{align*}
    \|f_k\|_{L^q}\lesssim 2^{3k(1/p-1/q)}\|f_k\|_{L^p}.
\end{align*}
\end{lem}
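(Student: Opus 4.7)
The plan is to realize $f_k$ as a convolution against a smooth kernel whose $L^r$ norm scales explicitly in $2^k$, and then invoke Young's inequality. By the construction in Section~\ref{notation section}, $\Tilde{\psi}_k\equiv 1$ on $\operatorname{supp}\psi_k$, so $\hat f_k(\xi)=\hat f_k(\xi)\Tilde{\psi}_k(\xi)$. Reversing the Fourier transform (with the convention used in this paper, which introduces only an absolute constant that we absorb into $\lesssim$), this gives a representation of the form $f_k = c\,f_k * K_k$, where $K_k:=\F^{-1}\Tilde{\psi}_k$.

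Next I would exploit the dyadic rescaling $K_k(x)=2^{3k}K(2^k x)$, with $K:=\F^{-1}\Tilde{\psi}$. A direct change of variables gives
$$\|K_k\|_{L^r}=2^{3k(1-1/r)}\|K\|_{L^r},$$
and since $\Tilde{\psi}\in\mathcal{S}(\R^3)$ (being smooth and compactly supported), its inverse Fourier transform $K$ is Schwartz, hence $\|K\|_{L^r}\lesssim 1$ uniformly in $r\in[1,\infty]$.

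Applying Young's convolution inequality with exponent $r$ chosen so that $1+1/q=1/p+1/r$ (which requires $r\in[1,\infty]$, automatic from $1\leq p\leq q\leq\infty$) yields
$$\|f_k\|_{L^q}=c\|f_k*K_k\|_{L^q}\lesssim \|K_k\|_{L^r}\|f_k\|_{L^p}\lesssim 2^{3k(1-1/r)}\|f_k\|_{L^p}.$$
Since the relation $1+1/q=1/p+1/r$ rearranges to $1-1/r=1/p-1/q$, this is exactly the claimed inequality.

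There is no genuine obstacle here; the whole argument is bookkeeping of a scaling exponent and a verification that $\F^{-1}\Tilde{\psi}\in L^r$ for every $r$, both of which are routine once one observes that $\Tilde{\psi}$ is Schwartz and compactly supported. The only mildly delicate point is tracking the absolute constants attached to the Fourier convention of Section~\ref{notation section}, but these are harmless for a $\lesssim$-type estimate.
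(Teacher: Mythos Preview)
Your proof is correct and follows essentially the same approach as the paper: write $f_k = c\,f_k * \F^{-1}\Tilde{\psi}_k$ using $\Tilde{\psi}_k\equiv 1$ on $\operatorname{supp}\psi_k$, apply Young's inequality with the exponent $r$ satisfying $1+1/q=1/p+1/r$, and compute $\|\F^{-1}\Tilde{\psi}_k\|_{L^r}$ via the dyadic rescaling. The paper spells out the change-of-variables computation for the scaling of $\|\F^{-1}\Tilde{\psi}_k\|_{L^r}$ line by line, but the content is identical.
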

\begin{proof}
Since $\hat{f}_k=\hat{f}\psi_k=\hat{f}_k\Tilde{\psi}_k$, where $\Tilde{\psi}_k(\xi)=\Tilde{\psi}(\xi/2^k)$ is smooth and compactly supported, by Young's convolution inequality,
\begin{align*}
    \|f_k\|_{L^q}=\|\F^{-1}\Tilde{\psi}_k(\xi)*f_k\|_{L^q}\lesssim \|\F^{-1}\Tilde{\psi}_k(\xi)\|_{L^{\frac{1}{1-1/p+1/q}}}\|f_k\|_{L^{p}}.
\end{align*}
Then the computation
\begin{align*}
    \|\F^{-1}\Tilde{\psi}_k(\xi)\|_{L^{\frac{1}{1-1/p+1/q}}}
    &=\bigg(\int_{\R^3}\bigg|\int_{\R^3}e^{it\xi\cdot x}\Tilde{\psi}(\xi/2^k)dx\bigg|^{\frac{1}{1-1/p+1/q}}dx\bigg)^{1-1/p+1/q}\\
    &=2^{3k}\bigg(\int_{\R^3}\bigg|\F^{-1}\Tilde{\psi}(2^kx)\bigg|^{\frac{1}{1-1/p+1/q}}dx\bigg)^{1-1/p+1/q}\\
    &=2^{3k}\bigg(\int_{\R^3}\bigg|\F^{-1}\Tilde{\psi}(y)\bigg|^{\frac{1}{1-1/p+1/q}}2^{-3k}dy\bigg)^{1-1/p+1/q}\\
    &\lesssim 2^{3k-3k(1-1/p+1/q)}\|\F^{-1}\Tilde{\psi}\|_{L^{\frac{1}{1-1/p+1/q}}}\\
    &\lesssim 2^{3k(1/p-1/q)}
\end{align*}
implies the desired estimate.
\begin{align*}
    \|f_k\|_{L^q}\lesssim 2^{3k(1/p-1/q)}\|f_k\|_{L^{p}}.
\end{align*}
\end{proof}
In addition, the localization allows us to control the $L^2$ norm of a function by the $L^2$ norm of its derivative.
\begin{lem}
For any $\hat{f}_k\in H^1(\R^3)$,
    \begin{equation}
    \label{addder}
    \|\hat{f}_k\|_{L^2}\lesssim 2^k\|\nabla_\xi\hat{f}_k\|_{L^2}.
    \end{equation}
\end{lem}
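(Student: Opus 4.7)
The plan is to exploit the fact that $\hat{f}_k=\hat{f}\psi_k$ is compactly supported on the annulus $\{2^{k-2}\le|\xi|\le 2^k\}$, which turns the claim into a Poincar\'e-type inequality on that annulus. I would start from the pointwise identity $3|\hat{f}_k(\xi)|^2=(\nabla_\xi\cdot\xi)\,|\hat{f}_k(\xi)|^2$, integrate over $\R^3$, and integrate by parts in $\xi$ using the radial vector field $\xi\mapsto \xi$. The compact support of $\hat{f}_k$ kills the boundary term, producing
\[
3\,\|\hat{f}_k\|_{L^2}^2=-2\,\mathrm{Re}\int_{\R^3}\xi\cdot\nabla_\xi\hat{f}_k(\xi)\,\overline{\hat{f}_k(\xi)}\,d\xi.
\]
Since $|\xi|\le 2^k$ throughout $\mathrm{supp}\,\hat{f}_k$, applying Cauchy--Schwarz gives
\[
3\,\|\hat{f}_k\|_{L^2}^2\le 2\cdot 2^k\,\|\nabla_\xi\hat{f}_k\|_{L^2}\,\|\hat{f}_k\|_{L^2},
\]
and dividing by $\|\hat{f}_k\|_{L^2}$ (after trivially disposing of the case $\hat{f}_k\equiv 0$) yields the stated bound.

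There is no substantive obstacle: the compact annular support makes both the vanishing of the boundary term and the pointwise bound $|\xi|\lesssim 2^k$ immediate, while the hypothesis $\hat{f}_k\in H^1(\R^3)$ guarantees that all integrals are finite and that the integration by parts is legitimate. An equally direct alternative would proceed via the fundamental theorem of calculus along rays: for $\omega\in S^2$ and $r\in[2^{k-2},2^k]$, use the vanishing of $\hat{f}_k$ for $|\xi|>2^k$ to write
\[
\hat{f}_k(r\omega)=-\int_r^{2^k}\omega\cdot(\nabla_\xi\hat{f}_k)(s\omega)\,ds,
\]
then apply Cauchy--Schwarz with the weight $s^2$ and integrate in $r^2\,dr\,d\omega$ to recover the factor $2^{2k}$; I prefer the divergence-based version above for its brevity.
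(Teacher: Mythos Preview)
Your proof is correct but takes a different route from the paper. The paper's argument is a one-line application of H\"older and the Sobolev embedding $\dot H^1(\R^3)\hookrightarrow L^6(\R^3)$:
\[
\|\hat f_k\|_{L^2}\le \|\chi_{2^{k-2}\le|\xi|\le 2^{k}}\|_{L^3}\,\|\hat f_k\|_{L^6}\lesssim 2^k\,\|\nabla_\xi\hat f_k\|_{L^2}.
\]
Your argument instead proves a Poincar\'e inequality directly from the divergence identity $3=\nabla_\xi\cdot\xi$ and Cauchy--Schwarz, never invoking the Sobolev embedding. This is slightly more self-contained and even yields an explicit constant $\tfrac{2}{3}$; it is also dimension-robust (the same computation gives $\|\hat f_k\|_{L^2}\le\tfrac{2}{n}2^k\|\nabla_\xi\hat f_k\|_{L^2}$ in $\R^n$, whereas the paper's route is tied to the critical embedding exponent $2^*=6$ in three dimensions). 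The paper's version, on the other hand, is shorter once Sobolev is taken as a black box and fits the pattern used repeatedly elsewhere in the paper (e.g.\ Lemma~\ref{sobolev space}). Either approach is perfectly adequate here.
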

\begin{proof}
This is an immediate result of Hölder's and Sobolev's inequality,
\begin{equation*}
    \|\hat{f}_k\|_{L^2}\leq \|\chi_{2^{k-2}\leq |\xi|\leq 2^{k+1}}\|_{L^3}\|\hat{f}_k\|_{L^6}\lesssim 2^k\|\nabla_\xi\hat{f}_k\|_{L^2}.
\end{equation*}
\end{proof}
In order to use Lemma \ref{op}, it is necessary to find a bound on $\|f_k\|_{L^p}$ for $p\in[1,2]$. We may use Hölder's inequality to control $\|f_k\|_{L^p}$ with $p<2$ by weighted $L^2$ norms. Since derivatives on the Fourier side can be thought of as weights on the space side, we then use the bounds on $\|D^\beta_\xi\hat{f}_k\|_{L^2}$ provided in the $Z$ norm. This is shown in the result below. 
\begin{lem}
\label{lpnorms}
For any $\beta\geq 0$, if $p>\frac{6}{3+2\beta}$, then
\begin{align}
\label{lpnormseq1}
    \|f_k\|_{L^p}\lesssim 2^{-3k(1/p-1/2)+\beta k}\|D^{\beta}_\xi\hat{f}_k\|_{L^2}.
\end{align}
Furthermore, if $p>\frac{6}{5+2\alpha}$, we have
\begin{align}
\label{lpnormseq2}
    \|f_k\|_{L^p}\lesssim 2^{-2k_++\gamma k+2k-3k/p}\|f\|_Z,
\end{align}
and if $p>\frac{6}{3+2\alpha}$, we have
\begin{align}
    \label{lpnormseq3}
    \|\F^{-1}\nabla_\xi\hat{f}_k\|_{L^p}\lesssim 2^{-2k_++\gamma k+k-3k/p}\|f\|_Z.
\end{align}
\end{lem}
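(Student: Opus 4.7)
The plan is to prove \eqref{lpnormseq1} as the main statement and then deduce \eqref{lpnormseq2} and \eqref{lpnormseq3} by specializing $\beta$ and invoking the definition \eqref{Znormdef} of the $Z$ norm. A convenient preliminary is the fractional analogue of \eqref{addder}:
\begin{equation*}
    \|\hat{f}_k\|_{L^2}\lesssim 2^{\beta k}\|D^\beta_\xi\hat{f}_k\|_{L^2}\quad\text{for }\beta\in[0,3/2),
\end{equation*}
which follows by the same two-step scheme as \eqref{addder}: H\"older with $\|\chi_{\{2^{k-2}\leq|\xi|\leq 2^{k+1}\}}\|_{L^{3/\beta}}\sim 2^{\beta k}$ against the 3D Sobolev embedding $\dot{H}^\beta(\R^3)\hookrightarrow L^{6/(3-2\beta)}$.

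For \eqref{lpnormseq1} I would split into the cases $p\geq 2$ and $p<2$. When $p\geq 2$, Bernstein's inequality (Lemma \ref{bernstein}) gives $\|f_k\|_{L^p}\lesssim 2^{-3k(1/p-1/2)}\|f_k\|_{L^2}=2^{-3k(1/p-1/2)}\|\hat{f}_k\|_{L^2}$, and the fractional added-derivative estimate above yields \eqref{lpnormseq1} immediately. When $p<2$, I would split $f_k=f_k\chi_{|x|\leq 2^{-k}}+f_k\chi_{|x|>2^{-k}}$ and apply H\"older with conjugate exponents $2/p$ and $2/(2-p)$ to each piece. The inner piece obeys
\begin{equation*}
    \|f_k\chi_{|x|\leq 2^{-k}}\|_{L^p}\leq\|\chi_{|x|\leq 2^{-k}}\|_{L^{2p/(2-p)}}\|f_k\|_{L^2}\sim 2^{-3k(1/p-1/2)}\|f_k\|_{L^2},
\end{equation*}
which is controlled by the fractional added-derivative estimate, while the outer piece pulls out the weight $|x|^{-\beta}$:
\begin{equation*}
    \|f_k\chi_{|x|>2^{-k}}\|_{L^p}\leq\bigl\||x|^{-\beta}\chi_{|x|>2^{-k}}\bigr\|_{L^{2p/(2-p)}}\bigl\||x|^\beta f_k\bigr\|_{L^2}.
\end{equation*}
The first factor is finite precisely when $2\beta p/(2-p)>3$, i.e.\ under the hypothesis $p>6/(3+2\beta)$, and then equals $\sim 2^{-3k(1/p-1/2)+\beta k}$. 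Plancherel identifies $\||x|^\beta f_k\|_{L^2}$ with $\|D^\beta_\xi\hat{f}_k\|_{L^2}$ up to an absolute constant, completing \eqref{lpnormseq1}.

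For \eqref{lpnormseq2} I apply \eqref{lpnormseq1} with $\beta=1+\alpha$ (so that the hypothesis matches $p>6/(5+2\alpha)$) and substitute $\|D^{1+\alpha}_\xi\hat{f}_k\|_{L^2}\leq 2^{\gamma k-2k_+-k/2-\alpha k}\|f\|_Z$ from \eqref{Znormdef}; the exponents collapse to the claimed $-2k_++\gamma k+2k-3k/p$. For \eqref{lpnormseq3} I note that $\F^{-1}\nabla_\xi\hat{f}_k=-ixf_k$, whose Fourier transform $\nabla_\xi\hat{f}_k=\nabla\hat{f}\cdot\psi_k+\hat{f}\cdot\nabla\psi_k$ is supported on the same dyadic annulus as $\hat{f}_k$; the proof of \eqref{lpnormseq1} applies componentwise with $\beta=\alpha$. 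Plancherel identifies $\|D^\alpha_\xi\nabla_\xi\hat{f}_k\|_{L^2}$ with $\||x|^{1+\alpha}f_k\|_{L^2}\sim\|D^{1+\alpha}_\xi\hat{f}_k\|_{L^2}$, and the $Z$-norm substitution then yields the exponent $-2k_++\gamma k+k-3k/p$.

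The main obstacle is the $p<2$ case of \eqref{lpnormseq1}: one must choose the spatial cutoff at precisely $R=2^{-k}$ so that the two terms produce matching dyadic factors, and the integrability of $|x|^{-2\beta p/(2-p)}$ at infinity is exactly what forces the threshold $p>6/(3+2\beta)$. Everything else reduces to careful bookkeeping of dyadic exponents.
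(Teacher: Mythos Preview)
Your proposal is correct and follows essentially the same route as the paper: the core of \eqref{lpnormseq1} is the spatial splitting at $|x|=2^{-k}$ with H\"older on each piece and the Plancherel identification $\||x|^\beta f_k\|_{L^2}\sim\|D^\beta_\xi\hat f_k\|_{L^2}$, together with the fractional analogue of \eqref{addder} to control the inner piece; \eqref{lpnormseq2} and \eqref{lpnormseq3} then follow by taking $\beta=1+\alpha$ and $\beta=\alpha$ respectively and reading off the $Z$-norm weights. The only cosmetic difference is that you separate out the case $p\geq 2$ via Bernstein, which the paper does not bother to isolate (the proof there is written for $p\leq 2$, which is the range actually used), and you make explicit the restriction $\beta<3/2$ in the Sobolev step, which the paper leaves implicit.
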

\begin{proof}
By Hölder's inequality and Plancherel's theorem, 
\begin{align*}
    \|f_k\|_{L^p}
    &\leq\bigg(\int_{|x|\leq 2^{-k}}|f_k(x)|^pdx\bigg)^{1/p}+\bigg(\int_{|x|\geq 2^{-k}}|f_k(x)|^pdx\bigg)^{1/p}\\
    & \leq \|f_k\|_{L^2}\bigg(\int_{|x|\leq 2^{-k}}1dx\bigg)^{\frac{2-p}{2p}}+\bigg(\int_{|x|\geq 2^{-k}}|x|^{-\beta p}||x|^{\beta}f_k(x)|^pdx\bigg)^{1/p}\\
    &\lesssim 2^{-3k(1/p-1/2)}\|f_k\|_{L^2}+\||x|^{\beta}f_k(x)\|_{L^2}\bigg(\int_{|x|\geq 2^{-k}}|x|^{-\beta\frac{2p}{2-p}}dx\bigg)^{\frac{2-p}{2p}}\\
    &\lesssim 2^{-3k(1/p-1/2)}\|f_k\|_{L^2}+2^{-3k(1/p-1/2)+\beta k}\|D^{\beta}_\xi \hat{f}_k\|_{L^2},
\end{align*}
the last line requires $p>\frac{6}{3+2\beta}$ in order for the integral to converge.
Moreover, by Sobolev's inequality
\begin{align*}
    \|f_k\|_{L^2}=\|\hat{f}_k\|_{L^2}\leq \|\chi_{2^{k-2}\leq|\xi|\leq 2^{k+2}}\|_{L^{\frac{3}{\beta}}}\|\hat{f}_k\|_{L^{\frac{6}{3-2\beta}}}\lesssim 2^{\beta k}\|D_\xi^{\beta}\hat{f}_k\|_{L^2}.
\end{align*}
Hence, we have
\begin{align*}
    \|f_k\|_{L^p}
    &\lesssim 2^{-3k(1/p-1/2)+\beta k}\|D^{\beta}_\xi \hat{f}_k\|_{L^2}.
\end{align*}
Thus, \eqref{lpnormseq2} is a direct consequence by taking $\beta=1+\alpha$ and using the definition of $\|\cdot\|_Z$,
\begin{align*}
    \|f_k\|_{L^p}
    &\lesssim 2^{-3k(1/p-1/2)+(1+\alpha)k}\|D^{1+\alpha}_\xi \hat{f}_k\|_{L^2}\leq 2^{-2k_++\gamma k+2k-3k/p}\|f\|_Z.
\end{align*}
If we replace $f_k$ by $\F^{-1}\nabla_\xi\hat{f}_k$ and let $\beta=\alpha$, we can obtain \eqref{lpnormseq3}
\begin{align*}
    \|\F^{-1}\nabla_\xi\hat{f}_k\|_{L^p}
    &\lesssim 2^{-3k(1/p-1/2)+\alpha k}\|D^{\alpha}_\xi\nabla_\xi \hat{f}_k\|_{L^2}\leq 2^{-2k_++\gamma k+k-3k/p}\|f\|_Z.
\end{align*}
\end{proof}
However, the lemma only gives us a bound on $\|f_k\|_{L^p}$ for $p>1$ close to $1$. This is due to the constraint, $\alpha <1/2$, imposed on the term $\|D^{1+\alpha}\hat{f}_k\|_{L^2}$ in $Z$ norm.

\section{Preliminary Estimates}
\label{prelim}
\subsection{Sobolev and $L^p$ norms}
There are places in Sections \ref{section for fractional derivatives} and \ref{section for lse}, where we have to consider the Sobolev norms of $\hat{f}$. The $Z$ norm is not sufficient to bound the Sobolev norms. However, after Littlewood-Paley decomposition, we can find a bound on the Sobolev norms of $\hat{f}_k$, given $f$ bounded in the $H^{10}\cap Z$ space.
\begin{lem}
\label{sobolev space}
If $\|f\|_{H^{10}}+\|f\|_{Z}\leq a$, then for all $k\in\Z$
\begin{align}
    \|\hat{f}_k\|_{H^\alpha}\lesssim 2^{-2k_++\gamma k+k/2-\alpha k}a,\,\|\hat{f}_k\|_{H^1}\lesssim 2^{-2k_++\gamma k-k/2}a,\,\|\hat{f}_k\|_{H^{1+\alpha}}\lesssim 2^{-2k_++\gamma k-k/2-\alpha k}a.
\end{align}
In particular, the bootstrap assumption \eqref{b1} implies
\begin{equation}
\label{l2}
    \|f_{l,k}\|_{L^\infty_t([1,T])L^2_x}\lesssim \min\{2^{\gamma k-2k_++k/2},2^{-10k_+}\}\e_1.
\end{equation}
Under the bootstrap assumption \eqref{b1}, we also have
\begin{equation}
    \label{sobolevnorms}
    \begin{aligned}
    \|D^\alpha_\xi\hat{f}_{l,k}\|_{L^\infty_t([1,T])L^2_\xi}\leq\|\hat{f}_{l,k}\|_{L^\infty_t([1,T])H^\alpha_\xi}\lesssim 2^{-2k_++\gamma k+k/2-\alpha k}\e_1,\\
    \|\nabla_\xi\hat{f}_{l,k}\|_{L^\infty_t([1,T])L^2_\xi}\leq \|\hat{f}_{l,k}\|_{L^\infty_t([1,T])H^1_\xi}\lesssim 2^{-2k_++\gamma k-k/2}\e_1,\\
    \|D_\xi^{1+\alpha}\hat{f}_{l,k}\|_{L^\infty_t([1,T])L^2_\xi}\leq \|\hat{f}_{l,k}\|_{L^\infty_t([1,T])H^{1+\alpha}_\xi}\lesssim 2^{-2k_++\gamma k-k/2-\alpha k}\e_1.
\end{aligned}
\end{equation}
\end{lem}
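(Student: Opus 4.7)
The plan is to reduce the three bounds to two endpoint estimates on $\hat{f}_k$—an $L^2$ bound and the $D_\xi^{1+\alpha}$ bound—and then interpolate. The $D_\xi^{1+\alpha}$ endpoint is essentially free: unpacking the definition \eqref{Znormdef} of the $Z$ norm gives $\|D^{1+\alpha}_\xi\hat{f}_k\|_{L^2}\leq 2^{-2k_+ + \gamma k - k/2 - \alpha k}\|f\|_Z$, which is already the desired $H^{1+\alpha}$ estimate once the lower-order $L^2$ piece is checked to be dominated.

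For the $L^2$ endpoint I would handle high and low frequencies separately. Plancherel together with the $H^{10}$ hypothesis yields $\|\hat{f}_k\|_{L^2}=\|f_k\|_{L^2}\lesssim 2^{-10k_+}a$, which is the sharp bound for $k\geq 0$. For $k\leq 0$, since $\hat{f}_k$ is supported on $\{|\xi|\sim 2^k\}$, Sobolev's inequality on that support (this is exactly \eqref{lpnormseq1} with $p=2$ and $\beta=1+\alpha$) converts the $Z$-norm bound into
$$\|f_k\|_{L^2}\lesssim 2^{(1+\alpha)k}\|D_\xi^{1+\alpha}\hat{f}_k\|_{L^2}\lesssim 2^{-2k_+ + \gamma k + k/2}a.$$
Because $-10<-3/2+\gamma$, the first bound also implies the second at high frequency, so I obtain the uniform estimate $\|\hat{f}_k\|_{L^2}\lesssim \min\{2^{-10k_+},\,2^{-2k_+ + \gamma k + k/2}\}a$, which already contains \eqref{l2}.

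With both endpoints in hand, I would interpolate via a direct Hölder argument extending Lemma \ref{dalpha} (or via Lemma \ref{si}(a) with $s_1=0$, $s_2=1+\alpha$), which gives
$$\|D_\xi^s\hat{f}_k\|_{L^2}\leq \|\hat{f}_k\|_{L^2}^{1-s/(1+\alpha)}\|D_\xi^{1+\alpha}\hat{f}_k\|_{L^2}^{s/(1+\alpha)},\qquad 0\leq s\leq 1+\alpha.$$
Plugging in the two endpoint bounds, the $\gamma k$ and $-2k_+$ exponents are preserved and the remaining $k$-exponent collapses to $1/2-s$, yielding $\|D_\xi^s\hat{f}_k\|_{L^2}\lesssim 2^{-2k_+ + \gamma k + k/2 - sk}a$. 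Specialising to $s=\alpha,1,1+\alpha$ and summing with the $L^2$ bound—which is dominated in each case under the constraints \eqref{constant conditions1}--\eqref{constant conditions2}—gives the three claimed $H^s$ inequalities, and \eqref{sobolevnorms} is the same statement with $a=\e_1$ under the bootstrap assumption \eqref{b1}. There is no serious obstacle; the only piece of bookkeeping is verifying at $k\geq 0$ that the $H^{10}$-driven $L^2$ bound fits inside the $Z$-driven template $2^{-2k+\gamma k+k/2-sk}$, which is immediate from $\gamma\ll 1$ and $\alpha<1/2$.
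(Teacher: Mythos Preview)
Your argument is correct and follows the same overall scheme as the paper: extract the $D_\xi^{1+\alpha}$ endpoint from the $Z$ norm, obtain the $L^2$ endpoint from Sobolev on the localized support together with the $H^{10}$ bound, and interpolate. The main difference is that you interpolate in one step via H\"older between $L^2$ and $D_\xi^{1+\alpha}$, whereas the paper first obtains $\|\nabla_\xi\hat f_k\|_{L^2}$ separately by H\"older--Sobolev and then, for $k>0$, runs a Gagliardo--Nirenberg self-improvement to force $\|\hat f_k\|_{H^1}$ below the $H^{1+\alpha}$ target before summing. Your direct interpolation is cleaner and sidesteps that detour; the only place the $H^{10}$ input is genuinely needed is exactly where you flag it, namely to dominate the $L^2$ piece of $\|\hat f_k\|_{H^s}$ at $k>0$ for each $s\in\{\alpha,1,1+\alpha\}$, and the numerology $-10\leq -3/2+\gamma-s$ checks out in all three cases.
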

\begin{proof}
We start by showing for any $k\leq 0$ and $s\geq 0$,
\begin{align}
\label{sobolevspaceeq1}
    \|\hat{f}_k(\xi)\|_{H^s}\lesssim \|D^s_\xi \hat{f}_k(\xi)\|_{L^2}.
\end{align}
To prove \eqref{sobolevspaceeq1}, it suffices to show if $s_1\leq s_2$, then $\|D^{s_1}\hat{f}_k\|_{L^2}\lesssim \|D^{s_2}\hat{f}_k\|_{L^2}$ when $k\leq 0$.
\begin{align*}
    \|D^{s_1}\hat{f}_k\|_{L^2}\leq \|\chi_{2^{k-2}\leq|\xi|\leq 2^{k+1}}\|_{L^{\frac{3}{s_2-s_1}}}\|D^{s_1}\hat{f}_k\|_{L^{\frac{6}{3-2(s_2-s_1)}}}\lesssim 2^{(s_2-s_1)k}\|D^{s_2}\hat{f}_k\|_{L^2}\leq \|D^{s_2}\hat{f}_k\|_{L^2},
\end{align*}
using Sobolev's inequality with $s=s_2-s_1$ and $p=\frac{6}{3-2(s_2-s_1)}$.\\
Since $\|f\|_{H^{10}}+\|f\|_{Z}\leq a$, by the definition of $\|\cdot\|_Z$, we obtain the following $L^2$ norm bound,
\begin{equation}
\label{l2(1+a)}
    \|D_\xi^{1+\alpha}\hat{f}_{l,k}(t,\xi)\|_{L^2_\xi}\lesssim 2^{\gamma k-2k_+-k/2-\alpha k}a.
\end{equation}
Then, Hölder's and Sobolev's inequality imply
\begin{equation}
\label{l21}
    \|f_{l,k}(t,x)\|_{L^2}
    \lesssim \|\chi_{2^{k-2}\leq |\xi|\leq 2^{k+1}}\|_{L^{\frac{3}{1+\alpha}}_\xi}\|f_{l,k}(t,\xi)\|_{L^\frac{6}{1-2\alpha}_\xi}
    \lesssim 2^{k+\alpha k} \|D^{1+\alpha}\hat{f}_{l,k}(t,\xi)\|_{L^2_\xi}
    \lesssim 2^{\gamma k-2k_++k/2}a
\end{equation}
and
\begin{equation}
\label{l2first}
    \|\nabla_\xi\hat{f}_{l,k}(t,\xi)\|_{L^2_\xi}
    \lesssim \|\chi_{2^{k-2}\leq |\xi|\leq 2^{k+1}}\|_{L^{\frac{3}{\alpha}}_\xi}\|\nabla_\xi\hat{f}_{l,k}(t,\xi)\|_{L^\frac{6}{3-2\alpha}_\xi}
    \lesssim 2^{\alpha k} \|D^{1+\alpha}\hat{f}_{l,k}(t,\xi)\|_{L^2_\xi}
    \lesssim 2^{\gamma k-2k_+-k/2}a.
\end{equation}
Since $\|e^{ic_lt\la}f_{l,k}(t,x)\|_{L^2_x}\lesssim 2^{-10k}\||\xi|^{10}e^{-ic_lt|\xi|^2}\hat{f}_l(t,\xi)\psi_k(\xi)\|_{L^2_\xi}\lesssim 2^{-10k}\|e^{ic_lt\la}f_l(t,x)\|_{H^{10}_x},$
\begin{equation}
\label{l22}
    \|f_{l,k}(t,x)\|_{L^2_x}=\|e^{ic_lt\la}f_{l,k}(t,x)\|_{L^2_x}\lesssim 2^{-10k}\| e^{ic_lt\la}f_l(t,x)\|_{H^{10}_x}\leq 2^{-10k}a.
\end{equation}
Combine \eqref{l21} and \eqref{l22}, we get
\begin{equation}
    \|f_{l,k}(t,x)\|_{L^2_x}\lesssim \min\{2^{\gamma k-2k_++k/2},2^{-10k_+}\}a,
\end{equation}
and
\begin{align*}
    \|\hat{f}_k\|_{H^1}
    \leq \|f_k\|_{L^2}+\|\nabla_\xi\hat{f}_k\|_{L^2}\lesssim \min\{2^{\gamma k-2k_++k/2},2^{-10k_+}\}a+2^{\gamma k-2k_+-k/2}a\lesssim 2^{\gamma k-2k_+-k/2}a.
\end{align*}
Then we discuss the bound on $\|\hat{f}_k\|_{H^{1+\alpha}}$ in two cases. 

When $k\leq 0$,
\begin{align*}
    \|\hat{f}_k\|_{H^{1+\alpha}}\lesssim \|D^{1+\alpha}_\xi\hat{f}_k\|_{L^2}\lesssim 2^{\gamma k-2k_+-k/2-\alpha k}a,
\end{align*}
by the definition of $\|\cdot\|_Z$. 

When $k>0$, Gagliardo-Nirenberg inequality in \cite{Brezis} gives,
\begin{align*}
    \|\hat{f}_k\|_{H^1}\lesssim \|\hat{f}_k\|_{L^2}^{\frac{\alpha}{1+\alpha}}\|\hat{f}_k\|_{H^{1+\alpha}}^{\frac{1}{1+\alpha}},
\end{align*}
so
\begin{align*}
    \|\hat{f}_k\|_{H^1}^{1+\alpha}\lesssim \|\hat{f}_k\|_{L^2}^{\alpha}\|\hat{f}_k\|_{H^{1+\alpha}}\lesssim 2^{-10\alpha k}a^\alpha (\|\hat{f}_k\|_{H^1}+\|D^{1+\alpha}\hat{f}\|_{L^2})\lesssim 2^{-10\alpha k+\gamma k-5k/2}a^{1+\alpha}.
\end{align*}
Since $\alpha\in(1/3,1/2)$,
\begin{align*}
    \|\hat{f}_k\|_{H^1}\lesssim 2^{-10k+\gamma k+\frac{15k}{2(1+\alpha)}}a\leq 2^{\gamma k-5k/2-\alpha k}a\leq 2^{\gamma k-2k_+-k/2-\alpha k}a.
\end{align*}
\\
Thus,
\begin{align*}
    \|\hat{f}_k\|_{H^{1+\alpha}}\lesssim \|\hat{f}_k\|_{H^1}+\|D^{1+\alpha}_\xi\hat{f}_k\|_{L^2}\lesssim 2^{\gamma k-2k_+-k/2-\alpha k}a,
\end{align*}
and by Gagliardo-Nirenberg inequality
\begin{align*}
    \|\hat{f}_k\|_{H^\alpha}\lesssim \|\hat{f}\|_{L^2}^{1-\alpha}\|\hat{f}_k\|_{H^1}^\alpha
    \lesssim 2^{\gamma k -2k_++k/2-\alpha k}a.
\end{align*}
\end{proof}
In the next lemma, we combine the results in Lemma \ref{lpnorms} and \ref{sobolev space} to derive some bounds on the $L^p$ norms of the solution $u_{l,k}$. Later these bounds will be used when we perform the $L^2\times L^\infty\rightarrow L^2$, $L^4\times L^4\rightarrow L^2$, $L^3\times L^6\rightarrow L^2$ bilinear estimates.
\begin{lem}
\label{estimates}
Under the assumption \eqref{b1}, for any $t\in[2^{M-1},2^M]$
\begin{equation}
\label{linfinity}
    \|e^{ic_lt\la}f_{l,k}\|_{L^\infty}\lesssim \min\{2^{-3M/2+\gamma M/8-2k_++\gamma k-k+\gamma k/4}, 2^{\gamma k-2k_++2k}\}\e_1,
\end{equation}
\begin{equation}
    \label{linfinity2}
    \|e^{ic_lt\la}f_{l,k}\|_{L^\infty}\lesssim 2^{-M/2+3k/2}\|\nabla_\xi\hat{f}_{l,k}\|_{L^2}\lesssim 2^{-M/2-2k_++\gamma k+k}\e_1,
\end{equation}
\begin{equation}
\label{l3nablaf}
    \|e^{ic_lt\la}\F^{-1}\nabla_\xi\hat{f}_{l,k}\|_{L^3}\lesssim 2^{-M/2+\gamma M/8-2k_++\gamma k-k+\gamma k/4}\e_1,
\end{equation}
and
\begin{equation}
\label{l4}
    \|e^{ic_lt\la}f_{l,k}\|_{L^4}\lesssim 2^{-3M/4+k/4}\|\nabla_\xi\hat{f}_{l,k}\|_{L^2}\lesssim 2^{-3M/4-2k_++\gamma k-k/4}\e_1.
\end{equation}
\end{lem}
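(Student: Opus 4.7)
\medskip
\noindent\textbf{Proof plan.} The four estimates share a common strategy: combine the dispersive estimate of Lemma \ref{op} with Bernstein's inequality (Lemma \ref{bernstein}) and the $L^p$ bounds from Lemmas \ref{lpnorms} and \ref{sobolev space}, varying only the choice of the intermediate Lebesgue exponent. The second (easy) bound in \eqref{linfinity}, namely $2^{\gamma k - 2k_+ + 2k}\e_1$, follows at once from Bernstein applied to $e^{ic_l t \la}f_{l,k}$ (which is $L^2$-isometric to $f_{l,k}$) combined with \eqref{l2}.

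The first bound in \eqref{linfinity} is the technical core. I would select $q$ with $1/q = (5+2\alpha)/6 - \delta_0$ for a small $\delta_0 > 0$; the strict inequality $\alpha > 1/2 - \gamma/8$ in \eqref{constant conditions2} guarantees such a $\delta_0$ satisfying the hypothesis $q > 6/(5+2\alpha)$ of Lemma \ref{lpnorms} \eqref{lpnormseq2} exists. Then the plan is to bound $\|f_{l,k}\|_{L^q}$ via \eqref{lpnormseq2}, pass to $L^{q'}$ by Lemma \ref{op} (producing decay $t^{-(1+\alpha)+3\delta_0}$), and apply Bernstein $L^{q'}\to L^\infty$ (contributing $2^{3k/q'}$). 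Assembling the exponents yields $2^{-2k_++\gamma k - 2\alpha k + 6k\delta_0}\cdot t^{-(1+\alpha)+3\delta_0}\e_1$, and the constraints on $\alpha, \gamma$ are precisely what make $\delta_0$ fit so that this matches the target $2^{-3M/2+\gamma M/8-2k_++\gamma k-k+\gamma k/4}\e_1$ after writing $t\sim 2^M$.

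For \eqref{linfinity2}, I would apply dispersive $L^{3/2}\to L^3$ (giving $t^{-1/2}$), then Bernstein $L^3\to L^\infty$ (contributing $2^k$), reducing the problem to bounding $\|f_{l,k}\|_{L^{3/2}}$. Splitting $f_{l,k} = f_{l,k}\chi_{|x|\leq R} + f_{l,k}\chi_{|x|>R}$ and applying H\"older (with the weight $|x|^{-1}$ on the outer piece, using $\||x|^{-1}\chi_{|x|>R}\|_{L^6}\lesssim R^{-1/2}$) gives $R^{1/2}\|f_{l,k}\|_{L^2} + R^{-1/2}\|xf_{l,k}\|_{L^2}$. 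Optimizing $R$ yields $\|f_{l,k}\|_{L^{3/2}}\lesssim \|f_{l,k}\|_{L^2}^{1/2}\|xf_{l,k}\|_{L^2}^{1/2}$, and Plancherel's identity $\|xf_{l,k}\|_{L^2} = \|\nabla_\xi\hat{f}_{l,k}\|_{L^2}$ together with \eqref{addder} finishes the first inequality; the second then uses \eqref{sobolevnorms}. The estimate \eqref{l4} is completely analogous, with dispersive $L^{4/3}\to L^4$ giving $t^{-3/4}$ and the same radial splitting at the $L^4$ level (so $\||x|^{-1}\chi_{|x|>R}\|_{L^4}\lesssim R^{-1/4}$) yielding $\|f_{l,k}\|_{L^{4/3}}\lesssim \|f_{l,k}\|_{L^2}^{1/4}\|xf_{l,k}\|_{L^2}^{3/4}\lesssim 2^{k/4}\|\nabla_\xi\hat{f}_{l,k}\|_{L^2}$.

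Finally, \eqref{l3nablaf} parallels the first bound of \eqref{linfinity}: $\F^{-1}\nabla_\xi\hat{f}_{l,k}$ remains frequency-localized in $\{|\xi|\sim 2^k\}$ since both $\psi_k$ and $\nabla\psi_k$ share this annulus, so I can take $p$ slightly above $6/(3+2\alpha)$ (admissible by \eqref{constant conditions2}), apply \eqref{lpnormseq3}, dispersive $L^p\to L^{p'}$, and Bernstein $L^{p'}\to L^3$ to reach the target. The main delicate point throughout is this endpoint bookkeeping in \eqref{linfinity} (first bound) and \eqref{l3nablaf}: the restriction $\alpha < 1/2$ forbids reaching $L^1\to L^\infty$ directly through Lemma \ref{lpnorms}, and the narrow windows on $(\alpha,\gamma)$ imposed by \eqref{constant conditions1}, \eqref{constant conditions2} are tailored exactly so that a valid $\delta_0$ (resp.\ $\delta_1$) exists in the gap between the hypothesis of Lemma \ref{lpnorms} and the target exponents.
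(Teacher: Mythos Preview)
Your proposal is correct and follows essentially the same strategy as the paper: Bernstein to an intermediate $L^p$, dispersive decay via Lemma~\ref{op}, then the $L^p$ bounds of Lemma~\ref{lpnorms}. The paper makes the explicit choice $p=24/\gamma$ (resp.\ $24/(8+\gamma)$) for the intermediate exponent in \eqref{linfinity} and \eqref{l3nablaf} rather than your abstract $\delta_0$-parametrization, and for \eqref{linfinity2} and \eqref{l4} it simply cites \eqref{lpnormseq1} with $\beta=1$ and $p=3/2,\,4/3$ rather than reproving those bounds via the radial splitting you describe---but that splitting is exactly how Lemma~\ref{lpnorms} is established, so the arguments coincide.
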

\begin{proof}
Using Bernstein's inequality in Lemma \ref{bernstein}, we have
\begin{align*}
    \|e^{ic_lt\la}f_{l,k}\|_{L^\infty}
    &\lesssim 2^{\gamma k/8}\|e^{ic_lt\la}f_{l,k}\|_{L^{24/\gamma}}.
\end{align*}
Then by Lemma \ref{op},
\begin{align*}
    \|e^{ic_lt\la}f_{l,k}\|_{L^{24/\gamma}}
    \lesssim t^{-3/2(1-2\gamma/24)}\|f_{l,k}\|_{L^{\frac{24}{24-\gamma}}}
    \lesssim 2^{-3M/2+\gamma M/8-2k_++\gamma k-k+\gamma k/8}\e_1,
\end{align*}
since $\alpha>1/2-\gamma/8$ allows us to use Lemma \ref{lpnorms}.\\
Another way to control $\|e^{ic_lt\la}f_{l,k}\|_{L^\infty}$ is to use Hölder's inequality directly,
\begin{align*}
    \|e^{ic_lt\la}f_{l,k}\|_{L^\infty}
    &=\bigg\|\int e^{ix\cdot\xi}e^{-ic_lt|\xi|^2}\hat{f}_l(\xi)\psi_k(\xi)d\xi\bigg\|_{L^\infty}\\
    &\leq \int |\hat{f}_l(\xi)\psi_k(\xi)|d\xi\\
    &\leq \|\hat{f}_l(\xi)\psi_k(\xi)\|_{L^2}\|\chi_{2^{k-2}\leq|\xi|\leq 2^k}\|_{L^2}
    \leq 2^{\gamma k-2k_++2k}\e_1.
\end{align*}
Combining the two results above gives \eqref{linfinity}.\\
If we use Bernstein's inequality with a different parameter, Lemma \ref{op}, and \eqref{lpnormseq1}, we can also get the following bound
\begin{equation*}
    \|e^{ic_lt\la}f_{l,k}\|_{L^\infty}\lesssim 2^{k}\|e^{ic_lt\la}f_{l,k}\|_{L^3}\lesssim 2^{k-M/2}\|f_{l,k}\|_{L^{3/2}}\lesssim 2^{-M/2+3k/2}\|\nabla_\xi\hat{f}_{l,k}\|_{L^2}.
\end{equation*}
We can obtain \eqref{l3nablaf} in a similar fashion using Bernstein's inequality, Lemma \ref{op}, and Lemma \ref{lpnorms},
\begin{align*}
    \|e^{ic_lt\la}\F^{-1}\nabla_\xi\hat{f}_{l,k}\|_{L^3}
    &\lesssim 2^{\gamma k/8}\|e^{ic_lt\la}\F^{-1}\nabla_\xi\hat{f}_{l,k}\|_{L^{\frac{24}{8+\gamma}}}\\
    &\lesssim 2^{\gamma k/8-M/2+\gamma M/8}\|\F^{-1}\nabla_\xi\hat{f}_{l,k}\|_{L^{\frac{24}{16-\gamma}}}\lesssim 2^{-M/2+\gamma M/8-k+\gamma k/4 -2k_++\gamma k}\e_1.
\end{align*}
Next, Lemma \ref{op} and Lemma \ref{lpnorms} give
\begin{align*}
    \|e^{ic_lt\la}f_{l,k}\|_{L^4}\lesssim t^{-3/4}\|f_{l,k}\|_{L^{4/3}}\lesssim 2^{-3M/4+k/4}\|\nabla_\xi\hat{f}_{l,k}\|_{L^2}\lesssim 2^{-3M/4-2k_++\gamma k-k/4}\e_1,
\end{align*}
for $t\in[2^{M-1},2^M]$.
\end{proof}

\subsection{Time Derivatives Estimates}
An important idea in this paper is to make use of the oscillation of $e^{it\phi(\xi,\eta)}$, where $\phi(\xi,\eta)=c_l|\xi|^2-c_m|\xi-\eta|^2-c_n|\eta|^2$ is called the phase function. To capture the oscillation, we will integrate by parts using the identity 
$$(\partial_t+\frac{P(\xi,\eta)}{t}\cdot\nabla_\eta)e^{it\phi(\xi,\eta)}=iZ(\xi,\eta)e^{it\phi(\xi,\eta)}$$
where $P(\xi,\eta)$ and $Z(\xi,\eta)$ are polynomials. This introduces time derivatives inside the Duhamel's term. Hence, we shall derive bounds for the time derivatives of $\hat{f}_{l,k}(t,\xi)$, $D^\alpha_\xi\hat{f}_{l,k}(t,\xi)$, and $\nabla_\xi\hat{f}_{l,k}(t,\xi)$ in this section.

We will use the fact that $u_l(t,x)=e^{ic_lt\la}f_l(t,x)$ is the solution to \eqref{pde} such that $f_l\in\mathcal{C}([1,T];H^{10}\cap Z)$ to find bounds for the time derivatives of $\hat{f}_{l,k}(t,x)$. In addition, we will do a further Littlewood-Paley decomposition on $f_{m}$ and $f_n$ in the nonlinear term.
\begin{lem}
\label{timel2}
Under the assumption \eqref{b1}, the following estimate holds for any $t\in[2^{M-1},2^M]$
\begin{align}
\label{l2timef}
    \|\partial_t\hat{f}_{l,k}(t,\xi)\|_{L^2}\lesssim \min\{2^{-M-2k_++\gamma k+k/2},2^{-10k_+-(1+\gamma/2)M}\}\e_1^2
\end{align}
and
\begin{align}
\label{l6timef}
    \|e^{ic_lt\la}\partial_t f_{l,k}(t,\xi)\|_{L^6}\lesssim 2^{-2M-2k_++\gamma k-k/2}\e_1^2.
\end{align}
\end{lem}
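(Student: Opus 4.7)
The plan is to use the evolution equation \eqref{pde} to rewrite $\partial_t\hat{f}_{l,k}$ as a frequency-localized bilinear expression in the unknowns, after which the two bounds follow from the $L^p$ bilinear estimates already developed. Writing $u_l = e^{ic_l t\la} f_l$ in \eqref{pde} and applying $e^{-ic_l t\la}$ gives
$$\partial_t f_l = e^{-ic_l t\la}\Big[\sum_{c_m + c_n \neq 0} A_{lmn}\, u_m u_n + \sum_{c_m + c_n = 0} A_{lmn}\, Q(u_m, u_n)\Big],$$
so $e^{ic_l t\la}\partial_t f_{l,k}$ is the Littlewood--Paley projection onto frequencies $\sim 2^k$ of this bracket, and by Plancherel $\|\partial_t\hat{f}_{l,k}\|_{L^2_\xi}$ equals its $L^2_x$ norm. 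I would further decompose $u_m = \sum_{k_1}u_{m,k_1}$ and $u_n = \sum_{k_2}u_{n,k_2}$, so that only triples $(k,k_1,k_2)\in \chi^1_k \cup \chi^2_k \cup \chi^3_k$ contribute. The $u_m u_n$ summands carry a trivial multiplier, handled by Lemma \ref{chi+eta}, while the $Q$ summands carry a multiplier whose $L^1$ size is controlled by \eqref{q}, supplying the $2^{\e k_-}$ factor needed for low-frequency summability.

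For the $L^2$ estimate \eqref{l2timef} I would invoke the $L^2 \times L^\infty \to L^2$ case of Lemma \ref{bilinear}, placing the frequency-$k_1$ factor in $L^2$ via \eqref{l2} and the frequency-$k_2$ factor in $L^\infty$ via \eqref{linfinity}. The key step is to sum $\|u_{n,k_2}\|_{L^\infty}$ over the low-frequency range by balancing the two competing bounds inside the $\min\{\cdot,\cdot\}$ of \eqref{linfinity}: the first bound is used for $k_2 \gtrsim -M/2$ and the second for $k_2 \lesssim -M/2$, and they match at the crossover $k_2\sim -M/2$ to give $\sum_{k_2\le k-a}\|u_{n,k_2}\|_{L^\infty} \lesssim 2^{-(1+\gamma/2)M}\e_1$. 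Pairing this with the $L^2$ bound $2^{\gamma k - 2k_+ + k/2}\e_1$ for $k_1\sim k$ (with room to spare in $t$) yields the first form of \eqref{l2timef}, while substituting the $H^{10}$ portion $2^{-10k_+}\e_1$ of \eqref{l2} yields the second form. The regimes $\chi^2_k$ and $\chi^3_k$ (where $k_1\sim k_2\ge k$) are treated analogously, with the $2^{-10k_{1+}}$ gain replacing $2^{-2k_+}$.

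For the $L^6$ estimate \eqref{l6timef} I would apply the $L^\infty \times L^6 \to L^6$ case of Lemma \ref{bilinear}. The $L^\infty$ factor at frequency $k_1$ is controlled by \eqref{linfinity}, while the $L^6$ factor at frequency $k_2$ is estimated in two pieces glued at a crossover: for $k_2$ not too small, by the dispersion estimate $\|e^{ic_n t\la}f_{n,k_2}\|_{L^6} \lesssim t^{-1}\|f_{n,k_2}\|_{L^{6/5}}$ of Lemma \ref{op} combined with \eqref{lpnormseq2} of Lemma \ref{lpnorms}; for very small $k_2$, by Bernstein's inequality applied to the $L^2$ bound in \eqref{l2}. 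The product then comfortably beats the target $2^{-2M - 2k_+ + \gamma k - k/2}\e_1^2$.

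The main obstacle is securing convergence of the sums over $k_2$ at low frequency: a single form of either bound in \eqref{linfinity} is non-summable on its own, since the first diverges as $k_2 \to -\infty$ while the second is too coarse near $k_2 = 0$. The decisive device, which produces the effective $t^{-(1+\gamma/2)}$ decay of the summed $L^\infty$ norm, is to split at the scale $k_2 \sim -M/2$ and rely on the $\gamma$-gain built into the $Z$ norm to close the summation; the same trick, together with the $2^{\e k_-}$ control supplied by \eqref{q}, handles the $Q$-nonlinearity.
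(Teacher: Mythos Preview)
Your overall plan and the treatment of the second bound in \eqref{l2timef} and of \eqref{l6timef} are essentially what the paper does (the paper uses $L^4\times L^4$ and $L^6\times L^\infty$ rather than $L^2\times L^\infty$ and $L^\infty\times L^6$, but these are interchangeable here). The crossover argument you describe is exactly how the summed $L^\infty$ decay $2^{-(1+\gamma/2)M}$ is obtained.

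There is, however, a genuine gap in your argument for the \emph{first} bound $2^{-M-2k_++\gamma k+k/2}$ in the high--high $\to$ low regime $\chi^2_k$ when $c_m+c_n\neq 0$. You say this case is ``treated analogously, with the $2^{-10k_{1,+}}$ gain replacing $2^{-2k_+}$,'' but that gain is vacuous for $k<k_1<0$. In $\chi^2_k$ one has $k_1\sim k_2>k$, and the $L^2\times L^\infty$ bound gives a term depending on $k_1$; summing over $k_1>k$ (even with the crossover in \eqref{linfinity}) yields only $\sim 2^{-(5/4+\gamma)M}$ for $k<-M/2$, which exceeds the target $2^{-M+\gamma k+k/2}$ by as much as $2^{M/4}$ when $k\sim -M$. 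Even if you upgrade to $L^4\times L^4$ together with the output-frequency duality bound of Lemma~\ref{dualitycomp} (which you do not mention but which is indispensable for small $2^k$), a residual loss of order $2^{\gamma M}$ persists in the range $-M\lesssim k\lesssim -2M/3$. The paper closes this by integrating by parts in $\eta$ via
\[
e^{it\phi}=\sum_m\frac{\partial_{\eta_m}\phi}{it|\nabla_\eta\phi|^2}\,\partial_{\eta_m}e^{it\phi},
\]
exploiting the lower bound $|\nabla_\eta\phi|\gtrsim 2^{k_1}$ available in $\chi^2_k$; this produces an extra factor $2^{-M-2k_1}$ that makes the $k_1$-sum converge with the correct $2^{\gamma k+k/2}$ dependence. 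This step is essential and is missing from your sketch. Note that for the $Q$-nonlinearity $F^2$ no integration by parts is needed, because the extra $2^{\e k_-}$ from \eqref{q} (with $\e>3\gamma$) already absorbs the loss---this is why the issue is specific to the $c_m+c_n\neq 0$ terms.
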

\begin{proof}
Assuming \eqref{b1}, the quadratic nonlinearity allows us to write the time derivative of the profile $f_l=e^{-ic_lt\la}u_l$ as bilinear forms,
\begin{equation*}
    \begin{aligned}
       \partial_t\hat{f}_{l}(t,\xi)=&ic_l|\xi|^2e^{ic_lt|\xi|^2}\hat{u}_l(t,\xi)+e^{ic_lt|\xi|^2}\partial_t\hat{u}_l(t,\xi)\\
        =&\sum_{\substack{c_m+c_n\neq 0\\m\geq n}}A_{lmn}\int_{\R^3}e^{ic_lt|\xi|^2}\hat{u}_m(t,\xi-\eta)\hat{u}_n(t,\eta)d\eta\\
        &+\sum_{\substack{c_m+c_n= 0\\m\geq n}}A_{lmn}\int_{\R^3}e^{ic_lt|\xi|^2}q(\xi-\eta,\eta)\hat{u}_m(t,\xi-\eta)\hat{u}_n(t,\eta)d\eta\\
        =&\sum_{\substack{c_m+c_n\neq 0\\m\geq n}}A_{lmn}\int_{\R^3}e^{it(c_l|\xi|^2-c_m|\xi-\eta|^2-c_n|\eta|^2)}\hat{f}_m(t,\xi-\eta)\hat{f}_n(t,\eta)d\eta\\
        &+\sum_{\substack{c_m+c_n= 0\\m\geq n}}A_{lmn}\int_{\R^3}e^{it(c_l|\xi|^2-c_m|\xi|^2+2c_m\xi\cdot\eta)}q(\xi-\eta,\eta)\hat{f}_m(t,\xi-\eta)\hat{f}_n(t,\eta)d\eta.
    \end{aligned}
\end{equation*}
Next, we perform dyadic decomposition on the frequency spaces of $f_m$ and $f_n$,
\begin{equation*}
    \begin{aligned}
        \partial_t\hat{f}_l(t,\xi)
        =&\sum_{\substack{c_m+c_n\neq 0\\m\geq n}}A_{lmn}\sum_{k_1,k_2\in\Z}\int_{\R^3}e^{it(c_l|\xi|^2-c_m|\xi-\eta|^2-c_n|\eta|^2)}\hat{f}_{m,k_1}(t,\xi-\eta)\hat{f}_{n,k_2}(t,\eta)d\eta \\
        &+\sum_{\substack{c_m+c_n= 0\\m\geq n}}A_{lmn}\sum_{k_1,k_2\in\Z}\int_{\R^3}e^{it(c_l|\xi|^2-c_m|\xi|^2+2c_m\xi\cdot\eta)}q(\xi-\eta,\eta)\hat{f}_{m,k_1}(t,\xi-\eta)\hat{f}_{n,k_2}(t,\eta)d\eta.
    \end{aligned}
\end{equation*}
\label{key}
By a change of variable with $\xi-\eta=\zeta$, we get
\begin{equation*}
\begin{aligned}
    &\int_{\R^3}e^{it(c_l|\xi|^2-c_m|\xi-\eta|^2-c_n|\eta|^2)}\hat{f}_{m,k_1}(t,\xi-\eta)\hat{f}_{n,k_2}(t,\eta)d\eta\\
    =& \int_{\R^3}e^{it(c_l|\xi|^2-c_m|\zeta|^2-c_n|\xi-\zeta|^2)}\hat{f}_{m,k_1}(t,\zeta)\hat{f}_{n,k_2}(t,\xi-\zeta)d\zeta
\end{aligned}
\end{equation*}
and
\begin{equation*}
\begin{aligned}
    &\int_{\R^3}e^{it(c_l|\xi|^2-c_m|\xi|^2+2c_m\xi\cdot\eta)}q(\xi-\eta,\eta)\hat{f}_{m,k_1}(t,\xi-\eta)\hat{f}_{n,k_2}(t,\eta)d\eta\\
    =& \int_{\R^3}e^{it(c_l|\xi|^2+c_m|\xi|^2-2c_m\xi\cdot\zeta)}q(\zeta,\xi-\eta)\hat{f}_{m,k_1}(t,\zeta)\hat{f}_{n,k_2}(t,\xi-\zeta)d\zeta
\end{aligned}
\end{equation*}
The symmetry observed above allows us to reduce the problem to the cases when $k_1\geq k_2$,
\begin{align*}
    \partial_t\hat{f}_l(t,\xi)
    =&\sum_{\substack{c_m+c_n\neq 0\\m\geq n}}A_{lmn}\bigg(\sum_{k_1\geq k_2}\int_{\R^3}e^{it(c_l|\xi|^2-c_m|\xi-\eta|^2-c_n|\eta|^2)}\hat{f}_{m,k_1}(t,\xi-\eta)\hat{f}_{n,k_2}(t,\eta)d\eta\\
    &\qquad\qquad+\sum_{k_1<k_2}\int_{\R^3}e^{it(c_l|\xi|^2-c_m|\xi-\eta|^2-c_n|\eta|^2)}\hat{f}_{m,k_1}(t,\xi-\eta)\hat{f}_{n,k_2}(t,\eta)d\eta\bigg)\\
    &+\sum_{\substack{c_m+c_n= 0\\m\geq n}}A_{lmn}\bigg(\sum_{k_1\geq k_2}\int_{\R^3}e^{it(c_l|\xi|^2-c_m|\xi|^2+2c_m\xi\cdot\eta)}q(\xi-\eta,\eta)\hat{f}_{m,k_1}(t,\xi-\eta)\hat{f}_{n,k_2}(t,\eta)d\eta\\
    &\qquad\qquad+\sum_{k_1<k_2}\int_{\R^3}e^{it(c_l|\xi|^2-c_m|\xi|^2+2c_m\xi\cdot\eta)}q(\xi-\eta,\eta)\hat{f}_{m,k_1}(t,\xi-\eta)\hat{f}_{n,k_2}(t,\eta)d\eta \bigg)\\
    =&\sum_{c_m+c_n\neq 0}A_{lmn}\sum_{k_1\geq k_2}\int_{\R^3}e^{it(c_l|\xi|^2-c_m|\xi-\eta|^2-c_n|\eta|^2)}\hat{f}_{m,k_1}(t,\xi-\eta)\hat{f}_{n,k_2}(t,\eta)d\eta \\
    &+\sum_{c_m+c_n= 0}A_{lmn}\sum_{k_1\geq k_2}\int_{\R^3}e^{it(c_l|\xi|^2-c_m|\xi|^2+2c_m\xi\cdot\eta)}q(\xi-\eta,\eta)\hat{f}_{m,k_1}(t,\xi-\eta)\hat{f}_{n,k_2}(t,\eta)d\eta .
\end{align*}
Truncating the difference by $\psi_k(\xi)$ and taking
\begin{equation}
    \label{adef}
    a=\max_{\substack{c_m+c_n\neq 0\\c_l\neq c_m}}\{\big|2\log_2|c_l|\big|+3,2-\log_2|c_m/c_n+1|, 10-\log_2|c_m/c_l-1|,3-\log_2(|c_l-c_m|/|c_n|)\},
\end{equation}
we have
\begin{align*}
    \partial_t\hat{f}_{l,k}(t,\xi)
    =&\sum_{c_m+c_n\neq 0}A_{lmn}\sum_{k_1\geq k_2}\int_{\R^3}e^{it(c_l|\xi|^2-c_m|\xi-\eta|^2-c_n|\eta|^2)}\hat{f}_{m,k_1}(t,\xi-\eta)\hat{f}_{n,k_2}(t,\eta)\psi_k(\xi)d\eta \\
    &+\sum_{c_m+c_n= 0}A_{lmn}\sum_{k_1\geq k_2}\int_{\R^3}e^{it(c_l|\xi|^2-c_m|\xi|^2+2c_m\xi\cdot\eta)}q(\xi-\eta,\eta)\hat{f}_{m,k_1}(t,\xi-\eta)\hat{f}_{n,k_2}(t,\eta)d\eta \\
    =&\sum_{\substack{c_m+c_n\neq 0\\m\geq n}}A_{lmn}\sum_{|k_1-k_2|< a}\int_{\R^3}e^{it(c_l|\xi|^2-c_m|\xi-\eta|^2-c_n|\eta|^2)}\hat{f}_{m,k_1}(t,\xi-\eta)\hat{f}_{n,k_2}(t,\eta)\psi_k(\xi)d\eta \\
    &+\sum_{\substack{c_m+c_n= 0\\m\geq n}}A_{lmn}\sum_{|k_1-k_2|< a}\int_{\R^3}e^{it(c_l|\xi|^2-c_m|\xi|^2+2c_m\xi\cdot\eta)}q(\xi-\eta,\eta)\hat{f}_{m,k_1}(t,\xi-\eta)\hat{f}_{n,k_2}(t,\eta)\psi_k(\xi)d\eta \\
    &+\sum_{c_m+c_n\neq 0}A_{lmn}\sum_{k_2 \leq k_1-a}\int_{\R^3}e^{it(c_l|\xi|^2-c_m|\xi-\eta|^2-c_n|\eta|^2)}\hat{f}_{m,k_1}(t,\xi-\eta)\hat{f}_{n,k_2}(t,\eta)\psi_k(\xi)d\eta \\
    &+\sum_{c_m+c_n= 0}A_{lmn}\sum_{k_2\leq k_1-a}\int_{\R^3}e^{it(c_l|\xi|^2-c_m|\xi|^2+2c_m\xi\cdot\eta)}q(\xi-\eta,\eta)\hat{f}_{m,k_1}(t,\xi-\eta)\hat{f}_{n,k_2}(t,\eta)\psi_k(\xi)d\eta \\
    =&\sum_{\substack{c_m+c_n\neq 0\\m\geq n}}A_{lmn}\sum_{\substack{|k_1-k_2|< a\\k\leq k_1+a+2}}\int_{\R^3}e^{it(c_l|\xi|^2-c_m|\xi-\eta|^2-c_n|\eta|^2)}\hat{f}_{m,k_1}(t,\xi-\eta)\hat{f}_{n,k_2}(t,\eta)\psi_k(\xi)d\eta \\
    & + \sum_{\substack{c_m+c_n= 0\\m\geq n}}A_{lmn}\sum_{\substack{|k_1-k_2|< a\\k\leq k_1+a+2}}\int_{\R^3}e^{it(c_l|\xi|^2-c_m|\xi|^2+2c_m\xi\cdot\eta)}q(\xi-\eta,\eta)\hat{f}_{m,k_1}(t,\xi-\eta)\hat{f}_{n,k_2}(t,\eta)\psi_k(\xi)d\eta \\
    &+\sum_{c_m+c_n\neq 0}A_{lmn}\sum_{\substack{k_2+a\leq k_1\\|k-k_1|\leq 3}}\int_{\R^3}e^{it(c_l|\xi|^2-c_m|\xi-\eta|^2-c_n|\eta|^2)}\hat{f}_{m,k_1}(t,\xi-\eta)\hat{f}_{n,k_2}(t,\eta)\psi_k(\xi)d\eta \\
    &+\sum_{c_m+c_n= 0}A_{lmn}\sum_{\substack{k_2+a\leq k_1\\|k-k_1|\leq 3}}\int_{\R^3}e^{it(c_l|\xi|^2-c_m|\xi|^2+2c_m\xi\cdot\eta)}q(\xi-\eta,\eta)\hat{f}_{m,k_1}(t,\xi-\eta)\hat{f}_{n,k_2}(t,\eta)\psi_k(\xi)d\eta,
\end{align*}
where $\chi^1_k=\{(k_1,k_2):|k-k_1|\leq 3,k_2\leq k_1-a\}$,
$\chi^2_k=\{(k_1,k_2):|k_1-k_2|< a,k< k_1-a-2\}$ and $\chi^3_k=\{(k_1,k_2):|k_1-k_2|< a,|k-k_1|\leq a+2\}$.

Thus, we have
\begin{equation}
\label{partialtf}
    \begin{aligned}
    \partial_t\hat{f}_{l,k}(t,\xi)
    =&\sum_{c_m+c_n\neq 0}A_{lmn}\sum_{(k_1,k_2)\in\chi_k^1} F^1_{k,k_1,k_2}+\sum_{\substack{c_m+c_n\neq 0\\m\geq n}}A_{lmn}\sum_{(k_1,k_2)\in\chi_k^2\cup\chi_k^3} F^1_{k,k_1,k_2}\\
    &+\sum_{c_m+c_n= 0}A_{lmn}\sum_{(k_1,k_2)\in\chi^1_k} F^2_{k,k_1,k_2}+\sum_{\substack{c_m+c_n= 0\\m\geq n}}A_{lmn}\sum_{(k_1,k_2)\in\chi^2_k\cup\chi^3_k} F^2_{k,k_1,k_2},
    \end{aligned}
\end{equation}
where
\begin{align}
\label{partialtf1}
    F^1_{k,k_1,k_2}
    =&\int_{\R^3}e^{it(c_l|\xi|^2-c_m|\xi-\eta|^2-c_n|\eta|^2)}\hat{f}_{m,k_1}(t,\xi-\eta)\hat{f}_{n,k_2}(t,\eta)d\eta\psi_k(\xi)
\end{align}
and
\begin{align}
\label{partialtf2}
    F^2_{k,k_1,k_2}
    =&\int_{\R^3}e^{it(c_l|\xi|^2-c_m|\xi|^2+2c_m\xi\cdot\eta)}q(\xi-\eta,\eta)\hat{f}_{m,k_1}(t,\xi-\eta)\hat{f}_{n,k_2}(t,\eta)\psi_k(\xi)d\eta.
\end{align}
By the bilinear estimate $L^4\times L^4\rightarrow L^2$ in Lemma \ref{bilinear} and Lemma \ref{dualitycomp},
\begin{equation}
\label{partialtfbound}
    \begin{aligned}
    \|\eqref{partialtf1}\|_{L^2}
    \leq &\bigg\|\int_{\R^3}e^{ic_lt|\xi|^2}e^{-ic_mt|\xi-\eta|^2}\hat{f}_{m,k_1}(t,\xi-\eta)e^{-ic_nt|\eta|^2}\hat{f}_{n,k_2}(t,\eta)\psi_k(\xi)d\eta\bigg\|_{L^2}\\
    \lesssim & \min\{\|e^{ic_mt\la}f_{m,k_1}\|_{L^4}\|e^{ic_nt\la}f_{n,k_2}\|_{L^4},2^{3\min\{k,k_2\}/2}\|e^{ic_mt\la}f_{m,k_1}\|_{L^2}\|e^{ic_nt\la}f_{n,k_2}\|_{L^2}\}\\
    \lesssim &\min\{\|e^{ic_mt\la}f_{m,k_1}\|_{L^4}\|e^{ic_nt\la}f_{n,k_2}\|_{L^4},2^{3\min\{k,k_2\}/2}\|f_{m,k_1}\|_{L^2}\|f_{n,k_2}\|_{L^2}\}.
    \end{aligned}
\end{equation}
Then, using the bounds in \eqref{l4}, \eqref{l2}, we get for any $t\in[2^{M-1},2^M]$,
\begin{equation}
\label{partialtfbound1.1}
    \begin{aligned}
    &\sum_{(k_1,k_2)\in\chi_k^1\cup\chi_k^3}\|\eqref{partialtf1}\|_{L^2}\\
    &\lesssim \sum_{(k_1,k_2)\in\chi_k^1\cup\chi_k^3}\min\{\|e^{ic_mt\la}f_{m,k_1}\|_{L^4}\|e^{ic_nt\la}f_{n,k_2}\|_{L^4}, 2^{3\min\{k,k_2\}/2}\|f_{m,k_1}\|_{L^2}\|f_{n,k_2}\|_{L^2}\}\\
    &\lesssim \sum_{(k_1,k_2)\in\chi_k^1\cup\chi_k^3}2^{-2k_{1,+}-2k_{2,+}+\gamma k_1+\gamma k_2}\min\{2^{-3M/2-k_1/4-k_2/4}, 2^{3\min\{k,k_2\}/2+k_1/2+k_2/2}\}\e_1^2\\
    &\lesssim 2^{-2k_{+}+\gamma k-M+k/2}\e_1^2.
    \end{aligned}
\end{equation}
Now, we shall show $\sum_{(k_1,k_2)\in\chi_k^2}\|\eqref{partialtf1}\|_{L^2}\lesssim 2^{-m-2k_++\gamma k+k/2}\e_1$. 

Recall that we defined $\phi(\xi,\eta)=c_l|\xi|^2-c_m|\xi-\eta|^2-c_n|\eta|^2.$
When $(k_1,k_2)\in\chi^1_k$, we know $k_2\leq k_1-a$, so $\psi_k(\xi)\psi_{k_1}(\xi-\eta)\psi_{k_2}(\eta)\neq 0$ implies
$$|\nabla_\eta\phi(\xi,\eta)|=|2c_m(\xi-\eta)-2c_n\eta|\geq 2^{k_1+1-2}|c_m|-2^{k_2+1}|c_n|\geq |c_m|2^{k_1-2}\geq 2^{k_1-2-a}$$
and
$$|\nabla_\eta\phi(\xi,\eta)|\leq 2^{k_1+1}|c_m|+2^{k_2+1}|c_n|\leq |c_m|2^{k_1+2}\leq 2^{k_1+2+a},$$
if $c_m\neq-c_n$.
When $(k_1,k_2)\in\chi^2_k$, we have $k\leq k_1-a-2$ and $\psi_k(\xi)\psi_{k_1}(\xi-\eta)\psi_{k_2}(\eta)\neq 0$ implies
$$|\nabla_\eta\phi(\xi,\eta)|\geq 2|c_m+c_n||\xi-\eta|-2|c_n||\xi|\geq |c_n|(2^{k_1+1-a}-2^{k+1})\geq |c_n|2^{k_1-a}\geq 2^{k_1-2a}$$
and
$$|\nabla_\eta\phi(\xi,\eta)|\leq 2|c_m+c_n||\xi-\eta|+2|c_n||\xi|\leq 2^{k_1+1+a}+2^{k+1+a}\leq 2^{k_1+2+a}.$$
Therefore, when $(k_1,k_2)\in\chi^1_k\cup\chi^2_k$
\begin{equation}
\label{xi-2eta}
    \Tilde{\psi}_{k_1}(\nabla_\eta\phi(\xi,\eta))\psi_k(\xi)\psi_{k_1}(\xi-\eta)\psi_{k_2}(\eta)=\psi_k(\xi)\psi_{k_1}(\xi-\eta)\psi_{k_2}(\eta).
\end{equation}
So, we use the identity $\sum_m\frac{\partial_{\eta_m}\phi(\xi,\eta)}{it|\nabla_\eta\phi(\xi,\eta)|^2}\partial_{\eta_m} e^{it\phi(\xi,\eta)}=e^{it\phi(\xi,\eta)}$ to integrate by parts,
\begin{align}
    \eqref{partialtf1}
    =&\int_{\R^3}\partial_{\eta_m} e^{it\phi(\xi,\eta)}\frac{\partial_{\eta_m}\phi(\xi,\eta)}{it|\nabla_\eta \phi(\xi,\eta)|^2}\hat{f}_{m,k_1}(t,\xi-\eta)\hat{f}_{n,k_2}(t,\eta)d\eta\psi_k(\xi)\nonumber\\
    =&-\int_{\R^3}e^{it\phi(\xi,\eta)}\partial_{\eta_m}\frac{\partial_{\eta_m}\phi(\xi,\eta)}{it|\nabla_\eta \phi(\xi,\eta)|^2}\hat{f}_{m,k_1}(t,\xi-\eta)\hat{f}_{n,k_2}(t,\eta)d\eta\psi_k(\xi)\label{l2t.1}\\
    &-\int_{\R^3} e^{it\phi(\xi,\eta)}\frac{\partial_{\eta_m}\phi(\xi,\eta)}{it|\nabla_\eta \phi(\xi,\eta)|^2}\partial_{\eta_m}\hat{f}_{m,k_1}(t,\xi-\eta)\hat{f}_{n,k_2}(t,\eta)d\eta\psi_k(\xi)\label{l2t.2}\\
    &-\int_{\R^3} e^{it\phi(\xi,\eta)}\frac{\partial_{\eta_m}\phi(\xi,\eta)}{it|\nabla_\eta \phi(\xi,\eta)|^2}\hat{f}_{m,k_1}(t,\xi-\eta)\partial_{\eta_m}\hat{f}_{n,k_2}(t,\eta)d\eta\psi_k(\xi)\label{l2t.3}.
\end{align}
Next, using the $L^{4}\times L^4\rightarrow L^2$ bilinear estimate, Lemma \ref{chi+eta2}, Lemma \ref{dualitycomp}, \eqref{l2} and \eqref{l4}, we have
\begin{align*}
    \|\eqref{l2t.1}\|_{L^2}
    \lesssim &2^{-m}\|\F^{-1}\partial_{\eta_m}\frac{\partial_{\eta_m}\phi(\xi,\eta)}{it|\nabla_\eta \phi(\xi,\eta)|^2}\Tilde{\psi}_{k_1}(\nabla_\eta\phi(\xi,\eta))\Tilde{\psi}_{k_2}(\eta)\|_{L^1}\times\\
    &\times\min\{\|e^{it\la}{f}_{m,k_1}\|_{L^4}\|e^{it\la}f_{n,k_2}\|_{L^4},2^{3\min\{k,k_2\}/2}\|e^{it\la}{f}_{m,k_1}\|_{L^2}\|e^{it\la}f_{n,k_2}\|_{L^2}\}\\
    \lesssim &2^{-M-2k_1-2k_{1,+}+\gamma k_1-2k_{2,+}+\gamma k_2}\min\{2^{-3M/2-k_1/4-k_2/4},2^{3\min\{k,k_2\}/2+k_1/2+k_2/2}\}\e_1^2\\
    \lesssim &2^{-2k_{1,+}+\gamma k_1-2k_{2,+}+\gamma k_2}\min\{2^{-5M/2-9k_1/4-k_2/4},2^{-M+3\min\{k,k_2\}/2-3k_1/2+k_2/2}\}\e_1^2\\
    \lesssim &2^{-4k_{1,+}+2\gamma k_1}\min\{2^{-5M/2-5k_1/2},2^{-M+3k/2-k_1}\}\e_1^2,
\end{align*}
since $(k_1,k_2)\in\chi^2_k=\{|k_1-k_2|< a,\,k< k_1-a-2\}$. Then,
using the $L^{18/7}\times L^9\rightarrow L^2$ bilinear estimate, Lemma \ref{op}, Lemma \ref{chi+eta2} and Lemma \ref{dualitycomp}, we get
\begin{align*}
    &\|\eqref{l2t.2}\|_{L^2}+\|\eqref{l2t.3}\|_{L^2}\\
    \lesssim &2^{-M}\|\F^{-1}\frac{\partial_{\eta_m}\phi(\xi,\eta)}{it|\nabla_\eta \phi(\xi,\eta)|^2}\Tilde{\psi}_{k_1}(\nabla_\eta\phi(\xi,\eta))\Tilde{\psi}_{n,k_2}(\eta)\|_{L^1}\times\\
    &\times\big(\min\{\|e^{it\la}\F^{-1}\nabla_\xi\hat{f}_{m,k_1}\|_{L^{18/7}}\|e^{it\la}f_{n,k_2}\|_{L^{9}},2^{3\min\{k,k_2\}/2}\|e^{it\la}\F^{-1}\nabla_\xi\hat{f}_{m,k_1}\|_{L^2}\|e^{it\la}f_{n,k_2}\|_{L^2}\}\\
    &+\min\{\|e^{it\la}{f}_{m,k_1}\|_{L^{9}}\|e^{it\la}\F^{-1}\nabla_\xi\hat{f}_{n,k_2}\|_{L^{18/7}},2^{3\min\{k,k_2\}/2}\|e^{it\la}{f}_{m,k_1}\|_{L^2}\|e^{it\la}\F^{-1}\nabla_\xi\hat{f}_{n,k_2}\|_{L^2}\}\big)\\
    \lesssim &2^{-M-k_1}
    \big(\min\{\|e^{it\la}\F^{-1}\nabla_\xi\hat{f}_{m,k_1}\|_{L^{18/7}}\|e^{it\la}f_{n,k_2}\|_{L^{9}},2^{3\min\{k,k_2\}/2}\|\nabla_\xi\hat{f}_{m,k_1}\|_{L^2}\|f_{n,k_2}\|_{L^2}\}\\
    &+\min\{\|e^{it\la}{f}_{m,k_1}\|_{L^{9}}\|e^{it\la}\F^{-1}\nabla_\xi\hat{f}_{n,k_2}\|_{L^{18/7}},2^{3\min\{k,k_2\}/2}\|{f}_{m,k_1}\|_{L^2}\|\nabla_\xi\hat{f}_{n,k_2}\|_{L^2}\}\big)\\
    \lesssim &2^{-M-k_1}\big(\min\{2^{-3M/2}\|\F^{-1}\nabla_\xi\hat{f}_{m,k_1}\|_{L^{18/11}}\|f_{n,k_2}\|_{L^{9/8}},2^{3\min\{k,k_2\}/2}\|\nabla_\xi\hat{f}_{m,k_1}\|_{L^2}\|f_{n,k_2}\|_{L^2}\}\\
    &+\min\{2^{-3M/2}\|{f}_{m,k_1}\|_{L^{9/8}}\|\F^{-1}\nabla_\xi\hat{f}_{n,k_2}\|_{L^{18/11}},2^{3\min\{k,k_2\}/2}\|{f}_{m,k_1}\|_{L^2}\|\nabla_\xi\hat{f}_{n,k_2}\|_{L^2}\}\big),
\end{align*}
where the $L^p$ norms can be bounded by Lemma \ref{lpnorms}, \eqref{l2} and \eqref{l2first}
\begin{align*}
    \|\eqref{l2t.2}\|_{L^2}+\|\eqref{l2t.3}\|_{L^2}
    \lesssim & 2^{-M-k_1-2k_{1,+}+\gamma k_1-2k_{2,+}+\gamma k_2}\big(\min\{2^{-3M/2-5k_1/6-2k_2/3},2^{3\min\{k,k_2\}/2-k_1/2+k_2/2}\}\\
    &+\min\{2^{-3M/2-2k_1/3-5k_2/6},2^{3\min\{k,k_2\}/2+k_1/2-k_2/2}\}\big)\e_1^2\\
    \lesssim & 2^{-4k_{1,+}+2\gamma k_1}\min\{2^{-5M/2-5k_1/2},2^{-M+3k/2-k_1}\}\e_1^2.
\end{align*}
Thus,
\begin{equation}
\label{partialtfbound1.2}
    \begin{aligned}
    \sum_{(k_1,k_2)\in\chi_k^2}\|\eqref{partialtf1}\|_{L^2}
    &\lesssim \sum_{(k_1,k_2)\in\chi_k^2}2^{-4k_{1,+}+2\gamma k_1}\min\{2^{-5M/2-5k_1/2},2^{-M+3k/2-k_1}\}\e_1^2\\
    &\lesssim \sum_{k_1\geq k}2^{-4k_{1,+}+2\gamma k_1}\min\{2^{-5M/2-5k_1/2},2^{-M+3k/2-k_1}\}\e_1^2\\
    &\lesssim 2^{-M-2k_++\gamma k+k/2}\e_1^2.
    \end{aligned}
\end{equation}
Next, we work on the term $F^2_{k,k_1,k_2}$. By the bilinear estimate $L^4\times L^4\rightarrow L^2$ in Lemma \ref{bilinear} and condition \eqref{q} on the multiplier $q(\xi,\eta)$,
\begin{align*}
    \|\eqref{partialtf2}\|_{L^2}
    =&\bigg\|\int_{\R^3}e^{itc_l|\xi|^2}q(\xi-\eta,\eta)e^{-itc_m|\xi-\eta|^2}\hat{f}_{m,k_1}(t,\xi-\eta)e^{-itc_n|\eta|^2}\hat{f}_{n,k_2}(t,\eta)\psi_k(\xi)d\eta\bigg\|_{L^2}\\
    \lesssim &\|\F^{-1}q(\xi-\eta,\eta)\Tilde{\psi}_k(\xi)\Tilde{\psi}_{k_1}(\xi-\eta)\Tilde{\psi}_{k_2}(\eta)\|_{L^1}\|e^{ic_mt\la}f_{m,k_1}\|_{L^4}\|e^{ic_nt\la}f_{n,k_2}\|_{L^4}\\
    \lesssim &2^{\e k_-}\|e^{ic_mt\la}f_{m,k_1}\|_{L^4}\|e^{ic_nt\la}f_{n,k_2}\|_{L^4}
\end{align*}
Moreover, Lemma \ref{dualitycomp} gives us,
\begin{align*}
    \|\eqref{partialtf2}\|_{L^2}
    \lesssim &2^{3\min\{k,k_2\}/2}\|\F^{-1}q(\xi-\eta,\eta)\Tilde{\psi}_k(\xi)\Tilde{\psi}_{k_1}(\xi-\eta)\Tilde{\psi}_{k_2}(\eta)\|_{L^1}\|e^{ic_mt\la}f_{m,k_1}\|_{L^2}\|e^{ic_nt\la}f_{n,k_2}\|_{L^2}\\
    \lesssim &2^{\e k_-+3\min\{k,k_2\}/2}\|f_{m,k_1}\|_{L^2}\|f_{n,k_2}\|_{L^2}
\end{align*}
Combing the results above and employ the bounds \eqref{l4}, \eqref{l2}, we get for any $t\in[2^{M-1},2^M]$,
\begin{equation}
    \label{partialtfbound2}
    \begin{aligned}
    &\sum_{(k_1,k_2)\in\chi_k^1\cup\chi_k^2\cup\chi_k^3} \|\eqref{partialtf2}\|_{L^2}\\
    \lesssim &\sum_{(k_1,k_2)\in\chi_k^1\cup\chi_k^2\cup\chi_k^3}2^{\e k_-}\min\{\|e^{ic_mt\la}f_{m,k_1}\|_{L^4}\|e^{ic_nt\la}f_{n,k_2}\|_{L^4}, 2^{3\min\{k,k_2\}/2}\|f_{m,k_1}\|_{L^2}\|f_{n,k_2}\|_{L^2}\}\\
    \lesssim &\sum_{(k_1,k_2)\in\chi_k^1\cup\chi_k^2\cup\chi_k^3}2^{\e k_--2k_{1,+}-2k_{2,+}+\gamma k_1+\gamma k_2}\min\{2^{-3M/2-k_1/4-k_2/4}, 2^{3\min\{k,k_2\}/2+k_1/2+k_2/2}\}\e_1^2\\
    \lesssim &2^{-2k_{+}+\gamma k-M+k/2}\e_1^2.
    \end{aligned}
\end{equation}
Thus, \eqref{partialtfbound1.1}, \eqref{partialtfbound1.2}, and \eqref{partialtfbound2} shows
$$\|\partial_t\hat{f}_k\|_{L^2}\lesssim 2^{-2k_{+}+\gamma k-M+k/2}\e_1^2.$$
If we use the $L^2\times L^\infty\rightarrow L^2$ bilinear estimate in Lemma \ref{bilinear},
\begin{align*}
    \|\eqref{partialtf1}\|_{L^2}+\|\eqref{partialtf2}\|_{L^2}
    \lesssim (1+2^{\e k_-})\|e^{ic_mt\la}f_{m,k_1}\|_{L^2}\|e^{ic_nt\la}f_{n,k_2}\|_{L^\infty}
    \lesssim \|f_{m,k_1}\|_{L^2}\|e^{ic_nt\la}f_{n,k_2}\|_{L^\infty}.
\end{align*}
By \eqref{linfinity}, \eqref{l2}, for any $t\in[2^{M-1},2^M]$
\begin{align*}
    \|\partial_t\hat{f}_{l,k}\|_{L^2}\lesssim &\sum_{(k_1,k_2)\in\chi_k^1\cup\chi_k^2\cup\chi_k^3}\|\eqref{partialtf1}\|_{L^2}+\|\eqref{partialtf2}\|_{L^2}\\
    &\lesssim \sum_{(k_1,k_2)\in\chi_k^1\cup\chi_k^2\cup\chi_k^3}\min\{\|f_{m,k_1}\|_{L^2}\|e^{ic_nt\la}f_{n,k_2}\|_{L^\infty}, 2^{3\min\{k,k_2\}/2}\|f_{m,k_1}\|_{L^2}\|f_{n,k_2}\|_{L^2}\}\\
    &\lesssim \sum_{(k_1,k_2)\in\chi_k^1\cup\chi_k^2\cup\chi_k^3}2^{-10k_{1,+}-2k_{2,+}+\gamma k_2}\min\{2^{-3M/2+\gamma M/8+\gamma k_2/4-k_2},2^{3\min\{k,k_2\}/2+k_2/2}\}\e_1^2\\
    &\lesssim 2^{-10k_+}\sum_{(k_1,k_2)\in\chi_k^1\cup\chi_k^2\cup\chi_k^3}2^{-2k_{2,+}+\gamma k_2}\min\{2^{-3M/2+\gamma M/8+\gamma k_2/4-k_2},2^{3\min\{k,k_2\}/2+k_2/2}\}\e_1^2\\
    &\lesssim 2^{-10k_+-M-\gamma M/2}\e_1^2.
\end{align*}
Next, we consider $\|e^{ic_lt\la}\partial_t f_{l,k}(t,\xi)\|_{L^6}=\|\F^{-1}e^{-ic_lt|\xi|^2}\partial_t\hat{f}_{l,k}(t,\xi)\|_{L^6}$. Using the $L^6\times L^\infty\rightarrow L^6$ bilinear estimate, Lemma \ref{op}, Lemma \ref{lpnorms} and \eqref{linfinity},
\begin{align*}
    &\|\F^{-1}e^{-ic_lt|\xi|^2}\eqref{partialtf1}\|_{L^6}+\|\F^{-1}e^{-ic_lt|\xi|^2}\eqref{partialtf2}\|_{L^6}\\
    \lesssim &(1+2^{\e k_-})\|e^{ic_mt\la}f_{m,k_1}\|_{L^6}\|e^{ic_nt\la}f_{n,k_2}\|_{L^\infty}\\
    \lesssim & 2^{-M}\|f_{m,k_1}\|_{L^{6/5}}2^{-2k_{2,+}+\gamma k_2}\min\{2^{-3M/2+\gamma M/8-k_2+\gamma k_2/4},2^{2k_2}\}\e_1\\
    \lesssim & 2^{-2k_{1,+}+\gamma k_1-2k_{2,+}+\gamma k_2}\min\{2^{-5M/2+\gamma M/8-k_2+\gamma k_2/4-k_1/2},2^{-M+2k_2-k_1/2}\}\e_1^2.
\end{align*}
Moreover, by Bernstein's inequality, Lemma \ref{dualitycomp} and \eqref{l2},
\begin{align*}
    \|\F^{-1}e^{-ic_lt|\xi|^2}\eqref{partialtf1}\|_{L^6}+\|\F^{-1}e^{-ic_lt|\xi|^2}\eqref{partialtf2}\|_{L^6}
    \lesssim &2^k\big(\|\F^{-1}e^{-it|\xi|^2}\eqref{partialtf1}\|_{L^2}+\|\F^{-1}e^{-it|\xi|^2}\eqref{partialtf2}\|_{L^2}\big)\\
    \lesssim & (1+2^{\e k_-})2^{k+3k/2}\|e^{ic_mt\la}f_{m,k_1}\|_{L^2}\|e^{ic_nt\la}f_{n,k_2}\|_{L^2}\\
    \lesssim & 2^{-2k_{1,+}+\gamma k_1-2k_{2,+}+\gamma k_2+5k/2+k_1/2+k_2/2}\e_1^2.
\end{align*}
Thus,
\begin{align*}
    &\|e^{ic_lt\la}\partial_t f_{l,k}(t,\xi)\|_{L^6}\\
    \leq &\sum_{(k_1,k_2)\in\chi^1_k\cup\chi^2_k\cup\chi^3_k}\|\F^{-1}e^{-ic_lt|\xi|^2}\eqref{partialtf1}\|_{L^6}+\|\F^{-1}e^{-ic_lt|\xi|^2}\eqref{partialtf2}\|_{L^6}\\
    \leq &\sum_{(k_1,k_2)\in\chi^1_k}2^{-2k_{1,+}+\gamma k_1-2k_{2,+}+\gamma k_2}\min\{2^{-5M/2+\gamma M/8-k_2+\gamma k_2/4-k_1/2},2^{-M+2k_2-k_1/2}\}\e_1^2\\
    &+\sum_{(k_1,k_2)\in\chi^2_k\cup\chi^3_k}2^{-2k_{1,+}+\gamma k_1-2k_{2,+}+\gamma k_2}\min\{2^{-5M/2+\gamma M/8-k_2+\gamma k_2/4-k_1/2},2^{5k/2+k_1/2+k_2/2}\}\e_1^2\\
    \lesssim & 2^{-2k_++\gamma k-2M-k/2}\e_1^2.
\end{align*}
\end{proof}
Based on the setup we used in Lemma \ref{timel2} for $\partial_t\hat{f}_{l,k}$, we can further take one derivative in $\xi$ and find bounds on the mixed derivatives of $\hat{f}_{l,k}$.
\begin{lem}
\label{mixder}
Under the assumption \eqref{b1}, we have for any $t\in[2^{M-1},2^M]$
\begin{align}
\label{l2mixedalphader}
    \|D^\alpha\partial_t\hat{f}_{l,k}(t,\xi)\|_{L^2}\lesssim 2^{-M-2k_++(1-\alpha)\gamma k+k/2-\alpha k}\e_1^2
\end{align}
and
\begin{align}
\label{l2mixedder}
    \|\nabla_\xi\partial_t\hat{f}_{l,k}(t,\xi)\|_{L^2}\lesssim (1+2^{M/2+k})2^{-M-2k_+-k/2}\e_1^2.
\end{align}
\end{lem}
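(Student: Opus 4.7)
We start from the bilinear decomposition of $\partial_t\hat f_{l,k}$ derived in the proof of Lemma~\ref{timel2}, writing $\partial_t\hat f_{l,k}(t,\xi)$ as in \eqref{partialtf} as a sum over $(k_1,k_2)\in\chi^1_k\cup\chi^2_k\cup\chi^3_k$ of the bilinear blocks $F^1_{k,k_1,k_2}$ and $F^2_{k,k_1,k_2}$ defined in \eqref{partialtf1}--\eqref{partialtf2}. The plan is to first prove the gradient bound \eqref{l2mixedder} by direct differentiation in $\xi$ combined with bilinear estimates, and then deduce the fractional-derivative bound \eqref{l2mixedalphader} by Stein's interpolation between \eqref{l2timef} and \eqref{l2mixedder}.

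For \eqref{l2mixedder}, apply $\nabla_\xi$ to each block. Four contributions arise: (a) the derivative hits the phase $e^{it\phi}$, producing the factor $it\nabla_\xi\phi=it(2c_l\xi-2c_m(\xi-\eta))$ of size $\lesssim 2^{M+\max\{k,k_1\}}$; (b) on the cutoff $\psi_k(\xi)$, yielding an $O(2^{-k})$ loss; (c) on the symbol $q(\xi-\eta,\eta)$ in $F^2$, controlled by hypothesis \eqref{q}; (d) on the profile $\hat f_{m,k_1}(\xi-\eta)$, so that it is replaced by $\nabla_\xi\hat f_{m,k_1}$, controlled via \eqref{sobolevnorms}. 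Terms (b)--(d) fit directly into the bilinear framework of Lemma~\ref{timel2}: combining Lemma~\ref{bilinear}, Lemma~\ref{dualitycomp}, the multiplier bounds of Lemmas~\ref{chi,eta}--\ref{chi+eta2}, and the $L^p$ estimates of Lemma~\ref{estimates} produces bounds of the required size. For the dangerous term (a), on $\chi^1_k\cup\chi^2_k$ the lower bound on $|\nabla_\eta\phi|$ from \eqref{xi-2eta} allows a single integration by parts in $\eta$, exactly as in the $\chi^2_k$ analysis of Lemma~\ref{timel2}; the $t$ arising from differentiating the phase is then cancelled by the $1/t$ produced by the IBP, leaving a bounded multiplier. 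On $\chi^3_k$ (where $|\nabla_\eta\phi|$ may vanish for $F^1$) we estimate (a) directly, paying the $2^{M+k}$ factor; this is the origin of the $2^{M/2+k}$ contribution in \eqref{l2mixedder}. Summing over $(k_1,k_2)\in\chi^1_k\cup\chi^2_k\cup\chi^3_k$ yields \eqref{l2mixedder}.

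For \eqref{l2mixedalphader}, we apply Lemma~\ref{si}(a) with $s_1=0$, $s_2=1$, $\theta=\alpha$, taking \eqref{l2timef} as the $z=0$ endpoint and \eqref{l2mixedder} as the $z=1$ endpoint. Performing the interpolation at the level of each dyadic block $F^1_{k,k_1,k_2}$ and $F^2_{k,k_1,k_2}$ before summing in $(k_1,k_2)$ lets us absorb any $(1+2^{M/2+k})^\alpha$ loss that a naive Gagliardo--Nirenberg estimate via Lemma~\ref{dalpha} would incur after summation into the fast frequency decay of each individual summand, producing the uniform bound claimed.

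The central obstacle is case (a): the $t\nabla_\xi\phi$ factor generated by differentiating the oscillatory phase is too large to control on its own, and requires a further integration by parts in $\eta$ on every set where $|\nabla_\eta\phi|$ admits a usable lower bound. The remaining $\chi^3_k$-contribution to $F^1$, where such a lower bound is unavailable, is precisely what forces the non-unit factor $(1+2^{M/2+k})$ in \eqref{l2mixedder}; handling it properly in \eqref{l2mixedalphader} is the motivation for performing Stein's interpolation block-by-block rather than after summation.
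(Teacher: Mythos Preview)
Your proposal is essentially correct and follows the paper's approach closely: differentiate each bilinear block in $\xi$, recover the lost factor of $t$ on $\chi^1_k\cup\chi^2_k$ (and for all of $F^2$) by one integration by parts in $\eta$ using the available lower bound on $|\nabla_\eta\phi|$, and on the finitely many $\chi^3_k$ blocks of $F^1$ estimate the phase term directly, which produces the $(1+2^{M/2+k})$ factor in \eqref{l2mixedder}.

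One point of imprecision: for \eqref{l2mixedalphader} you invoke Stein's interpolation Lemma~\ref{si}, but there is no analytic family of operators here---you are interpolating the $D^\alpha$-norm of a fixed function between its $L^2$ norm and its gradient $L^2$ norm. The paper uses the elementary inequality of Lemma~\ref{dalpha}, applied separately to the two pieces $F_{1,k}$ (the $\chi^1_k\cup\chi^2_k$ part of $F^1$ together with all of $F^2$) and $F_{2,k}$ (the $\chi^3_k$ part of $F^1$). For $F_{1,k}$ both endpoints are already of the claimed size; for $F_{2,k}$ the paper uses the refined block bound \eqref{l2timeder2} (which is sharper than \eqref{l2timef}) together with a case split on the sign of $M+2k$ to select the correct branch of the $\min$ and thereby absorb the $(1+2^{M/2+k})^\alpha$ loss. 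Your phrase ``absorb into the fast frequency decay of each individual summand'' gestures at this, but the actual mechanism is the improved $L^2$ endpoint for the $\chi^3_k$ piece combined with the $M+2k$ case analysis---you should make that explicit.
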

\begin{proof}
We split $\partial_t \hat{f}_k=F_{1,k}+F_{2,k}$ to discuss the problem differently when $(k_1,k_2)\in\chi^1_k\cup\chi^2_k$ and when $(k_1,k_2)\in\chi^3_k$ for $F^1_{k,k_1,k_2}$. Recall the equation of $\partial_t f_k$ in \eqref{partialtf} and define
\begin{align}
\label{Fsetupdef}
    F_{1,k}=\sum_{(k_1,k_2)\in\chi_k^1\cup\chi_k^2}F^1_{k,k_1,k_2}+\sum_{(k_1,k_2)\in\chi_k^1\cup\chi_k^2\cup\chi_k^3}F^2_{k,k_1,k_2},
    \qquad
    F_{2,k}=\sum_{(k_1,k_2)\in\chi_k^3}F^1_{k,k_1,k_2}.
\end{align}
Hence, \eqref{partialtfbound1.1}, \eqref{partialtfbound1.2}, and \eqref{partialtfbound2} imply
\begin{equation}
\label{l2timeder1}
    \|F_{1,k}\|_{L^2}\lesssim 2^{-2k_{+}+\gamma k-M+k/2}\e_1^2
\end{equation}
and
\begin{equation}
\label{l2timeder2}
\begin{aligned}
    \|F_{2,k}\|_{L^2}\leq&\sum_{(k_1,k_2)\in\chi^3_k}\bigg\|\int_{\R^3}e^{it(c_l|\xi|^2-c_m|\xi-\eta|^2-c_n|\eta|^2)}\hat{f}_{m,k_1}(t,\xi-\eta)\hat{f}_{n,k_2}(t,\eta)d\eta\psi_k(\xi)\bigg\|_{L^2}\\
    \leq&\sum_{(k_1,k_2)\in\chi^3_k}2^{-2k_{1,+}-2k_{2,+}+\gamma k_1+\gamma k_2}\min\{2^{-3M/2-k_1/4-k_2/4}, 2^{3\min\{k,k_2\}/2+k_1/2+k_2/2}\}\e_1^2\\
    \lesssim & 2^{-4k_++2\gamma k}\min\{2^{-3M/2-k/2},2^{5k/2}\}\e_1^2.
\end{aligned}
\end{equation}
Next,
\begin{align}
    \partial_{\xi_l}F^1_{k,k_1,k_2}
    =&\int_{\R^3}e^{it\phi(\xi,\eta)}it\partial_{\xi_l}\phi(\xi,\eta)\hat{f}_{m,k_1}(t,\xi-\eta)\hat{f}_{n,k_2}(t,\eta)d\eta\psi_k(\xi)\label{mixed1.1}\\
    & + \int_{\R^3}e^{it\phi(\xi,\eta)}\partial_{\xi_l}\hat{f}_{m,k_1}(t,\xi-\eta)\hat{f}_{n,k_2}(t,\eta)d\eta\psi_k(\xi)\label{mixed1.2}\\
    &+ \int_{\R^3}e^{it\phi(\xi,\eta)}\hat{f}_{m,k_1}(t,\xi-\eta)\hat{f}_{n,k_2}(t,\eta)d\eta\partial_{\xi_l}\psi_k(\xi)\label{mixed1.3},
\end{align}
\begin{align}
    \partial_{\xi_l}F^2_{k,k_1,k_2}
    =&\int_{\R^3}e^{it\phi(\xi,\eta)}it\partial_{\xi_l}\phi(\xi,\eta)q(\xi-\eta,\eta)\hat{f}_{m,k_1}(t,\xi-\eta)\hat{f}_{n,k_2}(t,\eta)d\eta\psi_k(\xi)\label{mixed2.1}\\
    &+\int_{\R^3}e^{it\phi(\xi,\eta)}\partial_{\xi_l}q(\xi-\eta,\eta)\hat{f}_{m,k_1}(t,\xi-\eta)\hat{f}_{n,k_2}(t,\eta)d\eta\psi_k(\xi)\label{mixed2.2}\\
    &+\int_{\R^3}e^{it\phi(\xi,\eta)}q(\xi-\eta,\eta)\partial_{\xi_l}\hat{f}_{m,k_1}(t,\xi-\eta)\hat{f}_{n,k_2}(t,\eta)d\eta\psi_k(\xi)\label{mixed2.3}\\
    &+\int_{\R^3}e^{it\phi(\xi,\eta)}q(\xi-\eta,\eta)\hat{f}_{m,k_1}(t,\xi-\eta)\hat{f}_{n,k_2}(t,\eta)d\eta\partial_{\xi_l}\psi_k(\xi)\label{mixed2.4},
\end{align}
When $c_m+c_n\neq 0$ and $(k_1,k_2)\in\chi^1_k\cup\chi^2_k$, we have \eqref{xi-2eta} and may use the identity 
$$\sum_{m=1}^3\frac{\partial_{\eta_m}\phi(\xi,\eta)}{it|\nabla_\eta\phi(\xi,\eta)|^2}\partial_{\eta_m} e^{it\phi(\xi,\eta)}=e^{it\phi(\xi,\eta)}$$
to integrate by parts,
\begin{align}
    \eqref{mixed1.1}
    =&-\int_{\R^3} e^{it\phi(\xi,\eta)}\partial_{\eta_m}\frac{\partial_{\xi_l}\phi(\xi,\eta)\partial_{\eta_m}\phi(\xi,\eta)}{|\nabla_\eta\phi(\xi,\eta)|^2}\hat{f}_{m,k_1}(t,\xi-\eta)\hat{f}_{n,k_2}(t,\eta)d\eta\psi_k(\xi)\label{mixed1.1.1}\\
    &-\int_{\R^3} e^{it\phi(\xi,\eta)}\frac{\partial_{\xi_l}\phi(\xi,\eta)\partial_{\eta_m}\phi(\xi,\eta)}{|\nabla_\eta\phi(\xi,\eta)|^2}\partial_{\eta_m}\hat{f}_{m,k_1}(t,\xi-\eta)\hat{f}_{n,k_2}(t,\eta)d\eta\psi_k(\xi)\label{mixed1.1.2}\\
    &-\int_{\R^3} e^{it\phi(\xi,\eta)}\frac{\partial_{\xi_l}\phi(\xi,\eta)\partial_{\eta_m}\phi(\xi,\eta)}{|\nabla_\eta\phi(\xi,\eta)|^2}\hat{f}_{m,k_1}(t,\xi-\eta)\partial_{\eta_m}\hat{f}_{n,k_2}(t,\eta)d\eta\psi_k(\xi)\label{mixed1.1.3}.
\end{align}
For the term \eqref{mixed2.1}, $\phi(\xi,\eta)=(c_l+c_n)|\xi|^2-2c_n\xi\cdot\eta$ since $c_m+c_n=0$. In this case, we may perform integration by parts using the identity $\sum_{m=1}^3-\frac{\xi_m}{it2c_n|\xi|^2}\partial_{\eta_m}e^{it\phi(\xi,\eta)}=e^{it\phi(\xi,\eta)}$
\begin{align}
    \eqref{mixed2.1}
    =&\int_{\R^3}e^{it\phi(\xi,\eta)}\frac{\xi_m\partial_{\eta_m}\partial_{\xi_l}\phi(\xi,\eta)}{2c_n|\xi|^2}q(\xi-\eta,\eta)\hat{f}_{m,k_1}(t,\xi-\eta)\hat{f}_{n,k_2}(t,\eta)d\eta\psi_k(\xi)\label{mixed2.1.1}\\
    &+\int_{\R^3}e^{it\phi(\xi,\eta)}\frac{\xi_m\partial_{\xi_l}\phi(\xi,\eta)}{2c_n|\xi|^2}\partial_{\eta_m}q(\xi-\eta,\eta)\hat{f}_{m,k_1}(t,\xi-\eta)\hat{f}_{n,k_2}(t,\eta)d\eta\psi_k(\xi)\label{mixed2.1.2}\\
    &+\int_{\R^3}e^{it\phi(\xi,\eta)}\frac{\xi_m\partial_{\xi_l}\phi(\xi,\eta)}{2c_n|\xi|^2}q(\xi-\eta,\eta)\partial_{\eta_m}\hat{f}_{m,k_1}(t,\xi-\eta)\hat{f}_{n,k_2}(t,\eta)d\eta\psi_k(\xi)\label{mixed2.1.3}\\
    &+\int_{\R^3}e^{it\phi(\xi,\eta)}\frac{\xi_m\partial_{\xi_l}\phi(\xi,\eta)}{2c_n|\xi|^2}q(\xi-\eta,\eta)\hat{f}_{m,k_1}(t,\xi-\eta)\partial_{\eta_m}\hat{f}_{n,k_2}(t,\eta)d\eta\psi_k(\xi)\label{mixed2.1.4},
\end{align}
The $L^{18/7}\times L^9\rightarrow L^2$ bilinear estimate in Lemma \ref{bilinear}, Lemma \ref{chi+eta}, Lemma \ref{chi+eta2}, Lemma \ref{dualitycomp}, and the property of $q(\xi,\eta)$ in \eqref{q} imply
\begin{align*}
    &\|\eqref{mixed1.2}\|_{L^2}+\|\eqref{mixed1.1.2}\|_{L^2}+\|\eqref{mixed2.3}\|_{L^2}+\|\eqref{mixed2.1.3}\|_{L^2}\\
    \lesssim &\big(1+\|\F^{-1}\frac{\partial_{\xi_l}\phi(\xi,\eta)\partial_{\eta_m}\phi(\xi,\eta)}{|\nabla_\eta\phi(\xi,\eta)|^2}\Tilde{\psi}_{k_1}(\nabla_\eta\phi(\xi,\eta))\Tilde{\psi}_{k_2}(\eta)\|_{L^1}
    +\|\F^{-1}q(\xi-\eta,\eta)\Tilde{\psi}_k(\xi)\Tilde{\psi}_{k_1}(\xi-\eta)\Tilde{\psi}_{k_2}(\eta)\|_{L^1}+\\
    &\qquad + \|\F^{-1}\frac{\xi_m\partial_{\xi_l}\phi(\xi,\eta)}{2c_n|\xi|^2}q(\xi-\eta,\eta)\Tilde{\psi}_k(\xi)\Tilde{\psi}_{k_1}(\xi-\eta)\Tilde{\psi}_{k_2}(\eta)\|_{L^1} \big)\times\\
    &\times\min\{\|e^{ic_mt\la}\F^{-1}\nabla_\xi \hat{f}_{m,k_1}\|_{L^{18/7}}\|e^{ic_nt\la}f_{n,k_2}\|_{L^9},\\
    &\qquad\qquad\qquad\qquad  2^{3\min\{k,k_2\}/2}\|e^{ic_mt\la}\F^{-1}\nabla_\xi\hat{f}_{m,k_1}\|_{L^2}\|e^{ic_nt\la}f_{n,k_2}\|_{L^2}\}\\
    \lesssim & (1+2^{\e k_-}+2^{\e k_-+k_2-k})\min\{\|e^{ic_mt\la}\F^{-1}\nabla_\xi \hat{f}_{m,k_1}\|_{L^{18/7}}\|e^{ic_nt\la}f_{n,k_2}\|_{L^9},\\
    &\qquad\qquad\qquad\qquad 2^{3\min\{k,k_2\}/2}\|\nabla_\xi\hat{f}_{m,k_1}\|_{L^2}\|f_{n,k_2}\|_{L^2}\},
\end{align*}
\begin{align*}
    &\|\eqref{mixed1.3}\|_{L^2}+\|\eqref{mixed2.2}\|_{L^2}+\|\eqref{mixed2.4}\|_{L^2}\\
    \lesssim & \big(\|\partial_{\xi_l}\psi_k(\xi)\|_{L^\infty} + \|\F^{-1}\partial_{\xi_l}q(\xi-\eta,\eta)\Tilde{\psi}_k(\xi)\Tilde{\psi}_{k_1}(\xi-\eta)\Tilde{\psi}_{k_2}(\eta)\|_{L^1}+\\
    & + \|\partial_{\xi_l}\psi_k(\xi)\|_{L^\infty}\|\F^{-1}q(\xi-\eta,\eta)\Tilde{\psi}_k(\xi)\Tilde{\psi}_{k_1}(\xi-\eta)\Tilde{\psi}_{k_2}(\eta)\|_{L^1}\big)\times\\
    &\times \min\{\|e^{ic_mt\la}f_{m,k_1}\|_{L^{18/7}}\|e^{ic_nt\la}f_{n,k_2}\|_{L^9},2^{3\min\{k,k_2\}/2}\|e^{ic_mt\la}f_{m,k_1}\|_{L^2}\|e^{ic_nt\la}f_{n,k_2}\|_{L^2}\}\\
    \lesssim & (2^{-k}+2^{\e k_--k})\min\{\|e^{ic_mt\la}f_{m,k_1}\|_{L^{18/7}}\|e^{ic_nt\la}f_{n,k_2}\|_{L^9},2^{3\min\{k,k_2\}/2}\|e^{ic_mt\la}f_{m,k_1}\|_{L^2}\|e^{ic_nt\la}f_{n,k_2}\|_{L^2}\}\\
    \lesssim & 2^{-k}\min\{\|e^{ic_mt\la}f_{m,k_1}\|_{L^{18/7}}\|e^{ic_nt\la}f_{n,k_2}\|_{L^9},2^{3\min\{k,k_2\}/2}\|f_{m,k_1}\|_{L^2}\|f_{n,k_2}\|_{L^2}\},
\end{align*}
and
\begin{align*}
    &\|\eqref{mixed1.1.1}\|_{L^2}+\|\eqref{mixed2.1.1}\|_{L^2}+\|\eqref{mixed2.1.2}\|_{L^2}\\
    \lesssim &\big(\|\F^{-1}\partial_{\eta_m}\frac{\partial_{\xi_l}\phi(\xi,\eta)\partial_{\eta_m}\phi(\xi,\eta)}{|\nabla_\eta\phi(\xi,\eta)|^2}\Tilde{\psi}_{k_1}(\nabla_\eta\phi(\xi,\eta))\Tilde{\psi}_{k_2}(\eta)\|_{L^1}+\\
    &+\|\F^{-1}\frac{\xi_m\partial_{\eta_m}\partial_{\xi_l}\phi(\xi,\eta)}{2c_n|\xi|^2}q(\xi-\eta,\eta)\Tilde{\psi}_{k_1}(\xi-\eta)\Tilde{\psi}_{k_2}(\eta)\Tilde{\psi}_k(\xi)\|_{L^1}+\\
    &+\|\F^{-1}\frac{\xi_m\partial_{\xi_l}\phi(\xi,\eta)}{2c_n|\xi|^2}\partial_{\eta_m}q(\xi-\eta,\eta)\Tilde{\psi}_{k_1}(\xi-\eta)\Tilde{\psi}_{k_2}(\eta)\Tilde{\psi}_k(\xi)\|_{L^1})\times\\
    &\times \min\{\|e^{ic_mt\la}f_{m,k_1}\|_{L^{18/7}}\|e^{ic_nt\la}f_{n,k_2}\|_{L^9},2^{3\min\{k,k_2\}/2}\|e^{ic_mt\la}f_{m,k_1}\|_{L^2}\|e^{ic_nt\la}f_{n,k_2}\|_{L^2}\}\\
    \lesssim &(2^{-k_1}+2^{\e k_--k}+2^{\e k_--k_2})\min\{\|e^{ic_mt\la}f_{m,k_1}\|_{L^{18/7}}\|e^{ic_nt\la}f_{n,k_2}\|_{L^9},2^{3\min\{k,k_2\}/2}\|f_{m,k_1}\|_{L^2}\|f_{n,k_2}\|_{L^2}\}.
\end{align*}
Next, by the $L^{\infty}\times L^2\rightarrow L^2$ bilinear estimate in Lemma \ref{bilinear}, Lemma \ref{chi+eta}, Lemma \ref{chi+eta2}, and Lemma \ref{dualitycomp}, we obtain
\begin{align*}
    &\|\eqref{mixed1.1.3}\|_{L^2}+\|\eqref{mixed2.1.4}\|_{L^2}\\
    \lesssim &\big(\|\F^{-1}\frac{\partial_{\xi_l}\phi(\xi,\eta)\partial_{\eta_m}\phi(\xi,\eta)}{|\nabla_\eta\phi(\xi,\eta)|^2}\Tilde{\psi}_{k_1}(\nabla_\eta\phi(\xi,\eta))\Tilde{\psi}_{k_2}(\eta)\|_{L^1}
    + \\
    &+\|\F^{-1}\frac{\xi_m\partial_{\xi_l}\phi(\xi,\eta)}{2c_n|\xi|^2}q(\xi-\eta,\eta)\Tilde{\psi}_{k_1}(\xi-\eta)\Tilde{\psi}_{k_2}(\eta)\Tilde{\psi}_k(\xi)\|_{L^1}\big)
    \times\\
    &\times\min\{\|e^{ic_mt\la}f_{m,k_1}\|_{L^\infty}\|e^{ic_nt\la}\F^{-1}\nabla_\xi\hat{f}_{n,k_2}\|_{L^2},2^{3\min\{k,k_2\}/2}\|e^{ic_mt\la}f_{m,k_1}\|_{L^2}\|e^{ic_nt\la}\F^{-1}\nabla_\xi\hat{f}_{n,k_2}\|_{L^2}\}\\
    \lesssim & (1+2^{\e k_-+k_2-k})\min\{\|e^{ic_mt\la}f_{m,k_1}\|_{L^\infty}\|e^{ic_nt\la}\F^{-1}\nabla_\xi\hat{f}_{n,k_2}\|_{L^2},2^{3\min\{k,k_2\}/2}\|f_{m,k_1}\|_{L^2}\|\nabla_\xi\hat{f}_{n,k_2}\|_{L^2}\},
\end{align*}
and we can obtain another bound for \eqref{mixed2.1.2},
\begin{align*}
    &\|\eqref{mixed2.1.2}\|_{L^2}\\
    \lesssim &\|\F^{-1}\frac{\xi_m\partial_{\xi_l}\phi(\xi,\eta)}{2c_n|\xi|^2}\partial_{\eta_m}q(\xi-\eta,\eta)\Tilde{\psi}_{k_1}(\xi-\eta)\Tilde{\psi}_{k_2}(\eta)\Tilde{\psi}_k(\xi)\|_{L^1}\times\\
    &\times \min\{\|e^{ic_mt\la}f_{m,k_1}\|_{L^\infty}\|e^{ic_nt\la}f_{n,k_2}\|_{L^2},2^{3\min\{k,k_2\}/2}\|e^{ic_mt\la}f_{m,k_1}\|_{L^2}\|e^{ic_nt\la}f_{n,k_2}\|_{L^2}\}\\
    \lesssim &(2^{\e k_--k}+2^{\e k_--k_2})\min\{\|e^{ic_mt\la}f_{m,k_1}\|_{L^\infty}\|f_{n,k_2}\|_{L^2},2^{3\min\{k,k_2\}/2}\|f_{m,k_1}\|_{L^2}\|f_{n,k_2}\|_{L^2}\}.
\end{align*}
Combining the results above and using Lemma \ref{op}, we have for $t\in[2^{M-1},2^M]$
\begin{align*}
    &\sum_{(k_1,k_2)\in\chi_k^1\cup\chi_k^2}\|\nabla_\xi F^1_{k,k_1,k_2}\|_{L^2}+\sum_{(k_1,k_2)\in\chi_k^1\cup\chi_k^2\cup\chi_k^3}\|\nabla_\xi F^2_{k,k_1,k_2}\|_{L^2}\\
    \lesssim &\sum_{(k_1,k_2)\in\chi_k^1\cup\chi_k^2}\|\eqref{mixed1.2}+\eqref{mixed1.3}+\eqref{mixed1.1.1}+\eqref{mixed1.1.2}+\eqref{mixed1.1.3}\|_{L^2}+\\
    &+\sum_{(k_1,k_2)\in\chi_k^1\cup\chi_k^2\cup\chi_k^3}\|\eqref{mixed2.2}+\eqref{mixed2.3}+\eqref{mixed2.4}+\eqref{mixed2.1.1}+\eqref{mixed2.1.2}+\eqref{mixed2.1.3}+\eqref{mixed2.1.4}\|_{L^2}\\
    \lesssim & \sum_{(k_1,k_2)\in\chi_k^1\cup\chi_k^2\cup\chi_k^3}\min\{ (2^{-k}\|e^{ic_mt\la}f_{m,k_1}\|_{L^{18/7}}+(1+2^{\e k_-+k_2-k})\|e^{ic_mt\la}\F^{-1}\nabla_\xi\hat{f}_{m,k_1}\|_{L^{18/7}})\times\\
    &\qquad\qquad \times\|e^{ic_nt\la}f_{n,k_2}\|_{L^9},2^{3\min\{k,k_2\}/2}(2^{-k}\|f_{m,k_1}\|_{L^2}+(1+2^{\e k_-+k_2-k})\|\nabla_\xi\hat{f}_{m,k_1}\|_{L^2})\|f_{n,k_2}\|_{L^2}\}+\\
    &+\sum_{(k_1,k_2)\in\chi_k^1\cup\chi_k^2\cup\chi_k^3}(2^{k_2-k_1}+1+2^{\e k_-+k_2-k})\min\{\|e^{ic_mt\la}f_{m,k_1}\|_{L^\infty}\|\nabla_\xi\hat{f}_{n,k_2}\|_{L^2}, \\
    &\qquad\qquad\qquad\qquad 2^{3\min\{k,k_2\}/2}\|f_{m,k_1}\|_{L^2}\|\nabla_\xi\hat{f}_{n,k_2}\|_{L^2}\}+\\
    &+\sum_{(k_1,k_2)\in\chi_k^1}2^{\e k_--k_2}\min\{\|e^{ic_mt\la}f_{m,k_1}\|_{L^\infty}\|f_{n,k_2}\|_{L^2}, 2^{3\min\{k,k_2\}/2}\|f_{m,k_1}\|_{L^2}\|f_{n,k_2}\|_{L^2}\}\\
    \lesssim & \sum_{(k_1,k_2)\in\chi_k^1\cup\chi_k^2\cup\chi_k^3}\min\{(2^{-k}t^{-1/3}\|f_{m,k_1}\|_{L^{18/11}}+(1+2^{\e k_-+k_2-k})t^{-1/3}\|\F^{-1}\nabla_\xi\hat{f}_{m,k_1}\|_{L^{18/11}})\times\\
    &\qquad\qquad \times t^{-7/6}\|f_{n,k_2}\|_{L^{9/8}},2^{3\min\{k,k_2\}/2}(2^{-k}\|f_{m,k_1}\|_{L^2}+(1+2^{\e k_-+k_2-k})\|\nabla_\xi\hat{f}_{m,k_1}\|_{L^2})\|f_{n,k_2}\|_{L^2}\}+\\
    &+\sum_{(k_1,k_2)\in\chi_k^1\cup\chi_k^2\cup\chi_k^3}(1+2^{\e k_-+k_2-k})\min\{\|e^{ic_mt\la}f_{m,k_1}\|_{L^\infty}\|\nabla_\xi\hat{f}_{n,k_2}\|_{L^2}, \\
    &\qquad\qquad\qquad\qquad 2^{3\min\{k,k_2\}/2}\|f_{m,k_1}\|_{L^2}\|\nabla_\xi\hat{f}_{n,k_2}\|_{L^2}\}+\\
    &+\sum_{(k_1,k_2)\in\chi_k^1}2^{\e k_--k_2}\min\{\|e^{ic_mt\la}f_{m,k_1}\|_{L^\infty}\|f_{n,k_2}\|_{L^2}, 2^{3k_2/2}\|f_{m,k_1}\|_{L^2}\|f_{n,k_2}\|_{L^2}\}\\
    \lesssim & \sum_{(k_1,k_2)\in\chi_k^1\cup\chi_k^2\cup\chi_k^3}\min\{2^{-3M/2}(2^{-k}\|f_{m,k_1}\|_{L^{18/11}}+(1+2^{\e k_-+k_2-k})\|\F^{-1}\nabla_\xi\hat{f}_{m,k_1}\|_{L^{18/11}})\|f_{n,k_2}\|_{L^{9/8}},\\
    &\qquad\qquad\qquad\qquad 2^{3\min\{k,k_2\}/2}(2^{-k}\|f_{m,k_1}\|_{L^2}+(1+2^{\e k_-+k_2-k})\|\nabla_\xi\hat{f}_{m,k_1}\|_{L^2})\|f_{n,k_2}\|_{L^2}\}+\\
    &+\sum_{(k_1,k_2)\in\chi_k^1\cup\chi_k^2\cup\chi_k^3}(1+2^{\e k_-+k_2-k})\min\{\|e^{ic_mt\la}f_{m,k_1}\|_{L^\infty}\|\nabla_\xi\hat{f}_{n,k_2}\|_{L^2}, \\
    &\qquad\qquad\qquad\qquad 2^{3\min\{k,k_2\}/2}\|f_{m,k_1}\|_{L^2}\|\nabla_\xi\hat{f}_{n,k_2}\|_{L^2}\}+\\
    &+\sum_{(k_1,k_2)\in\chi_k^1}2^{\e k_--k_2}\min\{\|e^{ic_mt\la}f_{m,k_1}\|_{L^\infty}\|f_{n,k_2}\|_{L^2}, 2^{3k_2/2}\|f_{m,k_1}\|_{L^2}\|f_{n,k_2}\|_{L^2}\}\\
    \lesssim & \sum_{(k_1,k_2)\in\chi_k^1\cup\chi_k^2\cup\chi_k^3}2^{-2k_{1,+}+\gamma k_1-2k_{2,+}+\gamma k_2}\min\{2^{-3M/2}(2^{-k+2k_1-11k_1/6}+(1+2^{\e k_-+k_2-k})2^{k_1-11k_1/6})\times \\
    &\qquad\qquad \times 2^{2k_2-8k_2/3}, 2^{3\min\{k,k_2\}/2}(2^{-k+k_1/2}+(1+2^{\e k_-+k_2-k})2^{-k_1/2})2^{k_2/2}\}\e_1^2+\\
    &+\sum_{(k_1,k_2)\in\chi_k^1\cup\chi_k^2\cup\chi_k^3}(1+2^{\e k_-+k_2-k})2^{-2k_{1,+}+\gamma k_1-2k_{2,+}+\gamma k_2}\min\{2^{-3M/2+\gamma M/8-k_1+\gamma k_1/4-k_2/2}, \\
    &\qquad\qquad\qquad\qquad 2^{3\min\{k,k_2\}/2+k_1/2-k_2/2}\}\e_1^2+\\
    &+\sum_{(k_1,k_2)\in\chi_k^1}2^{\e k_--k_2-2k_{1,+}+\gamma k_1-2k_{2,+}+\gamma k_2}\min\{2^{-3M/2+\gamma M/8-k_1+\gamma k_1/4+k_2/2}, 2^{3k_2/2+k_1/2+k_2/2}\}\e_1^2\\
    \lesssim & \sum_{(k_1,k_2)\in\chi_k^1\cup\chi_k^2\cup\chi_k^3}2^{-k-2k_{1,+}+\gamma k_1-2k_{2,+}+\gamma k_2}\min\{2^{-3M/2+k_1/6-2k_2/3}, 2^{3\min\{k,k_2\}/2+k_1/2+k_2/2}\}\e_1^2+\\
    &+\sum_{(k_1,k_2)\in\chi_k^1\cup\chi_k^2\cup\chi_k^3}(1+2^{\e k_-+k_2-k})2^{-2k_{1,+}+\gamma k_1-2k_{2,+}+\gamma k_2}\min\{2^{-3M/2+\gamma M/8-k_1+\gamma k_1/4-k_2/2}, \\
    &\qquad\qquad\qquad\qquad 2^{3\min\{k,k_2\}/2+k_1/2-k_2/2}\}\e_1^2\\
    \lesssim & 2^{-M-2k_+-k/2}\e_1^2,
\end{align*}
where the $L^p$ norms are bounded using \eqref{l2}, \eqref{l2first}, \eqref{lpnormseq2}, \eqref{lpnormseq3}, and \eqref{linfinity}.

Hence, we showed
\begin{equation}
    \label{l2mixedder1}
    \|\nabla_\xi F_{1,k}\|_{L^2}\leq \sum_{(k_1,k_2)\in\chi_k^1\cup\chi_k^2}\|\nabla_\xi F^1_{k,k_1,k_2}\|_{L^2}+\sum_{(k_1,k_2)\in\chi_k^1\cup\chi_k^2\cup\chi_k^3}\|\nabla_\xi F^2_{k,k_1,k_2}\|_{L^2}\lesssim  2^{-M-2k_+-k/2}\e_1^2.
\end{equation}
Using Lemma \ref{dalpha} and \eqref{l2timeder1}, we have
\begin{equation}
\label{l2mixedalphader1}
    \|D^\alpha F_{1,k}\|_{L^2}\lesssim \|F_{1,k}\|_{L^2}^{1-\alpha}\|\nabla F_{1,k}\|_{L^2}^\alpha\lesssim2^{-2k_+ +(\gamma k-M+k/2)(1-\alpha)+(-M-k/2)\alpha}\e_1^2\lesssim 2^{-2k_++(1-\alpha)\gamma k -M+k/2-\alpha k}\e_1^2.
\end{equation}
Next, we need to find a bound on $\|\nabla_\xi F_{2,k}\|_{L^2}$. Immediately by $\eqref{partialtfbound1.1}$, we get
\begin{equation}
\label{mixed1.3est}
    \begin{aligned}
    \|\eqref{mixed1.3}\|_{L^2}
    &\leq \|\nabla\psi_k\|_{L^\infty}\bigg\|\int_{\R^3}e^{it\phi(\xi,\eta)}\hat{f}_{m,k_1}(t,\xi-\eta)\hat{f}_{n,k_2}(t,\eta)d\eta\Tilde{\psi}_k(\xi)\bigg\|_{L^2}\\
    &\lesssim 2^{-k-2k_{1,+}+\gamma k_1-2k_{2,+}+\gamma k_2}\min\{2^{-3M/2-k_1/4-k_2/4},2^{3\min\{k,k_2\}/2+k_1/2+k_2/2}\}\e_1^2.
\end{aligned}
\end{equation}
By the $L^{18/7}\times L^9\rightarrow L^2$ bilinear estimate, Lemma \ref{dualitycomp}, Lemma \ref{op}, Lemma \ref{chi+eta2}, Lemma \ref{lpnorms}, \eqref{l2}, and \eqref{l2first}, for $t\in[2^{M-1},2^M]$
\begin{equation}
    \label{mixed1.1est}
    \begin{aligned}
    \|\eqref{mixed1.1}\|_{L^2}
    \lesssim &2^{M}\|\F^{-1}\partial_{\xi_l}\phi(\xi,\eta)\Tilde{\psi}_k(\xi)\Tilde{\psi}_{k_2}(\eta)\|_{L^1}\min\{\|e^{ic_mt\la}f_{m,k_1}\|_{L^{18/7}}\|e^{ic_nt\la}f_{n,k_2}\|_{L^9},\\
    &\qquad\qquad\qquad\qquad
    2^{3\min\{k,k_2\}/2}\|e^{ic_mt\la}f_{m,k_1}\|_{L^2}\|e^{ic_nt\la}f_{n,k_2}\|_{L^2}\}\\
    \lesssim & 2^{M+k_2}\min\{2^{-3M/2}\|f_{m,k_1}\|_{L^{18/11}}\|f_{n,k_2}\|_{L^{9/8}},2^{3\min\{k,k_2\}/2}\|f_{m,k_1}\|_{L^2}\|f_{n,k_2}\|_{L^2}\}\\
    \lesssim &2^{M+k_2-2k_{1,+}+\gamma k_1-2k_{2,+}+\gamma k_2}\min\{2^{-3M/2+k_1/6-2k_2/3},2^{3\min\{k,k_2\}/2+k_1/2+k_2/2}\}\e_1^2
\end{aligned}
\end{equation}
and
\begin{equation}
\label{mixed1.2est}
    \begin{aligned}
    \|\eqref{mixed1.2}\|_{L^2}
    \lesssim & \min\{\|e^{ic_mt\la}\F^{-1}\nabla_\xi\hat{f}_{m,k_1}\|_{L^{18/7}}\|e^{ic_nt\la}f_{n,k_2}\|_{L^{9}},\\
    &\qquad\qquad\qquad\qquad 2^{3\min\{k,k_2\}/2}\|e^{ic_mt\la}\F^{-1}\nabla_\xi\hat{f}_{m,k_1}\|_{L^2}\|e^{ic_nt\la}f_{n,k_2}\|_{L^2}\}\\
    \lesssim & \min\{2^{-3M/2}\|\F^{-1}\nabla_\xi\hat{f}_{m,k_1}\|_{L^{18/11}}\|f_{n,k_2}\|_{L^{9/8}},2^{3\min\{k,k_2\}/2}\|\F^{-1}\nabla_\xi\hat{f}_{m,k_1}\|_{L^2}\|f_{n,k_2}\|_{L^2}\}\\
    \lesssim &2^{-2k_{1,+}+\gamma k_1-2k_{2,+}+\gamma k_2}\min\{2^{-3M/2-5k_1/6-2k_2/3},2^{3\min\{k,k_2\}/2-k_1/2+k_2/2}\}\e_1^2.
\end{aligned}
\end{equation}
Thus, we showed
\begin{align*}
    \|\nabla_\xi F_{2,k}\|_{L^2}
    \lesssim &\sum_{(k_1,k_2)\in\chi_k^3}\|\nabla_\xi F^1_{k,k_1,k_2}\|_{L^2}\\
    \lesssim & \sum_{(k_1,k_2)\in\chi_k^3}\|\eqref{mixed1.1}+\eqref{mixed1.2}+\eqref{mixed1.3}\|_{L^2}\\
    \lesssim & 2^{M+k-4k_++2\gamma k}\min\{2^{-3M/2-k/2},2^{3k/2+k}\}\e_1^2+2^{-4k_++2\gamma k}\min\{2^{-3M/2-3k/2},2^{3k/2}\}\e_1^2\\
    \lesssim &2^{-4k_++2\gamma k}(1+2^{M+2k})\min\{2^{-3M/2-3k/2},2^{3k/2}\}\e_1^2.
\end{align*}
Recall the estimate in \eqref{l2timeder2}, when $M>-2k$,
\begin{align*}
    \| F_{2,k}\|_{L^2}
    \lesssim 2^{-4k_++2\gamma k-3M/2-k/2}\e_1^2
\end{align*}
and
\begin{align*}
    \|\nabla_\xi F_{2,k}\|_{L^2}
    \lesssim 2^{-4k_++2\gamma k}(2^{-3M/2-3k/2}+2^{-M/2+k/2})\e_1^2\lesssim 2^{-4k_++2\gamma k-M/2+k/2}\e_1^2.
\end{align*}
By Lemma \ref{dalpha}, we get
\begin{align*}
    \|D^\alpha F_{2,k}\|_{L^2}
    &\lesssim \|F_{2,k}\|_{L^2}^{1-\alpha}\|\nabla F_{2,k}\|_{L^2}^\alpha\\
    & \lesssim2^{-4k_++2\gamma k +(-3M/2-k/2)(1-\alpha)+(-M/2+k/2)\alpha}\e_1^2\\
    &\lesssim 2^{-4k_++2\gamma k-3M/2+\alpha M-k/2+\alpha k}\e_1^2\\
    &\lesssim 2^{-4k_++2\gamma k-M - (1/2-\alpha)
    M-k/2+\alpha k}\e_1^2\\
    &\lesssim 2^{-4k_++2\gamma k-M + k-2\alpha k-k/2+\alpha k}\e_1^2\\
    &= 2^{-4k_++2\gamma k -M+k/2-\alpha k}\e_1^2.
\end{align*}
When $M\leq -2k$, 
\begin{align*}
    \| F_{2,k}\|_{L^2}
    \lesssim 2^{-4k_++2\gamma k+5k/2}\e_1^2
\end{align*}
and
\begin{align*}
    \|\nabla_\xi F_{2,k}\|_{L^2}
    \lesssim 2^{-4k_++2\gamma k}(2^{3k/2}+2^{M+7k/2})\e_1^2\lesssim 2^{-4k_++2\gamma k+3k/2}\e_1^2,
\end{align*}
hence
\begin{align*}
    \|D^\alpha F_{2,k}\|_{L^2}
    &\lesssim \|F_{2,k}\|_{L^2}^{1-\alpha}\|\nabla F_{2,k}\|_{L^2}^\alpha\\
    &\lesssim2^{-4k_++2\gamma k +5k/2(1-\alpha)+\alpha3k/2}\e_1^2\\
    &\lesssim 2^{-4k_++2\gamma k +5k/2-\alpha k}\e_1^2\\
    &\lesssim 2^{-4k_++2\gamma k -M+k/2-\alpha k}\e_1^2.
\end{align*}
Therefore,
\begin{equation}
\label{l2mixedalphader2}
    \|D^\alpha F_{2,k}\|_{L^2}\lesssim 2^{-4k_++2\gamma k -M+k/2-\alpha k}\e_1^2\leq 2^{-2k_++(1-\alpha)\gamma k -M+k/2-\alpha k}\e_1^2,
\end{equation}
and \eqref{l2mixedalphader1} combined with \eqref{l2mixedalphader2} proves \eqref{l2mixedalphader}.\\
Moreover, from the computation above, we also have
\begin{equation}
\label{l2mixedder2}
    \|\nabla_\xi F_{2,k}\|_{L^2}\lesssim (1+2^{M/2+k})2^{-2k_+-M-k/2}\e_1^2,
\end{equation}
and \eqref{l2mixedder} is an immediate result of \eqref{l2mixedder1} and \eqref{l2mixedder2}.
\end{proof}
Lastly, using parts of the estimates we obtained in Lemma \ref{mixder}, we may conclude this section with the following result for the $L^3$ norm of the mixed derivatives.
\begin{lem}
\label{mixderl3}
Under the assumption \eqref{b1}, we have for any $t\in[2^{M-1},2^M]$
\begin{align*}
    \|e^{ic_lt\la}\F^{-1}\nabla_\xi\partial_t\hat{f}_{l,k}(t,\xi)\|_{L^3}\lesssim 2^{-M-2k_+}\e_1^2.
\end{align*}
\end{lem}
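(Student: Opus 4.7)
The plan is to follow the same decomposition used for Lemma~\ref{mixder}: write $\partial_t \hat{f}_{l,k} = F_{1,k} + F_{2,k}$ as in \eqref{Fsetupdef} and differentiate each $F^i_{k,k_1,k_2}$ in $\xi$, producing the thirteen terms \eqref{mixed1.1}--\eqref{mixed2.1.4}. After applying $e^{ic_lt\la}$, the operator $\F^{-1}e^{-ic_lt|\xi|^2}$ converts each $F^i_{k,k_1,k_2}$ into a bilinear form in the free Schr\"odinger evolutions $e^{ic_mt\la}f_{m,k_1}$ and $e^{ic_nt\la}f_{n,k_2}$ (together with their $\xi$-gradients), with a symbol whose $L^1$-norm is estimated by Lemmas~\ref{chi,eta}, \ref{chi+eta}, \ref{chi+eta2} and the hypothesis \eqref{q} exactly as in Lemma~\ref{mixder}.

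The key dichotomy is on the size of $2^{M/2+k}$. When $M+2k\leq 0$, Bernstein's inequality combined with the $L^2$ isometry of $e^{ic_lt\la}$ and the bound \eqref{l2mixedder} immediately gives
$$\|e^{ic_lt\la}\F^{-1}\nabla_\xi\partial_t\hat{f}_{l,k}\|_{L^3}\lesssim 2^{k/2}\|\nabla_\xi\partial_t\hat{f}_{l,k}\|_{L^2}\lesssim (1+2^{M/2+k})2^{-M-2k_+}\e_1^2 \lesssim 2^{-M-2k_+}\e_1^2.$$
When $M+2k>0$, I instead pass through the $L^6\times L^6\to L^3$ bilinear estimate of Lemma~\ref{bilinear} combined with the dispersive bound $\|e^{ict\la}g_{k_i}\|_{L^6}\lesssim t^{-1}\|g_{k_i}\|_{L^{6/5}}$ from Lemma~\ref{op}, and use \eqref{lpnormseq2} and \eqref{lpnormseq3} to control the $L^{6/5}$-norms of $f_{m,k_1}$, $f_{n,k_2}$ and their $\F^{-1}\nabla_\xi$ counterparts. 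The two $t^{-1}$ factors yield the $2^{-M}$ decay and cleanly absorb the extra $t$ arising when $\partial_{\xi_l}$ lands on the phase $e^{it\phi}$, as in \eqref{mixed1.1} and \eqref{mixed2.1}; the $2^{-2k_+}$ weight follows from the $-2k_{1,+}-2k_{2,+}$ coming from the $Z$-norm bounds together with the $2^{\e k_-}$ gain in \eqref{q} when summing dyadic blocks.

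The main obstacle is the $\chi^3_k$ block for $F^1$, where integration by parts in $\eta$ is unavailable and one is forced to pay the full $t\,\partial_{\xi_l}\phi \sim 2^M 2^k$ from \eqref{mixed1.1}. This was already the source of the $(1+2^{M/2+k})$ loss in \eqref{l2mixedder2} at the $L^2$-level; here, however, switching from $L^2$ to $L^6\times L^6\to L^3$ buys two $t^{-1}$ factors that precisely cancel the $2^M$, while the remaining $2^k$ is absorbed by the two $2^{-k/2}$ factors in $\|f_{m,k_1}\|_{L^{6/5}}\|f_{n,k_2}\|_{L^{6/5}}$ (since $k\sim k_1\sim k_2$ on $\chi^3_k$). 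A routine case split on the sign of $k$ then produces the desired $2^{-M-2k_+}\e_1^2$ for this block as well, completing the proof.
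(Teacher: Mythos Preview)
Your approach is essentially the paper's: the decisive step is treating the phase-derivative piece \eqref{mixed1.1} on $\chi^3_k$ via the $L^6\times L^6\to L^3$ bilinear estimate with two dispersive $t^{-1}$ factors, and that is exactly what the paper does. The paper's organization differs only cosmetically --- it avoids your $M+2k$ dichotomy by using Bernstein together with the finer bound \eqref{l2mixedder1} (which carries no $(1+2^{M/2+k})$ loss) for all of $F_{1,k}$, and then handles the two harmless pieces \eqref{mixed1.2}, \eqref{mixed1.3} of $F_{2,k}$ by Bernstein as well (via \eqref{mixed1.2est}, \eqref{mixed1.3est}), reserving $L^6\times L^6$ solely for \eqref{mixed1.1}.

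There is, however, a technical slip in your second paragraph that you should repair. You invoke \eqref{lpnormseq3} to control $\|\F^{-1}\nabla_\xi\hat f_{k_i}\|_{L^{6/5}}$, but that estimate requires $p>6/(3+2\alpha)$, and since $\alpha<1/2$ this threshold exceeds $3/2$; hence $p=6/5$ is out of range. This does not actually break the argument: every term carrying a factor $\nabla_\xi\hat f$ (namely \eqref{mixed1.2}, \eqref{mixed1.1.2}, \eqref{mixed1.1.3}, \eqref{mixed2.3}, \eqref{mixed2.1.3}, \eqref{mixed2.1.4}) is free of the dangerous extra power of $t$, so its contribution to $\|\nabla_\xi\partial_t\hat f_{l,k}\|_{L^2}$ is already $\lesssim 2^{-M-2k_+-k/2}\e_1^2$ by the estimates inside Lemma~\ref{mixder}, and Bernstein alone finishes those pieces. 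In other words, $L^6\times L^6$ is needed only on \eqref{mixed1.1} in $\chi^3_k$, where both inputs are undifferentiated and \eqref{lpnormseq2} by itself handles the $L^{6/5}$-norms --- exactly as in the paper.
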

\begin{proof}
We will keep the definition of $F_{1,k}$ and $F_{2,k}$ same as in the proof of Lemma \ref{mixder}, then it suffices to show
\begin{align*}
    \|\F^{-1}e^{-ic_lt|\xi|^2}\nabla_\xi F_{1,k}\|_{L^3}+\|\F^{-1}e^{-ic_lt|\xi|^2}\nabla_\xi F_{2,k}\|_{L^3}\lesssim 2^{-M-2k_+}\e_1^2.
\end{align*}
Using Bernstein's inequality in Lemma \ref{bernstein} and \eqref{l2mixedder1},
\begin{align*}
    \|\F^{-1}e^{-ic_lt|\xi|^2}\nabla_\xi F_{1,k}\|_{L^3}\lesssim 2^{k/2}\|\F^{-1}e^{-ic_lt|\xi|^2}\nabla_\xi F_{1,k}\|_{L^2}\lesssim 2^{-M-2k_+}\e_1^2.
\end{align*}
Next, from \eqref{Fsetupdef} we observe that
\begin{align*}
    \|\F^{-1}e^{-ic_lt|\xi|^2}\nabla_\xi F_{2,k}\|_{L^3}\leq \sum_{(k_1,k_2)\in\chi^3_k}\|\F^{-1}e^{-ic_lt|\xi|^2}\nabla_\xi F^1_{k,k_1,k_2}\|_{L^3}
\end{align*}
and
\begin{align}
    \F^{-1}e^{-ic_lt|\xi|^2}\partial_{\xi_l}F^1_{k,k_1,k_2}
    =&\F^{-1}\int_{\R^3}e^{-ic_mt|\xi-\eta|^2-ic_nt|\eta|^2}it\partial_{\xi_l}\phi(\xi,\eta)\hat{f}_{m,k_1}(t,\xi-\eta)\hat{f}_{n,k_2}(t,\eta)d\eta\psi_k(\xi)\label{l3mixed1.1}\\
    & + \F^{-1}\int_{\R^3}e^{-ic_mt|\xi-\eta|^2-ic_nt|\eta|^2}\partial_{\xi_l}\hat{f}_{m,k_1}(t,\xi-\eta)\hat{f}_{n,k_2}(t,\eta)d\eta\psi_k(\xi)\label{l3mixed1.2}\\
    &+ \F^{-1}\int_{\R^3}e^{-ic_mt|\xi-\eta|^2-ic_nt|\eta|^2}\hat{f}_{m,k_1}(t,\xi-\eta)\hat{f}_{n,k_2}(t,\eta)d\eta\partial_{\xi_l}\psi_k(\xi)\label{l3mixed1.3}.
\end{align}
By Bernstein's inequality, $\eqref{mixed1.3est}$, and $\eqref{mixed1.2est}$, we have
\begin{align*}
    \|\eqref{l3mixed1.3}\|_{L^3}
    &\lesssim 2^{k/2}\bigg\|\int_{\R^3}e^{it\phi(\xi,\eta)}\hat{f}_{m,k_1}(t,\xi-\eta)\hat{f}_{n,k_2}(t,\eta)d\eta\partial_{\xi_l}\psi_k(\xi)\bigg\|_{L^2}\\
    &= 2^{k/2}\|\eqref{mixed1.3}\|_{L^2}
    \lesssim 2^{-k/2-2k_{1,+}+\gamma k_1-2k_{2,+}+\gamma k_2}\min\{2^{-3M/2-k_1/4-k_2/4},2^{3\min\{k,k_2\}/2+k_1/2+k_2/2}\}\e_1^2
\end{align*}
and
\begin{align*}
    \|\eqref{l3mixed1.2}\|_{L^3}
    \lesssim & 2^{k/2}\bigg\|\int_{\R^3}e^{it\phi(\xi,\eta)}\partial_{\xi_l}\hat{f}_{m,k_1}(t,\xi-\eta)\hat{f}_{n,k_2}(t,\eta)d\eta\psi_k(\xi)\bigg\|_{L^2}\\
    =&2^{k/2}\|\eqref{mixed1.2}\|_{L^2}
    \lesssim 2^{k/2-2k_{1,+}+\gamma k_1-2k_{2,+}+\gamma k_2}\min\{2^{-3M/2-5k_1/6-2k_2/3},2^{3\min\{k,k_2\}/2-k_1/2+k_2/2}\}\e_1^2.
\end{align*}
Lastly, using the $L^6\times L^6\rightarrow L^3$ bilinear estimate, Lemma \ref{op}, Lemma \ref{chi+eta2} and Lemma \ref{lpnorms}, we obtain
\begin{align*}
    \|\eqref{l3mixed1.1}\|_{L^3}
    \lesssim &2^{M}\|\F^{-1}\partial_{\xi_l}\phi(\xi,\eta)\Tilde{\psi}_k(\xi)\Tilde{\psi}_{k_2}(\eta)\|_{L^1}\|e^{ic_mt\la}f_{m,k_1}\|_{L^6}\|e^{ic_nt\la}f_{n,k_2}\|_{L^6}\\
    \lesssim  &2^{M+k-2M}\|f_{m,k_1}\|_{L^{6/5}}\|f_{n,k_2}\|_{L^{6/5}}
    \lesssim 2^{-M+k_2-2k_{1,+}+\gamma k_1-2k_{2,+}+\gamma k_2-k_1/2-k_2/2}\e_1^2.
\end{align*}
Summarizing the bounds above, we have
\begin{align*}
    \|\F^{-1}e^{-ic_lt|\xi|^2}\nabla_\xi F_{2,k}\|_{L^3}
    \lesssim & \sum_{(k_1,k_2)\in\chi_k^3}\|\eqref{l3mixed1.1}+\eqref{l3mixed1.2}+\eqref{l3mixed1.3}\|_{L^3}\\
    \lesssim & 2^{-M-4k_++2\gamma k}\e_1^2+2^{-4k_++2\gamma k}\min\{2^{-3M/2-k},2^{2k}\}\e_1^2\\
    \lesssim &2^{-M-4k_++2\gamma k}\e_1^2\\
    \lesssim &2^{-M-2k_+}\e_1^2.
\end{align*}
\end{proof}

\section{Proof of Theorem \ref{mainthm}}\label{main proof}
Assume the initial condition $\|u_{l0}\|_{H^{10}}+\|e^{-ic_l\Delta}u_{l0}\|_Z\leq \e_0$. The local well-posedness in Proposition \ref{lwp} gives the bootstrap assumption \eqref{b1}.

First, we perform an energy estimate to bound the term $\|u_l(t,x)\|_{H^{10}_x}=\|e^{ic_lt\la}f_l(t,x)\|_{H^{10}_x}=\|f_l(t,x)\|_{H^{10}_x}$. Take any $t\in[1,T]$, by the initial condition \eqref{initial}, the assumption \eqref{b1}, and the estimation in Lemma \ref{timel2}
\begin{align*}
    \|u_l(t,x)\|^2_{H^{10}_x}
    &\lesssim \|u_{l0}\|^2_{H^{10}_x}+\sum_{k\in\Z}\int_1^t\partial_s\|f_{l,k}(s,x)\|^2_{H^{10}_x}ds\\
    &=\e_0^2+\sum_{k\in\Z}\int_1^t\partial_s\|(1+|\xi|^2)^5\hat{f}_{l,k}(s,\xi)\|^2_{L^2_\xi}ds\\
    &=\e_0^2+\sum_{k\in\Z}\int_1^t\partial_s\int_{\R^3}(1+|\xi|^2)^{10}\hat{f}_{l,k}(s,\xi)\bar{\hat{f}}_{l,k}(s,\xi)d\xi ds\\
    &\lesssim\e_0^2+\sum_{k\in\Z}\int_1^t 2^{20k_+}\int_{\R^3}\partial_s \hat{f}_{l,k}(s,\xi)\bar{\hat{f}}_{l,k}(s,\xi)+\hat{f}_{l,k}(s,\xi)\partial_s\bar{\hat{f}}_{l,k}(s,\xi)d\xi ds\\
    &\lesssim \e_0^2+\sum_{k\in\Z}\int_1^t2^{20k_+}\|\hat{f}_{l,k}(s,\xi)\|_{L^2_\xi}\|\partial_s \hat{f}_{l,k}(s,\xi)\|_{L^2_\xi} ds\\
    &\lesssim \e_0^2+\sum_{k\in\Z}\int_1^t2^{20k_+-10k_+}\|\hat{f}_{l,k}(s,\xi)\|_{L^2_\xi}\frac{\e_1^2}{s^{1+\gamma/2}} ds\\
    &\lesssim \e_0^2+\|f_l\|_{L^\infty_t([1,T])H^{10}_x}\int_1^t\frac{\e_1^2}{s^{1+\gamma/2}} ds\\
    &\lesssim \e_0^2+\|f_l\|_{L^\infty_t([1,T])H^{10}_x}\frac{\e_1^2}{\gamma}
    \lesssim \e_0^2+\e_1^3\lesssim \e_0^2.
\end{align*}

Next, we use the result below whose proof is given in Section \ref{prop}.
\begin{prop}
\label{main}
Under assumption \eqref{b1},
\begin{equation}
    \|e^{-ic_lt\la}u_l(t,x)\|_Z=\|f_l(t,x)\|_Z\lesssim \e_0
\end{equation}
for all $l$ and $t\in [1,T]$.
\end{prop}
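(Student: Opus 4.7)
The plan is to bound $2^{-\gamma k+2k_++k/2+\alpha k}\|D^{1+\alpha}_\xi\hat{f}_{l,k}(t,\xi)\|_{L^2}$ uniformly in $k\in\Z$ and $t\in[1,T]$ by $\e_0$. Starting from Duhamel, write $\hat{f}_{l,k}(t,\xi)=\hat{f}_{l,k}(1,\xi)+\int_1^t\partial_s\hat{f}_{l,k}(s,\xi)\,ds$. The initial data piece is controlled by the hypothesis $\|e^{-ic_l\Delta}u_{l0}\|_Z\leq\e_0$ in \eqref{initial}. The task reduces to showing that the nonlinear Duhamel contribution is bounded in $Z$ by something like $\e_1^2$, which under the bootstrap \eqref{b1} and the choice $\e_1=\e_0^{5/6}$ is $\ll\e_0$ and so improves the bootstrap constant.

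To this end I would dyadically decompose the time interval as $[1,t]=\bigcup_{M\geq 0}(I_M\cap[1,t])$ with $I_M=[2^{M-1},2^M]$, and aim to prove
\begin{equation*}
\Bigl\|D^{1+\alpha}_\xi\!\int_{I_M}\!\partial_s\hat{f}_{l,k}(s,\xi)\,ds\Bigr\|_{L^2_\xi}\lesssim 2^{\gamma k-2k_+-k/2-\alpha k}\,2^{-\delta M}\e_1^2
\end{equation*}
for some $\delta>0$, which is summable in $M$. Because $D^{1+\alpha}_\xi$ is nonlocal for $\alpha\in(1/2-\gamma/8,1/2)$, I would invoke the Stein interpolation of Lemma \ref{si}(b) on the analytic family $T_z g=\Lambda^{z}_\xi g$ applied to $\int_{I_M}\partial_s\hat{f}_{l,k}\,ds$, with $s_1=1$, $s_2=2$, and $\theta=\alpha$. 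This reduces the fractional estimate to two endpoint bounds: one for $\nabla_\xi$ and one for $D^2_\xi$ acting on the Duhamel integral, together with the $H^{10}$-type Sobolev control of the integrand supplied by Lemma \ref{sobolev space}.

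For each endpoint I would expand $\partial_s\hat{f}_{l,k}$ using the decomposition \eqref{partialtf} into pieces indexed by $(k_1,k_2)\in\chi^1_k\cup\chi^2_k\cup\chi^3_k$, distribute the $\xi$-derivatives onto the phase $\phi$, the cutoff $\psi_k$, the multiplier $q$, and the profiles $\hat{f}_{m,k_1}$, $\hat{f}_{n,k_2}$, and estimate each resulting term by the bilinear machinery of Lemmas \ref{bilinear}--\ref{chi+eta2} together with the $L^p$ bounds of Lemma \ref{estimates} and the time-derivative bounds of Lemmas \ref{timel2}--\ref{mixderl3}. The $\xi$-derivatives that fall on $\phi$ produce a dangerous prefactor of $s$; in $\chi^1_k\cup\chi^2_k$ this is cancelled by integration by parts in $\eta$ using the identity $\sum_m \tfrac{\partial_{\eta_m}\phi}{is|\nabla_\eta\phi|^2}\partial_{\eta_m}e^{is\phi}=e^{is\phi}$, whose denominator is nonvanishing on the support by \eqref{xi-2eta}. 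In $\chi^3_k$ the gradient $\nabla_\eta\phi$ can vanish, so there one must instead integrate by parts in $s$ using $\partial_s e^{is\phi}=i\phi e^{is\phi}$ when $\phi$ is bounded from below (the nonresonant subcase), or apply the Strichartz-based bilinear estimate of Lemma \ref{strichartz} to gain the decay factor $2^{-M/4}$ when $\phi$ itself can vanish.

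The principal obstacle is the resonant region $\chi^3_k$ combined with the cases $c_m+c_n=0$ or $1/c_l=1/c_m+1/c_n$, where both space and time resonances are present simultaneously. For $c_m+c_n=0$ the extra $2^{\e k_-}$ decay supplied by the symbol condition \eqref{q} compensates the loss at low frequencies, in the spirit of Wang \cite{wang}; for $1/c_l=1/c_m+1/c_n$ one relies on the Strichartz gain $2^{-M/4}$ from Lemma \ref{strichartz} together with the $2^{\gamma k}$ slack permitted by the $Z$ norm. The constraint $\alpha<1/2-\gamma/(16-2\gamma)$ is used to guarantee that the weights $2^{(1+\alpha)k}$ translate, via Lemma \ref{lpnorms}, into $L^p$ bounds for $p$ sufficiently close to $1$ for the $e^{is\Delta}\colon L^{p}\to L^{p'}$ decay of Lemma \ref{op} to yield an integrable $s^{-1-\delta}$ in time; the complementary lower bound $\alpha>1/2-\gamma/8$ is what makes the endpoint $D^2$ estimate from Lemma \ref{mixder} close with the required power of $2^{-M}$. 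Once all dyadic pieces are summed, the bound $\|f_l\|_Z\lesssim\e_0+\e_1^2\lesssim\e_0$ follows, completing the bootstrap improvement.
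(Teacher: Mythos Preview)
Your high-level strategy (Duhamel, dyadic time decomposition, summing $2^{-\delta M}$ gains) matches the paper, but two ingredients are missing or incorrect.

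First, your interpolation setup is misconfigured. Applying Lemma~\ref{si} to the family $T_zg=\Lambda_\xi^z g$ with $g$ equal to the \emph{fixed} Duhamel integrand gives nothing beyond the elementary bound $\|D^{1+\alpha}h\|_{L^2}\le\|\nabla h\|_{L^2}^{1-\alpha}\|\nabla^2 h\|_{L^2}^{\alpha}$. To close the $\nabla^2$ endpoint you would then have to bound $\|\nabla_\xi^2\hat{f}_{m,k_1}\|_{L^2}$, which the $Z$ norm does \emph{not} control (only $D^{1+\alpha}$ with $\alpha<1/2$ is available), and Lemma~\ref{sobolev space} stops at $H^{1+\alpha}$. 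The paper avoids this by first writing $D^{1+\alpha}=D^\alpha\nabla_\xi$, distributing the single $\nabla_\xi$ into terms $I^{M,i}_{k,k_1,k_2},J^{M,i}_{k,k_1,k_2}$ ($i=0,1,2$), and then for each term setting up a Stein family $T_z$ that acts on one of the \emph{profiles} $\hat g=\hat f_{m}\tilde\psi_{k_1}$ (or $\hat f_n\tilde\psi_{k_2}$); interpolation then lands on $\|\hat f_{m,k_1}\|_{H^{1+\alpha}}$ rather than $H^2$.

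Second, and more seriously, your treatment of $\chi^3_k$ does not close for the term $I^{M,2}$ (the one carrying the explicit factor $it\,\partial_{\xi_l}\phi$). In that region both $\phi$ and $\nabla_\eta\phi$ can vanish simultaneously, so neither integration by parts in $s$ nor in $\eta$ is available, and Strichartz via Lemma~\ref{strichartz} only recovers $2^{-M/4}$ against a prefactor of size $t\,|\partial_\xi\phi|\sim 2^{M+k}$ plus a time integral of length $2^M$, which is far from summable. The paper's key device here (Section~\ref{2-3}) is the combined vector field identity
\[
\Bigl(\partial_t+\tfrac{P(\xi,\eta)}{t}\cdot\nabla_\eta\Bigr)e^{it\phi}=iZ(\xi,\eta)e^{it\phi},\qquad |Z(\xi,\eta)|\gtrsim 2^{2k},
\]
valid when $\tfrac{1}{c_l}\neq\tfrac{1}{c_m}+\tfrac{1}{c_n}$; this removes the dangerous $t$ without needing either $\phi$ or $\nabla_\eta\phi$ to be individually nonvanishing. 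In the remaining case $\tfrac{1}{c_l}=\tfrac{1}{c_m}+\tfrac{1}{c_n}$, one has the algebraic identity $\nabla_\xi\phi=\tfrac{c_l-c_m}{c_m}\nabla_\eta\phi$, so the $t\,\partial_{\xi_l}\phi$ factor is converted into $t\,\partial_{\eta_l}\phi$ and removed by a single integration by parts in $\eta$; Strichartz is not what closes that case either. Without these two mechanisms the $\chi^3_k$ contribution to $I^{M,2}$ cannot be summed in $M$.
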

Hence,
\begin{equation*}
    \sup_{t\in[1,T]}\|u_l(t,x)\|_{H^{10}_x}+\|e^{-ic_lt\la}u_l(t,x)\|_Z\lesssim \e_0.
\end{equation*}
Define 
$$T^+=\sup\{T>1: \{u_l\}\text{ is a solution to }\eqref{pde}\text{ s.t. }\sup_{l}\sup_{t\in[1,T]}\|u_l(t,x)\|_{H^{10}_x}+\|e^{-ic_lt\la}u_l(t,x)\|_{Z}\leq \e_1\}.$$ Suppose $T^+<\infty$, then 
\begin{equation*}
    \sup_l\sup_{t\in[1,T^+]}\|u_l(t,x)\|_{H^{10}_x}+\|e^{-ic_lt\la}u_l(t,x)\|_Z\lesssim \e_0<\e_1,
\end{equation*}
given $\e_0$ sufficiently small. Hence, by local well-posedness, there exists some $T'>T^+$ such that $\{\Tilde{u}_l\}$ solves \eqref{pde} and 
\begin{equation*}
    \sup_l\sup_{t\in[1,T']}\|\Tilde{u}_l(t,x)\|_{H^{10}_x}+\|e^{-ic_lt\la}\Tilde{u}_l(t,x)\|_Z\leq \e_1,
\end{equation*}
which gives a contradiction and shows $T^+=\infty$.

Thus, we can conclude
\begin{equation}
\label{con}
    \sup_{t\in[1,\infty)}\|u_l(t,x)\|_{H^{10}_x}+\|e^{-ic_lt\la}u_l(t,x)\|_Z\lesssim \e_0,
\end{equation}
which proves the existence of a global solution to the IVP \eqref{pde} provided $\e_0$ sufficiently small. The uniqueness follows from the uniqueness for local well-posedness in Proposition \ref{lwp}.

Furthermore, as a result of \eqref{con} above and \eqref{linfinity}, for any integer $M\geq 1$ and $t\in[2^{M-1},2^M]$
\begin{align*}
    \|u_l(t,x)\|_{L^\infty_x}
    &\lesssim\sum_{k\in\Z}\|e^{ic_lt\la}f_{l,k}\|_{L^\infty_x}\\
    &\lesssim\sum_{k\in\Z}\min\{2^{-3M/2+\gamma M/8-2k_++\gamma k+\gamma k/4-k},2^{\gamma k-2k_++2k}\}\e_0\\
    &\lesssim\sum_{k\leq -M/2}2^{\gamma k-2k_++2k}\e_0+\sum_{k> -M/2}2^{-3M/2+\gamma M/8-2k_++\gamma k+\gamma k/4-k}\e_0\\
    &\lesssim 2^{-M(1+\gamma/2)}\e_0\lesssim t^{-(1+\gamma/2)}\e_0,
\end{align*}
which proves \eqref{gse}.\\
Lastly, we show the scattering result. Since we now have $$\sup_{t\in[1,\infty)}\|u_l(t,x)\|_{H^{10}_x}+\|e^{-ic_lt\la}u_l(t,x)\|_{Z}=\sup_{t\in[1,\infty)}\|f_l(t,x)\|_{H^{10}_x}+\|f_l(t,x)\|_{Z}\lesssim \e_0,$$ we know $\|\partial_t\hat{f}_{l,k}(t,x)\|_{L^2_x}\lesssim 2^{-10 k_+} t^{-1-\gamma/2}\e_0^2$ for all $t\geq 1$ from Lemma \ref{timel2}. Then for any $1\leq t_1< t_2$,
\begin{align*}
    \|f_l(t_2,x)-f_l(t_1,x)\|^2_{H^{10}_x}
    &\lesssim \sum_{k\in\Z}\int_{t_1}^{t_2}\partial_s\|f_{l,k}(s,x)\|^2_{H^{10}_x}ds\\
    &\lesssim\sum_{k\in\Z}\int_{t_1}^{t_2} 2^{20k_+}\int_{\R^3}\partial_s \hat{f}_{l,k}(s,\xi)\bar{\hat{f}}_{l,k}(s,\xi)+\hat{f}_{l,k}(s,\xi)\partial_s\bar{\hat{f}}_{l,k}(s,\xi)d\xi ds\\
    &\lesssim \sum_{k\in\Z}\int_{t_1}^{t_2}2^{20k_+}\|\hat{f}_{l,k}(s,x)\|_{L^2_x}\|\partial_s \hat{f}_{l,k}(s,x)\|_{L^2_x} ds\\
    &\lesssim \sum_{k\in\Z}\int_{t_1}^{t_2}2^{20k_+-10k_+}\|\hat{f}_{l,k}(s,x)\|_{L^2}\frac{\e_0^2}{s^{1+\gamma/2}} ds\\
    &\lesssim \|f_l(t,x)\|_{L^\infty_t([1,\infty))H^{10}_x}\int_{t_1}^{t_2}\frac{\e_0^2}{s^{1+\gamma/2}} ds\\
    &\lesssim \frac{\e_0^3}{\gamma}(\frac{1}{t_1^{\gamma/2}}-\frac{1}{t_2^{\gamma/2}}).
\end{align*}
Thus, $\Tilde{f}_n(x)=f_l(n,x)$ forms a Cauchy sequence in $H^{10}(\R^3)$. Hence, there exists some $u^*(x)\in H^{10}(\R^3)$ so that $\|f_l(n,x)-u^*(x)\|_{H^{10}}\rightarrow 0$ as $n\rightarrow\infty$. If $\lfloor t\rfloor$ denote the integer part of $t$, then
$$\|f_l(t,x)-u^*(x)\|_{H^{10}_x}\leq \|f_l(\lfloor t\rfloor,x)-u^*(x)\|_{H^{10}_x}+\|f_l(t,x)-f_l(\lfloor t\rfloor,x)\|_{H^{10}_x}$$
implies $\|u_l(t,x)-e^{ic_lt\la}u^*(x)\|_{H^{10}_x}=\|f_l(t,x)-u^*(x)\|_{H^{10}_x}\rightarrow 0$ as $t\rightarrow\infty$.
\section{Proof of Local Well-Posedness}
\label{plwp}
The following local well-posedness result is essential for us to make the bootstrap assumption \eqref{b1}. The result indicates that if we start at any time $t_0>1$ and have initial data $u_{l0}$ with $e^{-ic_lt_0\Delta}u_{l0}$ bounded in the $H^{10}\cap Z$ space, we can find a unique solution to the partial differential equation that exists on the time interval $[t_0,T]$ where $T$ is only dependent on the starting time $t_0$ and the size of our initial data. The proof uses Duhamel's integral equation and is based on the contraction mapping principle.
\label{lwpproof}
\begin{prop}[local well-posedness]
\label{lwp}
Given $t_0\geq 1$. If $\|u_{l0}\|_{H^{10}}+\|e^{-ic_lt_0\la}u_{l0}\|_Z\leq \e$ for some $\e>0$, then there exists $T=T(t_0,\e)>t_0$ so that the equation \begin{equation}
\label{lwppde}
    \begin{cases}
     \partial_t u_l= ic_l\Delta u_l+\sum_{c_m+c_n\neq 0}A_{lmn} u_mu_n+\sum_{c_m+c_n= 0}A_{lmn} Q(u_m,u_n)\\
     u_l|_{t=t_0}=u_{l0}
    \end{cases},
\end{equation}
has a unique solution $u_l(t,x)$ such that $e^{-ic_lt\la}u_l(t,x)\in\mathcal{C}([t_0,T];H^{10}\cap Z)$.
\end{prop}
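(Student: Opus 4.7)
The plan is to apply the contraction mapping principle to the Duhamel integral equation for the profile $f_l(t,x)=e^{-ic_lt\la}u_l(t,x)$. Writing
\begin{equation*}
    f_l(t)=e^{-ic_lt_0\la}u_{l0}+\int_{t_0}^t e^{-ic_ls\la}\mathcal{N}_l(u(s))\,ds,\qquad u_m(s)=e^{ic_ms\la}f_m(s),
\end{equation*}
with $\mathcal{N}_l(u)=\sum_{c_m+c_n\neq 0}A_{lmn}u_mu_n+\sum_{c_m+c_n=0}A_{lmn}Q(u_m,u_n)$, I set up the iteration on the closed ball $B_{2\e}\subset X_T:=\mathcal{C}([t_0,T];(H^{10}\cap Z)^N)$ equipped with the norm $\|f\|_{X_T}=\sup_l\sup_{t\in[t_0,T]}(\|f_l(t)\|_{H^{10}}+\|f_l(t)\|_Z)$, and define $\Phi(f)_l(t)$ as the right-hand side above. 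The goal is to show $\Phi:B_{2\e}\to B_{2\e}$ is a contraction, provided $T-t_0$ is taken small enough in terms of $\e$ and $t_0$.

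Next I would verify $\Phi(B_{2\e})\subset B_{2\e}$. The linear contribution $e^{-ic_lt_0\la}u_{l0}$ has $H^{10}\cap Z$ norm bounded by $\e$ by hypothesis (using that $e^{-ic_lt\la}$ is an $L^2$-isometry for the $H^{10}$ part). For the $H^{10}$ norm of the Duhamel term I would use that $e^{-ic_ls\la}$ is an isometry on $H^{10}$ together with the Sobolev algebra property (valid since $10>3/2$), yielding
\begin{equation*}
    \Big\|\int_{t_0}^t e^{-ic_ls\la}(u_mu_n)(s)\,ds\Big\|_{H^{10}}\leq (T-t_0)\sup_{s\in[t_0,T]}\|u_m(s)\|_{H^{10}}\|u_n(s)\|_{H^{10}}\lesssim (T-t_0)(2\e)^2,
\end{equation*}
and the $Q(u_m,u_n)$ contribution is handled analogously via Lemma \ref{bilinear} combined with the multiplier bound \eqref{q} applied dyadically.

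The main obstacle is bounding the $Z$-norm of the Duhamel term. Passing to the Fourier side, for each $k\in\Z$,
\begin{equation*}
    D^{1+\alpha}_\xi\widehat{\Phi(f)}_{l,k}(t,\xi)=D^{1+\alpha}_\xi\big[e^{ic_lt_0|\xi|^2}\hat u_{l0}(\xi)\psi_k(\xi)\big]+\sum A_{lmn}\int_{t_0}^t D^{1+\alpha}_\xi G_{k,k_1,k_2}(s,\xi)\,ds,
\end{equation*}
where $G_{k,k_1,k_2}$ is a bilinear piece of the form $\int e^{is\phi(\xi,\eta)}q(\xi-\eta,\eta)\hat f_m(s,\xi-\eta)\hat f_n(s,\eta)\psi_k(\xi)\,d\eta$ (with $q\equiv 1$ in the non-$Q$ case). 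Using a fractional Leibniz rule, each fractional derivative either lands on the oscillating phase $e^{is\phi}$ (producing at most $s^{1+\alpha}\lesssim T^{1+\alpha}$ times symbols involving $\nabla_\xi\phi$ or $\nabla^2_\xi\phi$ whose $\F^{-1}$-$L^1$ norms are controlled by Lemma \ref{chi,eta} and Lemma \ref{chi+eta2}), on the multiplier $q$ (absorbed by \eqref{q}), or on $\hat f_m(\xi-\eta)$ (producing $D^{1+\alpha}_\xi\hat f_m$ or $D^\alpha_\xi\nabla_\xi\hat f_m$, both controlled in $L^2$ via Lemma \ref{sobolev space}). Applying Lemma \ref{bilinear} and Lemma \ref{dualitycomp}, multiplying by the $Z$-weight $2^{-\gamma k+2k_++k/2+\alpha k}$ and integrating in $s$ yields a total bound $C(t_0,T)(T-t_0)\|f\|_{X_T}^2\lesssim C(t_0,T)(T-t_0)(2\e)^2$, which is at most $\e$ once $T-t_0$ is small enough.

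The contraction estimate follows from the same machinery applied to the bilinear difference $\Phi(f)-\Phi(g)$, giving Lipschitz constant at most $1/2$ after further shrinking $T-t_0$. Continuity in time of $\Phi(f)$ into $H^{10}\cap Z$ is immediate from the Duhamel form. The Banach fixed-point theorem then furnishes a unique $f\in B_{2\e}$, from which $u_l=e^{ic_lt\la}f_l$ solves \eqref{lwppde} uniquely on $[t_0,T]$. The hard part throughout is the systematic accounting of $D^{1+\alpha}_\xi$ on the bilinear integral, in particular the $s^{1+\alpha}$ growth when derivatives hit the phase; fortunately, since $s\in[t_0,T]$ is bounded, these polynomial-in-$s$ factors combine into a constant $C(t_0,T)$ which is beaten by the $T-t_0$ factor from time integration.
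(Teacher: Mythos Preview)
Your overall framework---contraction mapping for the profile $f_l=e^{-ic_lt\Delta}u_l$ on a ball of radius $2\e$ in $\mathcal{C}([t_0,T];H^{10}\cap Z)$, with $T-t_0$ small---matches the paper's approach exactly, and your $H^{10}$ estimate is fine. The gap is in the $Z$-norm estimate of the Duhamel term.

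The invocation of a ``fractional Leibniz rule'' to distribute $D^{1+\alpha}_\xi$ across $e^{is\phi(\xi,\eta)}q(\xi-\eta,\eta)\hat f_m(\xi-\eta)$ is not justified: this is an oscillatory bilinear integral, not a Coifman--Meyer pseudoproduct, and there is no off-the-shelf Leibniz rule that produces the clean trichotomy you describe. (Also, $D^{1+\alpha}_\xi\hat f_m$ and $D^\alpha_\xi\nabla_\xi\hat f_m$ are the same object, which suggests the distribution has not been thought through.) The paper instead writes $D^{1+\alpha}_\xi=D^\alpha_\xi\nabla_\xi$ and computes $\nabla_\xi$ explicitly, producing terms with zero or one derivative on $\hat f_{m,k_1}$. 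For the zero-derivative terms it bounds $D^\alpha$ via the elementary interpolation $\|D^\alpha F\|_{L^2}\leq\|F\|_{L^2}^{1-\alpha}\|\nabla F\|_{L^2}^\alpha$ (Lemma~\ref{dalpha}). For the one-derivative terms the same trick would require $\nabla^2_\xi\hat f_{m,k_1}$, which is \emph{not} controlled by the $Z$ norm; the paper circumvents this with the Stein-type complex interpolation of Lemma~\ref{si}, bounding a family $T_z$ at $\Re z=0$ in terms of $\|\hat g\|_{H^1}$ and at $\Re z=1$ in terms of $\|\hat g\|_{H^2}$, then interpolating to land on $H^{1+\alpha}$. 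Moreover, in the high--low regime $(k_1,k_2)\in\chi^1_k$ the paper needs an extra integration by parts in $\eta$ (using $|\nabla_\eta\phi|\gtrsim 2^{k_1}$) before the $\|\nabla F^1\|_{L^2}$ bound closes. None of this machinery is captured by ``fractional Leibniz,'' and without it the $Z$-estimate does not go through.
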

\begin{proof}
Define an operator
\begin{align*}
    \Psi(\vec{f})_l(t,x)= &e^{-ic_lt_0\la}u_{l0}+\int_{t_0}^te^{-ic_ls\la}\bigg(\sum_{\substack{c_m+c_n=0\\m\geq n}}A_{lmn}Q(e^{ic_ms\la}f_m,e^{ic_ns\la}f_n)+\\
    &\qquad\qquad\qquad\qquad +\sum_{\substack{c_m+c_n\neq 0\\m\geq n}}A_{lmn}(e^{ic_ms\la}f_m)(e^{ic_ns\la}f_n)\bigg)ds.
\end{align*}
We shall show there exists some $T>t_0$ and $r>0$ such that for
\begin{align*}
    X=\{\vec{f}\in\prod_l\mathcal{C}([t_0,T]; H^{10}\cap Z): \sup_l\sup_{t\in[t_0,T]}\|f_l\|_{H^{10}_x}+\|f_l\|_{Z}\leq r\},
\end{align*}
$\Psi:X\rightarrow X$, and furthermore, $\Psi$ is a contraction map.\\
Take an arbitrary $\vec{f}\in X$ and $t\in[t_0,T]$. Our goal is to establish bounds for $\|\Psi(\vec{f})_l\|_{H^{10}_x}$ and $\|\Psi(\vec{f})_l\|_{Z}$. These bounds will rely on the initial data $u_0$, the radius $r$, and the time interval $[t_0, T]$. To proceed, we need to compute
\begin{align*}
    \|\Psi(\vec{f})_l\|_{H^{10}_x}
    \leq & \|e^{-ic_lt_0\la}u_{l0}\|_{H^{10}}+(T-t_0)\sum_{\substack{c_m+c_n=0\\m\geq n}}A_{lmn}\|e^{-ic_ls\la}Q(e^{ic_ms\la}f_m,e^{ic_ns\la}f_n)\|_{L^\infty_s([t_0,T]) H^{10}_x}+\\
    &\qquad\qquad\qquad\qquad +(T-t_0)\sum_{\substack{c_m+c_n\neq 0\\m\geq n}}A_{lmn}\|(e^{ic_ms\la}f_m)(e^{ic_ns\la}f_n)\|_{L^\infty_s([t_0,T]) H^{10}_x}\\
    \leq & \|u_{l0}\|_{H^{10}}+(T-t_0)\sum_{\substack{c_m+c_n=0\\m\geq n}}A_{lmn}\|(1+|\xi|^2)^5\F Q(e^{ic_ms\la}f_m,e^{ic_ns\la}f_n)\|_{L^\infty_s([t_0,T]) L^2_\xi}+\\
    &\qquad\qquad\qquad\qquad (T-t_0)\sum_{\substack{c_m+c_n\neq 0\\m\geq n}}A_{lmn}\|e^{ic_ms\la}f_m\|_{L^\infty_s([1,T])H^{10}_x}\|e^{ic_ns\la}f_n\|_{L^\infty_s([t_0,T])H^{10}_x}.
\end{align*}
\label{keypage1}
Using the $L^2\times L^\infty\rightarrow L^2$ bilinear estimate in Lemma \ref{bilinear} and Bernstein's inequality in Lemma \ref{bernstein}, we have
\begin{align*}
    &\sum_{\substack{c_m+c_n=0\\m\geq n}}A_{lmn}\|(1+|\xi|^2)^5\F Q(e^{ic_ms\la}f_m,e^{ic_ns\la}f_n)\|_{L^\infty_s([t_0,T]) L^2_\xi}\\
    \leq & \sum_{\substack{c_m+c_n=0\\m\geq n}}A_{lmn}\bigg\|\sum_{k,k_1,k_2\in\Z}2^{10k_+}\psi_k(\xi)\int_{\R^3} q(\xi-\eta,\eta)\times\\
    &\qquad\qquad\qquad\qquad \times e^{-ic_ms|\xi-\eta|^2}\hat{f}_{m,k_1}(s,\xi-\eta)e^{-ic_ns|\eta|^2}\hat{f}_{n,k_2}(s,\eta)d\eta\bigg\|_{L^\infty_s([t_0,T])L^2_\xi}\\
    \leq & \sum_{\substack{c_m+c_n=0\\m\geq n}}A_{lmn}\bigg\|\sum_{k_1,k_2\in\Z}\sum_{k\leq \max\{k_1,k_2\}+1}2^{10k_+}\psi_k(\xi)\int_{\R^3} q(\xi-\eta,\eta)\times\\
    &\qquad\qquad\qquad\qquad \times e^{-ic_ms|\xi-\eta|^2}\hat{f}_{m,k_1}(s,\xi-\eta) e^{-ic_ns|\eta|^2}\hat{f}_{n,k_2}(s,\eta)d\eta\bigg\|_{L^\infty_s([t_0,T])L^2_\xi}\\
    \leq &\sum_{c_m+c_n=0}A_{lmn}\bigg\|\sum_{k_1\in\Z}\sum_{k_2\leq k_1}\sum_{k\leq k_1}2^{10k_++\e k_-}\|e^{ic_ms\la}f_{m,k_1}\|_{L^2_x}\|e^{ic_ns\la}f_{n,k_2}\|_{L^\infty_x}\bigg\|_{L^\infty_s([t_0,T])}\\
    \lesssim &\sum_{c_m+c_n=0}A_{lmn}\bigg\|\sum_{k_1\in\Z}\sum_{k_2\leq k_1}2^{10k_{1,+}}\|e^{ic_ms\la}f_{m,k_1}\|_{L^2_x}2^{3k_2/2}\|e^{ic_ns\la}f_{n,k_2}\|_{L^2_x}\bigg\|_{L^\infty_s([t_0,T])}\\
    \lesssim &\sum_{c_m+c_n=0}A_{lmn}\bigg\|\sum_{k_1\in\Z}\sum_{k_2\leq k_1}2^{10k_{1,+}}\|f_{m,k_1}\|_{L^2_x}2^{10k_{2,+}}\|f_{n,k_2}\|_{L^2_x}\bigg\|_{L^\infty_s([t_0,T])}\\
    \lesssim &\sum_{c_m+c_n=0}A_{lmn}\|f_m\|_{L^\infty_s([t_0,T])H^{10}_x}\|f_n\|_{L^\infty_s([t_0,T])H^{10}_x}.
\end{align*}
Hence, there is some constant $C>0$ independent of $f$ so that
\begin{align*}
    \|\Psi(\vec{f})_l\|_{L^\infty_t([t_0,T])H^{10}_x}
    \lesssim & \|u_{l0}\|_{H^{10}}+\sum_{c_m+c_n=0}A_{lmn}(T-t_0)\|f_m\|_{L^\infty_t([1,T])H^{10}_x}\|f_n\|_{L^\infty_t([t_0,T])H^{10}_x}\\
    \leq & \|u_{l0}\|_{H^{10}}+C(T-t_0)r^2.
\end{align*}
Next, we move on to bounding the $Z$ norm
\begin{align*}
    &\|\Psi(\vec{f})_l\|_Z\\
    \leq & \|e^{-i\la}u_{l0}\|_{Z}+ (T-t_0)\sup_{k\in\Z}2^{-\gamma k+2k_++k/2+\alpha k}\bigg\|D^{1+\alpha}_\xi \F e^{-ic_ls\la}\times\\
    &\times \bigg(\sum_{\substack{c_m+c_n=0\\m\geq n}}A_{lmn}Q(e^{ic_ms\la}f_m,e^{ic_ns\la}f_n)+\sum_{\substack{c_m+c_n\neq 0\\m\geq n}}A_{lmn}(e^{ic_ms\la}f_m)(e^{ic_ns\la}f_n)\bigg)\psi_k(\xi)\bigg\|_{L^\infty_s([t_0,T])L^2_\xi},
\end{align*}
where according to the computation on page \pageref{key} and \eqref{partialtf},
\begin{align*}
    &\F e^{-ic_ls\la} \bigg(\sum_{\substack{c_m+c_n=0\\m\geq n}}A_{lmn}Q(e^{ic_ms\la}f_m,e^{ic_ns\la}f_n)+\sum_{\substack{c_m+c_n\neq 0\\m\geq n}}A_{lmn}(e^{ic_ms\la}f_m)(e^{ic_ns\la}f_n)\bigg)\psi_k(\xi)\\
    =&\sum_{\substack{c_m+c_n=0\\m\geq n}}A_{lmn}\sum_{k_1,k_2\in\Z}\int_{\R^3}e^{is\phi(\xi,\eta)}q(\xi-\eta,\eta)\hat{f}_{m,k_1}(s,\xi-\eta)\hat{f}_{n,k_2}(s,\eta)\psi_k(\xi)d\eta\\
    &+\sum_{\substack{c_m+c_n\neq 0\\m\geq n}}A_{lmn}\sum_{k_1,k_2\in\Z}\int_{\R^3}e^{is\phi(\xi,\eta)}\hat{f}_{m,k_1}(s,\xi-\eta)\hat{f}_{n,k_2}(x,\eta)\psi_k(\xi)d\eta\\
    =&\sum_{\substack{c_m+c_n=0\\m\geq n}}A_{lmn}\sum_{(k_1,k_2)\in\chi^2_k\cup\chi^3_k}\int_{\R^3}e^{is\phi(\xi,\eta)}q(\xi-\eta,\eta)\hat{f}_{m,k_1}(s,\xi-\eta)\hat{f}_{n,k_2}(s,\eta)\psi_k(\xi)d\eta \\
    &+\sum_{\substack{c_m+c_n\neq 0\\m\geq n}}A_{lmn}\sum_{(k_1,k_2)\in\chi^2_k\cup\chi^3_k}\int_{\R^3}e^{is\phi(\xi,\eta)}\hat{f}_{m,k_1}(s,\xi-\eta)\hat{f}_{n,k_2}(s,\eta)\psi_k(\xi)d\eta\\
    &+\sum_{c_m+c_n\neq 0}A_{lmn}\sum_{(k_1,k_2)\in\chi^1_k}\int_{\R^3}e^{is\phi(\xi,\eta)}q(\xi-\eta,\eta)\hat{f}_{m,k_1}(s,\xi-\eta)\hat{f}_{n,k_2}(s,\eta)\psi_k(\xi)d\eta \\
    &+\sum_{c_m+c_n=0}A_{lmn}\sum_{(k_1,k_2)\in\chi^1_k}\int_{\R^3}e^{is\phi(\xi,\eta)}\hat{f}_{m,k_1}(s,\xi-\eta)\hat{f}_{n,k_2}(s,\eta)\psi_k(\xi)d\eta,
\end{align*}
for $\phi(\xi,\eta)=c_l|\xi|^2-c_m|\xi-\eta|^2-c_n|\eta|^2$.
Since $D^{1+\alpha}_\xi=D^\alpha_\xi\nabla_\xi$, we look at the first derivative in $\xi$ for each term above,
\begin{align}
    &\partial_{\xi_l} \int_{\R^3}e^{is\phi(\xi,\eta)}\hat{f}_{m,k_1}(s,\xi-\eta)\hat{f}_{n,k_2}(s,\eta)\psi_k(\xi)d\eta\nonumber\\
    =&\int_{\R^3}e^{is\phi(\xi,\eta)}is\partial_{\xi_l}\phi(\xi,\eta)\hat{f}_{m,k_1}(s,\xi-\eta)\hat{f}_{n,k_2}(s,\eta)\psi_k(\xi)d\eta\label{lwp1.1}\\
    &+\int_{\R^3}e^{is\phi(\xi,\eta)}\partial_{\xi_l}\hat{f}_{m,k_1}(s,\xi-\eta)\hat{f}_{n,k_2}(s,\eta)\psi_k(\xi)d\eta\label{lwp1.2}\\
    &+\int_{\R^3}e^{is\phi(\xi,\eta)}\hat{f}_{m,k_1}(s,\xi-\eta)\hat{f}_{n,k_2}(s,\eta)\partial_{\xi_l}\psi_k(\xi)d\eta\label{lwp1.3}
\end{align}
and
\begin{align}
    &\partial_{\xi_l} \int_{\R^3}e^{is\phi(\xi,\eta)}q(\xi-\eta,\eta)\hat{f}_{m,k_1}(s,\xi-\eta)\hat{f}_{n,k_2}(s,\eta)\psi_k(\xi)d\eta\nonumber\\
    =& \int_{\R^3}e^{is\phi(\xi,\eta)}is\partial_{\xi_l}\phi(\xi,\eta) q(\xi-\eta,\eta)\hat{f}_{m,k_1}(s,\xi-\eta)\hat{f}_{n,k_2}(s,\eta)\psi_k(\xi)d\eta\label{lwp2.1}\\
    &+\int_{\R^3}e^{is\phi(\xi,\eta)}\partial_{\xi_l} q(\xi-\eta,\eta)\hat{f}_{m,k_1}(s,\xi-\eta)\hat{f}_{n,k_2}(s,\eta)\psi_k(\xi)d\eta\label{lwp2.3-1}\\
    &+\int_{\R^3}e^{is\phi(\xi,\eta)} q(\xi-\eta,\eta)\partial_{\xi_l}\hat{f}_{m,k_1}(s,\xi-\eta)\hat{f}_{n,k_2}(s,\eta)\psi_k(\xi)d\eta\label{lwp2.2}\\
    &+\int_{\R^3}e^{is\phi(\xi,\eta)} q(\xi-\eta,\eta)\hat{f}_{m,k_1}(s,\xi-\eta)\hat{f}_{n,k_2}(s,\eta)\partial_{\xi_l}\psi_k(\xi)d\eta\label{lwp2.3}.
\end{align}
Split the terms in the first derivative according to the number of derivatives on $f_{m,k_1}$ and define
$$F^0_{k,k_1,k_2}=\eqref{lwp1.1}+\eqref{lwp1.3}+\eqref{lwp2.1}+\eqref{lwp2.3-1}+\eqref{lwp2.3},$$
$$F^1_{k,k_1,k_2}=\eqref{lwp1.2}+\eqref{lwp2.2}.$$
From condition \eqref{q} on the multiplier $q$, Lemma \ref{chi+eta}, and Lemma \ref{chi+eta2}, we know
\begin{align*}
    \|\F^{-1}\partial_{\xi_l}\phi(\xi,\eta)\Tilde{\psi}_k(\xi)\Tilde{\psi}_{k_2}(\eta)\|_{L^1}
    +\|\F^{-1}\partial_{\xi_l}\phi(\xi,\eta)q(\xi-\eta,\eta)\Tilde{\psi}_k(\xi)\Tilde{\psi}_{k_1}(\xi-\eta)\Tilde{\psi}_{k_2}(\eta)\|_{L^1}
    \lesssim 2^{k}+2^{k_2}\lesssim 2^{k_1}
\end{align*}
and 
\begin{align*}
    \|\F^{-1}\nabla_\xi q(\xi-\eta,\eta)\Tilde{\psi}_k(\xi)\Tilde{\psi}_{k_1}(\xi-\eta)\Tilde{\psi}_{k_2}(\eta)\|_{L^1}
    \lesssim 2^{\e k_--k}.
\end{align*}
Then, by Lemma \ref{dualitycomp},
\begin{equation}
\label{zeroderterm1}
    \begin{aligned}
    \|F^0_{k,k_1,k_2}(s,\xi)\|_{L^2_\xi}
    \lesssim & (T2^{k_1}+2^{\e k_--k}+\|\nabla\psi_k\|_{L^\infty})\|f_{m,k_1}(s,x)\|_{L^2_x}2^{3\min\{k,k_2\}/2}\|f_{n,k_2}(s,x)\|_{L^2_x}\\
    \lesssim & (T2^{k_1}+2^{-k})\|f_{m,k_1}(s,x)\|_{L^2_x}2^{3\min\{k,k_2\}/2}\|f_{n,k_2}(s,x)\|_{L^2_x}
    \end{aligned}
\end{equation}
and
\begin{align}
\label{onederterms1}
&\|F^1_{k,k_1,k_2}(s,\xi)\|_{L^2_\xi}
\lesssim \|\nabla_\xi\hat{f}_{m,k_1}(s,\xi)\|_{L^2_\xi}2^{3\min\{k,k_2\}/2}\|f_{n,k_2}(s,x)\|_{L^2_x}.
\end{align}
In order to bound the $1+\alpha$ derivatives, we take another derivative in $\xi$ and get
\begin{align*}
    \partial_{\xi_m}[\eqref{lwp1.1}+\eqref{lwp1.3}]
    =&\int_{\R^3}e^{is\phi(\xi,\eta)}is\partial_{\xi_m}\partial_{\xi_l}\phi(\xi,\eta)\hat{f}_{m,k_1}(s,\xi-\eta)\hat{f}_{n,k_2}(s,\eta)\psi_k(\xi)d\eta\\
    &-\int_{\R^3}e^{is\phi(\xi,\eta)}s^2\partial_{\xi_l}\phi(\xi,\eta)\partial_{\xi_m}\phi(\xi,\eta)\hat{f}_{m,k_1}(s,\xi-\eta)\hat{f}_{n,k_2}(s,\eta)\psi_k(\xi)d\eta\\
    &+\int_{\R^3}e^{is\phi(\xi,\eta)}is\partial_{\xi_l}\phi(\xi,\eta)\partial_{\xi_m}\hat{f}_{m,k_1}(s,\xi-\eta)\hat{f}_{n,k_2}(s,\eta)\psi_k(\xi)d\eta\\
    &+2\int_{\R^3}e^{is\phi(\xi,\eta)}is\partial_{\xi_l}\phi(\xi,\eta)\hat{f}_{m,k_1}(s,\xi-\eta)\hat{f}_{n,k_2}(s,\eta)\partial_{\xi_m}\psi_k(\xi)d\eta\\
    &+\int_{\R^3}e^{is\phi(\xi,\eta)}\partial_{\xi_m}\hat{f}_{m,k_1}(s,\xi-\eta)\hat{f}_{n,k_2}(s,\eta)\partial_{\xi_l}\psi_k(\xi)d\eta\\
    &+\int_{\R^3}e^{is\phi(\xi,\eta)}\hat{f}_{m,k_1}(s,\xi-\eta)\hat{f}_{n,k_2}(s,\eta)\partial_{\xi_m}\partial_{\xi_l}\psi_k(\xi)d\eta,
\end{align*}
\begin{align*}
    \partial_{\xi_m}\eqref{lwp1.2}
    =&\int_{\R^3}e^{is\phi(\xi,\eta)}is\partial_{\xi_m}\phi(\xi,\eta)\partial_{\xi_l}\hat{f}_{m,k_1}(s,\xi-\eta)\hat{f}_{n,k_2}(s,\eta)\psi_k(\xi)d\eta\\
    &+\int_{\R^3}e^{is\phi(\xi,\eta)}\partial_{\xi_m}\partial_{\xi_l}\hat{f}_{m,k_1}(s,\xi-\eta)\hat{f}_{n,k_2}(s,\eta)\psi_k(\xi)d\eta\\
    &+\int_{\R^3}e^{is\phi(\xi,\eta)}\partial_{\xi_l}\hat{f}_{m,k_1}(s,\xi-\eta)\hat{f}_{n,k_2}(s,\eta)\partial_{\xi_m}\psi_k(\xi)d\eta,
\end{align*}
\begin{align*}
    \partial_{\xi_m}[\eqref{lwp2.1}+\eqref{lwp2.3-1}+\eqref{lwp2.3}]
    =&-\int_{\R^3}e^{is\phi(\xi,\eta)}s^2\partial_{\xi_l}\phi(\xi,\eta)\partial_{\xi_m}\phi(\xi,\eta) q(\xi-\eta,\eta)\hat{f}_{m,k_1}(s,\xi-\eta)\hat{f}_{n,k_2}(s,\eta)\psi_k(\xi)d\eta\\
    &+\int_{\R^3}e^{is\phi(\xi,\eta)}is\partial_{\xi_m}\partial_{\xi_l}\phi(\xi,\eta)q(\xi-\eta,\eta)\hat{f}_{m,k_1}(s,\xi-\eta)\hat{f}_{n,k_2}(s,\eta)\psi_k(\xi)d\eta\\
    &+2\int_{\R^3}e^{is\phi(\xi,\eta)}is\partial_{\xi_l}\phi(\xi,\eta) \partial_{\xi_m}q(\xi-\eta,\eta)\hat{f}_{m,k_1}(s,\xi-\eta)\hat{f}_{n,k_2}(s,\eta)\psi_k(\xi)d\eta\\
    &+\int_{\R^3}e^{is\phi(\xi,\eta)}is\partial_{\xi_l}\phi(\xi,\eta) q(\xi-\eta,\eta)\partial_{\xi_m}\hat{f}_{m,k_1}(s,\xi-\eta)\hat{f}_{n,k_2}(s,\eta)\psi_k(\xi)d\eta\\
    &+2\int_{\R^3}e^{is\phi(\xi,\eta)}is\partial_{\xi_l}\phi(\xi,\eta) q(\xi-\eta,\eta)\hat{f}_{m,k_1}(s,\xi-\eta)\hat{f}_{n,k_2}(s,\eta)\partial_{\xi_m}\psi_k(\xi)d\eta\\
    &+\int_{\R^3}e^{is\phi(\xi,\eta)}\partial_{\xi_m}\partial_{\xi_l} q(\xi-\eta,\eta)\hat{f}_{m,k_1}(s,\xi-\eta)\hat{f}_{n,k_2}(s,\eta)\psi_k(\xi)d\eta\\
    &+\int_{\R^3}e^{is\phi(\xi,\eta)}\partial_{\xi_l} q(\xi-\eta,\eta)\partial_{\xi_m}\hat{f}_{m,k_1}(s,\xi-\eta)\hat{f}_{n,k_2}(s,\eta)\psi_k(\xi)d\eta\\
    &+2\int_{\R^3}e^{is\phi(\xi,\eta)}\partial_{\xi_l} q(\xi-\eta,\eta)\hat{f}_{m,k_1}(s,\xi-\eta)\hat{f}_{n,k_2}(s,\eta)\partial_{\xi_m}\psi_k(\xi)d\eta\\
    &+\int_{\R^3}e^{is\phi(\xi,\eta)} q(\xi-\eta,\eta)\partial_{\xi_m}\hat{f}_{m,k_1}(s,\xi-\eta)\hat{f}_{n,k_2}(s,\eta)\partial_{\xi_l}\psi_k(\xi)d\eta\\
    &+\int_{\R^3}e^{is\phi(\xi,\eta)} q(\xi-\eta,\eta)\hat{f}_{m,k_1}(s,\xi-\eta)\hat{f}_{n,k_2}(s,\eta)\partial_{\xi_l}\partial_{\xi_m}\psi_k(\xi)d\eta,
\end{align*}
and
\begin{align*}
    \partial_{\xi_m}\eqref{lwp2.2}
    =&\int_{\R^3}e^{is\phi(\xi,\eta)}is\partial_{\xi_m}\phi(\xi,\eta) q(\xi-\eta,\eta)\partial_{\xi_l}\hat{f}_{m,k_1}(s,\xi-\eta)\hat{f}_{n,k_2}(s,\eta)\psi_k(\xi)d\eta\\
    &+\int_{\R^3}e^{is\phi(\xi,\eta)}\partial_{\xi_m} q(\xi-\eta,\eta)\partial_{\xi_l}\hat{f}_{m,k_1}(s,\xi-\eta)\hat{f}_{n,k_2}(s,\eta)\psi_k(\xi)d\eta\\
    &+\int_{\R^3}e^{is\phi(\xi,\eta)} q(\xi-\eta,\eta)\partial_{\xi_m}\partial_{\xi_l}\hat{f}_{m,k_1}(s,\xi-\eta)\hat{f}_{n,k_2}(s,\eta)\psi_k(\xi)d\eta\\
    &+\int_{\R^3}e^{is\phi(\xi,\eta)} q(\xi-\eta,\eta)\partial_{\xi_l}\hat{f}_{m,k_1}(s,\xi-\eta)\hat{f}_{n,k_2}(s,\eta)\partial_{\xi_m}\psi_k(\xi)d\eta.
\end{align*}
Using Lemma \ref{chi+eta}, Lemma \ref{chi+eta2}, and \eqref{q}, we have
\begin{align*}
    &\|\F^{-1}\partial_{\xi_l}\phi(\xi,\eta)\partial_{\xi_m}\phi(\xi,\eta)\Tilde{\psi}_k(\xi)\Tilde{\psi}_{k_2}(\eta)\|_{L^1}
    +\\
    &+\|\F^{-1}\partial_{\xi_l}\phi(\xi,\eta)\partial_{\xi_m}\phi(\xi,\eta) q(\xi-\eta,\eta)\Tilde{\psi}_k(\xi)\Tilde{\psi}_{k_1}(\xi-\eta)\Tilde{\psi}_{k_2}(\eta)\|_{L^1}
    \lesssim 2^{2k_1}
\end{align*}
and 
\begin{align*}
    \|\F^{-1}\nabla^2_\xi q(\xi-\eta,\eta)\Tilde{\psi}_k(\xi)\Tilde{\psi}_{k_1}(\xi-\eta)\Tilde{\psi}_{k_2}(\eta)\|_{L^1}
    \lesssim 2^{\e k_--2k}.
\end{align*}
Hence, employing Lemma \ref{dualitycomp}, we obtain
\begin{equation}
    \label{zeroderterms2}
    \begin{aligned}
    \|\nabla_\xi F^0_{k,k_1,k_2}(s,\xi)\|_{L^2_\xi}
    \lesssim & (T+T^22^{2k_1}+T2^{k_1}\|\nabla\psi_k\|_{L^\infty}+T2^{k_1+\e k_--k}+\|\nabla^2\psi_k\|_{L^\infty}+2^{\e k_--2k}+\\
    &+2^{\e k_--k}\|\nabla\psi_k\|_{L^\infty})\|f_{m,k_1}(s,x)\|_{L^2_x}2^{3\min\{k,k_2\}/2}\|f_{n,k_2}(s,x)\|_{L^2_x}+\\
    &+(T2^{k_1}+\|\nabla\psi_k\|_{L^\infty}+2^{\e k_--k})\|\F^{-1}\nabla_\xi\hat{f}_{m,k_1}(s,\xi)\|_{L^2_x}2^{3\min\{k,k_2\}/2}\|f_{n,k_2}(s,x)\|_{L^2_x}\\
    \lesssim & (T+T^22^{2k_1}+T2^{k_1-k}+2^{-2k})\|f_{m,k_1}(s,x)\|_{L^2_x}2^{3\min\{k,k_2\}/2}\|f_{n,k_2}(s,x)\|_{L^2_x}+\\
    &+(T2^{k_1}+2^{-k})\|\F^{-1}\nabla_\xi\hat{f}_{m,k_1}(s,\xi)\|_{L^2_x}2^{3\min\{k,k_2\}/2}\|f_{n,k_2}(s,x)\|_{L^2_x}
    \end{aligned}
\end{equation}
and
\begin{equation}
    \label{onederterms2}
    \begin{aligned}
    \|\nabla_\xi F^1_{k,k_1,k_2}(s,\xi)\|_{L^2_\xi}
    \lesssim &(T2^{k_1}+\|\nabla\psi_k\|_{L^\infty}+2^{\e k_--k})\|\F^{-1}\nabla_\xi\hat{f}_{m,k_1}(s,\xi)\|_{L^2_x}2^{3\min\{k,k_2\}/2}\|f_{n,k_2}(s,x)\|_{L^2_x}+\\
    &+\|\F^{-1}\nabla^2_\xi\hat{f}_{m,k_1}(s,\xi)\|_{L^2_x}2^{3\min\{k,k_2\}/2}\|f_{n,k_2}(s,x)\|_{L^2_x}\\
    \lesssim &(T2^{k_1}+2^{-k})\|\nabla_\xi\hat{f}_{m,k_1}(s,\xi)\|_{L^2_\xi}2^{3\min\{k,k_2\}/2}\|f_{n,k_2}(s,x)\|_{L^2_x}+\\
    &+\|\nabla^2_\xi\hat{f}_{m,k_1}(s,\xi)\|_{L^2_\xi}2^{3\min\{k,k_2\}/2}\|f_{n,k_2}(s,x)\|_{L^2_x}\\
    \lesssim &T2^{k_1}\|\nabla_\xi\hat{f}_{m,k_1}(s,\xi)\|_{L^2_\xi}2^{3\min\{k,k_2\}/2}\|f_{n,k_2}(s,x)\|_{L^2_x}+\\
    &+(1+2^{-k+k_1})\|\nabla^2_\xi\hat{f}_{m,k_1}(s,\xi)\|_{L^2_\xi}2^{3\min\{k,k_2\}/2}\|f_{n,k_2}(s,x)\|_{L^2_x}\\
    \lesssim &(T2^{k_1}+2^{-k+k_1}+1)\|\hat{f}_{m,k_1}(s,\xi)\|_{H^2_\xi}2^{3\min\{k,k_2\}/2}\|f_{n,k_2}(s,x)\|_{L^2_x}.
    \end{aligned}
\end{equation}
since \eqref{addder} implies $\|\nabla_\xi\hat{f}_{m,k_1}\|_{L^2_\xi}\lesssim 2^{k_1}\|\nabla^2_\xi\hat{f}_{m,k_1}\|_{L^2_\xi}$.\\
Using Lemma \ref{sobolev space}, $f\in X$ implies for any $t\in[t_0,T]$
\begin{align}
\label{bound on h1 h1+alpha}
    \|\hat{f}_{l,k}\|_{H^1_\xi}\lesssim 2^{-2k_++\gamma k-k/2}r,\,\|\hat{f}_{l,k}\|_{H^{1+\alpha}_\xi}\lesssim 2^{-2k_++\gamma k-k/2-\alpha k}r,
\end{align}
and \eqref{l21}, \eqref{l22} implies $\|f_{l,k}\|_{L^\infty_s([t_0,T])L^2_x}\lesssim \min\{2^{-2k_++\gamma k+k/2},2^{-10k_+}\}r.$\\
Thus, as a result of Lemma \ref{dalpha},
\begin{align*}
    &\|D^\alpha F^0_{k,k_1,k_2}(s,\xi)\|_{L^2_\xi}
    \leq  \|F^0_{k,k_1,k_2}(s,\xi)\|_{L^2_\xi}^{1-\alpha}\|\nabla_\xi F^0_{k,k_1,k_2}(s,\xi)\|_{L^2_\xi}^\alpha,
\end{align*}
and using \eqref{zeroderterm1} and \eqref{zeroderterms2}, we have
\begin{align*}
    &\sum_{(k_1,k_2)\in\chi^1_k}\|D^\alpha F^0_{k,k_1,k_2}(s,\xi)\|_{L^2_\xi}\\
    \lesssim & \sum_{(k_1,k_2)\in\chi^1_k}(T2^{k_1}+2^{-k}+T+T^22^{2k_1}+T2^{k_1-k}+2^{-2k})\|f_{m,k_1}(s,x)\|^{1-\alpha}_{L^2_x}\times\\
    &\qquad\qquad\qquad\qquad\times \|\F^{-1}\nabla_\xi\hat{f}_{m,k_1}(s,\xi)\|^\alpha_{L^2_x}2^{3k_2/2}\|f_{n,k_2}(s,x)\|_{L^2_x}\\
    \lesssim & \sum_{k_2\leq k}(T2^{k}+2^{-k}+T+T^22^{2k}+2^{-2k})\min\{2^{-2k_++\gamma k+k/2-\alpha k},2^{-6k_+}\}r2^{3k_2/2}\|f_{n,k_2}(s,x)\|_{L^2_x}\\
    \lesssim & \sum_{k_2\leq k}(T2^{2k}+1+T2^k+T^22^{3k}+2^{-k})2^{-2k_++\gamma k-k/2-\alpha
    k}\min\{1,2^{-4k_+-\gamma k-k/2+\alpha k}\}r2^{3k_2/2}\|f_{n,k_2}(s,x)\|_{L^2_x}\\
    \lesssim & 2^{-2k_++\gamma k-k/2-\alpha
    k}(1+T2^k+T2^{2k}+T^22^{3k})\min\{1,2^{-4k_+-\gamma k-k/2+\alpha k}\}r\sum_{k_2\leq k}2^{10k_{2,+}}\|f_{n,k_2}(s,x)\|_{L^2_x}\\
    \lesssim & 2^{-2k_++\gamma k-k/2-\alpha
    k}T^2r\|f_n\|_{L^\infty_s([t_0,T])H^{10}_x}
\end{align*}
and
\begin{align*}
    &\sum_{(k_1,k_2)\in\chi^2_k\cup\chi^3_k}\|D^\alpha F^0_{k,k_1,k_2}(s,\xi)\|_{L^2_\xi}\\
    \lesssim & \sum_{(k_1,k_2)\in\chi^2_k\cup\chi^3_k}(T2^{k_2}+2^{-k}+T+T^22^{2k_2}+T2^{k_2-k}+2^{-2k})\times\\
    &\qquad\qquad\qquad\qquad\times\|f_{m,k_1}(s,x)\|^{1-\alpha}_{L^2_x}\|\F^{-1}\nabla_\xi\hat{f}_{m,k_1}(s,\xi)\|^\alpha_{L^2_x}2^{3k/2}\|f_{n,k_2}(s,x)\|_{L^2_x}\\
    \lesssim & \sum_{k_2\geq k-2a-2}(T2^{k_2}+2^{-k}+T^22^{2k_2}+T2^{k_2-k}+2^{-2k})2^{-2k_{2,+}+\gamma k_2+k_2/2-\alpha k_2}r2^{3k/2}\|f_{n,k_2}(s,x)\|_{L^2_x}\\
    = & 2^{-k/2}\sum_{k_2\geq k-2a-2}(T2^{3k_2/2+2k}+2^{k+k_2/2}+T^22^{5k_2/2+2k}+T2^{3k_2/2+k}+2^{k_2/2})2^{-2k_{2,+}+\gamma k_2-\alpha k_2}r\|f_{n,k_2}(s,x)\|_{L^2_x}\\
    \lesssim & 2^{-2k_++\gamma k-k/2-\alpha
    k}T^2r\sum_{k_2\geq k-2a-2}2^{10k_{2,+}}\|f_{n,k_2}(s,x)\|_{L^2_x}\\
    \lesssim & 2^{-2k_++\gamma k-k/2-\alpha
    k}T^2r\|f_n\|_{L^\infty_s([t_0,T])H^{10}_x}.
\end{align*}
For the estimation of $F^1_{k,k_1,k_2}$, we will use the interpolation result in Lemma \ref{si}. We can define a family of operators on the strip $\{z\in\C:0\leq \Re(z)\leq 1\}$
\begin{align*}
    T_z \hat{g}(s,\xi)
    =D^z_\xi \bigg(&\int_{\R^3}e^{is\phi(\xi,\eta)}\partial_{\xi_l}\hat{g}_{k_1}(s,\xi-\eta)\hat{f}_{n,k_2}(s,\eta)\psi_k(\xi)d\eta\\
    &+\int_{\R^3}e^{is\phi(\xi,\eta)} q(\xi-\eta,\eta)\partial_{\xi_l}\hat{g}_{k_1}(s,\xi-\eta)\hat{f}_{n,k_2}(s,\eta)\psi_k(\xi)d\eta\bigg).
\end{align*}
From \eqref{onederterms1} and \eqref{onederterms2}, we know
\begin{align*}
\|T_{0+iy}\hat{g}(s,\xi)\|_{L^2_\xi}
\lesssim & \|\nabla_\xi\hat{g}_{k_1}(s,\xi)\|_{L^2_\xi}2^{3\min\{k,k_2\}/2}\|f_{n,k_2}(s,x)\|_{L^2_x}\\
\leq & \|\hat{g}(s,\xi)\|_{H^1_\xi}2^{3\min\{k,k_2\}/2}\|f_{n,k_2}(s,x)\|_{L^2_x}
\end{align*}
and
\begin{align*}
    \|T_{1+iy}\hat{g}(s,\xi)\|_{L^2_\xi}
    \lesssim &(T2^{k_1}+2^{-k+k_1}+1)\|\nabla^2_\xi\hat{g}_{k_1}(s,\xi)\|_{L^2_\xi}2^{3\min\{k,k_2\}/2}\|f_{n,k_2}(s,x)\|_{L^2_x}\\
    \leq & (T2^{k_1}+2^{-k+k_1}+1)\|\hat{g}(s,\xi)\|_{H^2_\xi}2^{3\min\{k,k_2\}/2}\|f_{n,k_2}(s,x)\|_{L^2_x}.
\end{align*}
Thus, by the interpolation result in Lemma \ref{si},
\begin{align*}
    \|D^\alpha F^1_{k,k_1,k_2}(s,\xi)\|_{L^2_\xi}
    =& \|T_{\alpha} (\hat{f}\Tilde{\psi}_{k_1})(s,\xi)\|_{L^2_\xi}\\
    \lesssim & (T2^{k_1}+2^{-k+k_1}+1)\|\hat{f}_{m,k_1}(s,\xi)\|_{H^{1+\alpha}_\xi}2^{3\min\{k,k_2\}/2}\|f_{n,k_2}(s,x)\|_{L^2_x},
\end{align*}
and \eqref{bound on h1 h1+alpha} implies for any $s\in[t_0,T]$
\begin{align*}
    &\sum_{(k_1,k_2)\in\chi^2_k\cup\chi^3_k}\|D^\alpha F^1_{k,k_1,k_2}(s,\xi)\|_{L^2_\xi}\\
    \lesssim &\sum_{(k_1,k_2)\in\chi^2_k\cup\chi^3_k}(T2^{k_2}+2^{-k+k_1}+1)2^{-2k_{1,+}+\gamma k_1-k_1/2-\alpha k_1}r2^{3k/2}\|f_{n,k_2}(s,x)\|_{L^2_x}\\
    \lesssim &2^{-2k_++\gamma k-k/2-\alpha k}\sum_{k_2\geq k-2a-2}(T2^{k_2+3k/2}+2^{k_2+k/2})2^{(\gamma-1/2-\alpha)(k_2-k)}r\|f_{n,k_2}(s,x)\|_{L^2_x}\\
    \lesssim &2^{-2k_++\gamma k-k/2-\alpha k}Tr\sum_{k_2\geq k-2a-2}2^{10k_{2,+}}\|f_{n,k_2}(s,x)\|_{L^2_x}\\
    \lesssim & 2^{-2k_++\gamma k-k/2-\alpha k}Tr\|f_n\|_{L^\infty_s([t_0,T])H^{10}_x}.
\end{align*}
For $(k_1,k_2)\in\chi^1_k$, we need a new bound for $\|\nabla F^1_{k,k_1,k_2}\|_{L^2}$. Using the identity $\sum_{j}\frac{\partial_{\eta_j}\phi}{it|\nabla_\eta\phi|^2}\partial_{\eta_j}e^{it\phi(\xi,\eta)}=e^{it\phi(\xi,\eta)}$, we have
\begin{align*}
    \partial_{\xi_m}\eqref{lwp1.2}=-&\int_{\R^3}e^{is\phi(\xi,\eta)}\partial_{\eta_j}\frac{\partial_{\eta_j}\phi(\xi,\eta)\partial_{\xi_m}\phi(\xi,\eta)}{|\nabla_\eta\phi|^2}\partial_{\xi_l}\hat{f}_{m,k_1}(s,\xi-\eta)\hat{f}_{n,k_2}(s,\eta)\psi_k(\xi)d\eta\\
    -&\int_{\R^3}e^{is\phi(\xi,\eta)}\frac{\partial_{\eta_j}\phi(\xi,\eta)\partial_{\xi_m}\phi(\xi,\eta)}{|\nabla_\eta\phi|^2}\partial_{\eta_j}\partial_{\xi_l}\hat{f}_{m,k_1}(s,\xi-\eta)\hat{f}_{n,k_2}(s,\eta)\psi_k(\xi)d\eta\\
    -&\int_{\R^3}e^{is\phi(\xi,\eta)}\frac{\partial_{\eta_j}\phi(\xi,\eta)\partial_{\xi_m}\phi(\xi,\eta)}{|\nabla_\eta\phi|^2}\partial_{\xi_l}\hat{f}_{m,k_1}(s,\xi-\eta)\partial_{\eta_j}\hat{f}_{n,k_2}(s,\eta)\psi_k(\xi)d\eta\\
    &+\int_{\R^3}e^{is\phi(\xi,\eta)}\partial_{\xi_m}\partial_{\xi_l}\hat{f}_{m,k_1}(s,\xi-\eta)\hat{f}_{n,k_2}(s,\eta)\psi_k(\xi)d\eta\\
    &+\int_{\R^3}e^{is\phi(\xi,\eta)}\partial_{\xi_l}\hat{f}_{m,k_1}(s,\xi-\eta)\hat{f}_{n,k_2}(s,\eta)\partial_{\xi_m}\psi_k(\xi)d\eta,
\end{align*}
and
\begin{align*}
    \partial_{\xi_m}\eqref{lwp2.2}
    =&-\int_{\R^3}e^{is\phi(\xi,\eta)}\partial_{\eta_j}\frac{\partial_{\eta_j}\phi(\xi,\eta)\partial_{\xi_m}\phi(\xi,\eta)}{|\nabla_\eta\phi|^2} q(\xi-\eta,\eta)\partial_{\xi_l}\hat{f}_{m,k_1}(s,\xi-\eta)\hat{f}_{n,k_2}(s,\eta)\psi_k(\xi)d\eta\\
    &-\int_{\R^3}e^{is\phi(\xi,\eta)}\frac{\partial_{\eta_j}\phi(\xi,\eta)\partial_{\xi_m}\phi(\xi,\eta)}{|\nabla_\eta\phi|^2} \partial_{\eta_j}q(\xi-\eta,\eta)\partial_{\xi_l}\hat{f}_{m,k_1}(s,\xi-\eta)\hat{f}_{n,k_2}(s,\eta)\psi_k(\xi)d\eta\\
    &-\int_{\R^3}e^{is\phi(\xi,\eta)}\frac{\partial_{\eta_j}\phi(\xi,\eta)\partial_{\xi_m}\phi(\xi,\eta)}{|\nabla_\eta\phi|^2} q(\xi-\eta,\eta)\partial_{\eta_j}\partial_{\xi_l}\hat{f}_{m,k_1}(s,\xi-\eta)\hat{f}_{n,k_2}(s,\eta)\psi_k(\xi)d\eta\\
    &-\int_{\R^3}e^{is\phi(\xi,\eta)}\frac{\partial_{\eta_j}\phi(\xi,\eta)\partial_{\xi_m}\phi(\xi,\eta)}{|\nabla_\eta\phi|^2} q(\xi-\eta,\eta)\partial_{\xi_l}\hat{f}_{m,k_1}(s,\xi-\eta)\partial_{\eta_j}\hat{f}_{n,k_2}(s,\eta)\psi_k(\xi)d\eta\\
    &+\int_{\R^3}e^{is\phi(\xi,\eta)}\partial_{\xi_m} q(\xi-\eta,\eta)\partial_{\xi_l}\hat{f}_{m,k_1}(s,\xi-\eta)\hat{f}_{n,k_2}(s,\eta)\psi_k(\xi)d\eta\\
    &+\int_{\R^3}e^{is\phi(\xi,\eta)} q(\xi-\eta,\eta)\partial_{\xi_m}\partial_{\xi_l}\hat{f}_{m,k_1}(s,\xi-\eta)\hat{f}_{n,k_2}(s,\eta)\psi_k(\xi)d\eta\\
    &+\int_{\R^3}e^{is\phi(\xi,\eta)} q(\xi-\eta,\eta)\partial_{\xi_l}\hat{f}_{m,k_1}(s,\xi-\eta)\hat{f}_{n,k_2}(s,\eta)\partial_{\xi_m}\psi_k(\xi)d\eta.
\end{align*}
Since \eqref{xi-2eta} implies
\begin{align*}
    \|\F^{-1}\partial_{\eta_j}\frac{\partial_{\eta_j}\phi(\xi,\eta)\partial_{\xi_m}\phi(\xi,\eta)}{|\nabla_\eta\phi|^2}\Tilde{\psi}_{k}(\xi)\Tilde{\psi}_{k_1}(\xi-\eta)\Tilde{\psi}_{k_2}(\eta)\|_{L^1}\lesssim 2^{-k_1}
\end{align*}
and
\begin{align*}
    \|\F^{-1}\frac{\partial_{\eta_j}\phi(\xi,\eta)\partial_{\xi_m}\phi(\xi,\eta)}{|\nabla_\eta\phi|^2}\Tilde{\psi}_{k}(\xi)\Tilde{\psi}_{k_1}(\xi-\eta)\Tilde{\psi}_{k_2}(\eta)\|_{L^1}\lesssim 1,
\end{align*}
and $\nabla_\eta\phi(\xi,\eta)=2c_n\xi$ when $c_n+c_m=0$ implies
\begin{align*}
    &\|\F^{-1}\partial_{\eta_j}\frac{\partial_{\eta_j}\phi(\xi,\eta)\partial_{\xi_m}\phi(\xi,\eta)}{|\nabla_\eta\phi|^2}q(\xi-\eta,\eta)\Tilde{\psi}_{k}(\xi)\Tilde{\psi}_{k_1}(\xi-\eta)\Tilde{\psi}_{k_2}(\eta)\|_{L^1}+\\
    &+\|\F^{-1}\frac{\partial_{\eta_j}\phi(\xi,\eta)\partial_{\xi_m}\phi(\xi,\eta)}{|\nabla_\eta\phi|^2}\partial_{\eta_j}q(\xi-\eta,\eta)\Tilde{\psi}_{k}(\xi)\Tilde{\psi}_{k_1}(\xi-\eta)\Tilde{\psi}_{k_2}(\eta)\|_{L^1}\lesssim 2^{\e k_--k_2}
\end{align*}
and
\begin{align*}
    \|\F^{-1}\frac{\partial_{\eta_j}\phi(\xi,\eta)\partial_{\xi_m}\phi(\xi,\eta)}{|\nabla_\eta\phi|^2}q(\xi-\eta,\eta)\Tilde{\psi}_{k}(\xi)\Tilde{\psi}_{k_1}(\xi-\eta)\Tilde{\psi}_{k_2}(\eta)\|_{L^1}\lesssim 2^{\e k_-},
\end{align*}
we have
\begin{align*}
    \|\nabla F^1_{k,k_1,k_2}\|_{L^2_\xi}
    \lesssim &(1+2^{-k_2})\|\hat{f}_{m,k_1}(s,\xi)\|_{H^2_\xi}2^{3\min\{k,k_2\}/2}\|f_{n,k_2}(s,x)\|_{L^2_x}\\
    &+\|\hat{f}_{m,k_1}(s,\xi)\|_{H^2_\xi}2^{3\min\{k,k_2\}/2}\|\nabla_\xi\hat{f}_{n,k_2}(s,\xi)\|_{L^2_{\xi}}\\
    \lesssim &(1+2^{k_2})\|\hat{f}_{m,k_1}(s,\xi)\|_{H^2_\xi}2^{3\min\{k,k_2\}/2}\|\nabla_\xi\hat{f}_{n,k_2}(s,\xi)\|_{L^2_{
    \xi}}.
\end{align*}
Thus, by the interpolation result in Lemma \ref{si},
\begin{align*}
    \|D^\alpha F^1_{k,k_1,k_2}(s,\xi)\|_{L^2_\xi}
    \lesssim & (1+2^{k_2})\|\hat{f}_{m,k_1}(s,\xi)\|_{H^{1+\alpha}_\xi}2^{3\min\{k,k_2\}/2}\|f_{n,k_2}(s,x)\|_{L^2_x}^{1-\alpha}\|\hat{f}_{n,k_2}(s,\xi)\|_{H^1_{
    \xi}}^{\alpha},
\end{align*}
and \eqref{sobolevnorms}, \eqref{bound on h1 h1+alpha} imply that for any $s\in[t_0,T]$
\begin{align*}
    \sum_{(k_1,k_2)\in\chi^1_k}\|D^\alpha F^1_{k,k_1,k_2}(s,\xi)\|_{L^2_\xi}
    \lesssim &\sum_{(k_1,k_2)\in\chi^1_k}(1+2^{k_2})2^{-2k_{1,+}+\gamma k_1-k_1/2-\alpha k_1}r2^{3k_2/2}\|f_{k_2}(s,x)\|^{1/2}_{L^2_x}\|\hat{f}_{n,k_2}(s,\xi)\|_{H^1_{
    \xi}}^{1/2}\\
    \lesssim &2^{-2k_++\gamma k-k/2-\alpha k}r\sum_{k_2\leq k}(1+2^{k_2})2^{3k_2/2}\|f_{n,k_2}(s,x)\|^{1/2}_{L^2_x}2^{-k_{2,+}+\gamma k_2-k_2/4}r^{1/2}\\
    \lesssim &2^{-2k_++\gamma k-k/2-\alpha k}r^{3/2}\sum_{k_2\leq k}2^{5k_{2,+}}\|f_{n,k_2}(s,x)\|^{1/2}_{L^2_x}\\
    \lesssim & 2^{-2k_++\gamma k-k/2-\alpha k}r^{3/2}\|f_n\|_{L^\infty_s([t_0,T])H^{10}_x}^{1/2}.
\end{align*}
\label{keypage2}
Summarizing the results above, we have
\begin{align*}
    &\bigg\|D^{1+\alpha}_\xi \F e^{-ic_ls\la}\big(\sum_{\substack{c_m+c_n=0\\m\geq n}}A_{lmn}Q(e^{ic_ms\la}f_m,e^{ic_ns\la}f_n)+\\
    &+\sum_{\substack{c_m+c_n\neq 0\\m\geq n}}A_{lmn}(e^{ic_ms\la}f_m)(e^{ic_ns\la}f_n)\big)\psi_k(\xi)\bigg\|_{L^\infty_s([t_0,T])L^2_\xi}\\
    \lesssim & 2^{-2k_++\gamma k-k/2-\alpha k}Tr\|f_n\|_{L^\infty_s([t_0,T])H^{10}_x}+2^{-2k_++\gamma k-k/2-\alpha k}r^{3/2}\|f_n\|_{L^\infty_s([t_0,T])H^{10}_x}^{1/2}\\
    \lesssim & 2^{-2k_++\gamma k-k/2-\alpha k}Tr^2.
\end{align*}
Hence, there exists some constant $C'>0$ independent of $f$ so that 
\begin{align*}
    \|\Psi(\vec{f})_l\|_Z
    \leq & \|e^{-ic_l\la}u_{l0}\|_{Z}+ C'(T-t_0)Tr^2.
\end{align*}
Since $\|u_{l0}\|_{H^{10}}+\|e^{-ic_lt_0\la}u_{l0}\|_Z\leq \e$, we can pick $r=2\e$ and $T=T(\e,t_0)>t_0$ so that $\max\{C,C'\}(T-t_0)Tr < 1/12$. Then
\begin{align*}
    \sup_{t\in[1,T]}\|\Psi(\vec{f})_l\|_{H^{10}_x}+\|\Psi(\vec{f})_l\|_Z
    \leq  \e+C(T-t_0)r^2+C'(T-t_0)Tr^2 \leq \e+r/12+r/12\leq r.
\end{align*}
Thus, we proved $\Psi:X\rightarrow X$.\\
Following along the same lines from page \pageref{keypage1} to page \pageref{keypage2}, we have for arbitrary $\vec{f},\vec{g}\in X$
\begin{align*}
    \sum_{\substack{c_m+c_n\neq 0\\m\geq n}}A_{lmn}\|Q(e^{ic_ms\la}f_m,e^{ic_ns\la}g_n)\|_{L^\infty_s([t_0,T]) H^{10}_x}
    \leq &C \|f_m\|_{L^\infty_t([t_0,T])H^{10}_x}\|g_n\|_{L^\infty_t([t_0,T])H^{10}_x}\\
    \leq & C \|\vec{f}\|_X\|\vec{g}\|_X
\end{align*}
and
\begin{align*}
    &\bigg\|D^{1+\alpha}_\xi \F e^{-ic_ls\la}\big(\sum_{\substack{c_m+c_n=0\\m\geq n}}A_{lmn}Q(e^{ic_ms\la}f_m,e^{ic_ns\la}g_n)+\\
    &\qquad\qquad\qquad\qquad+\sum_{\substack{c_m+c_n\neq 0\\m\geq n}}A_{lmn}(e^{ic_ms\la}f_m)(e^{ic_ns\la}g_n)\big)\psi_k(\xi)\bigg\|_{L^\infty_s([t_0,T])L^2_\xi}\\
    \leq & C'2^{-2k_++\gamma k-k/2-\alpha k}T(\|f_m\|_{L^\infty_s([t_0,T])Z}+\|f_m\|_{L^\infty_s([t_0,T])H^{10}_x})(\|g_n\|_{L^\infty_s([t_0,T])Z}+\|g_n\|_{L^\infty_s([t_0,T])H^{10}_x})\\
    \leq & C'2^{-2k_++\gamma k-k/2-\alpha k}T\|\vec{f}\|_X\|\vec{g}\|_X,
\end{align*}
which imply
\begin{align*}
   &\|e^{-ic_ls\la}\big(\sum_{\substack{c_m+c_n=0\\m\geq n}}A_{lmn}Q(e^{ic_ms\la}f_m,e^{ic_ns\la}g_n)+\sum_{\substack{c_m+c_n\neq 0\\m\geq n}}A_{lmn}(e^{ic_ms\la}f_m)(e^{ic_ns\la}g_n)\big)\|_{L^\infty_s([t_0,T])Z}\\
    \leq & C'T\|\vec{f}\|_X\|\vec{g}\|_X.
\end{align*}
Now, we are ready to show $\Psi$ is a contraction map. Suppose $\vec{f},\vec{g}\in X$, then
\begin{align*}
    &\Psi(\vec{f})_l-\Psi(\vec{g})_l\\
    =&\int_{t_0}^te^{-ic_ls\la}\bigg(\sum_{\substack{c_m+c_n=0\\m\geq n}}A_{lmn}(Q(e^{ic_ms\la}f_m,e^{ic_ns\la}f_n)-Q(e^{ic_ms\la}g_m,e^{ic_ns\la}g_n))+\\
    &\qquad\qquad\qquad\qquad +\sum_{\substack{c_m+c_n\neq 0\\m\geq n}}A_{lmn}((e^{ic_ms\la}f_m)(e^{ic_ns\la}f_n)-(e^{ic_ms\la}g_m)(e^{ic_ns\la}g_n))\bigg)ds\\
    =&\int_{t_0}^te^{-ic_ls\la}\bigg(\sum_{\substack{c_m+c_n=0\\m\geq n}}A_{lmn}(Q(e^{ic_ms\la}f_m,e^{ic_ns\la}f_n)-Q(e^{ic_ms\la}g_m,e^{ic_ns\la}f_n)+\\
    &\qquad\qquad\qquad\qquad
    +Q(e^{ic_ms\la}g_m,e^{ic_ns\la}f_n)-Q(e^{ic_ms\la}g_m,e^{ic_ns\la}g_n))+\\
    &\qquad\qquad\qquad +\sum_{\substack{c_m+c_n\neq 0\\m\geq n}}A_{lmn}((e^{ic_ms\la}f_m)(e^{ic_ns\la}f_n)-(e^{ic_ms\la}g_m)(e^{ic_ns\la}f_n)+\\
    &\qquad\qquad\qquad\qquad+(e^{ic_ms\la}g_m)(e^{ic_ns\la}f_n)-(e^{ic_ms\la}g_m)(e^{ic_ns\la}g_n))\bigg)ds\\
    =&\int_{t_0}^te^{-ic_ls\la}\bigg(\sum_{\substack{c_m+c_n=0\\m\geq n}}A_{lmn}(Q(e^{ic_ms\la}(f_m-g_m),e^{ic_ns\la}f_n)+Q(e^{ic_ms\la}g_m,e^{ic_ns\la}(f_n-g_n))+\\
    &\qquad\qquad\qquad +\sum_{\substack{c_m+c_n\neq 0\\m\geq n}}A_{lmn}((e^{ic_ms\la}(f_m-g_m))(e^{ic_ns\la}f_n)+(e^{ic_ms\la}g_m)(e^{ic_ns\la}(f_n-g_n)))\bigg)ds.
\end{align*}
For any $t\in[t_0,T]$,
\begin{align*}
    &\sum_{\substack{c_m+c_n= 0\\m\geq n}}A_{lmn}\|Q(e^{ic_ms\la}(f_m-g_m),e^{ic_ns\la}f_n)+Q(e^{ic_ms\la}g_m,e^{ic_ns\la}(f_n-g_n))\|_{L^\infty_s([t_0,T]) H^{10}_x}+\\
    &+\sum_{\substack{c_m+c_n\neq 0\\m\geq n}}A_{lmn}\|(e^{ic_ms\la}(f_m-g_m))(e^{ic_ns\la}f_n)+(e^{ic_ms\la}g_m)(e^{ic_ns\la}(f_n-g_n)\|_{L^\infty_s([t_0,T]) H^{10}_x}\\
    \leq &C \|\vec{f}-\vec{g}\|_{X}\|\vec{f}\|_{X}+C \|\vec{g}\|_{X}\|\vec{f}-\vec{g}\|_{X}
\end{align*}
and
\begin{align*}
    &\bigg\|e^{-ic_ls\la}\big(\sum_{\substack{c_m+c_n=0\\m\geq n}}A_{lmn}Q(e^{ic_ms\la}(f_m-g_m),e^{ic_ns\la}f_n)+\\
    &\qquad\qquad\qquad\qquad+\sum_{\substack{c_m+c_n\neq 0\\m\geq n}}A_{lmn}(e^{ic_ms\la}(f_m-g_m))(e^{ic_ns\la}f_n)\big)\bigg\|_{L^\infty_s([t_0,T])Z}+\\
    &+\bigg\|e^{-ic_ls\la}\big(\sum_{\substack{c_m+c_n=0\\m\geq n}}A_{lmn}Q(e^{ic_ms\la}g_m,e^{ic_ns\la}(f_n-g_n)+\\
    &\qquad\qquad\qquad\qquad+\sum_{\substack{c_m+c_n\neq 0\\m\geq n}}A_{lmn}(e^{ic_ms\la}g_m)(e^{ic_ns\la}(f_n-g_n))\big)\bigg\|_{L^\infty_s([t_0,T])Z}\\
    \leq & C'T\|\vec{f}-\vec{g}\|_{X}\|\vec{f}\|_{X}
    +C'T\|\vec{g}\|_{X}\|\vec{f}-\vec{g}\|_{X}.
\end{align*}
Since $\vec{f},\vec{g}\in X$, we know $\|\Vec{f}\|_X+\|\vec{g}\|_X\leq 2r$. Thus,
\begin{align*}
    \|\Psi(\vec{f})_l-\Psi(\vec{g})_l\|_{L^\infty_t([t_0,T])H^{10}_x}
    \leq &2(T-t_0)C (\|\vec{f}-\vec{g}\|_{X}\|\vec{f}\|_{X}+\|\vec{g}\|_{X}\|\vec{f}-\vec{g}\|_{X})\\
    \leq &4r(T-t_0)C\|\vec{f}-\vec{g}\|_{X}
\end{align*}
and
\begin{align*}
    \|\Psi(f)-\Psi(g)\|_{L^\infty_t([t_0,T])Z}
    \leq &(T-t_0)(C'T\|\vec{f}-\vec{g}\|_{X}\|\vec{f}\|_{X}
    +C'T\|\vec{g}\|_{X}\|\vec{f}-\vec{g}\|_{X})\\
    \leq & 2rC'T(T-t_0)\|\vec{f}-\vec{g}\|_{X}.
\end{align*}
Since we required $\max\{C,C'\}(T-t_0)Tr < 1/12$, 
\begin{align*}
    \|\Psi(\vec{f})-\Psi(\vec{g})\|_{X}
    \leq & 4r(T-t_0)C\|\vec{f}-\vec{g}\|_{X}+2rC'T(T-t_0)\|\vec{f}-\vec{g}\|_{X}\\
    \leq & (4r(T-t_0)C+2rC'T(T-t_0))\|\vec{f}-\vec{g}\|_{X}
\end{align*}
with $4r(T-t_0)C+2rC'T(T-t_0)<1$. This shows that $\Psi:X\rightarrow X$ is a contraction map. Therefore, by the contraction mapping principle $\Psi$ has a unique fixed point $\vec{f}\in X$ such that
\begin{align*}
    f_l(t,x)= &e^{-ic_lt_0\la}u_{l0}+\int_{t_0}^te^{-ic_ls\la}\bigg(\sum_{\substack{c_m+c_n=0\\m\geq n}}A_{lmn}Q(e^{ic_ms\la}f_m,e^{ic_ns\la}f_n)+\\
    &\qquad\qquad\qquad\qquad +\sum_{\substack{c_m+c_n\neq 0\\m\geq n}}A_{lmn}(e^{ic_ms\la}f_m)(e^{ic_ns\la}f_n)\bigg)ds.
\end{align*}
Let $u_l(t,x)=e^{ic_lt\la}f_l(t,x)$ and observe that
\begin{align*}
    u_l(t,x)=e^{ic_lt\la}f_l(t,x)
    &= e^{ic_l(t-t_0)\la}u_{l0}+\int_{t_0}^te^{ic_l(t-s)\la}\bigg(\sum_{\substack{c_m+c_n=0\\m\geq n}}A_{lmn}Q(e^{ic_ms\la}f_m,e^{ic_ns\la}f_n)+\\
    &\qquad\qquad\qquad\qquad +\sum_{\substack{c_m+c_n\neq 0\\m\geq n}}A_{lmn}(e^{ic_ms\la}f_m)(e^{ic_ns\la}f_n)\bigg)ds,
\end{align*}
solves \eqref{lwppde} according to Duhamel's principle.
\end{proof}

\section{Proof of Proposition \ref{main}}
\label{prop}
In this section, we prove the main proposition stated in Section \ref{main proof}. This is the crucial result that shows the $Z$ norm of the profile $f_l(t,x)=e^{-ic_lt\Delta}u_l(t,x)$ remains bounded independently of the growth of the time variable, which leads to global well-posedness. The main goal of the proof is to deal with the additional powers of $t$ introduced by the $1+\alpha$ derivatives in the $Z$ norm.

Similarly to what we did in the proof of Proposition \ref{lwp} and Lemma \ref{timel2}, we will employ the integral equation and use the Littlewood-Paley operator to further decompose the frequency spaces of profiles $f_m,f_n$ in the nonlinear term.

Recall the definition of $\|\cdot\|_Z$ in \eqref{Znormdef}. We decompose the time interval $[1,T]$ into dyadic intervals and observe 
\begin{equation*}
\begin{aligned}
    \|f_l(t,x)\|_Z
    &\lesssim\|u_{l0}\|_Z+\big\|\int_{1}^{t} \partial_s f_l(s,x)ds\big\|_Z\\
    &\lesssim \e_0
    +\sum_{1\leq M\leq \log T}\sup_{2^{M-1}\leq t_1<t_2\leq 2^M}\|f_l(t_2,x)-f_l(t_1,x)\|_{Z}\\
    &\lesssim \e_0
    +\sup_{k\in\Z}\sum_{1\leq M\leq \log T}\sup_{2^{M-1}\leq t_1<t_2\leq 2^M}2^{-\gamma k+2k_+ + k/2+\alpha k}\|D^{1+\alpha}_\xi\hat{f}_{l,k}(t_2,\xi)-D^{1+\alpha}_\xi\hat{f}_{l,k}(t_1,\xi)\|_{L^2_\xi}.
\end{aligned}
\end{equation*}
To prove 
$$\|f_l(t,x)\|_Z\lesssim \e_0,$$
for all $t\in[1,T]$, it suffices to show for all $k\in\Z$,
\begin{equation*}
    \sum_{1\leq M\leq \log T}\sup_{2^{M-1}\leq t_1<t_2\leq 2^M}2^{-\gamma k+2k_++k/2+\alpha k}\|D^{1+\alpha}_\xi\hat{f}_{l,k}(t_2,\xi)-D^{1+\alpha}_\xi\hat{f}_{l,k}(t_1,\xi)\|_{L^2_\xi}\lesssim \e_1^2,
\end{equation*}
since $\e_1^2=\e_0^{10/6}\leq \e_0$.

For $2^{M-1}\leq t_1<t_2\leq 2^M$, let
\begin{equation*}
    G^{M,1}_{k,k_1,k_2}= 
    \int_{t_1}^{t_2}\int_{\R^3}e^{it\phi(\xi,\eta)}\hat{f}_{m,k_1}(t,\xi-\eta)\hat{f}_{n,k_2}(t,\eta)\psi_k(\xi)d\eta dt
\end{equation*}
and
\begin{equation*}
    G^{M,2}_{k,k_1,k_2}= 
    \int_{t_1}^{t_2}\int_{\R^3}e^{it\phi(\xi,\eta)}q(\xi-\eta,\eta)\hat{f}_{m,k_1}(t,\xi-\eta)\hat{f}_{n,k_2}(t,\eta)\psi_k(\xi)d\eta dt.
\end{equation*}
Then we have
\begin{equation}
    \begin{aligned}
        [\hat{f}_l(t_2,\xi)-\hat{f}_l(t_1,\xi)]\psi_k(\xi)
        =&\sum_{\substack{c_m+c_n\neq 0\\m\geq n}}A_{lmn}\sum_{(k_1,k_2)\in\chi^2_k\cup\chi^3_k} G^{M,1}_{k,k_1,k_2}
        +\sum_{c_m+c_n\neq 0}A_{lmn}\sum_{(k_1,k_2)\in\chi^1_k} G^{M,1}_{k,k_1,k_2}\\
        &+\sum_{\substack{c_m+c_n= 0\\m\geq n}}A_{lmn}\sum_{(k_1,k_2)\in\chi^2_k\cup\chi^3_k} G^{M,2}_{k,k_1,k_2}
        +\sum_{c_m+c_n= 0}A_{lmn}\sum_{(k_1,k_2)\in\chi^1_k} G^{M,2} _{k,k_1,k_2}.
    \end{aligned}
\end{equation}
Taking one derivative on $G^{M,1}_{k,k_1,k_2}$ and $G^{M,2}_{k,k_1,k_2}$ results in the terms $I^{M,i}_{k,k_1,k_2}$ and $J^{M,i}_{k,k_1,k_2}$, 
\begin{equation}
\label{i0}
\begin{aligned}
    I^{M,0}_{k,k_1,k_2}=&\int_{t_1}^{t_2}\int_{\R^3}e^{it\phi(\xi,\eta)}\hat{f}_{m,k_1}(t,\xi-\eta)\hat{f}_{n,k_2}(t,\eta)\partial_{\xi_l}\psi_k(\xi)d\eta dt,
\end{aligned}
\end{equation}
\begin{equation}
\label{i1}
    I^{M,1}_{k,k_1,k_2}=\int_{t_1}^{t_2}\int_{\R^3}e^{it\phi(\xi,\eta)}\partial_{\xi_l}\hat{f}_{m,k_1}(t,\xi-\eta)\hat{f}_{n,k_2}(t,\eta)\psi_k(\xi)d\eta dt,
\end{equation}
\begin{equation}
\label{i2}
    I^{M,2}_{k,k_1,k_2}=\int_{t_1}^{t_2}\int_{\R^3}e^{it\phi(\xi,\eta)}it\partial_{\xi_l}\phi(\xi,\eta) \hat{f}_{m,k_1}(t,\xi-\eta)\hat{f}_{n,k_2}(t,\eta)\psi_k(\xi)d\eta dt,
\end{equation}
\begin{equation}
\label{j0}
\begin{aligned}
    J^{M,0}_{k,k_1,k_2}
    =&\int_{t_1}^{t_2}\int_{\R^3}e^{it\phi(\xi,\eta)}q(\eta,\xi-\eta)\hat{f}_{m,k_1}(t,\xi-\eta)\hat{f}_{n,k_2}(t,\eta)\partial_{\xi_l}\psi_k(\xi)d\eta dt\\
    &+\int_{t_1}^{t_2}\int_{\R^3}e^{it\phi(\xi,\eta)}\partial_{\xi_l}q(\eta,\xi-\eta)\hat{f}_{m,k_1}(t,\xi-\eta)\hat{f}_{n,k_2}(t,\eta)\psi_k(\xi)d\eta dt,
\end{aligned}
\end{equation}
\begin{equation}
\label{j1}
\begin{aligned}
    J^{M,1}_{k,k_1,k_2}
    &=\int_{t_1}^{t_2}\int_{\R^3}e^{it\phi(\xi,\eta)}q(\eta,\xi-\eta)\partial_{\xi_l}\hat{f}_{m,k_1}(t,\xi-\eta)\hat{f}_{n,k_2}(t,\eta)\psi_k(\xi)d\eta dt,
\end{aligned}
\end{equation}
\begin{equation}
\label{j2}
\begin{aligned}
    J^{M,2}_{k,k_1,k_2}
    &=\int_{t_1}^{t_2}\int_{\R^3}e^{it\phi(\xi,\eta)}it\partial_{\xi_l}\phi(\xi,\eta) q(\eta,\xi-\eta)\hat{f}_{m,k_1}(t,\xi-\eta)\hat{f}_{n,k_2}(t,\eta)\psi_k(\xi)d\eta dt.
\end{aligned}
\end{equation} 
Since 
\begin{align*}
    &D^{1+\alpha}_\xi[\hat{f}_{l,k}(t_2,\xi)-\hat{f}_{l,k}(t_1,\xi)]\\
    =&D^{\alpha}_\xi\nabla_\xi[\hat{f}_{l,k}(t_2,\xi)-\hat{f}_{l,k}(t_1,\xi)]\\
    =&\sum_{c_m+c_n\neq 0}A_{lmn}\sum_{(k_1,k_2)\in\chi_k^1} \sum_{i=0,1,2}D^\alpha_\xi I^{M,i}_{k,k_1,k_2}+\sum_{c_m+c_n= 0}A_{lmn}\sum_{(k_1,k_2)\in\chi_k^1}\sum_{i=0,1,2} D^\alpha_\xi J^{M,i}_{k,k_1,k_2}\\
    &+\sum_{\substack{c_m+c_n\neq 0\\m\geq n}}A_{lmn}\sum_{(k_1,k_2)\in\chi_k^2\cup\chi^3_k}\sum_{i=0,1,2} D^\alpha_\xi I^{M,i}_{k,k_1,k_2}+\sum_{\substack{c_m+c_n= 0\\m\geq n}}A_{lmn}\sum_{(k_1,k_2)\in\chi_k^2\cup\chi^3_k}\sum_{i=0,1,2} D^\alpha_\xi J^{M,i}_{k,k_1,k_2},
\end{align*}
the problem is now reduced to estimating the $\alpha$ derivative of $I^{M,i}_{k,k_1,k_2}$ and $J^{M,i}_{k,k_1,k_2}$, where $i=0,1,2$.

We shall use the interpolation results developed in Section \ref{section for fractional derivatives} to tackle the estimation for the $\alpha$ derivative of the terms above. We will discuss the terms $D^{\alpha}I^{M,i}_{k,k_1,k_2}$ and $D^{\alpha}J^{M,i}_{k,k_1,k_2}$ for $i=0,1,2$ in Sections \ref{0}, \ref{1}, \ref{2} respectively.

\subsection{$D^{\alpha}I^{M,0}_{k,k_1,k_2}$ and $D^{\alpha}J^{M,0}_{k,k_1,k_2}$}
\label{0}
In this section, we will show
\begin{equation}
\label{im0wts}
    \begin{aligned}
        &\sum_{1\leq m\leq \log T}\sup_{2^{M-1}\leq t_1<t_2\leq 2^M} 2^{-\gamma k+2k_++k/2+\alpha k}\sum_{c_m+c_n\neq 0}A_{lmn}\sum_{(k_1,k_2)\in\chi^1_k\cup\chi^2_k\cup\chi^3_k}\|D^{\alpha}_\xi I^{M,0}_{k,k_1,k_2}\|_{L^2}+\\
        &+\sum_{1\leq m\leq \log T}\sup_{2^{M-1}\leq t_1<t_2\leq 2^M} 2^{-\gamma k+2k_++k/2+\alpha k}\sum_{c_m+c_n= 0}A_{lmn}\sum_{(k_1,k_2)\in\chi^1_k\cup\chi^2_k\cup\chi^3_k}\|D^{\alpha}_\xi J^{M,0}_{k,k_1,k_2}\|_{L^2}\lesssim\e_1^2
    \end{aligned}
\end{equation}
In order to gain decay in $t$, we want to use the following identity
$$e^{it\phi(\xi,\eta)}=\sum_{n=1}^3\frac{\partial_{\eta_n}\phi(\xi,\eta)}{it|\nabla_\eta\phi(\xi,\eta)|^2}\partial_{\eta_n}e^{it\phi(\xi,\eta)}$$
for integration by parts. Bounding the right hand side requires the norm of $\nabla_\eta\phi(\xi,\eta)=2c_m(\xi-\eta)-2c_n\eta$ to have a lower bound. However, we can only achieve a lower bound when $(k_1,k_2)\in\chi^1_k\cup\chi^2_k$ for a general pair of $c_m$ and $c_n$. When $c_m+c_n=0$, we observe $\nabla_\eta\phi(\xi,\eta)=2c_m\xi$, whose norm is bounded from below due to the localization. 

Thus, we will discuss the bounds in two different cases. The bound for $\|D^\alpha I^{M,0}_{k,k_1,k_2}\|_{L^2_\xi}$ when $(k_1,k_2)\in\chi^3_k$ will be dealt with in Lemma \ref{im0-2}, while the lemma below covers all the other terms.
\begin{lem}
\label{im0}
Given $t_1,t_2\in[2^{M-1},2^M]$ and $\sup_{t\in[1,T]}\|f_l(t,x)\|_{Z}\leq \e_1$, we have
\begin{align*}
    \|D^{\alpha}I^{M,0}_{k,k_1,k_2}\|_{L^2}
    \lesssim &(1+2^{-M-k-k_1})2^{-k-2k_{2,+}+\gamma k_2-2k_{1,+}+\gamma k_1-\alpha k_1-k_2/2}\min\{2^{-M/2}
    ,2^{M+3\min\{k,k_2\}/2+3k_1/2}\}\e_1^2,
\end{align*}
when $c_m+c_n\neq 0$ and $(k_1,k_2)\in\chi_k^1\cup\chi_k^2$, and
\begin{align*}
    \|D^{\alpha}J^{M,0}_{k,k_1,k_2}\|_{L^2}
    \lesssim & 2^{\e k_--\alpha k-k-2k_{2,+}+\gamma k_2-2k_{1,+}+\gamma k_1-k_2/2}\min\{2^{-M/2}
    ,2^{M+3\min\{k,k_2\}/2 +3k_1/2}\}\e_1^2,
\end{align*}
when $c_m+c_n = 0$ and $(k_1,k_2)\in\chi^1_k\cup\chi^2_k\cup\chi^3_k$.
\end{lem}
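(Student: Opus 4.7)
The plan is to apply Lemma \ref{dalpha} to reduce the fractional-derivative estimate to two integer-order estimates: $\|D^\alpha g\|_{L^2}\lesssim\|g\|_{L^2}^{1-\alpha}\|\nabla_\xi g\|_{L^2}^\alpha$ with $g=I^{M,0}_{k,k_1,k_2}$ or $J^{M,0}_{k,k_1,k_2}$. So I reduce to bounding the zeroth-order quantity $\|I^{M,0}\|_{L^2}$ (resp.~$\|J^{M,0}\|_{L^2}$) and the first-order quantity $\|\nabla_\xi I^{M,0}\|_{L^2}$ (resp.~$\|\nabla_\xi J^{M,0}\|_{L^2}$), then interpolate and track dyadic exponents.

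For $\|I^{M,0}\|_{L^2}$ I derive two competing bounds to produce the $\min\{\cdot,\cdot\}$ factor. The first takes $L^\infty$ in $t$ over $[t_1,t_2]$, pays $(t_2-t_1)\leq 2^M$, and uses Lemma \ref{dualitycomp} together with $\|\partial_\xi\psi_k\|_{L^\infty}\lesssim 2^{-k}$ and the $L^2$ bounds \eqref{l2} to produce the $2^{M+3\min\{k,k_2\}/2+3k_1/2}$ alternative. The second uses Strichartz's inequality (Lemma \ref{strichartz}), which trades the loss $2^M$ for a gain $2^{-M/4}$ at the cost of upgrading one profile to its $H^1_\xi$ norm (bounded by \eqref{sobolevnorms}), yielding the $2^{-M/2}$ alternative after accounting for the $2^{-k}$ from the cutoff. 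For $J^{M,0}$ the same strategy applies, with condition \eqref{q} producing the additional $2^{\e k_-}$ factor on the multiplier $L^1$ side.

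For $\|\nabla_\xi I^{M,0}\|_{L^2}$, the $\xi$-derivative lands on the phase $e^{it\phi}$, on the profile $\hat f_{m,k_1}(\xi-\eta)$, or on the cutoff $\partial_{\xi_l}\psi_k$. The latter two are handled analogously to $I^{M,0}$, but with $\|\nabla_\xi\hat f_{m,k_1}\|_{L^2}$ from \eqref{l2first} or $\|\nabla^2\psi_k\|_{L^\infty}\lesssim 2^{-2k}$ replacing the appropriate factor. The delicate piece is the phase contribution, which produces $it\nabla_\xi\phi$ with $|\nabla_\xi\phi|\lesssim 2^{k_1}$ and hence a bad growth factor $2^{M+k_1}$. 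I absorb this by integrating by parts in $\eta$ using $\sum_n\frac{\partial_{\eta_n}\phi}{it|\nabla_\eta\phi|^2}\partial_{\eta_n}e^{it\phi}=e^{it\phi}$. When $c_m+c_n\neq 0$ and $(k_1,k_2)\in\chi^1_k\cup\chi^2_k$, the lower bound $|\nabla_\eta\phi|\gtrsim 2^{k_1}$ from \eqref{xi-2eta} gives a gain of $2^{-M-k_1}$; when $c_m+c_n=0$, $\nabla_\eta\phi=2c_m\xi$ has modulus $\sim 2^k$ on the support, giving $2^{-M-k}$ (and this works uniformly over $\chi^1_k\cup\chi^2_k\cup\chi^3_k$, which is why $J^{M,0}$ has no restriction on $(k_1,k_2)$). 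For $J^{M,0}$, the multipliers involving $q$, $\partial_\xi q$ and $\partial_\eta q$ are handled via \eqref{q} combined with Lemmas \ref{chi+eta} and \ref{chi+eta2}. The $(1+2^{-M-k-k_1})$ factor in the $I^{M,0}$ bound records the trade-off between the integration-by-parts contribution (which produces the $1$) and the contributions from $\partial^2\psi_k$ and $\nabla_\xi\hat f_{m,k_1}$ (which do not use integration by parts and carry the extra $2^{-M-k-k_1}$).

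Combining the two bounds via Lemma \ref{dalpha} and matching exponents produces the statement. The principal obstacle will be bookkeeping the dyadic weights through the interpolation: both $\|I^{M,0}\|_{L^2}$ and $\|\nabla_\xi I^{M,0}\|_{L^2}$ come as minima of Strichartz-style bounds (carrying $H^1_\xi$ weights on one factor) and Bernstein-style bounds (carrying only $L^2$ weights on both factors), and the $-\alpha k_1$ and $-k_2/2$ exponents in the conclusion emerge only when one interpolates the bounds in the right pairing. Keeping the integration-by-parts gain confined to the phase contribution and not losing it in the interpolation is the key technical point.
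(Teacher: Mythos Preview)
Your overall integration-by-parts strategy is sound, but the interpolation device you propose does not match the paper's, and the mismatch is not cosmetic. The paper does \emph{not} use Lemma~\ref{dalpha} here; it uses the Stein-type operator interpolation of Lemma~\ref{si}. Concretely, the paper defines families $T_z\hat g$ (and $T^1_z,T^2_z$ for $J^{M,0}$), proves $T_{0+iy}:H^1_\xi\to L^2_\xi$ and $T_{1+iy}:H^2_\xi\to L^2_\xi$ (in the decaying branch), interpolates at the operator level to land on $T_\alpha:H^{1+\alpha}_\xi\to L^2_\xi$, and \emph{only then} substitutes $\hat g=\hat f_m\Tilde{\psi}_{k_1}$, invoking $\|\hat f_{m,k_1}\|_{H^{1+\alpha}}\lesssim 2^{-2k_{1,+}+\gamma k_1-k_1/2-\alpha k_1}$ from \eqref{sobolevnorms}. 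The factor $2^{-\alpha k_1}$ in the conclusion comes exactly from that $H^{1+\alpha}$ norm.

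Your route via $\|D^\alpha I^{M,0}\|_{L^2}\le \|I^{M,0}\|_{L^2}^{1-\alpha}\|\nabla_\xi I^{M,0}\|_{L^2}^\alpha$ forces you to evaluate $\|\nabla_\xi I^{M,0}\|_{L^2}$ with $g=f_m$ already inserted. If you handle the $\partial_{\xi}\hat f_{m,k_1}$ piece ``analogously to $I^{M,0}$'' (i.e.\ by the $L^4\times L^4$ bilinear with \eqref{l4}), you run into $\|e^{ic_mt\la}\F^{-1}\nabla_\xi\hat f_{m,k_1}\|_{L^4}\lesssim 2^{-3M/4+k_1/4}\|\nabla^2_\xi\hat f_{m,k_1}\|_{L^2}$, and $\|\nabla^2_\xi\hat f_{m,k_1}\|_{L^2}$ is \emph{not} controlled by the $Z$ norm (which carries only $1+\alpha$ derivatives). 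This is precisely why the paper passes through an abstract $H^2$ bound and interpolates before substituting. If instead you switch to $L^2\times L^\infty$ on that piece (using \eqref{linfinity}), you pick up a $2^{\gamma M/8}$ loss and the $k_1,k_2$ bookkeeping no longer reproduces the stated exponent $-\alpha k_1-k_2/2$; you may salvage \emph{some} bound, but not the one in the lemma.

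A secondary issue: your claim that Lemma~\ref{strichartz} ``yields the $2^{-M/2}$ alternative after accounting for the $2^{-k}$'' is incorrect. Lemma~\ref{strichartz} gives only $2^{-M/4}$ in $t$; the paper obtains $2^{-M/2}$ for $\|T_{0+iy}\hat g\|_{L^2}$ by taking $L^\infty$ in $t$ (paying $2^M$) and then using the $L^4\times L^4$ bilinear with \eqref{l4} (each factor giving $2^{-3M/4}$). The $2^{-k}$ from $\partial_\xi\psi_k$ is a separate spatial factor and does not convert $2^{-M/4}$ into $2^{-M/2}$.
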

\begin{proof}
Consider the following families of operators on $\{z\in\C:0\leq \Re(z)\leq 1\}$,
\begin{align*}
    T_{z} \hat{g} =D^z\int_{t_1}^{t_2}\int_{\R^3}e^{it\phi(\xi,\eta)}\hat{g}_{k_1}(t,\xi-\eta)\hat{f}_{n,k_2}(t,\eta)\partial_{\xi_l}\psi_k(\xi)d\eta dt,
\end{align*}
\begin{align*}
    T^1_{z} \hat{g} =D^z\int_{t_1}^{t_2}\int_{\R^3}e^{it\phi(\xi,\eta)}q(\xi-\eta,\eta)\hat{g}_{k_1}(t,\xi-\eta)\hat{f}_{n,k_2}(t,\eta)\partial_{\xi_l}\psi_k(\xi)d\eta dt,
\end{align*}
and
\begin{align*}
    T^2_{z} \hat{g} =D^z\int_{t_1}^{t_2}\int_{\R^3}e^{it\phi(\xi,\eta)}\partial_{\xi_l}q(\xi-\eta,\eta)\hat{g}_{k_1}(t,\xi-\eta)\hat{f}_{n,k_2}(t,\eta)\psi_k(\xi)d\eta dt.
\end{align*}
Since we want to estimate the $\alpha$ derivative, we may employ the interpolation result in Lemma \ref{si}. The first operator $T_z$ is used to estimate $I_{k,k_1,k_2}^{M,0}$, while $T_z^1$ and $T^2_z$ are defined to handle $J^{M,0}_{k,k_1,k_2}$.
In this proof, we will show $T_{0+iy}:L^{\infty}_t([2^{M-1},2^M])L^2_\xi\rightarrow L^2_\xi$, $T_{1+iy}:L^{\infty}_t([2^{M-1},2^M])H^1_\xi\rightarrow L^2_\xi$, $T^1_{0+iy},T^2_{0+iy}:L^{\infty}_t([2^{M-1},2^M])H^1_\xi\rightarrow L^2_\xi$, $T^1_{1+iy},T^2_{1+iy}:L^{\infty}_t([2^{M-1},2^M])H^2_\xi\rightarrow L^2_\xi$ are bounded. Then performing interpolation will give the desired estimates on the $\alpha$ derivative in the lemma.

We start with obtaining bounds for the operators when $\Re(z) = 0$. By the bilinear estimate $L^4\times L^4\rightarrow L^2$ in Lemma \ref{bilinear}, Lemma \ref{dualitycomp}, \eqref{l4}, \eqref{l2}, and
\eqref{l2first}, we have
\begin{align*}
    &\|T_{0+iy} \hat{g}\|_{L^2}\\
    =&\bigg\|\int_{t_1}^{t_2}\int_{\R^3}e^{it\phi(\xi,\eta)}\hat{g}_{k_1}(t,\xi-\eta)\hat{f}_{n,k_2}(t,\eta)\partial_{\xi_l}\psi_k(\xi)d\eta dt\bigg\|_{L^2}\\
    \lesssim & \|\nabla_\xi\psi_k\|_{L^\infty}\bigg\|\F^{-1}\int_{t_1}^{t_2}\int_{\R^3}e^{it\phi(\xi,\eta)}\hat{g}_{k_1}(t,\xi-\eta)\hat{f}_{n,k_2}(t,\eta)\Tilde{\psi}_k(\xi)d\eta dt\bigg\|_{L^2}\\
    \lesssim & 2^{M-k}\sup_{t\in[2^{M-1},2^M]}\min\{\|e^{ic_mt\la}g_{k_1}\|_{L^4}\|e^{ic_nt\la}f_{n,k_2}\|_{L^4},2^{3\min\{k,k_2\}/2}\|e^{ic_mt\la}g_{k_1}\|_{L^2}\|e^{ic_nt\la}f_{n,k_2}\|_{L^2}\}\\
    \lesssim & 2^{M-k}\sup_{t\in[2^{M-1},2^M]}\min\{2^{-3M/2+k_1/4+k_2/4}\|\nabla_\xi\hat{f}_{n,k_2}\|_{L^2}\|\nabla_\xi\hat{g}_{k_1}\|_{L^2},2^{3\min\{k,k_2\}/2}\|g_{k_1}\|_{L^2}\|f_{n,k_2}\|_{L^2}\}\\
    \lesssim & 2^{-k-2k_{2,+}+\gamma k_2}\min\{2^{-M/2+k_1/4-k_2/4}\e_1\|\hat{g}_{k_1}\|_{L^\infty_t([2^{M-1},2^M])H^1_\xi}, 2^{M+3\min\{k,k_2\}/2+k_2/2}\e_1\|\hat{g}_{k_1}\|_{L^\infty_t([2^{M-1},2^M])L^2_\xi}\}.
\end{align*}
Using the same method as above with \eqref{q}, we obtain
\begin{align*}
    &\|T^1_{0+iy} \hat{g}\|_{L^2}+\|T^2_{0+iy} \hat{g}\|_{L^2}\\
    =&\bigg\|\int_{t_1}^{t_2}\int_{\R^3}e^{it\phi(\xi,\eta)}q(\xi-\eta,\eta)\hat{g}_{k_1}(t,\xi-\eta)\hat{f}_{n,k_2}(t,\eta)\partial_{\xi_l}\psi_k(\xi)d\eta dt\bigg\|_{L^2}\\
    &+\bigg\|\int_{t_1}^{t_2}\int_{\R^3}e^{it\phi(\xi,\eta)}\partial_{\xi_l}q(\xi-\eta,\eta)\hat{g}_{k_1}(t,\xi-\eta)\hat{f}_{n,k_2}(t,\eta)\psi_k(\xi)d\eta dt\bigg\|_{L^2}\\
    \lesssim & 2^{M}\big(\|\nabla_\xi\psi_k\|_{L^\infty_\xi}\|\F^{-1}q(\xi-\eta,\eta)\Tilde{\psi}_k(\xi)\Tilde{\psi}_{k_1}(\xi-\eta)\Tilde{\psi}_{k_2}(\eta)\|_{L^1}+\\
    &\qquad +\|\F^{-1}\nabla_\xi q(\xi-\eta,\eta)\Tilde{\psi}_k(\xi)\Tilde{\psi}_{k_1}(\xi-\eta)\Tilde{\psi}_{k_2}(\eta)\|_{L^1}\big)\times\\
    &\times \sup_{t\in[2^{M-1},2^M]}\min\{\|e^{ic_mt\la}g_{k_1}\|_{L^4_x}\|e^{ic_nt\la}f_{n,k_2}\|_{L^4_x},2^{3\min\{k,k_2\}/2}\|e^{ic_mt\la}g_{k_1}\|_{L^2}\|e^{ic_nt\la}f_{n,k_2}\|_{L^2}\}\\
    \lesssim & 2^{\e k_--k-2k_{2,+}+\gamma k_2}\min\{2^{-M/2+k_1/4-k_2/4}\e_1\|\hat{g}_{k_1}\|_{L^\infty_t([2^{M-1},2^M])H^1_\xi}, \\
    &\qquad\qquad\qquad\qquad 2^{M+3\min\{k,k_2\}/2+k_2/2}\e_1\|\hat{g}_{k_1}\|_{L^\infty_t([2^{M-1},2^M])L^2_\xi}\}.
\end{align*}
Next, we look at the operators when $\Re(z) = 1$,
\begin{align}
    \|T_{1+iy} \hat{g}\|_{L^2}
    \leq &\bigg\|\partial_{\xi_m}\int_{t_1}^{t_2}\int_{\R^3}e^{it\phi(\xi,\eta)}\hat{g}_{k_1}(t,\xi-\eta)\hat{f}_{n,k_2}(t,\eta)\partial_{\xi_l}\psi_k(\xi)d\eta dt\bigg\|_{L^2}\nonumber\\
    \leq &\bigg\|\int_{t_1}^{t_2}\int_{\R^3}e^{it\phi(\xi,\eta)}\partial_{\xi_m}\hat{g}_{k_1}(t,\xi-\eta)\hat{f}_{n,k_2}(t,\eta)\partial_{\xi_l}\psi_k(\xi)d\eta dt\bigg\|_{L^2}\label{im0.1}\\
    &+ \bigg\|\int_{t_1}^{t_2}\int_{\R^3}e^{it\phi(\xi,\eta)}\hat{g}_{k_1}(t,\xi-\eta)\hat{f}_{n,k_2}(t,\eta)\partial_{\xi_m}\partial_{\xi_l}\psi_k(\xi)d\eta dt\bigg\|_{L^2}\label{im0.12}\\
    & + \bigg\|\int_{t_1}^{t_2}\int_{\R^3}e^{it\phi(\xi,\eta)}it\partial_{\xi_m}\phi(\xi,\eta)\hat{g}_{k_1}(t,\xi-\eta)\hat{f}_{n,k_2}(t,\eta)\partial_{\xi_l}\psi_k(\xi)d\eta dt\bigg\|_{L^2}\label{im0.2},
\end{align}
\begin{align}
    \|T^1_{1+iy} \hat{g}\|_{L^2}
    \leq &\bigg\|\partial_{\xi_m}\int_{t_1}^{t_2}\int_{\R^3}e^{it\phi(\xi,\eta)}q(\xi-\eta,\eta)\hat{g}_{k_1}(t,\xi-\eta)\hat{f}_{n,k_2}(t,\eta)\partial_{\xi_l}\psi_k(\xi)d\eta dt\bigg\|_{L^2}\nonumber\\
    \leq &\bigg\|\int_{t_1}^{t_2}\int_{\R^3}e^{it\phi(\xi,\eta)}q(\xi-\eta,\eta)\partial_{\xi_m}\hat{g}_{k_1}(t,\xi-\eta)\hat{f}_{n,k_2}(t,\eta)\partial_{\xi_l}\psi_k(\xi)d\eta dt\bigg\|_{L^2}\label{km0.1}\\
    &+ \bigg\|\int_{t_1}^{t_2}\int_{\R^3}e^{it\phi(\xi,\eta)}\partial_{\xi_m} q(\xi-\eta,\eta)\hat{g}_{k_1}(t,\xi-\eta)\hat{f}_{n,k_2}(t,\eta)\partial_{\xi_l}\psi_k(\xi)d\eta dt\bigg\|_{L^2}\label{km0.11}\\
    &+ \bigg\|\int_{t_1}^{t_2}\int_{\R^3}e^{it\phi(\xi,\eta)}q(\xi-\eta,\eta)\hat{g}_{k_1}(t,\xi-\eta)\hat{f}_{n,k_2}(t,\eta)\partial_{\xi_m}\partial_{\xi_l}\psi_k(\xi)d\eta dt\bigg\|_{L^2}\label{km0.12}\\
    & + \bigg\|\int_{t_1}^{t_2}\int_{\R^3}e^{it\phi(\xi,\eta)}it\partial_{\xi_m}\phi(\xi,\eta) q(\xi-\eta,\eta)\hat{g}_{k_1}(t,\xi-\eta)\hat{f}_{n,k_2}(t,\eta)\partial_{\xi_l}\psi_k(\xi)d\eta dt\bigg\|_{L^2}\label{km0.2},
\end{align}
and
\begin{align}
    \|T^2_{1+iy} \hat{g}\|_{L^2}
    \leq &\bigg\|\partial_{\xi_m}\int_{t_1}^{t_2}\int_{\R^3}e^{it\phi(\xi,\eta)}\partial_{\xi_l}q(\xi-\eta,\eta)\hat{g}_{k_1}(t,\xi-\eta)\hat{f}_{n,k_2}(t,\eta)\psi_k(\xi)d\eta dt\bigg\|_{L^2}\nonumber\\
    \leq &\bigg\|\int_{t_1}^{t_2}\int_{\R^3}e^{it\phi(\xi,\eta)}\partial_{\xi_l}q(\xi-\eta,\eta)\partial_{\xi_m}\hat{g}_{k_1}(t,\xi-\eta)\hat{f}_{n,k_2}(t,\eta)\psi_k(\xi)d\eta dt\bigg\|_{L^2}\label{km0.1-2}\\
    &+ \bigg\|\int_{t_1}^{t_2}\int_{\R^3}e^{it\phi(\xi,\eta)}\partial_{\xi_m} \partial_{\xi_l}q(\xi-\eta,\eta)\hat{g}_{k_1}(t,\xi-\eta)\hat{f}_{n,k_2}(t,\eta)\psi_k(\xi)d\eta dt\bigg\|_{L^2}\label{km0.11-2}\\
    &+ \bigg\|\int_{t_1}^{t_2}\int_{\R^3}e^{it\phi(\xi,\eta)}\partial_{\xi_l}q(\xi-\eta,\eta)\hat{g}_{k_1}(t,\xi-\eta)\hat{f}_{n,k_2}(t,\eta)\partial_{\xi_m}\psi_k(\xi)d\eta dt\bigg\|_{L^2}\label{km0.12-2}\\
    & + \bigg\|\int_{t_1}^{t_2}\int_{\R^3}e^{it\phi(\xi,\eta)}it\partial_{\xi_m}\phi(\xi,\eta) \partial_{\xi_l}q(\xi-\eta,\eta)\hat{g}_{k_1}(t,\xi-\eta)\hat{f}_{n,k_2}(t,\eta)\psi_k(\xi)d\eta dt\bigg\|_{L^2}\label{km0.2-2}.
\end{align}
Since $(k_1,k_2)\in\chi^1_k\cup\chi^2_k$, we have \eqref{xi-2eta} for $c_m+c_n\neq 0$ and are able to perform an integration by parts on $\eqref{im0.12}$ and $\eqref{im0.2}$ using the identity $e^{it\phi(\xi,\eta)}=\sum_{n=1}^3\frac{\partial_{\eta_n}\phi(\xi,\eta)}{it|\nabla_\eta\phi(\xi,\eta)|^2}\partial_{\eta_n}e^{it\phi(\xi,\eta)}$,
\begin{align}
    &\int_{t_1}^{t_2}\int_{\R^3}e^{it\phi(\xi,\eta)} \hat{g}_{k_1}(t,\xi-\eta)\hat{f}_{n,k_2}(t,\eta)\partial_{\xi_m}\partial_{\xi_l}\psi_k(\xi)d\eta dt\nonumber\\
    = & \int_{t_1}^{t_2}\int_{\R^3}\partial_{\eta_n}e^{it\phi(\xi,\eta)}\frac{\partial_{\eta_n}\phi(\xi,\eta)}{it|\nabla_\eta\phi(\xi,\eta)|^2}\hat{g}_{k_1}(t,\xi-\eta)\hat{f}_{n,k_2}(t,\eta)\partial_{\xi_m}\partial_{\xi_l}\psi_k(\xi)d\eta dt\nonumber\\
    = & -\int_{t_1}^{t_2}\int_{\R^3} e^{it\phi(\xi,\eta)}\partial_{\eta_n}\frac{\partial_{\eta_n}\phi(\xi,\eta)}{it|\nabla_\eta\phi(\xi,\eta)|^2}\hat{g}_{k_1}(t,\xi-\eta)\hat{f}_{n,k_2}(t,\eta)\partial_{\xi_m}\partial_{\xi_l}\psi_k(\xi)d\eta dt\label{im0.3-2}\\
    & -\int_{t_1}^{t_2}\int_{\R^3} e^{it\phi(\xi,\eta)}\frac{\partial_{\eta_n}\phi(\xi,\eta)}{it|\nabla_\eta\phi(\xi,\eta)|^2}\partial_{\eta_n}\hat{g}_{k_1}(t,\xi-\eta)\hat{f}_{n,k_2}(t,\eta)\partial_{\xi_m}\partial_{\xi_l}\psi_k(\xi)d\eta dt\label{im0.4-2}\\
    & -\int_{t_1}^{t_2}\int_{\R^3} e^{it\phi(\xi,\eta)}\frac{\partial_{\eta_n}\phi(\xi,\eta)}{it|\nabla_\eta\phi(\xi,\eta)|^2}\hat{g}_{k_1}(t,\xi-\eta)\partial_{\eta_n}\hat{f}_{n,k_2}(t,\eta)\partial_{\xi_m}\partial_{\xi_l}\psi_k(\xi)d\eta dt\label{im0.5-2}
\end{align}
and
\begin{align}
    &\int_{t_1}^{t_2}\int_{\R^3}e^{it\phi(\xi,\eta)}it\partial_{\xi_m}\phi(\xi,\eta) \hat{g}_{k_1}(t,\xi-\eta)\hat{f}_{n,k_2}(t,\eta)\partial_{\xi_l}\psi_k(\xi)d\eta dt\nonumber\\
    = & \int_{t_1}^{t_2}\int_{\R^3}\partial_{\eta_n}e^{it\phi(\xi,\eta)}\frac{\partial_{\xi_m}\phi(\xi,\eta)\partial_{\eta_n}\phi(\xi,\eta)}{|\nabla_\eta\phi(\xi,\eta)|^2}\hat{g}_{k_1}(t,\xi-\eta)\hat{f}_{n,k_2}(t,\eta)\partial_{\xi_l}\psi_k(\xi)d\eta dt\nonumber\\
    = & -\int_{t_1}^{t_2}\int_{\R^3} e^{it\phi(\xi,\eta)}\partial_{\eta_n}\frac{\partial_{\xi_m}\phi(\xi,\eta)\partial_{\eta_n}\phi(\xi,\eta)}{|\nabla_\eta\phi(\xi,\eta)|^2}\hat{g}_{k_1}(t,\xi-\eta)\hat{f}_{n,k_2}(t,\eta)\partial_{\xi_l}\psi_k(\xi)d\eta dt\label{im0.3}\\
    & -\int_{t_1}^{t_2}\int_{\R^3} e^{it\phi(\xi,\eta)}\frac{\partial_{\xi_m}\phi(\xi,\eta)\partial_{\eta_n}\phi(\xi,\eta)}{|\nabla_\eta\phi(\xi,\eta)|^2}\partial_{\eta_n}\hat{g}_{k_1}(t,\xi-\eta)\hat{f}_{n,k_2}(t,\eta)\partial_{\xi_l}\psi_k(\xi)d\eta dt\label{im0.4}\\
    & -\int_{t_1}^{t_2}\int_{\R^3} e^{it\phi(\xi,\eta)}\frac{\partial_{\xi_m}\phi(\xi,\eta)\partial_{\eta_n}\phi(\xi,\eta)}{|\nabla_\eta\phi(\xi,\eta)|^2}\hat{g}_{k_1}(t,\xi-\eta)\partial_{\eta_n}\hat{f}_{n,k_2}(t,\eta)\partial_{\xi_l}\psi_k(\xi)d\eta dt\label{im0.5}.
\end{align}
For $c_m+c_n = 0$, recall that $\phi(\xi,\eta)=(c_l+c_n)|\xi|^2-2c_n\xi\cdot\eta$. We thus integrate by parts using the identity $e^{it\phi(\xi,\eta)}=-\sum_{j=1}^3\frac{\xi_j}{it2c_n|\xi|^2}\partial_{\eta_j}e^{it\phi(\xi,\eta)}$ on the terms $\eqref{km0.2}$ and $\eqref{km0.2-2}$,
\begin{align}
    &\int_{t_1}^{t_2}\int_{\R^3}e^{it\phi(\xi,\eta)}it\partial_{\xi_m}\phi(\xi,\eta) q(\xi-\eta,\eta)\hat{g}_{k_1}(t,\xi-\eta)\hat{f}_{n,k_2}(t,\eta)\partial_{\xi_l}\psi_k(\xi)d\eta dt\nonumber\\
    = & \int_{t_1}^{t_2}\int_{\R^3}\partial_{\eta_j}e^{it\phi(\xi,\eta)}\frac{\xi_j\partial_{\xi_m}\phi(\xi,\eta)}{2c_n|\xi|^2}q(\xi-\eta,\eta)\hat{g}_{k_1}(t,\xi-\eta)\hat{f}_{n,k_2}(t,\eta)\partial_{\xi_l}\psi_k(\xi)d\eta dt\nonumber\\
    = & -\int_{t_1}^{t_2}\int_{\R^3} e^{it\phi(\xi,\eta)}\partial_{\eta_j}\big(\frac{\xi_j\partial_{\xi_m}\phi(\xi,\eta)}{2c_n|\xi|^2}q(\xi-\eta,\eta)\big)\hat{g}_{k_1}(t,\xi-\eta)\hat{f}_{n,k_2}(t,\eta)\partial_{\xi_l}\psi_k(\xi)d\eta dt\label{km0.3}\\
    & -\int_{t_1}^{t_2}\int_{\R^3} e^{it\phi(\xi,\eta)}\frac{\xi_j\partial_{\xi_m}\phi(\xi,\eta)}{2c_n|\xi|^2}q(\xi-\eta,\eta)\partial_{\eta_j}\hat{g}_{k_1}(t,\xi-\eta)\hat{f}_{n,k_2}(t,\eta)\partial_{\xi_l}\psi_k(\xi)d\eta dt\label{km0.4}\\
    & -\int_{t_1}^{t_2}\int_{\R^3} e^{it\phi(\xi,\eta)}\frac{\xi_j\partial_{\xi_m}\phi(\xi,\eta)}{2c_n|\xi|^2}q(\xi-\eta,\eta)\hat{g}_{k_1}(t,\xi-\eta)\partial_{\eta_j}\hat{f}_{n,k_2}(t,\eta)\partial_{\xi_l}\psi_k(\xi)d\eta dt\label{km0.5}
\end{align}
and
\begin{align}       
    &\int_{t_1}^{t_2}\int_{\R^3}e^{it\phi(\xi,\eta)}it\partial_{\xi_m}\phi(\xi,\eta) \partial_{\xi_l}q(\xi-\eta,\eta)\hat{g}_{k_1}(t,\xi-\eta)\hat{f}_{n,k_2}(t,\eta)\psi_k(\xi)d\eta dt\nonumber\\
    = & \int_{t_1}^{t_2}\int_{\R^3}\partial_{\eta_j}e^{it\phi(\xi,\eta)}\frac{\xi_j\partial_{\xi_m}\phi(\xi,\eta)}{2c_n|\xi|^2}\partial_{\xi_l}q(\xi-\eta,\eta)\hat{g}_{k_1}(t,\xi-\eta)\hat{f}_{n,k_2}(t,\eta)\psi_k(\xi)d\eta dt\nonumber\\
    = & -\int_{t_1}^{t_2}\int_{\R^3} e^{it\phi(\xi,\eta)}\partial_{\eta_j}\big(\frac{\xi_j\partial_{\xi_m}\phi(\xi,\eta)}{2c_n|\xi|^2}\partial_{\xi_l}q(\xi-\eta,\eta)\big)\hat{g}_{k_1}(t,\xi-\eta)\hat{f}_{n,k_2}(t,\eta)\psi_k(\xi)d\eta dt\label{km0.3-2}\\
    & -\int_{t_1}^{t_2}\int_{\R^3} e^{it\phi(\xi,\eta)}\frac{\xi_j\partial_{\xi_m}\phi(\xi,\eta)}{2c_n|\xi|^2}\partial_{\xi_l}q(\xi-\eta,\eta)\partial_{\eta_j}\hat{g}_{k_1}(t,\xi-\eta)\hat{f}_{n,k_2}(t,\eta)\psi_k(\xi)d\eta dt\label{km0.4-2}\\
    & -\int_{t_1}^{t_2}\int_{\R^3} e^{it\phi(\xi,\eta)}\frac{\xi_j\partial_{\xi_m}\phi(\xi,\eta)}{2c_n|\xi|^2}\partial_{\xi_l}q(\xi-\eta,\eta)\hat{g}_{k_1}(t,\xi-\eta)\partial_{\eta_j}\hat{f}_{n,k_2}(t,\eta)\psi_k(\xi)d\eta dt\label{km0.5-2},
\end{align}
By the bilinear estimate $L^4\times L^4\rightarrow L^2$, Lemma \ref{dualitycomp}, Lemma \ref{chi+eta2}, and \eqref{l2first}
\begin{align*}
    &\eqref{im0.1}+\|\eqref{im0.4-2}\|_{L^2}+\|\eqref{im0.4}\|_{L^2}\\
    \lesssim & \big[2^{M}\|\nabla_\xi\psi_k\|_{L^\infty}(1+\|\F^{-1}\frac{\partial_{\xi_m}\phi(\xi,\eta)\partial_{\eta_n}\phi(\xi,\eta)}{|\nabla_\eta\phi(\xi,\eta)|^2}\Tilde{\psi}_{k_1}(\nabla_\eta\phi(\xi,\eta))\Tilde{\psi}_{k_2}(\eta)\|_{L^1})+\\
    &\qquad +\|\nabla^2_\xi\psi_k\|_{L^\infty}\|\F^{-1}\frac{\partial_{\eta_n}\phi(\xi,\eta)}{|\nabla_\eta\phi(\xi,\eta)|^2}\Tilde{\psi}_{k_1}(\nabla_\eta\phi(\xi,\eta))\Tilde{\psi}_{k_2}(\eta)\|_{L^1}\big]\times\\
    &\times \sup_{t\in[2^{M-1},2^M]}\min\{\|e^{ic_mt\la}\F^{-1}\nabla_\xi\hat{g}_{k_1}\|_{L^4}\|e^{ic_nt\la}f_{n,k_2}\|_{L^4},\\
    &\qquad\qquad\qquad\qquad 2^{3\min\{k,k_2\}/2}\|e^{ic_mt\la}\F^{-1}\nabla_\xi\hat{g}_{k_1}\|_{L^2}\|e^{ic_nt\la}f_{n,k_2}\|_{L^2}\}\\
    \lesssim & [2^{M-k}+2^{-2k-k_1}]\sup_{t\in[2^{M-1},2^M]}\min\{\|e^{ic_mt\la}\F^{-1}\nabla_\xi\hat{g}_{k_1}\|_{L^4}\|e^{ic_nt\la}f_{n,k_2}\|_{L^4},\\
    &\qquad\qquad\qquad\qquad 2^{3\min\{k,k_2\}/2}\|\nabla_\xi\hat{g}_{k_1}\|_{L^2}\|f_{n,k_2}\|_{L^2}\}\\
    \lesssim & 2^{M-k}(1+2^{-M-k-k_1})\sup_{t\in[2^{M-1},2^M]}\min\{2^{-3M/2+k_1/4+k_2/4}\|\nabla^2_\xi\hat{g}_{k_1}\|_{L^2}\|\nabla_\xi\hat{f}_{n,k_2}\|_{L^2},\\
    &\qquad\qquad\qquad\qquad 2^{3\min\{k,k_2\}/2}\|\nabla_\xi\hat{g}_{k_1}\|_{L^2}\|f_{n,k_2}\|_{L^2}\}\\
    \lesssim & (1+2^{-M-k-k_1})2^{-k-2k_{2,+}+\gamma k_2}\min\{2^{-M/2+k_1/4-k_2/4}\e_1\|\hat{g}_{k_1}\|_{L^\infty_t([2^{M-1},2^M])H^2_\xi},\\
    &\qquad\qquad\qquad\qquad 2^{M+3\min\{k,k_2\}/2+k_2/2}\e_1\|\hat{g}_{k_1}\|_{L^\infty_t([2^{M-1},2^M])H^1_\xi}\},
\end{align*}
and using the condition \eqref{q} on the multiplier $q$,
\begin{align*}
    &\eqref{km0.1}+\eqref{km0.1-2}+\|\eqref{km0.4}\|_{L^2}+\|\eqref{km0.4-2}\|_{L^2}\\
    \lesssim & 2^{M}\big(\|\nabla_\xi\psi_k\|_{L^\infty}\|\F^{-1}q(\xi-\eta,\eta)\Tilde{\psi}_k(\xi)\Tilde{\psi}_{k_1}(\xi-\eta)\Tilde{\psi}_{k_2}(\eta)\|_{L^1}
    +\|\F^{-1}\nabla_\xi q(\xi-\eta,\eta)\Tilde{\psi}_k(\xi)\Tilde{\psi}_{k_1}(\xi-\eta)\Tilde{\psi}_{k_2}(\eta)\|_{L^1}+\\
    & \qquad + \|\nabla_\xi\psi_k\|_{L^\infty}\|\F^{-1}\frac{\xi_j\partial_{\xi_m}\phi(\xi,\eta)}{2c_n|\xi|^2}q(\xi-\eta,\eta)\Tilde{\psi}_k(\xi)\Tilde{\psi}_{k_1}(\xi-\eta)\Tilde{\psi}_{k_2}(\eta)\|_{L^1}+\\
    &\qquad + \|\F^{-1}\frac{\xi_j\partial_{\xi_m}\phi(\xi,\eta)}{2c_n|\xi|^2}\nabla_\xi q(\xi-\eta,\eta)\Tilde{\psi}_k(\xi)\Tilde{\psi}_{k_1}(\xi-\eta)\Tilde{\psi}_{k_2}(\eta)\|_{L^1}\big)\times\\
    &\times\sup_{t\in[2^{M-1},2^M]}\min\{
    \|e^{ic_mt\la}\F^{-1}\nabla_\xi\hat{g}_{k_1}\|_{L^4_x}\|e^{ic_nt\la}f_{n,k_2}\|_{L^4_x},\\
    &\qquad\qquad\qquad\qquad\qquad 2^{3\min\{k,k_2\}/2}\|e^{ic_mt\la}\F^{-1}\nabla_\xi\hat{g}_{k_1}\|_{L^2_x}\|e^{ic_nt\la}f_{n,k_2}\|_{L^2_x}\}\\
    \lesssim & 2^{M+\e k_-+k_1-2k}\sup_{t\in[2^{M-1},2^M]}\min\{2^{-3M/2+k_1/4+k_2/4}\|\nabla^2_\xi\hat{g}_{k_1}\|_{L^2}\|\nabla_\xi\hat{f}_{n,k_2}\|_{L^2},\\
    &\qquad\qquad\qquad\qquad 2^{3\min\{k,k_2\}/2}\|\nabla_\xi\hat{g}_{k_1}\|_{L^2}\|f_{n,k_2}\|_{L^2}\}\\
    \leq & 2^{\e k_-+k_1-2k-2k_{2,+}+\gamma k_2}\min\{2^{-M/2+k_1/4-k_2/4}\e_1\|\hat{g}_{k_1}\|_{L^\infty_t([2^{M-1},2^M])H^2_\xi}, \\
    &\qquad\qquad\qquad\qquad 2^{M+3\min\{k,k_2\}/2+k_2/2}\e_1\|\hat{g}_{k_1}\|_{L^\infty_t([2^{M-1},2^M])H^1_\xi}\}.
\end{align*}
Then the bilinear estimate $L^2\times L^\infty\rightarrow L^2$, Lemma \ref{op}, Lemma \ref{dualitycomp}, \eqref{l2}, \eqref{addder} and \eqref{lpnormseq1} imply
\begin{align*}
    \|\eqref{im0.3-2}\|_{L^2}+\|\eqref{im0.3}\|_{L^2}
    \lesssim &\big(2^{M}\|\nabla_\xi\psi_k\|_{L^\infty}\|\F^{-1}\partial_{\eta_n}\frac{\partial_{\xi_m}\phi(\xi,\eta)\partial_{\eta_n}\phi(\xi,\eta)}{|\nabla_\eta\phi(\xi,\eta)|^2}\Tilde{\psi}_{k_1}(\nabla_\eta\phi(\xi,\eta))\Tilde{\psi}_{k_2}(\eta)\|_{L^1}+\\
    &+\|\nabla^2_\xi\psi_k\|_{L^\infty}\|\F^{-1}\partial_{\eta_n}\frac{\partial_{\eta_n}\phi(\xi,\eta)}{|\nabla_\eta\phi(\xi,\eta)|^2}\Tilde{\psi}_{k_1}(\nabla_\eta\phi(\xi,\eta))\Tilde{\psi}_{k_2}(\eta)\|_{L^1}\big)\times\\
    &\times \sup_{t\in[2^{M-1},2^M]}\min\{\|e^{ic_mt\la}g_{k_1}\|_{L^\infty}\|e^{ic_nt\la}f_{n,k_2}\|_{L^2},\\
    &\qquad\qquad\qquad\qquad 2^{3\min\{k,k_2\}/2}\|e^{ic_mt\la}g_{k_1}\|_{L^2}\|e^{ic_nt\la}f_{n,k_2}\|_{L^2}\}\\
    \lesssim & (2^{M-k-k_1}+2^{-2k-2k_1}) \sup_{t\in[2^{M-1},2^M]}\min\{2^{-3M/2}\|g_{k_1}\|_{L^1}\|f_{n,k_2}\|_{L^2},\\
    &\qquad\qquad\qquad\qquad 2^{3\min\{k,k_2\}/2}\|g_{k_1}\|_{L^2}\|f_{n,k_2}\|_{L^2}\}\\
    \lesssim & 2^{M-k-k_1}(1+2^{-M-k-k_1}) \sup_{t\in[2^{M-1},2^M]}\min\{2^{-3M/2+k_1/2}\|\nabla^2_\xi\hat{g}_{k_1}\|_{L^2}\|f_{k_2}\|_{L^2},\\
    &\qquad\qquad\qquad\qquad 2^{3\min\{k,k_2\}/2+k_1}\|\nabla_\xi\hat{g}_{k_1}\|_{L^2}\|f_{k_2}\|_{L^2}\}\\
    \lesssim & (1+2^{-M-k-k_1})2^{-k-2k_{2,+}+\gamma k_2} \min\{2^{-M/2-k_1/2+k_2/2}\e_1\|\hat{g}_{k_1}\|_{L^\infty_t([2^{M-1},2^M])H^2_\xi}, \\
    &\qquad\qquad\qquad\qquad 2^{M+3\min\{k,k_2\}/2+k_2/2}\e_1\|\hat{g}_{k_1}\|_{L^\infty_t([2^{M-1},2^M])H^1_\xi}\}
\end{align*}
and
\begin{align*}
    \|\eqref{im0.5}\|_{L^2}+\|\eqref{im0.5-2}\|_{L^2}
    \lesssim &\big(2^{M}\|\nabla_\xi\psi_k\|_{L^\infty}\|\F^{-1}\frac{\partial_{\xi_m}\phi(\xi,\eta)\partial_{\eta_n}\phi(\xi,\eta)}{|\nabla_\eta\phi(\xi,\eta)|^2} \Tilde{\psi}_{k_1}(\nabla_\eta\phi(\xi,\eta))\Tilde{\psi}_{k_2}(\eta)\|_{L^1}+\\
    &+\|\nabla^2_\xi\psi_k\|_{L^\infty}\|\F^{-1}\frac{\partial_{\eta_n}\phi(\xi,\eta)}{|\nabla_\eta\phi(\xi,\eta)|^2} \Tilde{\psi}_{k_1}(\nabla_\eta\phi(\xi,\eta))\Tilde{\psi}_{k_2}(\eta)\|_{L^1}\big)\times\\
    &\times\sup_{t\in[2^{M-1},2^M]}\min\{\|e^{ic_mt\la}g_{k_1}\|_{L^\infty}\|e^{ic_nt\la}\F^{-1}\nabla_\xi\hat{f}_{n,k_2}\|_{L^2},\\
    &\qquad\qquad\qquad\qquad 2^{3\min\{k,k_2\}/2}\|e^{ic_mt\la}g_{k_1}\|_{L^2}\|e^{ic_nt\la}\F^{-1}\nabla_\xi\hat{f}_{n,k_2}\|_{L^2}\}\\
    \lesssim &(2^{M-k}+2^{-2k-k_1})\sup_{t\in[2^{M-1},2^M]}\min\{\|e^{ic_mt\la}g_{k_1}\|_{L^\infty}\|\nabla_\xi\hat{f}_{n,k_2}\|_{L^2},\\
    &\qquad\qquad\qquad\qquad 2^{3\min\{k,k_2\}/2}\|g_{k_1}\|_{L^2}\|\nabla_\xi\hat{f}_{n,k_2}\|_{L^2}\}\\
    \lesssim &(1+2^{-M-k-k_1})2^{M-k}\sup_{t\in[2^{M-1},2^M]}\min\{2^{-3M/2+k_1/2}\|\nabla^2_\xi\hat{g}_{k_1}\|_{L^2}\|\nabla_\xi\hat{f}_{n,k_2}\|_{L^2}, \\
    &\qquad\qquad\qquad\qquad 2^{3\min\{k,k_2\}/2+k_1}\|\nabla_\xi\hat{g}_{k_1}\|_{L^2}\|\nabla_\xi\hat{f}_{n,k_2}\|_{L^2}\}\\
    \lesssim &(1+2^{-M-k-k_1})2^{M-k-2k_{2,+}+\gamma k_2}\sup_{t\in[2^{M-1},2^M]}\min\{2^{-3M/2+k_1/2-k_2/2}\e_1\|\nabla^2_\xi\hat{g}_{k_1}\|_{L^2}, \\
    &\qquad\qquad\qquad\qquad 2^{3\min\{k,k_2\}/2+k_1-k_2/2}\e_1\|\nabla_\xi\hat{g}_{k_1}\|_{L^2}\}\\
    \lesssim &(1+2^{-M-k-k_1})2^{-k-2k_{2,+}+\gamma k_2}\min\{2^{-M/2+k_1/2-k_2/2}\e_1\|\hat{g}_{k_1}\|_{L^\infty_t([2^{M-1},2^M])H^2_\xi}, \\
    &\qquad\qquad\qquad\qquad 2^{M+3\min\{k,k_2\}/2+k_1-k_2/2}\e_1\|\hat{g}_{k_1}\|_{L^\infty_t([2^{M-1},2^M])H^1_\xi}\}.
\end{align*}
Repeating the same estimates for the $c_m+c_n = 0$ case, we have 
\begin{align*}
    &\eqref{km0.11}+\eqref{km0.11-2}+\eqref{km0.12}+\eqref{km0.12-2}+\|\eqref{km0.3}\|_{L^2}+\|\eqref{km0.3-2}\|_{L^2}\\
    \lesssim &2^{M}\big(\|\nabla_\xi\psi_k\|_{L^\infty}\|\F^{-1}\nabla_\xi q(\xi-\eta,\eta)\Tilde{\psi}_k(\xi)\Tilde{\psi}_{k_1}(\xi-\eta)\Tilde{\psi}_{k_2}(\eta)\|_{L^1}\\
    & + \|\F^{-1}\nabla^2_\xi q(\xi-\eta,\eta)\Tilde{\psi}_k(\xi)\Tilde{\psi}_{k_1}(\xi-\eta)\Tilde{\psi}_{k_2}(\eta)\|_{L^1}\\
    & + \|\nabla^2_\xi\psi_k\|_{L^\infty}\|\F^{-1} q(\xi-\eta,\eta)\Tilde{\psi}_k(\xi)\Tilde{\psi}_{k_1}(\xi-\eta)\Tilde{\psi}_{k_2}(\eta)\|_{L^1}\\
    & + \|\nabla_\xi\psi_k\|_{L^\infty}\|\F^{-1} \frac{\xi_j\partial_{\eta_j}\partial_{\xi_m}\phi(\xi,\eta)}{2c_n|\xi|^2}q(\xi-\eta,\eta)\Tilde{\psi}_k(\xi)\Tilde{\psi}_{k_1}(\xi-\eta)\Tilde{\psi}_{k_2}(\eta)\|_{L^1}\\
    & + \|\nabla_\xi\psi_k\|_{L^\infty}\|\F^{-1} \frac{\xi_j\partial_{\xi_m}\phi(\xi,\eta)}{2c_n|\xi|^2}\nabla_\eta q(\xi-\eta,\eta)\Tilde{\psi}_k(\xi)\Tilde{\psi}_{k_1}(\xi-\eta)\Tilde{\psi}_{k_2}(\eta)\|_{L^1}\\
    & + \|\F^{-1} \frac{\xi_j\partial_{\eta_j}\partial_{\xi_m}\phi(\xi,\eta)}{2c_n|\xi|^2}\nabla_\xi q(\xi-\eta,\eta)\Tilde{\psi}_k(\xi)\Tilde{\psi}_{k_1}(\xi-\eta)\Tilde{\psi}_{k_2}(\eta)\|_{L^1}\\
    & + \|\F^{-1} \frac{\xi_j\partial_{\xi_m}\phi(\xi,\eta)}{2c_n|\xi|^2}\nabla_\eta\nabla_\xi q(\xi-\eta,\eta)\Tilde{\psi}_k(\xi)\Tilde{\psi}_{k_1}(\xi-\eta)\Tilde{\psi}_{k_2}(\eta)\|_{L^1}\big)\times\\
    &\times\sup_{t\in[2^{M-1},2^M]}\min\{\|e^{ic_mt\la}g_{k_1}\|_{L^\infty_x}\|\hat{f}_{n,k_2}\|_{L^2_\xi},2^{3\min\{k,k_2\}/2}\|g_{k_1}\|_{L^2_x}\|\hat{f}_{n,k_2}\|_{L^2_\xi}\}\\
    \lesssim & (2^{-2k}+2^{-2k+k_1-k_2})2^{\e k_--2k_{2,+}+\gamma k_2}\min\{2^{-M/2+k_1/2+k_2/2}\e_1\|\hat{g}_{k_1}\|_{L^\infty_t([2^{M-1},2^M])H^2_\xi}, \\
    &\qquad\qquad\qquad\qquad 2^{M+3\min\{k,k_2\}/2+k_2/2+k_1}\e_1\|\hat{g}_{k_1}\|_{L^\infty_t([2^{M-1},2^M])H^1_\xi}\}
\end{align*}
and
\begin{align*}
    &\|\eqref{km0.5}\|_{L^2}+\|\eqref{km0.5-2}\|_{L^2}\\
    \lesssim &2^{M}\big(\|\nabla_\xi\psi_k\|_{L^\infty}\|\F^{-1}\frac{\xi_j\partial_{\xi_m}\phi(\xi,\eta)}{2c_n|\xi|^2} q(\xi-\eta,\eta)\Tilde{\psi}_k(\xi)\Tilde{\psi}_{k_1}(\xi-\eta)\Tilde{\psi}_{k_2}(\eta)\|_{L^1}\\
    &+\|\F^{-1}\frac{\xi_j\partial_{\xi_m}\phi(\xi,\eta)}{2c_n|\xi|^2} \nabla_\xi q(\xi-\eta,\eta)\Tilde{\psi}_k(\xi)\Tilde{\psi}_{k_1}(\xi-\eta)\Tilde{\psi}_{k_2}(\eta)\|_{L^1}\big)\times\\
    &\times \sup_{t\in[2^{M-1},2^M]}\min\{\|e^{ic_mt\la}g_{k_1}\|_{L^\infty_x}\|\nabla_\xi\hat{f}_{n,k_2}\|_{L^2_\xi},2^{3\min\{k,k_2\}/2}\|g_{k_1}\|_{L^2_x}\|\nabla_\xi\hat{f}_{n,k_2}\|_{L^2_\xi}\}\\
    \lesssim & 2^{\e k_-+k_1-2k-2k_{2,+}+\gamma k_2}\e_1\min\{2^{-M/2+k_1/2-k_2/2}\e_1\|\hat{g}_{k_1}\|_{L^\infty_t([2^{M-1},2^M])H^2_\xi}, \\
    &\qquad\qquad\qquad\qquad 2^{M+3\min\{k,k_2\}/2+k_1-k_2/2}\e_1\|\hat{g}_{k_1}\|_{L^\infty_t([2^{M-1},2^M])H^1_\xi}\}.
\end{align*}
Hence, for $(k_1,k_2)\in
\chi^1_k\cup\chi^2_k$,
\begin{align*}
    \|T_{1+iy}\hat{g}\|_{L^2}
    \lesssim (1+2^{-M-k-k_1})2^{-k-2k_{2,+}+\gamma k_2}\min\{2^{-M/2+k_1/2-k_2/2}\e_1\|\hat{g}_{k_1}\|_{L^\infty_t([2^{M-1},2^M])H^2_\xi},\\ 2^{M+3\min\{k,k_2\}/2+k_1-k_2/2}\e_1\|\hat{g}_{k_1}\|_{L^\infty_t([2^{M-1},2^M])H^1_\xi}\}
\end{align*}
and
\begin{align*}
    \|T_{0+iy} \hat{g}\|_{L^2}
    \lesssim  2^{-k-2k_{2,+}+\gamma k_2}\min\{2^{-M/2+k_1/4-k_2/4}\e_1\|\hat{g}_{k_1}\|_{L^\infty_t([2^{M-1},2^M])H^1_\xi},\\
    2^{M+3\min\{k,k_2\}/2+k_2/2}\e_1\|\hat{g}_{k_1}\|_{L^\infty_t([2^{M-1},2^M])L^2_\xi}\}.
\end{align*}
For $(k_1,k_2)\in\chi^1_k\cup\chi^2_k\cup\chi^3_k$,
\begin{align*}
    \|T^1_{1+iy}\hat{g}\|_{L^2}+\|T^2_{1+iy}\hat{g}\|_{L^2}
    \lesssim 2^{\e k_-+k_1-2k-2k_{2,+}+\gamma k_2}\min\{2^{-M/2+k_1/2-k_2/2}\e_1\|\hat{g}_{k_1}\|_{L^\infty_t([2^{M-1},2^M])H^2_\xi},\\ 2^{M+3\min\{k,k_2\}/2+k_1-k_2/2}\e_1\|\hat{g}_{k_1}\|_{L^\infty_t([2^{M-1},2^M])H^1_\xi}\}
\end{align*}
and
\begin{align*}
    \|T^1_{0+iy} \hat{g}\|_{L^2}+\|T^2_{0+iy} \hat{g}\|_{L^2}
    \lesssim  2^{\e k_--k-2k_{2,+}+\gamma k_2}\min\{2^{-M/2+k_1/4-k_2/4}\e_1\|\hat{g}_{k_1}\|_{L^\infty_t([2^{M-1},2^M])H^1_\xi},\\
    2^{M+3\min\{k,k_2\}/2+k_2/2}\e_1\|\hat{g}_{k_1}\|_{L^\infty_t([2^{M-1},2^M])L^2_\xi}\}.
\end{align*}
Now, we use the variation of the Stein's interpolation theorem in Lemma \ref{si}, and get for any $\alpha\in[0,1]$,
\begin{align*}
    \|T_{\alpha}\hat{g}\|_{L^2}\lesssim (1+2^{-M-k-k_1})2^{-k-2k_{2,+}+\gamma k_2}\min\{2^{-M/2+k_1/2-k_2/2}\e_1\|\hat{g}\|_{L^\infty_t([2^{M-1},2^M])H^{1+\alpha}_\xi},\\
    2^{M+3\min\{k,k_2\}/2+k_1-k_2/2}\e_1\|\hat{g}\|_{L^\infty_t([2^{M-1},2^M])H^\alpha_\xi}\}
\end{align*}
and
\begin{align*}
    \|T^1_{\alpha}\hat{g}\|_{L^2}+\|T^2_{\alpha}\hat{g}\|_{L^2}\lesssim 2^{\e k_-+\alpha(k_1-k)-k-2k_{2,+}+\gamma k_2}\min\{2^{-M/2+k_1/2-k_2/2}\e_1\|\hat{g}\|_{L^\infty_t([2^{M-1},2^M])H^{1+\alpha}_\xi},\\
    2^{M+3\min\{k,k_2\}/2+k_1-k_2/2}\e_1\|\hat{g}\|_{L^\infty_t([2^{M-1},2^M])H^\alpha_\xi}\}.
\end{align*}
Recall the definition of $I^{M,0}_{k,k_1,k_2}$ and $J^{M,0}_{k,k_1,k_2}$ in \eqref{i0}, \eqref{j0} and use the results of the interpolation with $g=\F^{-1}\hat{f}_m\Tilde{\psi}_{k_1}$. We obtain
\begin{align*}
    &\|D^{\alpha}I^{M,0}_{k,k_1,k_2}\|_{L^2}\\
    \lesssim & (1+2^{-M-k-k_1})2^{-k-2k_{2,+}+\gamma k_2}\min\{2^{-M/2+k_1/2-k_2/2}\e_1\|\hat{f}_{m,k_1}\|_{L^\infty_t([2^{M-1},2^M])H^{1+\alpha}_\xi}
    ,\\
    &\qquad\qquad\qquad\qquad2^{M+3\min\{k,k_2\}/2 +k_1-k_2/2}\e_1\|\hat{f}_{m,k_1}\|_{L^\infty_t([2^{M-1},2^M])H^\alpha_\xi}\}\\
    \lesssim & (1+2^{-M-k-k_1})2^{-k-2k_{2,+}+\gamma k_2-2k_{1,+}+\gamma k_1}\min\{2^{-M/2+k_1/2-k_2/2-k_1/2-\alpha k_1}
    ,\\
    &\qquad\qquad\qquad\qquad 2^{M+3\min\{k,k_2\}/2+k_1-k_2/2+k_1/2-\alpha k_1}\}\e_1^2\\
    \lesssim & (1+2^{-M-k-k_1})2^{-k-2k_{2,+}+\gamma k_2-2k_{1,+}+\gamma k_1-\alpha k_1-k_2/2}\min\{2^{-M/2}
    ,2^{M+3\min\{k,k_2\}/2+3k_1/2}\}\e_1^2
\end{align*}
and
\begin{align*}
    &\|D^{\alpha}J^{M,0}_{k,k_1,k_2}\|_{L^2}\\
    \lesssim & 2^{\e k_-+\alpha(k_1-k)-k-2k_{2,+}+\gamma k_2}\min\{2^{-M/2+k_1/2-k_2/2}\e_1\|\hat{f}_{m,k_1}\|_{L^\infty_t([2^{M-1},2^M])H^{1+\alpha}_\xi}
    ,\\
    &\qquad\qquad\qquad\qquad2^{M+3\min\{k,k_2\}/2 +k_1-k_2/2}\e_1\|\hat{f}_{m,k_1}\|_{L^\infty_t([2^{M-1},2^M])H^\alpha_\xi}\}\\
    \lesssim & 2^{\e k_-+\alpha(k_1-k)-k-2k_{2,+}+\gamma k_2-2k_{1,+}+\gamma k_1}\min\{2^{-M/2-k_2/2-\alpha k_1}
    ,2^{M+3\min\{k,k_2\}/2 -k_2/2+3k_1/2-\alpha k_1}\}\e_1^2\\
    \lesssim & 2^{\e k_--\alpha k-k-2k_{2,+}+\gamma k_2-2k_{1,+}+\gamma k_1-k_2/2}\min\{2^{-M/2}
    ,2^{M+3\min\{k,k_2\}/2 +3k_1/2}\}\e_1^2,
\end{align*}
where the bounds on the Sobolev norms are given in \eqref{sobolevnorms}.
\end{proof}
Thus, as a result of Lemma \ref{im0},
\begin{align}
\label{im0wts1}
    &\sum_{1\leq m\leq \log T}\sup_{2^{M-1}\leq t_1<t_2\leq 2^M} 2^{-\gamma k+2k_++k/2+\alpha k}\sum_{c_m+c_n\neq 0}A_{lmn}\sum_{(k_1,k_2)\in\chi^1_k\cup\chi^2_k}\|D^{\alpha}_\xi I^{M,0}_{k,k_1,k_2}\|_{L^2}+\nonumber\\
    &+\sum_{1\leq m\leq \log T}\sup_{2^{M-1}\leq t_1<t_2\leq 2^M} 2^{-\gamma k+2k_++k/2+\alpha k}\sum_{c_m+c_n= 0}A_{lmn}\sum_{(k_1,k_2)\in\chi^1_k\cup\chi^2_k\cup\chi^3_k}\|D^{\alpha}_\xi J^{M,0}_{k,k_1,k_2}\|_{L^2}\nonumber\\
    \lesssim &\sum_{1\leq M\leq \log T}\sum_{(k_1,k_2)\in\chi^1_k\cup\chi^2_k} (1+2^{-M-k-k_1})2^{-\gamma k+2k_+-k/2+\alpha (k-k_1)-2k_{2,+}+\gamma k_2-2k_{1,+}+\gamma k_1-k_2/2}\times\nonumber\\
    &\qquad\qquad\qquad\qquad\qquad\qquad\times\min\{2^{-M/2}
    ,2^{M+3\min\{k,k_2\}/2 +3k_1/2}\}\e_1^2+\nonumber\\
    &+\sum_{1\leq M\leq \log T}\sum_{(k_1,k_2)\in\chi^1_k\cup\chi^2_k\cup\chi^3_k} 2^{\e k_--\gamma k+2k_+-k/2-2k_{2,+}+\gamma k_2-2k_{1,+}+\gamma k_1-k_2/2}\times\nonumber\\
    &\qquad\qquad\qquad\qquad\qquad\qquad\times\min\{2^{-M/2}
    ,2^{M+3\min\{k,k_2\}/2 +3k_1/2}\}\e_1^2\nonumber\\
    \lesssim &\sum_{1\leq M\leq \log T}\bigg(\sum_{k_2\leq k-2a-2} (1+2^{-M-2k})2^{-k/2-2k_{2,+}+\gamma k_2-k_2/2}\min\{2^{-M/2}
    ,2^{M+3k_2/2 +3k/2}\}\e_1^2+\nonumber\\
    &+\sum_{k\leq k_2+2a+2} (1+2^{-M-k-k_2})2^{-\gamma k+2k_+-k/2+\alpha (k-k_2)-4k_{2,+}+2\gamma k_2-k_2/2}\min\{2^{-M/2}
    ,2^{M+3k/2 +3k_2/2}\}\e_1^2+\nonumber\\
    &+\sum_{k\leq k_2+2a+2} 2^{\e k_--\gamma k+2k_+-k/2-4k_{2,+}+2\gamma k_2-k_2/2}\min\{2^{-M/2}
    ,2^{M+3k/2 +3k_2/2}\}\e_1^2\bigg)\nonumber\\
    \lesssim &\e_1^2.
\end{align}
We are left to show
\begin{align*}
    &\sum_{1\leq M\leq \log T}\sup_{2^{M-1}\leq t_1<t_2\leq 2^M} 2^{-\gamma k+2k_++k/2+\alpha k}\sum_{c_m+c_n\neq 0}A_{lmn}\sum_{(k_1,k_2)\in\chi^3_k}\|D^{\alpha}_\xi I^{M,0}_{k,k_1,k_2}\|_{L^2}
    \lesssim  \e_1^2.
\end{align*}
For $(k_1,k_2)\in\chi_k^3=\{|k_1-k_2|\leq a, |k-k_1|\leq a+2\}$, we observe 
$$|\nabla_\eta\phi(\xi,\eta)|=|2c_m\xi-2(c_m+c_n)\eta|=0$$ 
if $c_m\xi = (c_m+c_n)\eta$. Hence, in the proof above, integration by parts method using the identity $e^{it\phi(\xi,\eta)}=\sum_{n=1}^3\frac{\partial_{\eta_n}\phi(\xi,\eta)}{it|\nabla_\eta\phi(\xi,\eta)|^2}\partial_{\eta_n}e^{it\phi(\xi,\eta)}$ cannot be applied to estimate $\eqref{im0.12}$ and $\eqref{im0.2}$. Instead, we will take advantage of the fact that $\chi^3_k$ is a finite set with uniformly bounded cardinality for all $k$. In Lemma \ref{im0-2}, we estimate the $\alpha$ derivative using the interpolation result in Lemma \ref{dalpha}.
\begin{lem}
\label{im0-2}
Given $t_1,t_2\in[2^{M-1},2^M]$, $(k_1,k_2)\in\chi^3_k$, and $\sup_{t\in[1,T]}\|f_l\|_{Z}\leq \e_1$, we have
\begin{align*}
    &\|D^\alpha I^{M,0}_{k,k_1,k_2}\|_{L^2}
    \lesssim  2^{-4k_++2\gamma k+M+3k/2-\alpha k}\e_1^2
\end{align*}
when $M\leq -2k$, and
\begin{align*}
    &\|D^\alpha I^{M,0}_{k,k_1,k_2}\|_{L^2}
    \lesssim  2^{-4k_++2\gamma k-M/2-3k/2+\alpha M+\alpha k}\e_1^2
\end{align*}
when $M>-2k$.
\end{lem}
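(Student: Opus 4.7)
The obstruction for $(k_1,k_2)\in\chi_k^3$ is that $|\nabla_\eta\phi(\xi,\eta)|=|2c_m(\xi-\eta)-2(c_m+c_n)\eta|$ can vanish on the relevant support, so the integration-by-parts identity in $\eta$ that drove Lemma \ref{im0} is unavailable. My plan is to bypass IBP entirely and exploit the fact that $\chi_k^3$ has bounded cardinality (so $k_1\sim k_2\sim k$ up to $O(a)$, and summability in $(k_1,k_2)$ is automatic) together with the interpolation inequality of Lemma \ref{dalpha}:
$$\|D^\alpha I^{M,0}_{k,k_1,k_2}\|_{L^2}\leq \|I^{M,0}_{k,k_1,k_2}\|_{L^2}^{1-\alpha}\|\nabla_\xi I^{M,0}_{k,k_1,k_2}\|_{L^2}^{\alpha}.$$

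For the $L^2$ bound, I factor the $\xi$-only multiplier $\partial_{\xi_l}\psi_k$ (with $\|\partial_{\xi_l}\psi_k\|_{L^\infty}\lesssim 2^{-k}$) out of the bilinear integral, pay the time-interval length $|t_2-t_1|\lesssim 2^M$, and combine the $L^4\times L^4\to L^2$ bilinear estimate from Lemma \ref{bilinear} (using \eqref{l4}) with the duality/Bernstein bound from Lemma \ref{dualitycomp} (using \eqref{l2}). This gives
$$\|I^{M,0}_{k,k_1,k_2}\|_{L^2}\lesssim 2^{-4k_++2\gamma k}\min\{2^{-M/2-3k/2},\,2^{M+3k/2}\}\e_1^2,$$
the first (resp.\ second) argument of the $\min$ winning exactly when $M>-2k$ (resp.\ $M\leq -2k$), which is the origin of the case split in the statement.

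For $\|\nabla_\xi I^{M,0}_{k,k_1,k_2}\|_{L^2}$ I distribute the gradient, producing three pieces analogous to \eqref{mixed1.1}--\eqref{mixed1.3} of Lemma \ref{mixder}: one with the phase derivative $it\nabla_\xi\phi$ (size $\lesssim 2^{M+k}$ on the support), one with $\nabla_\xi\hat f_{m,k_1}(\xi-\eta)$ controlled via Lemma \ref{sobolev space}, and one with $\nabla\partial_{\xi_l}\psi_k$ (size $\lesssim 2^{-2k}$). Applying the same two bilinear estimates to each piece and checking which dominates in each regime yields
$$\|\nabla_\xi I^{M,0}_{k,k_1,k_2}\|_{L^2}\lesssim 2^{-4k_++2\gamma k}\begin{cases}2^{M+k/2}\e_1^2,&M\leq -2k,\\[2pt] 2^{M/2-k/2}\e_1^2,&M>-2k.\end{cases}$$
Substituting both estimates into the interpolation inequality produces exactly $2^{-4k_++2\gamma k+M+3k/2-\alpha k}\e_1^2$ in the first regime (from $(2^{M+3k/2})^{1-\alpha}(2^{M+k/2})^\alpha$) and $2^{-4k_++2\gamma k-M/2-3k/2+\alpha M+\alpha k}\e_1^2$ in the second (from $(2^{-M/2-3k/2})^{1-\alpha}(2^{M/2-k/2})^\alpha$).

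The main technical obstacle will be the bookkeeping in the gradient estimate: without any oscillatory cancellation from IBP in $\eta$, the factor $2^{M+k}$ coming from $t\nabla_\xi\phi$ is paid in full, so one must verify that the bilinear bounds from \eqref{l2}, \eqref{l2first}, \eqref{l4} combine with the scaling factors $2^{M-k}$ and $2^{-2k}$ to produce exactly the two regime-dependent exponents above. This is essentially a rescaling of the computation the paper already performs for $\nabla_\xi F_{2,k}$ in Lemma \ref{mixder} (with an extra $2^M$ from time integration and an extra $2^{-k}$ from using $\partial_{\xi_l}\psi_k$ in place of $\psi_k$), so the heavy lifting has already been done; the task is to organize those estimates into the precise form required.
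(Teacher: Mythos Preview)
Your approach is exactly the paper's: interpolate via Lemma~\ref{dalpha} between the crude $L^2$ bound and the $\nabla_\xi$ bound, with no integration by parts in $\eta$. One small correction: ``the same two bilinear estimates'' (i.e.\ $L^4\times L^4$ and Lemma~\ref{dualitycomp}) will not close the gradient piece containing $\nabla_\xi\hat f_{m,k_1}$, because $\|\F^{-1}\nabla_\xi\hat f_{m,k_1}\|_{L^{4/3}}$ would require more than the $1+\alpha<3/2$ derivatives the $Z$-norm supplies; the paper (and the Lemma~\ref{mixder} computation you cite) uses $L^{18/7}\times L^9$ there instead, which needs only $\|\F^{-1}\nabla_\xi\hat f_{m,k_1}\|_{L^{18/11}}$ and falls within \eqref{lpnormseq3}. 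With that adjustment your bounds and interpolation arithmetic match the paper exactly.
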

\begin{proof}
Since Lemma \ref{dalpha} implies 
$$\|D^\alpha I^{M,0}_{k,k_1,k_2}\|_{L^2}\leq \|I^{M,0}_{k,k_1,k_2}\|_{L^2}^{1-\alpha}\|\nabla I^{M,0}_{k,k_1,k_2}\|_{L^2}^\alpha,$$ we shall find bounds for $\|I^{M,0}_{k,k_1,k_2}\|_{L^2}$ and $\|\nabla I^{M,0}_{k,k_1,k_2}\|_{L^2}$ when $(k_1,k_2)\in\chi^3_k$.\\
First, using the bilinear estimate $L^4\times L^4\rightarrow L^2$ in Lemma \ref{bilinear}, together with Lemma \ref{dualitycomp}, \eqref{l4} and \eqref{l2}, we get
\begin{align*}
    \|I^{M,0}_{k,k_1,k_2}\|_{L^2}
    =&\bigg\|\int_{t_1}^{t_2}\int_{\R^3}e^{it\phi(\xi,\eta)}\hat{f}_{m,k_1}(t,\xi-\eta)\hat{f}_{n,k_2}(t,\eta)\partial_{\xi_l}\psi_k(\xi)d\eta dt\bigg\|_{L^2}\\
    \lesssim & \|\nabla_\xi\psi_k\|_{L^\infty_\xi}\bigg\|\F^{-1}\int_{t_1}^{t_2}\int_{\R^3}e^{it\phi(\xi,\eta)}\hat{f}_{m,k_1}(t,\xi-\eta)\hat{f}_{n,k_2}(t,\eta)\Tilde{\psi}_k(\xi)d\eta dt\bigg\|_{L^2_x}\\
    \lesssim & 2^{M-k}\min\{\|e^{ic_mt\la}f_{m,k_1}\|_{L^\infty_t([2^{M-1},2^M])L^4_x}\|e^{ic_nt\la}f_{n,k_2}\|_{L^\infty_t([2^{M-1},2^M])L^4_x},\\
    &\qquad\qquad\qquad\qquad 2^{3\min\{k,k_2\}/2}\|e^{ic_mt\la}f_{m,k_1}\|_{L^\infty_t([2^{M-1},2^M])L^2_x}\|e^{ic_nt\la}f_{n,k_2}\|_{L^\infty_t([2^{M-1},2^M])L^2_x}\}\\
    \lesssim & 2^{M-k-2k_{1,+}+\gamma k_1-2k_{2,+}+\gamma k_2}\min\{2^{-3M/2-k_1/4-k_2/4},2^{3\min\{k,k_2\}/2+k_1/2+k_2/2}\}\e_1^2\\
    \lesssim &2^{-4k_++2\gamma k}\min\{2^{-M/2-3k/2},2^{M+3k/2}\}\e_1^2,
\end{align*}
given that $k,k_1,k_2$ are about the same size when $(k_1,k_2)\in\chi^3_k$.\\
Next, we look at the first-order derivative,
\begin{align}
    \partial_{\xi_m} I^{M,0}_{k,k_1,k_2}
    = &\partial_{\xi_m}\int_{t_1}^{t_2}\int_{\R^3}e^{it\phi(\xi,\eta)}\hat{f}_{m,k_1}(t,\xi-\eta)\hat{f}_{n,k_2}(t,\eta)\partial_{\xi_l}\psi_k(\xi)d\eta dt\nonumber\\
    = &\int_{t_1}^{t_2}\int_{\R^3}e^{it\phi(\xi,\eta)}\partial_{\xi_m}\hat{f}_{m,k_1}(t,\xi-\eta)\hat{f}_{n,k_2}(t,\eta)\partial_{\xi_l}\psi_k(\xi)d\eta dt\label{im0-2.1}\\
    &+ \int_{t_1}^{t_2}\int_{\R^3}e^{it\phi(\xi,\eta)}\hat{f}_{m,k_1}(t,\xi-\eta)\hat{f}_{n,k_2}(t,\eta)\partial_{\xi_m}\partial_{\xi_l}\psi_k(\xi)d\eta dt\label{im0-2.12}\\
    & + \int_{t_1}^{t_2}\int_{\R^3}e^{it\phi(\xi,\eta)}it\partial_{\xi_m}\phi(\xi,\eta) \hat{f}_{m,k_1}(t,\xi-\eta)\hat{f}_{n,k_2}(t,\eta)\partial_{\xi_l}\psi_k(\xi)d\eta dt\label{im0-2.2}.
\end{align}
By the bilinear estimate $L^{18/7}\times L^9\rightarrow L^2$, Lemma \ref{op}, Lemma \ref{dualitycomp}, Lemma \ref{lpnorms}, \eqref{l2} and \eqref{l2first}
\begin{align*}
    &\|\nabla I^{M,0}_{k,k_1,k_2}\|_{L^2}\\
    \leq & \|\eqref{im0-2.1}\|_{L^2}+\|\eqref{im0-2.12}\|_{L^2}+\|\eqref{im0-2.2}\|_{L^2}\\
    \lesssim & 2^{M}\sup_{t\in[2^{M-1},2^M]}\|\nabla_\xi\psi_k\|_{L^\infty_\xi}\min\{\|e^{ic_mt\la}\F^{-1}\nabla_\xi\hat{f}_{m,k_1}\|_{L^{18/7}_x}\|e^{ic_nt\la}f_{n,k_2}\|_{L^9_x},\\
    &\qquad\qquad\qquad\qquad 2^{3\min\{k,k_2\}/2}\|e^{ic_mt\la}\F^{-1}\nabla_\xi\hat{f}_{m,k_1}\|_{L^2_x}\|e^{ic_nt\la}f_{n,k_2}\|_{L^2_x}\}+\\
    &+\big(2^{M}\|\nabla^2_\xi\psi_k\|_{L^\infty_\xi}+2^{2m}\|\nabla_\xi\psi_k\|_{L^\infty_\xi}\|\F^{-1}\partial_{\xi_m}\phi(\xi,\eta)\Tilde{\psi}_{k}(\xi)\Tilde{\psi}_{k_2}(\eta)\|_{L^1}\big)\times\\
    &\times \sup_{t\in[2^{M-1},2^M]}\min\{\|e^{ic_mt\la}f_{m,k_1}\|_{L^\infty_t([2^{M-1},2^M])L^{18/7}_x}\|e^{ic_nt\la}f_{n,k_2}\|_{L^9_x},\\
    &\qquad\qquad\qquad\qquad 2^{3\min\{k,k_2\}/2}\|e^{ic_mt\la}f_{m,k_1}\|_{L^2_x}\|e^{ic_nt\la}f_{n,k_2}\|_{L^2_x}\}\\
    \lesssim & 2^{M-k}\sup_{t\in[2^{M-1},2^M]}\min\{2^{-3M/2}\|\F^{-1}\nabla_\xi\hat{f}_{m,k_1}\|_{L^{18/11}}\|f_{n,k_2}\|_{L^{9/8}},2^{3\min\{k,k_2\}/2}\|\nabla_\xi\hat{f}_{m,k_1}\|_{L^2}\|f_{n,k_2}\|_{L^2}\}\\
    &+(2^{M-2k}+2^{2M-k+k_2})\sup_{t\in[2^{M-1},2^M]}\min\{2^{-3M/2}\|f_{m,k_1}\|_{L^{18/11}_x}\|f_{n,k_2}\|_{L^{9/8}_x},2^{3\min\{k,k_2\}/2}\|f_{m,k_1}\|_{L^2_x}\|f_{n,k_2}\|_{L^2_x}\}\\
    \lesssim & 2^{M-k-2k_{1,+}+\gamma k_1-2k_{2,+}+\gamma k_2}\min\{2^{-3M/2-5k_1/6-2k_2/3},2^{3\min\{k,k_2\}/2-k_1/2+k_2/2}\}\e_1^2\\
    &+(1+2^{M+k+k_2})2^{M-2k-2k_{1,+}+\gamma k_1-2k_{2,+}+\gamma k_2}\min\{2^{-3M/2+k_1/6-2k_2/3},2^{3\min\{k,k_2\}/2+k_1/2+k_2/2}\}\e_1^2\\
    \lesssim & (1+2^{M+2k})2^{-4k_{+}+2\gamma k}\min\{2^{-M/2-5k/2},2^{M+k/2}\}\e_1^2.
\end{align*}
Thus, we have when $M\leq -2k$,
\begin{align*}
    \|I^{M,0}_{k,k_1,k_2}\|_{L^2}
    \lesssim 2^{-4k_++2\gamma k+M+3k/2}\e_1^2
\end{align*}
and
\begin{align*}
    \|\nabla I^{M,0}_{k,k_1,k_2}\|_{L^2}
    \lesssim 2^{-4k_{+}+2\gamma k+M+k/2}\e_1^2,
\end{align*}
so Lemma \ref{dalpha} implies
\begin{align*}
    \|D^{\alpha}I^{M,0}_{k,k_1,k_2}\|_{L^2}
    \lesssim 2^{-4k_++2\gamma k+M+3k/2-\alpha k}\e_1^2.
\end{align*}
In the other case when $M>-2k$,
\begin{align*}
    \|I^{M,0}_{k,k_1,k_2}\|_{L^2}
    \lesssim 2^{-4k_++2\gamma k-M/2-3k/2}\e_1^2
\end{align*}
and
\begin{align*}
    \|\nabla I^{M,0}_{k,k_1,k_2}\|_{L^2}
    \lesssim 2^{-4k_{+}+2\gamma k+M/2-k/2}\e_1^2.
\end{align*}
Then, applying Lemma \ref{dalpha} gives
\begin{align*}
    \|D^{\alpha}I^{M,0}_{k,k_1,k_2}\|_{L^2}
    \lesssim 2^{-4k_++2\gamma k-M/2-3k/2+\alpha M+\alpha k}\e_1^2.
\end{align*}
\end{proof}
Therefore, Lemma \ref{im0-2} implies for any $k\in\Z$
\begin{align*}
    &\sum_{1\leq M\leq \log T}\sup_{2^{M-1}\leq t_1\leq t_2\leq 2^M}2^{-\gamma k+2k_++k/2+\alpha k}\sum_{c_m+c_n\neq 0}A_{lmn}\sum_{(k_1,k_2)\in\chi^3_k}\|D^\alpha I^{M,0}_{k,k_1,k_2}\|_{L^2}\\
    \lesssim  &\sum_{1\leq M\leq -2k}2^{-2k_{+}+\gamma k+M+2k}\e_1^2+\sum_{-2k< M\leq \log T}2^{-2k_{+}+\gamma k-M/2-k+\alpha M+2\alpha k}\e_1^2\\
    \lesssim &\e_1^2,
\end{align*}
since $\alpha<1/2$. This result combined with \eqref{im0wts1} gives \eqref{im0wts}.

\subsection{$D^{\alpha}I^{M,1}_{k,k_1,k_2}$,$D^{\alpha}J^{M,1}_{k,k_1,k_2}$}
\label{1}
In this section, we will show
\begin{equation}
\label{im1wts}
    \begin{aligned}
        &\sum_{1\leq m\leq \log T}\sup_{2^{M-1}\leq t_1<t_2\leq 2^M} 2^{-\gamma k+2k_++k/2+\alpha k}\sum_{c_m+c_n\neq 0}A_{lmn}\sum_{(k_1,k_2)\in\chi^1_k\cup\chi^2_k\cup\chi^3_k}\|D^{\alpha}_\xi I^{M,1}_{k,k_1,k_2}\|_{L^2}+\\
        &+\sum_{1\leq m\leq \log T}\sup_{2^{M-1}\leq t_1<t_2\leq 2^M} 2^{-\gamma k+2k_++k/2+\alpha k}\sum_{c_m+c_n= 0}A_{lmn}\sum_{(k_1,k_2)\in\chi^1_k\cup\chi^2_k\cup\chi^3_k}\|D^{\alpha}_\xi J^{M,1}_{k,k_1,k_2}\|_{L^2}\lesssim\e_1^2
    \end{aligned}
\end{equation}
When $(k_1,k_2)\in\chi^1_k$, i.e. $|\xi|\sim|\xi-\eta|\geq 2^{a-2}|\eta|$, we want to integrate by parts in the time variable using the identity
$$e^{it\phi(\xi,\eta)}=\frac{1}{i\phi(\xi,\eta)}\partial_{t}e^{it\phi(\xi,\eta)}.$$
Recall that we defined $\phi(\xi,\eta)=c_l|\xi|^2-c_m|\xi-\eta|^2-c_n|\eta|^2$. There is no guaranteed lower bound on $|\phi(\xi,\eta)|$ when $c_l=c_m$. Hence, in Lemma \ref{jm1}, we only look at $(k_1,k_2)\in\chi^1_k$ excluding $c_l=c_m$. The special case of $(k_1,k_2)\in\chi^1_k$ and $c_l=c_m$ will be tackled later in Lemma \ref{sp1}.

Now we show the lower bound on $|\phi(\xi,\eta)|$ that we will use when $(k_1,k_2)\in\chi^1_k$ and $c_l\neq c_m$. Recall the definition of $a$ in \eqref{adef}, we have
\begin{equation}
    \label{lbt}
    \begin{aligned}
    |\phi(\xi,\eta)|
    &= \big|(c_l-c_m)|\xi-\eta|^2+2c_l\xi\cdot\eta-(c_l+c_n)|\eta|^2\big|\\
    &= |c_l-c_m|\big||\xi-\eta|^2+\frac{2c_l}{c_l-c_m}\xi\cdot\eta-\frac{c_l+c_n}{c_l-c_m}|\eta|^2\big|\\
    &\geq |c_l-c_m|\big(2^{2k_1-4}-\frac{2}{|1-c_m/c_l|}2^{k+k_2}-\frac{1}{|1-c_m/c_l|}2^{2k_2}-\frac{|c_n|}{|c_l-c_m|}2^{2k_2}\big)\\
    &\geq |c_l-c_m|\big(2^{2k_1-4}-\frac{1}{|1-c_m/c_l|}2^{2k_1+4-a}-\frac{1}{|1-c_m/c_l|}2^{2k_1-2a}-\frac{|c_n|}{|c_l-c_m|}2^{2k_1-2a}\big)\\
    &\geq |c_l-c_m|\big(2^{2k_1-4}-2^{2k_1-6}-2^{2k_1-10}-2^{2k_1-6}\big)\\
    &\geq |c_l-c_m|\big(2^{2k_1-5}-2^{2k_1-10}\big)
    \gtrsim 2^{2k_1}.
\end{aligned}
\end{equation}
In the proof for Lemma \ref{jm1}, we will employ the bound above with Lemma \ref{dalpha} to obtain the desired estimates.
\begin{lem}
\label{jm1}
Suppose $t_1,t_2\in[2^{M-1},2^M]$ and $\sup_{t\in[1,T]}\|f_l\|_{Z}\leq \e_1$. For $(k_1,k_2)\in\chi^1_k$ and $c_l\neq c_m$, we have
\begin{align*}
    &\|D^{\alpha}_\xi I^{M,1}_{k,k_1,k_2}\|_{L^2}+\|D^{\alpha}_\xi J^{M,1}_{k,k_1,k_2}\|_{L^2}\\
    \lesssim & 2^{-2k_{+}+\gamma k-\gamma k_--2k_{2,+}+\gamma k_2}\min\{2^{-M/2-k-\alpha k-k_2/2},2^{M+2k_2-k/2-\alpha k_2}\}\e_1^2,
\end{align*}
if $M+2k\leq 0$, and
\begin{align*}
    &\|D^{\alpha}_\xi I^{M,1}_{k,k_1,k_2}\|_{L^2}+\|D^{\alpha}_\xi J^{M,1}_{k,k_1,k_2}\|_{L^2}\\
    \lesssim & 2^{-2k_++\gamma k-\gamma k_--2k_{2,+}+\gamma k_2}\min\{2^{-M/2+\gamma M/4+\gamma k/2-2k+\alpha k-k_2/2},2^{M-k+3k_2/2+\alpha k}\}\e_1^2,
\end{align*}
if $M+2k> 0$.
\end{lem}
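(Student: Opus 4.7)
The strategy is to exploit the fact that when $(k_1,k_2)\in\chi^1_k$ and $c_l\neq c_m$ the phase $\phi(\xi,\eta)$ is bounded from below by $|\phi(\xi,\eta)|\gtrsim 2^{2k_1}$ as shown in \eqref{lbt}, so the standard normal form transformation is available. Concretely, I would use the identity $e^{it\phi(\xi,\eta)}=\frac{1}{i\phi(\xi,\eta)}\partial_t e^{it\phi(\xi,\eta)}$ to integrate by parts in $t$ in both $I^{M,1}_{k,k_1,k_2}$ and $J^{M,1}_{k,k_1,k_2}$. This produces boundary terms at $t=t_1,t_2$ of the form
\begin{equation*}
\frac{1}{i\phi(\xi,\eta)}e^{it\phi(\xi,\eta)}\partial_{\xi_l}\hat f_{m,k_1}(t,\xi-\eta)\hat f_{n,k_2}(t,\eta)\psi_k(\xi)\Big|_{t=t_j}
\end{equation*}
(with an extra factor of $q(\xi-\eta,\eta)$ in the $J$ case), together with a volume term containing $\partial_t[\partial_{\xi_l}\hat f_{m,k_1}(t,\xi-\eta)\hat f_{n,k_2}(t,\eta)]$ divided by $i\phi(\xi,\eta)$. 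The multiplier $\frac{1}{\phi(\xi,\eta)}\tilde\psi_k(\xi)\tilde\psi_{k_1}(\xi-\eta)\tilde\psi_{k_2}(\eta)$ is controlled in $\|\mathcal F^{-1}\cdot\|_{L^1}$ by $2^{-2k_1}$ via Lemma~\ref{chi,eta}, which is the decisive gain over the trivial bound $2^M$.

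Next, I would estimate each piece. For the boundary terms I would apply Lemma~\ref{bilinear} and Lemma~\ref{dualitycomp} together with the two bilinear pairings $L^4\times L^4\to L^2$ (using \eqref{l4}) and $L^\infty\times L^2\to L^2$ (using Bernstein on the small-frequency factor) to obtain the two branches of the min; the weight $2^{-2k_1}$ combined with the $Z$-bound on $\nabla_\xi\hat f_{m,k_1}$ from \eqref{sobolevnorms} produces exactly the prefactor $2^{-2k_+-\gamma k_-\cdots}$ asked for, and the absence of the $2^M$ in front is what gives the gain over Section~\ref{0}. For the volume term, when $\partial_t$ hits $\hat f_{n,k_2}$ I would integrate once more using Lemma~\ref{timel2}; when $\partial_t$ hits $\partial_{\xi_l}\hat f_{m,k_1}$ I would use Lemma~\ref{mixder} (the mixed $L^2$-estimate \eqref{l2mixedder}). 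In either case the extra $2^{-M}$ decay afforded by the time derivative absorbs the length $|t_2-t_1|\lesssim 2^M$ of the interval, producing the same order of magnitude as the boundary contribution.

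To pass from the first derivative to $D^\alpha$ I would set up a holomorphic family $T_z$ on the strip $0\le \mathrm{Re}(z)\le 1$ exactly as in the proof of Lemma~\ref{im0}, letting $T_z$ act on the $\partial_{\xi_l}\hat f_{m,k_1}$ slot via $D^z$. The $z=0+iy$ bound comes from the computation with $\hat f_{m,k_1}\in L^\infty_tH^1_\xi$ and the $z=1+iy$ bound comes from taking one additional $\xi$-derivative, which introduces the mixed-derivative terms estimated through \eqref{l2mixedder} and leads to an $H^2_\xi$ bound weighted by $2^{-2k_+-k/2-k}$. Stein interpolation (Lemma~\ref{si}) then gives the desired $H^{1+\alpha}_\xi$ bound with weight $2^{-2k_+-k/2-\alpha k}$. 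The two regimes $M+2k\leq 0$ and $M+2k>0$ correspond to choosing, respectively, the Bernstein/$L^2$ branch or the dispersive $L^\infty$-type branch of the min inside each bilinear estimate; the sharp bookkeeping of $k,k_1,k_2$ (with $|k_1-k|\le 3$ and $k_2\le k_1-a$ on $\chi^1_k$) converts these two choices into the two stated bounds, where the factor $2^{\gamma M/4}$ in the $M+2k>0$ case reflects the use of \eqref{linfinity} on the low-frequency factor.

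The main obstacle will be the volume term in which $\partial_t$ hits $\partial_{\xi_l}\hat f_{m,k_1}$: here the order of differentiation is one spatial derivative past what Lemma~\ref{timel2} directly controls, so Lemma~\ref{mixder} is essential and the interpolation has to be run carefully so that the $\alpha$-derivative is never demanded of $\partial_t\hat f$ directly but rather of a quantity that has already been suitably modulated. A secondary subtlety is the $\gamma k_-$ loss, which must be absorbed cleanly using the $Z$-norm weight $2^{\gamma k_2}$ combined with $k_2\leq k_1-a$; if one is not careful with the low-frequency summation this can generate a logarithmic divergence, and the condition $\gamma<\e/3$ from \eqref{constant conditions1} together with the multiplier bound \eqref{q} is what prevents it.
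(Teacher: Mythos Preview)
Your outline (integrate by parts in $t$ using $|\phi|\gtrsim 2^{2k}$, then interpolate) has the right starting point, but the interpolation step as you describe it does not close, and the paper takes a different route.

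The paper does \emph{not} use Stein interpolation here; it uses the elementary Lemma~\ref{dalpha}, bounding $\|D^\alpha F\|_{L^2}$ by $\|F\|_{L^2}^{1-\alpha}\|\nabla_\xi F\|_{L^2}^\alpha$ with $F=I^{M,1}_{k,k_1,k_2}$ (and $F^q=J^{M,1}_{k,k_1,k_2}$). Two separate bounds on $\|F\|_{L^2}$ are obtained, a direct $L^3\times L^6$ estimate \eqref{jm1bound1} and one after integration by parts in $t$ \eqref{jm1bound2}; the choice between them, not the branch inside a $\min$, is what produces the two regimes $M+2k\lessgtr 0$.

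The essential device you are missing is how $\|\nabla_\xi F\|_{L^2}$ is controlled. Since $I^{M,1}$ already carries $\partial_{\xi_l}\hat f_{m,k_1}$, a second $\xi$-derivative taken naively would land on it and produce $\nabla_\xi^2\hat f_{m,k_1}$, which the $Z$-norm does \emph{not} control (only $D^{1+\alpha}$ with $\alpha<1/2$ is available). The paper sidesteps this by changing variables $\zeta=\xi-\eta$ \emph{before} differentiating, so that $\partial_{\xi_m}$ falls on $\hat f_{n,k_2}(\xi-\zeta)$ instead; the resulting $it\partial_{\xi_m}\phi$ term is then removed by a second integration by parts in $t$, and the time-derivative pieces need only Lemmas~\ref{timel2}, \ref{mixder}, \ref{mixderl3}, each requiring at most one spatial derivative of $\partial_t\hat f$.

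Your Stein scheme runs into the same wall. At $z=1$, after your integration by parts in $t$, the $\partial_{\xi_m}$ hitting $\partial_t\partial_{\xi_l}\hat f_{m,k_1}$ in the volume term demands $\nabla_\xi^2\partial_t\hat f_{m,k_1}$, one derivative beyond what \eqref{l2mixedder} supplies. Nor can you rescue this within the linear-in-$g$ framework of Lemma~\ref{si}: once $\partial_t\hat f_m$ enters through the normal form, the bounds come from the nonlinear structure (Lemma~\ref{timel2}) and are not of the form $M_j\|g\|_{H^{s_j}}$. You also do not address the $it\partial_{\xi_m}\phi$ contribution at $z=1$ for the boundary pieces, which would require an additional integration by parts in $\eta$ that you do not mention. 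The change-of-variables trick is the missing ingredient.
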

\begin{proof}
Recall the definition of $I^{M,1}_{k,k_1,k_2}$ and $J^{M,1}_{k,k_1,k_2}$ in \eqref{i1} and \eqref{j1}. Define
\begin{align*}
    F^1(\xi)= I^{M,1}_{k,k_1,k_2}(\xi)=\int_{t_1}^{t_2}\int_{\R^3}e^{it\phi(\xi,\eta)}\partial_{\xi_l}\hat{f}_{m,k_1}(t,\xi-\eta)\hat{f}_{n,k_2}(t,\eta)\psi_k(\xi)d\eta dt
\end{align*}
and
\begin{align*}
    F^2(\xi)= J^{M,1}_{k,k_1,k_2}(\xi)=\int_{t_1}^{t_2}\int_{\R^3}e^{it\phi(\xi,\eta)}q(\eta,\xi-\eta)\partial_{\xi_l}\hat{f}_{m,k_1}(t,\xi-\eta)\hat{f}_{n,k_2}(t,\eta)\psi_k(\xi)d\eta dt.
\end{align*}
By Lemma \ref{dalpha}, we get
$$\|D^{\alpha}_\xi I^{M,1}_{k,k_1,k_2}\|_{L^2}=\|D^\alpha F^1\|_{L^2}\leq \|F^1\|_{L^2}^{1-\alpha}\|\nabla_\xi F^1\|_{L^2}^\alpha$$
and 
$$\|D^{\alpha}_\xi J^{M,1}_{k,k_1,k_2}\|_{L^2}=\|D^\alpha F^2\|_{L^2}\leq \|F^2\|_{L^2}^{1-\alpha}\|\nabla_\xi F^2\|_{L^2}^\alpha.$$
We shall find $L^2$ bounds on $F^1$, $F^2$ and their derivatives.

First, by the bilinear estimate $L^3\times L^6\rightarrow L^2$ in Lemma \ref{bilinear}, Lemma \ref{dualitycomp}, and \eqref{q},
\begin{align*}
    &\|F^1\|_{L^2}+\|F^2\|_{L^2}\\
    \lesssim & 2^{M}(1+\|\F^{-1}q(\xi-\eta,\eta)\Tilde{\psi}_k(\xi)\Tilde{\psi}_{k_1}(\xi-\eta)\Tilde{\psi}_{k_2}(\eta)\|_{L^1})\times\\
    &\times\sup_{t\in[2^{M-1},2^M]}\min\{\|e^{ic_mt\la}\F^{-1}\nabla\hat{f}_{m,k_1}\|_{L^3_x}\|e^{ic_nt\la}{f}_{n,k_2}\|_{L^6_x},
    2^{3k_2/2}\|e^{ic_mt\la}\F^{-1}\nabla_\xi f_{m,k_1}\|_{L^2_x}\|e^{ic_nt\la}f_{n,k_2}\|_{L^2_x}\}\\
    \lesssim & 2^{M}\sup_{t\in[2^{M-1},2^M]}\min\{\|e^{ic_mt\la}\F^{-1}\nabla\hat{f}_{m,k_1}\|_{L^3_x}\|e^{ic_nt\la}{f}_{n,k_2}\|_{L^6_x},2^{3k_2/2}\|\nabla_\xi \hat{f}_{m,k_1}\|_{L^2_\xi}\|f_{n,k_2}\|_{L^2_x}\}.
\end{align*}
Using Lemma \ref{lpnorms}, \eqref{l2}, \eqref{l2first}, and \eqref{l3nablaf}, we obtain
\begin{equation}
\label{jm1bound1}
\begin{aligned}
    \|F^1\|_{L^2}+\|F^2\|_{L^2}
    \lesssim & 2^{M-2k_{1,+}+\gamma k_1-2k_{2,+}+\gamma k_2}\min\{2^{-3M/2+\gamma M/8-k_1+\gamma k_1/4-k_2/2},2^{3k_2/2-k_1/2+k_2/2}\}\e_1^2\\
    \lesssim & 2^{-2k_{+}+\gamma k-2k_{2,+}+\gamma k_2}\min\{2^{-M/2+\gamma M/8-k+\gamma k/4-k_2/2},2^{M+2k_2-k/2}\}\e_1^2.
\end{aligned}
\end{equation}
However, this bound on $\|F^1+F^2\|_{L^2}$ is not small enough when $M+2k > 0$. In this case,
we integrate by parts using the identity $e^{it\phi(\xi,\eta)}=\frac{1}{i\phi(\xi,\eta)}\partial_{t}e^{it\phi(\xi,\eta)}$ to obtain another $L^2$ bound,
\begin{align}
    F^1(\xi)+F^2(\xi)
    = &\int_{t_1}^{t_2}\int_{\R^3}\partial_te^{it\phi(\xi,\eta)}\frac{1}{i\phi(\xi,\eta)}(1+q(\eta,\xi-\eta))\partial_{\xi_l}\hat{f}_{m,k_1}(t,\xi-\eta)\hat{f}_{n,k_2}(t,\eta)\psi_k(\xi)d\eta dt\nonumber\\
    = & \int_{\R^3}e^{it_2\phi(\xi,\eta)}\frac{1}{i\phi(\xi,\eta)}(1+q(\eta,\xi-\eta))\partial_{\xi_l}\hat{f}_{m,k_1}(t_2,\xi-\eta)\hat{f}_{n,k_2}(t_2,\eta)\psi_k(\xi)d\eta \label{jm1.b1}\\
    & - \int_{\R^3}e^{it_1\phi(\xi,\eta)}\frac{1}{i\phi(\xi,\eta)}(1+q(\eta,\xi-\eta))\partial_{\xi_l}\hat{f}_{m,k_1}(t_1,\xi-\eta)\hat{f}_{n,k_2}(t_1,\eta)\psi_k(\xi)d\eta\label{jm1.b2}\\
    & - \int_{t_1}^{t_2}\int_{\R^3}e^{it\phi(\xi,\eta)}\frac{1}{i\phi(\xi,\eta)}(1+q(\eta,\xi-\eta))\partial_t\partial_{\xi_l}\hat{f}_{m,k_1}(t,\xi-\eta)\hat{f}_{n,k_2}(t,\eta)\psi_k(\xi)d\eta dt\label{jm1.4}\\
    & - \int_{t_1}^{t_2}\int_{\R^3}e^{it\phi(\xi,\eta)}\frac{1}{i\phi(\xi,\eta)}(1+q(\eta,\xi-\eta))\partial_{\xi_l}\hat{f}_{m,k_1}(t,\xi-\eta)\partial_t\hat{f}_{n,k_2}(t,\eta)\psi_k(\xi)d\eta dt\label{jm1.5}.
\end{align}
Since $(k_1,k_2)\in\chi^1_k$ and $c_l\neq c_m$, $$\big|\phi(\xi,\eta)\big|=\big|c_l|\xi|^2-c_m|\xi-\eta|^2-c_n|\eta|^2\big|\gtrsim 2^{2k_1},$$ ideally, we shall gain a factor of 
$$ t^{-1}|\phi(\xi,\eta)|^{-1}\sim 2^{-M-2k_1}$$
through integration by parts. But given that there are only $1+\alpha$ derivatives of $\hat{f}_k$ in the $Z$ norm for us to work with, we cannot quite achieve $2^{-M-2k_1}$. This is shown in the estimations below.\\
By the $L^3\times L^6\rightarrow L^2$ bilinear estimate, Lemma \ref{dualitycomp}, Lemma \ref{chi,eta}, Lemma \ref{op}, Lemma \ref{lpnorms}, along with \eqref{l2}, \eqref{l2first}, and \eqref{l3nablaf},
\begin{align*}
    &\|\eqref{jm1.b1}\|_{L^2}+\|\eqref{jm1.b2}\|_{L^2}\\
    \lesssim & \|\F^{-1}\frac{1}{\phi(\xi,\eta)}(1+q(\eta,\xi-\eta))\Tilde{\psi}_k(\xi)\Tilde{\psi}_{k_1}(\xi-\eta)\Tilde{\psi}_{k_2}(\eta)\|_{L^1}\times\\
    &\times\sup_{t\in[2^{M-1},2^M]}\min\{\|e^{ic_mt\la}\F^{-1}\nabla_\xi\hat{f}_{m,k_1}\|_{L^3_x}\|e^{ic_nt\la}f_{n,k_2}\|_{L^6_x}, 2^{3k_2/2}\|e^{ic_mt\la}\F^{-1}\nabla_\xi\hat{f}_{m,k_1}\|_{L^2_x}\|e^{ic_nt\la}f_{n,k_2}\|_{L^2_x}\}\\
    \lesssim & 2^{-2k}\sup_{t\in[2^{M-1},2^M]}\min\{\|e^{ic_mt\la}\F^{-1}\nabla_\xi\hat{f}_{m,k_1}\|_{L^3_x}\|e^{ic_nt\la}f_{n,k_2}\|_{L^6_x},2^{3k_2/2}\|\nabla_\xi\hat{f}_{m,k_1}\|_{L^2_\xi}\|f_{n,k_2}\|_{L^2_x}\}\\
    \lesssim & 2^{-2k-2k_{1,+}+\gamma k_1-2k_{2,+}+\gamma k_2}\min\{2^{-M/2+\gamma M/8-k_1+\gamma k_1/4-M-k_2/2},2^{3k_2/2-k_1/2+k_2/2}\}\e_1^2\\
    \lesssim & 2^{-2k-2k_++\gamma k-2k_{2,+}+\gamma k_2}\min\{2^{-3M/2+\gamma M/8-k+\gamma k/4-k_2/2},2^{2k_2-k/2}\}\e_1^2
\end{align*}
and
\begin{align*}
    \|\eqref{jm1.4}\|_{L^2}+\|\eqref{jm1.5}\|_{L^2}
    \lesssim & 2^M\|\F^{-1}\frac{1}{\phi(\xi,\eta)}(1+q(\eta,\xi-\eta))\Tilde{\psi}_k(\xi)\Tilde{\psi}_{k_1}(\xi-\eta)\Tilde{\psi}_{k_2}(\eta)\|_{L^1}\times\\
    &\times\sup_{t\in[2^{M-1},2^M]}\big( \min\{\|e^{ic_mt\la}\F^{-1}\partial_t\nabla_\xi\hat{f}_{m,k_1}\|_{L^3_x}\|e^{ic_nt\la}f_{n,k_2}\|_{L^6_x},\\
    &\qquad\qquad\qquad\qquad\qquad 2^{3k_2/2}\|e^{ic_mt\la}\F^{-1}\partial_t\nabla_\xi\hat{f}_{m,k_1}\|_{L^2_x}\|e^{ic_nt\la}f_{n,k_2}\|_{L^2_x}\}+\\
    &\qquad\qquad\qquad\qquad + \min\{\|e^{ic_mt\la}\F^{-1}\nabla_\xi\hat{f}_{m,k_1}\|_{L^3_x}\|e^{ic_nt\la}\partial_tf_{n,k_2}\|_{L^6_x},\\
    &\qquad\qquad\qquad\qquad\qquad 2^{3k_2/2}\|e^{ic_mt\la}\F^{-1}\nabla_\xi\hat{f}_{m,k_1}\|_{L^2_x}\|e^{ic_nt\la}\partial_tf_{n,k_2}\|_{L^2_x}\}\big).
\end{align*}
Then, using the bounds on the time derivatives in Lemma \ref{timel2}, Lemma \ref{mixder}, and Lemma \ref{mixderl3}, we have
\begin{align*}
    &\|\eqref{jm1.4}\|_{L^2}+\|\eqref{jm1.5}\|_{L^2}\\
    \lesssim & 2^{M-2k-2k_{1,+}+\gamma k_1-2k_{2,+}+\gamma k_2}\big( \min\{2^{-M-\gamma k_1-M-k_2/2},(1+2^{M/2+k_1})2^{3k_2/2-M-k_1/2-\gamma k_1+k_2/2}\}+\\
    &\qquad + \min\{2^{-M/2+\gamma M/8-k_1+\gamma k_1/4-2M-k_2/2},2^{3k_2/2-k_1/2-M+k_2/2}\}\big)\e_1^2\\
    \lesssim & (1+2^{M/2+k})2^{-2k-2k_++\gamma k-2k_{2,+}+\gamma k_2}\min\{2^{-3M/2-\gamma k_--k-k_2/2},2^{-k/2-\gamma k_-+2k_2}\}\e_1^2.
\end{align*}
Thus, we conclude
\begin{align}
\label{jm1bound2}
    \|F^1\|_{L^2}+\|F^2\|_{L^2}\lesssim (1+2^{M/2+k})2^{-2k-2k_++\gamma k-\gamma k_--2k_{2,+}+\gamma k_2}\min\{2^{-3M/2-k-k_2/2},2^{-k/2+2k_2}\}\e_1^2.
\end{align}
Now, we deal with the terms $\|\nabla_\xi F^1\|_{L^2}$, $\|\nabla_\xi F^2\|_{L^2}$. Since we will not be able to bound $\|\nabla^2_\xi\hat{f}_{m,k_1}\|_{L^2}$, before taking the derivative in $\xi$, we perform a change of variable $\zeta=\xi-\eta$ on $F^1+F^2$. As a consequence, we will have the two derivatives evenly distributed between $\hat{f}_{m,k_1}$ and $\hat{f}_{n,k_2}$,
\begin{align*}
    F^1+F^2&=\int_{t_1}^{t_2}\int_{\R^3}e^{it\phi(\xi,\eta)}(1+q(\eta,\xi-\eta))\partial_{\xi_l}\hat{f}_{m,k_1}(t,\xi-\eta)\hat{f}_{n,k_2}(t,\eta)\psi_k(\xi)d\eta dt\\
    &=\int_{t_1}^{t_2}\int_{\R^3}e^{it\phi(\xi,\xi-\zeta)}(1+q(\xi-\zeta,\zeta))\partial_{\zeta_l}\hat{f}_{m,k_1}(t,\zeta)\hat{f}_{n,k_2}(t,\xi-\zeta)\psi_k(\xi)d\zeta dt.
\end{align*}
Hence,
\begin{align}
    &\partial_{\xi_m} F^1+\partial_{\xi_m} F^2\\
    = &\int_{t_1}^{t_2}\int_{\R^3}e^{it\phi(\xi,\xi-\eta)}(1+q(\xi-\eta,\eta))\partial_{\eta_l}\hat{f}_{m,k_1}(t,\eta)\partial_{\xi_m}\hat{f}_{n,k_2}(t,\xi-\eta)\psi_k(\xi)d\eta dt\label{jm1.1}\\
    &+ \int_{t_1}^{t_2}\int_{\R^3}e^{it\phi(\xi,\xi-\eta)}\partial_{\xi_m}q(\xi-\eta,\eta)\partial_{\eta_l}\hat{f}_{m,k_1}(t,\eta)\hat{f}_{n,k_2}(t,\xi-\eta)\psi_k(\xi)d\eta dt\label{jm1.11}\\
    &+ \int_{t_1}^{t_2}\int_{\R^3}e^{it\phi(\xi,\xi-\eta)}(1+q(\xi-\eta,\eta))\partial_{\eta_l}\hat{f}_{m,k_1}(t,\eta)\hat{f}_{n,k_2}(t,\xi-\eta)\partial_{\xi_m}\psi_k(\xi)d\eta dt\label{jm1.12}\\
    & + \int_{t_1}^{t_2}\int_{\R^3}e^{it\phi(\xi,\xi-\eta)}it\partial_{\xi_m}\phi(\xi,\xi-\eta)(1+q(\xi-\eta,\eta))\partial_{\eta_l}\hat{f}_{m,k_1}(t,\eta)\hat{f}_{n,k_2}(t,\xi-\eta)\psi_k(\xi)d\eta dt\label{jm1.2}.
\end{align}
For the term $\eqref{jm1.2}$, after integrating by parts using the identity $e^{it\phi(\xi,\xi-\eta)}=\frac{1}{i\phi(\xi,\xi-\eta)}\partial_{t}e^{it\phi(\xi,\xi-\eta)}$, we get
\begin{align}
    \eqref{jm1.2}
    = & \int_{t_1}^{t_2}\int_{\R^3}\partial_t e^{it\phi(\xi,\xi-\eta)}\frac{t\partial_{\xi_m}\phi(\xi,\xi-\eta)}{\phi(\xi,\xi-\eta)}(1+q(\xi-\eta,\eta))\partial_{\eta_l}\hat{f}_{m,k_1}(t,\eta)\hat{f}_{n,k_2}(t,\xi-\eta)\psi_k(\xi)d\eta dt\nonumber\\
    = & \int_{\R^3} e^{it_2\phi(\xi,\xi-\eta)}\frac{t_2\partial_{\xi_m}\phi(\xi,\xi-\eta)}{\phi(\xi,\xi-\eta)}(1+q(\xi-\eta,\eta))\partial_{\eta_l}\hat{f}_{m,k_1}(t_2,\eta)\hat{f}_{n,k_2}(t_2,\xi-\eta)\psi_k(\xi)d\eta\label{jm1.b1-2}\\
    & - \int_{\R^3} e^{it_1\phi(\xi,\xi-\eta)}\frac{t_1\partial_{\xi_m}\phi(\xi,\xi-\eta)}{\phi(\xi,\xi-\eta)}(1+q(\xi-\eta,\eta))\partial_{\eta_l}\hat{f}_{m,k_1}(t_1,\eta)\hat{f}_{n,k_2}(t_1,\xi-\eta)\psi_k(\xi)d\eta\label{jm1.b2-2}\\
    & - \int_{t_1}^{t_2}\int_{\R^3} e^{it\phi(\xi,\xi-\eta)}\frac{\partial_{\xi_m}\phi(\xi,\xi-\eta)}{\phi(\xi,\xi-\eta)}(1+q(\xi-\eta,\eta))\partial_{\eta_l}\hat{f}_{m,k_1}(t,\eta)\hat{f}_{n,k_2}(t,\xi-\eta)\psi_k(\xi)d\eta dt\label{jm1.3-2}\\
    & - \int_{t_1}^{t_2}\int_{\R^3} e^{it\phi(\xi,\xi-\eta)}\frac{t\partial_{\xi_m}\phi(\xi,\xi-\eta)}{\phi(\xi,\xi-\eta)}(1+q(\xi-\eta,\eta))\partial_t\partial_{\eta_l}\hat{f}_{m,k_1}(t,\eta)\hat{f}_{n,k_2}(t,\xi-\eta)\psi_k(\xi)d\eta dt\label{jm1.4-2}\\
    & - \int_{t_1}^{t_2}\int_{\R^3} e^{it\phi(\xi,\xi-\eta)}\frac{t\partial_{\xi_m}\phi(\xi,\xi-\eta)}{\phi(\xi,\xi-\eta)}(1+q(\xi-\eta,\eta))\partial_{\eta_l}\hat{f}_{m,k_1}(t,\eta)\partial_t\hat{f}_{n,k_2}(t,\xi-\eta)\psi_k(\xi)d\eta dt\label{jm1.5-2}.
\end{align}
Using the $L^3\times L^6\rightarrow L^2$ bilinear estimate and Lemma \ref{dualitycomp}, we have
\begin{align*}
    \|\eqref{jm1.1}\|_{L^2}
    \lesssim & 2^M\sup_{t\in[2^{M-1},2^M]}\|\F^{-1}(1+q(\xi-\eta,\eta))\Tilde{\psi}_k(\xi)\Tilde{\psi}_{k_1}(\eta)\Tilde{\psi}_{k_2}(\xi-\eta)\|_{L^1}\times\\
    &\times \min\{\|e^{ic_mt\la}\F^{-1}\nabla_\xi\hat{f}_{m,k_1}\|_{L^3_x}\|e^{ic_nt\la}\F^{-1}\nabla_\xi\hat{f}_{n,k_2}\|_{L^6_x},\\
    &\qquad\qquad\qquad\qquad 2^{3k_2/2}\|e^{ic_mt\la}\F^{-1}\nabla_\xi\hat{f}_{m,k_1}\|_{L^2_x}\|e^{ic_nt\la}\F^{-1}\nabla_\xi\hat{f}_{n,k_2}\|_{L^2_x}\}\\
    \lesssim & 2^{M}\sup_{t\in[2^{M-1},2^M]}\min\{\|e^{ic_mt\la}\F^{-1}\nabla_\xi\hat{f}_{m,k_1}\|_{L^3_x}\|e^{ic_nt\la}\F^{-1}\nabla_\xi\hat{f}_{n,k_2}\|_{L^6_x}, \\
    &\qquad\qquad\qquad\qquad2^{3k_2/2}\|\nabla_\xi\hat{f}_{m,k_1}\|_{L^2_\xi}\|\nabla_\xi\hat{f}_{n,k_2}\|_{L^2_\xi}\}.
\end{align*}
Then, by Bernstein's inequality, \eqref{l2}, \eqref{l2first}, and \eqref{l3nablaf},
\begin{align*}
    \|\eqref{jm1.1}\|_{L^2}
    \lesssim & 2^{M}\sup_{t\in[2^{M-1},2^M]}\min\{\|e^{ic_mt\la}\F^{-1}\nabla_\xi\hat{f}_{m,k_1}\|_{L^3_x}2^{k_2/2}\|e^{ic_nt\la}\F^{-1}\nabla_\xi\hat{f}_{n,k_2}\|_{L^3_x}, \\
    &\qquad\qquad\qquad\qquad 2^{3k_2/2}\|\nabla_\xi\hat{f}_{m,k_1}\|_{L^2_\xi}\|\nabla_\xi\hat{f}_{n,k_2}\|_{L^2_\xi}\}\\
    \lesssim & 2^{M-2k_{1,+}+\gamma k_1-2k_{2,+}+\gamma k_2}\min\{2^{-M+\gamma M/4+\gamma k_1/4-k_1+\gamma k_2/4-k_2/2}, 2^{3k_2/2-k_1/2-k_2/2}\}\e_1^2\\
    \lesssim & 2^{-2k_++\gamma k-2k_{2,+}+\gamma k_2}\min\{2^{\gamma M/4+\gamma k/4-k+\gamma k_2/4-k_2/2}, 2^{M-k/2+k_2}\}\e_1^2.
\end{align*}
Next, the bilinear estimate $L^3\times L^6\rightarrow L^2$, together with Lemma \ref{chi,eta}, Lemma \ref{dualitycomp}, Lemma \ref{lpnorms}, \eqref{l2}, and \eqref{l3nablaf} implies
\begin{align*}
    &\|\eqref{jm1.11}+\eqref{jm1.12}+\eqref{jm1.b1-2}+\eqref{jm1.b2-2}+\eqref{jm1.3-2}\|_{L^2}\\
    \lesssim &2^{M}\big(\|\F^{-1}\nabla_\xi q(\xi-\eta,\eta)\Tilde{\psi}_k(\xi)\Tilde{\psi}_{k_1}(\eta)\Tilde{\psi}_{k_2}(\xi-\eta)\|_{L^1}\\
    & + \|\F^{-1} (1+q(\xi-\eta,\eta))\Tilde{\psi}_k(\xi)\Tilde{\psi}_{k_1}(\eta)\Tilde{\psi}_{k_2}(\xi-\eta)\|_{L^1}\|\nabla_\xi\psi_k\|_{L^\infty}\\
    & +\|\F^{-1} \frac{\partial_{\xi_m}\phi(\xi,\xi-\eta)}{\phi(\xi,\xi-\eta)}(1+ q(\xi-\eta,\eta))\Tilde{\psi}_k(\xi)\Tilde{\psi}_{k_1}(\eta)\Tilde{\psi}_{k_2}(\xi-\eta)\|_{L^1}\big)\times\\
    &\times\sup_{t\in[2^{M-1},2^M]}\min\{\|e^{ic_mt\la}\F^{-1}\nabla\hat{f}_{m,k_1}\|_{L^3_x}\|e^{ic_nt\la}{f}_{n,k_2}\|_{L^6_x},2^{3k_2/2}\|e^{ic_mt\la}\F^{-1}\nabla\hat{f}_{m,k_1}\|_{L^2_x}\|e^{ic_nt\la}{f}_{n,k_2}\|_{L^2_x}\}\\
    \lesssim & 2^{M-k}\sup_{t\in[2^{M-1},2^M]}\min\{\|e^{ic_mt\la}\F^{-1}\nabla\hat{f}_{m,k_1}\|_{L^3_x}\|e^{ic_nt\la}f_{n,k_2}\|_{L^6_x},2^{3k_2/2}\|\F^{-1}\nabla\hat{f}_{m,k_1}\|_{L^2_x}\|{f}_{n,k_2}\|_{L^2_x}\}\\
    \lesssim & 2^{-k-2k_{1,+}-2k_{2,+}+\gamma k_1+\gamma k_2}\min\{2^{-M/2+\gamma M/8-k_1+\gamma k_1/4-k_2/2},2^{M+3k_2/2-k_1/2+k_2/2}\}\e_1^2\\
    \lesssim & 2^{-2k_{+}-2k_{2,+}+\gamma k+\gamma k_2}\min\{2^{-M/2+\gamma M/8-2k+\gamma k/4-k_2/2},2^{M+2k_2-3k/2}\}\e_1^2,
\end{align*}
\begin{align*}
    &\|\eqref{jm1.4-2}+\eqref{jm1.5-2}\|_{L^2}\\
    \lesssim &2^{2M}\|\F^{-1} \frac{\partial_{\xi_m}\phi(\xi,\xi-\eta)}{\phi(\xi,\xi-\eta)} (1+q(\xi-\eta,\eta))\Tilde{\psi}_k(\xi)\Tilde{\psi}_{k_1}(\eta)\Tilde{\psi}_{k_2}(\xi-\eta)\|_{L^1}\times\\
    &\qquad\times\sup_{t\in[2^{M-1},2^M]}\big(\min\{\|e^{ic_mt\la}\F^{-1}\partial_t\nabla\hat{f}_{m,k_1}\|_{L^3_x}\|e^{ic_nt\la}{f}_{n,k_2}\|_{L^6_x},\\
    &\qquad\qquad\qquad\qquad\qquad 2^{3k_2/2}\|e^{ic_mt\la}\F^{-1}\partial_t\nabla\hat{f}_{m,k_1}\|_{L^2_x}\|e^{ic_nt\la}{f}_{n,k_2}\|_{L^2_x}\}+\\
    &\qquad\qquad\qquad\qquad+\min\{\|e^{ic_mt\la}\F^{-1}\nabla\hat{f}_{m,k_1}\|_{L^3_x}\|e^{ic_nt\la}\partial_t{f}_{n,k_2}\|_{L^6_x},\\
    &\qquad\qquad\qquad\qquad\qquad 2^{3k_2/2}\|e^{ic_mt\la}\F^{-1}\nabla\hat{f}_{m,k_1}\|_{L^2_x}\|e^{ic_nt\la}\partial_t{f}_{n,k_2}\|_{L^2_x}\}\big)\\
    \lesssim & 2^{2M-k}\sup_{t\in[2^{M-1},2^M]}\big(\min\{\|e^{ic_mt\la}\F^{-1}\partial_t\nabla\hat{f}_{m,k_1}\|_{L^3_x}\|e^{ic_nt\la}{f}_{n,k_2}\|_{L^6_x},2^{3k_2/2}\|\partial_t\nabla\hat{f}_{m,k_1}\|_{L^2_\xi}\|{f}_{n,k_2}\|_{L^2_x}\}+\\
    &\qquad\qquad\qquad\qquad+\min\{\|e^{ic_mt\la}\F^{-1}\nabla\hat{f}_{m,k_1}\|_{L^3_x}\|e^{ic_nt\la}\partial_t{f}_{n,k_2}\|_{L^6_x},2^{3k_2/2}\|\nabla\hat{f}_{m,k_1}\|_{L^2_\xi}\|\partial_t{f}_{n,k_2}\|_{L^2_x}\}\big),
\end{align*}
and by Lemma \ref{timel2}, Lemma \ref{mixder}, Lemma \ref{mixderl3}, \eqref{l2}, \eqref{l2first}, and \eqref{l3nablaf},
\begin{align*}
    &\|\eqref{jm1.4-2}+\eqref{jm1.5-2}\|_{L^2}\\
    \lesssim & 2^{2M-k-2k_{1,+}+\gamma k_1-2k_{2,+}+\gamma k_2}\big(\min\{2^{-M-\gamma k_1-M-k_2/2},(1+2^{M/2+k_1})2^{3k_2/2-M-k_1/2-\gamma k_1+k_2/2}\}\e_1^2\\
    &+\min\{2^{-M/2+\gamma M/8-k_1+\gamma k_1/4-2M-k_2/2},2^{3k_2/2-k_1/2-M+k_2/2}\}\big)\e_1^2\\
    \lesssim & 2^{-2k_++\gamma k-2k_{2,+}+\gamma k_2}\big(\min\{2^{-k-\gamma k-k_2/2},(1+2^{M/2+k})2^{M-3k/2-\gamma k+2k_2}\}\e_1^2\\
    &+\min\{2^{-M/2+\gamma M/8-2k+\gamma k/4-k_2/2},2^{M-3k/2+2k_2}\}\big)\e_1^2\\
    \lesssim & (1+2^{M/2+k})2^{-2k_++\gamma k-\gamma k_--2k_{2,+}+\gamma k_2}\min\{2^{-M/2-2k-k_2/2},2^{M-3k/2+2k_2}\}\e_1^2.
\end{align*}
Thus, we have
\begin{align*}
    &\|\nabla_\xi F^1\|_{L^2}+\|\nabla_\xi F^2\|_{L^2}\\
    \lesssim &(1+2^{M/2+k})2^{-2k_++\gamma k-\gamma k_--2k_{2,+}+\gamma k_2}\min\{(1+2^{\gamma M/4+\gamma k/2})2^{-M/2-2k-k_2/2},2^{M-k/2+k_2}\}\e_1^2.
\end{align*}
When $M+2k\leq 0$, using Lemma \ref{dalpha} and \eqref{jm1bound1}, we obtain
\begin{align*}
    \|D^\alpha_\xi F^1\|_{L^2}+\|D^\alpha_\xi F^2\|_{L^2}
    &\leq \|F^1\|_{L^2}^{1-\alpha}\|\nabla_\xi F^1\|_{L^2}^\alpha+\|F^2\|_{L^2}^{1-\alpha}\|\nabla_\xi F^2\|_{L^2}^\alpha\\
    &\lesssim 2^{-2k_{+}+\gamma k-\gamma k_--2k_{2,+}+\gamma k_2}\min\{2^{-M/2-k-\alpha k-k_2/2},2^{M+2k_2-k/2-\alpha k_2}\}\e_1^2.
\end{align*}
And when $M+2k> 0$, we use Lemma \ref{dalpha} and \eqref{jm1bound2} to get
\begin{align*}
    &\|D^\alpha_\xi F^1\|_{L^2}+\|D^\alpha_\xi F^2\|_{L^2}\\
    \lesssim & 2^{-2k_++\gamma k-\gamma k_--2k_{2,+}+\gamma k_2}\min\{2^{-M+\alpha M+\gamma M/4+\gamma k/2-2k+\alpha k-k_2/2},2^{M/2+\alpha M-3k/2+2k_2+\alpha (2k-k_2)}\}\e_1^2\\
    \lesssim & 2^{-2k_++\gamma k-\gamma k_--2k_{2,+}+\gamma k_2}\min\{2^{-M/2+\gamma M/4+\gamma k/2-2k+\alpha k-k_2/2},2^{M-k+3k_2/2+\alpha k}\}\e_1^2.
\end{align*}
The last line uses the fact that $\alpha (k-k_2)\leq k/2-k_2/2$, since $k>k_2$ for $(k_1,k_2)\in\chi^1_k$ and $\alpha<1/2$.
\end{proof}
Since the denominator in the identity
$$e^{it\phi(\xi,\eta)}=\sum_{n=1}^3\frac{\partial_{\eta_n}\phi(\xi,\eta)}{it|\nabla_{\eta}\phi(\xi,\eta)|^2}\partial_{\eta_n}e^{it\phi(\xi,\eta)}$$
has a lower bound, $|\nabla_\eta\phi(\xi,\eta)|\gtrsim 2^{k_1}$, for all $(k_1,k_2)\in\chi^1_k\cup\chi^2_k$, we will employ it to handle the general case when $(k_1,k_2)\in\chi^2_k$ and the exception case in Lemma \ref{jm1} when $(k_1,k_2)\in\chi^1_k$ and $c_l=c_m$.
\begin{lem}
\label{sp1}
Suppose $t_1,t_2\in[2^{M-1},2^M]$ and $\sup_{t\in[1,T]}\|f_l\|_{Z}\leq \e_1$. If either $(k_1,k_2)\in\chi^2_k$ or $(k_1,k_2)\in\chi^1_k$ and $c_l=c_m$, then
\begin{align*}
    \|D^\alpha I^{M,1}_{k,k_1,k_2}\|_{L^2}
    \lesssim & 2^{-2k_{1,+}+\gamma k_1-2k_{2,+}+\gamma k_2-\alpha k-k_1/2}\min\{2^{-M/4-k_2/2},2^{M+3\min\{k,k_2\}/2+k_2/2}\}\e_1^2.
\end{align*}
If either $(k_1,k_2)\in\chi^2_k\cup\chi^3_k$ or $(k_1,k_2)\in\chi^1_k$ and $c_l=c_m$, then
\begin{align*}
    \|D^\alpha J^{M,1}_{k,k_1,k_2}\|_{L^2}
    \lesssim & 2^{-2k_{1,+}+\gamma k_1-2k_{2,+}+\gamma k_2-\alpha k-k_1/2}\min\{2^{-M/4-k_2/2},2^{M+3\min\{k,k_2\}/2+k_2/2}\}\e_1^2.
\end{align*}
\end{lem}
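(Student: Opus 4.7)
The common engine in all the cases covered by the lemma is the integration by parts identity
\[
e^{it\phi(\xi,\eta)}=\sum_{n=1}^{3}\frac{\partial_{\eta_n}\phi(\xi,\eta)}{it\,|\nabla_\eta\phi(\xi,\eta)|^2}\partial_{\eta_n}e^{it\phi(\xi,\eta)},
\]
which requires a lower bound on $|\nabla_\eta\phi|$. For $(k_1,k_2)\in\chi^2_k$ the bound $|\nabla_\eta\phi|\gtrsim 2^{k_1}$ was already established in the proof of Lemma \ref{timel2}. For $(k_1,k_2)\in\chi^1_k$ with $c_l=c_m$, the condition $k_2\leq k_1-a$ combined with the choice of $a$ in \eqref{adef} yields $|2c_m(\xi-\eta)-2c_n\eta|\geq 2|c_m|2^{k_1-2}-2|c_n|2^{k_2+1}\gtrsim 2^{k_1}$ (the subcase $c_n=0$ makes the nonlinearity vanish, so nothing is to prove). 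Finally, for $(k_1,k_2)\in\chi^3_k$ in the $J$-case, $c_m+c_n=0$ implies $\nabla_\eta\phi=2c_m\xi$, and localisation in $\xi$ together with $|k-k_1|\leq a+2$ gives $|\nabla_\eta\phi|\gtrsim 2^k\sim 2^{k_1}$.

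To place $D^{\alpha}_\xi$ on the outside I will use the Stein-type interpolation of Lemma \ref{si} applied to
\[
T_z\hat{g}(\xi):=D^z_\xi\int_{t_1}^{t_2}\int_{\R^3}e^{it\phi(\xi,\eta)}\sigma(\xi,\eta)\,\hat{g}(t,\xi-\eta)\,\hat{f}_{n,k_2}(t,\eta)\,\psi_k(\xi)\,d\eta\,dt,
\]
with $\sigma\equiv 1$ for $I^{M,1}_{k,k_1,k_2}$ and $\sigma(\xi,\eta)=q(\xi-\eta,\eta)$ for $J^{M,1}_{k,k_1,k_2}$, interpolating between $s_1=0$ and $s_2=1$. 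After interpolation, the substitution $\hat{g}=\partial_{\xi_l}\hat{f}_{m,k_1}$ turns the required $\|\hat{g}\|_{L^\infty_tH^\alpha_\xi}$ into $\|\hat{f}_{m,k_1}\|_{L^\infty_tH^{1+\alpha}_\xi}$, which is bounded by $2^{-2k_{1,+}+\gamma k_1-k_1/2-\alpha k_1}\e_1$ via Lemma \ref{sobolev space}, supplying the $2^{-\alpha k}2^{-k_1/2}2^{-2k_{1,+}+\gamma k_1}$ part of the claim after noting $k\leq k_1$ in all cases at hand.

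At $\Re z=0$ I establish two complementary bounds on $\|T_{iy}\hat{g}\|_{L^2}$ and take the minimum. The first comes from the $L^2\times L^\infty\to L^2$ bilinear estimate of Lemma \ref{bilinear}, dualised through Lemma \ref{dualitycomp} to extract a $2^{3\min\{k,k_2\}/2}$ factor and paired with \eqref{l2} on $\hat{f}_{n,k_2}$, producing the $2^{M+3\min\{k,k_2\}/2+k_2/2}$ arm of the $\min$. The second uses Strichartz's estimate in Lemma \ref{strichartz}, after a change of variable $\eta\mapsto\xi-\eta$ swapping the roles of the two profiles so that the $H^1$-slot is filled by $\hat{f}_{n,k_2}$ (estimated by \eqref{l2first}) and the $L^2$-slot by $\hat{g}$, which yields the $2^{-M/4-k_2/2}$ arm. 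The multiplier $L^1$-norms that arise from $\sigma$ and from the $\tilde\psi$ truncations are controlled by Lemmas \ref{chi+eta} and \ref{chi+eta2} together with hypothesis \eqref{q}. At $\Re z=1$, differentiating in $\xi$ brings down the potentially growing amplitude $it\partial_{\xi_l}\phi$; I absorb this by the $\eta$-integration by parts above, the $it$ cancelling the $1/(it)$ in the new multiplier and leaving the bounded symbol $\partial_{\xi_l}\phi\cdot\partial_{\eta_n}\phi/|\nabla_\eta\phi|^2$ (handled by Lemma \ref{chi+eta2}). One derivative is shifted onto $\hat{g}$ (providing the $\|\hat{g}\|_{H^1}$ needed by the $s_2=1$ side of Lemma \ref{si}) or onto $\hat{f}_{n,k_2}$ (bounded by \eqref{l2first}), and both resulting bilinear forms are estimated by the same two methods (direct bilinear and Strichartz) as at $z=0$.

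The main technical obstacle is the $\Re z=1$ step, where differentiating the whole integral in $\xi$ creates several pieces (derivative on $\psi_k$, on $q$ for $J^{M,1}$, on the amplitude, and on the phase), each of which must be paired with the correct $\eta$-integration by parts so that the resulting multiplier fits into Lemmas \ref{chi+eta}, \ref{chi+eta2} and the remaining bilinear factor still admits either Strichartz or a direct bilinear bound without demanding $\|\hat{f}_{m,k_1}\|_{H^2}$. A subsidiary point is verifying the analyticity and uniform boundedness of $T_z$ on the strip required by Lemma \ref{si}, which reduces to the standard analytic properties of $z\mapsto D^z$. Assembling the resulting $M_0$ and $M_1$ via $\|T_\alpha\hat g\|_{L^2}\lesssim M_0^{1-\alpha}M_1^\alpha\|\hat g\|_{H^\alpha}$ and interchanging $k,k_1,k_2$ using $|k_1-k_2|<a$ in $\chi^2_k\cup\chi^3_k$ and $|k-k_1|\leq 3$ in $\chi^1_k$ produces the stated min-bound.
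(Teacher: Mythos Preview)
Your proposal is correct and follows essentially the same route as the paper: define the analytic family $T_z\hat g=D^z_\xi\int\!\!\int e^{it\phi}\sigma\,\hat g_{k_1}(\xi-\eta)\hat f_{n,k_2}(\eta)\psi_k\,d\eta\,dt$, bound $T_{iy}$ by pairing Lemma~\ref{strichartz} (putting the $H^1_\xi$-slot on $\hat f_{n,k_2}$) with Lemma~\ref{dualitycomp}, bound $T_{1+iy}$ by differentiating and absorbing the $it\partial_{\xi}\phi$ piece via the $\eta$-integration-by-parts identity (which is legitimate thanks to the $|\nabla_\eta\phi|\gtrsim 2^{k_1}$ lower bound you identified in each regime), interpolate via Lemma~\ref{si}, and then insert $\hat g=\nabla_\xi\hat f_{m,k_1}$ together with \eqref{sobolevnorms}. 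The only point worth making explicit when you write it up is that in the $\chi^1_k$ case with $c_l=c_m$ one has $\partial_{\xi_l}\phi=2c_m\eta_l$, so the symbol $\partial_{\xi_l}\phi\cdot\partial_{\eta_n}\phi/|\nabla_\eta\phi|^2$ carries an extra $2^{k_2-k_1}$ factor (not merely ``bounded''); this is what keeps the $T_{1+iy}$ bound from overshooting by a full $2^{k_1-k_2}$ and is exactly how the paper handles the term where the $\eta$-derivative lands on $\hat f_{n,k_2}$.
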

\begin{proof}
We will use the interpolation result in Lemma \ref{si} to find bounds on the fractional derivatives. For $I^{M,1}_{k,k_1,k_2}$ and $J^{M,1}_{k,k_1,k_2}$, we consider the families of operators on $\{z\in\C:0\leq \Re(z)\leq 1\}$,
\begin{align*}
    T_{z} \hat{g} =D^z\int_{t_1}^{t_2}\int_{\R^3}e^{it\phi(\xi,\eta)}\hat{g}_{k_1}(t,\xi-\eta)\hat{f}_{n,k_2}(t,\eta)\psi_k(\xi)d\eta dt
\end{align*}
and
\begin{align*}
    T^q_{z} \hat{g} =D^z\int_{t_1}^{t_2}\int_{\R^3}e^{it\phi(\xi,\eta)}q(\xi-\eta,\eta)\hat{g}_{k_1}(t,\xi-\eta)\hat{f}_{n,k_2}(t,\eta)\psi_k(\xi)d\eta dt,
\end{align*}
respectively. We shall show for each $y\in\R$, 
$$T_{0+iy},T^q_{0+iy}:L^{\infty}_t([2^{M-1},2^M])L^2_\xi\rightarrow L^2_\xi$$ and $$T_{1+iy},T^q_{1+iy}:L^{\infty}_t([2^{M-1},2^M])H^1_\xi\rightarrow L^2_\xi$$ are bounded operators.\\
By Lemma \ref{strichartz}, Lemma \ref{dualitycomp}, Lemma \ref{sobolev space}, and \eqref{l2}, we have
\begin{align*}
    &\|T_{0+iy} \hat{g}\|_{L^2}+\|T^q_{0+iy} \hat{g}\|_{L^2}\\
    =&\bigg\|\int_{t_1}^{t_2}\int_{\R^3}e^{it\phi(\xi,\eta)}\hat{g}_{k_1}(t,\xi-\eta)\hat{f}_{n,k_2}(t,\eta)\psi_k(\xi)d\eta dt\bigg\|_{L^2}\\
    &+\bigg\|\int_{t_1}^{t_2}\int_{\R^3}e^{it\phi(\xi,\eta)}q(\xi-\eta,\eta)\hat{g}_{k_1}(t,\xi-\eta)\hat{f}_{n,k_2}(t,\eta)\psi_k(\xi)d\eta dt\bigg\|_{L^2}\\
    \lesssim &(1+2^{\e k_-})\min\{ 2^{-M/4}\|\hat{g}_{k_1}\|_{L^\infty_t([2^{M-1},2^M])L^2_\xi}\|\hat{f}_{k_2}\|_{L^\infty_t([2^{M-1},2^M])H^1_\xi},\\
    &\qquad\qquad\qquad\qquad 2^{M+3\min\{k,k_2\}/2}\|e^{ic_mt\la}g_{k_1}\|_{L^\infty_t([2^{M-1},2^M])L^2_x}\|e^{ic_nt\la}f_{n,k_2}\|_{L^\infty_t([2^{M-1},2^M])L^2_x}\}\\
    \lesssim & \min\{2^{-M/4}\|\hat{f}_{n,k_2}\|_{L^\infty_t([2^{M-1},2^M])H^1_\xi},2^{M+3\min\{k,k_2\}/2}\|f_{n,k_2}\|_{L^\infty_t([2^{M-1},2^M])L^2_x}\}\|\hat{g}_{k_1}\|_{L^\infty_t([2^{M-1},2^M])L^2_\xi}\\
    \lesssim & 2^{-2k_{2,+}+\gamma k_2}\min\{2^{-M/4-k_2/2},2^{M+3\min\{k,k_2\}/2+k_2/2}\}\e_1\|\hat{g}\|_{L^\infty_t([2^{M-1},2^M])L^2_\xi}.
\end{align*}
Next, we observe
\begin{align}
    \|T_{1+iy} \hat{g}\|_{L^2}
    \leq &\bigg\|\partial_{\xi_l}\int_{t_1}^{t_2}\int_{\R^3}e^{it\phi(\xi,\eta)}\hat{g}_{k_1}(t,\xi-\eta)\hat{f}_{n,k_2}(t,\eta)\psi_k(\xi)d\eta dt\bigg\|_{L^2}\nonumber\\
    \leq &\bigg\|\int_{t_1}^{t_2}\int_{\R^3}e^{it\phi(\xi,\eta)}\partial_{\xi_l}\hat{g}_{k_1}(t,\xi-\eta)\hat{f}_{n,k_2}(t,\eta)\psi_k(\xi)d\eta dt\bigg\|_{L^2}\label{stein1.1}\\
    &+ \bigg\|\int_{t_1}^{t_2}\int_{\R^3}e^{it\phi(\xi,\eta)}\hat{g}_{k_1}(t,\xi-\eta)\hat{f}_{n,k_2}(t,\eta)\partial_{\xi_l}\psi_k(\xi)d\eta dt\bigg\|_{L^2}\label{stein1.12}\\
    & + \bigg\|\int_{t_1}^{t_2}\int_{\R^3}e^{it\phi(\xi,\eta)}it\partial_{\xi_l}\phi(\xi,\eta) \hat{g}_{k_1}(t,\xi-\eta)\hat{f}_{k_2}(t,\eta)\psi_k(\xi)d\eta dt\bigg\|_{L^2}\label{stein1.2}
\end{align}
and
\begin{align}
    \|T^q_{1+iy} \hat{g}\|_{L^2}
    \leq &\bigg\|\partial_{\xi_l}\int_{t_1}^{t_2}\int_{\R^3}e^{it\phi(\xi,\eta)}q(\xi-\eta,\eta)\hat{g}_{k_1}(t,\xi-\eta)\hat{f}_{n,k_2}(t,\eta)\psi_k(\xi)d\eta dt\bigg\|_{L^2}\nonumber\\
    \leq &\bigg\|\int_{t_1}^{t_2}\int_{\R^3}e^{it\phi(\xi,\eta)}q(\xi-\eta,\eta)\partial_{\xi_l}\hat{g}_{k_1}(t,\xi-\eta)\hat{f}_{n,k_2}(t,\eta)\psi_k(\xi)d\eta dt\bigg\|_{L^2}\label{kstein1.1}\\
    &+ \bigg\|\int_{t_1}^{t_2}\int_{\R^3}e^{it\phi(\xi,\eta)}\partial_{\xi_l} q(\xi-\eta,\eta)\hat{g}_{k_1}(t,\xi-\eta)\hat{f}_{n,k_2}(t,\eta)\psi_k(\xi)d\eta dt\bigg\|_{L^2}\label{kstein1.11}\\
    &+ \bigg\|\int_{t_1}^{t_2}\int_{\R^3}e^{it\phi(\xi,\eta)}q(\xi-\eta,\eta)\hat{g}_{k_1}(t,\xi-\eta)\hat{f}_{n,k_2}(t,\eta)\partial_{\xi_l}\psi_k(\xi)d\eta dt\bigg\|_{L^2}\label{kstein1.12}\\
    & + \bigg\|\int_{t_1}^{t_2}\int_{\R^3}e^{it\phi(\xi,\eta)}it\partial_{\xi_l}\phi(\xi,\eta) q(\xi-\eta,\eta)\hat{g}_{k_1}(t,\xi-\eta)\hat{f}_{n,k_2}(t,\eta)\psi_k(\xi)d\eta dt\bigg\|_{L^2}\label{kstein1.2}.
\end{align}
Since we have \eqref{xi-2eta} when $(k_1,k_2)\in\chi^1_k\cup\chi^2_k$, we can perform an integration by parts using the identity $e^{it\phi(\xi,\eta)}=\sum_{n=1}^3\frac{\partial_{\eta_n}\phi(\xi,\eta)}{it|\nabla_{\eta}\phi(\xi,\eta)|^2}\partial_{\eta_n}e^{it\phi(\xi,\eta)}$ on the term $\eqref{stein1.2}$, 
\begin{align}
    &\int_{t_1}^{t_2}\int_{\R^3}e^{it\phi(\xi,\eta)}it\partial_{\xi_l}\phi(\xi,\eta) \hat{g}_{k_1}(t,\xi-\eta)\hat{f}_{n,k_2}(t,\eta)\psi_k(\xi)d\eta dt\nonumber\\
    = & \int_{t_1}^{t_2}\int_{\R^3}\partial_{\eta_n}e^{it\phi(\xi,\eta)}\frac{\partial_{\xi_l}\phi(\xi,\eta)\partial_{\eta_n}\phi(\xi,\eta)}{|\nabla_\eta\phi(\xi,\eta)|^2}\hat{g}_{k_1}(t,\xi-\eta)\hat{f}_{n,k_2}(t,\eta)\psi_k(\xi)d\eta dt\nonumber\\
    = & -\int_{t_1}^{t_2}\int_{\R^3}e^{it2\eta\cdot(\xi-\eta)}\partial_{\eta_n}\frac{\partial_{\xi_l}\phi(\xi,\eta)\partial_{\eta_n}\phi(\xi,\eta)}{|\nabla_\eta\phi(\xi,\eta)|^2}\hat{g}_{k_1}(t,\xi-\eta)\hat{f}_{n,k_2}(t,\eta)\psi_k(\xi)d\eta dt\label{stein1.3}\\
    & -\int_{t_1}^{t_2}\int_{\R^3}e^{it2\eta\cdot(\xi-\eta)}\frac{\partial_{\xi_l}\phi(\xi,\eta)\partial_{\eta_n}\phi(\xi,\eta)}{|\nabla_\eta\phi(\xi,\eta)|^2}\partial_{\eta_n}\hat{g}_{k_1}(t,\xi-\eta)\hat{f}_{n,k_2}(t,\eta)\psi_k(\xi)d\eta dt\label{stein1.4}\\
    & -\int_{t_1}^{t_2}\int_{\R^3}e^{it2\eta\cdot(\xi-\eta)}\frac{\partial_{\xi_l}\phi(\xi,\eta)\partial_{\eta_n}\phi(\xi,\eta)}{|\nabla_\eta\phi(\xi,\eta)|^2}\hat{g}_{k_1}(t,\xi-\eta)\partial_{\eta_n}\hat{f}_{n,k_2}(t,\eta)\psi_k(\xi)d\eta dt\label{stein1.5}.
\end{align}
Furthermore, for $c_m+c_n=0$, we get $\phi(\xi,\eta)=(c_l-c_m)|\xi|^2+2c_m\xi\cdot\eta$ and $\nabla_\eta\phi(\xi,\eta) = 2c_m\xi$. We may perform an integration by parts using the identity $e^{it\phi(\xi,\eta)}=\sum_{n=1}^3\frac{\xi_n}{it2c_m|\xi|^2}\partial_{\eta_n}e^{it\phi(\xi,\eta)}$ on the term $\eqref{kstein1.2}$ regardless of $k_1$ and $k_2$,
\begin{align}
    &\int_{t_1}^{t_2}\int_{\R^3}e^{it\phi(\xi,\eta)}it\partial_{\xi_l}\phi(\xi,\eta) q(\xi-\eta,\eta)\hat{g}_{k_1}(t,\xi-\eta)\hat{f}_{n,k_2}(t,\eta)\psi_k(\xi)d\eta dt\nonumber\\
    = & \int_{t_1}^{t_2}\int_{\R^3}\partial_{\eta_n}e^{it\phi(\xi,\eta)}\frac{\xi_n\partial_{\xi_l}\phi(\xi,\eta)}{c_m|\xi|^2}q(\xi-\eta,\eta)\hat{g}_{k_1}(t,\xi-\eta)\hat{f}_{n,k_2}(t,\eta)\psi_k(\xi)d\eta dt\nonumber\\
    = & -\int_{t_1}^{t_2}\int_{\R^3} e^{it\phi(\xi,\eta)}\partial_{\eta_n}\big(\frac{\xi_n\partial_{\xi_l}\phi(\xi,\eta)}{c_m|\xi|^2}q(\xi-\eta,\eta)\big)\hat{g}_{k_1}(t,\xi-\eta)\hat{f}_{n,k_2}(t,\eta)\psi_k(\xi)d\eta dt\label{kstein1.3}\\
    & -\int_{t_1}^{t_2}\int_{\R^3} e^{it\phi(\xi,\eta)}\frac{\xi_n\partial_{\xi_l}\phi(\xi,\eta)}{c_m|\xi|^2}q(\xi-\eta,\eta)\partial_{\eta_n}\hat{g}_{k_1}(t,\xi-\eta)\hat{f}_{n,k_2}(t,\eta)\psi_k(\xi)d\eta dt\label{kstein1.4}\\
    & -\int_{t_1}^{t_2}\int_{\R^3} e^{it\phi(\xi,\eta)}\frac{\xi_n\partial_{\xi_l}\phi(\xi,\eta)}{c_m|\xi|^2}q(\xi-\eta,\eta)\hat{g}_{k_1}(t,\xi-\eta)\partial_{\eta_n}\hat{f}_{n,k_2}(t,\eta)\psi_k(\xi)d\eta dt\label{kstein1.5}.
\end{align}
By Lemma \ref{strichartz}, Lemma \ref{dualitycomp}, Lemma \ref{sobolev space}, Lemma \ref{chi+eta2}, \eqref{l2}, and \eqref{addder}, we know
\begin{align*}
    &\eqref{stein1.1}+\|\eqref{stein1.4}\|_{L^2}\\
    \lesssim & \big(1+\|\F^{-1}\frac{\partial_{\xi_l}\phi(\xi,\eta)\partial_{\eta_n}\phi(\xi,\eta)}{|\nabla_\eta\phi(\xi,\eta)|^2}\Tilde{\psi}_{k_1}(\nabla_\eta\phi(\xi,\eta))\Tilde{\psi}_{k_2}(\eta)\|_{L^1}\big)\times\\
    &\times\min\{2^{-M/4}\|\nabla_\xi\hat{g}_{k_1}\|_{L^\infty_t([2^{M-1},2^M])L^2_\xi}\|\hat{f}_{k_2}\|_{L^\infty_t([2^{M-1},2^M])H^1_\xi},\\
    &\qquad\qquad\qquad\qquad 2^{M+3\min\{k,k_2\}/2}\|e^{ic_mt\la}\F^{-1}\nabla_\xi\hat{g}_{k_1}\|_{L^\infty_t([2^{M-1},2^M])L^2_x}\|e^{ic_nt\la}f_{n,k_2}\|_{L^\infty_t([2^{M-1},2^M])L^2_x}\}\\
    \lesssim & \min\{2^{-M/4}\|\hat{f}_{n,k_2}\|_{L^\infty_t([2^{M-1},2^M])H^1_\xi},2^{M+3\min\{k,k_2\}/2}\|f_{n,k_2}\|_{L^\infty_t([2^{M-1},2^M])L^2_x}\}\|\nabla_\xi\hat{g}_{k_1}\|_{L^\infty_t([2^{M-1},2^M])L^2_\xi}\\
    \leq & 2^{-2k_{2,+}+\gamma k_2}\min\{2^{-M/4-k_2/2},2^{M+3\min\{k,k_2\}/2+k_2/2}\}\e_1\|\hat{g}\|_{L^\infty_t([2^{M-1},2^M])H^1_\xi}
\end{align*}
and
\begin{align*}
    &\eqref{stein1.12}+\|\eqref{stein1.3}\|_{L^2}\\
    \lesssim &\big(\|\nabla_\xi\psi_k\|_{L^\infty}+\|\F^{-1}\partial_{\eta_n}\frac{\partial_{\xi_l}\phi(\xi,\eta)\partial_{\eta_n}\phi(\xi,\eta)}{|\nabla_\eta\phi(\xi,\eta)|^2}\Tilde{\psi}_{k_1}(\nabla_\eta\phi(\xi,\eta))\Tilde{\psi}_{k_2}(\eta)\|_{L^1}\big)\times\\
    &\times \min\{2^{-M/4}\|\hat{g}_{k_1}\|_{L^\infty_t([2^{M-1},2^M])H^1_\xi}\|\hat{f}_{n,k_2}\|_{L^\infty_t([2^{M-1},2^M])L^2_\xi},\\
    &\qquad\qquad\qquad\qquad 2^{M+3\min\{k,k_2\}/2}\|e^{ic_mt\la}g_{k_1}\|_{L^\infty_t([2^{M-1},2^M])L^2_x}\|e^{ic_nt\la}f_{n,k_2}\|_{L^\infty_t([2^{M-1},2^M])L^2_x}\}\\
    \lesssim & (2^{-k}+2^{-k_1})\min\{2^{-M/4}\|\hat{g}_{k_1}\|_{L^\infty_t([2^{M-1},2^M])H^1_\xi}\|\hat{f}_{n,k_2}\|_{L^\infty_t([2^{M-1},2^M])L^2_\xi},\\
    &\qquad\qquad\qquad\qquad 2^{M+3\min\{k,k_2\}/2}\|\hat{g}_{k_1}\|_{L^\infty_t([2^{M-1},2^M])L^2_\xi}\|f_{n,k_2}\|_{L^\infty_t([2^{M-1},2^M])L^2_x}\}\\
    \lesssim & 2^{-k}\min\{2^{-M/4}\|\hat{f}_{n,k_2}\|_{L^\infty_t([2^{M-1},2^M])L^2_\xi},2^{M+3\min\{k,k_2\}/2+k_1}\|f_{n,k_2}\|_{L^\infty_t([2^{M-1},2^M])L^2_x}\}\|\hat{g}_{k_1}\|_{L^\infty_t([2^{M-1},2^M])H^1_\xi}\\
    \lesssim & 2^{-k-2k_{2,+}+\gamma k_2}\min\{2^{-M/4+k_2/2},2^{M+3\min\{k,k_2\}/2+k_1+k_2/2}\}\e_1\|\hat{g}\|_{L^\infty_t([2^{M-1},2^M])H^1_\xi}.
\end{align*}
When $(k_1,k_2)\in\chi^1_k$, we have $c_l=c_m$ and $\phi(\xi,\eta)=2c_m\xi\cdot\eta-(c_m+c_n)|\eta|^2$, so
\begin{align*}
    &\|\eqref{stein1.5}\|_{L^2}\\
    \lesssim & \|\F^{-1}\frac{\partial_{\xi_l}\phi(\xi,\eta)\partial_{\eta_n}\phi(\xi,\eta)}{|\nabla_\eta\phi(\xi,\eta)|^2}\Tilde{\psi}_{k_1}(\nabla_\eta\phi(\xi,\eta))\Tilde{\psi}_{k_2}(\eta)\|_{L^1}\times\\
    &\times \min\{2^{-M/4}\|\hat{g}_{k_1}\|_{L^\infty_t([2^{M-1},2^M])H^1_\xi}\|\nabla_\xi\hat{f}_{n,k_2}\|_{L^\infty_t([2^{M-1},2^M])L^2_\xi},\\
    &\qquad\qquad\qquad\qquad 2^{m+3\min\{k,k_2\}/2}\|e^{ic_mt\la}g_{k_1}\|_{L^\infty_t([2^{M-1},2^M])L^2_x}\|e^{ic_nt\la}\F^{-1}\nabla_\xi\hat{f}_{n,k_2}\|_{L^\infty_t([2^{M-1},2^M])L^2_x}\}\\
    \lesssim & 2^{k_2-k_1}\min\{2^{-M/4}\|\hat{g}_{k_1}\|_{L^\infty_t([2^{M-1},2^M])H^1_\xi}\|\nabla_\xi\hat{f}_{n,k_2}\|_{L^\infty_t([2^{M-1},2^M])L^2_\xi},\\
    &\qquad\qquad\qquad\qquad 2^{M+3\min\{k,k_2\}/2}\|\hat{g}_{k_1}\|_{L^\infty_t([2^{M-1},2^M])L^2_\xi}\|\nabla_\xi\hat{f}_{n,k_2}\|_{L^\infty_t([2^{M-1},2^M])L^2_\xi}\}\\
    \lesssim & 2^{k_2-k_1}\min\{2^{-M/4}\|\nabla_\xi\hat{f}_{n,k_2}\|_{L^\infty_t([2^{M-1},2^M])L^2_\xi},\\
    &\qquad\qquad\qquad\qquad 2^{M+3\min\{k,k_2\}/2+k_1}\|\nabla_\xi\hat{f}_{n,k_2}\|_{L^\infty_t([2^{M-1},2^M])L^2_\xi}\}\|\hat{g}_{k_1}\|_{L^\infty_t([2^{M-1},2^M])H^1_\xi}\\
    \lesssim & 2^{-2k_{2,+}+\gamma k_2}\min\{2^{-M/4-k_1+k_2/2},2^{M+3\min\{k,k_2\}/2+k_2/2}\}\e_1\|\hat{g}\|_{L^\infty_t([2^{M-1},2^M])H^1_\xi}.
\end{align*}
Repeat the same estimate using \eqref{q}. Since either $(k_1,k_2)\in\chi^2_k$ or $\phi(\xi,\eta)=2c_m\xi\cdot\eta-(c_m+c_n)|\eta|^2$, we get
\begin{align*}
    &\eqref{kstein1.1}+\|\eqref{kstein1.4}\|_{L^2}\\
    \lesssim & \big(\|\F^{-1}q(\xi-\eta,\eta)\Tilde{\psi}_{k_1}(\xi-\eta)\Tilde{\psi}_{k_2}(\eta)\Tilde{\psi}_k(\xi)\|_{L^1}+\|\F^{-1}\frac{\xi_n\partial_{\xi_l}\phi(\xi,\eta)}{c_m|\xi|^2}q(\xi-\eta,\eta)\Tilde{\psi}_{k_1}(\xi-\eta)\Tilde{\psi}_{k_2}(\eta)\Tilde{\psi}_{k}(\xi)\|_{L^1}\big)\times\\
    &\times\min\{2^{-M/4}\|\nabla_\xi\hat{g}_{k_1}\|_{L^\infty_t([2^{M-1},2^M])L^2_\xi}\|\hat{f}_{k_2}\|_{L^\infty_t([2^{M-1},2^M])H^1_\xi},\\
    &\qquad\qquad\qquad\qquad 2^{M+3\min\{k,k_2\}/2}\|e^{ic_mt\la}\F^{-1}\nabla_\xi\hat{g}_{k_1}\|_{L^\infty_t([2^{M-1},2^M])L^2_x}\|e^{ic_nt\la}f_{n,k_2}\|_{L^\infty_t([2^{M-1},2^M])L^2_x}\}\\
    \lesssim &(2^{\e k_-}+2^{\e k_-+k_2-k}) 2^{-2k_{2,+}+\gamma k_2}\min\{2^{-M/4-k_2/2},2^{M+3\min\{k,k_2\}/2+k_2/2}\}\e_1\|\hat{g}\|_{L^\infty_t([2^{M-1},2^M])H^1_\xi},
\end{align*}
\begin{align*}
    &\eqref{kstein1.11}+\eqref{kstein1.12}+\|\eqref{kstein1.3}\|_{L^2}\\
    \lesssim &\big(\|\F^{-1}\nabla_\xi q(\xi-\eta,\eta)\Tilde{\psi}_{k_1}(\xi-\eta)\Tilde{\psi}_{k_2}(\eta)\Tilde{\psi}_{k}(\xi)\|_{L^1}
    +\|\nabla_\xi\psi_k\|_{L^\infty}\|\F^{-1}q(\xi-\eta,\eta)\Tilde{\psi}_{k_1}(\xi-\eta)\Tilde{\psi}_{k_2}(\eta)\Tilde{\psi}_{k}(\xi)\|_{L^1}+\\
    &+\|\F^{-1}\partial_{\eta_n}\big(\frac{\xi_n\partial_{\xi_l}\phi(\xi,\eta)}{c_m|\xi|^2}q(\xi-\eta,\eta)\big)\Tilde{\psi}_{k_1}(\xi-\eta)\Tilde{\psi}_{k_2}(\eta)\Tilde{\psi}_k(\xi)\|_{L^1}\big)\times\\
    &\times \min\{2^{-M/4}\|\hat{g}_{k_1}\|_{L^\infty_t([2^{M-1},2^M])H^1_\xi}\|\hat{f}_{n,k_2}\|_{L^\infty_t([2^{M-1},2^M])L^2_\xi},\\
    &\qquad\qquad\qquad\qquad 2^{M+3\min\{k,k_2\}/2}\|e^{ic_mt\la}g_{k_1}\|_{L^\infty_t([2^{M-1},2^M])L^2_x}\|e^{ic_nt\la}f_{n,k_2}\|_{L^\infty_t([2^{M-1},2^M])L^2_x}\}\\
    \lesssim & 2^{\e k_--k-2k_{2,+}+\gamma k_2}\min\{2^{-M/4+k_2/2},2^{M+3\min\{k,k_2\}/2+k_1+k_2/2}\}\e_1\|\hat{g}\|_{L^\infty_t([2^{M-1},2^M])H^1_\xi},
\end{align*}
and
\begin{align*}
    &\|\eqref{kstein1.5}\|_{L^2}\\
    \lesssim & \|\F^{-1}\frac{\xi_n\partial_{\xi_l}\phi(\xi,\eta)}{c_m|\xi|^2}\Tilde{\psi}_{k_1}(\xi-\eta)\Tilde{\psi}_{k_2}(\eta)\Tilde{\psi}_k(\xi)\|_{L^1}\times\\
    &\times \min\{2^{-M/4}\|\hat{g}_{k_1}\|_{L^\infty_t([2^{M-1},2^M])H^1_\xi}\|\nabla_\xi\hat{f}_{n,k_2}\|_{L^\infty_t([2^{M-1},2^M])L^2_\xi},\\
    &\qquad\qquad\qquad\qquad 2^{m+3\min\{k,k_2\}/2}\|e^{ic_mt\la}g_{k_1}\|_{L^\infty_t([2^{M-1},2^M])L^2_x}\|e^{ic_nt\la}\F^{-1}\nabla_\xi\hat{f}_{n,k_2}\|_{L^\infty_t([2^{M-1},2^M])L^2_x}\}\\
    \lesssim & 2^{\e k_--k-2k_{2,+}+\gamma k_2}\min\{2^{-M/4+k_2/2},2^{M+3\min\{k,k_2\}/2+k_1+k_2/2}\}\e_1\|\hat{g}\|_{L^\infty_t([2^{M-1},2^M])H^1_\xi}.
\end{align*}
Thus, we can conclude
\begin{align*}
    \|T_{0+iy}\hat{g}\|_{L^2}\lesssim  2^{-2k_{2,+}+\gamma k_2}\min\{2^{-M/4-k_2/2},2^{M+3\min\{k,k_2\}/2+k_2/2}\}\e_1\|\hat{g}\|_{L^\infty_t([2^{M-1},2^M])L^2_\xi},
\end{align*}
\begin{align*}
    \|T_{1+iy}\hat{g}\|_{L^2}
    \lesssim  &2^{-2k_{2,+}+\gamma k_2}\min\{2^{-M/4-k_2/2},2^{M+3\min\{k,k_2\}/2+k_2/2}\}\e_1\|\hat{g}\|_{L^\infty_t([2^{M-1},2^M])H^1_\xi}\\
    &+2^{-2k_{2,+}+\gamma k_2}\min\{2^{-M/4-k+k_2/2},2^{M+3\min\{k,k_2\}/2+k_1-k+k_2/2}\}\e_1\|\hat{g}\|_{L^\infty_t([2^{M-1},2^M])H^1_\xi}\\
    &+2^{-2k_{2,+}+\gamma k_2}\min\{2^{-M/4-k_1+k_2/2},2^{M+3\min\{k,k_2\}/2+k_2/2}\}\e_1\|\hat{g}\|_{L^\infty_t([2^{M-1},2^M])H^1_\xi}\\
    \lesssim &2^{-2k_{2,+}+\gamma k_2+k_1-k}\min\{2^{-M/4-k_2/2},2^{M+3\min\{k,k_2\}/2+k_2/2}\}\e_1\|\hat{g}\|_{L^\infty_t([2^{M-1},2^M])H^1_\xi},
\end{align*}
and
\begin{align*}
    \|T^q_{0+iy}\hat{g}\|_{L^2}\lesssim  2^{\e k_--2k_{2,+}+\gamma k_2}\min\{2^{-M/4-k_2/2},2^{M+3\min\{k,k_2\}/2+k_2/2}\}\e_1\|\hat{g}\|_{L^\infty_t([2^{M-1},2^M])L^2_\xi},
\end{align*}
\begin{align*}
    \|T^q_{1+iy}\hat{g}\|_{L^2}
    \lesssim &2^{\e k_--2k_{2,+}+\gamma k_2+k_1-k}\min\{2^{-M/4-k_2/2},2^{M+3\min\{k,k_2\}/2+k_2/2}\}\e_1\|\hat{g}\|_{L^\infty_t([2^{M-1},2^M])H^1_\xi}.
\end{align*}
By the variation of the Stein's interpolation theorem in Lemma \ref{si}, for any $\alpha\in(0,1)$,
\begin{align*}
    \|T_{\alpha}\hat{g}\|_{L^2}+\|T^q_{\alpha}\hat{g}\|_{L^2}\lesssim 2^{-2k_{2,+}+\gamma k_2+\alpha (k_1-k)}\min\{2^{-m/4-k_2/2},2^{m+3\min\{k,k_2\}/2+k_2/2}\}\e_1\|\hat{g}\|_{L^\infty_t([2^{M-1},2^M])H^\alpha_\xi}.
\end{align*}
Taking $g=\F^{-1}\nabla_\xi\hat{f}_m(t,\xi)\Tilde{\psi}_{k_1}(\xi)$ and using \eqref{sobolevnorms} finish the proof.
\end{proof}

Now, the only case left is $\|D^{\alpha}_\xi I^{M,1}_{k,k_1,k_2}\|_{L^2}$ for $(k_1,k_2)\in\chi^3_k$. We will start with proving a slightly more general result on the bilinear integrals when $|\xi|\sim|\eta|\sim|\xi-\eta|$. The proof will use Lemma \ref{si} to handle the fractional derivatives.
\begin{lem}
\label{chi3*}
Suppose $m_d:\R^3\times\R^3\rightarrow\R$ is homogeneous of degree $d$ and smooth on $\R^3\times\R^3\setminus\{(0,0)\}$. Given $(k_1,k_2)\in\chi^3_k$ and $\sup_{t\in[1,T]}\|f_l\|_{Z}\leq \e_1$, we have 
\begin{equation}
\label{chi3*1}
    \begin{aligned}
    &\bigg\|D^\alpha\int_{t_1}^{t_2}\int_{\R^3}e^{it\phi(\xi,\eta)}m_d(\xi,\eta)\hat{g}_{k_1}(t,\xi-\eta)\hat{f}_{n,k_2}(t,\eta)\psi_k(\xi)d\eta dt\bigg\|_{L^2}\\
    \lesssim & (1+2^{\alpha(M+2k)})2^{dk-2k_{+}+\gamma k}\min\{2^{-M/2-k+(1-\alpha)(\gamma M/8+\gamma k/4)},2^{M+2k}\}\e_1\|\hat{g}_{k_1}\|_{L^\infty_t([2^{M-1},2^M])H^\alpha_\xi}
    \end{aligned}
\end{equation}
and
\begin{equation}
\label{chi3*2}
    \begin{aligned}
    &\bigg\|D^\alpha\int_{t_1}^{t_2}\int_{\R^3}e^{it\phi(\xi,\eta)}m_d(\xi,\eta)\hat{f}_{m,k_1}(t,\xi-\eta)\hat{g}_{k_2}(t,\eta)\psi_k(\xi)d\eta dt\bigg\|_{L^2}\\
    \lesssim & (1+2^{\alpha(M+2k)})2^{dk-2k_{+}+\gamma k}\min\{2^{-M/2-k+(1-\alpha)(\gamma M/8+\gamma k/4)},2^{M+2k}\}\e_1\|\hat{g}_{k_2}\|_{L^\infty_t([2^{M-1},2^M])H^\alpha_\xi},
    \end{aligned}
\end{equation}
for any $t_1,t_2\in[2^{M-1},2^M]$.
\end{lem}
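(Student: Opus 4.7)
The natural approach is to apply the Stein-type interpolation of Lemma \ref{si}(b) to the family of operators on $\{z\in\C:0\leq\Re(z)\leq 1\}$ defined by
\[
T_z\hat{h}(\xi) = D^z_\xi \int_{t_1}^{t_2}\int_{\R^3} e^{it\phi(\xi,\eta)}m_d(\xi,\eta)\hat{h}_{k_1}(t,\xi-\eta)\hat{f}_{n,k_2}(t,\eta)\psi_k(\xi)\,d\eta\,dt.
\]
The desired estimate \eqref{chi3*1} becomes a bound on $\|T_\alpha\hat{g}\|_{L^2}$ in terms of $\|\hat{g}\|_{H^\alpha_\xi}$, which after establishing endpoint bounds at $\Re(z)=0$ and $\Re(z)=1$ follows by interpolating with $s_1=0$, $s_2=1$, and $\theta=\alpha$. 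The symmetric statement \eqref{chi3*2} is handled by the analogous family in which $\hat{g}_{k_2}$ replaces $\hat{g}_{k_1}$; after the change of variables $\zeta=\xi-\eta$ (which preserves the homogeneity degree $d$ of the symbol on the diagonal region $\chi^3_k$), the same analysis applies verbatim.

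At the endpoint $\Re(z)=0$ I would establish two competing bounds. The dispersive bound applies Lemma \ref{bilinear} in the $L^2\times L^\infty$ pairing, uses Lemma \ref{chi,eta} (or Lemma \ref{chi+eta}) to extract the factor $2^{dk}$ from $m_d$ (exploiting $|\xi|\sim|\xi-\eta|\sim|\eta|\sim 2^k$ on $\chi^3_k$), bounds $\|e^{ic_nt\la}f_{n,k_2}\|_{L^\infty}$ via \eqref{linfinity}, and absorbs a $2^M$ from the time integral; this produces the branch $2^{-M/2-k+\gamma M/8+\gamma k/4}$. The alternative bound applies Lemma \ref{dualitycomp} directly to obtain $2^{3k/2}\|\hat{h}_{k_1}\|_{L^2}\|f_{n,k_2}\|_{L^2}$ without the dispersive gain, producing the branch $2^{M+2k}$ after time integration. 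Taking the minimum gives the $z=0$ bound with $\hat{h}$ in $L^2_\xi$. At $\Re(z)=1$, using that $T_0\hat{h}$ is frequency-localized to $|\xi|\sim 2^k$, the operator $D^1_\xi$ costs at most a factor $2^k$, and differentiating inside the integral produces an additional $it\nabla_\xi\phi$ term of size $2^{M+k}$; together these give a $z=1$ bound of the form $M_1=(1+2^{M+2k})M_0$ with $\hat{h}\in H^1_\xi$. Crucially, since $|\nabla_\eta\phi|$ and $|\phi|$ may both vanish within $\chi^3_k$, no integration-by-parts gain is available and this loss is unavoidable.

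Interpolating the dispersive branch at $z=0$ against its $z=1$ counterpart, and separately the alternative branch against its $z=1$ counterpart, and then taking the minimum of the two interpolated bounds, produces the stated factor $(1+2^{\alpha(M+2k)})\cdot 2^{dk-2k_++\gamma k}\cdot\min\{2^{-M/2-k+(1-\alpha)(\gamma M/8+\gamma k/4)},2^{M+2k}\}\e_1$; the $(1-\alpha)$ weighting of the dispersive gain arises because that gain is present at $z=0$ but not at $z=1$, while the $(1+2^{\alpha(M+2k)})$ prefactor comes from $\alpha$-power interpolation of the endpoint loss. The principal obstacle is precisely the resonant character of $\chi^3_k$: because all three dyadic scales coincide, no structure allowing integration by parts in $\eta$ or $t$ is available, and the $2^{M+2k}$ growth must be tolerated directly. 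This is ultimately acceptable because the constraint $\alpha<1/2$ from \eqref{constant conditions2} keeps the interpolated growth $2^{\alpha(M+2k)}$ manageable when weighed against the other decay factors in the subsequent applications of this lemma.
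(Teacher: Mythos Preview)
Your proposal is essentially correct and follows the same interpolation strategy as the paper. One point deserves clarification: the claim that ``$D^1_\xi$ costs at most a factor $2^k$'' because $T_0\hat h$ is $\xi$-localized is not the right mechanism---$\xi$-localization controls multiplication by $|\xi|$, not differentiation in $\xi$. What actually produces the factor $(1+2^{M+2k})$ at $\Re(z)=1$ is this: when $\nabla_\xi$ hits the phase you pick up $it\nabla_\xi\phi\sim 2^{M+k}$, and to express the resulting bound against $\|\hat h\|_{H^1}$ rather than $\|\hat h\|_{L^2}$ you must invoke \eqref{addder}, i.e.\ $\|\hat h_{k_1}\|_{L^2}\lesssim 2^{k_1}\|\nabla_\xi\hat h_{k_1}\|_{L^2}$, costing a further $2^k$; the product is your $2^{M+2k}$. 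The remaining terms (derivative landing on $m_d$, on $\psi_k$, or on $\hat h_{k_1}$) contribute net factors $\lesssim 1$ after the same conversion. The paper carries this out using an $L^4\times L^4$ estimate (via \eqref{l4}) for the phase and multiplier terms and $L^2\times L^\infty$ (via \eqref{linfinity2}) for the $\nabla\hat h$ term, then splits into the regimes $M\le -2k$ and $M>-2k$ before interpolating---precisely what your prefactor $(1+2^{\alpha(M+2k)})$ encodes. Your change-of-variables reduction of \eqref{chi3*2} to the form of \eqref{chi3*1} is a clean alternative to the paper's parallel treatment of both operators $T^1_z,T^2_z$ at once.
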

\begin{proof}
Consider the following two families of operators on $\{z\in\C:0\leq \Re(z)\leq 1\}$, 
\begin{align*}
    T^1_{z} \hat{g} =D^z\int_{t_1}^{t_2}\int_{\R^3}e^{it\phi(\xi,\eta)}m_d(\xi,\eta)\hat{g}_{k_1}(t,\xi-\eta)\hat{f}_{n,k_2}(t,\eta)\psi_k(\xi)d\eta dt
\end{align*}
and
\begin{align*}
    T^2_{z} \hat{g} =D^z\int_{t_1}^{t_2}\int_{\R^3}e^{it\phi(\xi,\eta)}m_d(\xi,\eta)\hat{f}_{m,k_1}(t,\xi-\eta)\hat{g}_{k_2}(t,\eta)\psi_k(\xi)d\eta dt,
\end{align*}
which correspond to \eqref{chi3*1} and \eqref{chi3*2}.\\
We start with bounding $T^1_{0+iy}$ and $T^2_{0+iy}$.
Using the bilinear estimate $L^2\times L^\infty\rightarrow L^2$, Lemma \ref{chi,eta}, Lemma \ref{dualitycomp}, \eqref{l2}, and \eqref{linfinity}, we get
\begin{align*}
    &\|T^1_{0+iy} \hat{g}\|_{L^2}\\
    =&\bigg\|\int_{t_1}^{t_2}\int_{\R^3}e^{it\phi(\xi,\eta)}m_d(\xi,\eta)\hat{g}_{k_1}(t,\xi-\eta)\hat{f}_{n,k_2}(t,\eta)\psi_k(\xi)d\eta dt\bigg\|_{L^2}\\
    \lesssim &  2^{M}\|\F^{-1}m_d(\xi,\eta)\Tilde{\psi}_k(\xi)\Tilde{\psi}_{k_1}(\xi-\eta)\Tilde{\psi}_{k_2}(\eta)\|_{L^1}\sup_{t\in[2^{M-1},2^M]}\min\{ \|e^{ic_mt\la}g_{k_1}\|_{L^2_x}\|e^{ic_nt\la}f_{n,k_2}\|_{L^\infty_x},\\
    &\qquad\qquad\qquad\qquad 2^{3\min\{k,k_2\}/2}\|e^{ic_mt\la}g_{k_1}\|_{L^2_x}\|e^{ic_nt\la}f_{n,k_2}\|_{L^2_x}\}\\
    \lesssim & 2^{M+dk}\sup_{t\in[2^{M-1},2^M]}\min\{ \|e^{ic_nt\la}f_{n,k_2}\|_{L^\infty_x},2^{3\min\{k,k_2\}/2}\|f_{n,k_2}\|_{L^2_x}\}\|g_{k_1}\|_{L^2_x}\\
    \lesssim & 2^{M+dk-2k_{2,+}+\gamma k_2}\min\{2^{-3M/2+\gamma M/8-k_2+\gamma k_2/4},2^{3\min\{k,k_2\}/2+k_2/2}\}\e_1\|g_{k_1}\|_{L^\infty_t([2^{M-1},2^M])L^2_x}\\
    \lesssim & 2^{dk-2k_{+}+\gamma k}\min\{2^{-M/2+\gamma M/8-k+\gamma k/4},2^{M+2k}\}\e_1\|g\|_{L^\infty_t([2^{M-1},2^M])L^2_x},
\end{align*}
since $\chi^3_k=\{|k_1-k_2|\leq a, |k-k_1|\leq a+2\}$. Similarly, we also have
\begin{align*}
    &\|T^2_{0+iy} \hat{g}\|_{L^2}\\
    =&\bigg\|\int_{t_1}^{t_2}\int_{\R^3}e^{it\phi(\xi,\eta)}m_d(\xi,\eta)\hat{f}_{m,k_1}(t,\xi-\eta)\hat{g}_{k_2}(t,\eta)\psi_k(\xi)d\eta dt\bigg\|_{L^2}\\
    \lesssim &  2^{M}\|\F^{-1}m_d(\xi,\eta)\Tilde{\psi}_k(\xi)\Tilde{\psi}_{k_1}(\xi-\eta)\Tilde{\psi}_{k_2}(\eta)\|_{L^1}
    \sup_{t\in[2^{M-1},2^M]}\min\{ \|e^{ic_mt\la}f_{m,k_1}\|_{L^2_x}\|e^{ic_nt\la}g_{k_2}\|_{L^\infty_x},\\
    &\qquad\qquad\qquad\qquad 2^{3\min\{k,k_2\}/2}\|e^{ic_mt\la}f_{m,k_1}\|_{L^2_x}\|e^{ic_nt\la}g_{k_2}\|_{L^2_x}\}\\
    \lesssim & 2^{M+dk}\sup_{t\in[2^{M-1},2^M]}\min\{ \|e^{ic_mt\la}f_{m,k_1}\|_{L^\infty_x},2^{3\min\{k,k_2\}/2}\|f_{m,k_1}\|_{L^2_x}\}\|g_{k_2}\|_{L^2_x}\\
    \lesssim & 2^{M+dk-2k_{1,+}+\gamma k_1}\min\{2^{-3M/2+\gamma M/8-k_1+\gamma k_1/4},2^{3\min\{k,k_2\}/2+k_1/2}\}\e_1\|g_{k_2}\|_{L^\infty_t([2^{M-1},2^M])L^2_x}\\
    \lesssim & 2^{dk-2k_{+}+\gamma k}\min\{2^{-M/2+\gamma M/8-k+\gamma k/4},2^{M+2k}\}\e_1\|g\|_{L^\infty_t([2^{M-1},2^M])L^2_x}.
\end{align*}
Next, we observe
\begin{align}
    \|T^1_{1+iy} \hat{g}\|_{L^2}
    = &\bigg\|\partial_{\xi_m}\int_{t_1}^{t_2}\int_{\R^3}e^{it\phi(\xi,\eta)}m_d(\xi,\eta)\hat{g}_{k_1}(t,\xi-\eta)\hat{f}_{n,k_2}(t,\eta)\psi_k(\xi)d\eta dt \bigg\|_{L^2}\nonumber\\
    \leq &\bigg\|\int_{t_1}^{t_2}\int_{\R^3}e^{it\phi(\xi,\eta)}m_d(\xi,\eta)\partial_{\xi_m}\hat{g}_{k_1}(t,\xi-\eta)\hat{f}_{n,k_2}(t,\eta)\psi_k(\xi)d\eta dt\bigg\|_{L^2}\label{chi3*.1}\\
    &+ \bigg\|\int_{t_1}^{t_2}\int_{\R^3}e^{it\phi(\xi,\eta)}m_d(\xi,\eta)\hat{g}_{k_1}(t,\xi-\eta)\hat{f}_{n,k_2}(t,\eta)\partial_{\xi_m}\psi_k(\xi)d\eta dt\bigg\|_{L^2}\label{chi3*.12}\\
    & + \bigg\|\int_{t_1}^{t_2}\int_{\R^3}e^{it\phi(\xi,\eta)}it\partial_{\xi_m}\phi(\xi,\eta)m_d(\xi,\eta)\hat{g}_{k_1}(t,\xi-\eta)\hat{f}_{n,k_2}(t,\eta)\psi_k(\xi)d\eta dt\bigg\|_{L^2}\label{chi3*.2}
\end{align}
and
\begin{align}
    \|T^2_{1+iy} \hat{g}\|_{L^2}
    = &\bigg\|\partial_{\xi_m}\int_{t_1}^{t_2}\int_{\R^3}e^{it\phi(\xi,\eta)}m_d(\xi,\eta)\hat{f}_{m,k_1}(t,\xi-\eta)\hat{g}_{k_2}(t,\eta)\psi_k(\xi)d\eta dt\bigg\|_{L^2}\nonumber\\
    \leq &\bigg\|\int_{t_1}^{t_2}\int_{\R^3}e^{it\phi(\xi,\eta)}m_d(\xi,\eta)\partial_{\xi_m}\hat{f}_{m,k_1}(t,\xi-\eta)\hat{g}_{k_2}(t,\eta)\psi_k(\xi)d\eta dt\bigg\|_{L^2}\label{chi3*.1-2}\\
    &+ \bigg\|\int_{t_1}^{t_2}\int_{\R^3}e^{it\phi(\xi,\eta)}m_d(\xi,\eta)\hat{f}_{m,k_1}(t,\xi-\eta)\hat{g}_{k_2}(t,\eta)\partial_{\xi_m}\psi_k(\xi)d\eta dt\bigg\|_{L^2}\label{chi3*.12-2}\\
    & + \bigg\|\int_{t_1}^{t_2}\int_{\R^3}e^{it\phi(\xi,\eta)}it\partial_{\xi_m}\phi(\xi,\eta)m_d(\xi,\eta)\hat{f}_{m,k_1}(t,\xi-\eta)\hat{g}_{k_2}(t,\eta)\psi_k(\xi)d\eta dt\bigg\|_{L^2}\label{chi3*.2-2}.
\end{align}
By the $L^2\times L^\infty\rightarrow L^2$ bilinear estimate, Lemma \ref{chi,eta}, Lemma \ref{dualitycomp}, \eqref{l2}, \eqref{addder}, \eqref{l2first}, and \eqref{linfinity2}, we get
\begin{align*}
    &\eqref{chi3*.1} + \eqref{chi3*.1-2}\\
    \lesssim 
    &2^{M}\|\F^{-1}m_d(\xi,\eta)\Tilde{\psi}_{k_1}(\xi-\eta)\Tilde{\psi}_{k_2}(\eta)\Tilde{\psi}_{k}(\xi)\|_{L^1} \sup_{t\in[2^{M-1},2^M]}\big(\min\{\|e^{ic_mt\la}\F^{-1}\nabla_\xi\hat{g}_{k_1}\|_{L^2_x}\|e^{ic_nt\la}f_{n,k_2}\|_{L^\infty_x},\\
    &\qquad\qquad\qquad\qquad 2^{3\min\{k,k_2\}/2}\|e^{ic_mt\la}\F^{-1}\nabla_\xi\hat{g}_{k_1}\|_{L^2_x}\|e^{ic_nt\la}f_{n,k_2}\|_{L^2_x}\}+\\
    &+\min\{\|e^{ic_mt\la}\F^{-1}\nabla_\xi\hat{f}_{m,k_1}\|_{L^2_x}\|e^{ic_nt\la}g_{k_2}\|_{L^\infty_x},2^{3\min\{k,k_2\}/2}\|e^{ic_mt\la}\F^{-1}\nabla_\xi\hat{f}_{m,k_1}\|_{L^2_x}\|e^{ic_nt\la}g_{k_2}\|_{L^2_x}\}\big)\\
    \lesssim & 2^{M+dk}\sup_{t\in[2^{M-1},2^M]} \big(\min\{\|\nabla_\xi\hat{g}_{k_1}\|_{L^2_\xi}2^{-M/2+3k_2/2}\|\nabla_\xi\hat{f}_{n,k_2}\|_{L^2_\xi},2^{3\min\{k,k_2\}/2}\|\nabla_\xi\hat{g}_{k_1}\|_{L^2_\xi}\|f_{n,k_2}\|_{L^2_x}+\\
    &+\min\{\|\nabla_\xi\hat{f}_{k_1}\|_{L^2_\xi}2^{-M/2+3k_2/2}\|\nabla_\xi\hat{g}_{k_2}\|_{L^2_\xi},2^{3\min\{k,k_2\}/2+k_2}\|\nabla_\xi\hat{f}_{m,k_1}\|_{L^2_\xi}\|\nabla_\xi\hat{g}_{k_2}\|_{L^2_\xi}\}\big)\\
    \lesssim & 2^{M+dk}\sup_{t\in[2^{M-1},2^M]} \big(\min\{2^{-M/2+3k_2/2}\|\nabla_\xi\hat{f}_{n,k_2}\|_{L^2_\xi},2^{3\min\{k,k_2\}/2}\|f_{n,k_2}\|_{L^2_x}\}\|\nabla_\xi\hat{g}_{k_1}\|_{L^2_\xi}+\\
    &+\min\{\|\nabla_\xi\hat{f}_{m,k_1}\|_{L^2_\xi}2^{-M/2+3k_2/2},2^{3\min\{k,k_2\}/2+k_2}\|\nabla_\xi\hat{f}_{m,k_1}\|_{L^2_\xi}\}\|\nabla_\xi\hat{g}_{k_2}\|_{L^2_\xi}\big)\\
    \lesssim & 2^{M+dk} \sup_{t\in[2^{M-1},2^M]}\big(2^{-2k_{2,+}+\gamma k_2}\min\{2^{-M/2+k_2},2^{3\min\{k,k_2\}/2+k_2/2}\}\e_1\|\nabla_\xi\hat{g}_{k_1}\|_{L^2_\xi}+\\
    &+2^{-2k_{1,+}+\gamma k_1}\min\{2^{-M/2+3k_2/2-k_1/2},2^{3\min\{k,k_2\}/2+k_2-k_1/2}\}\e_1\|\nabla_\xi\hat{g}_{k_2}\|_{L^2_\xi}\big)\\
    \lesssim & 2^{dk-2k_{+}+\gamma k}\min\{2^{M/2+k},2^{M+2k}\}\e_1\|\hat{g}\|_{L^\infty_t([2^{m-2},2^M])H^1_\xi},
\end{align*}
and the $L^4\times L^4\rightarrow L^2$ bilinear estimate, Lemma \ref{dualitycomp}, \eqref{addder}, and \eqref{l4} imply
\begin{align*}
    &\eqref{chi3*.12}+\eqref{chi3*.2}+\eqref{chi3*.12-2}+\eqref{chi3*.2-2}\\
    \lesssim &2^{M}\big(\|\nabla_\xi\psi_k\|_{L^\infty}\|\F^{-1}m_d(\xi,\eta)\Tilde{\psi}_{k}(\xi)\Tilde{\psi}_{k_1}(\xi-\eta)\Tilde{\psi}_{k_2}(\eta)\|_{L^1}+\\
    &\qquad\qquad\qquad\qquad+2^{M}\|\F^{-1}\partial_{\xi_m}\phi(\xi,\eta)m_d(\xi,\eta)\Tilde{\psi}_{k}(\xi)\Tilde{\psi}_{k_1}(\xi-\eta)\Tilde{\psi}_{k_2}(\eta)\|_{L^1}\big)\times\\
    &\times \sup_{t\in[2^{M-1},2^M]}\big(\min\{\|e^{ic_mt\la}f_{m,k_1}\|_{L^4_x}\|e^{ic_nt\la}g_{k_2}\|_{L^4_x},2^{3\min\{k,k_2\}/2}\|e^{ic_mt\la}f_{m,k_1}\|_{L^2_x}\|e^{ic_nt\la}g_{k_2}\|_{L^2_x}\}+\\
    &\qquad\qquad\qquad\qquad+\min\{\|e^{ic_mt\la}g_{k_1}\|_{L^4_x}\|e^{ic_nt\la}f_{n,k_2}\|_{L^4_x},2^{3\min\{k,k_2\}/2}\|e^{ic_mt\la}g_{k_1}\|_{L^2_x}\|e^{ic_nt\la}f_{n,k_2}\|_{L^2_x}\}\big)\\
    \lesssim & (2^{M-k+dk}+2^{2M+dk+k})\times\\
    &\times\sup_{t\in[2^{M-1},2^M]}\big(\min\{\|e^{ic_mt\la}f_{m,k_1}\|_{L^4_x}\|e^{ic_nt\la}g_{k_2}\|_{L^4_x},2^{3\min\{k,k_2\}/2+k_2}\|f_{m,k_1}\|_{L^2_x}\|\nabla_\xi\hat{g}_{k_2}\|_{L^2_\xi}\}+\\
    &\qquad\qquad\qquad\qquad+\min\{\|e^{ic_mt\la}g_{k_1}\|_{L^4_x}\|e^{ic_nt\la}f_{n,k_2}\|_{L^4_x},2^{3\min\{k,k_2\}/2+k_1}\|\nabla_\xi\hat{g}_{k_1}\|_{L^2_\xi}\|f_{n,k_2}\|_{L^2_x}\}\big)\\
    \lesssim & (2^{M-k+dk}+2^{2M+dk+k})\times\\
    &\times \sup_{t\in[2^{M-1},2^M]}\big(2^{-2k_{1,+}+\gamma k_1}\min\{2^{-3M/2-k_1/4+k_2/4},2^{3\min\{k,k_2\}/2+k_2+k_1/2}\}\e_1\|\nabla_\xi\hat{g}_{k_2}\|_{L^2_\xi}+\\
    &\qquad\qquad\qquad\qquad+2^{-2k_{2,+}+\gamma k_2}\min\{2^{-3M/2+k_1/4-k_2/2},2^{3\min\{k,k_2\}/2+k_1+k_2/2}\}\e_1\|\nabla_\xi\hat{g}_{k_1}\|_{L^2_\xi}\big)\\
    \lesssim & (1+2^{M+2k})2^{dk-2k_++\gamma k}\min\{2^{-M/2-k},2^{M+2k}\}\e_1\|\hat{g}\|_{L^\infty_t([2^{M-1},2^M])H^1_\xi}.
\end{align*}
When $M\leq -2k$, we have $2^{M+2k}\leq 1$. Thus,
\begin{align*}
    \|T^1_{0+iy}\hat{g}\|_{L^2}+\|T^2_{0+iy}\hat{g}\|_{L^2}\lesssim  2^{dk-2k_{+}+\gamma k}\min\{2^{-M/2+\gamma M/8-k+\gamma k/4},2^{M+2k}\}\e_1\|\hat{g}\|_{L^\infty_t([2^{M-1},2^M])L^2_\xi}
\end{align*}
and
\begin{align*}
    \|T^1_{1+iy}\hat{g}\|_{L^2}+\|T^2_{1+iy}\hat{g}\|_{L^2}
    \lesssim  &2^{dk-2k_++\gamma k}\min\{2^{-M/2-k},2^{M+2k}\}\e_1\|\hat{g}\|_{L^\infty_t([2^{M-1},2^M])H^1_\xi}.
\end{align*}
By the interpolation result in Lemma \ref{si}, for any $\alpha\in(0,1)$, we get
\begin{align*}
    \|T^1_{\alpha}\hat{g}\|_{L^2}+\|T^2_{\alpha}\hat{g}\|_{L^2}\lesssim 2^{dk-2k_{+}+\gamma k}\min\{2^{-M/2-k+(1-\alpha)(\gamma M/8+\gamma k/4)},2^{M+k}\}\e_1\|\hat{g}\|_{L^\infty_t([2^{M-1},2^M])H^\alpha_\xi}.
\end{align*}
In the other case when $M>-2k$, we have
\begin{align*}
    \|T^1_{0+iy}\hat{g}\|_{L^2}+\|T^2_{0+iy}\hat{g}\|_{L^2}\lesssim  2^{dk-2k_{+}+\gamma k}\min\{2^{-M/2+\gamma M/8-k+\gamma k/4},2^{M+2k}\}\e_1\|\hat{g}\|_{L^\infty_t([2^{M-1},2^M])L^2_\xi}
\end{align*}
and
\begin{align*}
    \|T^1_{1+iy}\hat{g}\|_{L^2}+\|T^2_{1+iy}\hat{g}\|_{L^2}
    \lesssim  &2^{M+2k+dk-2k_++\gamma k}\min\{2^{-M/2-k},2^{M+2k}\}\e_1\|\hat{g}\|_{L^\infty_t([2^{M-1},2^M])H^1_\xi}.
\end{align*}
Lemma \ref{si} implies that for any $\alpha\in(0,1)$,
\begin{align*}
    \|T^1_{\alpha}\hat{g}\|_{L^2}+\|T^2_{\alpha}\hat{g}\|_{L^2}\lesssim 2^{\alpha(M+2k)+dk-2k_{+}+\gamma k}\min\{2^{-M/2-k+(1-\alpha)(\gamma M/8+\gamma k/4)},2^{M+2k}\}\e_1\|\hat{g}\|_{L^\infty_t([2^{M-1},2^M])H^\alpha_\xi}.
\end{align*}
Therefore,
\begin{align*}
    &\bigg\|D^\alpha\int_{t_1}^{t_2}\int_{\R^3}e^{it\phi(\xi,\eta)}m_d(\xi,\eta)\hat{g}_{k_1}(t,\xi-\eta)\hat{f}_{n,k_2}(t,\eta)\psi_k(\xi)d\eta dt\bigg\|_{L^2}
    =\|T^1_{\alpha}(\hat{g}\Tilde{\psi}_{k_1})\|_{L^2}\\
    \lesssim & (1+2^{\alpha(M+2k)})2^{dk-2k_{+}+\gamma k}\min\{2^{-M/2-k+(1-\alpha)(\gamma M/8+\gamma k/4)},2^{M+2k}\}\e_1\|\hat{g}_{k_1}\|_{L^\infty_t([2^{M-1},2^M])H^\alpha_\xi}
\end{align*}
and
\begin{align*}
    &\bigg\|D^\alpha\int_{t_1}^{t_2}\int_{\R^3}e^{it\phi(\xi,\eta)}m_d(\xi,\eta)\hat{f}_{m,k_1}(t,\xi-\eta)\hat{g}_{k_2}(t,\eta)\psi_k(\xi)d\eta dt\bigg\|_{L^2}
    =\|T^2_{\alpha}(\hat{g}\Tilde{\psi}_{k_2})\|_{L^2}\\
    \lesssim & (1+2^{\alpha(M+2k)})2^{dk-2k_{+}+\gamma k}\min\{2^{-M/2-k+(1-\alpha)(\gamma M/8+\gamma k/4)},2^{M+2k}\}\e_1\|\hat{g}_{k_2}\|_{L^\infty_t([2^{M-1},2^M])H^\alpha_\xi}.
\end{align*}
\end{proof}
Observe $\|D^\alpha I^{M,1}_{k,k_1,k_2}\|_{L^2}$ for $(k_1,k_2)\in\chi^3_k$ is special case of \eqref{chi3*1} in Lemma \ref{chi3*} with $\hat{g}=\nabla_\xi\hat{f}_m$, $d=0$. Employing \eqref{sobolevnorms}, we have
\begin{align*}
    &\|D^\alpha I^{M,1}_{k,k_1,k_2}\|_{L^2}\\
    \lesssim &(1+2^{\alpha(M+2k)})2^{-2k_{+}+\gamma k}\min\{2^{-M/2-k+(1-\alpha)(\gamma M/8+\gamma k/4)},2^{M+2k}\}\e_1\|\hat{f}_{k_1}\|_{L^\infty_t([2^{M-1},2^M])H^{1+\alpha}_\xi}\\
    \lesssim &(1+2^{\alpha(M+2k)})2^{-2k_{+}+\gamma k-2k_{1,+}+\gamma k_1-k_1/2-\alpha k_1}\min\{2^{-M/2-k+(1-\alpha)(\gamma M/8+\gamma k/4)},2^{M+2k}\}\e_1^2.
\end{align*}
Hence, 
\begin{align*}
    &\sum_{1\leq M\leq \log T}\sup_{2^{M-1}\leq t_1\leq t_2\leq 2^M}2^{2k_+-\gamma k+k/2+\alpha k}\sum_{c_m+c_n\neq 0}A_{lmn}\sum_{(k_1,k_2)\in\chi^3_k}\|D^\alpha I^{M,1}_{k,k_1,k_2}\|_{L^2}\\
    \lesssim &\sum_{1\leq M\leq \log T}(1+2^{\alpha(M+2k)})2^{-2k_{+}+\gamma k}\min\{2^{-M/2-k+(1-\alpha)(\gamma M/8+\gamma k/4)},2^{M+2k}\}\e_1^2\\
    \lesssim &\sum_{M\leq -2k}2^{\gamma k+M+2k}\e_1^2+ \sum_{-2k<M\leq \log T}2^{\alpha(M+2k)-2k_{+}+\gamma k-M/2-k+(1-\alpha)(\gamma M/8+\gamma k/4)}\e_1^2\\
    \lesssim & \e_1^2,
\end{align*}
given $\alpha+(1-\alpha)\gamma/8<1/2$, i.e. $\alpha<1/2-\gamma/(16-2\gamma)$.\\
Combining this with the result in Lemma \ref{jm1} and Lemma \ref{sp1}, we have
\begin{align*}
    &\sum_{1\leq M\leq \log T}\sup_{2^{M-1}\leq t_1\leq t_2\leq 2^M}2^{2k_+-\gamma k+k/2+\alpha k}\bigg(\sum_{c_m+c_n\neq 0}A_{lmn}\sum_{(k_1,k_2)\in\chi^3_k}\|D^\alpha I^{M,1}_{k,k_1,k_2}\|_{L^2}\\
    &+\big(\sum_{\substack{c_m+c_n\neq 0\\c_m\neq c_l}}+\sum_{\substack{c_m+c_n\neq 0\\c_m=c_l}}\big)A_{lmn}\sum_{(k_1,k_2)\in\chi^1_k}\|D^\alpha I^{M,1}_{k,k_1,k_2}\|_{L^2}
    +\sum_{c_m+c_n\neq 0}A_{lmn}\sum_{(k_1,k_2)\in\chi^2_k}\|D^\alpha I^{M,1}_{k,k_1,k_2}\|_{L^2}\\
    &+\big(\sum_{\substack{c_m+c_n= 0\\c_m\neq c_l}}+\sum_{\substack{c_m+c_n= 0\\c_m=c_l}}\big)A_{lmn}\sum_{(k_1,k_2)\in\chi^1_k}\|D^\alpha J^{M,1}_{k,k_1,k_2}\|_{L^2}
    +\sum_{c_m+c_n= 0}A_{lmn}\sum_{(k_1,k_2)\in\chi^2_k\cup\chi^3_k}\|D^\alpha J^{M,1}_{k,k_1,k_2}\|_{L^2}\\
    \lesssim & \e_1^2+\sum_{1\leq M\leq \log T}\sum_{(k_1,k_2)\in\chi_k^1\cup\chi_k^2\cup\chi_k^3} 2^{-\gamma k+2k_+-2k_{1,+}+\gamma k_1-2k_{2,+}+\gamma k_2+k/2-k_1/2}\times\\
    &\qquad\qquad\qquad\qquad\qquad\qquad\times\min\{2^{-M/4 -k_2/2},2^{M+3\min\{k,k_2\}/2 +k_2/2}\}\e_1^2\\
    &+\sum_{1\leq M\leq \min\{\log T,-2k\}}\sum_{k_2\leq k} 2^{-\gamma k_--2k_{2,+}+\gamma k_2}\min\{2^{-M/2-k/2-k_2/2},2^{M+2k_2+\alpha k-\alpha k_2}\}\e_1^2\\
    &+\sum_{-2k \leq M\leq \log T}\sum_{k_2\leq k} 2^{-\gamma k_--2k_{2,+}+\gamma k_2+2\alpha k}\min\{2^{-M/2+\gamma M/4+\gamma k/2-3k/2-k_2/2},2^{M-k/2+3k_2/2}\}\e_1^2\\
    \lesssim &\e_1^2,
\end{align*}
which proves \eqref{im1wts}.

\subsection{$D^{\alpha}I^{M,2}_{k,k_1,k_2}$ and $D^{\alpha}J^{M,2}_{k,k_1,k_2}$}
\label{2}
In the current section, we examine the remaining terms, \eqref{j1} and \eqref{j2}. 

For $D^{\alpha}I^{M,2}_{k,k_1,k_2}$, we handle the cases where $(k_1,k_2)\in\chi^1_k$ and $c_m\neq c_l$ in Section \ref{2-2}. In Section \ref{2-0}, we further investigate the cases where $(k_1,k_2)\in\chi^1_k$, but with the coefficients $c_m=c_l$, or the cases where $(k_1,k_2)\in\chi^2_k$. Lastly, the cases where $(k_1,k_2)\in\chi^3_k$ are addressed in Section \ref{2-3}.

As for $D^{\alpha}J^{M,2}_{k,k_1,k_2}$, we look at the cases where $(k_1,k_2)\in\chi^1_k$ and $c_m\neq c_l$ in Section \ref{2-2}, while all the remaining cases, $(k_1,k_2)\in\chi^1_k$ and $c_m=c_l$ or $(k_1,k_2)\in\chi^2_k\cup\chi^3_k$, are estimated in Section \ref{2-1}.
\subsubsection{$D^{\alpha}I^{M,2}_{k,k_1,k_2}$ where $(k_1,k_2)\in\chi^2_k$}
\label{2-0}
Recall the lower bound $|\nabla_\eta\phi(\xi,\eta)|\gtrsim 2^{k_1}$ established in $\eqref{xi-2eta}$, when $(k_1,k_2)\in\chi^1_k\cup\chi^2_k$. We will frequently employ the following identity throughout this section,
$$\sum_{m}\frac{\partial_{\eta_m}\phi(\xi,\eta)}{it|\nabla_\eta\phi(\xi,\eta)|^2}\partial_{\eta_m} e^{it\phi(\xi,\eta)}=e^{it\phi(\xi,\eta)}.$$ 

To estimate the $\alpha$ derivative, we will utilize the interpolation result from Lemma \ref{si}. However, instead of immediately defining a single family of operators for $I^{M,2}_{k,k_1,k_2}$ to apply the lemma to, we will take a preliminary step. We will use the aforementioned identity to expand and split $D^{\alpha}I^{M,2}_{k,k_1,k_2}$ in the following way,
\begin{align}
    D^{\alpha}I^{M,2}_{k,k_1,k_2}
    =&D^\alpha\int_{t_1}^{t_2}\int_{\R^3}e^{it\phi(\xi,\eta)}it\partial_{\xi_l}\phi(\xi,\eta)\hat{f}_{m,k_1}(t,\xi-\eta)\hat{f}_{n,k_2}(t,\eta)\psi_k(\xi)d\eta dt\nonumber\\
    =&-D^\alpha\int_{t_1}^{t_2}\int_{\R^3}e^{it\phi(\xi,\eta)}\partial_{\eta_m}\frac{\partial_{\xi_l}\phi(\xi,\eta)\partial_{\eta_m}\phi(\xi,\eta)}{|\nabla_\eta\phi(\xi,\eta)|^2}\hat{f}_{m,k_1}(t,\xi-\eta)\hat{f}_{n,k_2}(t,\eta)\psi_k(\xi)d\eta dt\label{im2.1}\\
    &-D^\alpha\int_{t_1}^{t_2}\int_{\R^3}e^{it\phi(\xi,\eta)}\frac{\partial_{\xi_l}\phi(\xi,\eta)\partial_{\eta_m}\phi(\xi,\eta)}{|\nabla_\eta\phi(\xi,\eta)|^2}\partial_{\eta_m}\hat{f}_{m,k_1}(t,\xi-\eta)\hat{f}_{n,k_2}(t,\eta)\psi_k(\xi)d\eta dt\label{im2.3}\\
    &-D^\alpha\int_{t_1}^{t_2}\int_{\R^3}e^{it\phi(\xi,\eta)}\frac{\partial_{\xi_l}\phi(\xi,\eta)\partial_{\eta_m}\phi(\xi,\eta)}{|\nabla_\eta\phi(\xi,\eta)|^2}\hat{f}_{m,k_1}(t,\xi-\eta)\partial_{\eta_m}\hat{f}_{n,k_2}(t,\eta)\psi_k(\xi)d\eta dt\label{im2.4}.
\end{align}
This procedure adds flexibility to our estimation by allowing us to deal with each term separately using Lemma \ref{si}.
\begin{lem}
\label{im2-1}
Suppose $t_1,t_2\in[2^{M-1},2^M]$ and $\sup_{t\in[1,T]}\|f_l\|_{Z}\leq \e_1$. If either $(k_1,k_2)\in\chi^2_k$ or $(k_1,k_2)\in\chi^1_k$ and $c_l=c_m$, then
\begin{align*}
    &\|\eqref{im2.1}\|_{L^2}
    \lesssim  2^{-k_1/2-2k_{1,+}+\gamma k_1-2k_{2,+}+\gamma k_2-\alpha k}\min\{2^{-M/4-k_1+k_2/2},2^{M+3\min\{k,k_2\}/2+k_2/2}\}\e_1^2.
\end{align*}
\end{lem}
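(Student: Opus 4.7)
The plan is to bound $\eqref{im2.1}$ by the interpolation scheme of Lemma \ref{si}, exactly parallel to the treatment of $F^1_{k,k_1,k_2}$ in Lemma \ref{sp1}. Define the analytic family
\begin{align*}
T_z\hat{g}=D^z_\xi\int_{t_1}^{t_2}\int_{\R^3}e^{it\phi(\xi,\eta)}\partial_{\eta_m}\frac{\partial_{\xi_l}\phi(\xi,\eta)\partial_{\eta_m}\phi(\xi,\eta)}{|\nabla_\eta\phi(\xi,\eta)|^2}\hat{g}_{k_1}(t,\xi-\eta)\hat{f}_{n,k_2}(t,\eta)\psi_k(\xi)\,d\eta\,dt,
\end{align*}
on the strip $\{0\leq\Re(z)\leq 1\}$. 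The key observation is that $\partial_{\eta_m}\frac{\partial_{\xi_l}\phi\,\partial_{\eta_m}\phi}{|\nabla_\eta\phi|^2}$ is homogeneous of degree $-1$ in $(\xi,\eta)$, and since we are in $\chi^1_k\cup\chi^2_k$ we have the lower bound $|\nabla_\eta\phi|\gtrsim 2^{k_1}$ from \eqref{xi-2eta}. Hence Lemma \ref{chi+eta2} (after expanding $\partial_{\eta_m}$ on the denominator and using homogeneity) produces an $L^1$-multiplier bound on the order of $2^{-k_1}$, localized to the correct frequency shells. The hypothesis that $c_l=c_m$ plays no role in this step; it enters only through the localization conditions, which are identical to those used in Lemma \ref{sp1}.

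For $\Re(z)=0$, combine Lemma \ref{strichartz} with the multiplier bound to obtain a gain of $2^{-M/4}\|\hat{f}_{n,k_2}\|_{H^1_\xi}$, while Lemma \ref{dualitycomp} gives the alternative $2^{M+3\min\{k,k_2\}/2}\|f_{n,k_2}\|_{L^2_x}$; invoking \eqref{sobolevnorms} and \eqref{l2} leaves
\begin{align*}
\|T_{iy}\hat{g}\|_{L^2_\xi}\lesssim 2^{-k_1-2k_{2,+}+\gamma k_2}\min\{2^{-M/4-k_2/2},2^{M+3\min\{k,k_2\}/2+k_2/2}\}\e_1\|\hat{g}\|_{L^\infty_t L^2_\xi}.
\end{align*}
For $\Re(z)=1$, differentiate in $\xi$; three kinds of terms arise, from the derivative landing on $\hat{g}_{k_1}$, on $\psi_k$, or on $e^{it\phi}$ (producing $it\partial_{\xi_j}\phi$). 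The first two are handled exactly as at $\Re(z)=0$, losing at most a factor of $2^{-k}$ from $\nabla\psi_k$. The third term is where the $t$ factor threatens the estimate; here one integrates by parts in $\eta$ a second time via $\sum_n\frac{\partial_{\eta_n}\phi}{it|\nabla_\eta\phi|^2}\partial_{\eta_n}e^{it\phi}=e^{it\phi}$, again using $|\nabla_\eta\phi|\gtrsim 2^{k_1}$. This yields a bound of the same form with an extra $2^{k_1-k}$ factor,
\begin{align*}
\|T_{1+iy}\hat{g}\|_{L^2_\xi}\lesssim 2^{-k-2k_{2,+}+\gamma k_2}\min\{2^{-M/4-k_2/2},2^{M+3\min\{k,k_2\}/2+k_2/2}\}\e_1\|\hat{g}\|_{L^\infty_t H^1_\xi}.
\end{align*}

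Lemma \ref{si}(b) then gives
\begin{align*}
\|T_\alpha\hat{g}\|_{L^2_\xi}\lesssim 2^{-k_1+\alpha(k_1-k)-2k_{2,+}+\gamma k_2}\min\{2^{-M/4-k_2/2},2^{M+3\min\{k,k_2\}/2+k_2/2}\}\e_1\|\hat{g}\|_{L^\infty_t H^\alpha_\xi}.
\end{align*}
Taking $\hat{g}=\hat{f}_m$ and inserting $\|\hat{f}_{m,k_1}\|_{L^\infty_t H^\alpha_\xi}\lesssim 2^{-2k_{1,+}+\gamma k_1+k_1/2-\alpha k_1}\e_1$ from \eqref{sobolevnorms} collapses $\alpha(k_1-k)-\alpha k_1=-\alpha k$ and produces the stated bound.

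The main obstacle is the careful execution of the second integration by parts in the $\Re(z)=1$ bound: the $\xi$ derivative and the $\eta$ integrations by parts generate many cross-terms (derivatives of $\partial_{\xi_j}\phi$, of $\partial_{\xi_l}\phi\partial_{\eta_m}\phi/|\nabla_\eta\phi|^2$, of $\Tilde\psi_{k_1}(\nabla_\eta\phi)$), and each must be verified to obey the appropriate homogeneity so that Lemma \ref{chi+eta2} applies and no net positive power of $2^{k_1}$ is lost. Apart from bookkeeping, the argument is a direct transplant of Lemma \ref{sp1}; no new analytical idea is needed.
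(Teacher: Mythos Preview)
Your plan matches the paper's architecture (same analytic family $T_z$, Strichartz at the endpoints, a second $\eta$-integration by parts to handle the $it\partial_{\xi_j}\phi$ term), but the execution has a real gap: your interpolation scheme does not yield the stated bound. You place $\hat f_{n,k_2}$ in the $H^1$ slot of Lemma~\ref{strichartz} at $\Re(z)=0$, interpolate $L^2\!\to\!H^1$ on $\hat g$, and close with $\|\hat f_{m,k_1}\|_{H^\alpha}$. Tracking exponents, this produces $2^{-M/4-k_2/2}$ in the first argument of the $\min$, not the claimed $2^{-M/4-k_1+k_2/2}$; on $\chi^1_k$ (where $k_2\le k_1-a$) your bound is weaker by a factor of $2^{k_1-k_2}$, so the final line ``produces the stated bound'' is false. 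The paper instead puts $\hat g_{k_1}$ in the $H^1$ slot of Strichartz (so $\hat f_{n,k_2}$ contributes $\|\hat f_{n,k_2}\|_{L^2}\sim 2^{+k_2/2}$ rather than $\|\hat f_{n,k_2}\|_{H^1}\sim 2^{-k_2/2}$), interpolates $H^1\!\to\!H^2$ on $\hat g$, and closes with $\|\hat f_{m,k_1}\|_{H^{1+\alpha}}$; this trade gains exactly the missing $2^{k_2-k_1}$.

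Your assertion that ``$c_l=c_m$ plays no role'' is also wrong, and the place it enters is precisely the cross-term you flag as the obstacle. On $\chi^1_k$ the hypothesis forces $\nabla_\xi\phi=2c_m\eta$, of size $2^{k_2}$ rather than $2^{k_1}$. After the second $\eta$-integration by parts at $\Re(z)=1$, the term with $\partial_{\eta_j}\hat f_{n,k_2}$ carries the multiplier $\tfrac{\partial_{\xi_n}\phi\,\partial_{\eta_j}\phi}{|\nabla_\eta\phi|^2}\partial_{\eta_m}\tfrac{\partial_{\xi_l}\phi\,\partial_{\eta_m}\phi}{|\nabla_\eta\phi|^2}$; the extra factor $2^{k_2-k_1}$ coming from $|\partial_{\xi_n}\phi|\sim 2^{k_2}$ is what absorbs the loss $\|\nabla_\xi\hat f_{n,k_2}\|_{L^2}/\|\hat f_{n,k_2}\|_{L^2}\sim 2^{-k_2}$ on the Lemma~\ref{dualitycomp} side of the $T_{1+iy}$ bound. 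Without invoking $c_l=c_m$, that term does not fit your claimed endpoint estimate.
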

\begin{proof}
Consider the following family of operators on $\{z\in\C:0\leq \Re(z)\leq 1\}$,
\begin{align*}
    T_{z} \hat{g} =D^z\int_{t_1}^{t_2}\int_{\R^3}e^{it\phi(\xi,\eta)}\partial_{\eta_m}\frac{\partial_{\xi_l}\phi(\xi,\eta)\partial_{\eta_m}\phi(\xi,\eta)}{|\nabla_\eta\phi(\xi,\eta)|^2}\hat{g}_{k_1}(t,\xi-\eta)\hat{f}_{n,k_2}(t,\eta)\psi_k(\xi)d\eta dt.
\end{align*}
First, we want to show the operator $T_{0+iy}: L^\infty_t([2^{M-1},2^M])H^1_\xi\rightarrow L^2_\xi$ is bounded for all $y\in\R$.\\
The Strichartz's estimate in Lemma \ref{strichartz}, together with Lemma \ref{chi+eta2}, Lemma \ref{dualitycomp}, \eqref{sobolevnorms}, \eqref{l2}, and \eqref{addder}, provides
\begin{align*}
    \|T_{0+iy} \hat{g}\|_{L^2}
    =&\bigg\|\int_{t_1}^{t_2}\int_{\R^3}e^{it\phi(\xi,\eta)}\partial_{\eta_m}\frac{\partial_{\xi_l}\phi(\xi,\eta)\partial_{\eta_m}\phi(\xi,\eta)}{|\nabla_\eta\phi(\xi,\eta)|^2}\hat{g}_{k_1}(t,\xi-\eta)\hat{f}_{n,k_2}(t,\eta)\psi_k(\xi)d\eta dt\bigg\|_{L^2}\\
    \lesssim & \|\F^{-1}\partial_{\eta_m}\frac{\partial_{\xi_l}\phi(\xi,\eta)\partial_{\eta_m}\phi(\xi,\eta)}{|\nabla_\eta\phi(\xi,\eta)|^2}\Tilde{\psi}_{k_1}(\nabla_\eta\phi(\xi,\eta))\Tilde{\psi}_{k_2}(\eta)\|_{L^1}\times\\
    &\times \min\{2^{-M/4}\|\hat{g}_{k_1}\|_{L^\infty_t([2^{M-1},2^M])H^1_\xi}\|\hat{f}_{n,k_2}\|_{L^\infty_t([2^{M-1},2^M])L^2_\xi},\\
    &\qquad\qquad\qquad\qquad 2^{M+3\min\{k,k_2\}/2}\|e^{ic_mt\la}g_{k_1}\|_{L^\infty_t([2^{M-1},2^M])L^2_x}\|e^{ic_nt\la}f_{n,k_2}\|_{L^\infty_t([2^{M-1},2^M])L^2_x}\}\\
    \lesssim & 2^{-k_1}\min\{2^{-M/4}\|\hat{g}_{k_1}\|_{L^\infty_t([2^{M-1},2^M])H^1_\xi}\|\hat{f}_{n,k_2}\|_{L^\infty_t([2^{M-1},2^M])L^2_\xi},\\
    &\qquad\qquad\qquad\qquad 2^{M+3\min\{k,k_2\}/2+k_1}\|\nabla_\xi \hat{g}_{k_1}\|_{L^\infty_t([2^{M-1},2^M])L^2_\xi}\|\hat{f}_{n,k_2}\|_{L^\infty_t([2^{M-1},2^M])L^2_\xi}\}\\
    \lesssim & 2^{-k_1-2k_{2,+}+\gamma k_2}\min\{2^{-M/4+k_2/2},2^{M+3\min\{k,k_2\}/2+k_1+k_2/2}\}\e_1\|\hat{g}_{k_1}\|_{L^\infty_t([2^{M-1},2^M])H^1_\xi}.
\end{align*}
Then, we need to prove for all $y\in\R$, the operator $T_{1+iy}:L^\infty_t([2^{M-1},2^M])H^2_\xi\rightarrow L^2_\xi$ is bounded.\\
Observe that
\begin{align}
    \|T_{1+iy} \hat{g}\|_{L^2}
    \leq &\bigg\|\partial_{\xi_n}\int_{t_1}^{t_2}\int_{\R^3}e^{it\phi(\xi,\eta)}\partial_{\eta_m}\frac{\partial_{\xi_l}\phi(\xi,\eta)\partial_{\eta_m}\phi(\xi,\eta)}{|\nabla_\eta\phi(\xi,\eta)|^2}\hat{g}_{k_1}(t,\xi-\eta)\hat{f}_{n,k_2}(t,\eta)\psi_k(\xi)d\eta dt\bigg\|_{L^2}\nonumber\\
    \leq &\bigg\|\int_{t_1}^{t_2}\int_{\R^3}e^{it\phi(\xi,\eta)}\partial_{\eta_m}\frac{\partial_{\xi_l}\phi(\xi,\eta)\partial_{\eta_m}\phi(\xi,\eta)}{|\nabla_\eta\phi(\xi,\eta)|^2}\partial_{\xi_n}\hat{g}_{k_1}(t,\xi-\eta)\hat{f}_{n,k_2}(t,\eta)\psi_k(\xi)d\eta dt\bigg\|_{L^2}\label{im2-1.1}\\
    &+ \bigg\|\int_{t_1}^{t_2}\int_{\R^3}e^{it\phi(\xi,\eta)}\partial_{\xi_n}\partial_{\eta_m}\frac{\partial_{\xi_l}\phi(\xi,\eta)\partial_{\eta_m}\phi(\xi,\eta)}{|\nabla_\eta\phi(\xi,\eta)|^2}\hat{g}_{k_1}(t,\xi-\eta)\hat{f}_{n,k_2}(t,\eta)\psi_k(\xi)d\eta dt\bigg\|_{L^2}\label{im2-1.11}\\
    &+ \bigg\|\int_{t_1}^{t_2}\int_{\R^3}e^{it\phi(\xi,\eta)}\partial_{\eta_m}\frac{\partial_{\xi_l}\phi(\xi,\eta)\partial_{\eta_m}\phi(\xi,\eta)}{|\nabla_\eta\phi(\xi,\eta)|^2}\hat{g}_{k_1}(t,\xi-\eta)\hat{f}_{n,k_2}(t,\eta)\partial_{\xi_n}\psi_k(\xi)d\eta dt\bigg\|_{L^2}\label{im2-1.12}\\
    & + \bigg\|\int_{t_1}^{t_2}\int_{\R^3}e^{it\phi(\xi,\eta)}it\partial_{\xi_n}\phi(\xi,\eta)\partial_{\eta_m}\frac{\partial_{\xi_l}\phi(\xi,\eta)\partial_{\eta_m}\phi(\xi,\eta)}{|\nabla_\eta\phi(\xi,\eta)|^2}\hat{g}_{k_1}(t,\xi-\eta)\hat{f}_{n,k_2}(t,\eta)\psi_k(\xi)d\eta dt\bigg\|_{L^2}\label{im2-1.2},
\end{align}
and perform integration by parts using the identity $e^{it\phi(\xi,\eta)}=\sum_{j=1}^3\frac{\partial_{\eta_j}\phi(\xi,\eta)}{it|\nabla_\eta\phi(\xi,\eta)|^2}\partial_{\eta_j}e^{it\phi(\xi,\eta)}$ on the term $\eqref{im2-1.2}$ ,
\begin{align}
    &\int_{t_1}^{t_2}\int_{\R^3}e^{it\phi(\xi,\eta)}it\partial_{\xi_n}\phi(\xi,\eta)\partial_{\eta_m}\frac{\partial_{\xi_l}\phi(\xi,\eta)\partial_{\eta_m}\phi(\xi,\eta)}{|\nabla_\eta\phi(\xi,\eta)|^2}\hat{g}_{k_1}(t,\xi-\eta)\hat{f}_{n,k_2}(t,\eta)\psi_k(\xi)d\eta dt\nonumber\\
    = & -\int_{t_1}^{t_2}\int_{\R^3}e^{it\phi(\xi,\eta)}\partial_{\eta_j}\big(\frac{\partial_{\xi_n}\phi(\xi,\eta)\partial_{\eta_j}\phi(\xi,\eta)}{|\nabla_\eta\phi(\xi,\eta)|^2}\partial_{\eta_m}\frac{\partial_{\xi_l}\phi(\xi,\eta)\partial_{\eta_m}\phi(\xi,\eta)}{|\nabla_\eta\phi(\xi,\eta)|^2}\big)\hat{g}_{k_1}(t,\xi-\eta)\hat{f}_{n,k_2}(t,\eta)\psi_k(\xi)d\eta dt\label{im2-1.3}\\
    & -\int_{t_1}^{t_2}\int_{\R^3}e^{it\phi(\xi,\eta)}\frac{\partial_{\xi_n}\phi(\xi,\eta)\partial_{\eta_j}\phi(\xi,\eta)}{|\nabla_\eta\phi(\xi,\eta)|^2}\partial_{\eta_m}\frac{\partial_{\xi_l}\phi(\xi,\eta)\partial_{\eta_m}\phi(\xi,\eta)}{|\nabla_\eta\phi(\xi,\eta)|^2}\partial_{\eta_j}\hat{g}_{k_1}(t,\xi-\eta)\hat{f}_{n,k_2}(t,\eta)\psi_k(\xi)d\eta dt\label{im2-1.4}\\
    & -\int_{t_1}^{t_2}\int_{\R^3}e^{it\phi(\xi,\eta)}\frac{\partial_{\xi_n}\phi(\xi,\eta)\partial_{\eta_j}\phi(\xi,\eta)}{|\nabla_\eta\phi(\xi,\eta)|^2}\partial_{\eta_m}\frac{\partial_{\xi_l}\phi(\xi,\eta)\partial_{\eta_m}\phi(\xi,\eta)}{|\nabla_\eta\phi(\xi,\eta)|^2}\hat{g}_{k_1}(t,\xi-\eta)\partial_{\eta_j}\hat{f}_{n,k_2}(t,\eta)\psi_k(\xi)d\eta dt\label{im2-1.5}.
\end{align}
Using the Strichartz's estimate in Lemma \ref{strichartz}, Lemma \ref{dualitycomp}, Lemma \ref{chi+eta2}, as well as estimations \eqref{l2}, \eqref{l2first}, and \eqref{addder}, we have
\begin{align*}
    &\eqref{im2-1.1}+\|\eqref{im2-1.4}\|_{L^2}\\
    \lesssim & \big(\|\F^{-1}\partial_{\eta_m}\frac{\partial_{\xi_l}\phi(\xi,\eta)\partial_{\eta_m}\phi(\xi,\eta)}{|\nabla_\eta\phi(\xi,\eta)|^2}\Tilde{\psi}_{k_1}(\nabla_\eta\phi(\xi,\eta))\Tilde{\psi}_{k_2}(\eta)\|_{L^1}+\\
    &+\|\F^{-1}\frac{\partial_{\xi_n}\phi(\xi,\eta)\partial_{\eta_j}\phi(\xi,\eta)}{|\nabla_\eta\phi(\xi,\eta)|^2}\partial_{\eta_m}\frac{\partial_{\xi_l}\phi(\xi,\eta)\partial_{\eta_m}\phi(\xi,\eta)}{|\nabla_\eta\phi(\xi,\eta)|^2}\Tilde{\psi}_{k_1}(\nabla_\eta\phi(\xi,\eta))\Tilde{\psi}_{k_2}(\eta)\|_{L^1}\big)\times\\
    &\times \min\{2^{-M/4}\|\nabla_\xi\hat{g}_{k_1}\|_{L^\infty_t([2^{M-1},2^M])H^1_\xi}\|\hat{f}_{n,k_2}\|_{L^\infty_t([2^{M-1},2^M])L^2_\xi},\\
    &\qquad\qquad\qquad\qquad 2^{M+3\min\{k,k_2\}/2}\|e^{ic_mt\la}\F^{-1}\nabla_\xi\hat{g}_{k_1}\|_{L^\infty_t([2^{M-1},2^M])L^2_x}\|e^{ic_nt\la}f_{n,k_2}\|_{L^\infty_t([2^{M-1},2^M])L^2_x}\}\\
    \lesssim & 2^{-k_1}\min\{2^{-M/4}\|\nabla_\xi\hat{g}_{k_1}\|_{L^\infty_t([2^{M-1},2^M])H^1_\xi}\|\hat{f}_{n,k_2}\|_{L^\infty_t([2^{M-1},2^M])L^2_\xi},\\
    &\qquad\qquad\qquad\qquad 2^{M+3\min\{k,k_2\}/2+k_1}\|\nabla^2_\xi\hat{g}_{k_1}\|_{L^\infty_t([2^{M-1},2^M])L^2_\xi}\|f_{n,k_2}\|_{L^\infty_t([2^{M-1},2^M])L^2_x}\}\\
    \lesssim & 2^{-k_1}\min\{2^{-M/4}\|\hat{f}_{n,k_2}\|_{L^\infty_t([2^{M-1},2^M])L^2_\xi},\\
    &\qquad\qquad\qquad\qquad 2^{M+3\min\{k,k_2\}/2+k_1}\|f_{n,k_2}\|_{L^\infty_t([2^{M-1},2^M])L^2_x}\}\|\nabla_\xi\hat{g}_{k_1}\|_{L^\infty_t([2^{M-1},2^M])H^1_\xi}\\
    \lesssim & 2^{-k_1-2k_{2,+}+\gamma k_2}\min\{2^{-M/4+k_2/2},2^{M+3\min\{k,k_2\}/2+k_1+k_2/2}\}\e_1\|\hat{g}_{k_1}\|_{L^\infty_t([2^{M-1},2^M])H^2_\xi}
\end{align*}
and
\begin{align*}
    &\eqref{im2-1.11}+\eqref{im2-1.12}+\|\eqref{im2-1.3}\|_{L^2}\\
    \lesssim &\big(\|\F^{-1}\partial_{\xi_n}\partial_{\eta_m}\frac{\partial_{\xi_l}\phi(\xi,\eta)\partial_{\eta_m}\phi(\xi,\eta)}{|\nabla_\eta\phi(\xi,\eta)|^2}\Tilde{\psi}_{k_1}(\nabla_\eta\phi(\xi,\eta))\Tilde{\psi}_{k_2}(\eta)\|_{L^1}+\\
    &+ \|\nabla\psi_k\|_{L^\infty}\|\F^{-1}\partial_{\eta_m}\frac{\partial_{\xi_l}\phi(\xi,\eta)\partial_{\eta_m}\phi(\xi,\eta)}{|\nabla_\eta\phi(\xi,\eta)|^2}\Tilde{\psi}_{k_1}(\nabla_\eta\phi(\xi,\eta))\Tilde{\psi}_{k_2}(\eta)\|_{L^1}\\
    & + \|\F^{-1}\partial_{\eta_j}\big(\frac{\partial_{\xi_n}\phi(\xi,\eta)\partial_{\eta_j}\phi(\xi,\eta)}{|\nabla_\eta\phi(\xi,\eta)|^2}\partial_{\eta_m}\frac{\partial_{\xi_l}\phi(\xi,\eta)\partial_{\eta_m}\phi(\xi,\eta)}{|\nabla_\eta\phi(\xi,\eta)|^2}\big)\Tilde{\psi}_{k_1}(\nabla_\eta\phi(\xi,\eta))\Tilde{\psi}_{k_2}(\eta)\|_{L^1}\big)\times\\
    &\times \min\{2^{-M/4}\|\hat{g}_{k_1}\|_{L^\infty_t([2^{M-1},2^M])H^1_\xi}\|\hat{f}_{n,k_2}\|_{L^\infty_t([2^{M-1},2^M])L^2_\xi},\\
    &\qquad\qquad\qquad\qquad 2^{M+3\min\{k,k_2\}/2}\|e^{ic_mt\la}g_{k_1}\|_{L^\infty_t([2^{M-1},2^M])L^2_x}\|e^{ic_nt\la}f_{n,k_2}\|_{L^\infty_t([2^{M-1},2^M])L^2_x}\}\\
    \lesssim &2^{-2k_1}\min\{2^{-M/4+k_1}\|\hat{g}_{k_1}\|_{L^\infty_t([2^{M-1},2^M])H^2_\xi}\|\hat{f}_{n,k_2}\|_{L^\infty_t([2^{M-1},2^M])L^2_\xi},\\
    &\qquad\qquad\qquad\qquad 2^{M+3\min\{k,k_2\}/2+2k_1}\|\nabla^2_\xi\hat{g}_{k_1}\|_{L^\infty_t([2^{M-1},2^M])L^2_\xi}\|f_{n,k_2}\|_{L^\infty_t([2^{M-1},2^M])L^2_x}\}\\
    \lesssim &2^{-k-k_1}\min\{2^{-M/4+k_1}\|\hat{f}_{n,k_2}\|_{L^\infty_t([2^{M-1},2^M])L^2_\xi},\\
    &\qquad\qquad\qquad\qquad 2^{M+3\min\{k,k_2\}/2+2k_1}\|f_{n,k_2}\|_{L^\infty_t([2^{M-1},2^M])L^2_x}\}\|\hat{g}_{k_1}\|_{L^\infty_t([2^{M-1},2^M])H^2_\xi}\\
    \lesssim &2^{-k-k_1-2k_{2,+}+\gamma k_2}\min\{2^{-M/4+k_1+k_2/2},2^{M+3\min\{k,k_2\}/2+2k_1+k_2/2}\}\e_1\|\hat{g}_{k_1}\|_{L^\infty_t([2^{M-1},2^M])H^2_\xi}.
\end{align*}
In addition, since we either have $(k_1,k_2)\in\chi^2_k$ or $(k_1,k_2)\in\chi^1_k$ and $c_l=c_m$,
\begin{align*}
    &\|\eqref{im2-1.5}\|_{L^2}\\
    \lesssim &\|\F^{-1}\frac{\partial_{\xi_n}\phi(\xi,\eta)\partial_{\eta_j}\phi(\xi,\eta)}{|\nabla_\eta\phi(\xi,\eta)|^2}\partial_{\eta_m}\frac{\partial_{\xi_l}\phi(\xi,\eta)\partial_{\eta_m}\phi(\xi,\eta)}{|\nabla_\eta\phi(\xi,\eta)|^2}\Tilde{\psi}_{k_1}(\nabla_\eta\phi(\xi,\eta))\Tilde{\psi}_{k_2}(\eta)\|_{L^1}\times\\
    &\times \min\{2^{-M/4}\|\hat{g}_{k_1}\|_{L^\infty_t([2^{M-1},2^M])H^1_\xi}\|\nabla_\xi\hat{f}_{k_2}\|_{L^\infty_t([2^{M-1},2^M])L^2_\xi},\\
    &\qquad\qquad\qquad\qquad 2^{M+3\min\{k,k_2\}/2}\|e^{ic_mt\la}g_{k_1}\|_{L^\infty_t([2^{M-1},2^M])L^2_x}\|e^{ic_nt\la}\F^{-1}\nabla_\xi\hat{f}_{n,k_2}\|_{L^\infty_t([2^{M-1},2^M])L^2_x}\}\\
    \lesssim &2^{k_2-2k_1}\min\{2^{-M/4+k_1}\|\hat{g}_{k_1}\|_{L^\infty_t([2^{M-1},2^M])H^2_\xi}\|\nabla_\xi\hat{f}_{n,k_2}\|_{L^\infty_t([2^{M-1},2^M])L^2_\xi},\\
    &\qquad\qquad\qquad\qquad 2^{M+3\min\{k,k_2\}/2+2k_1}\|\nabla^2_\xi g_{k_1}\|_{L^\infty_t([2^{M-1},2^M])L^2_\xi}\|\nabla_\xi\hat{f}_{n,k_2}\|_{L^\infty_t([2^{M-1},2^M])L^2_\xi}\}\\
    \lesssim &2^{k_2-2k_1}\min\{2^{-M/4+k_1}\|\nabla_\xi\hat{f}_{n,k_2}\|_{L^\infty_t([2^{M-1},2^M])L^2_\xi},\\
    &\qquad\qquad\qquad\qquad 2^{M+3\min\{k,k_2\}/2+2k_1}\|\nabla_\xi\hat{f}_{n,k_2}\|_{L^\infty_t([2^{M-1},2^M])L^2_\xi}\}\|\hat{g}_{k_1}\|_{L^\infty_t([2^{M-1},2^M])H^2_\xi}\\
    \lesssim &2^{k_2-2k_1-2k_{2,+}+\gamma k_2}\min\{2^{-M/4+k_1-k_2/2},2^{M+3\min\{k,k_2\}/2+2k_1-k_2/2}\}\e_1\|\hat{g}_{k_1}\|_{L^\infty_t([2^{M-1},2^M])H^2_\xi}.
\end{align*}
Thus, we found
\begin{align*}
    \|T_{0+iy} \hat{g}\|_{L^2}
    \lesssim & 2^{-k_1-2k_{2,+}+\gamma k_2}\min\{2^{-M/4+k_2/2},2^{M+3\min\{k,k_2\}/2+k_1+k_2/2}\}\e_1\|\hat{g}\|_{L^\infty_t([2^{M-1},2^M])H^1_\xi}
\end{align*} 
and
\begin{align*}
    \|T_{1+iy} \hat{g}\|_{L^2}
    \lesssim &2^{-k_1-2k_{2,+}+\gamma k_2}\min\{2^{-M/4+k_1-k+k_2/2},2^{M+3\min\{k,k_2\}/2+2k_1-k+k_2/2}\}\e_1\|\hat{g}\|_{L^\infty_t([2^{M-1},2^M])H^2_\xi}.
\end{align*}
By Lemma \ref{si}, for any $\alpha\in(0,1)$
\begin{align*}
    \|T_{\alpha} \hat{g}\|_{L^2}
    \lesssim & 2^{-k_1-2k_{2,+}+\gamma k_2+\alpha(k_1-k)}\min\{2^{-M/4+k_2/2},2^{M+3\min\{k,k_2\}/2+k_1+k_2/2}\}\e_1\|\hat{g}\|_{L^\infty_t([2^{M-1},2^M])H^{1+\alpha}_\xi}.
\end{align*} 
As we plug in $\F^{-1}\hat{f}_m\Tilde{\psi}_{k_1}$ for $g$ in the result above, we obtain
\begin{align*}
    \|\eqref{im2.1}\|_{L^2}
    \lesssim & 2^{-k_1-2k_{2,+}+\gamma k_2+\alpha(k_1-k)}\min\{2^{-M/4+k_2/2},2^{m+3\min\{k,k_2\}/2+k_1+k_2/2}\}\e_1\|\hat{f}_{k_1}\|_{L^\infty_t([2^{M-1},2^M])H^{1+\alpha}_\xi}\\
    \lesssim & 2^{-3k_1/2-\alpha k_1-2k_{1,+}+\gamma k_1-2k_{2,+}+\gamma k_2+\alpha(k_1-k)}\min\{2^{-M/4+k_2/2},2^{M+3\min\{k,k_2\}/2+k_1+k_2/2}\}\e_1^2\\
    \lesssim & 2^{-k_1/2-2k_{1,+}+\gamma k_1-2k_{2,+}+\gamma k_2-\alpha k}\min\{2^{-M/4-k_1+k_2/2},2^{M+3\min\{k,k_2\}/2+k_2/2}\}\e_1^2,
\end{align*}
using \eqref{sobolevnorms}.
\end{proof}
Next, we turn to the estimation for the second term.
\begin{lem}
\label{im2-3}
Suppose $t_1,t_2\in[2^{M-1},2^M]$ and $\sup_{t\in[1,T]}\|f_l\|_{Z}\leq \e_1$. If either $(k_1,k_2)\in\chi^2_k$ or $(k_1,k_2)\in\chi^1_k$ and $c_m=c_l$, then
\begin{align*}
    &\|\eqref{im2.3}\|_{L^2}
    \lesssim  2^{-k_1/2-2k_{1,+}+\gamma k_1-2k_{2,+}+\gamma k_2-\alpha k}\min\{2^{-M/4-k_1+k_2/2},2^{M+3\min\{k,k_2\}/2-k_1+3k_2/2}\}\e_1^2.
\end{align*}
\end{lem}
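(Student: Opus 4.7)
The plan is to follow the Stein-interpolation strategy of Lemma~\ref{im2-1}, shifted by one derivative to accommodate the extra $\partial_{\eta_m}\hat{f}_{m,k_1}(\xi-\eta) = -(\partial_m\hat{f}_{m,k_1})(\xi-\eta)$ present in \eqref{im2.3}. Concretely, I will introduce the analytic family of operators on the strip $\{z\in\C:\, 0\le \Re(z)\le 1\}$,
\begin{align*}
T_z\hat{g} = D^z \int_{t_1}^{t_2}\int_{\R^3} e^{it\phi(\xi,\eta)} \frac{\partial_{\xi_l}\phi(\xi,\eta)\,\partial_{\eta_m}\phi(\xi,\eta)}{|\nabla_\eta\phi(\xi,\eta)|^2}\, \hat{g}_{k_1}(t,\xi-\eta)\, \hat{f}_{n,k_2}(t,\eta)\, \psi_k(\xi)\, d\eta\, dt,
\end{align*}
and eventually substitute $\hat{g} = -\F^{-1}[\nabla_\xi\hat{f}_m\Tilde{\psi}_{k_1}]$ to recover \eqref{im2.3}. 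I aim to show $T_{0+iy}\colon L^\infty_t L^2_\xi \to L^2_\xi$ and $T_{1+iy}\colon L^\infty_t H^1_\xi \to L^2_\xi$ with appropriate operator norms, so that Lemma~\ref{si}(b) with endpoints $s_1=0$, $s_2=1$ yields $T_\alpha\colon L^\infty_t H^\alpha_\xi \to L^2_\xi$. This interpolation scheme is shifted by one derivative from the $H^1 \to H^2$ scheme of Lemma~\ref{im2-1}, because the built-in $\partial_{\eta_m}$ already consumes one unit of the regularity available for $\hat{f}_{m,k_1}$ in the $Z$-norm.

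For the zero line, I will combine the Strichartz estimate (Lemma~\ref{strichartz}), applied after the change of variables $\zeta=\xi-\eta$ so that $\hat{g}$ sits in the $L^2$ slot while $\hat{f}_{n,k_2}$ sits in the $H^1$ slot, with the $L^2\times L^2$ bilinear bound from Lemma~\ref{dualitycomp} for the low-$M$ regime. The $L^1$ norm of the inverse Fourier transform of the symbol $\partial_{\xi_l}\phi\,\partial_{\eta_m}\phi/|\nabla_\eta\phi|^2$ is controlled by Lemma~\ref{chi+eta2}: in the $\chi^2_k$ case it is a degree-zero symbol with $L^1$-norm $\lesssim 1$, whereas in the $\chi^1_k$ case with $c_l=c_m$ one computes $\partial_{\xi_l}\phi = 2c_l\eta_l$, producing an extra factor $2^{k_2-k_1}$ that precisely explains the $2^{-k_1+k_2}$ enhancement of the statement relative to Lemma~\ref{im2-1}. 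Inserting the Sobolev bounds on $\hat{f}_{n,k_2}$ from \eqref{sobolevnorms} gives the required $T_{0+iy}$ estimate.

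For the one line, I will differentiate once in $\xi$ and obtain four pieces mirroring \eqref{im2-1.1}-\eqref{im2-1.2}, according to whether the derivative lands on $\hat{g}_{k_1}$, on the symbol, on $\psi_k(\xi)$, or produces the factor $it\partial_{\xi_n}\phi$. The first three pieces are straightforward via Lemma~\ref{chi,eta}, Lemma~\ref{chi+eta2}, Lemma~\ref{dualitycomp}, and \eqref{sobolevnorms}. The $it\partial_{\xi_n}\phi$ piece is the main obstacle, since the factor $t\sim 2^M$ would overwhelm the estimate if left intact; I will defuse it by integrating by parts in $\eta$ through $e^{it\phi} = \sum_j \frac{\partial_{\eta_j}\phi}{it|\nabla_\eta\phi|^2}\partial_{\eta_j}e^{it\phi}$, which is legitimate because \eqref{xi-2eta} provides $|\nabla_\eta\phi|\gtrsim 2^{k_1}$ in both allowed configurations: for $(k_1,k_2)\in\chi^2_k$ by its original derivation, and for $(k_1,k_2)\in\chi^1_k$ with $c_l=c_m$ because $\nabla_\eta\phi = 2c_l\xi - 2(c_l+c_n)\eta$ is dominated by $\xi$ when $|\eta|\ll|\xi|$. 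The three resulting terms (analogous to \eqref{im2-1.3}-\eqref{im2-1.5}) are bounded piece by piece, with the term transferring an $\eta$-derivative onto $\hat{f}_{n,k_2}$ handled via $\|\nabla_\xi\hat{f}_{n,k_2}\|_{L^2_\xi}$ from \eqref{sobolevnorms}. Summing yields $T_{1+iy}\colon L^\infty_t H^1_\xi \to L^2_\xi$ with operator norm equal to $2^{k_1-k}$ times that of $T_{0+iy}$. Lemma~\ref{si}(b) then interpolates to $T_\alpha$, and after substituting $\hat{g} = -\F^{-1}[\nabla_\xi\hat{f}_m\Tilde{\psi}_{k_1}]$ and using $\|\hat{g}_{k_1}\|_{H^\alpha_\xi}\lesssim \|\hat{f}_{m,k_1}\|_{H^{1+\alpha}_\xi}\lesssim 2^{-2k_{1,+}+\gamma k_1-k_1/2-\alpha k_1}\e_1$ from \eqref{sobolevnorms}, the stated estimate drops out.
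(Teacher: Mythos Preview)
Your proposal is correct and follows essentially the same route as the paper: define an analytic family $T_z$, bound the two endpoints via Lemma~\ref{strichartz} plus Lemma~\ref{dualitycomp}, integrate by parts in $\eta$ to neutralize the $it\partial_{\xi_n}\phi$ term on the $\Re z=1$ line, interpolate via Lemma~\ref{si}, and finish with \eqref{sobolevnorms}. The only cosmetic difference is that the paper keeps the $\partial_{\eta_m}$ inside the operator and interpolates between $H^1$ and $H^2$, whereas you absorb it into the input $g$ and interpolate between $L^2$ and $H^1$; these are equivalent up to the harmless commutator $[\partial_m,\psi_{k_1}]$, and your final substitution should read $g=-\F^{-1}[\partial_m\hat f_m\,\Tilde\psi_{k_1}]$ (so that $\hat g$, not $g$, equals $-\partial_m\hat f_m\,\Tilde\psi_{k_1}$).
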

\begin{proof}
Consider the following family of operators on $\{z\in\C:0\leq \Re(z)\leq 1\}$,
\begin{align*}
    T_{z} \hat{g} =D^z\int_{t_1}^{t_2}\int_{\R^3}e^{it\phi(\xi,\eta)}\frac{\partial_{\xi_l}\phi(\xi,\eta)\partial_{\eta_m}\phi(\xi,\eta)}{|\nabla_\eta\phi(\xi,\eta)|^2}\partial_{\eta_m}\hat{g}_{k_1}(t,\xi-\eta)\hat{f}_{n,k_2}(t,\eta)\psi_k(\xi)d\eta dt.
\end{align*}
We want to show that for any $y\in\R$, the operators $T_{0+iy}:L^{\infty}_t([2^{M-1},2^M])H^1_\xi\rightarrow L^2_\xi$ and $T_{1+iy}:L^{\infty}_t([2^{M-1},2^M])H^2_\xi\rightarrow L^2_\xi$ are bounded.\\
Employing the Strichartz's estimate in Lemma \ref{strichartz}, along with Lemma \ref{chi+eta2}, Lemma \ref{dualitycomp}, and using \eqref{sobolevnorms}, \eqref{l2}, and \eqref{addder}, we get
\begin{align*}
    &\|T_{0+iy} \hat{g}\|_{L^2}\\
    =&\bigg\|\int_{t_1}^{t_2}\int_{\R^3}e^{it\phi(\xi,\eta)}\frac{\partial_{\xi_l}\phi(\xi,\eta)\partial_{\eta_m}\phi(\xi,\eta)}{|\nabla_\eta\phi(\xi,\eta)|^2}\partial_{\eta_m}\hat{g}_{k_1}(t,\xi-\eta)\hat{f}_{n,k_2}(t,\eta)\psi_k(\xi)d\eta dt\bigg\|_{L^2}\\
    \lesssim & \|\F^{-1}\frac{\partial_{\xi_l}\phi(\xi,\eta)\partial_{\eta_m}\phi(\xi,\eta)}{|\nabla_\eta\phi(\xi,\eta)|^2}\Tilde{\psi}_{k_1}(\nabla_\eta\phi(\xi,\eta))\Tilde{\psi}_{k_2}(\eta)\|_{L^1}\times\\
    &\times \min\{2^{-M/4}\|\nabla_\xi\hat{g}_{k_1}\|_{L^\infty_t([2^{M-1},2^M])L^2_\xi}\|\hat{f}_{n,k_2}\|_{L^\infty_t([2^{M-1},2^M])H^1_\xi},\\
    &\qquad\qquad\qquad\qquad 2^{M+3\min\{k,k_2\}/2}\|e^{ic_mt\la}\F^{-1}\nabla_\xi\hat{g}_{k_1}\|_{L^\infty_t([2^{M-1},2^M])L^2_x}\|e^{ic_nt\la}f_{n,k_2}\|_{L^\infty_t([2^{M-1},2^M])L^2_x}\}\\
    \lesssim & 2^{k_2-k_1}\min\{2^{-M/4}\|\hat{f}_{n,k_2}\|_{L^\infty_t([2^{M-1},2^M])H^1_\xi},2^{M+3\min\{k,k_2\}/2}\|\hat{f}_{n,k_2}\|_{L^\infty_t([2^{M-1},2^M])L^2_\xi}\}\|\nabla_\xi \hat{g}_{k_1}\|_{L^\infty_t([2^{M-1},2^M])L^2_\xi}\\
    \lesssim & 2^{k_2-k_1-2k_{2,+}+\gamma k_2}\min\{2^{-M/4-k_2/2},2^{M+3\min\{k,k_2\}/2+k_2/2}\}\e_1\|\hat{g}_{k_1}\|_{L^\infty_t([2^{M-1},2^M])H^1_\xi}.
\end{align*}
Next, we have
\begin{align}
    \|T_{1+iy} \hat{g}\|_{L^2}
    \leq &\bigg\|\partial_{\xi_n}\int_{t_1}^{t_2}\int_{\R^3}e^{it\phi(\xi,\eta)}\frac{\partial_{\xi_l}\phi(\xi,\eta)\partial_{\eta_m}\phi(\xi,\eta)}{|\nabla_\eta\phi(\xi,\eta)|^2}\partial_{\eta_m}\hat{g}_{k_1}(t,\xi-\eta)\hat{f}_{n,k_2}(t,\eta)\psi_k(\xi)d\eta dt\bigg\|_{L^2}\nonumber\\
    \leq &\bigg\|\int_{t_1}^{t_2}\int_{\R^3}e^{it\phi(\xi,\eta)}\frac{\partial_{\xi_l}\phi(\xi,\eta)\partial_{\eta_m}\phi(\xi,\eta)}{|\nabla_\eta\phi(\xi,\eta)|^2}\partial_{\xi_n}\partial_{\eta_m}\hat{g}_{k_1}(t,\xi-\eta)\hat{f}_{n,k_2}(t,\eta)\psi_k(\xi)d\eta dt\bigg\|_{L^2}\label{im2-3.1}\\
    &+ \bigg\|\int_{t_1}^{t_2}\int_{\R^3}e^{it\phi(\xi,\eta)}\partial_{\xi_n}\frac{\partial_{\xi_l}\phi(\xi,\eta)\partial_{\eta_m}\phi(\xi,\eta)}{|\nabla_\eta\phi(\xi,\eta)|^2}\partial_{\eta_m}\hat{g}_{k_1}(t,\xi-\eta)\hat{f}_{n,k_2}(t,\eta)\psi_k(\xi)d\eta dt\bigg\|_{L^2}\label{im2-3.11}\\
    &+ \bigg\|\int_{t_1}^{t_2}\int_{\R^3}e^{it\phi(\xi,\eta)}\frac{\partial_{\xi_l}\phi(\xi,\eta)\partial_{\eta_m}\phi(\xi,\eta)}{|\nabla_\eta\phi(\xi,\eta)|^2}\partial_{\eta_m}\hat{g}_{k_1}(t,\xi-\eta)\hat{f}_{n,k_2}(t,\eta)\partial_{\xi_n}\psi_k(\xi)d\eta dt\bigg\|_{L^2}\label{im2-3.12}\\
    & + \bigg\|\int_{t_1}^{t_2}\int_{\R^3}e^{it\phi(\xi,\eta)}it\partial_{\xi_n}\phi(\xi,\eta)\frac{\partial_{\xi_l}\phi(\xi,\eta)\partial_{\eta_m}\phi(\xi,\eta)}{|\nabla_\eta\phi(\xi,\eta)|^2}\partial_{\eta_m}\hat{g}_{k_1}(t,\xi-\eta)\hat{f}_{n,k_2}(t,\eta)\psi_k(\xi)d\eta dt\bigg\|_{L^2}\label{im2-3.2}.
\end{align}
Integration by parts using the identity $e^{it\phi(\xi,\eta)}=\sum_{j=1}^3\frac{\partial_{\eta_j}\phi(\xi,\eta)}{it|\nabla_\eta\phi(\xi,\eta)|^2}\partial_{\eta_j}e^{it\phi(\xi,\eta)}$ on the term $\eqref{im2-3.2}$ gives
\begin{align}
    &\int_{t_1}^{t_2}\int_{\R^3}e^{it\phi(\xi,\eta)}it\partial_{\xi_n}\phi(\xi,\eta)\frac{\partial_{\xi_l}\phi(\xi,\eta)\partial_{\eta_m}\phi(\xi,\eta)}{|\nabla_\eta\phi(\xi,\eta)|^2}\partial_{\eta_m}\hat{g}_{k_1}(t,\xi-\eta)\hat{f}_{n,k_2}(t,\eta)\psi_k(\xi)d\eta dt\nonumber\\
    = & -\int_{t_1}^{t_2}\int_{\R^3}e^{it\phi(\xi,\eta)}\partial_{\eta_j}\big(\frac{\partial_{\eta_j}\phi(\xi,\eta)\partial_{\xi_n}\phi(\xi,\eta)}{|\nabla_\eta\phi(\xi,\eta)|^2}\frac{\partial_{\xi_l}\phi(\xi,\eta)\partial_{\eta_m}\phi(\xi,\eta)}{|\nabla_\eta\phi(\xi,\eta)|^2}\big)\partial_{\eta_m}\hat{g}_{k_1}(t,\xi-\eta)\hat{f}_{n,k_2}(t,\eta)\psi_k(\xi)d\eta dt\label{im2-3.3}\\
    &-\int_{t_1}^{t_2}\int_{\R^3}e^{it\phi(\xi,\eta)}\frac{\partial_{\eta_j}\phi(\xi,\eta)\partial_{\xi_n}\phi(\xi,\eta)}{|\nabla_\eta\phi(\xi,\eta)|^2}\frac{\partial_{\xi_l}\phi(\xi,\eta)\partial_{\eta_m}\phi(\xi,\eta)}{|\nabla_\eta\phi(\xi,\eta)|^2}\partial_{\eta_j}\partial_{\eta_m}\hat{g}_{k_1}(t,\xi-\eta)\hat{f}_{n,k_2}(t,\eta)\psi_k(\xi)d\eta dt\label{im2-3.4}\\
    & -\int_{t_1}^{t_2}\int_{\R^3}e^{it\phi(\xi,\eta)}\frac{\partial_{\eta_j}\phi(\xi,\eta)\partial_{\xi_n}\phi(\xi,\eta)}{|\nabla_\eta\phi(\xi,\eta)|^2}\frac{\partial_{\xi_l}\phi(\xi,\eta)\partial_{\eta_m}\phi(\xi,\eta)}{|\nabla_\eta\phi(\xi,\eta)|^2}\partial_{\eta_m}\hat{g}_{k_1}(t,\xi-\eta)\partial_{\eta_j}\hat{f}_{n,k_2}(t,\eta)\psi_k(\xi)d\eta dt\label{im2-3.5}.
\end{align}
Using the Strichartz's estimate in Lemma \ref{strichartz}, together with Lemma \ref{dualitycomp}, Lemma \ref{chi+eta2}, \eqref{sobolevnorms}, \eqref{l2}, \eqref{l2first}, and \eqref{addder}, we obtain
\begin{align*}
    &\eqref{im2-3.1}+\|\eqref{im2-3.4}\|_{L^2}\\
    \lesssim & \big(\|\F^{-1}\frac{\partial_{\xi_l}\phi(\xi,\eta)\partial_{\eta_m}\phi(\xi,\eta)}{|\nabla_\eta\phi(\xi,\eta)|^2}\Tilde{\psi}_{k_1}(\nabla_\eta\phi(\xi,\eta))\Tilde{\psi}_{k_2}(\eta)\|_{L^1}+\\
    &+\|\F^{-1}\frac{\partial_{\eta_j}\phi(\xi,\eta)\partial_{\xi_n}\phi(\xi,\eta)}{|\nabla_\eta\phi(\xi,\eta)|^2}\frac{\partial_{\xi_l}\phi(\xi,\eta)\partial_{\eta_m}\phi(\xi,\eta)}{|\nabla_\eta\phi(\xi,\eta)|^2}\Tilde{\psi}_{k_1}(\nabla_\eta\phi(\xi,\eta))\Tilde{\psi}_{k_2}(\eta)\|_{L^1}\big)\times\\
    &\times \min\{2^{-M/4}\|\nabla^2_\xi\hat{g}_{k_1}\|_{L^\infty_t([2^{M-1},2^M])L^2_\xi}\|\hat{f}_{n,k_2}\|_{L^\infty_t([2^{M-1},2^M])H^1_\xi},\\
    &\qquad\qquad\qquad\qquad 2^{M+3\min\{k,k_2\}/2}\|e^{ic_mt\la}\F^{-1}\nabla^2_\xi\hat{g}_{k_1}\|_{L^\infty_t([2^{M-1},2^M])L^2_x}\|e^{ic_nt\la}f_{n,k_2}\|_{L^\infty_t([2^{M-1},2^M])L^2_x}\}\\
    \lesssim & (2^{k_2-k_1}+2^{2k_2-2k_1})\min\{2^{-M/4}\|\hat{f}_{n,k_2}\|_{L^\infty_t([2^{M-1},2^M])H^1_\xi},\\
    &\qquad\qquad\qquad\qquad 2^{M+3\min\{k,k_2\}/2}\|f_{n,k_2}\|_{L^\infty_t([2^{M-1},2^M])L^2_x}\}\|\hat{g}_{k_1}\|_{L^\infty_t([2^{M-1},2^M])H^2_\xi}\\
    \lesssim & 2^{k_2-k_1-2k_{2,+}+\gamma k_2}\min\{2^{-M/4-k_2/2},2^{M+3\min\{k,k_2\}/2+k_2/2}\}\e_1\|\hat{g}_{k_1}\|_{L^\infty_t([2^{M-1},2^M])H^2_\xi},
\end{align*}
\begin{align*}
    &\eqref{im2-3.11}+\eqref{im2-3.12}+\|\eqref{im2-3.3}\|_{L^2}\\
    \lesssim &\big(\|\F^{-1}\partial_{\xi_n}\frac{\partial_{\xi_l}\phi(\xi,\eta)\partial_{\eta_m}\phi(\xi,\eta)}{|\nabla_\eta\phi(\xi,\eta)|^2}\Tilde{\psi}_{k_1}(\nabla_\eta\phi(\xi,\eta))\Tilde{\psi}_{k_2}(\eta)\|_{L^1}+\\
    &+ \|\nabla\psi_k\|_{L^\infty}\|\F^{-1}\frac{\partial_{\xi_l}\phi(\xi,\eta)\partial_{\eta_m}\phi(\xi,\eta)}{|\nabla_\eta\phi(\xi,\eta)|^2}\Tilde{\psi}_{k_1}(\nabla_\eta\phi(\xi,\eta))\Tilde{\psi}_{k_2}(\eta)\|_{L^1}\\
    & + \|\F^{-1}\partial_{\eta_j}\big(\frac{\partial_{\eta_j}\phi(\xi,\eta)\partial_{\xi_n}\phi(\xi,\eta)}{|\nabla_\eta\phi(\xi,\eta)|^2}\frac{\partial_{\xi_l}\phi(\xi,\eta)\partial_{\eta_m}\phi(\xi,\eta)}{|\nabla_\eta\phi(\xi,\eta)|^2}\big)\Tilde{\psi}_{k_1}(\nabla_\eta\phi(\xi,\eta))\Tilde{\psi}_{k_2}(\eta)\|_{L^1}\big)\times\\
    &\times \min\{2^{-M/4}\|\nabla_\xi\hat{g}_{k_1}\|_{L^\infty_t([2^{M-1},2^M])H^1_\xi}\|\hat{f}_{n,k_2}\|_{L^\infty_t([2^{M-1},2^M])L^2_\xi},\\
    &\qquad\qquad\qquad\qquad 2^{M+3\min\{k,k_2\}/2}\|e^{ic_mt\la}\F^{-1}\nabla_\xi\hat{g}_{k_1}\|_{L^\infty_t([2^{M-1},2^M])L^2_x}\|e^{ic_nt\la}f_{n,k_2}\|_{L^\infty_t([2^{M-1},2^M])L^2_x}\}\\
    \lesssim &(2^{k_2-2k_1}+2^{-k+k_2-k_1}+2^{k_2-2k_1})\min\{2^{-M/4}\|\hat{f}_{n,k_2}\|_{L^\infty_t([2^{M-1},2^M])L^2_\xi},\\
    &\qquad\qquad\qquad\qquad 2^{M+3\min\{k,k_2\}/2+k_1}\|f_{n,k_2}\|_{L^\infty_t([2^{M-1},2^M])L^2_x}\}\|\hat{g}_{k_1}\|_{L^\infty_t([2^{M-1},2^M])H^2_\xi}\\
    \lesssim &2^{-k+k_2-k_1-2k_{2,+}+\gamma k_2}\min\{2^{-M/4+k_2/2},2^{M+3\min\{k,k_2\}/2+k_2/2+k_1}\}\e_1\|\hat{g}_{k_1}\|_{L^\infty_t([2^{M-1},2^M])H^2_\xi},
\end{align*}
and
\begin{align*}
    &\|\eqref{im2-3.5}\|_{L^2}\\
    \lesssim &\|\F^{-1}\frac{\partial_{\eta_j}\phi(\xi,\eta)\partial_{\xi_n}\phi(\xi,\eta)}{|\nabla_\eta\phi(\xi,\eta)|^2}\frac{\partial_{\xi_l}\phi(\xi,\eta)\partial_{\eta_m}\phi(\xi,\eta)}{|\nabla_\eta\phi(\xi,\eta)|^2}\Tilde{\psi}_{k_1}(\nabla_\eta\phi(\xi,\eta))\Tilde{\psi}_{k_2}(\eta)\|_{L^1}\times\\
    &\times \min\{2^{-M/4}\|\nabla_\xi\hat{g}_{k_1}\|_{L^\infty_t([2^{M-1},2^M])H^1_\xi}\|\nabla_\xi\hat{f}_{n,k_2}\|_{L^\infty_t([2^{M-1},2^M])L^2_\xi},\\
    &\qquad\qquad\qquad 2^{M+3\min\{k,k_2\}/2}\|e^{ic_mt\la}\F^{-1}\nabla_\xi\hat{g}_{k_1}\|_{L^\infty_t([2^{M-1},2^M])L^2_x}\|e^{ic_nt\la}\F^{-1}\nabla_\xi\hat{f}_{n,k_2}\|_{L^\infty_t([2^{M-1},2^M])L^2_x}\}\\
    \lesssim &2^{2k_2-2k_1}\min\{2^{-M/4}\|\nabla_\xi\hat{f}_{n,k_2}\|_{L^\infty_t([2^{M-1},2^M])L^2_\xi},\\
    &\qquad\qquad\qquad\qquad 2^{M+3\min\{k,k_2\}/2+k_1}\|\nabla_\xi\hat{f}_{n,k_2}\|_{L^\infty_t([2^{M-1},2^M])L^2_\xi}\}\|\hat{g}_{k_1}\|_{L^\infty_t([2^{M-1},2^M])H^2_\xi}\\
    \lesssim &2^{2k_2-2k_1-2k_{2,+}+\gamma k_2}\min\{2^{-M/4-k_2/2},2^{M+3\min\{k,k_2\}/2+k_1-k_2/2}\}\|\hat{g}_{k_1}\|_{L^\infty_t([2^{M-1},2^M])H^2_\xi}.
\end{align*}
Thus, combining the estimates above, we found
\begin{align*}
    \|T_{0+iy} \hat{g}\|_{L^2}
    \lesssim &2^{-2k_{2,+}+\gamma k_2}\min\{2^{-M/4-k_1+k_2/2},2^{M+3\min\{k,k_2\}/2-k_1+3k_2/2}\}\e_1\|\hat{g}\|_{L^\infty_t([2^{M-1},2^M])H^1_\xi}
\end{align*} 
and
\begin{align*}
    \|T_{1+iy} \hat{g}\|_{L^2}
    \lesssim  &2^{-2k_{2,+}+\gamma k_2}\min\{2^{-M/4-k_1+k_2/2},2^{M+3\min\{k,k_2\}/2-k_1+3k_2/2}\}\e_1\|\hat{g}_{k_1}\|_{L^\infty_t([2^{M-1},2^M])H^2_\xi}\\
    &+ 2^{-2k_{2,+}+\gamma k_2}\min\{2^{-M/4-k_1-k+3k_2/2},2^{M+3\min\{k,k_2\}/2-k+3k_2/2}\}\e_1\|\hat{g}\|_{L^\infty_t([2^{M-1},2^M])H^2_\xi}\\
    &+2^{-2k_{2,+}+\gamma k_2}\min\{2^{-M/4-2k_1+3k_2/2},2^{M+3\min\{k,k_2\}/2-k_1+3k_2/2}\}\|\hat{g}_{k_1}\|_{L^\infty_t([2^{M-1},2^M])H^2_\xi}\\
    \lesssim & 2^{-2k_{2,+}+\gamma k_2}\min\{2^{-M/4-k+k_2/2},2^{M+3\min\{k,k_2\}/2-k+3k_2/2}\}\e_1\|\hat{g}\|_{L^\infty_t([2^{M-1},2^M])H^2_\xi}.
\end{align*}
By the interpolation result in Lemma \ref{si}, we have
\begin{align*}
    \|T_{\alpha} \hat{g}\|_{L^2}
    \lesssim & 2^{-2k_{1,+}+\gamma k_1+\alpha(k_1-k)}\min\{2^{-M/4-k_1+k_2/2},2^{M+3\min\{k,k_2\}/2-k_1+3k_2/2}\}\e_1\|\hat{g}\|_{L^\infty_t([2^{M-1},2^M])H^{1+\alpha}_\xi},
\end{align*} 
for any $\alpha\in(0,1)$.\\
When we take $\F^{-1}\hat{f}_m\Tilde{\psi}_{k_1}$ for $g$ and use \eqref{sobolevnorms}, we obtain
\begin{align*}
    &\|\eqref{im2.3}\|_{L^2}\\
    \lesssim & 2^{-2k_{1,+}+\gamma k_1+\alpha(k_1-k)}\min\{2^{-M/4-k_1+k_2/2},2^{M+3\min\{k,k_2\}/2-k_1+3k_2/2}\}\e_1\|\hat{f}_{m,k_1}\|_{L^\infty_t([2^{M-1},2^M])H^{1+\alpha}_\xi}\\
    \lesssim & 2^{-k_1/2-\alpha k_1-2k_{1,+}+\gamma k_1-2k_{2,+}+\gamma k_2+\alpha(k_1-k)}\min\{2^{-M/4-k_1+k_2/2},2^{M+3\min\{k,k_2\}/2-k_1+3k_2/2}\}\e_1^2\\
    \lesssim & 2^{-k_1/2-2k_{1,+}+\gamma k_1-2k_{2,+}+\gamma k_2-\alpha k}\min\{2^{-M/4-k_1+k_2/2},2^{M+3\min\{k,k_2\}/2-k_1+3k_2/2}\}\e_1^2
\end{align*}
as desired.
\end{proof}
Now, we turn to the final term in the splitting of $I^{M,2}_{k,k_1,k_2}$.
\begin{lem}
\label{im2-4}
Suppose $t_1,t_2\in[2^{M-1},2^M]$ and $\sup_{t\in[1,T]}\|f_l\|_{Z}\leq \e_1$. If either $(k_1,k_2)\in\chi^2_k$ or $(k_1,k_2)\in\chi^1_k$ and $c_m=c_l$, then
\begin{align*}
    &\|\eqref{im2.4}\|_{L^2}
    \lesssim  2^{-\alpha k_2-2k_{1,+}+\gamma k_1-2k_{2,+}+\gamma k_2+\alpha(k_1-k)}\min\{2^{-M/4+k_2/2-3k_1/2},2^{M+3\min\{k,k_2\}/2-k_1/2+k_2/2}\}\e_1^2.
\end{align*}
\end{lem}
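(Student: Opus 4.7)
The plan is to mirror the strategy employed in Lemma \ref{im2-3}, since the term \eqref{im2.4} is the symmetric counterpart where the $\eta$-derivative lands on $\hat{f}_{n,k_2}$ rather than on $\hat{f}_{m,k_1}$. Specifically, I would introduce the family of operators on the strip $\{z\in\C:0\leq \Re(z)\leq 1\}$ defined by
\begin{align*}
    T_z \hat{g} = D^z\int_{t_1}^{t_2}\int_{\R^3}e^{it\phi(\xi,\eta)}\frac{\partial_{\xi_l}\phi(\xi,\eta)\partial_{\eta_m}\phi(\xi,\eta)}{|\nabla_\eta\phi(\xi,\eta)|^2}\hat{f}_{m,k_1}(t,\xi-\eta)\partial_{\eta_m}\hat{g}_{k_2}(t,\eta)\psi_k(\xi)d\eta dt,
\end{align*}
and then apply the interpolation Lemma \ref{si} after establishing boundedness of $T_{0+iy}:L^\infty_t([2^{M-1},2^M])H^1_\xi\to L^2_\xi$ and $T_{1+iy}:L^\infty_t([2^{M-1},2^M])H^2_\xi\to L^2_\xi$. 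Finally substituting $g=\F^{-1}\hat{f}_n\tilde\psi_{k_2}$ and using \eqref{sobolevnorms} will produce the claimed estimate on \eqref{im2.4}.

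For the bound on $T_{0+iy}$, I would combine the Strichartz-type bilinear estimate in Lemma \ref{strichartz} (for the $2^{-M/4}$ factor) and the duality bound of Lemma \ref{dualitycomp} (for the $2^{M+3\min\{k,k_2\}/2}$ factor), together with the multiplier bound
\begin{align*}
    \|\F^{-1}\tfrac{\partial_{\xi_l}\phi\,\partial_{\eta_m}\phi}{|\nabla_\eta\phi|^2}\tilde\psi_{k_1}(\nabla_\eta\phi)\tilde\psi_{k_2}(\eta)\|_{L^1}\lesssim 2^{k_2-k_1},
\end{align*}
which follows from Lemma \ref{chi+eta2} and the lower bound $|\nabla_\eta\phi|\gtrsim 2^{k_1}$ guaranteed by \eqref{xi-2eta} because $(k_1,k_2)\in\chi^2_k$, or because $\nabla_\eta\phi=-2c_n\eta$ when $c_m=c_l$ and $(k_1,k_2)\in\chi^1_k$. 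The input factors $\|\partial_{\eta_m}\hat{g}_{k_2}\|_{L^2}$ and $\|f_{m,k_1}\|_{L^2}$ are controlled via \eqref{sobolevnorms} and \eqref{l2}.

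For $T_{1+iy}$, taking an additional $\partial_{\xi_n}$ produces four sub-terms, corresponding to the derivative falling on $\hat{f}_{m,k_1}(t,\xi-\eta)$, on the multiplier $\partial_{\xi_l}\phi\,\partial_{\eta_m}\phi/|\nabla_\eta\phi|^2$, on the cutoff $\psi_k(\xi)$, or on the phase $e^{it\phi}$. The first three can be estimated directly via the same Strichartz/duality pair as above, with the extra factor $\|\nabla_\xi\hat{f}_{m,k_1}\|_{L^2}$ contributing a $2^{-k_1/2}$ through \eqref{l2first}, or with $2^{-k}$ from $\|\nabla\psi_k\|_{L^\infty}$, or with $2^{-k_1}$ coming from the differentiated multiplier. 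The phase-derivative term carries an extra $it\partial_{\xi_n}\phi$, and I would handle it by a second integration by parts in $\eta$ using the identity $\sum_j\frac{\partial_{\eta_j}\phi}{it|\nabla_\eta\phi|^2}\partial_{\eta_j}e^{it\phi}=e^{it\phi}$. This produces three subterms where $\partial_{\eta_j}$ lands on the composite multiplier (absorbable by Lemma \ref{chi+eta2}), on $\hat{f}_{m,k_1}(t,\xi-\eta)$ (handled with $\|\nabla_\xi\hat{f}_{m,k_1}\|_{L^2}$), or on $\partial_{\eta_m}\hat{g}_{k_2}(t,\eta)$ (this is the term requiring $g\in H^2$).

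The main obstacle will be the careful bookkeeping of factors of $2^{k}, 2^{k_1}, 2^{k_2}$, and $2^M$ produced by each of the six or so resulting integrals, and verifying that their minimum with the alternative duality bound matches the asymmetric exponent $-\alpha k_2-2k_{1,+}+\gamma k_1-2k_{2,+}+\gamma k_2+\alpha(k_1-k)$ in the statement. In particular, the extra factor $2^{\alpha(k_1-k)}$ reflects rescaling in the interpolation operator (as in Lemma \ref{im2-3}), while $-\alpha k_2$ comes from the fact that the $\alpha$ derivatives are absorbed by $\hat{g}_{k_2}$ through $\|\hat{f}_{n,k_2}\|_{H^{1+\alpha}}\lesssim 2^{-2k_{2,+}+\gamma k_2-k_2/2-\alpha k_2}\e_1$. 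Once the two endpoint bounds are in hand, Lemma \ref{si} together with \eqref{sobolevnorms} applied to $g=\F^{-1}\hat f_n\tilde\psi_{k_2}$ yields the stated estimate without further difficulty.
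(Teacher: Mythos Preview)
Your proposal is correct and follows essentially the same approach as the paper's proof: define the analytic family $T_z$ with $g$ placed in the $k_2$-slot, bound $T_{0+iy}$ via Lemma~\ref{strichartz}/Lemma~\ref{dualitycomp}, bound $T_{1+iy}$ after one $\xi$-derivative and a further integration by parts in $\eta$, interpolate via Lemma~\ref{si}, and finally insert $g=\F^{-1}\hat f_n\tilde\psi_{k_2}$ with \eqref{sobolevnorms}. One small correction: the identity $\nabla_\eta\phi=-2c_n\eta$ does not hold when $c_m=c_l$ (it would require $c_m=0$); the lower bound $|\nabla_\eta\phi|\gtrsim 2^{k_1}$ for the $\chi^1_k$ case follows directly from \eqref{xi-2eta}, which already covers both $\chi^1_k$ and $\chi^2_k$.
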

\begin{proof}
We start with defining the following family of operators on $\{z\in\C:0\leq \Re(z)\leq 1\}$,
\begin{align*}
    T_{z} \hat{g} =D^z\int_{t_1}^{t_2}\int_{\R^3}e^{it\phi(\xi,\eta)}\frac{\partial_{\xi_l}\phi(\xi,\eta)\partial_{\eta_m}\phi(\xi,\eta)}{|\nabla_\eta\phi(\xi,\eta)|^2}\hat{f}_{m,k_1}(t,\xi-\eta)\partial_{\eta_m}\hat{g}_{k_2}(t,\eta)\psi_k(\xi)d\eta dt,
\end{align*}
which we want to show $T_{0+iy}:L^{\infty}_t([2^{M-1},2^M])H^1_\xi\rightarrow L^2_\xi$ and $T_{1+iy}:L^{\infty}_t([2^{M-1},2^M])H^2_\xi\rightarrow L^2_\xi$ are bounded for all $y\in\R$.\\
Based on the Strichartz's estimate in Lemma \ref{chi+eta2}, as well as Lemma \ref{dualitycomp}, Lemma \ref{strichartz}, \eqref{addder}, \eqref{l2}, and \eqref{sobolevnorms}, we can get
\begin{align*}
    &\|T_{0+iy} \hat{g}\|_{L^2}\\
    =&\bigg\|\int_{t_1}^{t_2}\int_{\R^3}e^{it\phi(\xi,\eta)}\frac{\partial_{\xi_l}\phi(\xi,\eta)\partial_{\eta_m}\phi(\xi,\eta)}{|\nabla_\eta\phi(\xi,\eta)|^2}\hat{f}_{m,k_1}(t,\xi-\eta)\partial_{\eta_m}\hat{g}_{k_2}(t,\eta)\psi_k(\xi)d\eta dt\bigg\|_{L^2}\\
    \lesssim & \|\F^{-1}\frac{\partial_{\xi_l}\phi(\xi,\eta)\partial_{\eta_m}\phi(\xi,\eta)}{|\nabla_\eta\phi(\xi,\eta)|^2}\Tilde{\psi}_{k_1}(\nabla_\eta\phi(\xi,\eta))\Tilde{\psi}_{k_2}(\eta)\|_{L^1}\times\\
    &\times \min\{2^{-M/4}\|\hat{f}_{m,k_1}\|_{L^\infty_t([2^{M-1},2^M])H^1_\xi}\|\nabla_\xi\hat{g}_{k_2}\|_{L^\infty_t([2^{M-1},2^M])L^2_\xi},\\
    &\qquad\qquad\qquad\qquad 2^{M+3\min\{k,k_2\}/2}\|e^{ic_mt\la}f_{m,k_1}\|_{L^\infty_t([2^{M-1},2^M])L^2_x}\|e^{ic_nt\la}\F^{-1}\nabla_\xi\hat{g}_{k_2}\|_{L^\infty_t([2^{M-1},2^M])L^2_x}\}\\
    \lesssim & 2^{k_2-k_1}\min\{2^{-M/4}\|\hat{f}_{m,k_1}\|_{L^\infty_t([2^{M-1},2^M])H^1_\xi},\\
    &\qquad\qquad\qquad\qquad 2^{M+3\min\{k,k_2\}/2}\|\hat{f}_{m,k_1}\|_{L^\infty_t([2^{M-1},2^M])L^2_\xi}\}\|\nabla_\xi \hat{g}_{k_2}\|_{L^\infty_t([2^{M-1},2^M])L^2_\xi}\\
    \lesssim & 2^{k_2-k_1-2k_{1,+}+\gamma k_1}\min\{2^{-M/4-k_1/2},2^{M+3\min\{k,k_2\}/2+k_1/2}\}\e_1\|\hat{g}_{k_2}\|_{L^\infty_t([2^{M-1},2^M])H^1_\xi}.
\end{align*}
Next, we look at the operator $T_{1+iy}$,
\begin{align}
    \|T_{1+iy} \hat{g}\|_{L^2}
    \leq &\bigg\|\partial_{\xi_n}\int_{t_1}^{t_2}\int_{\R^3}e^{it\phi(\xi,\eta)}\frac{\partial_{\xi_l}\phi(\xi,\eta)\partial_{\eta_m}\phi(\xi,\eta)}{|\nabla_\eta\phi(\xi,\eta)|^2}\hat{f}_{m,k_1}(t,\xi-\eta)\partial_{\eta_m}\hat{g}_{k_2}(t,\eta)\psi_k(\xi)d\eta dt\bigg\|_{L^2}\nonumber\\
    \leq &\bigg\|\int_{t_1}^{t_2}\int_{\R^3}e^{it\phi(\xi,\eta)}\frac{\partial_{\xi_l}\phi(\xi,\eta)\partial_{\eta_m}\phi(\xi,\eta)}{|\nabla_\eta\phi(\xi,\eta)|^2}\partial_{\xi_n}\hat{f}_{m,k_1}(t,\xi-\eta)\partial_{\eta_m}\hat{g}_{k_2}(t,\eta)\psi_k(\xi)d\eta dt\bigg\|_{L^2}\label{im2-1.1-1}\\
    &+ \bigg\|\int_{t_1}^{t_2}\int_{\R^3}e^{it\phi(\xi,\eta)}\partial_{\xi_n}\frac{\partial_{\xi_l}\phi(\xi,\eta)\partial_{\eta_m}\phi(\xi,\eta)}{|\nabla_\eta\phi(\xi,\eta)|^2}\hat{f}_{m,k_1}(t,\xi-\eta)\partial_{\eta_m}\hat{g}_{k_2}(t,\eta)\psi_k(\xi)d\eta dt\bigg\|_{L^2}\label{im2-1.11-1}\\
    &+ \bigg\|\int_{t_1}^{t_2}\int_{\R^3}e^{it\phi(\xi,\eta)}\frac{\partial_{\xi_l}\phi(\xi,\eta)\partial_{\eta_m}\phi(\xi,\eta)}{|\nabla_\eta\phi(\xi,\eta)|^2}\hat{f}_{m,k_1}(t,\xi-\eta)\partial_{\eta_m}\hat{g}_{k_2}(t,\eta)\partial_{\xi_n}\psi_k(\xi)d\eta dt\bigg\|_{L^2}\label{im2-1.12-1}\\
    & + \bigg\|\int_{t_1}^{t_2}\int_{\R^3}e^{it\phi(\xi,\eta)}it\partial_{\xi_n}\phi(\xi,\eta)\frac{\partial_{\xi_l}\phi(\xi,\eta)\partial_{\eta_m}\phi(\xi,\eta)}{|\nabla_\eta\phi(\xi,\eta)|^2}\hat{f}_{m,k_1}(t,\xi-\eta)\partial_{\eta_m}\hat{g}_{k_2}(t,\eta)\psi_k(\xi)d\eta dt\bigg\|_{L^2}\label{im2-1.2-1},
\end{align}
and perform an integration by parts on $\eqref{im2-1.2-1}$ using the identity $e^{it\phi(\xi,\eta)}=\sum_{j=1}^3\frac{\partial_{\eta_j}\phi(\xi,\eta)}{it|\nabla_\eta\phi(\xi,\eta)|^2}\partial_{\eta_j}e^{it\phi(\xi,\eta)}$,
\begin{align}
    &\int_{t_1}^{t_2}\int_{\R^3}e^{it\phi(\xi,\eta)}it\partial_{\xi_n}\phi(\xi,\eta)\frac{\partial_{\xi_l}\phi(\xi,\eta)\partial_{\eta_m}\phi(\xi,\eta)}{|\nabla_\eta\phi(\xi,\eta)|^2}\hat{f}_{m,k_1}(t,\xi-\eta)\partial_{\eta_m}\hat{g}_{k_2}(t,\eta)\psi_k(\xi)d\eta dt\nonumber\\
    = & -\int_{t_1}^{t_2}\int_{\R^3}e^{it\phi(\xi,\eta)}\partial_{\eta_j}\big(\frac{\partial_{\xi_n}\phi(\xi,\eta)\partial_{\eta_j}\phi(\xi,\eta)}{|\nabla_\eta\phi(\xi,\eta)|^2}\frac{\partial_{\xi_l}\phi(\xi,\eta)\partial_{\eta_m}\phi(\xi,\eta)}{|\nabla_\eta\phi(\xi,\eta)|^2}\big)\hat{f}_{m,k_1}(t,\xi-\eta)\partial_{\eta_m}\hat{g}_{k_2}(t,\eta)\psi_k(\xi)d\eta dt\label{im2-1.3-1}\\
    & -\int_{t_1}^{t_2}\int_{\R^3}e^{it\phi(\xi,\eta)}\frac{\partial_{\xi_n}\phi(\xi,\eta)\partial_{\eta_j}\phi(\xi,\eta)}{|\nabla_\eta\phi(\xi,\eta)|^2}\frac{\partial_{\xi_l}\phi(\xi,\eta)\partial_{\eta_m}\phi(\xi,\eta)}{|\nabla_\eta\phi(\xi,\eta)|^2}\partial_{\eta_j}\hat{f}_{m,k_1}(t,\xi-\eta)\partial_{\eta_m}\hat{g}_{k_2}(t,\eta)\psi_k(\xi)d\eta dt\label{im2-1.4-1}\\
    & -\int_{t_1}^{t_2}\int_{\R^3}e^{it\phi(\xi,\eta)}\frac{\partial_{\xi_n}\phi(\xi,\eta)\partial_{\eta_j}\phi(\xi,\eta)}{|\nabla_\eta\phi(\xi,\eta)|^2}\frac{\partial_{\xi_l}\phi(\xi,\eta)\partial_{\eta_m}\phi(\xi,\eta)}{|\nabla_\eta\phi(\xi,\eta)|^2}\hat{f}_{m,k_1}(t,\xi-\eta)\partial_{\eta_j}\partial_{\eta_m}\hat{g}_{k_2}(t,\eta)\psi_k(\xi)d\eta dt\label{im2-1.5-1}.
\end{align}
Using the Strichartz's estimate in Lemma \ref{strichartz}, along with Lemma \ref{dualitycomp}, Lemma \ref{chi+eta2}, \eqref{sobolevnorms}, \eqref{l2}, \eqref{l2first}, and \eqref{addder}, we get
\begin{align*}
    &\eqref{im2-1.1-1}+\|\eqref{im2-1.4-1}\|_{L^2}\\
    \lesssim & \big(\|\F^{-1}\frac{\partial_{\xi_l}\phi(\xi,\eta)\partial_{\eta_m}\phi(\xi,\eta)}{|\nabla_\eta\phi(\xi,\eta)|^2}\Tilde{\psi}_{k_1}(\nabla_\eta\phi(\xi,\eta))\Tilde{\psi}_{k_2}(\eta)\|_{L^1}+\\
    &+\|\F^{-1}\frac{\partial_{\xi_n}\phi(\xi,\eta)\partial_{\eta_j}\phi(\xi,\eta)}{|\nabla_\eta\phi(\xi,\eta)|^2}\frac{\partial_{\xi_l}\phi(\xi,\eta)\partial_{\eta_m}\phi(\xi,\eta)}{|\nabla_\eta\phi(\xi,\eta)|^2}\Tilde{\psi}_{k_1}(\nabla_\eta\phi(\xi,\eta))\Tilde{\psi}_{k_2}(\eta)\|_{L^1}\big)\times\\
    &\times \min\{2^{-M/4}\|\nabla_\xi\hat{f}_{m,k_1}\|_{L^\infty_t([2^{M-1},2^M])L^2_\xi}\|\nabla_\xi\hat{g}_{k_2}\|_{L^\infty_t([2^{M-1},2^M])H^1_\xi},\\
    &\qquad\qquad\qquad2^{M+3\min\{k,k_2\}/2}\|e^{ic_mt\la}\F^{-1}\nabla_\xi\hat{f}_{m,k_1}\|_{L^\infty_t([2^{M-1},2^M])L^2_x}\|e^{ic_nt\la}\F^{-1}\nabla_\xi\hat{g}_{k_2}\|_{L^\infty_t([2^{M-1},2^M])L^2_x}\}\\
    \lesssim & (2^{k_2-k_1}+2^{2k_2-2k_1})\min\{2^{-M/4}\|\nabla_\xi\hat{f}_{m,k_1}\|_{L^\infty_t([2^{M-1},2^M])L^2_\xi},\\
    &\qquad\qquad\qquad\qquad 2^{M+3\min\{k,k_2\}/2+k_2}\|\nabla_\xi f_{m,k_1}\|_{L^\infty_t([2^{M-1},2^M])L^2_\xi}\}\|\hat{g}_{k_2}\|_{L^\infty_t([2^{M-1},2^M])H^2_\xi}\\
    \lesssim & 2^{k_2-k_1-2k_{1,+}+\gamma k_1}\min\{2^{-M/4-k_1/2},2^{M+3\min\{k,k_2\}/2+k_2-k_1/2}\}\e_1\|\hat{g}_{k_2}\|_{L^\infty_t([2^{M-1},2^M])H^2_\xi},
\end{align*}
\begin{align*}
    &\eqref{im2-1.11-1}+\eqref{im2-1.12-1}+\|\eqref{im2-1.3-1}\|_{L^2}\\
    \lesssim &\big(\|\F^{-1}\partial_{\xi_n}\frac{\partial_{\xi_l}\phi(\xi,\eta)\partial_{\eta_m}\phi(\xi,\eta)}{|\nabla_\eta\phi(\xi,\eta)|^2}\Tilde{\psi}_{k_1}(\nabla_\eta\phi(\xi,\eta))\Tilde{\psi}_{k_2}(\eta)\|_{L^1}
    + \\
    & + \|\nabla\psi_k\|_{L^\infty}\|\F^{-1}\frac{\partial_{\xi_l}\phi(\xi,\eta)\partial_{\eta_m}\phi(\xi,\eta)}{|\nabla_\eta\phi(\xi,\eta)|^2}\Tilde{\psi}_{k_1}(\nabla_\eta\phi(\xi,\eta))\Tilde{\psi}_{k_2}(\eta)\|_{L^1}+\\
    & + \|\F^{-1}\partial_{\eta_j}\big(\frac{\partial_{\xi_n}\phi(\xi,\eta)\partial_{\eta_j}\phi(\xi,\eta)}{|\nabla_\eta\phi(\xi,\eta)|^2}\frac{\partial_{\xi_l}\phi(\xi,\eta)\partial_{\eta_m}\phi(\xi,\eta)}{|\nabla_\eta\phi(\xi,\eta)|^2}\big)\Tilde{\psi}_{k_1}(\nabla_\eta\phi(\xi,\eta))\Tilde{\psi}_{k_2}(\eta)\|_{L^1}\big)\times\\
    &\times \min\{2^{-M/4}\|\hat{f}_{m,k_1}\|_{L^\infty_t([2^{M-1},2^M])L^2_\xi}\|\nabla_\xi\hat{g}_{k_2}\|_{L^\infty_t([2^{M-1},2^M])H^1_\xi},\\
    &\qquad\qquad\qquad\qquad 2^{M+3\min\{k,k_2\}/2}\|e^{ic_mt\la}f_{m,k_1}\|_{L^\infty_t([2^{M-1},2^M])L^2_x}\|e^{ic_nt\la}\F^{-1}\nabla_\xi\hat{g}_{k_2}\|_{L^\infty_t([2^{M-1},2^M])L^2_x}\}\\
    \lesssim &(2^{k_2-2k_1}+2^{-k+k_2-k_1}+2^{k_2-2k_1})\min\{2^{-M/4}\|\hat{f}_{m,k_1}\|_{L^\infty_t([2^{M-1},2^M])L^2_\xi},\\
    &\qquad\qquad\qquad\qquad 2^{M+3\min\{k,k_2\}/2+k_2}\|f_{m,k_1}\|_{L^\infty_t([2^{M-1},2^M])L^2_x}\}\|\hat{g}_{k_2}\|_{L^\infty_t([2^{M-1},2^M])H^2_\xi}\\
    \lesssim &2^{-k+k_2-k_1-2k_{1,+}+\gamma k_1}\min\{2^{-M/4+k_1/2},2^{M+3\min\{k,k_2\}/2+k_1/2+k_2}\}\e_1\|\hat{g}_{k_2}\|_{L^\infty_t([2^{M-1},2^M])H^2_\xi},
\end{align*}
and
\begin{align*}
    &\|\eqref{im2-1.5-1}\|_{L^2}\\
    \lesssim &\|\F^{-1}\frac{\partial_{\xi_n}\phi(\xi,\eta)\partial_{\eta_j}\phi(\xi,\eta)}{|\nabla_\eta\phi(\xi,\eta)|^2}\frac{\partial_{\xi_l}\phi(\xi,\eta)\partial_{\eta_m}\phi(\xi,\eta)}{|\nabla_\eta\phi(\xi,\eta)|^2}\Tilde{\psi}_{k_1}(\nabla_\eta\phi(\xi,\eta))\Tilde{\psi}_{k_2}(\eta)\|_{L^1}\times\\
    &\times \min\{2^{-M/4}\|\hat{f}_{m,k_1}\|_{L^\infty_t([2^{M-1},2^M])H^1_\xi}\|\nabla_\xi^2\hat{g}_{k_2}\|_{L^\infty_t([2^{M-1},2^M])L^2_\xi},\\
    &\qquad\qquad\qquad\qquad 2^{M+3\min\{k,k_2\}/2}\|e^{ic_mt\la}f_{m,k_1}\|_{L^\infty_t([2^{M-1},2^M])L^2_x}\|e^{ic_nt\la}\F^{-1}\nabla^2_\xi\hat{g}_{k_2}\|_{L^\infty_t([2^{M-1},2^M])L^2_x}\}\\
    \lesssim &2^{2k_2-2k_1}\min\{2^{-M/4}\|\hat{f}_{m,k_1}\|_{L^\infty_t([2^{M-1},2^M])H^1_\xi},\\
    &\qquad\qquad\qquad\qquad 2^{M+3\min\{k,k_2\}/2}\|f_{m,k_1}\|_{L^\infty_t([2^{M-1},2^M])L^2_x}\}\|\hat{g}_{k_2}\|_{L^\infty_t([2^{M-1},2^M])H^2_\xi}\\
    \lesssim &2^{2k_2-2k_1-2k_{1,+}+\gamma k_1}\min\{2^{-M/4-k_1/2},2^{M+3\min\{k,k_2\}/2+k_1/2}\}\|\hat{g}_{k_2}\|_{L^\infty_t([2^{M-1},2^M])H^2_\xi}.
\end{align*}
Summarizing the estimates above, we have
\begin{align*}
    \|T_{0+iy} \hat{g}\|_{L^2}
    \lesssim & 2^{-2k_{1,+}+\gamma k_1}\min\{2^{-M/4+k_2-3k_1/2},2^{M+3\min\{k,k_2\}/2+k_2-k_1/2}\}\e_1\|\hat{g}\|_{L^\infty_t([2^{M-1},2^M])H^1_\xi}
\end{align*} 
and
\begin{align*}
    \|T_{1+iy} \hat{g}\|_{L^2}
    \lesssim &2^{-2k_{1,+}+\gamma k_1}\min\{2^{-M/4-k_1/2-k+k_2},2^{M+3\min\{k,k_2\}/2-k_1/2-k+2k_2}\}\e_1\|\hat{g}_{k_2}\|_{L^\infty_t([2^{M-1},2^M])H^2_\xi}.
\end{align*}
By the interpolation result in Lemma \ref{si}, we obtain
\begin{align*}
    \|T_{\alpha} \hat{g}\|_{L^2}
    \lesssim & 2^{-2k_{1,+}+\gamma k_1+\alpha(k_1-k)}\min\{2^{-M/4+k_2-3k_1/2},2^{M+3\min\{k,k_2\}/2+k_2-k_1/2}\}\e_1\|\hat{g}\|_{L^\infty_t([2^{M-1},2^M])H^{1+\alpha}_\xi},
\end{align*} 
for any $\alpha\in(0,1)$.\\
Thus, taking $\F^{-1}\hat{f}_n\Tilde{\psi}_{k_2}$ for $g$ gives
\begin{align*}
    \|\eqref{im2.4}\|_{L^2}
    \lesssim & 2^{-2k_{1,+}+\gamma k_1+\alpha(k_1-k)}\min\{2^{-M/4+k_2-3k_1/2},2^{M+3\min\{k,k_2\}/2+k_2-k_1/2}\}\e_1\|\hat{f}_{k_2}\|_{L^\infty_t([2^{M-1},2^M])H^{1+\alpha}_\xi}\\
    \lesssim & 2^{-k_2/2-\alpha k_2-2k_{1,+}+\gamma k_1-2k_{2,+}+\gamma k_2+\alpha(k_1-k)}\min\{2^{-M/4+k_2-3k_1/2},2^{M+3\min\{k,k_2\}/2-k_1/2+k_2}\}\e_1^2\\
    \lesssim & 2^{-\alpha k_2-2k_{1,+}+\gamma k_1-2k_{2,+}+\gamma k_2+\alpha(k_1-k)}\min\{2^{-M/4+k_2/2-3k_1/2},2^{M+3\min\{k,k_2\}/2-k_1/2+k_2/2}\}\e_1^2,
\end{align*}
where the bound on the Sobolev norm is provided in \eqref{sobolevnorms}.
\end{proof}
Combining the results in Lemma \ref{im2-1} and Lemma \ref{im2-3}, we compute
\begin{align*}
    &\sum_{1\leq M\leq\log T}\sum_{(k_1,k_2)\in\chi^1_k\cup\chi^2_k}\sup_{2^{M-1}\leq t_1\leq t_2\leq 2^M}2^{2k_+-\gamma k+k/2+\alpha k}\big(\|\eqref{im2.1}\|_{L^2}+\|\eqref{im2.3}\|_{L^2}\big)\\
    \lesssim & \sum_{1\leq M\leq\log T}\sum_{(k_1,k_2)\in\chi^1_k\cup\chi^2_k} 2^{2k_+-\gamma k+k/2-k_1/2-2k_{1,+}+\gamma k_1-2k_{2,+}+\gamma k_2}\times\\
    &\qquad\qquad\qquad\qquad \min\{2^{-M/4-k_1+k_2/2},2^{M+3\min\{k,k_2\}/2+k_2/2}\}\e_1^2\\
    \lesssim &\e_1^2.
\end{align*}
Additionally, Lemma \ref{im2-4} gives us
\begin{align*}
    &\sum_{1\leq M\leq\log T}\sum_{(k_1,k_2)\in\chi^1_k\cup\chi^2_k}\sup_{2^{M-1}\leq t_1\leq t_2\leq 2^M}2^{2k_+-\gamma k+k/2+\alpha k}\|\eqref{im2.4}\|_{L^2}\\
    \lesssim & \sum_{1\leq M\leq\log T}\sum_{(k_1,k_2)\in\chi^1_k\cup\chi^2_k} 2^{2k_+-\gamma k+k/2-2k_{1,+}+\gamma k_1-2k_{2,+}+\gamma k_2+\alpha(k_1-k_2)}\times\\
    &\qquad\qquad\qquad\qquad\times\min\{2^{-M/4+k_2/2-3k_1/2},2^{M+3\min\{k,k_2\}/2-k_1/2+k_2/2}\}\e_1^2\\
    \lesssim &\e_1^2.
\end{align*}
Therefore, we showed that
\begin{equation}
    \label{im2inchi2}
    \begin{aligned}
    \sum_{1\leq M\leq\log T}\sup_{2^{M-1}\leq t_1\leq t_2\leq 2^M}2^{2k_+-\gamma k+k/2+\alpha k}\bigg(\sum_{\substack{c_m+c_n\neq 0\\c_l=c_m}}A_{lmn}\sum_{(k_1,k_2)\in\chi^1_k}\|D^\alpha I^{M,2}_{k,k_1,k_2}\|_{L^2}+\\
    +\sum_{\substack{c_m+c_n\neq 0}}A_{lmn}\sum_{(k_1,k_2)\in\chi^2_k}\|D^\alpha I^{M,2}_{k,k_1,k_2}\|_{L^2}\bigg)
    \lesssim \e_1^2.
    \end{aligned}
\end{equation}

\subsubsection{$D^{\alpha}J^{M,2}_{k,k_1,k_2}$ where $(k_1,k_2)\in\chi^2_k\cup\chi^3_k$}
\label{2-1}

For the term $J^{M,2}_{k,k_1,k_2}$, recall that we restricted our consideration to only the cases when $c_m+c_n=0$. Consequently, we have the following identity
$$-\sum_{m}\frac{\xi_m}{it2c_n|\xi|^2}\partial_{\eta_m} e^{it\phi(\xi,\eta)}=e^{it\phi(\xi,\eta)},$$
which will serve as a key tool in this section, enabling us to perform integration by parts to achieve decay in the time variable. Using the above identity and the definition provided in \eqref{j2}, we can expand $D^{\alpha}J^{M,2}_{k,k_1,k_2}$ as follows,
\begin{align}
    &D^\alpha\int_{t_1}^{t_2}\int_{\R^3}e^{it\phi(\xi,\eta)}it\partial_{\xi_l}\phi(\xi,\eta)q(\xi-\eta,\eta)\hat{f}_{m,k_1}(t,\xi-\eta)\hat{f}_{n,k_2}(t,\eta)\psi_k(\xi)d\eta dt\nonumber\\
    =&-D^\alpha\int_{t_1}^{t_2}\int_{\R^3}e^{it\phi(\xi,\eta)}\partial_{\eta_m}\frac{\xi_m\partial_{\xi_l}\phi(\xi,\eta)}{2c_n|\xi|^2}q(\xi-\eta,\eta)\hat{f}_{m,k_1}(t,\xi-\eta)\hat{f}_{n,k_2}(t,\eta)\psi_k(\xi)d\eta dt\label{km2.1}\\
    &-D^\alpha\int_{t_1}^{t_2}\int_{\R^3}e^{it\phi(\xi,\eta)}\frac{\xi_m\partial_{\xi_l}\phi(\xi,\eta)}{2c_n|\xi|^2}\partial_{\eta_m}q(\xi-\eta,\eta)\hat{f}_{m,k_1}(t,\xi-\eta)\hat{f}_{n,k_2}(t,\eta)\psi_k(\xi)d\eta dt\label{km2.2}\\
    &-D^\alpha\int_{t_1}^{t_2}\int_{\R^3}e^{it\phi(\xi,\eta)}\frac{\xi_m\partial_{\xi_l}\phi(\xi,\eta)}{2c_n|\xi|^2}q(\xi-\eta,\eta)\partial_{\eta_m}\hat{f}_{m,k_1}(t,\xi-\eta)\hat{f}_{n,k_2}(t,\eta)\psi_k(\xi)d\eta dt\label{km2.3}\\
    &-D^\alpha\int_{t_1}^{t_2}\int_{\R^3}e^{it\phi(\xi,\eta)}\frac{\xi_m\partial_{\xi_l}\phi(\xi,\eta)}{2c_n|\xi|^2}q(\xi-\eta,\eta)\hat{f}_{m,k_1}(t,\xi-\eta)\partial_{\eta_m}\hat{f}_{n,k_2}(t,\eta)\psi_k(\xi)d\eta dt\label{km2.4}.
\end{align}
The terms we derived from the expansion will be estimated separately in Lemma \ref{km2-1}, Lemma \ref{km2-2}, and Lemma \ref{km2-3}, employing the interpolation result in Lemma \ref{si}.
\begin{lem}
\label{km2-1}
Suppose $t_1,t_2\in[2^{M-1},2^M]$ and $\sup_{t\in[1,T]}\|f\|_{Z}\leq \e_1$. If either $(k_1,k_2)\in\chi^2_k\cup\chi_k^3$ or $(k_1,k_2)\in\chi^1_k$ and $c_l=c_m$, then
\begin{align*}
    &\|\eqref{km2.1}\|_{L^2}
    +\|\eqref{km2.2}\|_{L^2}\\
    \lesssim & 2^{\e k_--k-2k_{1,+}-2k_{2,+}+\gamma k_1+\gamma k_2-\alpha k}\min\{2^{-M/2-k_2/4-k_1/4}
    ,2^{M+3\min\{k,k_2\}/2 +k_2/2+k_1/2}\}\e_1^2.
\end{align*}
\end{lem}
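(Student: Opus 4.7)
The plan is to combine \eqref{km2.1} and \eqref{km2.2} into a single expression via the Leibniz rule. Both terms share the structure $\hat f_{m,k_1}(\xi-\eta)\hat f_{n,k_2}(\eta)\psi_k(\xi)$, and together they equal
$$-D^\alpha\int_{t_1}^{t_2}\int_{\R^3}e^{it\phi(\xi,\eta)}\,\partial_{\eta_m}\!\left[\frac{\xi_m\partial_{\xi_l}\phi(\xi,\eta)}{2c_n|\xi|^2}q(\xi-\eta,\eta)\right]\hat f_{m,k_1}(t,\xi-\eta)\hat f_{n,k_2}(t,\eta)\psi_k(\xi)\,d\eta\,dt.$$
I will then introduce the family of operators $T_z\hat g$ on $\{0\le \Re z\le 1\}$ obtained by replacing $\hat f_{m,k_1}$ with $D^z\hat g_{k_1}$ and apply the Stein-type interpolation Lemma \ref{si}(b), following the template of Lemmas \ref{im2-1}, \ref{im2-3}, \ref{im2-4}.

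First I would record the key multiplier estimate
$$\|\F^{-1}\partial_{\eta_m}\!\left[\tfrac{\xi_m\partial_{\xi_l}\phi}{2c_n|\xi|^2}q(\xi-\eta,\eta)\right]\Tilde\psi_k(\xi)\Tilde\psi_{k_1}(\xi-\eta)\Tilde\psi_{k_2}(\eta)\|_{L^1}\lesssim 2^{\e k_--k}.$$
This uses the fact that $\partial_{\eta_m}\partial_{\xi_l}\phi$ is a constant (since $\phi$ is quadratic), so $\partial_{\eta_m}[\xi_m\partial_{\xi_l}\phi/(2c_n|\xi|^2)]$ is $0$-homogeneous in $\xi$ with effective size $2^{-k}$ by Lemma \ref{chi+eta}, while the complementary product-rule piece $[\xi_m\partial_{\xi_l}\phi/(2c_n|\xi|^2)]\partial_{\eta_m}q$ is handled by combining Lemma \ref{chi+eta2} with \eqref{q}.

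For $\Re(z)=0$ I would produce two bilinear bounds. The $L^4\times L^4\to L^2$ bilinear estimate (Lemma \ref{bilinear}) together with \eqref{l4} yields a $2^{-M/2+k_1/4+k_2/4}\|\nabla_\xi\hat g_{k_1}\|_{L^2}\|\nabla_\xi\hat f_{n,k_2}\|_{L^2}$ factor, while Lemma \ref{dualitycomp} together with \eqref{addder} and \eqref{l2} yields the complementary $2^{M+3\min\{k,k_2\}/2+k_1+k_2/2}$ factor with $\hat g$ in $H^1$. For $\Re(z)=1$ I differentiate $T_z\hat g$ in $\xi_n$ and split into four pieces according to whether the derivative falls on $\hat g_{k_1}$, on the multiplier, on $\psi_k$, or on $e^{it\phi}$. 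The last piece produces a growing factor $it\partial_{\xi_n}\phi$ which I cancel by a second integration by parts using the identity $e^{it\phi}=-\sum_j\xi_j/(it\,2c_n|\xi|^2)\partial_{\eta_j}e^{it\phi}$ valid because $c_m+c_n=0$; the resulting $\partial_{\eta_j}$ redistributes onto the multiplier (giving an extra $2^{-k}$ via a computation analogous to the one above), onto $\hat g_{k_1}$ (giving a $\nabla_\xi\hat g$, i.e.\ one more $g$-derivative), or onto $\hat f_{n,k_2}$ (controlled via \eqref{l2first}). The resulting $\Re(z)=1$ bound matches the $\Re(z)=0$ one but with $\hat g$ in $H^2$ and one extra $2^{k_1-k}$ factor.

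Finally, interpolating by Lemma \ref{si}(b) at $z=\alpha$ produces a bound with $\hat g$ in $H^{1+\alpha}_\xi$ and an interpolated $2^{\alpha(k_1-k)}$ factor. Substituting $g=\F^{-1}\hat f_m\Tilde\psi_{k_1}$ and using the bound $\|\hat f_{m,k_1}\|_{H^{1+\alpha}_\xi}\lesssim 2^{-2k_{1,+}+\gamma k_1-k_1/2-\alpha k_1}\e_1$ from \eqref{sobolevnorms} delivers the claimed estimate. The main obstacle will be organizing the numerous sub-terms produced by the $\Re(z)=1$ double integration by parts and verifying that each remains at size $2^{\e k_--k}$ at the multiplier level; this requires using the full strength of \eqref{q} (including the mixed derivative $\nabla_\eta\nabla_\xi q$ bound) and checking that the second IBP is uniformly valid across the regimes $(k_1,k_2)\in\chi^1_k$ with $c_l=c_m$, $\chi^2_k$, and $\chi^3_k$ — which it is, since $|\nabla_\eta\phi|=|2c_n\xi|\sim 2^k$ everywhere on the support of $\psi_k$.
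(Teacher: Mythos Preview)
Your proposal is correct and follows essentially the same strategy as the paper: set up an analytic family of operators, bound the endpoints $\Re z=0$ and $\Re z=1$ via bilinear estimates and Lemma~\ref{dualitycomp}, cancel the $it\partial_{\xi_n}\phi$ growth at $\Re z=1$ by a second integration by parts using $e^{it\phi}=-\sum_j\tfrac{\xi_j}{it\,2c_n|\xi|^2}\partial_{\eta_j}e^{it\phi}$, interpolate via Lemma~\ref{si}(b), and substitute $g=\F^{-1}\hat f_m\Tilde\psi_{k_1}$ with \eqref{sobolevnorms}. Two cosmetic differences: the paper keeps \eqref{km2.1} and \eqref{km2.2} as separate families $T^1_z,T^2_z$ rather than merging them by Leibniz, and the paper runs \emph{two} interpolations (one $L^2\to H^1$ feeding into $\|\hat f_{m,k_1}\|_{H^\alpha}$, one $H^1\to H^2$ feeding into $\|\hat f_{m,k_1}\|_{H^{1+\alpha}}$) and takes the $\min$ afterward, whereas you carry the $\min$ through a single $H^1\to H^2$ interpolation by invoking \eqref{addder} to lift the duality bound by one $g$-derivative; the two routes give the same numerology since the extra $2^{k_1}$ from \eqref{addder} is exactly compensated by using $\|\hat f_{m,k_1}\|_{H^{1+\alpha}}$ instead of $\|\hat f_{m,k_1}\|_{H^\alpha}$.
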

\begin{proof}
First, we define two families of operators on $\{z\in\C:0\leq \Re(z)\leq 1\}$,
\begin{align*}
    T^1_{z} \hat{g} =D^z\int_{t_1}^{t_2}\int_{\R^3}e^{it\phi(\xi,\eta)}\partial_{\eta_m}\frac{\xi_m\partial_{\xi_l}\phi(\xi,\eta)}{|\xi|^2}q(\xi-\eta,\eta)\hat{g}_{k_1}(t,\xi-\eta)\hat{f}_{n,k_2}(t,\eta)\psi_k(\xi)d\eta dt
\end{align*}
and
\begin{align*}
    T^2_{z} \hat{g} =D^z\int_{t_1}^{t_2}\int_{\R^3}e^{it\phi(\xi,\eta)}\frac{\xi_m\partial_{\xi_l}\phi(\xi,\eta)}{|\xi|^2}\partial_{\eta_m}q(\xi-\eta,\eta)\hat{g}_{k_1}(t,\xi-\eta)\hat{f}_{n,k_2}(t,\eta)\psi_k(\xi)d\eta dt,
\end{align*}
for \eqref{km2.1} and \eqref{km2.2} respectively.
We want to find bounds on the operators
$$T^1_{0+iy},T^2_{0+iy}:L^{\infty}_t([2^{M-1},2^M])L^2_\xi\rightarrow L^2_\xi$$
and 
$$T^1_{1+iy},T^2_{1+iy}:L^{\infty}_t([2^{M-1},2^M])H^1_\xi\rightarrow L^2_\xi$$
for all $y\in\R$.\\
Beginning with $T^1_{0+iy}$ and $T^2_{0+iy}$, we can employ the bilinear estimate $L^4\times L^4\rightarrow L^2$, together with \eqref{l4} and \eqref{q} to get
\begin{align*}
    &\|T^1_{0+iy} \hat{g}\|_{L^2}+\|T^2_{0+iy} \hat{g}\|_{L^2}\\
    =&\bigg\|\int_{t_1}^{t_2}\int_{\R^3}e^{it\phi(\xi,\eta)}\partial_{\eta_m}\frac{\xi_m\partial_{\xi_l}\phi(\xi,\eta)}{|\xi|^2}q(\xi-\eta,\eta)\hat{g}_{k_1}(t,\xi-\eta)\hat{f}_{n,k_2}(t,\eta)\psi_k(\xi)d\eta dt\bigg\|_{L^2}\\
    &+\bigg\|\int_{t_1}^{t_2}\int_{\R^3}e^{it\phi(\xi,\eta)}\frac{\xi_m\partial_{\xi_l}\phi(\xi,\eta)}{|\xi|^2}\partial_{\eta_m}q(\xi-\eta,\eta)\hat{g}_{k_1}(t,\xi-\eta)\hat{f}_{n,k_2}(t,\eta)\psi_k(\xi)d\eta dt\bigg\|_{L^2}\\
    \lesssim & 2^{M}\sup_{t\in[2^{M-1},2^M]}\|\F^{-1}\partial_{\eta_m}\frac{\xi_m\partial_{\xi_l}\phi(\xi,\eta)}{|\xi|^2}q(\xi-\eta,\eta)\Tilde{\psi}_k(\xi)\Tilde{\psi}_{k_1}(\xi-\eta)\Tilde{\psi}_{k_2}(\eta)\|_{L^1}\|e^{ic_mt\la}g_{k_1}\|_{L^4_x}\|e^{ic_nt\la}f_{n,k_2}\|_{L^4_x}\\
    &+2^{M}\sup_{t\in[2^{M-1},2^M]}\|\F^{-1}\frac{\xi_m\partial_{\xi_l}\phi(\xi,\eta)}{|\xi|^2}\partial_{\eta_m}q(\xi-\eta,\eta)\Tilde{\psi}_k(\xi)\Tilde{\psi}_{k_1}(\xi-\eta)\Tilde{\psi}_{k_2}(\eta)\|_{L^1}\|e^{ic_mt\la}g_{k_1}\|_{L^4_x}\|e^{ic_nt\la}f_{n,k_2}\|_{L^4_x}\\
    \lesssim & 2^{\e k_--k-M/2+k_2/4+k_1/4}\sup_{t\in[2^{M-1},2^M]}\|\nabla_\xi\hat{f}_{n,k_2}\|_{L^2_\xi}\|\nabla_\xi\hat{g}_{k_1}\|_{L^2_\xi}\\
    \lesssim & 2^{\e k_--k-M/2-2k_{2,+}+\gamma k_2-k_2/4+k_1/4}\e_1\|\hat{g}\|_{L^\infty_t([2^{M-1},2^M])H^1_\xi}.
\end{align*}
Combining this result with Lemma \ref{dualitycomp} and \eqref{l2} leads to
\begin{align*}
    \|T^1_{0+iy} \hat{g}\|_{L^2}+\|T^2_{0+iy} \hat{g}\|_{L^2}
    \lesssim &2^{\e k_--k+M+3\min\{k,k_2\}/2}\sup_{t\in[2^{M-1},2^M]}\|f_{n,k_2}\|_{L^2_x}\|\hat{g}_{k_1}\|_{L^2_\xi}\\
    \lesssim &2^{\e k_--k+M+3\min\{k,k_2\}/2-2k_{2,+}+\gamma k_2+k_2/2}\e_1\|\hat{g}\|_{L^\infty_t([2^{M-1},2^M])L^2_\xi}.
\end{align*}
Next, for $T^1_{1+iy}$ and $T^2_{1+iy}$, we compute
\begin{align}
    \|T^1_{1+iy} \hat{g}\|_{L^2}
    \leq &\bigg\|\partial_{\xi_n}\int_{t_1}^{t_2}\int_{\R^3}e^{it\phi(\xi,\eta)}\partial_{\eta_m}\frac{\xi_m\partial_{\xi_l}\phi(\xi,\eta)}{|\xi|^2}q(\xi-\eta,\eta)\hat{g}_{k_1}(t,\xi-\eta)\hat{f}_{n,k_2}(t,\eta)\psi_k(\xi)d\eta dt\bigg\|_{L^2}\nonumber\\
    \leq &\bigg\|\int_{t_1}^{t_2}\int_{\R^3}e^{it\phi(\xi,\eta)}\partial_{\eta_m}\frac{\xi_m\partial_{\xi_l}\phi(\xi,\eta)}{|\xi|^2}q(\xi-\eta,\eta)\partial_{\xi_n}\hat{g}_{k_1}(t,\xi-\eta)\hat{f}_{n,k_2}(t,\eta)\psi_k(\xi)d\eta dt\bigg\|_{L^2}\label{km2-1.1}\\
    &+ \bigg\|\int_{t_1}^{t_2}\int_{\R^3}e^{it\phi(\xi,\eta)}\partial_{\xi_n}\big(\partial_{\eta_m}\frac{\xi_m\partial_{\xi_l}\phi(\xi,\eta)}{|\xi|^2}q(\xi-\eta,\eta)\big)\hat{g}_{k_1}(t,\xi-\eta)\hat{f}_{n,k_2}(t,\eta)\psi_k(\xi)d\eta dt\bigg\|_{L^2}\label{km2-1.11}\\
    &+ \bigg\|\int_{t_1}^{t_2}\int_{\R^3}e^{it\phi(\xi,\eta)}\partial_{\eta_m}\frac{\xi_m\partial_{\xi_l}\phi(\xi,\eta)}{|\xi|^2}q(\xi-\eta,\eta)\hat{g}_{k_1}(t,\xi-\eta)\hat{f}_{n,k_2}(t,\eta)\partial_{\xi_n}\psi_k(\xi)d\eta dt\bigg\|_{L^2}\label{km2-1.12}\\
    & + \bigg\|\int_{t_1}^{t_2}\int_{\R^3}e^{it\phi(\xi,\eta)}it\partial_{\xi_n}\phi(\xi,\eta)\partial_{\eta_m}\frac{\xi_m\partial_{\xi_l}\phi(\xi,\eta)}{|\xi|^2}q(\xi-\eta,\eta)\hat{g}_{k_1}(t,\xi-\eta)\hat{f}_{n,k_2}(t,\eta)\psi_k(\xi)d\eta dt\bigg\|_{L^2}\label{km2-1.2}
\end{align}
and
\begin{align}
    \|T^2_{1+iy} \hat{g}\|_{L^2}
    \leq &\bigg\|\partial_{\xi_n}\int_{t_1}^{t_2}\int_{\R^3}e^{it\phi(\xi,\eta)}\frac{\xi_m\partial_{\xi_l}\phi(\xi,\eta)}{|\xi|^2}\partial_{\eta_m}q(\xi-\eta,\eta)\hat{g}_{k_1}(t,\xi-\eta)\hat{f}_{n,k_2}(t,\eta)\psi_k(\xi)d\eta dt\bigg\|_{L^2}\nonumber\\
    \leq &\bigg\|\int_{t_1}^{t_2}\int_{\R^3}e^{it\phi(\xi,\eta)}\frac{\xi_m\partial_{\xi_l}\phi(\xi,\eta)}{|\xi|^2}\partial_{\eta_m}q(\xi-\eta,\eta)\partial_{\xi_n}\hat{g}_{k_1}(t,\xi-\eta)\hat{f}_{n,k_2}(t,\eta)\psi_k(\xi)d\eta dt\bigg\|_{L^2}\label{km2-1.1-2}\\
    &+ \bigg\|\int_{t_1}^{t_2}\int_{\R^3}e^{it\phi(\xi,\eta)}\partial_{\xi_n}\big(\frac{\xi_m\partial_{\xi_l}\phi(\xi,\eta)}{|\xi|^2}\partial_{\eta_m}q(\xi-\eta,\eta)\big)\hat{g}_{k_1}(t,\xi-\eta)\hat{f}_{n,k_2}(t,\eta)\psi_k(\xi)d\eta dt\bigg\|_{L^2}\label{km2-1.11-2}\\
    &+ \bigg\|\int_{t_1}^{t_2}\int_{\R^3}e^{it\phi(\xi,\eta)}\frac{\xi_m\partial_{\xi_l}\phi(\xi,\eta)}{|\xi|^2}\partial_{\eta_m}q(\xi-\eta,\eta)\hat{g}_{k_1}(t,\xi-\eta)\hat{f}_{n,k_2}(t,\eta)\partial_{\xi_n}\psi_k(\xi)d\eta dt\bigg\|_{L^2}\label{km2-1.12-2}\\
    & + \bigg\|\int_{t_1}^{t_2}\int_{\R^3}e^{it\phi(\xi,\eta)}it\partial_{\xi_l}\phi(\xi,\eta)\frac{\xi_m\partial_{\xi_l}\phi(\xi,\eta)}{|\xi|^2}\partial_{\eta_m}q(\xi-\eta,\eta)\hat{g}_{k_1}(t,\xi-\eta)\hat{f}_{n,k_2}(t,\eta)\psi_k(\xi)d\eta dt\bigg\|_{L^2}\label{km2-1.2-2}.
\end{align}
Integration by parts using the identity $e^{it\phi(\xi,\eta)}=-\sum_{j=1}^3\frac{\xi_j}{it2c_n|\xi|^2}\partial_{\eta_j}e^{it\phi(\xi,\eta)}$ on the terms $\eqref{km2-1.2}$ and $\eqref{km2-1.2-2}$ gives
\begin{align}
    &\int_{t_1}^{t_2}\int_{\R^3}e^{it\phi(\xi,\eta)}it\partial_{\xi_n}\phi(\xi,\eta)\partial_{\eta_m}\frac{\xi_m\partial_{\xi_l}\phi(\xi,\eta)}{|\xi|^2}q(\xi-\eta,\eta)\hat{g}_{k_1}(t,\xi-\eta)\hat{f}_{n,k_2}(t,\eta)\psi_k(\xi)d\eta dt\nonumber\\
    = & \int_{t_1}^{t_2}\int_{\R^3}e^{it\phi(\xi,\eta)}\partial_{\eta_j}\big(\frac{\xi_j\partial_{\xi_n}\phi(\xi,\eta)}{2c_n|\xi|^2}\partial_{\eta_m}\frac{\xi_m\partial_{\xi_l}\phi(\xi,\eta)}{|\xi|^2}q(\xi-\eta,\eta)\big)\hat{g}_{k_1}(t,\xi-\eta)\hat{f}_{n,k_2}(t,\eta)\psi_k(\xi)d\eta dt\label{km2-1.3}\\
    & +\int_{t_1}^{t_2}\int_{\R^3}e^{it\phi(\xi,\eta)}\frac{\xi_j\partial_{\xi_n}\phi(\xi,\eta)}{2c_n|\xi|^2}\partial_{\eta_m}\frac{\xi_m\partial_{\xi_l}\phi(\xi,\eta)}{|\xi|^2}q(\xi-\eta,\eta)\partial_{\eta_j}\hat{g}_{k_1}(t,\xi-\eta)\hat{f}_{n,k_2}(t,\eta)\psi_k(\xi)d\eta dt\label{km2-1.4}\\
    & +\int_{t_1}^{t_2}\int_{\R^3}e^{it\phi(\xi,\eta)}\frac{\xi_j\partial_{\xi_n}\phi(\xi,\eta)}{2c_n|\xi|^2}\partial_{\eta_m}\frac{\xi_m\partial_{\xi_l}\phi(\xi,\eta)}{|\xi|^2}q(\xi-\eta,\eta)\hat{g}_{k_1}(t,\xi-\eta)\partial_{\eta_j}\hat{f}_{n,k_2}(t,\eta)\psi_k(\xi)d\eta dt\label{km2-1.5}
\end{align}
and
\begin{align}
    &\int_{t_1}^{t_2}\int_{\R^3}e^{it\phi(\xi,\eta)}it\partial_{\xi_l}\phi(\xi,\eta)\frac{\xi_m\partial_{\xi_l}\phi(\xi,\eta)}{|\xi|^2}\partial_{\eta_m}q(\xi-\eta,\eta)\hat{g}_{k_1}(t,\xi-\eta)\hat{f}_{n,k_2}(t,\eta)\psi_k(\xi)d\eta dt\nonumber\\
    = & \int_{t_1}^{t_2}\int_{\R^3}e^{it\phi(\xi,\eta)}\partial_{\eta_j}\big(\frac{\xi_j\partial_{\xi_l}\phi(\xi,\eta)}{2c_n|\xi|^2}\frac{\xi_m\partial_{\xi_l}\phi(\xi,\eta)}{|\xi|^2}\partial_{\eta_m}q(\xi-\eta,\eta)\big)\hat{g}_{k_1}(t,\xi-\eta)\hat{f}_{n,k_2}(t,\eta)\psi_k(\xi)d\eta dt\label{km2-1.3-2}\\
    & +\int_{t_1}^{t_2}\int_{\R^3}e^{it\phi(\xi,\eta)}\frac{\xi_j\partial_{\xi_l}\phi(\xi,\eta)}{2c_n|\xi|^2}\frac{\xi_m\partial_{\xi_l}\phi(\xi,\eta)}{|\xi|^2}\partial_{\eta_m}q(\xi-\eta,\eta)\partial_{\eta_j}\hat{g}_{k_1}(t,\xi-\eta)\hat{f}_{n,k_2}(t,\eta)\psi_k(\xi)d\eta dt\label{km2-1.4-2}\\
    & +\int_{t_1}^{t_2}\int_{\R^3}e^{it\phi(\xi,\eta)}\frac{\xi_j\partial_{\xi_l}\phi(\xi,\eta)}{2c_n|\xi|^2}\frac{\xi_m\partial_{\xi_l}\phi(\xi,\eta)}{|\xi|^2}\partial_{\eta_m}q(\xi-\eta,\eta)\hat{g}_{k_1}(t,\xi-\eta)\partial_{\eta_j}\hat{f}_{n,k_2}(t,\eta)\psi_k(\xi)d\eta dt\label{km2-1.5-2}.
\end{align}
We now commence the estimation of all terms. Utilizing the bilinear estimate $L^4\times L^4\rightarrow L^2$, along with Lemma \ref{chi+eta}, Lemma \ref{dualitycomp}, and the bounds in \eqref{q} and \eqref{l2first}, we find
\begin{align*}
    &\eqref{km2-1.1}+\eqref{km2-1.1-2}+\|\eqref{km2-1.4}\|_{L^2}+\|\eqref{km2-1.4-2}\|_{L^2}\\
    \lesssim & 2^{M}\big(\|\F^{-1}\partial_{\eta_m}\frac{\xi_m\partial_{\xi_l}\phi(\xi,\eta)}{|\xi|^2}q(\xi-\eta,\eta)\Tilde{\psi}_k(\xi)\Tilde{\psi}_{k_1}(\xi-\eta)\Tilde{\psi}_{k_2}(\eta)\|_{L^1}+\\
    &+\|\F^{-1}\frac{\xi_m\partial_{\xi_l}\phi(\xi,\eta)}{|\xi|^2}\partial_{\eta_m} q(\xi-\eta,\eta)\Tilde{\psi}_k(\xi)\Tilde{\psi}_{k_1}(\xi-\eta)\Tilde{\psi}_{k_2}(\eta)\|_{L^1}\\
    & + \|\F^{-1}\frac{\xi_j\partial_{\xi_n}\phi(\xi,\eta)}{2c_n|\xi|^2}\partial_{\eta_m}\frac{\xi_m\partial_{\xi_l}\phi(\xi,\eta)}{|\xi|^2}q(\xi-\eta,\eta)\Tilde{\psi}_k(\xi)\Tilde{\psi}_{k_1}(\xi-\eta)\Tilde{\psi}_{k_2}(\eta)\|_{L^1}\\
    &+ \|\F^{-1}\frac{\xi_j\partial_{\xi_n}\phi(\xi,\eta)}{2c_n|\xi|^2}\frac{\xi_m\partial_{\xi_l}\phi(\xi,\eta)}{|\xi|^2} \partial_{\eta_m}q(\xi-\eta,\eta)\Tilde{\psi}_k(\xi)\Tilde{\psi}_{k_1}(\xi-\eta)\Tilde{\psi}_{k_2}(\eta)\|_{L^1}\big)\times\\
    &\times \sup_{t\in[2^{M-1},2^M]}
    \|e^{ic_mt\la}\F^{-1}\nabla_\xi\hat{g}_{k_1}\|_{L^4_x}\|e^{ic_nt\la}f_{n,k_2}\|_{L^4_x}\\
    \lesssim & 2^{\e k_--k-M/2+k_1/4+k_2/4}(1+2^{k_2-k})\sup_{t\in[2^{M-1},2^M]}\|\nabla_\xi\hat{f}_{n,k_2}\|_{L^2_\xi}\|\nabla^2_\xi\hat{g}_{k_1}\|_{L^2_\xi}\\
    \leq & 2^{\e k_--k-M/2-2k_{2,+}+\gamma k_2+k_1/4-k_2/4}(1+2^{k_2-k})\e_1\|\hat{g}\|_{L^\infty_t([2^{M-1},2^M])H^2_\xi}.
\end{align*}
Then, the bilinear estimate $L^2\times L^\infty\rightarrow L^2$, Lemma \ref{op}, \eqref{lpnormseq1} and \eqref{l2} give 
\begin{align*}
    &\eqref{km2-1.11}+\eqref{km2-1.11-2}+\eqref{km2-1.12}+\eqref{km2-1.12-2}+\|\eqref{km2-1.3}\|_{L^2}+\|\eqref{km2-1.3-2}\|_{L^2}\\
    \lesssim &2^{M}\big[\|\F^{-1}\partial_{\xi_n}\big(\partial_{\eta_m}\frac{\xi_m\partial_{\xi_l}\phi(\xi,\eta)}{|\xi|^2}q(\xi-\eta,\eta)\big)\Tilde{\psi}_k(\xi)\Tilde{\psi}_{k_1}(\xi-\eta)\Tilde{\psi}_{k_2}(\eta)\|_{L^1}+\\
    & + \|\F^{-1}\partial_{\xi_n}\big(\frac{\xi_m\partial_{\xi_l}\phi(\xi,\eta)}{|\xi|^2}\partial_{\eta_m}q(\xi-\eta,\eta)\big)\Tilde{\psi}_k(\xi)\Tilde{\psi}_{k_1}(\xi-\eta)\Tilde{\psi}_{k_2}(\eta)\|_{L^1}+\\
    & + \|\nabla_\xi\psi_k\|_{L^\infty}\big(\|\F^{-1} \partial_{\eta_m}\frac{\xi_m\partial_{\xi_l}\phi(\xi,\eta)}{|\xi|^2}q(\xi-\eta,\eta)\Tilde{\psi}_k(\xi)\Tilde{\psi}_{k_1}(\xi-\eta)\Tilde{\psi}_{k_2}(\eta)\|_{L^1}+\\
    &+\|\F^{-1} \frac{\xi_m\partial_{\xi_l}\phi(\xi,\eta)}{|\xi|^2}\partial_{\eta_m}q(\xi-\eta,\eta)\Tilde{\psi}_k(\xi)\Tilde{\psi}_{k_1}(\xi-\eta)\Tilde{\psi}_{k_2}(\eta)\|_{L^1}\big)+\\
    & + \|\F^{-1} \partial_{\eta_j}\big(\frac{\xi_j\partial_{\xi_n}\phi(\xi,\eta)}{2c_n|\xi|^2}\partial_{\eta_m}\frac{\xi_m\partial_{\xi_l}\phi(\xi,\eta)}{|\xi|^2}q(\xi-\eta,\eta)\big)\Tilde{\psi}_k(\xi)\Tilde{\psi}_{k_1}(\xi-\eta)\Tilde{\psi}_{k_2}(\eta)\|_{L^1}\\
    & + \|\F^{-1} \partial_{\eta_j}\big(\frac{\xi_j\partial_{\xi_n}\phi(\xi,\eta)}{2c_n|\xi|^2}\frac{\xi_m\partial_{\xi_l}\phi(\xi,\eta)}{|\xi|^2}\partial_{\eta_m}q(\xi-\eta,\eta)\big)\Tilde{\psi}_k(\xi)\Tilde{\psi}_{k_1}(\xi-\eta)\Tilde{\psi}_{k_2}(\eta)\|_{L^1}\big]\times\\
    &\times\sup_{t\in[2^{M-1},2^M]}\|e^{ic_mt\la}g_{k_1}\|_{L^\infty_x}\|f_{n,k_2}\|_{L^2_x}\\
    \lesssim & 2^{\e k_--2k-M/2}\sup_{t\in[2^{M-1},2^M]}\|f_{n,k_2}\|_{L^2_x}\|g_{k_1}\|_{L^1_x}\\
    \lesssim & 2^{\e k_--2k-M/2}\sup_{t\in[2^{M-1},2^M]}\|f_{n,k_2}\|_{L^2_x}2^{k_1/2}\|\nabla^2_\xi\hat{g}_{k_1}\|_{L^2_\xi}\\
    \lesssim & 2^{\e k_--2k-M/2-2k_{2,+}+\gamma k_2+k_2/2+k_1/2}\e_1\|\hat{g}\|_{L^\infty_t([2^{M-1},2^M])H^2_\xi}
\end{align*}
and
\begin{align*}
    &\|\eqref{km2-1.5}\|_{L^2}+\|\eqref{km2-1.5-2}\|_{L^2}\\
    \lesssim &2^{M}\big(\|\F^{-1}\frac{\xi_j\partial_{\xi_n}\phi(\xi,\eta)}{2c_n|\xi|^2}\partial_{\eta_m}\frac{\xi_m\partial_{\xi_l}\phi(\xi,\eta)}{|\xi|^2}q(\xi-\eta,\eta)\Tilde{\psi}_k(\xi)\Tilde{\psi}_{k_1}(\xi-\eta)\Tilde{\psi}_{k_2}(\eta)\|_{L^1}\\
    &+\|\F^{-1}\frac{\xi_j\partial_{\xi_n}\phi(\xi,\eta)}{2c_n|\xi|^2}\frac{\xi_m\partial_{\xi_l}\phi(\xi,\eta)}{|\xi|^2}\partial_{\eta_m}q(\xi-\eta,\eta)\Tilde{\psi}_k(\xi)\Tilde{\psi}_{k_1}(\xi-\eta)\Tilde{\psi}_{k_2}(\eta)\|_{L^1}\big)\times\\
    &\times\sup_{t\in[2^{M-1},2^M]}\|e^{ic_mt\la}g_{k_1}\|_{L^\infty_x}\|\nabla_\xi\hat{f}_{n,k_2}\|_{L^2_\xi}\\
    \lesssim & 2^{\e k_--k-M/2+k_2-k+k_1/2}\sup_{t\in[2^{M-1},2^M]}\|\nabla_\xi \hat{f}_{n,k_2}\|_{L^2_\xi}\|\nabla^2_\xi\hat{g}_{k_1}\|_{L^2_\xi}\\
    \lesssim & 2^{\e k_--2k-M/2-2k_{2,+}+\gamma k_2+k_2/2+k_1/2}\e_1\|\hat{g}\|_{L^\infty_t([2^{M-1},2^M])H^2_\xi}.
\end{align*}
Furthermore, employing the duality result stated in Lemma \ref{dualitycomp}, we can find a new set of bounds on the previous terms. We compute
\begin{align*}
    &\eqref{km2-1.1}+\eqref{km2-1.1-2}+\|\eqref{km2-1.4}\|_{L^2}+\|\eqref{km2-1.4-2}\|_{L^2}\\
    \lesssim & 2^{\e k_--k+M+3\min\{k,k_2\}/2}(1+2^{k_2-k})\sup_{t\in[2^{M-1},2^M]}\|\hat{f}_{n,k_2}\|_{L^2_\xi}\|\nabla_\xi\hat{g}_{k_1}\|_{L^2_\xi}\\
    \lesssim & 2^{\e k_--k+M+3\min\{k,k_1\}/2-2k_{2,+}+\gamma k_2+k_2/2}(1+2^{k_2-k})\e_1\|\hat{g}\|_{L^\infty_t([2^{M-1},2^M])H^1_\xi},
\end{align*}
\begin{align*}
    &\eqref{km2-1.11}+\eqref{km2-1.11-2}+\eqref{km2-1.12}+\eqref{km2-1.12-2}+\|\eqref{km2-1.3}\|_{L^2}+\|\eqref{km2-1.3-2}\|_{L^2}\\
    \lesssim & 2^{\e k_--2k+M+3\min\{k,k_2\}/2}\sup_{t\in[2^{M-1},2^M]}\|\hat{f}_{n,k_2}\|_{L^2_\xi}\|\hat{g}_{k_1}\|_{L^2_\xi}\\
    \lesssim & 2^{\e k_-+M+3\min\{k,k_1\}/2-2k_{2,+}+\gamma k_2+k_2/2-2k+k_1}\e_1\|\hat{g}\|_{L^\infty_t([2^{M-1},2^M])H^1_\xi},
\end{align*}
and
\begin{align*}
    &\|\eqref{km2-1.5}\|_{L^2}+\|\eqref{km2-1.5-2}\|_{L^2}\\
    \lesssim & 2^{\e k_-+M+3\min\{k,k_2\}/2+k_2-2k}\sup_{t\in[2^{M-1},2^M]}\|\nabla_\xi\hat{f}_{n,k_2}\|_{L^2_\xi}\|\hat{g}_{k_1}\|_{L^2_\xi}\\
    \lesssim & 2^{\e k_-+M+3\min\{k,k_1\}/2-2k_{2,+}+\gamma k_2+k_2/2-2k+k_1}\e_1\|\hat{g}\|_{L^\infty_t([2^{M-1},2^M])H^1_\xi}.
\end{align*}
To summarize, we have
\begin{align}
    \|T^1_{0+iy}\hat{g}\|_{L^2}+\|T^2_{0+iy}\hat{g}\|_{L^2}\lesssim 2^{\e k_--k-M/2-2k_{2,+}+\gamma k_2-k_2/4+k_1/4}\e_1\|\hat{g}\|_{L^\infty_t([2^{M-1},2^M])H^1_\xi}
\end{align}
and
\begin{align}
    \|T^1_{1+iy}\hat{g}\|_{L^2}+\|T^2_{1+iy}\hat{g}\|_{L^2}
    &\lesssim 2^{\e k_--k-M/2-2k_{2,+}+\gamma k_2-k_2/4+k_1/4+k_1-k}\e_1\|\hat{g}\|_{L^\infty_t([2^{M-1},2^M])H^2_\xi}.
\end{align}
Using the interpolation result from Lemma \ref{si}, we get
\begin{align*}
    \|T^1_{\alpha}\hat{g}\|_{L^2}+\|T^2_{\alpha}\hat{g}\|_{L^2}\lesssim 2^{\e k_--k-M/2-2k_{2,+}+\gamma k_2 -k_2/4+k_1/4+(k_1-k)\alpha}\e_1\|\hat{g}\|_{L^\infty_t([2^{M-1},2^M])H^{1+\alpha}_\xi},
\end{align*}
for any $\alpha\in(0,1)$.

If we look at a different set of bounds,
\begin{align}
    \|T^1_{0+iy}\hat{g}\|_{L^2}+\|T^2_{0+iy}\hat{g}\|_{L^2}\lesssim 2^{\e k_--k+M+3\min\{k,k_2\}/2-2k_{2,+}+\gamma k_2+k_2/2}\e_1\|\hat{g}\|_{L^\infty_t([2^{M-1},2^M])L^2_\xi}
\end{align}
and
\begin{align}
    \|T^1_{1+iy}\hat{g}\|_{L^2}+\|T^2_{1+iy}\hat{g}\|_{L^2}\lesssim 2^{\e k_--k+M+3\min\{k,k_2\}/2-2k_{2,+}+\gamma k_2 +k_2/2+k_1-k}\e_1\|\hat{g}\|_{L^\infty_t([2^{M-1},2^M])H^1_\xi},
\end{align}
we obtain for any $\alpha\in(0,1)$,
\begin{align*}
    \|T^1_{\alpha}\hat{g}\|_{L^2}+\|T^2_{\alpha}\hat{g}\|_{L^2}\lesssim 2^{\e k_--k+M+3\min\{k,k_2\}/2-2k_{2,+}+\gamma k_2 +k_2/2+(k_1-k)\alpha}\e_1\|\hat{g}\|_{L^\infty_t([2^{M-1},2^M])H^\alpha_\xi}.
\end{align*}
Thus, combining the two bounds we got on $\|T^1_{\alpha}\hat{g}\|_{L^2}+\|T^2_{\alpha}\hat{g}\|_{L^2}$, we have
\begin{align*}
    &\bigg\|D^\alpha\int_{t_1}^{t_2}\int_{\R^3}e^{it\phi(\xi,\eta)}\partial_{\eta_m}\frac{\xi_m\partial_{\xi_l}\phi(\xi,\eta)}{2c_n|\xi|^2}q(\xi-\eta,\eta)\hat{f}_{m,k_1}(t,\xi-\eta)\hat{f}_{n,k_2}(t,\eta)\psi_k(\xi)d\eta dt\bigg\|_{L^2}\\
    &+\bigg\|D^\alpha\int_{t_1}^{t_2}\int_{\R^3}e^{it\phi(\xi,\eta)}\frac{\xi_m\partial_{\xi_l}\phi(\xi,\eta)}{2c_n|\xi|^2}\partial_{\eta_m}q(\xi-\eta,\eta)\hat{f}_{m,k_1}(t,\xi-\eta)\hat{f}_{n,k_2}(t,\eta)\psi_k(\xi)d\eta dt\bigg\|_{L^2}\\
    \lesssim & 2^{\e k_--k-2k_{2,+}+\gamma k_2+(k_1-k)\alpha}\min\{2^{-M/2-k_2/4+k_1/4}\e_1\|\hat{g}_{k_1}\|_{L^\infty_t([2^{M-1},2^M])H^{1+\alpha}_\xi}
    ,\\
    &\qquad\qquad\qquad 2^{M+3\min\{k,k_2\}/2 +k_2/2}\e_1\|\hat{g}_{k_1}\|_{L^\infty_t([2^{M-1},2^M])H^\alpha_\xi}\}.
\end{align*}
Finally, by substituting $f_m$ for $g$ and employing the bounds on $\|\hat{f}_{m,k}\|_{H^\alpha}$ and $\|\hat{f}_{m,k}\|_{H^{1+\alpha}}$ from equation \eqref{sobolevnorms}, we arrive at the desired result.
\end{proof}
We move on to the estimation of the next term in the splitting of $J^{M,2}_{k,k_1,k_2}$.
\begin{lem}
\label{km2-2}
Suppose $t_1,t_2\in[2^{M-1},2^M]$ and $\sup_{t\in[1,T]}\|f\|_{Z}\leq \e_1$. If either $(k_1,k_2)\in\chi^2_k\cup\chi_k^3$ or $(k_1,k_2)\in\chi^1_k$ and $c_l=c_m$, then
\begin{align*}
    \|\eqref{km2.3}\|_{L^2}
    \lesssim  &2^{\e k_--k-\alpha k-2k_{1,+}+\gamma k_1-2k_{2,+}+\gamma k_2}\min\{2^{-M/2+\gamma M/4-k_1/2+\gamma k_{1,+}/4+\gamma k_{2,+}/4},\\
    &\qquad\qquad\qquad\qquad2^{M+3\min\{k,k_2\}/2-k_1/2+3 k_2/2}\}\e_1^2.
\end{align*}
\end{lem}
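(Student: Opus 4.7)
The proof will mirror the strategy of Lemma \ref{km2-1}, using the interpolation result in Lemma \ref{si}(b) applied to a suitable operator family. Define, for $z$ in the strip $\{0\le\Re(z)\le 1\}$,
\begin{align*}
T_z\hat g = D^z\int_{t_1}^{t_2}\int_{\R^3}e^{it\phi(\xi,\eta)}\frac{\xi_m\partial_{\xi_l}\phi(\xi,\eta)}{2c_n|\xi|^2}q(\xi-\eta,\eta)\partial_{\eta_m}\hat g_{k_1}(t,\xi-\eta)\hat f_{n,k_2}(t,\eta)\psi_k(\xi)\,d\eta\,dt.
\end{align*}
The goal is to prove that $T_{0+iy}:L^\infty_t([2^{M-1},2^M])H^1_\xi\to L^2_\xi$ and $T_{1+iy}:L^\infty_t([2^{M-1},2^M])H^2_\xi\to L^2_\xi$ are bounded with norms whose product is consistent with the stated estimate, interpolate to get $T_\alpha$ on $H^{1+\alpha}$, and finally substitute $g=f_m$ using \eqref{sobolevnorms}.

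For $T_{0+iy}$ I would combine two competing estimates. The first is the $L^3\times L^6\to L^2$ bilinear estimate of Lemma \ref{bilinear}, pairing $\|e^{ic_mt\la}\F^{-1}\nabla_\xi\hat g_{k_1}\|_{L^3}$ (controlled via \eqref{l3nablaf}, which contributes the $2^{\gamma M/8+\gamma k_{1,+}/4}$ factor) with $\|e^{ic_nt\la}f_{n,k_2}\|_{L^6}\lesssim 2^{-M}\|f_{n,k_2}\|_{L^{6/5}}$ from Lemma \ref{op} and \eqref{lpnormseq1}; multiplying by $2^M$ from the time integration and the multiplier norm $2^{\e k_--k}$ from Lemma \ref{chi+eta2} together with the hypothesis \eqref{q} produces the first branch of the $\min$. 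The second estimate is the $L^\infty\times L^2\to L^2$ duality bound via Lemma \ref{dualitycomp}, which yields the $2^{M+3\min\{k,k_2\}/2}$ branch. For $T_{1+iy}$, I differentiate once in $\xi$, producing four terms by the product rule: the derivative hits (i) $\partial_{\eta_m}\hat g_{k_1}$, yielding $\nabla_\xi^2\hat g$; (ii) the prefactor $\tfrac{\xi_m\partial_{\xi_l}\phi}{|\xi|^2}q$, absorbing $2^{-k}$ (or hitting $q$, still $2^{-k+\e k_-}$ by \eqref{q}); (iii) $\psi_k$, losing $2^{-k}$; (iv) the exponential, which produces a factor $it\partial_{\xi_n}\phi$. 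For (iv) I integrate by parts in $\eta$ using $e^{it\phi}=-\sum_j\tfrac{\xi_j}{it\cdot 2c_n|\xi|^2}\partial_{\eta_j}e^{it\phi}$ — permissible because $c_m+c_n=0$ throughout $J^{M,2}$, so $\nabla_\eta\phi=2c_m\xi$ is nondegenerate at size $2^k$ for every configuration of $(k_1,k_2)$. The new $\partial_{\eta_j}$ then distributes onto the polynomial coefficient, $q$ (handled by \eqref{q}), $\partial_{\eta_m}\hat g_{k_1}$ (producing $\nabla_\xi^2\hat g$), or $\hat f_{n,k_2}$ (producing $\nabla_\xi\hat f_{n,k_2}$, bounded via \eqref{l2first}). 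Each resulting integral is estimated by the same two bilinear tools as in $T_{0+iy}$.

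Interpolating via Lemma \ref{si}(b) yields
\[
\|T_\alpha\hat g\|_{L^2}\lesssim 2^{\e k_--k-2k_{2,+}+\gamma k_2+\alpha(k_1-k)}\min\{2^{-M/2+\gamma M/4-k_1/2+\gamma k_{1,+}/4+\gamma k_{2,+}/4},2^{M+3\min\{k,k_2\}/2+3k_2/2}\}\e_1\|\hat g\|_{L^\infty_tH^{1+\alpha}_\xi},
\]
and substituting $g=f_m$ with $\|\hat f_{m,k_1}\|_{H^{1+\alpha}}\lesssim 2^{-2k_{1,+}+\gamma k_1-k_1/2-\alpha k_1}\e_1$ from \eqref{sobolevnorms} absorbs the $\alpha(k_1-k)-\alpha k_1=-\alpha k$ and $-k_1/2$ exactly, producing the claimed estimate. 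The main obstacle is bookkeeping rather than technique: tracking the $\gamma k_{1,+}/4+\gamma k_{2,+}/4$ accurately through the dispersive branch (these arise from the $2^{\gamma k/8}$ Bernstein factors in \eqref{l3nablaf} and \eqref{linfinity}) and confirming that the integration-by-parts step in term (iv) does not force a second derivative on $\hat f_m$ beyond $\alpha<1/2$; care must be taken to ensure that whenever $\partial_{\eta_j}$ falls on $\partial_{\eta_m}\hat g$ the resulting $\nabla_\xi^2\hat g$ is only invoked at $\Re(z)=1$ (so that $H^2$ is used only at the endpoint being interpolated).
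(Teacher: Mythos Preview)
Your architecture—define $T_z$, bound at the endpoints, interpolate via Lemma~\ref{si}(b), substitute $g=f_m$ using \eqref{sobolevnorms}—matches the paper's exactly, including the integration by parts in $\eta$ at $\Re(z)=1$. The gap is in the dispersive branch of your $T_{0+iy}$ bound. You invoke \eqref{l3nablaf} to control $\|e^{ic_mt\la}\F^{-1}\nabla_\xi\hat g_{k_1}\|_{L^3}$, but \eqref{l3nablaf} is specific to the bootstrap profile $f_l$: it uses the $D^{1+\alpha}$ weight of the $Z$-norm through Lemma~\ref{lpnorms}. For Lemma~\ref{si}(b) one needs $\|T_{0+iy}\hat g\|_{L^2}\le M_0\|\hat g\|_{L^\infty_t H^1_\xi}$ for \emph{generic} $g$, and an $L^3$ bound on $e^{ic_mt\la}\F^{-1}\nabla_\xi\hat g_{k_1}$ in terms of $\|\hat g\|_{H^1}$ alone cannot retain the time decay (Bernstein from $L^2$ gives no $M$-decay; dispersion through $L^{3/2}$ would require $\|\nabla_\xi^2\hat g_{k_1}\|_{L^2}$, i.e., already $H^2$).

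The paper sidesteps this by reversing the pairing at $z=0$: use $L^2\times L^\infty$, place $\nabla_\xi\hat g_{k_1}$ in $L^2$ (bounded directly by $\|\hat g\|_{H^1}$), and draw the $2^{-3M/2+\gamma M/8}$ decay from $\|e^{ic_nt\la}f_{n,k_2}\|_{L^\infty}$ via \eqref{linfinity}. Thus the $\gamma M/8+\gamma k_2/4$ at $z=0$ comes from the $f_{n,k_2}$ factor, while the $\gamma k_{1,+}/4$ enters only at $z=1$, in the $L^6\times L^3$ estimate for the term where $\partial_{\eta_j}$ lands on $\hat f_{n,k_2}$. A downstream symptom of the mis-pairing: your displayed $T_\alpha$ bound carries an extra $2^{-k_1/2}$ that should not appear in the operator norm; that factor enters only after substituting $\|\hat f_{m,k_1}\|_{H^{1+\alpha}}\lesssim 2^{-2k_{1,+}+\gamma k_1-k_1/2-\alpha k_1}\e_1$.
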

\begin{proof}
Define a family $T_z$ of operators on $\{z\in\C:0\leq \Re(z)\leq 1\}$,
\begin{align*}
    T_{z} \hat{g} (\xi)=D^z\int_{t_1}^{t_2}\int_{\R^3}e^{it\phi(\xi,\eta)}\frac{\xi_m\partial_{\xi_l}\phi(\xi,\eta)}{2c_n|\xi|^2}q(\xi-\eta,\eta)\partial_{\eta_m}\hat{g}_{k_1}(t,\xi-\eta)\hat{f}_{n,k_2}(t,\eta)\psi_k(\xi)d\eta dt.
\end{align*}
We want to show
$T_{0+iy}:L^{\infty}_t([2^{M-1},2^M])H^1_\xi\rightarrow L^2_\xi$
and 
$T_{1+iy}:L^{\infty}_t([2^{M-1},2^M])H^2_\xi\rightarrow L^2_\xi$
are bounded for all $y\in\R$.\\
We estimate the operator $T_{0+iy}$ by employing the bilinear estimate $L^{2}\times L^{\infty}\rightarrow L^2$, along with Lemma \ref{dualitycomp} and \eqref{linfinity},
\begin{align*}
    &\|T_{0+it}\hat{g}\|_{L^2}\\
    =&\bigg\|\int_{t_1}^{t_2}\int_{\R^3}e^{it\phi(\xi,\eta)}\frac{\xi_m\partial_{\xi_l}\phi(\xi,\eta)}{2c_n|\xi|^2}q(\xi-\eta,\eta)\partial_{\eta_m}\hat{g}_{k_1}(t,\xi-\eta)\hat{f}_{n,k_2}(t,\eta)\psi_k(\xi)d\eta dt\bigg\|_{L^2}\\
    \lesssim & 2^{M}\sup_{t\in[2^{M-1},2^M]}\|\F^{-1} \frac{\xi_m\partial_{\xi_l}\phi(\xi,\eta)}{2c_n|\xi|^2} q(\xi-\eta,\eta)\Tilde{\psi}_k(\xi)\Tilde{\psi}_{k_1}(\xi-\eta)\Tilde{\psi}_{k_2}(\eta)\|_{L^1}\times\\
    &\times\min\{\|e^{ic_mt\la}\F^{-1}\nabla\hat{g}_{k_1}\|_{L^2_x}\|e^{ic_nt\la}f_{n,k_2}\|_{L^\infty_x},2^{3\min\{k,k_2\}/2}\|e^{ic_mt\la}\F^{-1}\nabla\hat{g}_{k_1}\|_{L^2_x}\|e^{ic_nt\la}f_{n,k_2}\|_{L^2_x}\}\\
    \lesssim & 2^{\e k_-+M+k_2-k}\sup_{t\in[2^{M-1},2^M]}\min\{\|\nabla\hat{g}_{k_1}\|_{L^2_\xi}\|e^{ic_nt\la}f_{k_2}\|_{L^\infty_x},2^{3\min\{k,k_2\}/2}\|\nabla\hat{g}_{k_1}\|_{L^2_x}\|f_{n,k_2}\|_{L^2_x}\}\\
    \lesssim &2^{\e k_--k-2k_{2,+}+\gamma k_2}\min\{2^{-M/2+\gamma M/8+\gamma k_2/4},2^{M+ 3\min\{k,k_2\}/2+3k_2/2}\}\e_1\|\hat{g}_{k_1}\|_{L^\infty_t([2^{M-1},2^M])H^1_\xi}.
\end{align*}
Next, for the operator $T_{1+iy}$, we compute
\begin{align}
    &\|T_{1+it}\hat{g}\|_{L^2}\nonumber\\    =&\bigg\|\partial_{\xi_p}\int_{t_1}^{t_2}\int_{\R^3}e^{it\phi(\xi,\eta)}\frac{\xi_m\partial_{\xi_l}\phi(\xi,\eta)}{2c_n|\xi|^2}q(\xi-\eta,\eta)\partial_{\eta_m}\hat{g}_{k_1}(t,\xi-\eta)\hat{f}_{n,k_2}(t,\eta)\psi_k(\xi)d\eta dt\bigg\|_{L^2}\nonumber\\
    \leq &\bigg\|\int_{t_1}^{t_2}\int_{\R^3}e^{it\phi(\xi,\eta)}\frac{it\partial_{\xi_p}\phi(\xi,\eta)\xi_m\partial_{\xi_l}\phi(\xi,\eta)}{2c_n|\xi|^2}q(\xi-\eta,\eta)\partial_{\eta_m}\hat{g}_{k_1}(t,\xi-\eta)\hat{f}_{n,k_2}(t,\eta)\psi_k(\xi)d\eta dt\bigg\|_{L^2}\label{km2-2.1}\\
    & + \bigg\|\int_{t_1}^{t_2}\int_{\R^3}e^{it\phi(\xi,\eta)}\partial_{\xi_p}\big(\frac{\xi_m\partial_{\xi_l}\phi(\xi,\eta)}{2c_n|\xi|^2}q(\xi-\eta,\eta)\big)\partial_{\eta_m}\hat{g}_{k_1}(t,\xi-\eta)\hat{f}_{n,k_2}(t,\eta)\psi_k(\xi)d\eta dt\bigg\|_{L^2}\label{km2-2.2}\\
    & + \bigg\|\int_{t_1}^{t_2}\int_{\R^3}e^{it\phi(\xi,\eta)}\frac{\xi_m\partial_{\xi_l}\phi(\xi,\eta)}{2c_n|\xi|^2}q(\xi-\eta,\eta)\partial_{\xi_p}\partial_{\eta_m}\hat{g}_{k_1}(t,\xi-\eta)\hat{f}_{n,k_2}(t,\eta)\psi_k(\xi)d\eta dt\bigg\|_{L^2}\label{km2-2.3}\\
    & + \bigg\|\int_{t_1}^{t_2}\int_{\R^3}e^{it\phi(\xi,\eta)}\frac{\xi_m\partial_{\xi_l}\phi(\xi,\eta)}{2c_n|\xi|^2}q(\xi-\eta,\eta)\partial_{\eta_m}\hat{g}_{k_1}(t,\xi-\eta)\hat{f}_{n,k_2}(t,\eta)\partial_{\xi_p}\psi_k(\xi)d\eta dt\bigg\|_{L^2}\label{km2-2.7}.
\end{align}
Immediately, we see
\begin{align*}
    &\|\eqref{km2-2.7}\|_{L^2}\\
    \lesssim &\|T_{0+it}\hat{g}\|_{L^2}\|\nabla_\xi\psi_k\|_{L^\infty}\\
    \lesssim & 2^{-2k+\e k_--2k_{2,+}+\gamma k_2}\min\{2^{-M/2+\gamma M/8+\gamma k_2/4}, 2^{M+3\min\{k,k_2\}/2+3k_2/2}\}\e_1 \|\hat{g}_{k_1}\|_{L^\infty_t([2^{M-1},2^M])H^1_\xi}\\
    \lesssim & 2^{-2k+\e k_--2k_{2,+}+\gamma k_2+k_1}\min\{2^{-M/2+\gamma M/8+\gamma k_2/4}, 2^{M+3\min\{k,k_2\}/2+3k_2/2}\}\e_1 \|\hat{g}_{k_1}\|_{L^\infty_t([2^{M-1},2^M])H^2_\xi},
\end{align*}
where the last line uses \eqref{addder}.\\
To handle the term \eqref{km2-2.1}, we proceed with integration by parts using $e^{it\phi(\xi,\eta)}=-\sum_{j=1}^3\frac{\xi_j}{it2c_n|\xi|^2}\partial_{\eta_j}e^{it\phi(\xi,\eta)}$. Hence, we have
\begin{align}
    &\int_{t_1}^{t_2}\int_{\R^3}\big(-\sum_{j}\frac{\xi_j}{2c_n|\xi|^2}\partial_{\eta_j}\big)e^{it\phi(\xi,\eta)}\frac{\partial_{\xi_p}\phi(\xi,\eta)\xi_m\partial_{\xi_l}\phi(\xi,\eta)}{2c_n|\xi|^2}q(\xi-\eta,\eta)\partial_{\eta_m}\hat{g}_{k_1}(t,\xi-\eta)\hat{f}_{n,k_2}(t,\eta)\psi_k(\xi)d\eta dt\nonumber\\
    =&\int_{t_1}^{t_2}\int_{\R^3}e^{it\phi(\xi,\eta)}\partial_{\eta_j}\big(\frac{\xi_j\partial_{\xi_p}\phi(\xi,\eta)\xi_m\partial_{\xi_l}\phi(\xi,\eta)}{4c_n^2|\xi|^4}q(\xi-\eta,\eta)\big)\partial_{\eta_m}\hat{g}_{k_1}(t,\xi-\eta)\hat{f}_{n,k_2}(t,\eta)\psi_k(\xi)d\eta dt\label{km2-2.4}\\
    &+\int_{t_1}^{t_2}\int_{\R^3}e^{it\phi(\xi,\eta)}\frac{\xi_j\partial_{\xi_p}\phi(\xi,\eta)\xi_m\partial_{\xi_l}\phi(\xi,\eta)}{4c_n^2|\xi|^4}q(\xi-\eta,\eta)\partial_{\eta_j}\partial_{\eta_m}\hat{g}_{k_1}(t,\xi-\eta)\hat{f}_{n,k_2}(t,\eta)\psi_k(\xi)d\eta dt\label{km2-2.5}\\
    &+\int_{t_1}^{t_2}\int_{\R^3}e^{it\phi(\xi,\eta)}\frac{\xi_j\partial_{\xi_p}\phi(\xi,\eta)\xi_m\partial_{\xi_l}\phi(\xi,\eta)}{4c_n^2|\xi|^4}q(\xi-\eta,\eta)\partial_{\eta_m}\hat{g}_{k_1}(t,\xi-\eta)\partial_{\eta_j}\hat{f}_{n,k_2}(t,\eta)\psi_k(\xi)d\eta dt\label{km2-2.6}.
\end{align}
Now, we combine similar terms and perform estimations. Employing the bilinear estimate $L^4\times L^4\rightarrow L^2$,  Lemma \ref{chi+eta}, Lemma \ref{dualitycomp}, together with estimations \eqref{q}, \eqref{addder}, \eqref{l2} and \eqref{l4}, we have
\begin{align*}
    &\|\eqref{km2-2.2}\|_{L^2}+\|\eqref{km2-2.4}\|_{L^2}\\
    \lesssim &2^M\sup_{t\in[2^{M-1},2^M]}\big(\|\F^{-1} \partial_{\xi_p}\big(\frac{\xi_m\partial_{\xi_l}\phi(\xi,\eta)}{2c_n|\xi|^2}q(\xi-\eta,\eta)\big)\Tilde{\psi}_k(\xi)\Tilde{\psi}_{k_1}(\xi-\eta)\Tilde{\psi}_{k_2}(\eta)\|_{L^1}\\
    &+\|\F^{-1} \partial_{\eta_j}\big(\frac{\xi_j\partial_{\xi_p}\phi(\xi,\eta)\xi_m\partial_{\xi_l}\phi(\xi,\eta)}{4c_n^2|\xi|^4}q(\xi-\eta,\eta)\big)\Tilde{\psi}_k(\xi)\Tilde{\psi}_{k_1}(\xi-\eta)\Tilde{\psi}_{k_2}(\eta)\|_{L^1}\big)\times\\
    &\times \min\{\|e^{ic_mt\la}\F^{-1}\nabla\hat{g}_{k_1}\|_{L^4_x}\|e^{ic_nt\la}f_{n,k_2}\|_{L^4_x},2^{3\min\{k,k_2\}/2}\|e^{ic_mt\la}\F^{-1}\nabla\hat{g}_{k_1}\|_{L^2_x}\|e^{ic_nt\la}f_{n,k_2}\|_{L^2_x}\}\\
    \lesssim & 2^{\e k_-+M+k_2-2k}\sup_{t\in[2^{M-1},2^M]}\min\{\|e^{ic_mt\la}\F^{-1}\nabla\hat{g}_{k_1}\|_{L^4_x}\|e^{ic_nt\la}f_{n,k_2}\|_{L^4_x},\\
    &\qquad\qquad\qquad\qquad 2^{3\min\{k,k_2\}/2}\|e^{ic_mt\la}\F^{-1}\nabla\hat{g}_{k_1}\|_{L^2_x}\|e^{ic_nt\la}f_{n,k_2}\|_{L^2_x}\}\\
    \lesssim & 2^{\e k_-+M+k_2-2k}\sup_{t\in[2^{M-1},2^M]}\min\{2^{-3M/4+k_1/4}\|\nabla^2\hat{g}_{k_1}\|_{L^2_\xi}\|e^{ic_nt\la}f_{n,k_2}\|_{L^4_x},\\
    &\qquad\qquad\qquad\qquad 2^{3\min\{k,k_2\}/2}2^{k_1}\|\nabla^2\hat{g}_{k_1}\|_{L^2_\xi}\|f_{n,k_2}\|_{L^2_x}\}\\
    \lesssim & 2^{\e k_--2k-2k_{2,+}+\gamma k_2}\min\{2^{-M/2+k_1/4+3k_2/4},2^{M+3\min\{k,k_2\}/2+k_1+3k_2/2}\}\e_1\|\hat{g}_{k_1}\|_{L^\infty_t([2^{M-1},2^M])H^2_\xi}.
\end{align*}
Applying the bilinear estimate $L^2\times L^\infty\rightarrow L^2$, together with Lemma \ref{chi+eta}, Lemma \ref{dualitycomp}, bounds in \eqref{q}, \eqref{l2} and \eqref{linfinity}, we obtain
\begin{align*}
    &\|\eqref{km2-2.3}\|_{L^2}+\|\eqref{km2-2.5}\|_{L^2}\\
    \lesssim & 2^M\big(\|\F^{-1} \frac{\xi_m\partial_{\xi_l}\phi(\xi,\eta)}{2c_n|\xi|^2} q(\xi-\eta,\eta)\Tilde{\psi}_k(\xi)\Tilde{\psi}_{k_1}(\xi-\eta)\Tilde{\psi}_{k_2}(\eta)\|_{L^1}+\\
    &+\|\F^{-1} \frac{\xi_j\partial_{\xi_p}\phi(\xi,\eta)\xi_m\partial_{\xi_l}\phi(\xi,\eta)}{4c_n^2|\xi|^4} q(\xi-\eta,\eta)\Tilde{\psi}_k(\xi)\Tilde{\psi}_{k_1}(\xi-\eta)\Tilde{\psi}_{k_2}(\eta)\|_{L^1}\big)\times\\
    &\times\sup_{t\in[2^{M-1},2^M]}\min\{\|e^{ic_mt\la}\F^{-1}\nabla^2\hat{g}_{k_1}\|_{L^{2_x}}\|e^{ic_nt\la}f_{n,k_2}\|_{L^\infty_x},\\
    &\qquad\qquad\qquad\qquad 2^{3\min\{k,k_2\}/2}\|e^{ic_mt\la}\F^{-1}\nabla^2\hat{g}_{k_1}\|_{L^2_x}\|e^{ic_nt\la}f_{n,k_2}\|_{L^2_x}\}\\
    \lesssim &2^{\e k_-+M+k_2-k}(1+2^{k_2-k})\sup_{t\in[2^{M-1},2^M]}\min\{\|\nabla^2\hat{g}_{k_1}\|_{L^{2}_\xi}\|e^{ic_nt\la}f_{n,k_2}\|_{L^\infty_x},\\
    &\qquad\qquad\qquad\qquad 2^{3\min\{k,k_2\}/2}\|\nabla^2\hat{g}_{k_1}\|_{L^2_\xi}\|f_{n,k_2}\|_{L^2_x}\}\\
    \lesssim &2^{\e k_-+M+k_2-k-2k_{2,+}+\gamma k_2}(1+2^{k_2-k})\min\{2^{-3M/2+\gamma M/8-k_2+\gamma k_2/4},\\
    &\qquad\qquad\qquad\qquad 2^{3\min\{k,k_2\}/2+k_2/2}\}\e_1\|\nabla^2\hat{g}_{k_1}\|_{L^\infty_t([2^{M-1},2^M])L^{2}_\xi}\\
    \lesssim &2^{\e k_--k-2k_{2,+}+\gamma k_2}(1+2^{k_2-k})\min\{2^{-M/2+\gamma M/8+\gamma k_2/4},2^{M+3\min\{k,k_2\}/2+3k_2/2}\}\e_1\|\hat{g}_{k_1}\|_{L^\infty_t([2^{M-1},2^M])H^{2}_\xi}\\
    \lesssim &2^{\e k_--k-2k_{2,+}+\gamma k_2+k_1-k}\min\{2^{-M/2+\gamma M/8+\gamma k_2/4},2^{M+3\min\{k,k_2\}/2+3k_2/2}\}\e_1\|\hat{g}_{k_1}\|_{L^\infty_t([2^{M-1},2^M])H^{2}_\xi}.
\end{align*}
Lastly, we use the bilinear estimate $L^6\times L^3\rightarrow L^2$ and Lemma \ref{dualitycomp} to get
\begin{align*}
    \|\eqref{km2-2.6}\|_{L^2}
    \lesssim & 2^M\sup_{t\in[2^{M-1},2^M]}\|\F^{-1} \frac{\xi_j\partial_{\xi_p}\phi(\xi,\eta)\xi_m\partial_{\xi_l}\phi(\xi,\eta)}{4c_n^2|\xi|^4} q(\xi-\eta,\eta)\Tilde{\psi}_k(\xi)\Tilde{\psi}_{k_1}(\xi-\eta)\Tilde{\psi}_{k_2}(\eta)\|_{L^1}\times\\
    &\qquad\times \min\{\|e^{ic_mt\la}\F^{-1}\nabla\hat{g}_{k_1}\|_{L^6_x}\|e^{ic_nt\la}\F^{-1}\nabla\hat{f}_{n,k_2}\|_{L^3_x},\\
    &\qquad\qquad\qquad\qquad\qquad 2^{3\min\{k,k_2\}/2}\|e^{ic_mt\la}\F^{-1}\nabla\hat{g}_{k_1}\|_{L^2_x}\|e^{ic_nt\la}\F^{-1}\nabla\hat{f}_{n,k_2}\|_{L^2_x}\}\\
    \lesssim & 2^{\e k_-+M+2k_2-2k}\sup_{t\in[2^{M-1},2^M]}\min\{\|e^{ic_mt\la}\F^{-1}\nabla\hat{g}_{k_1}\|_{L^6_x}\|e^{ic_nt\la}\F^{-1}\nabla\hat{f}_{n,k_2}\|_{L^3_x},\\
    &\qquad\qquad\qquad\qquad\qquad 2^{3\min\{k,k_2\}/2}\|\nabla\hat{g}_{k_1}\|_{L^2_\xi}\|\nabla\hat{f}_{n,k_2}\|_{L^2_\xi}\}.
\end{align*}
In order to estimate the $L^6$ norm, we employ Bernstein's inequality in Lemma \ref{bernstein}, together with Lemma \ref{op} and Lemma \ref{lpnorms} to get
\begin{equation}
\label{l6normforoneder}
    \begin{aligned}
    \|e^{ic_mt\la}\F^{-1}\nabla\hat{g}_{k}\|_{L^6_x}
    \lesssim &2^{\gamma k/8}\|e^{ic_mt\la}\F^{-1}\nabla\hat{g}_{k}\|_{L^\frac{24}{\gamma+4}_x}\\
    \lesssim & 2^{\gamma k/8-M+\gamma M/8}\|\F^{-1}\nabla\hat{g}_{k}\|_{L^\frac{24}{20-\gamma}_x}\\
    \lesssim & 2^{-M+\gamma M/8+\gamma k/4}\|\nabla^2\hat{g}_{k}\|_{L^2_\xi}.
    \end{aligned}
\end{equation}
Hence, using the bound above with the estimations \eqref{l2first} and \eqref{l3nablaf}, we obtain
\begin{align*}
    \|\eqref{km2-2.6}\|_{L^2}
    &\lesssim 2^{\e k_-+M+2k_2-2k}\sup_{t\in[2^{M-1},2^M]}\min\{2^{-M+\gamma M/8+\gamma k_1/4}\|e^{it\la}\F^{-1}\nabla\hat{f}_{k_2}\|_{L^3_x},\\
    &\qquad\qquad\qquad\qquad 2^{3\min\{k,k_2\}/2+k_1}\|\nabla\hat{f}_{k_2}\|_{L^2_\xi}\}\|\nabla^2_\xi\hat{g}_{k_1}\|_{L^2_\xi}\\
    &\lesssim 2^{\e k_--2k_{2,+}+\gamma k_2-2k}\min\{2^{-M/2+\gamma M/4+k_2+\gamma k_2/4 +\gamma k_1/4},\\
    &\qquad\qquad\qquad\qquad 2^{M+3\min\{k,k_2\}/2+k_1+3k_2/2}\}\e_1\|\hat{g}_{k_1}\|_{L^\infty_t([2^{M-1},2^M])H^2_\xi}.
\end{align*}
To conclude, we have
\begin{equation*}
    \begin{aligned}
    \|T_{1+it}\hat{g}\|_{L^2}
    \lesssim &2^{\e k_--2k_{2,+}+\gamma k_2-2k}\min\{2^{-M/2+\gamma M/4+k_1+\gamma k_{2,+}/4 +\gamma k_{1,+}/4},\\
    &\qquad\qquad\qquad\qquad 2^{M+3\min\{k,k_2\}/2+k_1+3k_2/2}\}\e_1\|\hat{g}_{k_1}\|_{L^\infty_t([2^{M-1},2^M])H^2_\xi}
    \end{aligned}
\end{equation*}
and
\begin{align*}
    \|T_{0+it}\hat{g}\|_{L^2}\lesssim 2^{\e k_--k-2k_{2,+}+\gamma k_2}\min\{2^{-M/2+\gamma M/8+\gamma k_2/4},2^{M+3\min\{k,k_2\}/2+3k_2/2}\}\e_1\|\hat{g}_{k_1}\|_{L^\infty_t([2^{M-1},2^M])H^1_\xi}.
\end{align*}
By the interpolation result in Lemma \ref{si},
\begin{align*}
    \|T_{\alpha}\hat{g}\|_{L^2}
    \lesssim & 2^{\e k_--k-2k_{2,+}+\gamma k_2+\alpha(k_1-k)}\min\{2^{-M/2+\gamma M/4+\gamma k_{1,+}/4+\gamma k_{2,+}/4},\\
    &\qquad\qquad\qquad\qquad 2^{M+3\min\{k,k_2\}/2+3 k_2/2}\}\e_1\|\hat{g}\|_{L^\infty_t([2^{M-1},2^M])H^{1+\alpha}_\xi},
\end{align*}
for any $\alpha\in(0,1)$.
Lastly, we plug $g=\F^{-1}\hat{f}_m\Tilde{\psi}_{k_1}$ into the result above and use \eqref{sobolevnorms} to get
\begin{align*}
    &\bigg\|D^\alpha\int_{t_1}^{t_2}\int_{\R^3}e^{it\phi(\xi,\eta)}\frac{\xi_m\partial_{\xi_l}\phi(\xi,\eta)}{2c_n|\xi|^2}q(\xi-\eta,\eta)\partial_{\eta_m}\hat{f}_{m,k_1}(t,\xi-\eta)\hat{f}_{n,k_2}(t,\eta)\psi_k(\xi)d\eta dt\bigg\|_{L^2}\\
    \lesssim &2^{\e k_--k-2k_{2,+}+\gamma k_2+\alpha(k_1-k)}\min\{2^{-M/2+\gamma M/4+\gamma k_{1,+}/4+\gamma k_{2,+}/4},\\
    &\qquad\qquad\qquad\qquad 2^{M+3\min\{k,k_2\}/2+3 k_2/2}\}\e_1\|\hat{f}_{m,k_1}\|_{L^\infty_t([2^{M-1},2^M])H^{1+\alpha}_\xi}\\
    \lesssim & 2^{\e k_--k-\alpha k-2k_{1,+}+\gamma k_1-2k_{2,+}+\gamma k_2}\min\{2^{-M/2+\gamma M/4-k_1/2+\gamma k_{1,+}/4+\gamma k_{2,+}/4},2^{M+3\min\{k,k_2\}/2-k_1/2+3 k_2/2}\}\e_1^2.
\end{align*}
\end{proof}
Now, we look at the final term in the splitting of $J^{M,2}_{k,k_1,k_2}$.
\begin{lem}
\label{km2-3}
Suppose $t_1,t_2\in[2^{M-1},2^M]$ and $\sup_{t\in[1,T]}\|f\|_{Z}\leq \e_1$. If either $(k_1,k_2)\in\chi^2_k\cup\chi_k^3$ or $(k_1,k_2)\in\chi^1_k$ and $c_l=c_m$, then
\begin{align*}
    \|\eqref{km2.4}\|_{L^2}
    \lesssim & 2^{\e k_--k-\alpha k-2k_{1,+}-2k_{2,+}+\gamma k_1+\gamma k_2+\alpha(k_1-k_2)}\min\{2^{-M/2+\gamma M/4-k_1+k_2/2+\gamma k_{1,+}/4+\gamma k_{2,+}/4},\\
    &\qquad\qquad\qquad\qquad 2^{M+3\min\{k,k_2\}/2+k_1/2+k_2/2}\}\e_1^2.
\end{align*}
\end{lem}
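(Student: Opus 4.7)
The plan is to mirror the strategy of Lemma \ref{km2-2}, but with the role of $\hat f_m$ and $\hat f_n$ interchanged, so that the interpolation variable sits on the factor that already carries an $\eta$-derivative. Define on $\{z\in\C:0\leq\Re(z)\leq 1\}$ the family
\begin{align*}
T_z\hat g(\xi)=D^z\int_{t_1}^{t_2}\int_{\R^3}e^{it\phi(\xi,\eta)}\frac{\xi_m\partial_{\xi_l}\phi(\xi,\eta)}{2c_n|\xi|^2}q(\xi-\eta,\eta)\hat f_{m,k_1}(t,\xi-\eta)\partial_{\eta_m}\hat g_{k_2}(t,\eta)\psi_k(\xi)\,d\eta\,dt.
\end{align*}
The goal is to prove $T_{0+iy}:L^\infty_t([2^{M-1},2^M])H^1_\xi\to L^2_\xi$ and $T_{1+iy}:L^\infty_t([2^{M-1},2^M])H^2_\xi\to L^2_\xi$ with quantitative bounds, then apply Lemma \ref{si} and substitute $g=\F^{-1}\hat f_n\Tilde\psi_{k_2}$, using \eqref{sobolevnorms} to convert Sobolev bounds on $g$ into $\e_1$ times the claimed powers of $2$.

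For $T_{0+iy}$, I would use the $L^\infty_x\times L^2_x$ bilinear estimate (Lemma \ref{bilinear}) pairing $e^{ic_mt\la}f_{m,k_1}$ in $L^\infty$ with $e^{ic_nt\la}\F^{-1}\nabla_\eta g_{k_2}$ in $L^2$, controlling the multiplier via \eqref{q} and the Lemmas in Section~2.3 to obtain the factor $2^{\e k_-+k_2-k}$. Estimate $\|e^{ic_mt\la}f_{m,k_1}\|_{L^\infty}$ by \eqref{linfinity}, and also apply Lemma \ref{dualitycomp} to get the alternative bound involving $2^{3\min\{k,k_2\}/2}$. Taking the minimum yields
\begin{align*}
\|T_{0+iy}\hat g\|_{L^2}\lesssim 2^{\e k_--k-2k_{1,+}+\gamma k_1}\min\{2^{-3M/2+\gamma M/8-k_1+\gamma k_1/4+k_2},2^{M+3\min\{k,k_2\}/2+k_1/2+k_2}\}\e_1\|\hat g\|_{L^\infty_t H^1_\xi}.
\end{align*}

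For $T_{1+iy}$, apply $\partial_{\xi_p}$, producing five terms: the $it\partial_{\xi_p}\phi$ term (from hitting $e^{it\phi}$), derivatives of the symbol factor $\frac{\xi_m\partial_{\xi_l}\phi}{|\xi|^2}q$ and of $\psi_k$, and a derivative $\partial_{\xi_p}\hat f_{m,k_1}(t,\xi-\eta)$ on the profile factor. On the $it\partial_{\xi_p}\phi$ term I would integrate by parts in $\eta$ using $e^{it\phi}=-\sum_j\frac{\xi_j}{it\,2c_n|\xi|^2}\partial_{\eta_j}e^{it\phi}$, which is permissible here since $c_m+c_n=0$ makes $|\nabla_\eta\phi|=2|c_n||\xi|\gtrsim 2^k$ regardless of the configuration in $\chi^1_k,\chi^2_k,\chi^3_k$. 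This produces three sub-terms: one where $\partial_{\eta_j}$ lands on the multipliers, one where it lands on $\hat f_{m,k_1}(t,\xi-\eta)$ (turning $\partial_{\eta_j}$ into $-\partial_{\xi_j}$ and giving a single derivative on $f_m$), and one where it hits $\partial_{\eta_m}\hat g_{k_2}(t,\eta)$, producing the second derivative $\partial_{\eta_j}\partial_{\eta_m}\hat g_{k_2}$. The multiplier and single-derivative sub-terms are handled by $L^4\times L^4\to L^2$ and $L^2\times L^\infty\to L^2$ combined with Lemma \ref{dualitycomp}, Lemma \ref{chi+eta}, Lemma \ref{chi+eta2}, and the bounds \eqref{l2}, \eqref{l2first}, \eqref{l4}, \eqref{linfinity}.

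The main obstacle is the final sub-term, which carries two $\eta$-derivatives on $g$; to control it while only asking for $\|\hat g\|_{H^2_\xi}$, I would bound it by the $L^6_x\times L^3_x\to L^2_x$ bilinear estimate, writing $\|e^{ic_nt\la}\F^{-1}\nabla_\eta g_{k_2}\|_{L^6}\lesssim 2^{-M+\gamma M/8+\gamma k_2/4}\|\nabla^2\hat g_{k_2}\|_{L^2}$ through Bernstein plus the dispersive estimate (as in \eqref{l6normforoneder}), and pairing this against $\|e^{ic_mt\la}\F^{-1}\nabla_\xi\hat f_{m,k_1}\|_{L^3}$ estimated by \eqref{l3nablaf}. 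This is exactly where the $\gamma M/4$ loss in the final bound enters. Collecting all sub-terms gives
\begin{align*}
\|T_{1+iy}\hat g\|_{L^2}\lesssim 2^{\e k_--2k_{1,+}+\gamma k_1-2k+k_1}\min\{2^{-M/2+\gamma M/4+k_2/2+\gamma k_{1,+}/4+\gamma k_{2,+}/4},2^{M+3\min\{k,k_2\}/2+k_1/2+k_2}\}\e_1\|\hat g\|_{L^\infty_t H^2_\xi}.
\end{align*}
Lemma \ref{si} then interpolates to produce $\|T_\alpha\hat g\|_{L^2}$ with the factor $2^{\alpha(k_1-k)}$ instead of $2^{k_1-k}$ on the second-Sobolev side. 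Finally, substituting $g=\F^{-1}\hat f_n\Tilde\psi_{k_2}$ and invoking \eqref{sobolevnorms} to bound $\|\hat f_{n,k_2}\|_{H^{1+\alpha}_\xi}\lesssim 2^{-2k_{2,+}+\gamma k_2-k_2/2-\alpha k_2}\e_1$ converts the $2^{\alpha(k_1-k)}$ into $2^{\alpha(k_1-k_2)}$ after absorbing the $-\alpha k$ into the prefactor, yielding the stated estimate.
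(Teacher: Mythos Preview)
Your overall plan—set up an analytic family $T_z$ with $g$ in the $k_2$-slot, bound $T_{0+iy}$ via $L^\infty\times L^2$, bound $T_{1+iy}$ after one $\eta$-integration by parts, then interpolate and substitute $g=\F^{-1}\hat f_n\Tilde\psi_{k_2}$—is exactly the paper's strategy. However, two of the bilinear pairings in your $T_{1+iy}$ treatment are swapped, and one of them fails as written.

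The term you single out as ``the final sub-term'' is $\hat f_{m,k_1}(t,\xi-\eta)\,\partial_{\eta_j}\partial_{\eta_m}\hat g_{k_2}(t,\eta)$, i.e.\ $f_m\times\nabla^2 g$. The $L^6\times L^3$ pairing you describe for it—$\|e^{ic_nt\la}\F^{-1}\nabla\hat g_{k_2}\|_{L^6}$ via \eqref{l6normforoneder} against $\|e^{ic_mt\la}\F^{-1}\nabla\hat f_{m,k_1}\|_{L^3}$ via \eqref{l3nablaf}—does not match this term's structure, since there is no derivative on $f_m$ here. The correct estimate is $L^\infty\times L^2$: put $e^{ic_mt\la}f_{m,k_1}$ in $L^\infty$ via \eqref{linfinity} and $\nabla^2\hat g_{k_2}$ in $L^2$. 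Conversely, the $L^6\times L^3$ pairing you wrote is precisely what is needed for the \emph{cross} terms $\nabla\hat f_{m,k_1}\times\nabla\hat g_{k_2}$—both the one coming directly from $\partial_{\xi_p}$ landing on $\hat f_{m,k_1}(t,\xi-\eta)$ and the IBP sub-term where $\partial_{\eta_j}$ lands on $\hat f_{m,k_1}$. Your proposed $L^4\times L^4$ for those cross terms does not work: getting $t^{-3/4}$ decay on $\|e^{ic_mt\la}\F^{-1}\nabla\hat f_{m,k_1}\|_{L^4}$ through \eqref{l4} would require $\|\nabla^2_\xi\hat f_{m,k_1}\|_{L^2}$, which the $Z$-norm (with only $1+\alpha<3/2$ derivatives) does not control. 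Once you swap these two assignments the argument closes, and the $\gamma M/4$ loss then correctly arises from the $L^6\times L^3$ cross-term estimate.

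The paper also inserts one simplification you omit: before defining $T_z$ it performs the change of variables $\zeta=\xi-\eta$, placing $\hat g_{k_2}$ in the $\xi-\zeta$ slot so that $\partial_{\xi_p}$ lands directly on $\partial_{\xi_m}\hat g_{k_2}$ rather than on $\hat f_{m,k_1}$. This eliminates one of the two cross terms and makes the bookkeeping cleaner, but is not essential once the pairings above are assigned correctly.
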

\begin{proof}
Since we restrict ourselves to the case when $c_n+c_m=0$ for the term $J^{M,2}_{k,k_1,k_2}$, we indeed have $\partial_{\xi_l}\phi(\xi,\eta)=2(c_l-c_m)\xi_l+2c_m\eta_l$. As we perform a change of variable $\zeta=\xi-\eta$, we can rewrite $\eqref{km2.4}$ into
\begin{align*}
    &\eqref{km2.4}\\
    =&D^\alpha\int_{t_1}^{t_2}\int_{\R^3}e^{it\phi(\xi,\xi-\zeta)}\frac{\xi_m((c_l-c_m)\xi_l+c_m(\xi_l-\zeta_l))}{c_n|\xi|^2}q(\zeta,\xi-\zeta)\hat{f}_{m,k_1}(t,\zeta)\partial_{\eta_m}\hat{f}_{n,k_2}(t,\xi-\zeta)\psi_k(\xi)d\zeta dt.
\end{align*}
Based on this observation, we define a family $T_z$ of operators on $\{z\in\C:0\leq \Re(z)\leq 1\}$ as follows
\begin{align*}
    T_{z} \hat{g} (\xi)=D^z\int_{t_1}^{t_2}\int_{\R^3}e^{it\phi(\xi,\xi-\eta)}\frac{\xi_m((c_l-c_m)\xi_l+c_m(\xi_l-\eta_l))}{2c_n|\xi|^2}q(\eta,\xi-\eta)\hat{f}_{m,k_1}(t,\eta)\partial_{\xi_m}\hat{g}_{k_2}(t,\xi-\eta)\psi_k(\xi)d\eta dt.
\end{align*}
It suffices to show that $T_{0+iy}:L^{\infty}_t([2^{M-1},2^M])H^1_\xi\rightarrow L^2_\xi$
and 
$T_{1+iy}:L^{\infty}_t([2^{M-1},2^M])H^2_\xi\rightarrow L^2_\xi$
are bounded for all $y\in\R$ with suitable bounds.\\
First, we look at the operator $T_{0+iy}$. The $L^2\times L^\infty\rightarrow L^2$ bilinear estimate, Lemma \ref{dualitycomp}, Lemma \ref{chi+eta}, bounds in \eqref{q} and \eqref{linfinity} yields
\begin{align*}
    &\|T_{0+it}\hat{g}\|_{L^2}\\
    =&\bigg\|\int_{t_1}^{t_2}\int_{\R^3}e^{it\phi(\xi,\xi-\eta)}\frac{\xi_m((c_l-c_m)\xi_l+c_m(\xi_l-\eta_l))}{c_n|\xi|^2}q(\eta,\xi-\eta)\hat{f}_{m,k_1}(t,\eta)\partial_{\xi_m}\hat{g}_{k_2}(t,\xi-\eta)\psi_k(\xi)d\eta dt\bigg\|_{L^2}\\
    \lesssim &2^M\sup_{t\in[2^{M-1},2^M]}\|\F^{-1} \frac{\xi_m((c_l-c_m)\xi_l+c_m(\xi_l-\eta_l))}{c_n|\xi|^2}q(\eta,\xi-\eta)\Tilde{\psi}_k(\xi)\Tilde{\psi}_{k_1}(\eta)\Tilde{\psi}_{k_2}(\xi-\eta)\|_{L^1}\times\\
    &\times \min\{\|e^{ic_nt\la}\F^{-1}\nabla\hat{g}_{k_2}\|_{L^{2}_x}\|e^{ic_mt\la}f_{m,k_1}\|_{L^\infty_x},2^{3\min\{k,k_2\}/2}\|e^{ic_nt\la}\F^{-1}\nabla\hat{g}_{k_2}\|_{L^2_x}\|e^{c_mit\la}f_{m,k_1}\|_{L^2_x}\}\\
    \lesssim & 2^{\e k_-+M+k_2-k}\sup_{t\in[2^{M-1},2^M]}\min\{\|\nabla\hat{g}_{k_2}\|_{L^{2}_\xi}\|e^{ic_mt\la}f_{m,k_1}\|_{L^\infty_x},2^{3\min\{k,k_2\}/2}\|\nabla\hat{g}_{k_2}\|_{L^2_\xi}\|f_{m,k_1}\|_{L^2_x}\}\\
    \lesssim & 2^{\e k_-+M+k_2-k-2k_{1,+}+\gamma k_1}\min\{2^{-3M/2+\gamma M/8-k_1+\gamma k_1/4},2^{3\min\{k,k_2\}/2+k_1/2}\}\|\nabla\hat{g}_{k_2}\|_{L^\infty_t([2^{M-1},2^M])L^{2}_\xi}\\
    \lesssim & 2^{\e k_--k-2k_{1,+}+\gamma k_1}\min\{2^{-M/2+\gamma M/8-k_1+k_2+\gamma k_1/4},2^{M+3\min\{k,k_2\}/2+k_1/2+k_2}\}\|\hat{g}_{k_2}\|_{L^\infty_t([2^{M-1},2^M])H^1_\xi}.
\end{align*}
Next, we perform the following computation for $T_{1+iy}$,
\begin{align}
    &\|T_{1+it}\hat{g}\|_{L^2}\nonumber\\
    =&\bigg\|\partial_{\xi_p}\int_{t_1}^{t_2}\int_{\R^3}e^{it\phi(\xi,\xi-\eta)}\frac{\xi_m((c_l-c_m)\xi_l+c_m(\xi_l-\eta_l))}{c_n|\xi|^2}q(\eta,\xi-\eta)\hat{f}_{m,k_1}(t,\eta)\partial_{\xi_m}\hat{g}_{k_2}(t,\xi-\eta)\psi_k(\xi)d\eta dt\bigg\|_{L^2}\nonumber\\
    \leq &\bigg\|\int_{t_1}^{t_2}\int_{\R^3}e^{it\phi(\xi,\xi-\eta)}\frac{it\partial_{\xi_p}\phi(\xi,\xi-\eta)\xi_m((c_l-c_m)\xi_l+c_m(\xi_l-\eta_l))}{c_n|\xi|^2}q(\eta,\xi-\eta)\hat{f}_{m,k_1}(t,\eta)\partial_{\xi_m}\hat{g}_{k_2}(t,\xi-\eta)\psi_k(\xi)d\eta dt\bigg\|_{L^2}\label{km2-3.1}\\
    & + \bigg\|\int_{t_1}^{t_2}\int_{\R^3}e^{it\phi(\xi,\xi-\eta)}\partial_{\xi_p}\big(\frac{\xi_m((c_l-c_m)\xi_l+c_m(\xi_l-\eta_l))}{c_n|\xi|^2}q(\eta,\xi-\eta)\big)\hat{f}_{m,k_1}(t,\eta)\partial_{\xi_m}\hat{g}_{k_2}(t,\xi-\eta)\psi_k(\xi)d\eta dt\bigg\|_{L^2}\label{km2-3.2}\\
    & + \bigg\|\int_{t_1}^{t_2}\int_{\R^3}e^{it\phi(\xi,\xi-\eta)}\frac{\xi_m((c_l-c_m)\xi_l+c_m(\xi_l-\eta_l))}{c_n|\xi|^2}q(\eta,\xi-\eta)\hat{f}_{m,k_1}(t,\eta)\partial_{\xi_p}\partial_{\xi_m}\hat{g}_{k_2}(t,\xi-\eta)\psi_k(\xi)d\eta dt\bigg\|_{L^2}\label{km2-3.3}\\
    & + \bigg\|\int_{t_1}^{t_2}\int_{\R^3}e^{it\phi(\xi,\xi-\eta)}\frac{\xi_m((c_l-c_m)\xi_l+c_m(\xi_l-\eta_l))}{c_n|\xi|^2}q(\eta,\xi-\eta)\hat{f}_{m,k_1}(t,\eta)\partial_{\xi_m}\hat{g}_{k_2}(t,\xi-\eta)\partial_{\xi_p}\psi_k(\xi)d\eta dt\bigg\|_{L^2}\label{km2-3.7}.
\end{align}
The last term can be estimated directly using the results for $T_{0+it}$ and \eqref{addder},
\begin{align*}
    &\|\eqref{km2-3.7}\|_{L^2}\\
    \lesssim &\|\nabla\psi_k\|_{L^\infty}\|T_{0+it}\hat{g}\|_{L^2}\\
    \lesssim &2^{-k+\e k_--k-2k_{1,+}+\gamma k_1}\min\{2^{-M/2+\gamma M/8-k_1+k_2+\gamma k_1/4},2^{M+3\min\{k,k_2\}/2+k_1/2+k_2}\}\|\hat{g}_{k_2}\|_{L^\infty_t([2^{M-1},2^M])H^1_\xi}\\
    \lesssim &2^{\e k_--2k-2k_{1,+}+\gamma k_1}\min\{2^{-M/2+\gamma M/8-k_1+2k_2+\gamma k_1/4},2^{M+3\min\{k,k_2\}/2+k_1/2+2k_2}\}\|\hat{g}_{k_2}\|_{L^\infty_t([2^{M-1},2^M])H^2_\xi}.
\end{align*}
For the term \eqref{km2-3.1}, integration by parts using the identity $e^{it\phi(\xi,\eta)}=-\sum_{j=1}^3\frac{\xi_j}{it2c_n|\xi|^2}\partial_{\eta_j}e^{it\phi(\xi,\eta)}$ gives
\begin{align}
    &\int_{t_1}^{t_2}\int_{\R^3}\big(\sum_{j}\frac{\xi_j}{2c_n|\xi|^2}\partial_{\eta_j}\big)e^{it\phi(\xi,\xi-\eta)}\frac{\partial_{\xi_p}\phi(\xi,\xi-\eta)\xi_m((c_l-c_m)\xi_l+c_m(\xi_l-\eta_l))}{c_n|\xi|^2} q(\eta,\xi-\eta)\times\nonumber\\
    &\qquad\qquad\qquad\qquad \times\hat{f}_{m,k_1}(t,\eta)\partial_{\xi_m}\hat{g}_{k_2}(t,\xi-\eta)\psi_k(\xi)d\eta dt\nonumber\\
    =&-\int_{t_1}^{t_2}\int_{\R^3}e^{it\phi(\xi,\xi-\eta)}\partial_{\eta_j}\big(\frac{\xi_j\partial_{\xi_p}\phi(\xi,\xi-\eta)\xi_m((c_l-c_m)\xi_l+c_m(\xi_l-\eta_l))}{2c_n^2|\xi|^4} q(\eta,\xi-\eta)\big)\times\nonumber\\
    &\qquad\qquad\qquad\qquad \times\hat{f}_{m,k_1}(t,\eta)\partial_{\xi_m}\hat{g}_{k_2}(t,\xi-\eta)\psi_k(\xi)d\eta dt\label{km2-3.4}\\
    &-\int_{t_1}^{t_2}\int_{\R^3}e^{it\phi(\xi,\xi-\eta)}\frac{\xi_j\partial_{\xi_p}\phi(\xi,\xi-\eta)\xi_m((c_l-c_m)\xi_l+c_m(\xi_l-\eta_l))}{2c_n^2|\xi|^4} q(\eta,\xi-\eta)\times\nonumber\\
    &\qquad\qquad\qquad\qquad \times\partial_{\eta_j}\hat{f}_{m,k_1}(t,\eta)\partial_{\xi_m}\hat{g}_{k_2}(t,\xi-\eta)\psi_k(\xi)d\eta dt\label{km2-3.6}\\
    &-\int_{t_1}^{t_2}\int_{\R^3}e^{it\phi(\xi,\xi-\eta)}\frac{\xi_j\partial_{\xi_p}\phi(\xi,\xi-\eta)\xi_m((c_l-c_m)\xi_l+c_m(\xi_l-\eta_l))}{2c_n^2|\xi|^4} q(\eta,\xi-\eta)\times\nonumber\\
    &\qquad\qquad\qquad\qquad \times\hat{f}_{m,k_1}(t,\eta)\partial_{\eta_j}\partial_{\xi_m}\hat{g}_{k_2}(t,\xi-\eta)\psi_k(\xi)d\eta dt\label{km2-3.5}.
\end{align}
Applying the $L^2\times L^\infty\rightarrow L^2$ bilinear estimate, along with Lemma \ref{chi+eta}, Lemma \ref{dualitycomp}, bounds in \eqref{q}, \eqref{addder}, \eqref{l2}, and \eqref{linfinity}, we obtain
\begin{align*}
    &\|\eqref{km2-3.2}\|_{L^2}+\|\eqref{km2-3.4}\|_{L^2}\\
    \lesssim & 2^M\sup_{t\in[2^{M-1},2^M]}\big(\|\F^{-1} \partial_{\xi_p}\big(\frac{\xi_m((c_l-c_m)\xi_l+c_m(\xi_l-\eta_l))}{c_n|\xi|^2}q(\eta,\xi-\eta)\big)\Tilde{\psi}_k(\xi)\Tilde{\psi}_{k_1}(\eta)\Tilde{\psi}_{k_2}(\xi-\eta)\|_{L^1}+\\
    &+\|\F^{-1}\partial_{\eta_j}\big(\frac{\xi_j\partial_{\xi_p}\phi(\xi,\xi-\eta)\xi_m((c_l-c_m)\xi_l+c_m(\xi_l-\eta_l))}{2c_n^2|\xi|^4}q(\eta,\xi-\eta)\big)\Tilde{\psi}_k(\xi)\Tilde{\psi}_{k_1}(\eta)\Tilde{\psi}_{k_2}(\xi-\eta)\|_{L^1}\big)\times\\
    &\times \min\{\|e^{ic_nt\la}\F^{-1}\nabla\hat{g}_{k_2}\|_{L^2_x}\|e^{ic_mt\la}f_{m,k_1}\|_{L^\infty_x},2^{3\min\{k,k_2\}/2}\|e^{ic_nt\la}\F^{-1}\nabla\hat{g}_{k_2}\|_{L^2_x}\|e^{ic_mt\la}f_{m,k_1}\|_{L^2_x}\}\\
    \lesssim &2^{\e k_-+M-k}(1+2^{k_2-k})\sup_{t\in[2^{M-1},2^M]}\min\{\|e^{ic_nt\la}\F^{-1}\nabla\hat{g}_{k_2}\|_{L^2_x}\|e^{ic_mt\la}f_{m,k_1}\|_{L^\infty_x},\\
    &\qquad\qquad\qquad\qquad\qquad 2^{3\min\{k,k_2\}/2}\|\nabla\hat{g}_{k_2}\|_{L^2_\xi}\|f_{m,k_1}\|_{L^2_x}\}\\
    \lesssim &2^{\e k_-+M-k-2k_{1,+}+\gamma k_1}(1+2^{k_2-k})\min\{2^{-3M/2+\gamma M/8-k_1+\gamma k_1/4+k_2},2^{3\min\{k,k_2\}/2+k_2+k_1/2}\}\e_1\times\\
    &\qquad\times\|\nabla^2\hat{g}_{k_2}\|_{L^\infty_t([2^{M-1},2^M])L^2_\xi}\\
    \lesssim &2^{\e k_--2k-2k_{1,+}+\gamma k_1}\min\{2^{-M/2+\gamma M/8+\gamma k_1/4+k_2},2^{M+3\min\{k,k_2\}/2+k_2+3k_1/2}\}\e_1\|\hat{g}_{k_2}\|_{L^\infty_t([2^{M-1},2^M])H^2_\xi}
\end{align*}
and
\begin{align*}
    &\|\eqref{km2-3.3}\|_{L^2}+\|\eqref{km2-3.5}\|_{L^2}\\
    \lesssim & 2^M\sup_{t\in[2^{M-1},2^M]}\big(\|\F^{-1} \frac{\xi_m((c_l-c_m)\xi_l+c_m(\xi_l-\eta_l))}{c_n|\xi|^2} q(\eta,\xi-\eta)\Tilde{\psi}_k(\xi)\Tilde{\psi}_{k_1}(\eta)\Tilde{\psi}_{k_2}(\xi-\eta)\|_{L^1}\\
    &+\|\F^{-1} \frac{\xi_j\partial_{\xi_p}\phi(\xi,\xi-\eta)\xi_m((c_l-c_m)\xi_l+c_m(\xi_l-\eta_l))}{2c_n^2|\xi|^4}  q(\eta,\xi-\eta)\Tilde{\psi}_k(\xi)\Tilde{\psi}_{k_1}(\eta)\Tilde{\psi}_{k_2}(\xi-\eta)\|_{L^1}\big)\times\\
    &\times\min\{\|e^{ic_nt\la}\F^{-1}\nabla^2\hat{g}_{k_2}\|_{L^2_x}\|e^{ic_mt\la}f_{m,k_1}\|_{L^\infty_x},2^{3\min\{k,k_2\}/2}\|e^{ic_nt\la}\F^{-1}\nabla^2\hat{g}_{k_2}\|_{L^2_x}\|e^{ic_mt\la}f_{m,k_1}\|_{L^2_x}\}\\
    \lesssim & 2^{\e k_-+M+k_2-k+k_1-k}\sup_{t\in[2^{M-1},2^M]}\min\{\|\nabla^2\hat{g}_{k_2}\|_{L^2_\xi}\|e^{ic_mt\la}f_{m,k_1}\|_{L^\infty_x},2^{3\min\{k,k_2\}/2}\|\nabla^2\hat{g}_{k_2}\|_{L^2_\xi}\|f_{m,k_1}\|_{L^2_x}\}\\
    \lesssim & 2^{\e k_-+M+k_2-k+k_1-k-2k_{1,+}+\gamma k_1}\min\{2^{-3M/2+\gamma M/8-k_1+\gamma k_1/4},2^{3\min\{k,k_2\}/2+k_1/2}\}\e_1\|\nabla^2\hat{g}_{k_2}\|_{L^\infty_t([2^{M-1},2^M])L^2_\xi}\\
    \lesssim & 2^{\e k_--2k-2k_{1,+}+\gamma k_1}\min\{2^{-M/2+\gamma M/8+k_2+\gamma k_1/4},2^{M+3\min\{k,k_2\}/2+3k_1/2+k_2}\}\e_1\|\hat{g}_{k_2}\|_{L^\infty_t([2^{M-1},2^M])H^2_\xi}.
\end{align*}
From the bilinear estimate $L^6\times L^3\rightarrow L^2$ and Lemma \ref{dualitycomp}, we get
\begin{align*}
    &\|\eqref{km2-3.6}\|_{L^2}\\
    \lesssim & 2^M\sup_{t\in[2^{M-1},2^M]}\|\F^{-1} \frac{\xi_j\partial_{\xi_p}\phi(\xi,\xi-\eta)\xi_m((c_l-c_m)\xi_l+c_m(\xi_l-\eta_l))}{2c_n^2|\xi|^4} q(\eta,\xi-\eta)\Tilde{\psi}_k(\xi)\Tilde{\psi}_{k_1}(\eta)\Tilde{\psi}_{k_2}(\xi-\eta)\|_{L^1}\times\\
    & \qquad\times\min\{\|e^{ic_nt\la}\F^{-1}\nabla\hat{g}_{k_2}\|_{L^6_x}\|e^{ic_mt\la}\F^{-1}\nabla\hat{f}_{m,k_1}\|_{L^3_x},\\
    &\qquad\qquad\qquad\qquad\qquad 2^{3\min\{k,k_2\}/2}\|e^{ic_nt\la}\F^{-1}\nabla\hat{g}_{k_2}\|_{L^2_x}\|e^{ic_mt\la}\F^{-1}\nabla\hat{f}_{m,k_1}\|_{L^2_x}\}\\
    \lesssim & 2^{\e k_-+M+k_2-k+k_1-k}\sup_{t\in[2^{M-1},2^M]}\min\{\|e^{ic_nt\la}\F^{-1}\nabla\hat{g}_{k_2}\|_{L^6_x}\|e^{ic_mt\la}\F^{-1}\nabla\hat{f}_{m,k_1}\|_{L^3_x},\\
    &\qquad\qquad\qquad\qquad 2^{3\min\{k,k_2\}/2}\|\nabla\hat{g}_{k_2}\|_{L^2_\xi}\|\nabla\hat{f}_{m,k_1}\|_{L^2_\xi}\}.
\end{align*}
Therefore, using the $L^6$ norm result \eqref{l6normforoneder} obtained in the previous lemma, together with estimations \eqref{l2first} and \eqref{l3nablaf}, we have
\begin{align*}
    \|\eqref{km2-3.6}\|_{L^2}
    \lesssim & 2^{\e k_-+M-2k_{1,+}+\gamma k_1+k_2-k+k_1-k}\min\{2^{-3M/2+\gamma M/4-k_1+\gamma k_1/4 +\gamma k_2/4}, \\
    &\qquad\qquad\qquad\qquad 2^{3\min\{k,k_2\}/2+k_2-k_1/2}\}\e_1\|\nabla^2\hat{g}_{k_2}\|_{L^\infty_t([2^{M-1},2^M])L^2_\xi}\\
    \lesssim & 2^{\e k_--2k_{1,+}+\gamma k_1-2k}\min\{2^{-M/2+\gamma M/4+k_2+\gamma k_1/4 +\gamma k_2/4},\\
    &\qquad\qquad\qquad\qquad 2^{M+3\min\{k,k_2\}/2+2k_2+k_1/2}\}\e_1\|\hat{g}_{k_2}\|_{L^\infty_t([2^{M-1},2^M])H^2_\xi}.
\end{align*}
In conclusion, we showed
\begin{align*}
    \|T_{1+it}\hat{g}\|_{L^2}
    \lesssim &2^{\e k_--2k-2k_{1,+}+\gamma k_1}\min\{2^{-M/2+\gamma M/4+k_2+\gamma k_{1,+}/4 +\gamma k_{2,+}/4},\\
    &\qquad\qquad\qquad\qquad 2^{M+3\min\{k,k_2\}/2+k_2+3k_1/2}\}\e_1\|\hat{g}\|_{L^\infty_t([2^{M-1},2^M])H^2_\xi}
\end{align*}
and
\begin{align*}
    \|T_{0+it}\hat{g}\|_{L^2}
    \lesssim &2^{\e k_--k-2k_{1,+}+\gamma k_1}\min\{2^{-M/2+\gamma M/8-k_1+k_2+\gamma k_1/4},\\
    &\qquad\qquad\qquad\qquad 2^{M+3\min\{k,k_2\}/2+k_1/2+k_2}\}\|\hat{g}\|_{L^\infty_t([2^{M-1},2^M])H^1_\xi}.
\end{align*}
Applying the interpolation result in Lemma \ref{si} gives us
\begin{align*}
    \|T_{\alpha}\hat{g}\|_{L^2}
    &\lesssim2^{\e k_--k-2k_{1,+}+\gamma k_1+\alpha(k_1-k)}\min\{2^{-M/2+\gamma M/4-k_1+k_2+\gamma k_{1,+}/4+\gamma k_{2,+}/4},\\
    &\qquad\qquad\qquad\qquad2^{M+3\min\{k,k_2\}/2+k_1/2+k_2}\}\e_1\|\hat{g}\|_{L^\infty_t([2^{M-1},2^M])H^{1+\alpha}_\xi},
\end{align*}
for any $\alpha\in(0,1)$.\\
As we plug in $g=\F^{-1}\hat{f}_n\Tilde{\psi}_{k_2}$, we obtain the desired bound
\begin{align*}
    &\bigg\|D^\alpha\int_{t_1}^{t_2}\int_{\R^3}e^{it\phi(\xi,\eta)}\frac{\xi_m\partial_{\xi_l}\phi(\xi,\eta)}{2c_n|\xi|^2}q(\xi-\eta,\eta)\hat{f}_{m,k_1}(t,\xi-\eta)\partial_{\eta_m}\hat{f}_{n,k_2}(t,\eta)\psi_k(\xi)d\eta dt\bigg\|_{L^2}\\
    \lesssim & 2^{\e k_--k-2k_{1,+}+\gamma k_1+\alpha(k_1-k)}\min\{2^{-M/2+\gamma M/4-k_1+k_2+\gamma k_{1,+}/4+\gamma k_{2,+}/4},\\
    &\qquad\qquad\qquad\qquad 2^{M+3\min\{k,k_2\}/2+k_1/2+k_2}\}\e_1\|\hat{f}_{k_2}\|_{L^\infty_t([2^{M-1},2^M])H^{1+\alpha}_\xi}\\
    \lesssim & 2^{\e k_--k-\alpha k-2k_{1,+}-2k_{2,+}+\gamma k_1+\gamma k_2+\alpha(k_1-k_2)}\min\{2^{-M/2+\gamma M/4-k_1+k_2/2+\gamma k_{1,+}/4+\gamma k_{2,+}/4},\\
    &\qquad\qquad\qquad\qquad 2^{M+3\min\{k,k_2\}/2+k_1/2+k_2/2}\}\e_1^2.
\end{align*}
\end{proof}
Combine the results we obtained in Lemma \ref{km2-1}, Lemma \ref{km2-2} and Lemma \ref{km2-3}. Given that $0<\alpha<1/2$ and $k_2\leq k_1+2a+2$, we have $2^{\alpha(k_1-k_2)}\lesssim 2^{(k_1-k_2)/2}$ and
\begin{align*}
    \|D^\alpha_\xi J^{M,2}_{k,k_1,k_2}\|_{L^2}
    \lesssim & 2^{\e k_--k-2k_{1,+}-2k_{2,+}+\gamma k_1+\gamma k_2-\alpha k}\min\{2^{-M/2-k_2/4-k_1/4}
    ,2^{M+3\min\{k,k_2\}/2 +k_2/2+k_1/2}\}\e_1^2\\
    &+2^{\e k_--k-\alpha k-2k_{1,+}+\gamma k_1-2k_{2,+}+\gamma k_2}\min\{2^{-M/2+\gamma M/4-k_1/2+\gamma k_{1,+}/4+\gamma k_{2,+}/4},\\
    &\qquad\qquad\qquad\qquad 2^{M+3\min\{k,k_2\}/2-k_1/2+3 k_2/2}\}\e_1^2\\
    &+2^{\e k_--k-\alpha k-2k_{1,+}-2k_{2,+}+\gamma k_1+\gamma k_2+\alpha(k_1-k_2)}\min\{2^{-M/2+\gamma M/4-k_1+k_2/2+\gamma k_{1,+}/4+\gamma k_{2,+}/4},\\
    &\qquad\qquad\qquad\qquad 2^{M+3\min\{k,k_2\}/2+k_1/2+k_2/2}\}\e_1^2\\
    \lesssim & 2^{\e k_--k-\alpha k-2k_{1,+}-2k_{2,+}+\gamma k_1+\gamma k_2}\big(\min\{2^{-M/2-k_2/4-k_1/4}
    ,2^{M+3\min\{k,k_2\}/2 +k_2/2+k_1/2}\}\\
    &+\min\{2^{-M/2+\gamma M/4-k_1/2+\gamma k_{1,+}/2},2^{M+3\min\{k,k_2\}/2+k_1}\}\big)\e_1^2,
\end{align*}
if either $(k_1,k_2)\in\chi^2_k\cup\chi^3_k$ or $(k_1,k_2)\in\chi^1_k$ and $c_l=c_m$. Therefore, we proved
\begin{equation}
    \label{jm2inchi2}
    \begin{aligned}
    &\sum_{1\leq M\leq\log T}\sup_{2^{M-1}\leq t_1\leq t_2\leq 2^M}2^{2k_+-\gamma k+k/2+\alpha k}\bigg(\sum_{\substack{c_m+c_n= 0\\c_l=c_m}}A_{lmn}\sum_{(k_1,k_2)\in\chi^1_k}\|D^\alpha J^{M,2}_{k,k_1,k_2}\|_{L^2}+\\
    &\qquad\qquad\qquad\qquad +\sum_{\substack{c_m+c_n= 0}}A_{lmn}\sum_{(k_1,k_2)\in\chi^2_k\cup\chi^3_k}\|D^\alpha J^{M,2}_{k,k_1,k_2}\|_{L^2}\bigg)\\
    \lesssim &\sum_{1\leq M\leq \log T}\sum_{(k_1,k_2)\in\chi_k^1\cup\chi_k^2\cup\chi_k^3} 2^{\e k_--k/2-\gamma k+2k_+-2k_{1,+}-2k_{2,+}+\gamma k_1+\gamma k_2}\big(\min\{2^{-M/2-k_2/4-k_1/4}
    ,\\
    &\qquad\qquad\qquad\qquad 2^{M+3\min\{k,k_2\}/2 +k_2/2+k_1/2}\}+\min\{2^{-M/2+\gamma M/4-k_1/2+\gamma k_{1,+}/2},2^{M+3\min\{k,k_2\}/2+k_1}\}\big)\e_1^2\\
    \lesssim & \e_1^2.
    \end{aligned}
\end{equation}

\subsubsection{$D^{\alpha}I^{M,2}_{k,k_1,k_2}$ and $D^{\alpha}J^{M,2}_{k,k_1,k_2}$ where $(k_1,k_2)\in\chi^1_k$}
\label{2-2}
In this section, we focus solely on the cases where $(k_1,k_2)\in\chi^1_k$ and $c_l\neq c_m$. These conditions give us the lower bound $|\phi(\xi,\eta)|\gtrsim 2^{k_1}$ according to \eqref{lbt} and allows us to use the identity
$$e^{it\phi(\xi,\eta)}=\frac{1}{i\phi(\xi,\eta)}\partial_te^{it\phi(\xi,\eta)}.$$
Recall the definition of $I^{m,2}_{k,k_1,k_2}$ and $J^{m,2}_{k,k_1,k_2}$ in \eqref{i2} and \eqref{j2}. We perform integration by parts in the variable $t$ and get
\begin{align}
    D^{\alpha}_\xi I^{m,2}_{k,k_1,k_2}
    =&D^{\alpha}_\xi\int_{\R^3}e^{it_2\phi(\xi,\eta)}\frac{t_2\partial_{\xi_l}\phi(\xi,\eta)}{i\phi(\xi,\eta)}\hat{f}_{m,k_1}(t_2,\xi-\eta)\hat{f}_{n,k_2}(t_2,\eta)\psi_k(\xi)d\eta\label{f1-i}\\
    &-D^{\alpha}_\xi\int_{\R^3}e^{it_1\phi(\xi,\eta)}\frac{t_1\partial_{\xi_l}\phi(\xi,\eta)}{i\phi(\xi,\eta)}\hat{f}_{m,k_1}(t_1,\xi-\eta)\hat{f}_{n,k_2}(t_1,\eta)\psi_k(\xi)d\eta\label{f2-i}\\
    &-D^{\alpha}_\xi\int_{t_1}^{t_2}\int_{\R^3}e^{it\phi(\xi,\eta)}\frac{\partial_{\xi_l}\phi(\xi,\eta)}{i\phi(\xi,\eta)}\hat{f}_{m,k_1}(t,\xi-\eta)\hat{f}_{n,k_2}(t,\eta)\psi_k(\xi)d\eta dt\label{f3-i}\\
    &-D^{\alpha}_\xi\int_{t_1}^{t_2}\int_{\R^3}e^{it\phi(\xi,\eta)}\frac{t\partial_{\xi_l}\phi(\xi,\eta)}{i\phi(\xi,\eta)}\partial_t\hat{f}_{m,k_1}(t,\xi-\eta)\hat{f}_{n,k_2}(t,\eta)\psi_k(\xi)d\eta dt\label{f4-i}\\
    &-D^{\alpha}_\xi\int_{t_1}^{t_2}\int_{\R^3}e^{it\phi(\xi,\eta)}\frac{t\partial_{\xi_l}\phi(\xi,\eta)}{i\phi(\xi,\eta)}\hat{f}_{m,k_1}(t,\xi-\eta)\partial_t\hat{f}_{n,k_2}(t,\eta)\psi_k(\xi)d\eta dt\label{f5-i}
\end{align}
and
\begin{align}
    D^{\alpha}_\xi J^{m,2}_{k,k_1,k_2}
    =&D^{\alpha}_\xi\int_{\R^3}e^{it_2\phi(\xi,\eta)}\frac{t_2\partial_{\xi_l}\phi(\xi,\eta)}{i\phi(\xi,\eta)}q(\eta,\xi-\eta)\hat{f}_{m,k_1}(t_2,\xi-\eta)\hat{f}_{n,k_2}(t_2,\eta)\psi_k(\xi)d\eta\label{f1}\\
    &-D^{\alpha}_\xi\int_{\R^3}e^{it_1\phi(\xi,\eta)}\frac{t_1\partial_{\xi_l}\phi(\xi,\eta)}{i\phi(\xi,\eta)}q(\eta,\xi-\eta)\hat{f}_{m,k_1}(t_1,\xi-\eta)\hat{f}_{n,k_2}(t_1,\eta)\psi_k(\xi)d\eta\label{f2}\\
    &-D^{\alpha}_\xi\int_{t_1}^{t_2}\int_{\R^3}e^{it\phi(\xi,\eta)}\frac{\partial_{\xi_l}\phi(\xi,\eta)}{i\phi(\xi,\eta)}q(\eta,\xi-\eta)\hat{f}_{m,k_1}(t,\xi-\eta)\hat{f}_{n,k_2}(t,\eta)\psi_k(\xi)d\eta dt\label{f3}\\
    &-D^{\alpha}_\xi\int_{t_1}^{t_2}\int_{\R^3}e^{it\phi(\xi,\eta)}\frac{t\partial_{\xi_l}\phi(\xi,\eta)}{i\phi(\xi,\eta)}q(\eta,\xi-\eta)\partial_t\hat{f}_{m,k_1}(t,\xi-\eta)\hat{f}_{n,k_2}(t,\eta)\psi_k(\xi)d\eta dt\label{f4}\\
    &-D^{\alpha}_\xi\int_{t_1}^{t_2}\int_{\R^3}e^{it\phi(\xi,\eta)}\frac{t\partial_{\xi_l}\phi(\xi,\eta)}{i\phi(\xi,\eta)}q(\eta,\xi-\eta)\hat{f}_{m,k_1}(t,\xi-\eta)\partial_t\hat{f}_{n,k_2}(t,\eta)\psi_k(\xi)d\eta dt\label{f5}.
\end{align}
We will divide the resulted terms into two groups, based on whether the derivative in $t$ falls on the product $\hat{f}_{m,k_1}(t,\xi-\eta)\hat{f}_{n,k_2}(t,\xi)$, and estimate them respectively in Lemma \ref{jm2-1} and \ref{jm2-2}. For both groups, we will use Lemma \ref{dalpha} to bound the $\alpha$ fractional derivative.
\begin{lem}
\label{jm2-1}
Suppose $t_1,t_2\in[2^{M-1},2^M]$ and $\sup_{t\in[1,T]}\|f\|_{Z}\leq \e_1$. If $(k_1,k_2)\in\chi^1_k$ and $c_l\neq c_m$, then
\begin{align*}
    &\|\eqref{f1-i}+\eqref{f2-i}+\eqref{f3-i}\|_{L^2}+\|\eqref{f1}+\eqref{f2}+\eqref{f3}\|_{L^2}\\
    \lesssim &  2^{-k-2k_{+}+\gamma k-2k_{2,+}+\gamma k_2-\alpha k_2}\min\{2^{-M/2+\gamma M/8-k+k_2/2+\gamma k/4},2^{M+k/2+2k_2}\}\e_1^2.
\end{align*}
\end{lem}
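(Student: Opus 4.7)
The proof would follow the template of Lemma~\ref{jm1}: apply the interpolation inequality $\|D^\alpha G\|_{L^2}\leq\|G\|_{L^2}^{1-\alpha}\|\nabla_\xi G\|_{L^2}^\alpha$ from Lemma~\ref{dalpha} to each of the three pre-$D^\alpha$ integrands (and their $q$-decorated $J$-counterparts, which are identical modulo the factor $2^{\e k_-}$ from \eqref{q}), writing the target as the interpolation between an $L^2$-bound and a $\nabla_\xi L^2$-bound. The crucial structural input is the lower bound $|\phi(\xi,\eta)|\gtrsim 2^{2k_1}\sim 2^{2k}$ from \eqref{lbt}, which holds precisely because $(k_1,k_2)\in\chi^1_k$ and $c_l\neq c_m$; this controls the multiplier $t\partial_{\xi_l}\phi/(i\phi)$ with $\|\F^{-1}[\cdot\,\Tilde{\psi}_k\Tilde{\psi}_{k_1}\Tilde{\psi}_{k_2}]\|_{L^1}\lesssim t\cdot 2^{-k}$ via Lemma~\ref{chi,eta}.

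For the $L^2$-bound, the two arms of the target min are produced by two bilinear estimates (Lemma~\ref{bilinear}): the first arm $2^{-M/2+\gamma M/8-k+k_2/2+\gamma k/4}$ comes from the $L^\infty\times L^2$ estimate placing the high-frequency profile $f_{m,k_1}$ in $L^\infty$ via the dispersive decay \eqref{linfinity} (its $t^{-3/2+\gamma/8}$ converts the multiplier's $2^M$ factor into $2^{-M/2+\gamma M/8}$), with $f_{n,k_2}$ in $L^2$ via \eqref{l2}; the second arm $2^{M+k/2+2k_2}$ comes from the $L^2\times L^2$ duality estimate of Lemma~\ref{dualitycomp} with Bernstein factor $2^{3\min\{k,k_2\}/2}=2^{3k_2/2}$ combined with \eqref{l2}. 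The $2^{M-k}$ multiplier pre-factor appears uniformly across the three pieces, whether the $2^M$ comes from the explicit $t_i$ in \eqref{f1-i}--\eqref{f2-i} or from the length of the interval of integration in \eqref{f3-i}.

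For the $\nabla_\xi L^2$-bound, I would first change variables $\zeta=\xi-\eta$, so that $\hat{f}_{n,k_2}(t,\xi-\zeta)$ becomes $\xi$-dependent. The derivative $\nabla_\xi$ then falls on (a) the multiplier or $\psi_k$, giving a subdominant extra $2^{-k}$; (b) $\hat{f}_{n,k_2}(t,\xi-\zeta)$, producing $\|\nabla_\xi\hat{f}_{n,k_2}\|_{L^2}$ from \eqref{l2first} and hence a favorable factor $2^{-k_2}$ relative to $\|\hat{f}_{n,k_2}\|_{L^2}$, which is exactly what interpolates into the target's $2^{-\alpha k_2}$; or (c) the chirp $e^{it\phi(\xi,\xi-\zeta)}$, producing an $it\cdot\partial_{\xi_p}[\phi(\xi,\xi-\zeta)]\sim 2^{M+k}$ overhead. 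For the time-integrated pieces \eqref{f3-i} and \eqref{f3}, contribution (c) is controlled by a second integration by parts in $t$ against $e^{it\phi}=\partial_t e^{it\phi}/(i\phi)$, which trades the bad $t^2$-scaling for new boundary pieces with moderate multiplier $\sim 2^{M-2k}$ and interior integrals involving $\partial_t\hat{f}_{m,k_1}$ or $\partial_t\hat{f}_{n,k_2}$, bounded via Lemmas~\ref{timel2} and \ref{mixder}; a subsequent $L^3\times L^6$ bilinear estimate then places these within $A\cdot 2^{-k_2}$.

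The main obstacle is contribution (c) for the pure boundary pieces \eqref{f1-i}--\eqref{f2-i} and \eqref{f1}--\eqref{f2}, where no further $t$-integration by parts is available and the naive $t^2$-growth threatens to dominate. The resolution I expect is to group these boundary terms with \eqref{f3-i} (resp.\ \eqref{f3}) through the fundamental-theorem-of-calculus identity $e^{it_2\phi}-e^{it_1\phi}=i\phi\int_{t_1}^{t_2}e^{it\phi}\,dt$, which rewrites the difference of boundary values as a time integral weighted by an additional factor of $\phi$; combined with the lower bound $|\phi|\gtrsim 2^{2k}$ this absorbs the offending $2^{M+k}$ factor and reduces the estimate to the already-treated time-integral case. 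Once the resulting $\nabla_\xi L^2$-bound satisfies $\|\nabla_\xi G\|_{L^2}\lesssim 2^{-k_2}\cdot(L^2\text{-bound})$, Lemma~\ref{dalpha} interpolates to produce exactly the target with its $2^{-\alpha k_2}$ factor and the advertised decay in~$M$.
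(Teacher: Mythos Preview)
Your overall strategy matches the paper's: both apply Lemma~\ref{dalpha} to the pre-$D^\alpha$ integrands, obtain the $L^2$ bound via the $L^\infty\times L^2$ bilinear estimate with \eqref{linfinity} (and Lemma~\ref{dualitycomp} for the second arm), and reduce the $\nabla_\xi L^2$ bound to controlling the ``hard'' term where the $\xi$-derivative hits $e^{it\phi}$. The paper does not change variables, but that is inessential.

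The genuine gap is your resolution of the hard term for the boundary pieces \eqref{f1-i}--\eqref{f2-i}. The identity $e^{it_2\phi}-e^{it_1\phi}=i\phi\int_{t_1}^{t_2}e^{it\phi}\,dt$ does not help: rewriting the boundary difference as a time integral introduces an \emph{extra} factor of $\phi$ (not $\phi^{-1}$), so the multiplier becomes $\sim t^2\,\partial_{\xi_m}\phi\,\partial_{\xi_l}\phi\sim 2^{2M+2k}$, and after the $dt$ integration you end up with $2^{3M+2k}$ --- far worse than what you started with. The lower bound $|\phi|\gtrsim 2^{2k}$ is only useful when $\phi$ sits in a denominator; it cannot absorb a multiplicative $\phi$. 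Nor is there any cancellation with the new boundary terms coming from your temporal IBP on the hard part of \eqref{f3-i}: those have multiplier $\sim t\,\partial_{\xi_m}\phi\,\partial_{\xi_l}\phi/\phi^2\sim 2^{M-2k}$, which does not match.

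The paper instead handles \emph{all} hard terms (boundary and interior alike) by spatial integration by parts in $\eta$, using $e^{it\phi}=\sum_n\frac{\partial_{\eta_n}\phi}{it|\nabla_\eta\phi|^2}\partial_{\eta_n}e^{it\phi}$ and the lower bound $|\nabla_\eta\phi|\gtrsim 2^{k_1}$ valid on $\chi^1_k$ (see \eqref{xi-2eta}). For the boundary hard term this absorbs the dangerous $it_j$ against $1/(it_j)$ and leaves a multiplier $\frac{\partial_{\eta_n}\phi\,\partial_{\xi_m}\phi\,t_j\partial_{\xi_l}\phi}{|\nabla_\eta\phi|^2\,\phi}$ of size $\sim 2^{M-k}$. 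The resulting $\partial_{\eta_n}$ then falls on the multiplier, on $\hat f_{m,k_1}$, or on $\hat f_{n,k_2}$; it is this last case that delivers the $\|\nabla_\xi\hat f_{n,k_2}\|_{L^2}$ contribution and hence the $2^{-k_2}$ improvement you need for the interpolated $2^{-\alpha k_2}$.
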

\begin{proof}
Let
\begin{align*}
    F_1(\xi)=\int_{\R^3}e^{it_j\phi(\xi,\eta)}\frac{t_j\partial_{\xi_l}\phi(\xi,\eta)}{\phi(\xi,\eta)}\hat{f}_{m,k_1}(t_j,\xi-\eta)\hat{f}_{n,k_2}(t_j,\eta)\psi_k(\xi)d\eta,
\end{align*}
\begin{align*}
    F^q_1(\xi)=\int_{\R^3}e^{it_j\phi(\xi,\eta)}\frac{t_j\partial_{\xi_l}\phi(\xi,\eta)}{\phi(\xi,\eta)}q(\eta,\xi-\eta)\hat{f}_{m,k_1}(t_j,\xi-\eta)\hat{f}_{n,k_2}(t_j,\eta)\psi_k(\xi)d\eta
\end{align*}
for $j=1,2$ and
\begin{align*}
    F_2(\xi)=\int_{t_1}^{t_2}\int_{\R^3}e^{it\phi(\xi,\eta)}\frac{\partial_{\xi_l}\phi(\xi,\eta)}{\phi(\xi,\eta)}\hat{f}_{m,k_1}(t,\xi-\eta)\hat{f}_{n,k_2}(t,\eta)\psi_k(\xi)d\eta dt,
\end{align*}
\begin{align*}
    F^q_2(\xi)=\int_{t_1}^{t_2}\int_{\R^3}e^{it\phi(\xi,\eta)}\frac{\partial_{\xi_l}\phi(\xi,\eta)}{\phi(\xi,\eta)}q(\eta,\xi-\eta)\hat{f}_{m,k_1}(t,\xi-\eta)\hat{f}_{n,k_2}(t,\eta)\psi_k(\xi)d\eta dt.
\end{align*}
In order to apply Lemma \ref{dalpha}, we shall obtain bounds on $\|F_1+F_2\|_{L^2}$, $\|F^q_1+F^q_2\|_{L^2}$, $\|\nabla_\xi (F_1+F_2)\|_{L^2}$, and $\|\nabla_\xi (F^q_1+F^q_2)\|_{L^2}$. \\
First, using the bilinear estimate $L^\infty\times L^2\rightarrow L^2$ in Lemma \ref{bilinear}, along with Lemma \ref{chi,eta}, Lemma \ref{dualitycomp}, estimates in \eqref{q}, \eqref{linfinity}, and \eqref{l2}, we obtain
\begin{align*}
    &\|F_1+F_2\|_{L^2}+\|F^q_1+F^q_2\|_{L^2}\\
    \lesssim & 2^{M}\big(\|\F^{-1}\frac{\partial_{\xi_l}\phi(\xi,\eta)}{\phi(\xi,\eta)}\Tilde{\psi}_k(\xi)\Tilde{\psi}_{k_1}(\xi-\eta)\Tilde{\psi}_{k_2}(\eta)\|_{L^1}+\|\F^{-1}\frac{\partial_{\xi_l}\phi(\xi,\eta)}{\phi(\xi,\eta)}q(\xi-\eta,\eta)\Tilde{\psi}_k(\xi)\Tilde{\psi}_{k_1}(\xi-\eta)\Tilde{\psi}_{k_2}(\eta)\|_{L^1}\big)\times\\
    &\times\sup_{t\in[2^{M-1},2^M]}\min\{\|e^{ic_mt\la}f_{m,k_1}\|_{L^\infty_x}\|e^{ic_nt\la}f_{n,k_2}\|_{L^2_x},2^{3k_2/2}\|e^{ic_mt\la}f_{m,k_1}\|_{L^2_x}\|e^{ic_nt\la}f_{n,k_2}\|_{L^2_x}\}\\
    \lesssim & 2^{M-k}\sup_{t\in[2^{M-1},2^M]}\min\{\|e^{ic_mt\la}f_{m,k_1}\|_{L^\infty_x}\|f_{n,k_2}\|_{L^2_x},2^{3k_2/2}\|f_{m,k_1}\|_{L^2_x}\|f_{n,k_2}\|_{L^2_x}\}\\
    \lesssim & 2^{-k-2k_{1,+}+\gamma k_1-2k_{2,+}+\gamma k_2}\min\{2^{-M/2+\gamma M/8-k_1+\gamma k_1/4+k_2/2},2^{M+k_1/2+2k_2}\}\e_1^2.
\end{align*}
Next, we compute the derivatives and get
\begin{align}
    \partial_{\xi_m} F_1
    = &\int_{\R^3}e^{it_j\phi(\xi,\eta)}\frac{t_j\partial_{\xi_l}\phi(\xi,\eta)}{\phi(\xi,\eta)}\partial_{\xi_m}\hat{f}_{m,k_1}(t_j,\xi-\eta)\hat{f}_{n,k_2}(t_j,\eta)\psi_k(\xi)d\eta\label{jm2.1-i}\\
    & + \int_{\R^3}e^{it_j\phi(\xi,\eta)}\partial_{\xi_m}\frac{t_j\partial_{\xi_l}\phi(\xi,\eta)}{\phi(\xi,\eta)}\hat{f}_{m,k_1}(t_j,\xi-\eta)\hat{f}_{n,k_2}(t_j,\eta)\psi_k(\xi)d\eta\label{jm2.11.2-i}\\
    &+ \int_{\R^3}e^{it_j\phi(\xi,\eta)}\frac{t_j\partial_{\xi_l}\phi(\xi,\eta)}{\phi(\xi,\eta)}\hat{f}_{m,k_1}(t_j,\xi-\eta)\hat{f}_{n,k_2}(t_j,\eta)\partial_{\xi_m}\psi_k(\xi)d\eta\label{jm2.12-i}\\
    & + \int_{\R^3}e^{it_j\phi(\xi,\eta)}\frac{i\partial_{\xi_m}\phi(\xi,\eta)t^2_j\partial_{\xi_l}\phi(\xi,\eta)}{\phi(\xi,\eta)}\hat{f}_{m,k_1}(t_j,\xi-\eta)\hat{f}_{n,k_2}(t_j,\eta)\psi_k(\xi)d\eta\label{jm2.2-i},
\end{align}
\begin{align}
    \partial_{\xi_m} F^q_1
    = &\int_{\R^3}e^{it_j\phi(\xi,\eta)}\frac{t_j\partial_{\xi_l}\phi(\xi,\eta)}{\phi(\xi,\eta)}q(\eta,\xi-\eta)\partial_{\xi_m}\hat{f}_{m,k_1}(t_j,\xi-\eta)\hat{f}_{n,k_2}(t_j,\eta)\psi_k(\xi)d\eta\label{jm2.1}\\
    & + \int_{\R^3}e^{it_j\phi(\xi,\eta)}\partial_{\xi_m}\big(\frac{t_j\partial_{\xi_l}\phi(\xi,\eta)}{\phi(\xi,\eta)}q(\eta,\xi-\eta)\big)\hat{f}_{m,k_1}(t_j,\xi-\eta)\hat{f}_{n,k_2}(t_j,\eta)\psi_k(\xi)d\eta\label{jm2.11.2}\\
    &+ \int_{\R^3}e^{it_j\phi(\xi,\eta)}\frac{t_j\partial_{\xi_l}\phi(\xi,\eta)}{\phi(\xi,\eta)}q(\eta,\xi-\eta)\hat{f}_{m,k_1}(t_j,\xi-\eta)\hat{f}_{n,k_2}(t_j,\eta)\partial_{\xi_m}\psi_k(\xi)d\eta\label{jm2.12}\\
    & + \int_{\R^3}e^{it_j\phi(\xi,\eta)}\frac{i\partial_{\xi_m}\phi(\xi,\eta)t^2_j\partial_{\xi_l}\phi(\xi,\eta)}{\phi(\xi,\eta)}q(\eta,\xi-\eta)\hat{f}_{m,k_1}(t_j,\xi-\eta)\hat{f}_{n,k_2}(t_j,\eta)\psi_k(\xi)d\eta\label{jm2.2},
\end{align}
\begin{align}
    \partial_{\xi_m} F_2
    =
    &\int_{t_1}^{t_2}\int_{\R^3}e^{it\phi(\xi,\eta)}\frac{\partial_{\xi_l}\phi(\xi,\eta)}{\phi(\xi,\eta)}\partial_{\xi_m}\hat{f}_{m,k_1}(t,\xi-\eta)\hat{f}_{n,k_2}(t,\eta)\psi_k(\xi)d\eta dt\label{jm2.1-2-i}\\
    &+ \int_{t_1}^{t_2}\int_{\R^3}e^{it\phi(\xi,\eta)}\partial_{\xi_m}\frac{\partial_{\xi_l}\phi(\xi,\eta)}{\phi(\xi,\eta)}\hat{f}_{m,k_1}(t,\xi-\eta)\hat{f}_{n,k_2}(t,\eta)\psi_k(\xi)d\eta dt\label{jm2.11.2-2-i}\\
    &+ \int_{t_1}^{t_2}\int_{\R^3}e^{it\phi(\xi,\eta)}\frac{\partial_{\xi_l}\phi(\xi,\eta)}{\phi(\xi,\eta)}\hat{f}_{m,k_1}(t,\xi-\eta)\hat{f}_{n,k_2}(t,\eta)\partial_{\xi_m}\psi_k(\xi)d\eta dt\label{jm2.12-2-i}\\
    & + \int_{t_1}^{t_2}\int_{\R^3}e^{it\phi(\xi,\eta)}\frac{it\partial_{\xi_m}\phi(\xi,\eta)\partial_{\xi_l}\phi(\xi,\eta)}{\phi(\xi,\eta)}\hat{f}_{m,k_1}(t,\xi-\eta)\hat{f}_{n,k_2}(t,\eta)\psi_k(\xi)d\eta dt\label{jm2.2-2-i},
\end{align}
and
\begin{align}
    \partial_{\xi_m} F^q_2
    =
    &\int_{t_1}^{t_2}\int_{\R^3}e^{it\phi(\xi,\eta)}\frac{\partial_{\xi_l}\phi(\xi,\eta)}{\phi(\xi,\eta)}q(\eta,\xi-\eta)\partial_{\xi_m}\hat{f}_{m,k_1}(t,\xi-\eta)\hat{f}_{n,k_2}(t,\eta)\psi_k(\xi)d\eta dt\label{jm2.1-2}\\
    &+ \int_{t_1}^{t_2}\int_{\R^3}e^{it\phi(\xi,\eta)}\partial_{\xi_m}\big(\frac{\partial_{\xi_l}\phi(\xi,\eta)}{\phi(\xi,\eta)}q(\eta,\xi-\eta)\big)\hat{f}_{m,k_1}(t,\xi-\eta)\hat{f}_{n,k_2}(t,\eta)\psi_k(\xi)d\eta dt\label{jm2.11.2-2}\\
    &+ \int_{t_1}^{t_2}\int_{\R^3}e^{it\phi(\xi,\eta)}\frac{\partial_{\xi_l}\phi(\xi,\eta)}{\phi(\xi,\eta)}q(\eta,\xi-\eta)\hat{f}_{m,k_1}(t,\xi-\eta)\hat{f}_{n,k_2}(t,\eta)\partial_{\xi_m}\psi_k(\xi)d\eta dt\label{jm2.12-2}\\
    & + \int_{t_1}^{t_2}\int_{\R^3}e^{it\phi(\xi,\eta)}\frac{it\partial_{\xi_m}\phi(\xi,\eta)\partial_{\xi_l}\phi(\xi,\eta)}{\phi(\xi,\eta)}q(\eta,\xi-\eta)\hat{f}_{m,k_1}(t,\xi-\eta)\hat{f}_{n,k_2}(t,\eta)\psi_k(\xi)d\eta dt\label{jm2.2-2}.
\end{align}
Furthermore, we perform integration by parts in the space variable on the terms $\eqref{jm2.2-i}$, $\eqref{jm2.2}$, $\eqref{jm2.2-2-i}$, and $\eqref{jm2.2-2}$ using the identity  $e^{it\phi(\xi,\eta)}=\sum_n\frac{\partial_{\eta_n}\phi(\xi,\eta)}{it|\nabla_\eta\phi(\xi,\eta)|^2}\partial_{\eta_n}e^{it\phi(\xi,\eta)}$,
\begin{align}
    \eqref{jm2.2-i}
    =&-\int_{\R^3}e^{it_j\phi(\xi,\eta)}\partial_{\eta_n}\frac{\partial_{\eta_n}\phi(\xi,\eta)\partial_{\xi_m}\phi(\xi,\eta)t_j\partial_{\xi_l}\phi(\xi,\eta)}{|\nabla_\eta\phi(\xi,\eta)|^2\phi(\xi,\eta)}\hat{f}_{m,k_1}(t_j,\xi-\eta)\hat{f}_{n,k_2}(t_j,\eta)\psi_k(\xi)d\eta\label{jm2.3-i}\\
    &-\int_{\R^3}e^{it_j\phi(\xi,\eta)}\frac{\partial_{\eta_n}\phi(\xi,\eta)\partial_{\xi_m}\phi(\xi,\eta)t_j\partial_{\xi_l}\phi(\xi,\eta)}{|\nabla_\eta\phi(\xi,\eta)|^2\phi(\xi,\eta)}\partial_{\eta_n}\hat{f}_{m,k_1}(t_j,\xi-\eta)\hat{f}_{n,k_2}(t_j,\eta)\psi_k(\xi)d\eta\label{jm2.4-i}\\
    &-\int_{\R^3}e^{it_j\phi(\xi,\eta)}\frac{\partial_{\eta_n}\phi(\xi,\eta)\partial_{\xi_m}\phi(\xi,\eta)t_j\partial_{\xi_l}\phi(\xi,\eta)}{|\nabla_\eta\phi(\xi,\eta)|^2\phi(\xi,\eta)}\hat{f}_{m,k_1}(t_j,\xi-\eta)\partial_{\eta_n}\hat{f}_{n,k_2}(t_j,\eta)\psi_k(\xi)d\eta\label{jm2.5-i},
\end{align}
\begin{align}
    \eqref{jm2.2}
    =&-\int_{\R^3}e^{it_j\phi(\xi,\eta)}\partial_{\eta_n}\big(\frac{\partial_{\eta_n}\phi(\xi,\eta)\partial_{\xi_m}\phi(\xi,\eta)t_j\partial_{\xi_l}\phi(\xi,\eta)}{|\nabla_\eta\phi(\xi,\eta)|^2\phi(\xi,\eta)}q(\eta,\xi-\eta)\big)\hat{f}_{m,k_1}(t_j,\xi-\eta)\hat{f}_{n,k_2}(t_j,\eta)\psi_k(\xi)d\eta\label{jm2.3}\\
    &-\int_{\R^3}e^{it_j\phi(\xi,\eta)}\frac{\partial_{\eta_n}\phi(\xi,\eta)\partial_{\xi_m}\phi(\xi,\eta)t_j\partial_{\xi_l}\phi(\xi,\eta)}{|\nabla_\eta\phi(\xi,\eta)|^2\phi(\xi,\eta)}q(\eta,\xi-\eta)\partial_{\eta_n}\hat{f}_{m,k_1}(t_j,\xi-\eta)\hat{f}_{n,k_2}(t_j,\eta)\psi_k(\xi)d\eta\label{jm2.4}\\
    &-\int_{\R^3}e^{it_j\phi(\xi,\eta)}\frac{\partial_{\eta_n}\phi(\xi,\eta)\partial_{\xi_m}\phi(\xi,\eta)t_j\partial_{\xi_l}\phi(\xi,\eta)}{|\nabla_\eta\phi(\xi,\eta)|^2\phi(\xi,\eta)}q(\eta,\xi-\eta)\hat{f}_{m,k_1}(t_j,\xi-\eta)\partial_{\eta_n}\hat{f}_{n,k_2}(t_j,\eta)\psi_k(\xi)d\eta\label{jm2.5},
\end{align}
\begin{align}
    \eqref{jm2.2-2-i}
    =&-\int_{t_1}^{t_2}\int_{\R^3}e^{it\phi(\xi,\eta)}\partial_{\eta_n}\frac{\partial_{\eta_n}\phi(\xi,\eta)\partial_{\xi_m}\phi(\xi,\eta)\partial_{\xi_l}\phi(\xi,\eta)}{|\nabla_\eta\phi(\xi,\eta)|^2\phi(\xi,\eta)}\hat{f}_{m,k_1}(t,\xi-\eta)\hat{f}_{n,k_2}(t,\eta)\psi_k(\xi)d\eta dt\label{jm2.3-2-i}\\
    &-\int_{t_1}^{t_2}\int_{\R^3}e^{it\phi(\xi,\eta)}\frac{\partial_{\eta_n}\phi(\xi,\eta)\partial_{\xi_m}\phi(\xi,\eta)\partial_{\xi_l}\phi(\xi,\eta)}{|\nabla_\eta\phi(\xi,\eta)|^2\phi(\xi,\eta)}\partial_{\eta_n}\hat{f}_{m,k_1}(t,\xi-\eta)\hat{f}_{n,k_2}(t,\eta)\psi_k(\xi)d\eta dt\label{jm2.4-2-i}\\
    &-\int_{t_1}^{t_2}\int_{\R^3}e^{it\phi(\xi,\eta)}\frac{\partial_{\eta_n}\phi(\xi,\eta)\partial_{\xi_m}\phi(\xi,\eta)\partial_{\xi_l}\phi(\xi,\eta)}{|\nabla_\eta\phi(\xi,\eta)|^2\phi(\xi,\eta)}\hat{f}_{m,k_1}(t,\xi-\eta)\partial_{\eta_n}\hat{f}_{n,k_2}(t,\eta)\psi_k(\xi)d\eta dt\label{jm2.5-2-i},
\end{align}
and
\begin{align}
    \eqref{jm2.2-2}
    =&-\int_{t_1}^{t_2}\int_{\R^3}e^{it\phi(\xi,\eta)}\partial_{\eta_n}\big(\frac{\partial_{\eta_n}\phi(\xi,\eta)\partial_{\xi_m}\phi(\xi,\eta)\partial_{\xi_l}\phi(\xi,\eta)}{|\nabla_\eta\phi(\xi,\eta)|^2\phi(\xi,\eta)}q(\eta,\xi-\eta)\big)\hat{f}_{m,k_1}(t,\xi-\eta)\hat{f}_{n,k_2}(t,\eta)\psi_k(\xi)d\eta dt\label{jm2.3-2}\\
    &-\int_{t_1}^{t_2}\int_{\R^3}e^{it\phi(\xi,\eta)}\frac{\partial_{\eta_n}\phi(\xi,\eta)\partial_{\xi_m}\phi(\xi,\eta)\partial_{\xi_l}\phi(\xi,\eta)}{|\nabla_\eta\phi(\xi,\eta)|^2\phi(\xi,\eta)}q(\eta,\xi-\eta)\partial_{\eta_n}\hat{f}_{m,k_1}(t,\xi-\eta)\hat{f}_{n,k_2}(t,\eta)\psi_k(\xi)d\eta dt\label{jm2.4-2}\\
    &-\int_{t_1}^{t_2}\int_{\R^3}e^{it\phi(\xi,\eta)}\frac{\partial_{\eta_n}\phi(\xi,\eta)\partial_{\xi_m}\phi(\xi,\eta)\partial_{\xi_l}\phi(\xi,\eta)}{|\nabla_\eta\phi(\xi,\eta)|^2\phi(\xi,\eta)}q(\eta,\xi-\eta)\hat{f}_{m,k_1}(t,\xi-\eta)\partial_{\eta_n}\hat{f}_{n,k_2}(t,\eta)\psi_k(\xi)d\eta dt\label{jm2.5-2}.
\end{align}
By the bilinear estimate $L^2\times L^\infty\rightarrow L^2$, together with Lemma \ref{dualitycomp}, Lemma \ref{chi,eta}, estimates in \eqref{q}, \eqref{l2}, \eqref{l2first} and \eqref{linfinity}, we have
\begin{align*}
    &\|\eqref{jm2.11.2-i}+\eqref{jm2.11.2-2-i}+\eqref{jm2.12-i}+\eqref{jm2.12-2-i}+\eqref{jm2.3-i}+\eqref{jm2.3-2-i}
    \|_{L^2}+\\
    &+\|\eqref{jm2.11.2}+\eqref{jm2.11.2-2}+\eqref{jm2.12}+\eqref{jm2.12-2}+\eqref{jm2.3}+\eqref{jm2.3-2}\|_{L^2}\\
    \lesssim &2^{M}\big(\|\F^{-1}\partial_{\xi_m}\frac{\partial_{\xi_l}\phi(\xi,\eta)}{\phi(\xi,\eta)}\Tilde{\psi}_k(\xi)\Tilde{\psi}_{k_1}(\xi-\eta)\Tilde{\psi}_{k_2}(\eta)\|_{L^1}+\\
    &+\|\F^{-1}\partial_{\xi_m}\big(\frac{\partial_{\xi_l}\phi(\xi,\eta)}{\phi(\xi,\eta)}q(\eta,\xi-\eta)\big)\Tilde{\psi}_k(\xi)\Tilde{\psi}_{k_1}(\xi-\eta)\Tilde{\psi}_{k_2}(\eta)\|_{L^1}+\\
    & + \|\nabla_\xi\psi_k\|_{L^\infty}(\|\F^{-1}\frac{\partial_{\xi_l}\phi(\xi,\eta)}{\phi(\xi,\eta)}\Tilde{\psi}_k(\xi)\Tilde{\psi}_{k_1}(\xi-\eta)\Tilde{\psi}_{k_2}(\eta)\|_{L^1}
    +\\
    &+\|\F^{-1}\frac{\partial_{\xi_l}\phi(\xi,\eta)}{\phi(\xi,\eta)}q(\eta,\xi-\eta)\Tilde{\psi}_k(\xi)\Tilde{\psi}_{k_1}(\xi-\eta)\Tilde{\psi}_{k_2}(\eta)\|_{L^1})+\\
    & + \|\F^{-1}\partial_{\eta_n}\frac{\partial_{\eta_n}\phi(\xi,\eta)\partial_{\xi_m}\phi(\xi,\eta)\partial_{\xi_l}\phi(\xi,\eta)}{|\nabla_\eta\phi(\xi,\eta)|^2\phi(\xi,\eta)}\Tilde{\psi}_k(\xi)\Tilde{\psi}_{k_1}(\xi-\eta)\Tilde{\psi}_{k_2}(\eta)\|_{L^1}+\\
    &+\|\F^{-1}\partial_{\eta_n}\big(\frac{\partial_{\eta_n}\phi(\xi,\eta)\partial_{\xi_m}\phi(\xi,\eta)\partial_{\xi_l}\phi(\xi,\eta)}{|\nabla_\eta\phi(\xi,\eta)|^2\phi(\xi,\eta)}q(\eta,\xi-\eta)\big)\Tilde{\psi}_k(\xi)\Tilde{\psi}_{k_1}(\xi-\eta)\Tilde{\psi}_{k_2}(\eta)\|_{L^1}\big)\times\\
    &\times\sup_{t\in[2^{M-1},2^M]}\min\{\|e^{ic_mt\la}f_{m,k_1}\|_{L^\infty_x}\|e^{ic_nt\la}{f}_{n,k_2}\|_{L^2_x},2^{3k_2/2}\|e^{ic_mt\la}f_{m,k_1}\|_{L^2_x}\|e^{ic_nt\la}{f}_{n,k_2}\|_{L^2_x}\}\\
    \lesssim & 2^{M-2k}(1+2^{k-k_2})\sup_{t\in[2^{M-1},2^M]}\min\{\|e^{ic_mt\la}f_{m,k_1}\|_{L^\infty_x}\|\hat{f}_{n,k_2}\|_{L^2_\xi},2^{3k_2/2}\|f_{m,k_1}\|_{L^2_x}\|{f}_{n,k_2}\|_{L^2_x}\}\\
    \lesssim & 2^{-k-k_2-2k_{1,+}-2k_{2,+}+\gamma k_1+\gamma k_2}\min\{2^{-M/2+\gamma M/8-k_1+k_2/2+\gamma k_1/4},2^{M+3k_2/2+k_1/2+k_2/2}\}\e_1^2\\
    \lesssim & 2^{-k-2k_{+}-2k_{2,+}+\gamma k+\gamma k_2}\min\{2^{-M/2+\gamma M/8-k-k_2/2+\gamma k/4},2^{M+k_2+k/2}\}\e_1^2.
\end{align*}
Using the bilinear estimate $L^6\times L^3\rightarrow L^2$ instead with the estimate in \eqref{l3nablaf}, we get
\begin{align*}
    &\|\eqref{jm2.1-i}+\eqref{jm2.1-2-i}+\eqref{jm2.4-i}+\eqref{jm2.4-2-i}\|_{L^2}+\|\eqref{jm2.1}+\eqref{jm2.1-2}+\eqref{jm2.4}+\eqref{jm2.4-2}\|_{L^2}\\
    \lesssim & 2^{M}\big(\|\F^{-1}\frac{\partial_{\xi_l}\phi(\xi,\eta)}{\phi(\xi,\eta)}\Tilde{\psi}_k(\xi)\Tilde{\psi}_{k_1}(\xi-\eta)\Tilde{\psi}_{k_2}(\eta)\|_{L^1}+\|\F^{-1}\frac{\partial_{\xi_l}\phi(\xi,\eta)}{\phi(\xi,\eta)}q(\eta,\xi-\eta)\Tilde{\psi}_k(\xi)\Tilde{\psi}_{k_1}(\xi-\eta)\Tilde{\psi}_{k_2}(\eta)\|_{L^1}+\\
    &+\|\F^{-1}\frac{\partial_{\eta_n}\phi(\xi,\eta)\partial_{\xi_m}\phi(\xi,\eta)\partial_{\xi_l}\phi(\xi,\eta)}{|\nabla_\eta\phi(\xi,\eta)|^2\phi(\xi,\eta)}\Tilde{\psi}_k(\xi)\Tilde{\psi}_{k_1}(\xi-\eta)\Tilde{\psi}_{k_2}(\eta)\|_{L^1}+\\
    &+\|\F^{-1}\frac{\partial_{\eta_n}\phi(\xi,\eta)\partial_{\xi_m}\phi(\xi,\eta)\partial_{\xi_l}\phi(\xi,\eta)}{|\nabla_\eta\phi(\xi,\eta)|^2\phi(\xi,\eta)}q(\eta,\xi-\eta)\Tilde{\psi}_k(\xi)\Tilde{\psi}_{k_1}(\xi-\eta)\Tilde{\psi}_{k_2}(\eta)\|_{L^1}\big)\times\\
    &\times\sup_{t\in[2^{M-1},2^M]} \min\{\|e^{ic_mt\la}\F^{-1}\nabla_\xi\hat{f}_{m,k_1}\|_{L^3_x}\|e^{ic_nt\la}f_{n,k_2}\|_{L^6_x},2^{3k_2/2}\|e^{ic_mt\la}\F^{-1}\nabla_\xi\hat{f}_{m,k_1}\|_{L^2_x}\|e^{ic_nt\la}f_{n,k_2}\|_{L^2_x}\}\\
    \lesssim & 2^{M-k}\sup_{t\in[2^{M-1},2^M]}\min\{\|e^{ic_mt\la}\F^{-1}\nabla_\xi\hat{f}_{m,k_1}\|_{L^3_x}\|e^{ic_nt\la}f_{n,k_2}\|_{L^6_x}, 2^{3k_2/2}\|\nabla_\xi\hat{f}_{m,k_1}\|_{L^2_\xi}\|f_{n,k_2}\|_{L^2_x}\}\\
    \lesssim & 2^{-k-2k_{1,+}-2k_{2,+}+\gamma k_1+\gamma k_2}\min\{2^{-M/2+\gamma M/8-k_1+\gamma k_1/4 -k_2/2},2^{M+3k_2/2-k_1/2+k_2/2}\}\e_1^2\\
    \lesssim & 2^{-k-2k_{+}-2k_{2,+}+\gamma k+\gamma k_2}\min\{2^{-M/2+\gamma M/8-k-k_2/2+\gamma k/4},2^{M+2k_2-k/2}\}\e_1^2.
\end{align*}
Lastly, we may bound the remaining terms by the $L^\infty\times L^2\rightarrow L^2$ bilinear estimate, along with Lemma \ref{chi,eta}, Lemma \ref{timel2}, Lemma \ref{dualitycomp}, estimations \eqref{l2}, \eqref{l2first} and \eqref{linfinity},
\begin{align*}
    &\|\eqref{jm2.5-i}+\eqref{jm2.5-2-i}\|_{L^2}+\|\eqref{jm2.5}+\eqref{jm2.5-2}\|_{L^2}\\
    \lesssim &2^{M}\big(\|\F^{-1} \frac{\partial_{\eta_n}\phi(\xi,\eta)\partial_{\xi_m}\phi(\xi,\eta)\partial_{\xi_l}\phi(\xi,\eta)}{|\nabla_\eta\phi(\xi,\eta)|^2\phi(\xi,\eta)}\Tilde{\psi}_k(\xi)\Tilde{\psi}_{k_1}(\xi-\eta)\Tilde{\psi}_{k_2}(\eta)\|_{L^1}+\\
    &+\|\F^{-1} \frac{\partial_{\eta_n}\phi(\xi,\eta)\partial_{\xi_m}\phi(\xi,\eta)\partial_{\xi_l}\phi(\xi,\eta)}{|\nabla_\eta\phi(\xi,\eta)|^2\phi(\xi,\eta)}q(\eta,\xi-\eta)\Tilde{\psi}_k(\xi)\Tilde{\psi}_{k_1}(\xi-\eta)\Tilde{\psi}_{k_2}(\eta)\|_{L^1}\big)\times\\
    &\times\sup_{t\in[2^{M-1},2^M]}\min\{\|e^{ic_mt\la}f_{m,k_1}\|_{L^\infty_x}\|e^{ic_nt\la}\F^{-1}\nabla_\xi\hat{f}_{n,k_2}\|_{L^2_x},2^{3k_2/2}\|e^{ic_mt\la}f_{m,k_1}\|_{L^2_x}\|e^{ic_nt\la}\F^{-1}\nabla_\xi\hat{f}_{n,k_2}\|_{L^2_x}\}\\
    \lesssim & 2^{M-k}\sup_{t\in[2^{M-1},2^M]}\min\{\|e^{ic_mt\la}f_{m,k_1}\|_{L^\infty_x}\|\nabla_\xi\hat{f}_{n,k_2}\|_{L^2_\xi},2^{3k_2/2}\|f_{m,k_1}\|_{L^2_x}\|\nabla_\xi\hat{f}_{n,k_2}\|_{L^2_\xi}\}\\
    \lesssim & 2^{-k-2k_{1,+}-2k_{2,+}+\gamma k_1+\gamma k_2}\min\{2^{-M/2+\gamma M/8-k_1+\gamma k_1/4-k_2/2},2^{M+3k_2/2+k_1/2-k_2/2}\}\e_1^2\\
    \lesssim & 2^{-k-2k_{+}-2k_{2,+}+\gamma k+\gamma k_2}\min\{2^{-M/2+\gamma M/8-k+\gamma k/4-k_2/2},2^{M+k_2+k/2}\}\e_1^2.
\end{align*}
Thus, we showed
\begin{align*}
    &\|\nabla_\xi(F_1+F_2)\|_{L^2}+\|\nabla_\xi(F^q_1+F^q_2)\|_{L^2}\\
    \lesssim & 2^{-k-2k_{+}-2k_{2,+}+\gamma k+\gamma k_2}\min\{2^{-M/2+\gamma M/8-k-k_2/2+\gamma k/4},2^{M+k_2+k/2}\}\e_1^2
\end{align*}
and
\begin{align*}
    &\|F_1+F_2\|_{L^2}+\|F^q_1+F^q_2\|_{L^2}\\
    \lesssim & 2^{-k-2k_{+}+\gamma k-2k_{2,+}+\gamma k_2}\min\{2^{-M/2+\gamma M/8-k+k_2/2+\gamma k/4},2^{M+k/2+2k_2}\}\e_1^2,
\end{align*}
which imply
\begin{align*}
    &\|D^\alpha_\xi(F_1+F_2)\|_{L^2}+\|D^\alpha_\xi(F^q_1+F^q_2)\|_{L^2}\\
    &\leq \|F_1+F_2\|_{L^2}^{1-\alpha}\|\nabla_\xi(F_1+F_2)\|_{L^2}^\alpha+\|F^q_1+F^q_2\|_{L^2}^{1-\alpha}\|\nabla_\xi(F^q_1+F^q_2)\|_{L^2}^\alpha\\
    &\lesssim 2^{-k-2k_{+}+\gamma k-2k_{2,+}+\gamma k_2-\alpha k_2}\min\{2^{-M/2+\gamma M/8-k+k_2/2+\gamma k/4},2^{M+k/2+2k_2}\}\e_1^2,
\end{align*}
according to Lemma \ref{dalpha}.
\end{proof}
The estimation for the second group of terms is more intricate, as it requires bounding the $L^p$ norms for the time derivatives $\partial_t \hat{f}_k(t,\xi)$ and the mixed derivatives $\partial_t\nabla_\xi\hat{f}(t,\xi)$. We will state our results in two cases based on the relative size of $|\xi|\sim 2^k$ compared to $t\sim 2^M$.
\begin{lem}
\label{jm2-2}
Given $t_1,t_2\in[2^{M-1},2^M]$, $(k_1,k_2)\in\chi^1_k$, $c_l\neq c_m$, and $\sup_{t\in[1,T]}\|f\|_{Z}\leq \e_1$, we have
\begin{align*}
    &\|\eqref{f4-i}+\eqref{f5-i}\|_{L^2}+\|\eqref{f4}+\eqref{f5}\|_{L^2}\\
    \lesssim  & 2^{-2k_++\gamma k-2k_{2,+}+\gamma k_2-\alpha k_2}\min\{2^{-M/2-3k/2},2^{M-k/2+2k_2}\}\e_1^2,
\end{align*}
if $M+2k\leq 0$, and 
\begin{align*}
    &\|\eqref{f4-i}+\eqref{f5-i}\|_{L^2}+\|\eqref{f4}+\eqref{f5}\|_{L^2}\\
    \lesssim  & 2^{-2k_++\gamma k-\gamma k_-/2-2k_{2,+}+\gamma k_2-\gamma k_{2,-}/2+\alpha k} \min\{2^{-M/2-2k-k_2/2},2^{M-k/2+k_2}\}\e_1^2,
\end{align*}
if $M+2k>0$.
\end{lem}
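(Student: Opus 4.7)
The plan is to mirror the strategy of Lemma \ref{jm2-1}: set
\begin{align*}
G_1(\xi) &= \int_{t_1}^{t_2}\int_{\R^3}e^{it\phi(\xi,\eta)}\frac{t\partial_{\xi_l}\phi(\xi,\eta)}{\phi(\xi,\eta)}\partial_t\hat{f}_{m,k_1}(t,\xi-\eta)\hat{f}_{n,k_2}(t,\eta)\psi_k(\xi)\,d\eta\,dt,\\
G_2(\xi) &= \int_{t_1}^{t_2}\int_{\R^3}e^{it\phi(\xi,\eta)}\frac{t\partial_{\xi_l}\phi(\xi,\eta)}{\phi(\xi,\eta)}\hat{f}_{m,k_1}(t,\xi-\eta)\partial_t\hat{f}_{n,k_2}(t,\eta)\psi_k(\xi)\,d\eta\,dt,
\end{align*}
define $G^q_1, G^q_2$ analogously with $q(\eta,\xi-\eta)$ inserted, and apply Lemma \ref{dalpha} in the form $\|D^\alpha_\xi(G_1+G_2)\|_{L^2}\leq \|G_1+G_2\|_{L^2}^{1-\alpha}\|\nabla_\xi(G_1+G_2)\|_{L^2}^\alpha$. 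The whole argument rests on the lower bound $|\phi(\xi,\eta)|\gtrsim 2^{2k_1}$ from \eqref{lbt}, valid precisely because $(k_1,k_2)\in\chi^1_k$ and $c_l\neq c_m$; together with $|\partial_{\xi_l}\phi|\lesssim 2^{k_1}$ this makes the symbol $\partial_{\xi_l}\phi/\phi$ behave like a multiplier of order $-k_1$, which I will extract via Lemma \ref{chi,eta} (and \eqref{q} for the $G^q$ variants).

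First I would bound $\|G_1+G_2\|_{L^2}+\|G^q_1+G^q_2\|_{L^2}$. The time integration and the $t$ factor together contribute $2^{2M}$, the multiplier contributes $2^{-k_1}$, and a $L^\infty\times L^2\to L^2$ bilinear estimate paired with Lemma \ref{dualitycomp} reduces the problem to $\min\{\|e^{ic_m t\Delta}\partial_t f_{m,k_1}\|_{L^\infty}\|f_{n,k_2}\|_{L^2},\,2^{3k_2/2}\|\partial_t\hat{f}_{m,k_1}\|_{L^2}\|f_{n,k_2}\|_{L^2}\}$ and the symmetric quantity with the time derivative on $f_{n,k_2}$. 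The $L^\infty$ norm of $e^{ict\Delta}\partial_t f$ will be treated via Bernstein (Lemma \ref{bernstein}) applied to the $L^6$ bound \eqref{l6timef}, and the $L^2$ estimates come from \eqref{l2timef} and \eqref{l2}. The factor $t\sim 2^M$ cancels the $t^{-1}$ decay of $\|\partial_t\hat{f}\|$, so no growing power of $2^M$ appears.

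Next, for $\|\nabla_\xi(G_1+G_2)\|_{L^2}$, differentiating $G_j$ in $\xi$ produces the usual four families: derivative on the profile, on the multiplier $\partial_{\xi_l}\phi/\phi$, on $\psi_k$, and an extra $it\partial_{\xi_m}\phi$ from the exponential. On the last family I would integrate by parts in $\eta$ using $e^{it\phi}=\sum_n\frac{\partial_{\eta_n}\phi}{it|\nabla_\eta\phi|^2}\partial_{\eta_n}e^{it\phi}$, which is legitimate because \eqref{xi-2eta} holds on $\chi^1_k$; this removes the stray $t$. Each of the resulting pieces is then controlled by $L^\infty\times L^2\to L^2$, $L^6\times L^3\to L^2$, or $L^4\times L^4\to L^2$ bilinear estimates, combined with Lemma \ref{chi,eta}, Lemma \ref{dualitycomp}, the mixed-derivative bounds \eqref{l2mixedder} and Lemma \ref{mixderl3} on the time-differentiated factor, and \eqref{l2}, \eqref{l2first}, \eqref{l3nablaf}, \eqref{linfinity} on the other.

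The main obstacle is the factor $(1+2^{M/2+k})$ appearing in \eqref{l2mixedder}. When $M+2k\leq 0$ this factor is $O(1)$ and the $\nabla_\xi$ estimate inherits no penalty, leading directly to the first displayed bound after inserting into Lemma \ref{dalpha}. When $M+2k>0$ the bound on $\|\nabla_\xi\partial_t\hat{f}_{m,k_1}\|_{L^2}$ degrades to $2^{-M/2-2k_+-k/2+k}\e_1^2$, and a naive use would be too weak; however in this regime the interpolation gives room to spend a power of $2^{\alpha(M+2k)}$ while still benefiting from the sharper $L^2$ bound on $G_1+G_2$ (which has no such penalty), and the constraint $\alpha<1/2$ together with the gain $\gamma k_-/2$ coming from $\|\partial_t\hat{f}\|_{L^2}\lesssim 2^{-10k_+-(1+\gamma/2)M}\e_1^2$ at low frequency conspire to produce the second displayed bound. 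Carefully tracking which branch of each $\min\{\cdot,\cdot\}$ dominates after the $(1-\alpha,\alpha)$ interpolation, in each of the two regimes, is where the most bookkeeping is required; everything else is a direct analogue of the estimates carried out in Lemma \ref{jm2-1}.
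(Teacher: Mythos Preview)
Your plan works for $M+2k\le 0$, but in the regime $M+2k>0$ there is a genuine missing ingredient. You propose to obtain $\|G_1+G_2\|_{L^2}$ only by a direct bilinear estimate (essentially the analogue of the paper's first bound \eqref{jm2-2bound1}), and then interpolate against $\|\nabla_\xi(G_1+G_2)\|_{L^2}$, absorbing the factor $(1+2^{M/2+k})^\alpha$. But a straightforward computation shows that this interpolation yields, for the decaying branch of the minimum, a factor $2^{-(1-\alpha)M/2}$ rather than the $2^{-M/2}$ appearing in the stated bound; since $\alpha>0$ this is strictly weaker, and no choice of branches in the two minima recovers the claimed exponent. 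The paper closes this gap by deriving a \emph{second} $L^2$ bound on $G_1+G_2$ themselves, obtained by integrating by parts in $\eta$ (via $e^{it\phi}=\sum_n\frac{\partial_{\eta_n}\phi}{it|\nabla_\eta\phi|^2}\partial_{\eta_n}e^{it\phi}$, legitimate on $\chi^1_k$) \emph{before} taking any $\xi$-derivative. This produces the bound \eqref{jm2-2bound2}, which carries the same $(1+2^{M/2+k})$ penalty as the gradient but gains an additional $2^{-M-k}$ from the IBP; interpolating \eqref{jm2-2bound2} with the gradient bound then gives the intermediate exponent $2^{-M+\alpha M}$, which is bounded by $2^{-M/2}$ precisely because $\alpha<1/2$.

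A secondary point: the factors $-\gamma k_-/2$ and $-\gamma k_{2,-}/2$ in the stated bound are not gains coming from the alternative $L^2$ estimate $\|\partial_t\hat f_{l,k}\|_{L^2}\lesssim 2^{-10k_+-(1+\gamma/2)M}\e_1^2$; they are \emph{losses} inherited from the fact that the mixed-derivative estimates in Lemmas~\ref{mixder} and~\ref{mixderl3} lack the usual $2^{\gamma k}$ factor. These losses appear both in \eqref{jm2-2bound2} and in the gradient bound, and are carried through the interpolation.
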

\begin{proof}
Let
\begin{align*}
    F_1(\xi)=\int_{t_1}^{t_2}\int_{\R^3}e^{it\phi(\xi,\eta)}\frac{t\partial_{\xi_l}\phi(\xi,\eta)}{\phi(\xi,\eta)}\partial_t\hat{f}_{m,k_1}(t,\xi-\eta)\hat{f}_{n,k_2}(t,\eta)\psi_k(\xi)d\eta dt,
\end{align*}
\begin{align*}
    F^q_1(\xi)=\int_{t_1}^{t_2}\int_{\R^3}e^{it\phi(\xi,\eta)}\frac{t\partial_{\xi_l}\phi(\xi,\eta)}{\phi(\xi,\eta)}q(\eta,\xi-\eta)\partial_t\hat{f}_{m,k_1}(t,\xi-\eta)\hat{f}_{n,k_2}(t,\eta)\psi_k(\xi)d\eta dt,
\end{align*}
\begin{align*}
    F_2(\xi)=\int_{t_1}^{t_2}\int_{\R^3}e^{it\phi(\xi,\eta)}\frac{t\partial_{\xi_l}\phi(\xi,\eta)}{\phi(\xi,\eta)}\hat{f}_{m,k_1}(t,\xi-\eta)\partial_t\hat{f}_{n,k_2}(t,\eta)\psi_k(\xi)d\eta dt,
\end{align*}
and
\begin{align*}
    F^q_2(\xi)=\int_{t_1}^{t_2}\int_{\R^3}e^{it\phi(\xi,\eta)}\frac{t\partial_{\xi_l}\phi(\xi,\eta)}{\phi(\xi,\eta)}q(\eta,\xi-\eta)\hat{f}_{m,k_1}(t,\xi-\eta)\partial_t\hat{f}_{n,k_2}(t,\eta)\psi_k(\xi)d\eta dt.
\end{align*}
We shall use Lemma \ref{dalpha} to bound the $L^2$ norm of $D^\alpha F_i, D^\alpha F^q_i$ for $i=1,2$.\\
First, we find a bound for $\|F_1+F_2\|_{L^2}+\|F^q_1+F^q_2\|_{L^2}$.\\
Using the bilinear estimate $L^6\times L^3\rightarrow L^2$, along with Lemma \ref{chi,eta}, Lemma \ref{dualitycomp}, Lemma \ref{op}, Lemma \ref{lpnorms}, Lemma \ref{timel2}, \eqref{q}, and \eqref{l2}, we can estimate
\begin{align*}
    &\|F_1\|_{L^2}+\|F^q_1\|_{L^2}\\
    \lesssim & 2^{2M}\big(\|\F^{-1}\frac{\partial_{\xi_l}\phi(\xi,\eta)}{\phi(\xi,\eta)}\Tilde{\psi}_k(\xi)\Tilde{\psi}_{k_1}(\xi-\eta)\Tilde{\psi}_{k_2}(\eta)\|_{L^1}+\|\F^{-1}\frac{\partial_{\xi_l}\phi(\xi,\eta)}{\phi(\xi,\eta)}q(\eta,\xi-\eta)\Tilde{\psi}_k(\xi)\Tilde{\psi}_{k_1}(\xi-\eta)\Tilde{\psi}_{k_2}(\eta)\|_{L^1}\big)\times\\
    &\times\sup_{t\in[2^{M-1},2^M]}\min\{\|e^{ic_mt\la}\partial_t{f}_{m,k_1}\|_{L^6_x}\|e^{ic_nt\la}{f}_{n,k_2}\|_{L^3_x},2^{3k_2/2}\|e^{ic_mt\la}\partial_tf_{m,k_1}\|_{L^2_x}\|e^{ic_nt\la}f_{n,k_2}\|_{L^2_x}\}\\
    \lesssim & 2^{2M-k}\sup_{t\in[2^{M-1},2^M]}\min\{\|e^{ic_mt\la}\partial_t{f}_{m,k_1}\|_{L^6_x}\|e^{ic_nt\la}{f}_{n,k_2}\|_{L^3_x},2^{3k_2/2}\|\partial_t f_{m,k_1}\|_{L^2_x}\|f_{n,k_2}\|_{L^2_x}\}\\
    \lesssim & 2^{2M-k-2k_{1,+}+\gamma k_1-2k_{2,+}+\gamma k_2}\min\{2^{-2M-k_1/2-M/2},2^{3k_2/2-M+k_1/2+k_2/2}\}\e_1^2\\
    \lesssim & 2^{-2k_++\gamma k-2k_{2,+}+\gamma k_2}\min\{2^{-M/2-3k/2},2^{M-k/2+2k_2}\}\e_1^2.
\end{align*}
Similarly, by the $L^\infty\times L^2\rightarrow L^2$ bilinear estimate, Lemma \ref{chi,eta}, Lemma \ref{dualitycomp}, together with \eqref{q}, \eqref{linfinity}, and \eqref{l2}, we have
\begin{align*}
    &\|F_2\|_{L^2}+\|F^q_2\|_{L^2}\\
    \lesssim & 2^{2M}\big(\|\F^{-1}\frac{\partial_{\xi_l}\phi(\xi,\eta)}{\phi(\xi,\eta)}\Tilde{\psi}_k(\xi)\Tilde{\psi}_{k_1}(\xi-\eta)\Tilde{\psi}_{k_2}(\eta)\|_{L^1}+\|\F^{-1}\frac{\partial_{\xi_l}\phi(\xi,\eta)}{\phi(\xi,\eta)}q(\eta,\xi-\eta)\Tilde{\psi}_k(\xi)\Tilde{\psi}_{k_1}(\xi-\eta)\Tilde{\psi}_{k_2}(\eta)\|_{L^1}\big)\times\\
    &\times\sup_{t\in[2^{M-1},2^M]}\min\{\|e^{ic_mt\la}f_{m,k_1}\|_{L^\infty_x}\|e^{ic_nt\la}\partial_tf_{n,k_2}\|_{L^2_x},2^{3k_2/2}\|e^{ic_mt\la}f_{m,k_1}\|_{L^2_x}\|e^{ic_nt\la}\partial_tf_{n,k_2}\|_{L^2_x}\}\\
    \lesssim & 2^{2M-k}\sup_{t\in[2^{M-1},2^M]}\min\{\|e^{ic_mt\la}f_{m,k_1}\|_{L^\infty_x}\|\partial_tf_{n,k_2}\|_{L^2_x},2^{3k_2/2}\|f_{m,k_1}\|_{L^2_x}\|\partial_tf_{n,k_2}\|_{L^2_x}\}\\
    \lesssim & 2^{-k-2k_{1,+}+\gamma k_1-2k_{2,+}+\gamma k_2}\min\{2^{-M/2+\gamma M/8-k_1+\gamma k_1/4+k_2/2},2^{M+k_1/2+2k_2}\}\e_1^2\\
    \lesssim & 2^{-2k_++\gamma k-2k_{2,+}+\gamma k_2}\min\{2^{-M/2+\gamma M/8-2k+\gamma k/4+k_2/2},2^{M-k/2+2k_2}\}\e_1^2.
\end{align*}
Hence, we get
\begin{equation}
    \label{jm2-2bound1}
    \begin{aligned}
    \|F_1+F_2\|_{L^2}+\|F^q_1+F^q_2\|_{L^2}\lesssim 2^{-2k_++\gamma k-2k_{2,+}+\gamma k_2}\min\{(1+2^{\gamma M/8+\gamma k/4})2^{-M/2-3k/2},2^{M-k/2+2k_2}\}\e_1^2.
\end{aligned}
\end{equation}
Next, we use integration by parts with the identity $\sum_{m}\frac{\partial_{\eta_m}\phi(\xi,\eta)}{it|\nabla_\eta\phi(\xi,\eta)|^2}\partial_{\eta_m}e^{it\phi(\xi,\eta)}=e^{it\phi(\xi,\eta)}$ to obtain another bound for $\|F_1+F_2\|_{L^2}+\|F^q_1+F^q_2\|_{L^2}$. We compute that
\begin{align}
    F_1(\xi)
    =&\int_{t_1}^{t_2}\int_{\R^3}e^{it\phi(\xi,\eta)}\partial_{\eta_m}\frac{\partial_{\eta_m}\phi(\xi,\eta)\partial_{\xi_l}\phi(\xi,\eta)}{i|\nabla_\eta\phi(\xi,\eta)|^2\phi(\xi,\eta)}\partial_t\hat{f}_{m,k_1}(t,\xi-\eta)\hat{f}_{n,k_2}(t,\eta)\psi_k(\xi)d\eta dt\label{jm20.1.1-i}\\
    &+\int_{t_1}^{t_2}\int_{\R^3}e^{it\phi(\xi,\eta)}\frac{\partial_{\eta_m}\phi(\xi,\eta)\partial_{\xi_l}\phi(\xi,\eta)}{i|\nabla_\eta\phi(\xi,\eta)|^2\phi(\xi,\eta)}\partial_{\eta_m}\partial_t\hat{f}_{m,k_1}(t,\xi-\eta)\hat{f}_{n,k_2}(t,\eta)\psi_k(\xi)d\eta dt\label{jm20.1.3-i}\\
    &+\int_{t_1}^{t_2}\int_{\R^3}e^{it\phi(\xi,\eta)}\frac{\partial_{\eta_m}\phi(\xi,\eta)\partial_{\xi_l}\phi(\xi,\eta)}{i|\nabla_\eta\phi(\xi,\eta)|^2\phi(\xi,\eta)}\partial_t\hat{f}_{m,k_1}(t,\xi-\eta)\partial_{\eta_m}\hat{f}_{n,k_2}(t,\eta)\psi_k(\xi)d\eta dt,\label{jm20.1.4-i}
\end{align}
\begin{align}
    F^q_1(\xi)
    =&\int_{t_1}^{t_2}\int_{\R^3}e^{it\phi(\xi,\eta)}\partial_{\eta_m}\frac{\partial_{\eta_m}\phi(\xi,\eta)\partial_{\xi_l}\phi(\xi,\eta)}{i|\nabla_\eta\phi(\xi,\eta)|^2\phi(\xi,\eta)}q(\eta,\xi-\eta)\partial_t\hat{f}_{m,k_1}(t,\xi-\eta)\hat{f}_{n,k_2}(t,\eta)\psi_k(\xi)d\eta dt\label{jm20.1.1}\\
    +&\int_{t_1}^{t_2}\int_{\R^3}e^{it\phi(\xi,\eta)}\frac{\partial_{\eta_m}\phi(\xi,\eta)\partial_{\xi_l}\phi(\xi,\eta)}{i|\nabla_\eta\phi(\xi,\eta)|^2\phi(\xi,\eta)}\partial_{\eta_m}q(\eta,\xi-\eta)\partial_t\hat{f}_{m,k_1}(t,\xi-\eta)\hat{f}_{n,k_2}(t,\eta)\psi_k(\xi)d\eta dt\label{jm20.1.2}\\
    &+\int_{t_1}^{t_2}\int_{\R^3}e^{it\phi(\xi,\eta)}\frac{\partial_{\eta_m}\phi(\xi,\eta)\partial_{\xi_l}\phi(\xi,\eta)}{i|\nabla_\eta\phi(\xi,\eta)|^2\phi(\xi,\eta)}q(\eta,\xi-\eta)\partial_{\eta_m}\partial_t\hat{f}_{m,k_1}(t,\xi-\eta)\hat{f}_{n,k_2}(t,\eta)\psi_k(\xi)d\eta dt\label{jm20.1.3}\\
    &+\int_{t_1}^{t_2}\int_{\R^3}e^{it\phi(\xi,\eta)}\frac{\partial_{\eta_m}\phi(\xi,\eta)\partial_{\xi_l}\phi(\xi,\eta)}{i|\nabla_\eta\phi(\xi,\eta)|^2\phi(\xi,\eta)}q(\eta,\xi-\eta)\partial_t\hat{f}_{m,k_1}(t,\xi-\eta)\partial_{\eta_m}\hat{f}_{n,k_2}(t,\eta)\psi_k(\xi)d\eta dt,\label{jm20.1.4}
\end{align}
\begin{align}
    F_2(\xi)
    =&\int_{t_1}^{t_2}\int_{\R^3}e^{it\phi(\xi,\eta)}\partial_{\eta_m}\frac{\partial_{\eta_m}\phi(\xi,\eta)\partial_{\xi_l}\phi(\xi,\eta)}{i|\nabla_\eta\phi(\xi,\eta)|^2\phi(\xi,\eta)}\hat{f}_{m,k_1}(t,\xi-\eta)\partial_t\hat{f}_{n,k_2}(t,\eta)\psi_k(\xi)d\eta dt\label{jm20.2.1-i}\\
    &+\int_{t_1}^{t_2}\int_{\R^3}e^{it\phi(\xi,\eta)}\frac{\partial_{\eta_m}\phi(\xi,\eta)\partial_{\xi_l}\phi(\xi,\eta)}{i|\nabla_\eta\phi(\xi,\eta)|^2\phi(\xi,\eta)}\partial_{\eta_m}\hat{f}_{m,k_1}(t,\xi-\eta)\partial_t\hat{f}_{n,k_2}(t,\eta)\psi_k(\xi)d\eta dt\label{jm20.2.3-i}\\
    &+\int_{t_1}^{t_2}\int_{\R^3}e^{it\phi(\xi,\eta)}\frac{\partial_{\eta_m}\phi(\xi,\eta)\partial_{\xi_l}\phi(\xi,\eta)}{i|\nabla_\eta\phi(\xi,\eta)|^2\phi(\xi,\eta)}\hat{f}_{m,k_1}(t,\xi-\eta)\partial_{\eta_m}\partial_t\hat{f}_{n,k_2}(t,\eta)\psi_k(\xi)d\eta dt,\label{jm20.2.4-i}
\end{align}
and
\begin{align}
    F^q_2(\xi)
    =&\int_{t_1}^{t_2}\int_{\R^3}e^{it\phi(\xi,\eta)}\partial_{\eta_m}\frac{\partial_{\eta_m}\phi(\xi,\eta)\partial_{\xi_l}\phi(\xi,\eta)}{i|\nabla_\eta\phi(\xi,\eta)|^2\phi(\xi,\eta)}q(\eta,\xi-\eta)\hat{f}_{m,k_1}(t,\xi-\eta)\partial_t\hat{f}_{n,k_2}(t,\eta)\psi_k(\xi)d\eta dt\label{jm20.2.1}\\
    +&\int_{t_1}^{t_2}\int_{\R^3}e^{it\phi(\xi,\eta)}\frac{\partial_{\eta_m}\phi(\xi,\eta)\partial_{\xi_l}\phi(\xi,\eta)}{i|\nabla_\eta\phi(\xi,\eta)|^2\phi(\xi,\eta)}\partial_{\eta_m}q(\eta,\xi-\eta)\hat{f}_{m,k_1}(t,\xi-\eta)\partial_t\hat{f}_{n,k_2}(t,\eta)\psi_k(\xi)d\eta dt\label{jm20.2.2}\\
    &+\int_{t_1}^{t_2}\int_{\R^3}e^{it\phi(\xi,\eta)}\frac{\partial_{\eta_m}\phi(\xi,\eta)\partial_{\xi_l}\phi(\xi,\eta)}{i|\nabla_\eta\phi(\xi,\eta)|^2\phi(\xi,\eta)}q(\eta,\xi-\eta)\partial_{\eta_m}\hat{f}_{m,k_1}(t,\xi-\eta)\partial_t\hat{f}_{n,k_2}(t,\eta)\psi_k(\xi)d\eta dt\label{jm20.2.3}\\
    &+\int_{t_1}^{t_2}\int_{\R^3}e^{it\phi(\xi,\eta)}\frac{\partial_{\eta_m}\phi(\xi,\eta)\partial_{\xi_l}\phi(\xi,\eta)}{i|\nabla_\eta\phi(\xi,\eta)|^2\phi(\xi,\eta)}q(\eta,\xi-\eta)\hat{f}_{m,k_1}(t,\xi-\eta)\partial_{\eta_m}\partial_t\hat{f}_{n,k_2}(t,\eta)\psi_k(\xi)d\eta dt\label{jm20.2.4}.
\end{align}
The bilinear estimate $L^6\times L^3\rightarrow L^2$, Lemma \ref{chi,eta}, Lemma \ref{dualitycomp}, Lemma \ref{op}, Lemma \ref{lpnorms}, Lemma \ref{timel2}, along with the estimates in \eqref{q} and \eqref{l2} imply
\begin{align*}
    &\|\eqref{jm20.1.1-i}+\eqref{jm20.2.1-i}\|_{L^2}+\|\eqref{jm20.1.1}+\eqref{jm20.2.1}\|_{L^2}\\
    \lesssim & 2^{M}\big(\|\F^{-1}\partial_{\eta_m}\frac{\partial_{\eta_m}\phi(\xi,\eta)\partial_{\xi_l}\phi(\xi,\eta)}{i|\nabla_\eta\phi(\xi,\eta)|^2\phi(\xi,\eta)}\Tilde{\psi}_k(\xi)\Tilde{\psi}_{k_1}(\xi-\eta)\Tilde{\psi}_{k_2}(\eta)\|_{L^1}+\\
    &+\|\F^{-1}\partial_{\eta_m}\frac{\partial_{\eta_m}\phi(\xi,\eta)\partial_{\xi_l}\phi(\xi,\eta)}{i|\nabla_\eta\phi(\xi,\eta)|^2\phi(\xi,\eta)}q(\eta,\xi-\eta)\Tilde{\psi}_k(\xi)\Tilde{\psi}_{k_1}(\xi-\eta)\Tilde{\psi}_{k_2}(\eta)\|_{L^1}\big)\times\\
    &\times\sup_{t\in[2^{M-1},2^M]}\big(\min\{\|e^{ic_mt\la}\partial_t{f}_{m,k_1}\|_{L^6_x}\|e^{ic_nt\la}{f}_{n,k_2}\|_{L^3_x},2^{3k_2/2}\|e^{ic_mt\la}\partial_tf_{m,k_1}\|_{L^2_x}\|e^{ic_nt\la}f_{n,k_2}\|_{L^2_x}\}\\
    &\qquad + \min\{\|e^{ic_mt\la}{f}_{m,k_1}\|_{L^6_x}\|e^{ic_nt\la}\partial_t{f}_{n,k_2}\|_{L^3_x},2^{3k_2/2}\|e^{ic_mt\la}f_{m,k_1}\|_{L^2_x}\|e^{ic_nt\la}\partial_tf_{n,k_2}\|_{L^2_x}\}\big)\\
    \lesssim & 2^{M-3k}\sup_{t\in[2^{M-1},2^M]}\big(\min\{\|e^{ic_mt\la}\partial_t{f}_{m,k_1}\|_{L^6_x}\|e^{ic_nt\la}{f}_{n,k_2}\|_{L^3_x},2^{3k_2/2}\|\partial_t f_{m,k_1}\|_{L^2_x}\|f_{n,k_2}\|_{L^2_x}\}\\
    &\qquad + \min\{\|e^{ic_mt\la}{f}_{m,k_1}\|_{L^6_x}2^{k_2/2}\|e^{ic_nt\la}\partial_t{f}_{n,k_2}\|_{L^2_x},2^{3k_2/2}\|f_{m,k_1}\|_{L^2_x}\|\partial_tf_{n,k_2}\|_{L^2_x}\}\big)\\
    \lesssim & 2^{M-3k-2k_{1,+}+\gamma k_1-2k_{2,+}+\gamma k_2}\big(\min\{2^{-2M-k_1/2-M/2},2^{3k_2/2-M+k_1/2+k_2/2}\}\\
    &\qquad + \min\{2^{-M-k_1/2+k_2/2-M+k_2/2},2^{3k_2/2+k_1/2-M+k_2/2}\}\big)\e_1^2\\
    \lesssim & (1+2^{M/2+k})2^{\e k_--2k_++\gamma k-2k_{2,+}+\gamma k_2}\min\{2^{-3M/2-7k/2},2^{-5k/2+2k_2}\}\e_1^2,
\end{align*}
\begin{align*}
    &\|\eqref{jm20.1.2}\|_{L^2}+\|\eqref{jm20.2.2}\|_{L^2}\\
    \lesssim & 2^{M}\|\F^{-1}\frac{\partial_{\eta_m}\phi(\xi,\eta)\partial_{\xi_l}\phi(\xi,\eta)}{i|\nabla_\eta\phi(\xi,\eta)|^2\phi(\xi,\eta)}\partial_{\eta_m}q(\eta,\xi-\eta)\Tilde{\psi}_k(\xi)\Tilde{\psi}_{k_1}(\xi-\eta)\Tilde{\psi}_{k_2}(\eta)\|_{L^1}\times\\
    &\times\sup_{t\in[2^{M-1},2^M]}\big(\min\{\|e^{ic_mt\la}\partial_t{f}_{m,k_1}\|_{L^6_x}\|e^{ic_nt\la}{f}_{n,k_2}\|_{L^3_x},2^{3k_2/2}\|e^{ic_mt\la}\partial_tf_{m,k_1}\|_{L^2_x}\|e^{ic_nt\la}f_{n,k_2}\|_{L^2_x}\}\\
    &\qquad +\min\{\|e^{ic_mt\la}{f}_{m,k_1}\|_{L^6_x}\|e^{ic_nt\la}\partial_t{f}_{n,k_2}\|_{L^3_x},2^{3k_2/2}\|e^{ic_mt\la}f_{m,k_1}\|_{L^2_x}\|e^{ic_nt\la}\partial_tf_{n,k_2}\|_{L^2_x}\}\big)\\
    \lesssim & 2^{M-2k-k_2}\sup_{t\in[2^{M-1},2^M]}\big(\min\{\|e^{ic_mt\la}\partial_t{f}_{m,k_1}\|_{L^6_x}2^{k_2/2}\|e^{ic_nt\la}{f}_{n,k_2}\|_{L^2_x},2^{3k_2/2}\|\partial_t f_{m,k_1}\|_{L^2_x}\|f_{n,k_2}\|_{L^2_x}\}\\
    &\qquad + \min\{\|e^{ic_mt\la}{f}_{m,k_1}\|_{L^6_x}2^{k_2/2}\|e^{ic_nt\la}\partial_t{f}_{n,k_2}\|_{L^2_x},2^{3k_2/2}\|f_{m,k_1}\|_{L^2_x}\|\partial_tf_{n,k_2}\|_{L^2_x}\}\big)\\
    \lesssim & 2^{M-2k-k_2-2k_{1,+}+\gamma k_1-2k_{2,+}+\gamma k_2}\big(\min\{2^{-2M-k_1/2+k_2/2+k_2/2},2^{3k_2/2-M+k_1/2+k_2/2}\}\\
    &\qquad + \min\{2^{-M-k_1/2+k_2/2-M+k_2/2},2^{3k_2/2+k_1/2-M+k_2/2}\}\big)\e_1^2\\
    \lesssim & 2^{-2k_++\gamma k-2k_{2,+}+\gamma k_2} \min\{2^{-M-5k/2},2^{-3k/2+k_2}\}\e_1^2,
\end{align*}
\begin{align*}
    &\|\eqref{jm20.1.3-i}+\eqref{jm20.2.4-i}\|_{L^2}+\|\eqref{jm20.1.3}+\eqref{jm20.2.4}\|_{L^2}\\
    \lesssim & 2^{M}\big(\|\F^{-1}\frac{\partial_{\eta_m}\phi(\xi,\eta)\partial_{\xi_l}\phi(\xi,\eta)}{i|\nabla_\eta\phi(\xi,\eta)|^2\phi(\xi,\eta)} \Tilde{\psi}_k(\xi)\Tilde{\psi}_{k_1}(\xi-\eta)\Tilde{\psi}_{k_2}(\eta)\|_{L^1}+\\
    &+\|\F^{-1}\frac{\partial_{\eta_m}\phi(\xi,\eta)\partial_{\xi_l}\phi(\xi,\eta)}{i|\nabla_\eta\phi(\xi,\eta)|^2\phi(\xi,\eta)} q(\xi-\eta,\eta)\Tilde{\psi}_k(\xi)\Tilde{\psi}_{k_1}(\xi-\eta)\Tilde{\psi}_{k_2}(\eta)\|_{L^1}\big)\times\\
    &\times\sup_{t\in[2^{M-1},2^M]}\big(\min\{\|e^{ic_mt\la}\F^{-1}\nabla_\xi\partial_t\hat{f}_{m,k_1}\|_{L^3_x}\|e^{ic_nt\la}{f}_{n,k_2}\|_{L^6_x},\\
    &\qquad\qquad\qquad\qquad\qquad 2^{3k_2/2}\|e^{ic_mt\la}\F^{-1}\nabla_\xi\partial_t\hat{f}_{m,k_1}\|_{L^2_x}\|e^{ic_nt\la}f_{n,k_2}\|_{L^2_x}\}+\\
    &\qquad\qquad\qquad\qquad +\min\{\|e^{ic_mt\la}{f}_{m,k_1}\|_{L^6_x}\|e^{ic_nt\la}\F^{-1}\nabla_\xi\partial_t\hat{f}_{n,k_2}\|_{L^3_x},\\
    &\qquad\qquad\qquad\qquad\qquad 2^{3k_2/2}\|e^{ic_mt\la}f_{m,k_1}\|_{L^2_x}\|e^{ic_nt\la}\F^{-1}\nabla_\xi\partial_t\hat{f}_{n,k_2}\|_{L^2_x}\}\big)\\
    \lesssim & 2^{M-2k}\sup_{t\in[2^{M-1},2^M]}\big(\min\{\|e^{ic_mt\la}\F^{-1}\nabla_\xi\partial_t\hat{f}_{m,k_1}\|_{L^3_x}\|e^{ic_nt\la}{f}_{n,k_2}\|_{L^6_x},2^{3k_2/2}\|\nabla_\xi\partial_t\hat{f}_{m,k_1}\|_{L^2_\xi}\|f_{n,k_2}\|_{L^2_x}\}+\\
    &\qquad\qquad\qquad\qquad+\min\{\|e^{ic_mt\la}{f}_{m,k_1}\|_{L^6_x}\|e^{ic_nt\la}\F^{-1}\nabla_\xi\partial_t\hat{f}_{n,k_2}\|_{L^3_x},2^{3k_2/2}\|f_{m,k_1}\|_{L^2_x}\|\nabla_\xi\partial_t\hat{f}_{n,k_2}\|_{L^2_\xi}\}\big)\\
    \lesssim & 2^{M-2k-2k_{1,+}+\gamma k_1-2k_{2,+}+\gamma k_2}\big(\min\{2^{-M-\gamma k_1-M-k_2/2},(1+2^{M/2+k})2^{3k_2/2-M-k_1/2-\gamma k_1+k_2/2}\}+\\
    &\qquad + \min\{2^{-M-k_1/2-M-\gamma k_2},(1+2^{M/2+k_2})2^{3k_2/2+k_1/2-M-k_2/2}\}\big)\e_1^2\\
    \lesssim & (1+2^{M/2+k})2^{-2k_++\gamma k-2k_{2,+}+\gamma k_2} \min\{2^{-3M/2-3k-\gamma k_{-}-k_2/2},2^{-3k/2-\gamma k_-+k_2}\}\e_1^2,
\end{align*}
and
\begin{align*}
    &\|\eqref{jm20.1.4-i}+\eqref{jm20.2.3-i}\|_{L^2}+\|\eqref{jm20.1.4}+\eqref{jm20.2.3}\|_{L^2}\\
    \lesssim & 2^{M}\big(\|\F^{-1}\frac{\partial_{\eta_m}\phi(\xi,\eta)\partial_{\xi_l}\phi(\xi,\eta)}{i|\nabla_\eta\phi(\xi,\eta)|^2\phi(\xi,\eta)} \Tilde{\psi}_k(\xi)\Tilde{\psi}_{k_1}(\xi-\eta)\Tilde{\psi}_{k_2}(\eta)\|_{L^1}+\\
    &+\|\F^{-1}\frac{\partial_{\eta_m}\phi(\xi,\eta)\partial_{\xi_l}\phi(\xi,\eta)}{i|\nabla_\eta\phi(\xi,\eta)|^2\phi(\xi,\eta)} q(\xi-\eta,\eta)\Tilde{\psi}_k(\xi)\Tilde{\psi}_{k_1}(\xi-\eta)\Tilde{\psi}_{k_2}(\eta)\|_{L^1}\big)\times\\
    &\times\sup_{t\in[2^{M-1},2^M]}\big(\min\{\|e^{ic_mt\la}\partial_t{f}_{m,k_1}\|_{L^6_x}\|e^{ic_nt\la}\F^{-1}\nabla_\xi\hat{f}_{n,k_2}\|_{L^3_x},\\
    &\qquad\qquad\qquad\qquad\qquad 2^{3k_2/2}\|e^{ic_mt\la}\partial_tf_{m,k_1}\|_{L^2_x}\|e^{ic_nt\la}\F^{-1}\nabla_\xi\hat{f}_{n,k_2}\|_{L^2_x}\}+\\
    &\qquad\qquad\qquad\qquad +\min\{\|e^{ic_mt\la}\F^{-1}\nabla_\xi\hat{f}_{m,k_1}\|_{L^3_x}\|e^{ic_nt\la}\partial_t{f}_{n,k_2}\|_{L^6_x},\\
    &\qquad\qquad\qquad\qquad\qquad 2^{3k_2/2}\|e^{ic_mt\la}\F^{-1}\nabla_\xi\hat{f}_{m,k_1}\|_{L^2_x}\|e^{ic_nt\la}\partial_tf_{n,k_2}\|_{L^2_x}\}\big)\\
    \lesssim & 2^{M-2k}\sup_{t\in[2^{M-1},2^M]}\big(\min\{\|e^{ic_mt\la}\partial_t{f}_{m,k_1}\|_{L^6_x}2^{k_2/2}\|e^{ic_nt\la}\F^{-1}\nabla_\xi\hat{f}_{n,k_2}\|_{L^2_x},2^{3k_2/2}\|\partial_t f_{m,k_1}\|_{L^2_x}\|\nabla_\xi\hat{f}_{n,k_2}\|_{L^2_\xi}\}+\\
    &\qquad\qquad\qquad\qquad + \min\{\|e^{ic_mt\la}\F^{-1}\nabla_\xi\hat{f}_{m,k_1}\|_{L^3_x}\|e^{ic_nt\la}\partial_t{f}_{n,k_2}\|_{L^6_x},2^{3k_2/2}\|\nabla_\xi\hat{f}_{m,k_1}\|_{L^2_\xi}\|\partial_tf_{n,k_2}\|_{L^2_x}\}\big)\\
    \lesssim & 2^{M-2k-2k_{1,+}+\gamma k_1-2k_{2,+}+\gamma k_2}\big(\min\{2^{-2M-k_1/2+k_2/2-k_2/2},2^{3k_2/2-M+k_1/2-k_2/2}\}+\\
    &\qquad + \min\{2^{-M/2+\gamma M/8-k_1+\gamma k_1/4-2M-k_2/2},2^{3k_2/2-k_1/2-M+k_2/2}\}\big)\e_1^2\\
    \lesssim & 2^{-2k_++\gamma k-2k_{2,+}+\gamma k_2}\big(\min\{2^{-M-5k/2},2^{-3k/2+k_2}\}+ \min\{2^{-3M/2+\gamma M/8-3k+\gamma k/4-k_2/2},2^{-5k/2+2k_2}\}\big)\e_1^2\\
    \lesssim & (1+2^{M/2+k})2^{-2k_++\gamma k-2k_{2,+}+\gamma k_2} \min\{2^{-3M/2-3k-k_2/2},2^{-3k/2+k_2}\}\e_1^2.
\end{align*}
Thus, we found a new bound 
\begin{equation}
\label{jm2-2bound2}
    \begin{aligned}
    \|F_1+F_2\|_{L^2}+\|F^q_1+F^q_2\|_{L^2}\lesssim (1+2^{M/2+k})2^{-2k_++\gamma k-\gamma k_--2k_{2,+}+\gamma k_2} \min\{2^{-3M/2-+3k-k_2/2},2^{-3k/2+k_2}\}\e_1^2.
    \end{aligned}
\end{equation}
Next, we want to estimate $\|\nabla_\xi(F_1+F_2)\|_{L^2}+\|\nabla_\xi(F^q_1+F^q_2)\|_{L^2}$. Observe
\begin{align}
    \partial_{\xi_m} F_1
    = &\int_{t_1}^{t_2}\int_{\R^3}e^{it\phi(\xi,\eta)}\frac{t\partial_{\xi_l}\phi(\xi,\eta)}{\phi(\xi,\eta)}\partial_{\xi_m}\partial_t\hat{f}_{m,k_1}(t,\xi-\eta)\hat{f}_{n,k_2}(t,\eta)\psi_k(\xi)d\eta dt\label{jm2-2.1-i}\\
    & + \int_{t_1}^{t_2}\int_{\R^3}e^{it\phi(\xi,\eta)}\partial_{\xi_m}\frac{t\partial_{\xi_l}\phi(\xi,\eta)}{\phi(\xi,\eta)}\partial_t\hat{f}_{m,k_1}(t,\xi-\eta)\hat{f}_{n,k_2}(t,\eta)\psi_k(\xi)d\eta dt\label{jm2-2.11.2-i}\\
    &+ \int_{t_1}^{t_2}\int_{\R^3}e^{it\phi(\xi,\eta)}\frac{t\partial_{\xi_l}\phi(\xi,\eta)}{\phi(\xi,\eta)}\partial_t\hat{f}_{m,k_1}(t,\xi-\eta)\hat{f}_{n,k_2}(t,\eta)\partial_{\xi_m}\psi_k(\xi)d\eta dt\label{jm2-2.12-i}\\
    & + \int_{t_1}^{t_2}\int_{\R^3}e^{it\phi(\xi,\eta)}\frac{it^2\partial_{\xi_m}\phi(\xi,\eta)\partial_{\xi_l}\phi(\xi,\eta)}{\phi(\xi,\eta)}\partial_t\hat{f}_{m,k_1}(t,\xi-\eta)\hat{f}_{n,k_2}(t,\eta)\psi_k(\xi)d\eta dt\label{jm2-2.2-i},
\end{align}
\begin{align}
    \partial_{\xi_m} F^q_1
    = &\int_{t_1}^{t_2}\int_{\R^3}e^{it\phi(\xi,\eta)}\frac{t\partial_{\xi_l}\phi(\xi,\eta)}{\phi(\xi,\eta)}q(\eta,\xi-\eta)\partial_{\xi_m}\partial_t\hat{f}_{m,k_1}(t,\xi-\eta)\hat{f}_{n,k_2}(t,\eta)\psi_k(\xi)d\eta dt\label{jm2-2.1}\\
    & + \int_{t_1}^{t_2}\int_{\R^3}e^{it\phi(\xi,\eta)}\partial_{\xi_m}\big(\frac{t\partial_{\xi_l}\phi(\xi,\eta)}{\phi(\xi,\eta)}q(\eta,\xi-\eta)\big)\partial_t\hat{f}_{m,k_1}(t,\xi-\eta)\hat{f}_{n,k_2}(t,\eta)\psi_k(\xi)d\eta dt\label{jm2-2.11.2}\\
    &+ \int_{t_1}^{t_2}\int_{\R^3}e^{it\phi(\xi,\eta)}\frac{t\partial_{\xi_l}\phi(\xi,\eta)}{\phi(\xi,\eta)}q(\eta,\xi-\eta)\partial_t\hat{f}_{m,k_1}(t,\xi-\eta)\hat{f}_{n,k_2}(t,\eta)\partial_{\xi_m}\psi_k(\xi)d\eta dt\label{jm2-2.12}\\
    & + \int_{t_1}^{t_2}\int_{\R^3}e^{it\phi(\xi,\eta)}\frac{it^2\partial_{\xi_m}\phi(\xi,\eta)\partial_{\xi_l}\phi(\xi,\eta)}{\phi(\xi,\eta)}q(\eta,\xi-\eta)\partial_t\hat{f}_{m,k_1}(t,\xi-\eta)\hat{f}_{n,k_2}(t,\eta)\psi_k(\xi)d\eta dt\label{jm2-2.2},
\end{align}
\begin{align}
    \partial_{\xi_m} F_2
    = &\int_{t_1}^{t_2}\int_{\R^3}e^{it\phi(\xi,\eta)}\frac{t\partial_{\xi_l}\phi(\xi,\eta)}{\phi(\xi,\eta)}\partial_{\xi_m}\hat{f}_{m,k_1}(t,\xi-\eta)\partial_t\hat{f}_{n,k_2}(t,\eta)\psi_k(\xi)d\eta dt\label{jm2-2.1-2-i}\\
    & + \int_{t_1}^{t_2}\int_{\R^3}e^{it\phi(\xi,\eta)}\partial_{\xi_m}\frac{t\partial_{\xi_l}\phi(\xi,\eta)}{\phi(\xi,\eta)}\hat{f}_{m,k_1}(t,\xi-\eta)\partial_t\hat{f}_{n,k_2}(t,\eta)\psi_k(\xi)d\eta dt\label{jm2-2.11.2-2-i}\\
    &+ \int_{t_1}^{t_2}\int_{\R^3}e^{it\phi(\xi,\eta)}\frac{t\partial_{\xi_l}\phi(\xi,\eta)}{\phi(\xi,\eta)}\hat{f}_{m,k_1}(t,\xi-\eta)\partial_t\hat{f}_{n,k_2}(t,\eta)\partial_{\xi_m}\psi_k(\xi)d\eta dt\label{jm2-2.12-2-i}\\
    & + \int_{t_1}^{t_2}\int_{\R^3}e^{it\phi(\xi,\eta)}\frac{it^2\partial_{\xi_m}\phi(\xi,\eta)\partial_{\xi_l}\phi(\xi,\eta)}{\phi(\xi,\eta)}\hat{f}_{m,k_1}(t,\xi-\eta)\partial_t\hat{f}_{n,k_2}(t,\eta)\psi_k(\xi)d\eta dt\label{jm2-2.2-2-i},
\end{align}
and
\begin{align}
    \partial_{\xi_m} F^q_2
    = &\int_{t_1}^{t_2}\int_{\R^3}e^{it\phi(\xi,\eta)}\frac{t\partial_{\xi_l}\phi(\xi,\eta)}{\phi(\xi,\eta)}q(\eta,\xi-\eta)\partial_{\xi_m}\hat{f}_{m,k_1}(t,\xi-\eta)\partial_t\hat{f}_{n,k_2}(t,\eta)\psi_k(\xi)d\eta dt\label{jm2-2.1-2}\\
    & + \int_{t_1}^{t_2}\int_{\R^3}e^{it\phi(\xi,\eta)}\partial_{\xi_m}\big(\frac{t\partial_{\xi_l}\phi(\xi,\eta)}{\phi(\xi,\eta)}q(\eta,\xi-\eta)\big)\hat{f}_{m,k_1}(t,\xi-\eta)\partial_t\hat{f}_{n,k_2}(t,\eta)\psi_k(\xi)d\eta dt\label{jm2-2.11.2-2}\\
    &+ \int_{t_1}^{t_2}\int_{\R^3}e^{it\phi(\xi,\eta)}\frac{t\partial_{\xi_l}\phi(\xi,\eta)}{\phi(\xi,\eta)}q(\eta,\xi-\eta)\hat{f}_{m,k_1}(t,\xi-\eta)\partial_t\hat{f}_{n,k_2}(t,\eta)\partial_{\xi_m}\psi_k(\xi)d\eta dt\label{jm2-2.12-2}\\
    & + \int_{t_1}^{t_2}\int_{\R^3}e^{it\phi(\xi,\eta)}\frac{it^2\partial_{\xi_m}\phi(\xi,\eta)\partial_{\xi_l}\phi(\xi,\eta)}{\phi(\xi,\eta)}q(\eta,\xi-\eta)\hat{f}_{m,k_1}(t,\xi-\eta)\partial_t\hat{f}_{n,k_2}(t,\eta)\psi_k(\xi)d\eta dt\label{jm2-2.2-2}.
\end{align}
For the terms \eqref{jm2-2.2-i}, \eqref{jm2-2.2}, \eqref{jm2-2.2-2-i} and \eqref{jm2-2.2-2}, we perform integration by parts in the space variable using the identity $e^{it\phi(\xi,\eta)}=\sum_n\frac{\partial_{\eta_n}\phi(\xi,\eta)}{it|\nabla_\eta\phi(\xi,\eta)|^2}\partial_{\eta_n}e^{it\phi(\xi,\eta)}$. We get
\begin{align}
    &\eqref{jm2-2.2-i}\nonumber\\
    =&\int_{t_1}^{t_2}\int_{\R^3}e^{it\phi(\xi,\eta)}\partial_{\eta_n}\frac{t\partial_{\eta_n}\phi(\xi,\eta)\partial_{\xi_m}\phi(\xi,\eta)\partial_{\xi_l}\phi(\xi,\eta)}{|\nabla_\eta\phi(\xi,\eta)|^2\phi(\xi,\eta)}\partial_t\hat{f}_{m,k_1}(t,\xi-\eta)\hat{f}_{n,k_2}(t,\eta)\psi_k(\xi)d\eta dt\label{jm2-2.3-i}\\
    &+\int_{t_1}^{t_2}\int_{\R^3}e^{it\phi(\xi,\eta)}\frac{t\partial_{\eta_n}\phi(\xi,\eta)\partial_{\xi_m}\phi(\xi,\eta)\partial_{\xi_l}\phi(\xi,\eta)}{|\nabla_\eta\phi(\xi,\eta)|^2\phi(\xi,\eta)}\partial_{\eta_n}\partial_t\hat{f}_{m,k_1}(t,\xi-\eta)\hat{f}_{n,k_2}(t,\eta)\psi_k(\xi)d\eta dt\label{jm2-2.4-i}\\
    &+\int_{t_1}^{t_2}\int_{\R^3}e^{it\phi(\xi,\eta)}\frac{t\partial_{\eta_n}\phi(\xi,\eta)\partial_{\xi_m}\phi(\xi,\eta)\partial_{\xi_l}\phi(\xi,\eta)}{|\nabla_\eta\phi(\xi,\eta)|^2\phi(\xi,\eta)}\partial_t\hat{f}_{m,k_1}(t,\xi-\eta)\partial_{\eta_n}\hat{f}_{n,k_2}(t,\eta)\psi_k(\xi)d\eta dt\label{jm2-2.5-i},
\end{align}
\begin{align}
    &\eqref{jm2-2.2}\nonumber\\
    =&\int_{t_1}^{t_2}\int_{\R^3}e^{it\phi(\xi,\eta)}\partial_{\eta_n}\big(\frac{t\partial_{\eta_n}\phi(\xi,\eta)\partial_{\xi_m}\phi(\xi,\eta)\partial_{\xi_l}\phi(\xi,\eta)}{|\nabla_\eta\phi(\xi,\eta)|^2\phi(\xi,\eta)}q(\eta,\xi-\eta)\big)\partial_t\hat{f}_{m,k_1}(t,\xi-\eta)\hat{f}_{n,k_2}(t,\eta)\psi_k(\xi)d\eta dt\label{jm2-2.3}\\
    &+\int_{t_1}^{t_2}\int_{\R^3}e^{it\phi(\xi,\eta)}\frac{t\partial_{\eta_n}\phi(\xi,\eta)\partial_{\xi_m}\phi(\xi,\eta)\partial_{\xi_l}\phi(\xi,\eta)}{|\nabla_\eta\phi(\xi,\eta)|^2\phi(\xi,\eta)}q(\eta,\xi-\eta)\partial_{\eta_n}\partial_t\hat{f}_{m,k_1}(t,\xi-\eta)\hat{f}_{n,k_2}(t,\eta)\psi_k(\xi)d\eta dt\label{jm2-2.4}\\
    &+\int_{t_1}^{t_2}\int_{\R^3}e^{it\phi(\xi,\eta)}\frac{t\partial_{\eta_n}\phi(\xi,\eta)\partial_{\xi_m}\phi(\xi,\eta)\partial_{\xi_l}\phi(\xi,\eta)}{|\nabla_\eta\phi(\xi,\eta)|^2\phi(\xi,\eta)}q(\eta,\xi-\eta)\partial_t\hat{f}_{m,k_1}(t,\xi-\eta)\partial_{\eta_n}\hat{f}_{n,k_2}(t,\eta)\psi_k(\xi)d\eta dt\label{jm2-2.5},
\end{align}
\begin{align}
    &\eqref{jm2-2.2-2-i}\nonumber\\
    =&\int_{t_1}^{t_2}\int_{\R^3}e^{it\phi(\xi,\eta)}\partial_{\eta_n}\frac{t\partial_{\eta_n}\phi(\xi,\eta)\partial_{\xi_m}\phi(\xi,\eta)\partial_{\xi_l}\phi(\xi,\eta)}{|\nabla_\eta\phi(\xi,\eta)|^2\phi(\xi,\eta)}\hat{f}_{m,k_1}(t,\xi-\eta)\partial_t\hat{f}_{n,k_2}(t,\eta)\psi_k(\xi)d\eta dt\label{jm2-2.3-2-i}\\
    &+\int_{t_1}^{t_2}\int_{\R^3}e^{it\phi(\xi,\eta)}\frac{t\partial_{\eta_n}\phi(\xi,\eta)\partial_{\xi_m}\phi(\xi,\eta)\partial_{\xi_l}\phi(\xi,\eta)}{|\nabla_\eta\phi(\xi,\eta)|^2\phi(\xi,\eta)}\partial_{\eta_n}\hat{f}_{m,k_1}(t,\xi-\eta)\partial_t\hat{f}_{n,k_2}(t,\eta)\psi_k(\xi)d\eta dt\label{jm2-2.4-2-i}\\
    &+\int_{t_1}^{t_2}\int_{\R^3}e^{it\phi(\xi,\eta)}\frac{t\partial_{\eta_n}\phi(\xi,\eta)\partial_{\xi_m}\phi(\xi,\eta)\partial_{\xi_l}\phi(\xi,\eta)}{|\nabla_\eta\phi(\xi,\eta)|^2\phi(\xi,\eta)}\hat{f}_{m,k_1}(t,\xi-\eta)\partial_{\eta_n}\partial_t\hat{f}_{n,k_2}(t,\eta)\psi_k(\xi)d\eta dt\label{jm2-2.5-2-i},
\end{align}
and
\begin{align}
    &\eqref{jm2-2.2-2}\nonumber\\
    =&\int_{t_1}^{t_2}\int_{\R^3}e^{it\phi(\xi,\eta)}\partial_{\eta_n}\big(\frac{t\partial_{\eta_n}\phi(\xi,\eta)\partial_{\xi_m}\phi(\xi,\eta)\partial_{\xi_l}\phi(\xi,\eta)}{|\nabla_\eta\phi(\xi,\eta)|^2\phi(\xi,\eta)}q(\eta,\xi-\eta)\big)\hat{f}_{m,k_1}(t,\xi-\eta)\partial_t\hat{f}_{n,k_2}(t,\eta)\psi_k(\xi)d\eta dt\label{jm2-2.3-2}\\
    &+\int_{t_1}^{t_2}\int_{\R^3}e^{it\phi(\xi,\eta)}\frac{t\partial_{\eta_n}\phi(\xi,\eta)\partial_{\xi_m}\phi(\xi,\eta)\partial_{\xi_l}\phi(\xi,\eta)}{|\nabla_\eta\phi(\xi,\eta)|^2\phi(\xi,\eta)}q(\eta,\xi-\eta)\partial_{\eta_n}\hat{f}_{m,k_1}(t,\xi-\eta)\partial_t\hat{f}_{n,k_2}(t,\eta)\psi_k(\xi)d\eta dt\label{jm2-2.4-2}\\
    &+\int_{t_1}^{t_2}\int_{\R^3}e^{it\phi(\xi,\eta)}\frac{t\partial_{\eta_n}\phi(\xi,\eta)\partial_{\xi_m}\phi(\xi,\eta)\partial_{\xi_l}\phi(\xi,\eta)}{|\nabla_\eta\phi(\xi,\eta)|^2\phi(\xi,\eta)}q(\eta,\xi-\eta)\hat{f}_{m,k_1}(t,\xi-\eta)\partial_{\eta_n}\partial_t\hat{f}_{n,k_2}(t,\eta)\psi_k(\xi)d\eta dt\label{jm2-2.5-2}.
\end{align}
Employing bilinear estimates $L^2\times L^\infty\rightarrow L^2$ and $L^3\times L^6\rightarrow L^2$, along with Lemma \ref{dualitycomp}, Lemma \ref{chi,eta}, Lemma \ref{timel2}, \eqref{q}, \eqref{l2}, \eqref{l2first}, and \eqref{linfinity}, we obtain
\begin{align*}
    &\|\eqref{jm2-2.11.2-i}+\eqref{jm2-2.12-i}+\eqref{jm2-2.3-i}+\eqref{jm2-2.11.2-2-i}+\eqref{jm2-2.12-2-i}
    +\eqref{jm2-2.3-2-i}\|_{L^2}+\\
    &+\|\eqref{jm2-2.11.2}+\eqref{jm2-2.12}+\eqref{jm2-2.3}+\eqref{jm2-2.11.2-2}+\eqref{jm2-2.12-2}
    +\eqref{jm2-2.3-2}\|_{L^2}\\
    \lesssim &2^{2M}\big(\|\F^{-1}\partial_{\xi_m}\frac{\partial_{\xi_l}\phi(\xi,\eta)}{\phi(\xi,\eta)}\Tilde{\psi}_k(\xi)\Tilde{\psi}_{k_1}(\xi-\eta)\Tilde{\psi}_{k_2}(\eta)\|_{L^1}+\\
    &\qquad +\|\F^{-1}\partial_{\xi_m}\big(\frac{\partial_{\xi_l}\phi(\xi,\eta)}{\phi(\xi,\eta)}q(\eta,\xi-\eta)\big)\Tilde{\psi}_k(\xi)\Tilde{\psi}_{k_1}(\xi-\eta)\Tilde{\psi}_{k_2}(\eta)\|_{L^1}+\\
    &\qquad + \|\nabla_\xi\psi_k\|_{L^\infty}\big(\|\F^{-1}\frac{\partial_{\xi_l}\phi(\xi,\eta)}{\phi(\xi,\eta)}\Tilde{\psi}_k(\xi)\Tilde{\psi}_{k_1}(\xi-\eta)\Tilde{\psi}_{k_2}(\eta)\|_{L^1}+\\
    &\qquad + \|\F^{-1}\frac{\partial_{\xi_l}\phi(\xi,\eta)}{\phi(\xi,\eta)}q(\eta,\xi-\eta)\Tilde{\psi}_k(\xi)\Tilde{\psi}_{k_1}(\xi-\eta)\Tilde{\psi}_{k_2}(\eta)\|_{L^1}\big)+\\
    &\qquad +\|\F^{-1}\partial_{\eta_n}\frac{\partial_{\eta_n}\phi(\xi,\eta)\partial_{\xi_m}\phi(\xi,\eta)\partial_{\xi_l}\phi(\xi,\eta)}{|\nabla_\eta\phi(\xi,\eta)|^2\phi(\xi,\eta)}\Tilde{\psi}_k(\xi)\Tilde{\psi}_{k_1}(\xi-\eta)\Tilde{\psi}_{k_2}(\eta)\|_{L^1}+\\
    &\qquad +\|\F^{-1}\partial_{\eta_n}\big(\frac{\partial_{\eta_n}\phi(\xi,\eta)\partial_{\xi_m}\phi(\xi,\eta)\partial_{\xi_l}\phi(\xi,\eta)}{|\nabla_\eta\phi(\xi,\eta)|^2\phi(\xi,\eta)}q(\eta,\xi-\eta)\big)\Tilde{\psi}_k(\xi)\Tilde{\psi}_{k_1}(\xi-\eta)\Tilde{\psi}_{k_2}(\eta)\|_{L^1}\big)\times\\
    &\times\sup_{t\in[2^{M-1},2^M]}\big(\min\{\|e^{ic_mt\la}\partial_t{f}_{m,k_1}\|_{L^6_x}\|e^{ic_nt\la}{f}_{n,k_2}\|_{L^3_x},2^{3k_2/2}\|e^{ic_mt\la}\partial_tf_{m,k_1}\|_{L^2_x}\|e^{ic_nt\la}{f}_{n,k_2}\|_{L^2_x}\}+\\
    &\qquad+\min\{\|e^{ic_mt\la}f_{m,k_1}\|_{L^\infty_x}\|e^{ic_nt\la}\partial_t{f}_{n,k_2}\|_{L^2_x},2^{3k_2/2}\|e^{ic_mt\la}f_{m,k_1}\|_{L^2_x}\|e^{ic_nt\la}\partial_t{f}_{n,k_2}\|_{L^2_x}\}\big)\\
    \lesssim & 2^{2M-k-k_2}\sup_{t\in[2^{M-1},2^M]}\big(\min\{\|e^{ic_mt\la}\partial_t{f}_{m,k_1}\|_{L^6_x}2^{k_2/2}\|e^{ic_nt\la}{f}_{n,k_2}\|_{L^2_x},2^{3k_2/2}\|\partial_tf_{m,k_1}\|_{L^2_x}\|{f}_{n,k_2}\|_{L^2_x}\}\\
    &\qquad+\min\{\|e^{ic_mt\la}f_{m,k_1}\|_{L^\infty_x}\|\partial_t{f}_{n,k_2}\|_{L^2_x},2^{3k_2/2}\|f_{m,k_1}\|_{L^2_x}\|\partial_t{f}_{n,k_2}\|_{L^2_x}\}\big)\\
    \lesssim & 2^{2M-k-k_2-2k_{2,+}+\gamma k_2-2k_{1,+}+\gamma k_1}\big(\min\{2^{-2M-k_1/2+k_2/2+k_2/2}, 2^{-M+3k_2/2+k_1/2+k_2/2}\}\\ 
    &\qquad+\min\{2^{-5M/2+\gamma M/8-k_1+k_2/2+\gamma k_1/4},2^{-M+3k_2/2+k_1/2+k_2/2}\}\big)\e_1^2\\
    \lesssim & 2^{-k-2k_{2,+}+\gamma k_2-2k_{1,+}+\gamma k_1}\big(\min\{2^{-k_1/2}, 2^{M+k_1/2+k_2}\}\\
    &\qquad+\min\{2^{-M/2+\gamma M/8-k_1-k_2/2+\gamma k_1/4},2^{M+k_1/2+k_2}\}\big)\e_1^2\\
    \lesssim & (1+2^{M/2+k}) 2^{-2k_{2,+}+\gamma k_2-2k_{+}+\gamma k}\min\{2^{-M/2-2k-k_2/2}, 2^{M-k/2+k_2}\}\e_1^2,
\end{align*}
\begin{align*}
    &\|\eqref{jm2-2.1-i}+\eqref{jm2-2.4-i}+\eqref{jm2-2.1-2-i}+\eqref{jm2-2.4-2-i}\|_{L^2}+\|\eqref{jm2-2.1}+\eqref{jm2-2.4}+\eqref{jm2-2.1-2}+\eqref{jm2-2.4-2}\|_{L^2}\\
    \lesssim & 2^{2M}\big(\|\F^{-1}\frac{\partial_{\xi_l}\phi(\xi,\eta)}{\phi(\xi,\eta)}\Tilde{\psi}_k(\xi)\Tilde{\psi}_{k_1}(\xi-\eta)\Tilde{\psi}_{k_2}(\eta)\|_{L^1}+\|\F^{-1}\frac{\partial_{\xi_l}\phi(\xi,\eta)}{\phi(\xi,\eta)}q(\eta,\xi-\eta)\Tilde{\psi}_k(\xi)\Tilde{\psi}_{k_1}(\xi-\eta)\Tilde{\psi}_{k_2}(\eta)\|_{L^1}+\\
    &+\|\F^{-1}\frac{\partial_{\eta_n}\phi(\xi,\eta)\partial_{\xi_m}\phi(\xi,\eta)\partial_{\xi_l}\phi(\xi,\eta)}{|\nabla_\eta\phi(\xi,\eta)|^2\phi(\xi,\eta)}\Tilde{\psi}_k(\xi)\Tilde{\psi}_{k_1}(\xi-\eta)\Tilde{\psi}_{k_2}(\eta)\|_{L^1}+\\
    &+\|\F^{-1}\frac{\partial_{\eta_n}\phi(\xi,\eta)\partial_{\xi_m}\phi(\xi,\eta)\partial_{\xi_l}\phi(\xi,\eta)}{|\nabla_\eta\phi(\xi,\eta)|^2\phi(\xi,\eta)}q(\eta,\xi-\eta)\Tilde{\psi}_k(\xi)\Tilde{\psi}_{k_1}(\xi-\eta)\Tilde{\psi}_{k_2}(\eta)\|_{L^1}\big)\times\\
    &\times \sup_{t\in[2^{M-1},2^M]}\big(\min\{\|e^{ic_mt\la}\F^{-1}\nabla_\xi\partial_t\hat{f}_{m,k_1}\|_{L^3_x}\|e^{ic_nt\la} f_{n,k_2}\|_{L^6_x},\\
    &\qquad\qquad\qquad\qquad\qquad 2^{3k_2/2}\|e^{ic_mt\la}\F^{-1}\nabla_\xi\partial_t\hat{f}_{m,k_1}\|_{L^2_x}\|e^{ic_nt\la} f_{n,k_2}\|_{L^2_x}\}+\\
    &\qquad\qquad\qquad\qquad+\min\{\|e^{ic_mt\la}\F^{-1}\nabla_\xi\hat{f}_{m,k_1}\|_{L^3_x}\|e^{ic_nt\la}\partial_t f_{n,k_2}\|_{L^6_x},\\
    &\qquad\qquad\qquad\qquad\qquad 2^{3k_2/2}\|e^{ic_mt\la}\F^{-1}\nabla_\xi\hat{f}_{m,k_1}\|_{L^2_x}\|e^{ic_nt\la}\partial_t f_{n,k_2}\|_{L^2_x}\}\big)\\
    \lesssim & 2^{2M-k-2k_{1,+}-2k_{2,+}+\gamma k_2+\gamma k_1}\big(\min\{2^{-M-\gamma k_1-M-k_2/2},(1+2^{M/2+k})2^{3k_2/2-M-k_1/2-\gamma k_1+k_2/2}\}+\\
    &\qquad+\min\{2^{-M/2+\gamma M/8-k_1+\gamma k_1/4-2M-k_2/2},2^{3k_2/2-k_1/2-M+k_2/2}\}\big)\e_1^2\\
    \lesssim & 2^{-k-2k_{1,+}-2k_{2,+}+\gamma k_1+\gamma k_2}\big(\min\{2^{-\gamma k_1-k_2/2},(1+2^{M/2+k})2^{M-k_1/2-\gamma k_1+2k_2}\}+\\
    &\qquad +\min\{2^{-M/2+\gamma M/8-k_1+\gamma k_1/4-k_2/2},2^{M-k_1/2+2k_2}\}\big)\e_1^2\\
    \lesssim & (1+2^{M/2+k})2^{-2k_{+}-2k_{2,+}+\gamma k-\gamma k_-+\gamma k_2}\min\{2^{-M/2-2k-k_2/2},2^{M-3k/2+2k_2}\}\e_1^2,
\end{align*}
and
\begin{align*}
    &\|\eqref{jm2-2.5-i}+\eqref{jm2-2.5-2-i}\|_{L^2}+\|\eqref{jm2-2.5}+\eqref{jm2-2.5-2}\|_{L^2}\\
    \lesssim &2^{2M}\big(\|\F^{-1} \frac{\partial_{\eta_n}\phi(\xi,\eta)\partial_{\xi_m}\phi(\xi,\eta)\partial_{\xi_l}\phi(\xi,\eta)}{|\nabla_\eta\phi(\xi,\eta)|^2\phi(\xi,\eta)}\Tilde{\psi}_k(\xi)\Tilde{\psi}_{k_1}(\xi-\eta)\Tilde{\psi}_{k_2}(\eta)\|_{L^1}+\\
    &+\|\F^{-1} \frac{\partial_{\eta_n}\phi(\xi,\eta)\partial_{\xi_m}\phi(\xi,\eta)\partial_{\xi_l}\phi(\xi,\eta)}{|\nabla_\eta\phi(\xi,\eta)|^2\phi(\xi,\eta)}q(\eta,\xi-\eta)\Tilde{\psi}_k(\xi)\Tilde{\psi}_{k_1}(\xi-\eta)\Tilde{\psi}_{k_2}(\eta)\|_{L^1}\big)\times\\
    &\times\sup_{t\in[2^{M-1},2^M]}\big(\min\{\|e^{ic_mt\la}\partial_t f_{m,k_1}\|_{L^6_x}\|e^{ic_nt\la}\F^{-1}\nabla_\xi\hat{f}_{n,k_2}\|_{L^3_x},\\
    &\qquad\qquad\qquad\qquad\qquad 2^{3k_2/2}\|e^{ic_mt\la}\partial_t f_{m,k_1}\|_{L^2_x}\|e^{ic_nt\la}\F^{-1}\nabla_\xi\hat{f}_{n,k_2}\|_{L^2_x}\}+\\
    &\qquad\qquad\qquad\qquad+\min\{\|e^{ic_mt\la}f_{m,k_1}\|_{L^6_x}\|e^{ic_nt\la}\F^{-1}\nabla_\xi\partial_t\hat{f}_{n,k_2}\|_{L^3_x},\\
    &\qquad\qquad\qquad\qquad\qquad 2^{3k_2/2}\|e^{ic_mt\la}f_{m,k_1}\|_{L^2_x}\|e^{ic_nt\la}\F^{-1}\nabla_\xi\partial_t\hat{f}_{n,k_2}\|_{L^2_x}\}\big)\\
    \lesssim &2^{2M-k}\sup_{t\in[2^{M-1},2^M]}\big(\min\{\|e^{ic_mt\la}\partial_t f_{m,k_1}\|_{L^6_x}2^{k_2/2}\|e^{ic_nt\la}\F^{-1}\nabla_\xi\hat{f}_{n,k_2}\|_{L^2_x},2^{3k_2/2}\|\partial_t f_{m,k_1}\|_{L^2_x}\|\nabla_\xi\hat{f}_{n,k_2}\|_{L^2_\xi}\}+\\
    &\qquad\qquad\qquad\qquad+\min\{\|e^{ic_mt\la}f_{m,k_1}\|_{L^6_x}\|e^{ic_nt\la}\F^{-1}\nabla_\xi\partial_t\hat{f}_{n,k_2}\|_{L^3_x},2^{3k_2/2}\|f_{m,k_1}\|_{L^2_x}\|\nabla_\xi\partial_t\hat{f}_{n,k_2}\|_{L^2_\xi}\}\big)\\
    \lesssim & 2^{2M-k-2k_{1,+}-2k_{2,+}+\gamma k_2+\gamma k_1}\big(\min\{2^{-2M-k_1/2+k_2/2-k_2/2},2^{3k_2/2-k_2/2-M+k_1/2}\}+\\
    &\qquad\qquad\qquad\qquad +\min\{2^{-M-k_1/2-M-\gamma k_2},(1+2^{M/2+k_2})2^{-M+3k_2/2+k_1/2-k_2/2-\gamma k_2}\}\big)\e_1^2\\
    \lesssim & 2^{-k-2k_{1,+}-2k_{2,+}+\gamma k_2+\gamma k_1}\big(\min\{2^{-k_1/2},2^{M+k_1/2+k_2}\}
    +\min\{2^{-k_1/2-\gamma k_2},(1+2^{M/2+k_2})2^{M+k_1/2+k_2-\gamma k_2}\}\big)\e_1^2\\
    \lesssim & (1+2^{M/2+k})2^{-2k_++\gamma k-2k_{2,+}+\gamma k_2-\gamma k_{2,-}}\min\{2^{-M/2-5k/2},2^{M-k/2+k_2}\}\e_1^2.
\end{align*}
Thus, we showed
\begin{align*}
    &\|\nabla_\xi(F_1+F_2)\|_{L^2}+\|\nabla_\xi(F^q_1+F^q_2)\|_{L^2}\\
    \lesssim & (1+2^{M/2+k})2^{-2k_++\gamma k-2k_{2,+}+\gamma k_2-\gamma k_{2,-}}\min\{2^{-M/2-2k-k_2/2},2^{M-k/2+k_2}\}\e_1^2.
\end{align*}
Using Lemma \ref{dalpha} and \eqref{jm2-2bound1}, when $M+2k\leq 0$
\begin{align*}
    &\|D^\alpha_\xi(F_1+F_2)\|_{L^2}+\|D^\alpha_\xi(F^q_1+F^q_2)\|_{L^2}\\
    \leq & \|F_1+F_2\|_{L^2}^{1-\alpha}\|\nabla_\xi(F_1+F_2)\|_{L^2}^\alpha+\|F^q_1+F^q_2\|_{L^2}^{1-\alpha}\|\nabla_\xi(F^q_1+F^q_2)\|_{L^2}^\alpha\\
    \lesssim & 2^{-2k_++\gamma k-2k_{2,+}+\gamma k_2}\min\{(1+2^{\gamma M/8+\gamma k/4})2^{-M/2-3k/2-\alpha k/2-\alpha k_2/2},2^{M-k/2+2k_2-\alpha k_2}\}\e_1^2\\
    \lesssim & 2^{-2k_++\gamma k-2k_{2,+}+\gamma k_2-\alpha k_2}\min\{2^{-M/2-3k/2},2^{M-k/2+2k_2}\}\e_1^2.
\end{align*}
When $M+2k>0$, we use \eqref{jm2-2bound2} and get
\begin{align*}
    &\|D^\alpha_\xi(F_1+F_2)\|_{L^2}+\|D^\alpha_\xi(F^q_1+F^q_2)\|_{L^2}\\
    \lesssim & 2^{-2k_++\gamma k-\gamma k_-/2-2k_{2,+}+\gamma k_2-\gamma k_{2,-}/2} \min\{2^{-M+\alpha M-2k+\alpha k-k_2/2},2^{M/2+\alpha M-k/2+\alpha k+k_2}\}\e_1^2\\
    \lesssim & 2^{-2k_++\gamma k-\gamma k_-/2-2k_{2,+}+\gamma k_2-\gamma k_{2,-}/2+\alpha k} \min\{2^{-M/2-2k-k_2/2},2^{M-k/2+k_2}\}\e_1^2,
\end{align*}
since $\alpha<1/2$.
\end{proof}
As a result, Lemma \ref{jm2-1} implies
\begin{equation}
    \label{jm2-1result}
    \begin{aligned}
    &\sum_{1\leq M\leq \log T}\sum_{(k_1,k_2)\in\chi^1_k}\sup_{2^{M-1}\leq t_1<t_2\leq 2^M} 2^{-\gamma k+2k_++k/2+\alpha k}\times\\
    &\qquad\qquad\qquad\qquad\times\|\eqref{f1-i}+\eqref{f2-i}+\eqref{f3-i}+\eqref{f1}+\eqref{f2}+\eqref{f3}\|_{L^2}\\
    \lesssim &\sum_{1\leq M\leq \log T}\sum_{(k_1,k_2)\in\chi^1_k} 2^{-2k_{2,+}+\gamma k_2+\alpha k-\alpha k_2}\min\{2^{-M/2+\gamma M/8-k+\gamma k/4},2^{M+2k_2}\}\e_1^2\\
    \lesssim &\sum_{1\leq M\leq \log T}\sum_{k_2\leq k} 2^{-2k_{2,+}+\gamma k_2}\min\{2^{-M/2+\gamma M/8-k/2-k_2/2+\gamma k/4},2^{M+k/2+3k_2/2}\}\e_1^2\\
    \lesssim &\e_1^2,
\end{aligned}
\end{equation}
and from Lemma \ref{jm2-2}, we get
\begin{equation}
\label{jm2-2result}
    \begin{aligned}
    &\sum_{1\leq M\leq \log T}\sum_{(k_1,k_2)\in\chi^1_k}\sup_{2^{M-1}\leq t_1<t_2\leq 2^M} 2^{-\gamma k+2k_++k/2+\alpha k}\|\eqref{f4-i}+\eqref{f5-i}+\eqref{f4}+\eqref{f5}\|_{L^2}\\
    \lesssim &\sum_{1\leq M\leq \min\{-2k,\log T\}}\sum_{k_2\leq k} 2^{-2k_{2,+}+\gamma k_2+\alpha k-\alpha k_2}\min\{2^{-M/2-k},2^{M+2k_2}\}\e_1^2\\
    &+\sum_{-2k\leq M\leq \log T}\sum_{k_2\leq k} 2^{-\gamma k_-/2-2k_{2,+}+\gamma k_2-\gamma k_{2,-}/2+2\alpha k} \min\{2^{-M/2-3k/2-k_2/2},2^{M+k_2}\}\e_1^2\\
    \lesssim &\e_1^2.
\end{aligned}
\end{equation}
Lastly, combining \eqref{jm2-1result} and \eqref{jm2-2result} gives
\begin{equation}
    \label{iandjm2inchi1}
    \begin{aligned}
    &\sum_{1\leq M\leq\log T}\sup_{2^{M-1}\leq t_1\leq t_2\leq 2^M}2^{2k_+-\gamma k+k/2+\alpha k}\bigg(\sum_{\substack{c_m+c_n\neq 0\\c_l\neq c_m}}A_{lmn}\sum_{(k_1,k_2)\in\chi^1_k}\|D^\alpha I^{M,2}_{k,k_1,k_2}\|_{L^2}+\\
    &\qquad\qquad\qquad\qquad +\sum_{\substack{c_m+c_n= 0\\c_l\neq c_m}}A_{lmn}\sum_{(k_1,k_2)\in\chi^1_k}\|D^\alpha J^{M,2}_{k,k_1,k_2}\|_{L^2}\bigg)\\
    \lesssim & \e_1^2
    \end{aligned}
\end{equation}
as desired.

\subsubsection{$D^{\alpha}I^{M,2}_{k,k_1,k_2}$ where $(k_1,k_2)\in\chi^3_k$}
\label{2-3}
Unlike the situation where we only consider $c_m+c_n=0$ for the term $J^{M,2}_{k,k_1,k_2}$, which allows use to handle the cases when $(k_1,k_2)\in\chi^3_k$ together with the cases when $(k_1,k_2)\in\chi^2_k$ in Section \ref{2-1}. For $I^{M,2}_{k,k_1,k_2}$, we failed to obtain a lower bound for $|\nabla_\eta\phi(\xi,\eta)|$ when $(k_1,k_2)\in\chi^3_k$. Thus, we treat this case separately here in this section.

Similarly to \autocite[Theorem~2]{germain}{}, we define $P(\xi,\eta)= \frac{1}{2}(c_lc_m+c_lc_n-c_mc_n+1)(\frac{c_m}{c_m+c_n}\xi-\eta)$, then we have the identity
\begin{equation}
\label{chi3id}
    (\partial_t+\frac{P(\xi,\eta)}{t}\cdot\nabla_\eta) e^{it\phi(\xi,\eta)}=iZ(\xi,\eta)e^{it\phi(\xi,\eta)},
\end{equation}
where
\begin{align*}
    Z(\xi,\eta)
    = & \phi(\xi,\eta)+P(\xi,\eta)\cdot\nabla_\eta\phi(\xi,\eta)\\
    = & \frac{c_lc_m+c_lc_n-c_mc_n}{c_m+c_n}\big((1+c^2_m)|\xi|^2-2c_m(c_m+c_n)\xi\cdot\eta+(c_m+c_n)^2|\eta|^2\big).
\end{align*}
This polynomial $P$ is chosen to ensure the $Z$ bounded from below when $(k_1,k_2)\in\chi^3_k$.
Since 
$$2|c_m(c_m+c_n)\xi\cdot\eta|\leq c_m^2|\xi|^2+(c_m+c_n)^2|\eta|^2,$$
we indeed have $|Z(\xi,\eta)|\gtrsim |\xi|^2\sim 2^{2k}$. However, we notice that this is only true given $c_lc_m+c_lc_n-c_mc_n\neq 0$. Hence, we will assume this for now and address the case when $\frac{1}{c_l}=\frac{1}{c_m}+\frac{1}{c_n}$ at the end of this section.

Now, we use the identity \eqref{chi3id} to integrate $I^{M,2}_{k,k_1,k_2}$ by parts and get
\begin{align}
    I^{M,2}_{k,k_1,k_2}
    =&\int_{t_1}^{t_2}\int_{\R^3}(\partial_t+\frac{P(\xi,\eta)}{t}\cdot\nabla_\eta)e^{it\phi(\xi,\eta)}\frac{t\partial_{\xi_l}\phi(\xi,\eta)}{Z(\xi,\eta)}\hat{f}_{m,k_1}(t,\xi-\eta)\hat{f}_{n,k_2}(t,\eta)\psi_k(\xi)d\eta dt\nonumber\\
    =&\sum_{j=1,2}(-1)^j\int_{\R^3}e^{it_j \phi(\xi,\eta)}\frac{t_j\partial_{\xi_l}\phi(\xi,\eta)}{Z(\xi,\eta)}\hat{f}_{m,k_1}(t_j,\xi-\eta)\hat{f}_{n,k_2}(t_j,\eta)\psi_k(\xi)d\eta \label{chi3.1}\\
    &-\int_{t_1}^{t_2}\int_{\R^3}e^{it\phi(\xi,\eta)}\frac{\partial_{\xi_l}\phi(\xi,\eta)}{Z(\xi,\eta)}\hat{f}_{m,k_1}(t,\xi-\eta)\hat{f}_{n,k_2}(t,\eta)\psi_k(\xi)d\eta dt\label{chi3.2}\\
    &-\int_{t_1}^{t_2}\int_{\R^3}e^{it\phi(\xi,\eta)}\frac{t\partial_{\xi_l}\phi(\xi,\eta)}{Z(\xi,\eta)}\partial_t\hat{f}_{m,k_1}(t,\xi-\eta)\hat{f}_{n,k_2}(t,\eta)\psi_k(\xi)d\eta dt\label{chi3.3.1}\\
    &-\int_{t_1}^{t_2}\int_{\R^3}e^{it\phi(\xi,\eta)}\frac{t\partial_{\xi_l}\phi(\xi,\eta)}{Z(\xi,\eta)}\hat{f}_{m,k_1}(t,\xi-\eta)\partial_t\hat{f}_{n,k_2}(t,\eta)\psi_k(\xi)d\eta dt\label{chi3.3.2}\\
    &-\int_{t_1}^{t_2}\int_{\R^3}e^{it\phi(\xi,\eta)}\partial_{\eta_n}\frac{P_n(\xi,\eta)\partial_{\xi_l}\phi(\xi,\eta)}{Z(\xi,\eta)}\hat{f}_{m,k_1}(t,\xi-\eta)\hat{f}_{n,k_2}(t,\eta)\psi_k(\xi)d\eta dt\label{chi3.4}\\
    &-\int_{t_1}^{t_2}\int_{\R^3}e^{it\phi(\xi,\eta)}\frac{P_n(\xi,\eta)\partial_{\xi_l}\phi(\xi,\eta)}{Z(\xi,\eta)}\partial_{\eta_n}\hat{f}_{m,k_1}(t,\xi-\eta)\hat{f}_{n,k_2}(t,\eta)\psi_k(\xi)d\eta dt\label{chi3.5.1}\\
    &-\int_{t_1}^{t_2}\int_{\R^3}e^{it\phi(\xi,\eta)}\frac{P_n(\xi,\eta)\partial_{\xi_l}\phi(\xi,\eta)}{Z(\xi,\eta)}\hat{f}_{m,k_1}(t,\xi-\eta)\partial_{\eta_n}\hat{f}_{n,k_2}(t,\eta)\psi_k(\xi)d\eta dt\label{chi3.5.2}.
\end{align}
We observe that the power of $t$ in $I^{M,2}_{k,k_1,k_2}$ is lowered by $1$ while $I^{M,2}_{k,k_1,k_2}$ is expanded into a sum of terms. In the lemma below, we will first address the terms where the derivatives of $t$ and $\eta$ did not fall on the product $\hat{f}_{m,k_1}(t,\xi-\eta)\hat{f}_{n,k_2}(t,\eta)$ using Lemma \ref{dalpha}.
\begin{lem}
\label{im2-2-1}
Given $t_1,t_2\in[2^{M-1},2^M]$, $(k_1,k_2)\in\chi^3_k$, $\frac{1}{c_l}\neq\frac{1}{c_m}+\frac{1}{c_n}$, and $\sup_{t\in[1,T]}\|f\|_{Z}\leq \e_1$, we have
\begin{align*}
    &\|D^\alpha \eqref{chi3.1}+D^\alpha \eqref{chi3.2}+D^\alpha \eqref{chi3.4}\|_{L^2}
    \lesssim  2^{-4k_++2\gamma k}\min\{2^{-M/2-3k/2+\alpha M+\alpha k},2^{M+3k/2-\alpha k}\}\e_1^2.
\end{align*}
\end{lem}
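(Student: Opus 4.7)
\textbf{Proof Proposal for Lemma \ref{im2-2-1}.}

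The plan is to follow the same interpolation strategy used in Lemma \ref{im0-2}: namely, I would bound the $L^2$ norms of the three terms \eqref{chi3.1}, \eqref{chi3.2}, and \eqref{chi3.4} along with the $L^2$ norms of their $\xi$-gradients separately, and then apply Lemma \ref{dalpha} to obtain the desired $D^\alpha$ estimate. Observe first that for $(k_1,k_2)\in\chi^3_k$ we have $|\xi|\sim|\xi-\eta|\sim|\eta|\sim 2^k$, and under the hypothesis $\frac{1}{c_l}\neq\frac{1}{c_m}+\frac{1}{c_n}$ the polynomial $Z(\xi,\eta)$ is bounded below by $2^{2k}$ on the support of $\psi_k(\xi)\psi_{k_1}(\xi-\eta)\psi_{k_2}(\eta)$. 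Consequently the symbols $\frac{t\partial_{\xi_l}\phi}{Z}$, $\frac{\partial_{\xi_l}\phi}{Z}$, and $\partial_{\eta_n}\frac{P_n\partial_{\xi_l}\phi}{Z}$ have effective $L^1$ (on the Fourier side) size $2^{M-k}$, $2^{-k}$, and $2^{-k}$, respectively; accounting for the $t$-integration over $[t_1,t_2]$ of length $\sim 2^M$ in \eqref{chi3.2} and \eqref{chi3.4}, all three contributions carry the same effective multiplier weight $2^{M-k}$.

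For the $L^2$ bound, I would apply the bilinear estimate $L^4\times L^4\rightarrow L^2$ together with \eqref{l4} on one hand, and Lemma \ref{dualitycomp} combined with \eqref{l2} on the other, using Lemma \ref{chi,eta} to convert the multiplier $L^1$ bounds to $2^{-k}$. This yields
\[
\|\eqref{chi3.1}\|_{L^2}+\|\eqref{chi3.2}\|_{L^2}+\|\eqref{chi3.4}\|_{L^2}\lesssim 2^{-4k_++2\gamma k}\min\{2^{-M/2-3k/2},2^{M+3k/2}\}\e_1^2,
\]
which exactly matches the target when evaluated at the endpoints in the min (as one can check, the min toggles at $M=-2k$, mirroring Lemma \ref{im0-2}).

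For the gradient bound, taking $\partial_{\xi_p}$ distributes across the $\psi_k$ cutoff, the multiplier, the profile $\hat{f}_{m,k_1}$, and the oscillatory factor $e^{it\phi}$. The first three give improvements of order $2^{-k}$ (either from $\|\nabla\psi_k\|_{L^\infty}$, from derivatives of a homogeneous multiplier, or via \eqref{l2first} exploiting $\|\nabla_\xi\hat{f}_{m,k_1}\|_{L^2}\lesssim 2^{-k}\|\hat{f}_{m,k_1}\|_{L^2}$). The remaining case — when $\partial_{\xi_p}$ falls on the exponential to produce $it\partial_{\xi_p}\phi$ — is the main obstacle: it inflates the symbol by a factor of $2^{M+k}$, yielding an overall contribution of order $2^{2M+k}$. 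I would not attempt a further integration by parts on this piece; instead I would estimate it directly via $L^4\times L^4\rightarrow L^2$ and Lemma \ref{dualitycomp}, using the homogeneity analysis of Lemma \ref{chi,eta} for $\F^{-1}\bigl[\frac{\partial_{\xi_p}\phi\,\partial_{\xi_l}\phi}{Z}\tilde\psi^2_k\tilde\psi^2_{k_1}\tilde\psi_{k_2}\bigr]\lesssim 1$. Combining both cases should give
\[
\|\nabla_\xi\eqref{chi3.1}\|_{L^2}+\|\nabla_\xi\eqref{chi3.2}\|_{L^2}+\|\nabla_\xi\eqref{chi3.4}\|_{L^2}\lesssim (1+2^{M+2k})\,2^{-4k_++2\gamma k}\min\{2^{-M/2-5k/2},2^{M+k/2}\}\e_1^2.
\]

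Finally, splitting into the regimes $M\leq -2k$ and $M>-2k$ and applying Lemma \ref{dalpha} with the exponents $(1-\alpha)$ and $\alpha$ to the two bounds above produces the claimed estimates. In the regime $M\leq -2k$ one gets $\|F\|_{L^2}^{1-\alpha}\|\nabla F\|_{L^2}^\alpha\lesssim 2^{-4k_++2\gamma k+M+3k/2-\alpha k}\e_1^2$, and in the regime $M>-2k$ the factor $(1+2^{M+2k})$ on the gradient side delivers $\|\nabla F\|_{L^2}\lesssim 2^{-4k_++2\gamma k+M/2-k/2}\e_1^2$, so that interpolation yields $2^{-4k_++2\gamma k-M/2-3k/2+\alpha M+\alpha k}\e_1^2$, in agreement with the statement. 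The constraint $\alpha<1/2$ ensures both bounds give acceptable time decay when later summed over dyadic $M$.
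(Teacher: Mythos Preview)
Your proposal is correct and follows essentially the same route as the paper: interpolate via Lemma \ref{dalpha} between an $L^2$ bound (obtained with $L^4\times L^4$ and Lemma \ref{dualitycomp}) and a gradient bound, then split at $M=-2k$. One small point: for the gradient term where $\partial_{\xi_p}$ lands on $\hat f_{m,k_1}$, the $2^{-k}$ gain you quote is correct for the a priori bounds, but to also obtain the $M$-decay needed in the regime $M>-2k$ you cannot use $L^4\times L^4$ (that would require $\|\nabla^2_\xi\hat f_{m,k_1}\|_{L^2}$); the paper uses $L^{18/7}\times L^9$ with Lemma \ref{op} and \eqref{lpnormseq3} here, and $L^2\times L^\infty$ would work as well.
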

\begin{proof}
In order to use Lemma \ref{dalpha}, we need bounds on $\| \eqref{chi3.1}+\eqref{chi3.2}\|_{L^2}$ and $\|\nabla  \eqref{chi3.1}+\nabla \eqref{chi3.2}\|_{L^2}$ when $(k_1,k_2)\in\chi^3_k=\{|k_1-k_2|\leq a, |k-k_1|\leq a+2\}$.\\
First, using the bilinear estimate $L^4\times L^4\rightarrow L^2$ in Lemma \ref{bilinear}, Lemma \ref{chi,eta} Lemma \ref{dualitycomp}, together with the estimations in \eqref{l4}, and \eqref{l2}, we get
\begin{align*}
    &\|\eqref{chi3.1}+\eqref{chi3.2}+\eqref{chi3.4}\|_{L^2}\\
    =&\bigg\|\int_{\R^3}e^{it_j \phi(\xi,\eta)}\frac{t_j\partial_{\xi_l}\phi(\xi,\eta)}{Z(\xi,\eta)}\hat{f}_{m,k_1}(t_j,\xi-\eta)\hat{f}_{n,k_2}(t_j,\eta)\psi_k(\xi)d\eta \bigg\|_{L^2}\\
    &+\bigg\|\int_{t_1}^{t_2}\int_{\R^3}e^{it\phi(\xi,\eta)}\frac{\partial_{\xi_l}\phi(\xi,\eta)}{Z(\xi,\eta)}\hat{f}_{m,k_1}(t,\xi-\eta)\hat{f}_{n,k_2}(t,\eta)\psi_k(\xi)d\eta dt\bigg\|_{L^2}\\
    &+\bigg\|\int_{t_1}^{t_2}\int_{\R^3}e^{it\phi(\xi,\eta)}\partial_{\eta_n}\frac{P_n(\xi,\eta)\partial_{\xi_l}\phi(\xi,\eta)}{Z(\xi,\eta)}\hat{f}_{m,k_1}(t,\xi-\eta)\hat{f}_{n,k_2}(t,\eta)\psi_k(\xi)d\eta dt\bigg\|_{L^2}\\
    \lesssim & 2^{M}\big(\|\F^{-1}\frac{\partial_{\xi_l}\phi(\xi,\eta)}{Z(\xi,\eta)}\Tilde{\psi}_{k_2}(\eta)\Tilde{\psi}_k(\xi)\|_{L^1}+\|\F^{-1}\partial_{\eta_n}\frac{P_n(\xi,\eta)\partial_{\xi_l}\phi(\xi,\eta)}{Z(\xi,\eta)}\Tilde{\psi}_{k_2}(\eta)\Tilde{\psi}_k(\xi)\|_{L^1}\big)\times\\
    &\times \sup_{t\in[2^{M-1},2^M]}\min\{\|e^{ic_mt\la}f_{m,k_1}\|_{L^4_x}\|e^{ic_nt\la}f_{n,k_2}\|_{L^4_x},2^{3\min\{k,k_2\}/2}\|e^{ic_mt\la}f_{m,k_1}\|_{L^2_x}\|e^{ic_nt\la}f_{n,k_2}\|_{L^2_x}\}\\
    \lesssim & 2^{M-k-2k_{1,+}+\gamma k_1-2k_{2,+}+\gamma k_2}\min\{2^{-3M/2-k_1/4-k_2/4},2^{3\min\{k,k_2\}/2+k_1/2+k_2/2}\}\e_1^2\\
    \lesssim &2^{-4k_++2\gamma k}\min\{2^{-M/2-3k/2},2^{M+3k/2}\}\e_1^2.
\end{align*}
Next, we compute
\begin{align}
    \partial_{\xi_m} \eqref{chi3.1}
    = &\partial_{\xi_m}\int_{\R^3}e^{it_j \phi(\xi,\eta)}\frac{t_j\partial_{\xi_l}\phi(\xi,\eta)}{Z(\xi,\eta)}\hat{f}_{m,k_1}(t_j,\xi-\eta)\hat{f}_{n,k_2}(t_j,\eta)\psi_k(\xi)d\eta\nonumber\\
    = &\int_{\R^3}e^{it_j \phi(\xi,\eta)}\frac{t_j\partial_{\xi_l}\phi(\xi,\eta)}{Z(\xi,\eta)}\partial_{\xi_m}\hat{f}_{m,k_1}(t_j,\xi-\eta)\hat{f}_{n,k_2}(t_j,\eta)\psi_k(\xi)d\eta\label{im2-2.1}\\
    &+\int_{\R^3}e^{it_j \phi(\xi,\eta)}\frac{t_j\partial_{\xi_l}\phi(\xi,\eta)}{Z(\xi,\eta)}\hat{f}_{m,k_1}(t_j,\xi-\eta)\hat{f}_{n,k_2}(t_j,\eta)\partial_{\xi_m}\psi_k(\xi)d\eta\label{im2-2.12}\\
    &+\int_{\R^3}e^{it_j \phi(\xi,\eta)}\partial_{\xi_m}\frac{t_j\partial_{\xi_l}\phi(\xi,\eta)}{Z(\xi,\eta)}\hat{f}_{m,k_1}(t_j,\xi-\eta)\hat{f}_{n,k_2}(t_j,\eta)\psi_k(\xi)d\eta\label{im2-2.12*}\\
    & + \int_{\R^3}e^{it_j \phi(\xi,\eta)}\frac{it_j^2\partial_{\xi_m}\phi(\xi,\eta)\partial_{\xi_l}\phi(\xi,\eta)}{Z(\xi,\eta)}\hat{f}_{m,k_1}(t_j,\xi-\eta)\hat{f}_{n,k_2}(t_j,\eta)\psi_k(\xi)d\eta\label{im2-2.2},
\end{align}
\begin{align}
    \partial_{\xi_m} \eqref{chi3.2}
    = &\partial_{\xi_m}\int_{t_1}^{t_2}\int_{\R^3}e^{it\phi(\xi,\eta)}\frac{\partial_{\xi_l}\phi(\xi,\eta)}{Z(\xi,\eta)}\hat{f}_{m,k_1}(t,\xi-\eta)\hat{f}_{n,k_2}(t,\eta)\psi_k(\xi)d\eta dt  \nonumber\\
    = &\int_{t_1}^{t_2}\int_{\R^3}e^{it\phi(\xi,\eta)}\frac{\partial_{\xi_l}\phi(\xi,\eta)}{Z(\xi,\eta)}\partial_{\xi_m}\hat{f}_{m,k_1}(t,\xi-\eta)\hat{f}_{n,k_2}(t,\eta)\psi_k(\xi)d\eta dt\label{im2-2.1-2}\\
    &+ \int_{t_1}^{t_2}\int_{\R^3}e^{it\phi(\xi,\eta)}\frac{\partial_{\xi_l}\phi(\xi,\eta)}{Z(\xi,\eta)}\hat{f}_{m,k_1}(t,\xi-\eta)\hat{f}_{n,k_2}(t,\eta)\partial_{\xi_m}\psi_k(\xi)d\eta dt\label{im2-2.12-2}\\
    &+ \int_{t_1}^{t_2}\int_{\R^3}e^{it\phi(\xi,\eta)}\partial_{\xi_m}\frac{\partial_{\xi_l}\phi(\xi,\eta)}{Z(\xi,\eta)}\hat{f}_{m,k_1}(t,\xi-\eta)\hat{f}_{n,k_2}(t,\eta)\psi_k(\xi)d\eta dt\label{im2-2.12-2*}\\
    & +\int_{t_1}^{t_2}\int_{\R^3}e^{it\phi(\xi,\eta)}\frac{it\partial_{\xi_m}\phi(\xi,\eta)\partial_{\xi_l}\phi(\xi,\eta)}{Z(\xi,\eta)}\hat{f}_{m,k_1}(t,\xi-\eta)\hat{f}_{n,k_2}(t,\eta)\psi_k(\xi)d\eta dt\label{im2-2.2-2},
\end{align}
and
\begin{align}
    \partial_{\xi_m} \eqref{chi3.4}
    = &\partial_{\xi_m}\int_{t_1}^{t_2}\int_{\R^3}e^{it\phi(\xi,\eta)}\partial_{\eta_n}\frac{P_n(\xi,\eta)\partial_{\xi_l}\phi(\xi,\eta)}{Z(\xi,\eta)}\hat{f}_{m,k_1}(t,\xi-\eta)\hat{f}_{n,k_2}(t,\eta)\psi_k(\xi)d\eta dt \nonumber\\
    = &\int_{t_1}^{t_2}\int_{\R^3}e^{it\phi(\xi,\eta)}\partial_{\eta_n}\frac{P_n(\xi,\eta)\partial_{\xi_l}\phi(\xi,\eta)}{Z(\xi,\eta)}\partial_{\xi_m}\hat{f}_{m,k_1}(t,\xi-\eta)\hat{f}_{n,k_2}(t,\eta)\psi_k(\xi)d\eta dt\label{im2-2.1-3}\\
    &+ \int_{t_1}^{t_2}\int_{\R^3}e^{it\phi(\xi,\eta)}\partial_{\eta_n}\frac{P_n(\xi,\eta)\partial_{\xi_l}\phi(\xi,\eta)}{Z(\xi,\eta)}\hat{f}_{m,k_1}(t,\xi-\eta)\hat{f}_{n,k_2}(t,\eta)\partial_{\xi_m}\psi_k(\xi)d\eta dt\label{im2-2.12-3}\\
    &+ \int_{t_1}^{t_2}\int_{\R^3}e^{it\phi(\xi,\eta)}\partial_{\xi_m}\partial_{\eta_n}\frac{P_n(\xi,\eta)\partial_{\xi_l}\phi(\xi,\eta)}{Z(\xi,\eta)}\hat{f}_{m,k_1}(t,\xi-\eta)\hat{f}_{n,k_2}(t,\eta)\psi_k(\xi)d\eta dt\label{im2-2.12-3*}\\
    & +\int_{t_1}^{t_2}\int_{\R^3}e^{it\phi(\xi,\eta)}it\partial_{\xi_m}\phi(\xi,\eta)\partial_{\eta_n}\frac{P_n(\xi,\eta)\partial_{\xi_l}\phi(\xi,\eta)}{Z(\xi,\eta)}\hat{f}_{m,k_1}(t,\xi-\eta)\hat{f}_{n,k_2}(t,\eta)\psi_k(\xi)d\eta dt\label{im2-2.2-3}.
\end{align}
Applying the bilinear estimate $L^{18/7}\times L^9\rightarrow L^2$, along with Lemma \ref{chi,eta}, Lemma \ref{op}, Lemma \ref{dualitycomp}, Lemma \ref{lpnorms}, \eqref{l2}, and \eqref{l2first}, we obtain
\begin{align*}
    &\|\eqref{im2-2.1}\|_{L^2}+\|\eqref{im2-2.1-2}\|_{L^2}+\|\eqref{im2-2.1-3}\|_{L^2}\\
    \lesssim & 2^{M}\big(\|\F^{-1}\frac{\partial_{\xi_l}\phi(\xi,\eta)}{Z(\xi,\eta)}\Tilde{\psi}_{k_1}(\xi-\eta)\Tilde{\psi}_{k_2}(\eta)\Tilde{\psi}_k(\xi)\|_{L^1}+\|\F^{-1}\partial_{\eta_n}\frac{P_n(\xi,\eta)\partial_{\xi_l}\phi(\xi,\eta)}{Z(\xi,\eta)}\Tilde{\psi}_{k_1}(\xi-\eta)\Tilde{\psi}_{k_2}(\eta)\Tilde{\psi}_k(\xi)\|_{L^1}\big)\times\\
    &\times \sup_{t\in[2^{M-1},2^M]}\min\{\|e^{ic_mt\la}\F^{-1}\nabla_\xi\hat{f}_{m,k_1}\|_{L^{18/7}_x}\|e^{ic_nt\la}f_{n,k_2}\|_{L^9_x},\\&\qquad\qquad\qquad\qquad 2^{3k/2}\|e^{ic_mt\la}\F^{-1}\nabla_\xi\hat{f}_{m,k_1}\|_{L^2_x}\|e^{ic_nt\la}f_{n,k_2}\|_{L^2_x}\}\\
    \lesssim & 2^{M-k}\sup_{t\in[2^{M-1},2^M]}\min\{2^{-3M/2}\|\F^{-1}\nabla_\xi\hat{f}_{m,k_1}\|_{L^{18/11}_x}\|f_{n,k_2}\|_{L^{9/8}_x},2^{3k/2}\|\nabla_\xi\hat{f}_{m,k_1}\|_{L^2_\xi}\|f_{n,k_2}\|_{L^2_x}\}\\
    \lesssim & 2^{M-k_2-2k_{1,+}+\gamma k_1-2k_{2,+}+\gamma k_2}\min\{2^{-3M/2-5k_1/6-2k_2/3},2^{3k/2-k_1/2+k_2/2}\}\e_1^2\\
    \lesssim & 2^{-4k_{+}+2\gamma k}\min\{2^{-M/2-5k/2},2^{M+k/2}\}\e_1^2
\end{align*}
and
\begin{align*}
    &\|\eqref{im2-2.12}\|_{L^2}+\|\eqref{im2-2.12*}\|_{L^2}+\|\eqref{im2-2.12-2}\|_{L^2}+\|\eqref{im2-2.12-2*}\|_{L^2}+\|\eqref{im2-2.12-3}\|_{L^2}+\|\eqref{im2-2.12-3*}\|_{L^2}+\\
    &+\|\eqref{im2-2.2}\|_{L^2}+\|\eqref{im2-2.2-2}\|_{L^2}+\|\eqref{im2-2.2-3}\|_{L^2}\\
    \lesssim & 
    \big(2^{M}\|\nabla\psi_k\|_{L^\infty}\|\F^{-1}\frac{\partial_{\xi_l}\phi(\xi,\eta)}{Z(\xi,\eta)}\Tilde{\psi}_{k_1}(\xi-\eta)\Tilde{\psi}_{k_2}(\eta)\Tilde{\psi}_k(\xi)\|_{L^1}+\\
    &+2^{M}\|\F^{-1}\partial_{\xi_m}\frac{\partial_{\xi_l}\phi(\xi,\eta)}{Z(\xi,\eta)}\Tilde{\psi}_{k_1}(\xi-\eta)\Tilde{\psi}_{k_2}(\eta)\Tilde{\psi}_k(\xi)\|_{L^1}+\\
    &+2^{M}\|\nabla\psi_k\|_{L^\infty}\|\F^{-1}\partial_{\eta_n}\frac{P_n(\xi,\eta)\partial_{\xi_l}\phi(\xi,\eta)}{Z(\xi,\eta)}\Tilde{\psi}_{k_1}(\xi-\eta)\Tilde{\psi}_{k_2}(\eta)\Tilde{\psi}_k(\xi)\|_{L^1}+\\
    &+2^{M}\|\F^{-1}\partial_{\xi_m}\partial_{\eta_n}\frac{P_n(\xi,\eta)\partial_{\xi_l}\phi(\xi,\eta)}{Z(\xi,\eta)}\Tilde{\psi}_{k_1}(\xi-\eta)\Tilde{\psi}_{k_2}(\eta)\Tilde{\psi}_k(\xi)\|_{L^1}+\\
    &+2^{2M}\|\F^{-1}\frac{\partial_{\xi_m}\phi(\xi,\eta)\partial_{\xi_l}\phi(\xi,\eta)}{Z(\xi,\eta)}\Tilde{\psi}_{k_1}(\xi-\eta)\Tilde{\psi}_{k_2}(\eta)\Tilde{\psi}_k(\xi)\|_{L^1}+\\
    &+2^{2M}\|\F^{-1}\partial_{\xi_m}\phi(\xi,\eta)\partial_{\eta_n}\frac{P_n(\xi,\eta)\partial_{\xi_l}\phi(\xi,\eta)}{Z(\xi,\eta)}\Tilde{\psi}_{k_1}(\xi-\eta)\Tilde{\psi}_{k_2}(\eta)\Tilde{\psi}_k(\xi)\|_{L^1}\big)\times\\
    &\times \sup_{t\in[2^{M-1},2^M]}\min\{\|e^{ic_mt\la}f_{m,k_1}\|_{L^{18/7}_x}\|e^{ic_nt\la}f_{n,k_2}\|_{L^9_x},2^{3k/2}\|e^{ic_mt\la}f_{m,k_1}\|_{L^2_x}\|e^{ic_nt\la}f_{n,k_2}\|_{L^2_x}\}\\
    \lesssim & (2^{M-2k}+2^{2M})\sup_{t\in[2^{M-1},2^M]}\min\{2^{-3M/2}\|f_{m,k_1}\|_{L^{18/11}_x}\|f_{n,k_2}\|_{L^{9/8}_x},2^{3k/2}\|f_{m,k_1}\|_{L^2_x}\|f_{n,k_2}\|_{L^2_x}\}\\
    \lesssim & (1+2^{M+2k})2^{M-2k-2k_{1,+}+\gamma k_1-2k_{2,+}+\gamma k_2}\min\{2^{-3M/2+k_1/6-2k_2/3},2^{3k/2+k_1/2+k_2/2}\}\e_1^2\\
    \lesssim & (1+2^{M+2k})2^{-4k_{+}+2\gamma k}\min\{2^{-M/2-5k/2},2^{M+k/2}\}\e_1^2.
\end{align*}
Thus, we have when $M\leq -2k$,
\begin{align*}
    \|\eqref{chi3.1}+\eqref{chi3.2}+\eqref{chi3.4}\|_{L^2}
    \lesssim 2^{-4k_++2\gamma k+M+3k/2}\e_1^2
\end{align*}
and
\begin{align*}
    \|\nabla \eqref{chi3.1}+\nabla\eqref{chi3.2}+\nabla\eqref{chi3.4}\|_{L^2}
    \lesssim 2^{-4k_{+}+2\gamma k+M+k/2}\e_1^2.
\end{align*}
Lemma \ref{dalpha} implies
\begin{align*}
    \|D^{\alpha}\eqref{chi3.1}+D^\alpha \eqref{chi3.2}+D^\alpha\eqref{chi3.4}\|_{L^2}
    \lesssim 2^{-4k_++2\gamma k+M+3k/2-\alpha k}\e_1^2.
\end{align*}
Furthermore when $M>-2k$,
\begin{align*}
    \|\eqref{chi3.1}+\eqref{chi3.2}+\eqref{chi3.4}\|_{L^2}
    \lesssim 2^{-4k_++2\gamma k-M/2-3k/2}\e_1^2
\end{align*}
and
\begin{align*}
    \|\nabla \eqref{chi3.1}+\nabla\eqref{chi3.2}+\nabla\eqref{chi3.4}\|_{L^2}
    \lesssim 2^{-4k_{+}+2\gamma k+M/2-k/2}\e_1^2.
\end{align*}
By Lemma \ref{dalpha}, we get
\begin{align*}
    \|D^{\alpha}\eqref{chi3.1}+D^\alpha \eqref{chi3.2}+D^\alpha\eqref{chi3.4}\|_{L^2}
    \lesssim 2^{-4k_++2\gamma k-M/2-3k/2+\alpha M+\alpha k}\e_1^2.
\end{align*}
\end{proof}
As a result of Lemma \ref{im2-2-1}, we have
\begin{equation}
\label{im2-2-1r}
    \begin{aligned}
    &\sum_{1\leq M\leq \log T}\sum_{(k_1,k_2)\in\chi^3_k}2^{2k_+-\gamma k+k/2+\alpha k}\sup_{2^{M-1}\leq t_1\leq t_2\leq 2^M}\big\|D^\alpha\big[\eqref{chi3.1}+\eqref{chi3.2}+\eqref{chi3.4}\big]\big\|_{L^2}\\
    \lesssim &\sum_{1\leq M\leq \log T}2^{-2k_++\gamma k}\min\{2^{-M/2-k+\alpha M+2\alpha k},2^{M+2k}\}\e_1^2\\
    \lesssim &\sum_{M\leq -2k}2^{\gamma k+M+2k}\e_1^2+ \sum_{-2k <M\leq \log T} 2^{\alpha(M+2k)-2k_{+}+\gamma k-M/2-k}\e_1^2\\
    \lesssim & \e_1^2.
\end{aligned}
\end{equation}
For the remaining terms in the expansion of $I^{M,2}_{k,k_1,k_2}$, we will estimate directly by applying Lemma \ref{chi3*}.\\
Let $\hat{g}=\partial_t\hat{f}_l$ and $m_d=\frac{\partial_{\xi_l}\phi(\xi,\eta)}{Z(\xi,\eta)}$. Then, we have 
\begin{align*}
    &\|D^\alpha\eqref{chi3.3.1}\|_{L^2}\\
    \lesssim & (1+2^{\alpha(M+2k)})2^{-2k_{+}+\gamma k}\min\{2^{M/2-2k+(1-\alpha)(\gamma M/8+\gamma k/4)},2^{2M+k}\}\e_1\|\partial_t\hat{f}_{m,k_1}\|_{L^\infty_t([2^{M-1},2^M])H^\alpha_\xi}
\end{align*}
and
\begin{align*}
    &\|D^\alpha\eqref{chi3.3.2}\|_{L^2}\\
    \lesssim & (1+2^{\alpha(M+2k)})2^{-2k_{+}+\gamma k}\min\{2^{M/2-2k+(1-\alpha)(\gamma M/8+\gamma k/4)},2^{2M+k}\}\e_1\|\partial_t\hat{f}_{n,k_2}\|_{L^\infty_t([2^{M-1},2^M])H^\alpha_\xi}.
\end{align*}
From Lemma \ref{timel2} and Lemma \ref{mixder}, we know $\|\partial_t\hat{f}_{l,k}\|_{H^\alpha}\lesssim 2^{-M-2k_++(1-\alpha)\gamma k+k/2-\alpha k}\e_1^2$. Thus, 
\begin{align*}
    &\|D^\alpha\eqref{chi3.3.1}\|_{L^2}+\|D^\alpha\eqref{chi3.3.2}\|_{L^2}\\
    \lesssim & 2^{-2k_++(1-\alpha)\gamma k+k/2-\alpha k} (1+2^{\alpha(M+2k)})2^{-2k_{+}+\gamma k}\min\{2^{-M/2-2k+(1-\alpha)(\gamma M/8+\gamma k/4)},2^{M+k}\}\e_1^3,
\end{align*}
for $(k_1,k_2)\in\chi^3_k$.\\
Then, take $\hat{g}=\partial_{\xi_n}\hat{f}_l$ and $m_d=\frac{P_n(\xi,\eta)\partial_{\xi_l}\phi(\xi,\eta)}{Z(\xi,\eta)}$. Employing Lemma \ref{chi3*}, we obtain
\begin{align*}
    &\|D^\alpha\eqref{chi3.5.1}\|_{L^2}\\
    \lesssim & (1+2^{\alpha(M+2k)})2^{-2k_{+}+\gamma k}\min\{2^{-M/2-k+(1-\alpha)(\gamma M/8+\gamma k/4)},2^{M+2k}\}\e_1\|\hat{f}_{m,k_1}\|_{L^\infty_t([2^{M-1},2^M])H^{1+\alpha}_\xi}
\end{align*}
and
\begin{align*}
    &\|D^\alpha\eqref{chi3.5.2}\|_{L^2}\\
    \lesssim & (1+2^{\alpha(M+2k)})2^{-2k_{+}+\gamma k}\min\{2^{-M/2-k+(1-\alpha)(\gamma M/8+\gamma k/4)},2^{M+2k}\}\e_1\|\hat{f}_{n,k_2}\|_{L^\infty_t([2^{M-1},2^M])H^{1+\alpha}_\xi}.
\end{align*}
Using the bound in \eqref{sobolevnorms} for the Sobolev norms, we have 
\begin{align*}
    &\|D^\alpha\eqref{chi3.5.1}\|_{L^2}+\|D^\alpha\eqref{chi3.5.2}\|_{L^2}\\
    \lesssim &2^{-2k_++\gamma k-k/2-\alpha k}(1+2^{\alpha(M+2k)})2^{-2k_{+}+\gamma k}\min\{2^{-M/2-k+(1-\alpha)(\gamma M/8+\gamma k/4)},2^{M+2k}\}\e_1^3,
\end{align*}
for $(k_1,k_2)\in\chi^3_k$.
Hence, we may conclude
\begin{equation}
\label{im2-2-2r}
    \begin{aligned}
    &\sum_{1\leq M\leq \log T}\sum_{(k_1,k_2)\in\chi^3_k}2^{2k_+-\gamma k+k/2+\alpha k}\sup_{2^{M-1}\leq t_1\leq t_2\leq 2^M}\big\|D^\alpha\big[\eqref{chi3.3.1}+\eqref{chi3.3.2}+\eqref{chi3.5.1}+\eqref{chi3.5.2}\big]\big\|_{L^2}\\
    \lesssim &\sum_{1\leq M\leq \log T}(1+2^{\alpha(M+2k)}) 2^{-2k_{+}+(1-\alpha)\gamma k}\min\{2^{-M/2-k+(1-\alpha)(\gamma M/8+\gamma k/4)},2^{M+2k}\}\e_1^3\\
    &+\sum_{1\leq M\leq \log T}(1+2^{\alpha(M+2k)})2^{-2k_{+}+\gamma k}\min\{2^{-M/2-k+(1-\alpha)(\gamma M/8+\gamma k/4)},2^{M+2k}\}\e_1^3\\
    \lesssim &\sum_{M\leq -2k}(1+2^{\alpha\gamma k})2^{(1-\alpha)\gamma k+M+2k}\e_1^2+ \sum_{M > -2k}(1+2^{\alpha\gamma k})2^{\alpha(M+2k)-2k_{+}+(1-\alpha)\gamma k-M/2-k+(1-\alpha)(\gamma M/8+\gamma k/4)}\e_1^2\\
    \lesssim & \e_1^2,
\end{aligned}
\end{equation}
given $\alpha+(1-\alpha)\gamma/8<1/2$, i.e. $\alpha<1/2-\gamma/(16-2\gamma)$.

Finally, we deal with the case when $\frac{1}{c_l}=\frac{1}{c_m}+\frac{1}{c_n}$. Since we have
$$\nabla_\xi\phi(\xi,\eta)=\frac{c_l-c_m}{c_m}\nabla_\eta\phi(\xi,\eta),$$
where $\frac{c_l-c_m}{c_m}\neq 0$, we can rewrite $I^{M,2}_{k,k_1,k_2}$ as
\begin{align}
I^{M,2}_{k,k_1,k_2}
=&\int_{t_1}^{t_2}\int_{\R^3}e^{it\phi(\xi,\eta)}it\partial_{\xi_l}\phi(\xi,\eta) \hat{f}_{m,k_1}(t,\xi-\eta)\hat{f}_{n,k_2}(t,\eta)\psi_k(\xi)d\eta dt\nonumber\\
=&\frac{c_l-c_m}{c_m}\int_{t_1}^{t_2}\int_{\R^3}e^{it\phi(\xi,\eta)}it\partial_{\eta_l}\phi(\xi,\eta) \hat{f}_{m,k_1}(t,\xi-\eta)\hat{f}_{n,k_2}(t,\eta)\psi_k(\xi)d\eta dt\nonumber\\
=&-\frac{c_l-c_m}{c_m}\int_{t_1}^{t_2}\int_{\R^3}e^{it\phi(\xi,\eta)}\partial_{\eta_l} \hat{f}_{m,k_1}(t,\xi-\eta)\hat{f}_{n,k_2}(t,\eta)\psi_k(\xi)d\eta dt\label{im2chi3extra1}\\
&-\frac{c_l-c_m}{c_m}\int_{t_1}^{t_2}\int_{\R^3}e^{it\phi(\xi,\eta)} \hat{f}_{m,k_1}(t,\xi-\eta)\partial_{\eta_l}\hat{f}_{n,k_2}(t,\eta)\psi_k(\xi)d\eta dt\label{im2chi3extra2}.
\end{align}
Furthermore, as a result of Lemma \ref{chi3*} and \eqref{sobolevnorms}, we know for $(k_1,k_2)\in\chi^3_k$,
\begin{align*}
    \|D^\alpha I^{M,2}_{k,k_1,k_2}\|_{L^2}
    \leq & \|D^\alpha\eqref{im2chi3extra1}\|_{L^2}+\|D^\alpha\eqref{im2chi3extra2}\|_{L^2}\\
    \lesssim & (1+2^{\alpha(M+2k)})2^{-2k_++\gamma k}\min\{2^{-M/2-k+(1-\alpha)(\gamma M/8+\gamma k/4)},2^{M+2k}\}\e_1\times\\
    &\times(\|\nabla_\xi\hat{f}_{m,k_1}\|_{L^\infty_t([2^{M-1},2^M])H^\alpha_\xi}+\|\nabla_\xi\hat{f}_{n,k_2}\|_{L^\infty_t([2^{M-1},2^M])H^\alpha_\xi})\\
    \lesssim & (1+2^{\alpha(M+2k)})2^{-4k_++2\gamma k-k/2-\alpha k}\min\{2^{-M/2-k+(1-\alpha)(\gamma M/8+\gamma k/4)},2^{M+2k}\}\e_1^2.
\end{align*}
Hence, we obtain
\begin{equation}
\label{im2chi3extra}
    \begin{aligned}
    &\sum_{1\leq M\leq \log T}\sum_{(k_1,k_2)\in\chi^3_k}2^{2k_+-\gamma k+k/2+\alpha k}\sup_{2^{M-1}\leq t_1\leq t_2\leq 2^M}\big\|D^\alpha\big[\eqref{im2chi3extra1}+\eqref{im2chi3extra2}\big]\big\|_{L^2}\\
    \lesssim &\sum_{1\leq M\leq \log T}(1+2^{\alpha(M+2k)})2^{-2k_{+}+\gamma k}\min\{2^{-M/2-k+(1-\alpha)(\gamma M/8+\gamma k/4)},2^{M+2k}\}\e_1^2\\
    \lesssim &\sum_{M\leq -2k}2^{\gamma k+M+2k}\e_1^2+ \sum_{M > -2k}2^{\alpha(M+2k)-2k_{+}+\gamma k-M/2-k+(1-\alpha)(\gamma M/8+\gamma k/4)}\e_1^2\\
    \lesssim & \e_1^2,
\end{aligned}
\end{equation}
given that $\alpha+(1-\alpha)\gamma/8<1/2$.\\
Therefore, combining the results in \eqref{im2-2-1r} and \eqref{im2-2-2r} with \eqref{im2inchi2}, \eqref{jm2inchi2}, \eqref{iandjm2inchi1}, and \eqref{im2chi3extra}, we have
\begin{align*}
    &\sum_{1\leq M\leq \log T}\sup_{2^{M-1}\leq t_1\leq t_2\leq 2^M}2^{2k_+-\gamma k+k/2+\alpha k}\bigg(\sum_{c_m+c_n\neq 0}A_{lmn}\sum_{(k_1,k_2)\in\chi^3_k}\|D^\alpha I^{M,2}_{k,k_1,k_2}\|_{L^2}\\
    &+\big(\sum_{\substack{c_m+c_n\neq 0\\c_m\neq c_l}}+\sum_{\substack{c_m+c_n\neq 0\\c_m=c_l}}\big)A_{lmn}\sum_{(k_1,k_2)\in\chi^1_k}\|D^\alpha I^{M,2}_{k,k_1,k_2}\|_{L^2}
    +\sum_{c_m+c_n\neq 0}A_{lmn}\sum_{(k_1,k_2)\in\chi^2_k}\|D^\alpha I^{M,2}_{k,k_1,k_2}\|_{L^2}\\
    &+\big(\sum_{\substack{c_m+c_n= 0\\c_m\neq c_l}}+\sum_{\substack{c_m+c_n= 0\\c_m=c_l}}\big)A_{lmn}\sum_{(k_1,k_2)\in\chi^1_k}\|D^\alpha J^{M,2}_{k,k_1,k_2}\|_{L^2}
    +\sum_{c_m+c_n= 0}A_{lmn}\sum_{(k_1,k_2)\in\chi^2_k\cup\chi^3_k}\|D^\alpha J^{M,2}_{k,k_1,k_2}\|_{L^2}\\
    \lesssim & \e_1^2
\end{align*}
as desired.

\printbibliography
\end{document}